\definecolor{darkgreen}{cmyk}{1,0,1,.2}
\definecolor{m}{rgb}{1,0.1,1}
\long\def\red#1{\textcolor {red}{#1}}
\newdimen\theight
\def\TeXref#1{%
             \leavevmode\vadjust{\setbox0=\hbox{{\tt
                     \quad\quad  {\small \textrm #1}}}%
             \theight=\ht0
             \advance\theight by \lineskip
             \kern -\theight \vbox to
             \theight{\rightline{\rlap{\box0}}%
             \vss}%
             }}%
\newcommand{\fnote}[1]{\TeXref{*}{\tiny{#1}}}
\long\def\rnote#1{\red{\fnote{#1}}}
\theoremstyle{plain}
\newtheorem{thm}{Theorem}[section]
\newtheorem{lem}[thm]{Lemma}
\newtheorem{cor}[thm]{Corollary}
\newtheorem{prop}[thm]{Proposition}
\theoremstyle{definition}
\newtheorem{ex}[thm]{Example}
\theoremstyle{remark}
\newtheorem{rem}[thm]{Remark}
\newtheorem{claim}[thm]{Claim}
\newtheorem{notation}[thm]{Notation}
\crefname{thm}{Theorem}{Theorems}
\crefname{lem}{Lemma}{Lemmas}
\crefname{cor}{Corollary}{Corollaries}
\crefname{prop}{Proposition}{Propositions}
\crefname{mainthm}{Theorem}{Theorems}
\crefname{maincor}{Corollary}{Corollaries}
\crefname{defn}{Definition}{Definitions}
\crefname{conj}{Conjecture}{Conjectures}
\crefname{ex}{Example}{Examples}
\crefname{exs}{Examples}{Examples}
\crefname{prob}{Problem}{Problems}
\crefname{quest}{Question}{Questions}
\crefname{rem}{Remark}{Remarks}
\crefname{claim}{Claim}{Claims}
\crefname{case}{Case}{Cases}
\crefname{hyp}{Hypothesis}{Hypotheses}
\crefname{notation}{Notation}{Notations}
\newcommand{\C}{\mathbb{C}}
\newcommand{\K}{\mathbb{K}}
\newcommand{\N}{\mathbb{N}}
\newcommand{\R}{\mathbb{R}}
\newcommand{\Z}{\mathbb{Z}}
\renewcommand{\AA}{\mathcal{A}}
\newcommand{\CC}{\mathcal{C}}
\newcommand{\FF}{\mathcal{F}}
\newcommand{\GG}{\mathcal{G}}
\newcommand{\HH}{\mathcal{H}}
\newcommand{\KK}{\mathcal{K}}
\newcommand{\LL}{\mathcal{L}}
\newcommand{\PP}{\mathcal{P}}
\newcommand{\RR}{\mathcal{R}}
\renewcommand{\SS}{\mathcal{S}}
\newcommand{\VV}{\mathcal{V}}
\newcommand{\fg}{\mathfrak{g}}
\newcommand{\fG}{\mathfrak{G}}
\newcommand{\fU}{\mathfrak{U}}
\newcommand{\fX}{\mathfrak{X}}
\newcommand{\bfc}{\boldsymbol{c}}
\newcommand{\bfg}{\boldsymbol{g}}
\newcommand{\bfh}{\boldsymbol{h}}
\newcommand{\bfr}{\boldsymbol{r}}
\newcommand{\bfs}{\boldsymbol{s}}
\newcommand{\bfx}{\boldsymbol{x}}
\newcommand{\bfE}{\boldsymbol{E}}
\newcommand{\bfF}{\boldsymbol{F}}
\newcommand{\bfG}{\boldsymbol{G}}
\newcommand{\bfH}{\boldsymbol{H}}
\newcommand{\bfK}{\boldsymbol{K}}
\newcommand{\bfV}{\boldsymbol{V}}
\newcommand{\bfM}{\boldsymbol{M}}
\newcommand{\bfP}{\boldsymbol{P}}
\newcommand{\bfT}{\boldsymbol{T}}
\newcommand{\bfFF}{\boldsymbol{\FF}}
\newcommand{\bfphi}{\boldsymbol{\phi}}
\newcommand{\bfnu}{\boldsymbol{\nu}}
\newcommand{\bfrho}{\boldsymbol{\rho}}
\newcommand{\bfpi}{\boldsymbol{\pi}}
\newcommand{\bfvarpi}{\boldsymbol{\varpi}}
\newcommand{\bfeta}{\boldsymbol{\eta}}
\newcommand{\bfomega}{\boldsymbol{\omega}}
\newcommand{\sh}{\mathsf{h}}
\newcommand{\sw}{\mathsf{w}}
\newcommand{\sA}{\mathsf{A}}
\newcommand{\sH}{\mathsf{H}}
\newcommand{\sJ}{\mathsf{J}}
\newcommand{\sT}{\mathsf{T}}
\newcommand{\supp}{\operatorname{supp}}
\newcommand{\esssup}{\operatorname{ess\,sup}}
\newcommand{\codim}{\operatorname{codim}}
\newcommand{\im}{\operatorname{im}}
\newcommand{\Cl}{\operatorname{Cl}}
\newcommand{\id}{\operatorname{id}}
\newcommand{\Tr}{\operatorname{Tr}}
\newcommand{\Aut}{\operatorname{Aut}}
\newcommand{\Diffeo}{\operatorname{Diffeo}}
\newcommand{\Diff}{\operatorname{Diff}}
\newcommand{\GL}{\operatorname{GL}}
\newcommand{\Hol}{\operatorname{Hol}}
\newcommand{\sign}{\operatorname{sign}}
\newcommand{\Fix}{\operatorname{Fix}}
\newcommand{\rank}{\operatorname{rank}}
\newcommand{\End}{\operatorname{End}}
\newcommand{\dom}{\operatorname{dom}}
\newcommand{\vol}{\operatorname{vol}}
\newcommand{\dvol}{\operatorname{dvol}}
\newcommand{\ev}{\operatorname{ev}}
\newcommand{\Pen}{\operatorname{Pen}}
\newcommand{\inj}{\operatorname{inj}}
\newcommand{\pr}{\operatorname{pr}}
\newcommand{\lb}{\operatorname{lb}}
\newcommand{\rb}{\operatorname{rb}}
\newcommand{\ff}{\operatorname{ff}}
\newcommand{\length}{\operatorname{length}}
\newcommand{\sing}{\operatorname{sing}}
\newcommand{\fiber}{{\text{\rm fiber}}}
\newcommand{\topd}{{\text{\rm top}}}
\newcommand{\co}{{\text{\rm c}}}
\newcommand{\cv}{{\text{\rm cv}}}
\newcommand{\trans}{{\text{\rm t}}}
\newcommand{\rnabla}{\mathring{\nabla}}
\newcommand{\rP}{\mathring{P}}
\newcommand{\rexp}{\mathring{\exp}}
\newcommand{\cnabla}{\check{\nabla}}
\newcommand{\cexp}{\check{\exp}}
\newcommand{\cO}{\check{O}}
\newcommand{\cV}{\check{V}}
\newcommand{\bTr}{{}^{\text{\rm b}}\!\Tr}
\newcommand{\Str}{\operatorname{Str}}
\newcommand{\str}{\operatorname{str}}
\newcommand{\bStr}{{}^{\text{\rm b}}\!\Str}
\newcommand{\bOmega}{{}^{\text{\rm b}}\Omega}
\newcommand{\bT}{{}^{\text{\rm b}}T}
\newcommand{\bsigma}{{}^{\text{\rm b}}\!\sigma}
\newcommand{\Psib}{\Psi_{\text{\rm b}}}
\newcommand{\Diffb}{\Diff_{\text{\rm b}}}
\newcommand{\Deltab}{\Delta_{\text{\rm b}}}
\newcommand{\Hb}{H_{\text{\rm b}}}
\newcommand{\nuint}{\!\!\!\!\!{\phantom{\int}}^\nu\!\!\!\!\int}
\newcommand{\smallnuint}{\!\!\!{\phantom{\int}}^\nu\!\!\!\int}
\newcommand{\muint}{\!\!\!\!\!{\phantom{\int}}^\mu\!\!\!\!\int}
\newcommand{\nulint}{\!\!\!\!\!{\phantom{\int}}^{\nu_l}\!\!\!\!\int}
\newcommand{\bfnuint}{\!\!\!\!\!{\phantom{\int}}^{\bfnu}\!\!\!\!\int}
\newcommand{\smallbsnuint}{\!\!\!{\phantom{\int}}^{\bfnu}\!\!\!\int}
\newcommand{\Fsigma}{{}^\FF\!\sigma}
\newcommand{\Diffub}{\Diff_{\text{\rm ub}}}
\newcommand{\Cinftyub}{C^\infty_{\text{\rm ub}}}
\newcommand{\Cinftyc}{C^\infty_{\text{\rm c}}}
\newcommand{\Cinftycv}{C^\infty_{\text{\rm cv}}}
\newcommand{\bchi}{{}^{\text{\rm b}}\!\chi}
\newcommand{\Ldis}{L_{\text{\rm dis}}}
\newcommand{\LdisK}{L_{\text{\rm dis},K}}
\newcommand{\LdisJ}{L_{\text{\rm dis},J}}
\newcommand{\LdisdK}{L_{\text{\rm dis},K'}}
\newcommand{\LdisdJ}{L_{\text{\rm dis},J'}}
\newcommand{\LdisKdK}{L_{\text{\rm dis},K,K'}}
\newcommand{\LdisJdJ}{L_{\text{\rm dis},J,J'}}
\newcommand{\LdisIdI}{L_{\text{\rm dis},I,I'}}
\newcommand{\olfX}{\overline{\fX}}
\newcommand{\fXcom}{\fX_{\text{\rm com}}}
\newcommand{\fXub}{\fX_{\text{\rm ub}}}
\newcommand{\fXb}{\fX_{\text{\rm b}}}
\newcommand{\olfXcom}{\overline{\fX}_{\text{\rm com}}}
\newcommand{\betaNo}{\beta_{\text{\rm No}}}
\numberwithin{section}{chapter}
\numberwithin{equation}{section}
\begin{document}

\frontmatter

\title{A trace formula for foliated flows}

\author[J.A. \'Alvarez L\'opez]{Jes\'us A. \'Alvarez L\'opez\,\orcidlink{0000-0001-6056-2847}}
\address{Department of Mathematics and CITMAga\\
         University of Santiago de Compostela\\
         15782 Santiago de Compostela\\ Spain}
\email{jesus.alvarez@usc.es}
\thanks{The authors are partially supported by the grants MTM2017-89686-P and PID2020-114474GB-I00 (AEI/FEDER, UE) and ED431C 2019/10 (Xunta de Galicia, FEDER)}

\author[Y.A. Kordyukov]{Yuri A. Kordyukov\,\orcidlink{0000-0003-2957-2873}}
\address{Institute of Mathematics\\ 
Ufa Federal Research Center\\
Russian Academy of Sciences\\
112 Chernyshevsky street\\ 450008 Ufa\\ Russia}
\email{yurikor@matem.anrb.ru}

\author[E. Leichtnam]{Eric Leichtnam\,\orcidlink{0000-0002-5058-5508}}
\address{Institut de Math\'ematiques de Jussieu-PRG\\ CNRS\\ Batiment Sophie Germain (bureau 740)\\ Case~7012\\ 75205 Paris Cedex 13, France}
\email{eric.leichtnam@imj-prg.fr}

\date{\today}

\subjclass[2020]{58A14, 58G11, 57R30}

\keywords{Foliation, foliated flow, simple closed orbit, transversely simple preserved leaf, conormal distributions, dual-conormal distributions, small b-calculus, b-trace, Riemannian foliations of bounded geometry, leafwise forms, reduced leafwise cohomology, leafwise Hodge decomposition, Witten's complex, leafwise Witten's complex, b-Connes-Euler characteristic, Lefschetz distribution}


\maketitle

\begin{abstract}
Let $\mathcal{F}$ be a transversely oriented foliation of codimension one on a closed manifold $M$, and let $\phi=\{\phi^t\}$ be a foliated flow on $(M,\mathcal{F})$. Assume the closed orbits of $\phi$ are simple and its preserved leaves are transversely simple. In this case, there are finitely many preserved leaves, which are compact. Let $M^0$ denote their union, $M^1=M\setminus M^0$ and $\mathcal{F}^1=\mathcal{F}|_{M^1}$. We consider two topological vector spaces, $I(\mathcal{F})$ and $I'(\mathcal{F})$, consisting of the leafwise currents on $M$ that are conormal and dual-conormal to $M^0$, respectively. They become topological complexes with the differential operator $d_{\mathcal{F}}$ induced by the de~Rham derivative on the leaves, and they have an $\mathbb{R}$-action $\phi^*=\{\phi^{t\,*}\}$ induced by $\phi$. Let $\bar H^\bullet I(\mathcal{F})$ and $\bar H^\bullet I'(\mathcal{F})$ denote the corresponding leafwise reduced cohomologies, with the induced $\mathbb{R}$-action $\phi^*=\{\phi^{t\,*}\}$. $\bar H^\bullet I(\mathcal{F})$ and $\bar H^\bullet I'(\mathcal{F})$ are shown to be the central terms of short exact sequences in the category of continuous linear maps between locally convex spaces, where the other terms are described using Witten's perturbations of the de~Rham complex on $M^0$ and leafwise Witten's perturbations for $\mathcal{F}^1$. This is used to define some kind of Lefschetz distribution $L_{\text{\rm dis}}(\phi)$ of the actions $\phi^*$ on both $\bar H^\bullet I(\mathcal{F})$ and $\bar H^\bullet I'(\mathcal{F})$, whose value is a distribution on $\mathbb{R}$. Its definition involves several renormalization procedures, the main one is the b-trace of some smoothing b-pseudodifferential operator on the compact manifold with boundary obtained by cutting $M$ along $M^0$. We also prove a trace formula describing $L_{\text{\rm dis}}(\phi)$ in terms of infinitesimal data from the closed orbits and preserved leaves. This solves a conjecture of C.~Deninger involving two leafwise reduced cohomologies instead of a single one. This memoir is the conclusion of a program started about ten years ago by the three authors.
\end{abstract}

\tableofcontents


\mainmatter

\chapter{Introduction}\label{ch: intro}

\section{Deninger's program}\label{s: Deninger}

Let $(M,\FF)$ be a smooth foliated manifold. The leafwise cohomology, $H^\bullet(\FF)$, is defined with the complex of differential forms on the leaves that are smooth on $M$, $C^\infty(M;\Lambda\FF)$ ($\Lambda\FF=\bigwedge T^*\FF\otimes\C$), equipped with de~Rham differential operator along the leaves, $d_\FF$. This differential complex is not elliptic, it is only leafwise elliptic. Therefore $H^\bullet(\FF)$ may be of infinite dimension and non-Hausdorff with the topology induced by the $C^\infty$ topology. Thus it makes sense to consider the reduced leafwise cohomology, $\bar H^\bullet(\FF)=H^\bullet(\FF)/\overline0$. (The reduced cohomology is defined and denoted in a similar way for any complex with a compatible topology, called a topological complex.)

A flow $\phi=\{\phi^t\}$ on $M$ is said to be foliated if it maps leaves to leaves; equivalently, its infinitesimal generator $Z$ is an infinitesimal transformation of $(M,\FF)$, or the induced section $\overline Z$ of the normal bundle $N\FF=TM/T\FF$ is parallel with respect to the Bott partial connection. In this case, there is an induced $\R$-action $\phi^*=\{\phi^{t\,*}\}$ on $(C^\infty(M;\Lambda\FF),d_\FF)$, which induces an $\R$-action $\phi^*=\{\phi^{t\,*}\}$ on $\bar H^\bullet(\FF)$. Moreover, $\phi$ induces a local flow $\bar\phi$ on local transversals of $\FF$. Some leaves may be preserved by $\phi$, which correspond to the fixed points of $\bar\phi$. If these fixed points of $\bar\phi$ are simple, then the leaves preserved by $\phi$ are called transversely simple (\Cref{ss: simple fol flows}).

Assume $M$ is closed, $\codim\FF=1$, the closed orbits are simple, the preserved leaves are transversely simple, and $\phi$ is transverse to the non-preserved leaves. With these conditions, C.~Deninger has conjectured that the supertrace of $\phi^*$ on $\bar H^\bullet(\FF)$ makes sense as a distribution $\Ldis(\phi)$ on $\R$ (its Lefschetz distribution), and it has an expression involving infinitesimal data from the preserved leaves and closed orbits (a dynamical Lefschetz trace formula).

This problem is a part of a program proposed by Deninger, whose goal is the study of arithmetic zeta functions by finding an interpretation of the explicit formulae as a dynamical Lefschetz trace formula for some $(M,\FF,\phi)$ of this type \cite{Deninger1998,Deninger2001,Deninger2002,Deninger-Arith_geom_anal_fol_sps,Deninger2008}. The precise expression of the trace formula was previously suggested by Guillemin \cite{Guillemin1977}. Further developments of these ideas were made in \cite{DeningerSinghof2002,Mumken2006,Kopei2006,Leichtnam2008,Kopei2011,Leichtnam2014,KordyukovPavlenko2015,
Kim-fiber_bdls,Deninger-Dyn-Syst-for-arith-schemes,Deninger-Primes-knots-periodic-orbits}.

It became clear that more generality is needed to draw arithmetic consequences (perhaps foliated flows on possibly singular foliated spaces of arithmetic nature). But, even for $(M,\FF,\phi)$ as above, this problem is difficult and interesting; for instance, $\bar H^\bullet(\FF)$ is not appropriate in general \cite{DeningerSinghof2001}. Besides its own interest, a solution might provide techniques to deal with more general settings. Moreover, we believe that 
the techniques developed in this paper will be useful in arithmetic once the appropriate framework 
allowing to interpret the Weil’s explicit formulae for arithmetic zeta functions as Lefschetz trace formulae will have been discovered.

\section{Case with no preserved leaves}\label{s: no preserved leaves}

The first two authors proved such a trace formula when $\phi$ has no preserved leaves \cite{AlvKordy2002}, and extended it for transverse actions of Lie groups \cite{AlvKordy2008a}. In this case, $\FF$ is Riemannian; i.e., it is locally described by Riemannian submersions for some Riemannian metric $g$ on $M$ (a bundle-like metric). Using $g$, we get the leafwise coderivative $\delta_\FF$ and the leafwise Laplacian $\Delta_\FF$. Then the leafwise heat operator defines a continuous map \cite{AlvKordy2001}
\begin{equation}\label{heat op - FF Riem - M closed}
C^\infty(M;\Lambda\FF)\times[0,\infty]\to C^\infty(M;\Lambda\FF)\;,\quad(\alpha,u)\mapsto e^{-u\Delta_\FF}\alpha\;.
\end{equation}
It follows that there is a leafwise Hodge decomposition
\begin{equation}\label{leafwise Hodge dec - FF Riem - M closed}
C^\infty(M;\Lambda\FF)=\ker\Delta_\FF\oplus\overline{\im d_\FF}\oplus\overline{\im\delta_\FF}\;,
\end{equation}
and therefore the orthogonal projection $\Pi_\FF=e^{-\infty\Delta_\FF}$ to $\ker\Delta_\FF$ induces a leafwise Hodge isomorphism
\begin{equation}\label{leafwise Hodge iso - FF Riem - M closed}
\bar H^\bullet(\FF)\cong\ker\Delta_\FF\;.
\end{equation}
This is surprising because $\Delta_\FF$ is only leafwise elliptic; somehow, the transverse rigidity of Riemannian foliations makes up for the lack of transverse ellipticity. These properties may fail for non-Riemannian foliations \cite{DeningerSinghof2001}. 

Furthermore, for all $f\in\Cinftyc(\R)$ and $0<u\le\infty$, the operator
\begin{equation}\label{P_u f - FF Riem - M closed}
P_{u,f}=\int_\R\phi^{t*}e^{-u\Delta_\FF}f(t)\,dt
\end{equation}
is smoothing, and therefore of trace class, its supertrace $\Str P_{u,f}$ depends continuously on $f$ and is independent of $u$, and the limit of $\Str P_{u,f}$ as $u\downarrow0$ gives the expected contribution of the closed orbits \cite{AlvKordy2002,AlvKordy2008a}. By~\eqref{leafwise Hodge iso - FF Riem - M closed} and~\eqref{P_u f - FF Riem - M closed}, the mapping $f\mapsto\Str P_{\infty,f}$ can be considered as the Lefschetz distribution $\Ldis(\phi)$, solving the problem in this case.

\section{General case}\label{s: general}

This publication is a continuation of the works \cite{AlvKordy2001,AlvKordy2002,AlvKordy2008a}, recalled in \Cref{s: no preserved leaves}. Our main goal is to propose an extension of the trace formula to the case where there are (compact) leaves preserved by $\phi$, which are very relevant in Deninger's program. Examples of foliations with such foliated flows can be easily constructed by using foliation surgeries.

\subsection{Ingredients of the trace formula}\label{ss: general - ingredients}

Assume $\FF$ is transversely oriented for the sake of simplicity. Thus, by Frobenius theorem, $\FF$ is defined by a 1-form $\omega$ with $d\omega=\eta\wedge\omega$ ($T\FF=\ker\omega$). Except in trivial cases, the existence of leaves preserved by $\phi$ prevents $\FF$ from being Riemannian (it is impossible to choose $\eta=0$), yet $\FF$ has a precise description \cite{AlvKordyLeichtnam2022}. For instance, there is a finite number of preserved leaves, which are compact. Let $M^0$ denote the union of the leaves preserved by $\phi$, $M^1=M\setminus M^0$ and $\FF^1=\FF|_{M^1}$.

All versions of leafwise reduced cohomologies we will consider have an action $\phi^*=\{\phi^{t\,*}\}$ induced by $\phi$, which is invariant by leafwise homotopy equivalences. Thus, up to leafwise homotopies, we can assume $\phi^t=\id$ on $M^0$. Then, for every leaf $L\subset M^0$, there is some $\varkappa_L\in\R^\times$ such that, on the normal bundle $NL=T_LM/TL$, the normal tangent map is $\phi^t_*=e^{\varkappa_Lt}$. The numbers $\varkappa_L$ will be ingredients of the trace formula. Moreover $\FF^1$ becomes a transversely complete $\R$-Lie foliation with the restriction of $\overline Z$. So $\FF$ is a particular case of foliation almost without holonomy \cite{Hector1972c,Hector1978}.

Take a Riemannian metric $g$ on $M$ so that $\omega$ is the transverse volume form. The corresponding leafwise metric is denoted by $g_\FF$. We can suppose $\eta$ vanishes on $T\FF^\perp$, and therefore it can be considered as a leafwise form, and we have $d_\FF\eta=0$. Furthermore, on some tubular neighborhood $T\equiv(-\epsilon,\epsilon)\times M^0$ ($\epsilon>0$) of $M^0$ in $M$, we can suppose $\eta$ and $g_\FF$ are lifts of their restrictions to $M^0$, and the fibers of the projection $\varpi:T\to M^0$ are orthogonal to the leaves and agree with the orbits of $\phi$. Thus there are no closed orbits of $\phi$ in $T$. The projection $\rho:T\to(-\epsilon,\epsilon)$ is a defining function of $M^0$ on $T$ ($d\rho\ne0$ on $M^0=\rho^{-1}(0)$), which can be assumed to satisfy $d_\FF\rho=\rho\eta$ on $T$ and $\phi^{t*}\rho=e^{\varkappa_L t}\rho$ around every leaf $L\subset M^0$. We can choose any $\eta|_{M^0}$ in some fixed real cohomology class $\xi\in H^1(M^0)$ determined by $\FF$, and there is no restriction on the choice of $g|_{M^0}$.

For every closed orbit $c$ of $\phi$, let $\ell(c)$ denote its smallest positive period. The condition on $c$ to be simple means that $\id-\phi^{k\ell(c)}_*:T_p\FF\to T_p\FF$ is an isomorphism for any $p\in c$ and $k\in\Z^\times$, whose determinant is independent of $p$, and its sign denoted by $\epsilon_c(k)$. The integers $\ell(c)$ and $\epsilon_c(k)$ will be also ingredients of the trace formula.

Let $g^1$ be the bundle-like metric of $\FF^1$ such that it defines the same orthogonal complement $(T\FF^1)^\perp$ as $g$, its restriction to $T\FF^1$ is $g_\FF$, and $\overline Z|_{M^1}$ is of norm one with the induced Euclidean structure on $N\FF^1$. Then $\FF^1$ has bounded geometry with $g^1$ in the sense of \cite{Sanguiao2008,AlvKordyLeichtnam2014}. Let $\omega^1$ denote the transverse volume form of $\FF^1$ defined by $g^1$ and the transverse orientation given by $Z|_{M^1}$. The transverse density $|\omega^1|$ can be considered as an invariant transverse measure of $\FF^1$.

By cutting $M$ along $M^0$, we get a compact manifold with boundary $\bfM$ with a foliation $\bfFF$ tangent to $\partial\bfM$. This allows us to apply tools from b-calculus \cite{Melrose1993,Melrose1996}. For instance, $g^1$ and $\omega^1$ are restrictions to $M^1\equiv\mathring\bfM$ of a b-metric $\bfg_{\text{\rm b}}$ and a b-form $\bfomega_{\text{\rm b}}$ on $\bfM$, and therefore $|\omega^1|$ is the restriction of the b-density $|\bfomega_{\text{\rm b}}|$. 

We can suppose there is some boundary-defining function $\bfrho$ on $\bfM$ ($\bfrho\ge0$ and $d\bfrho\ne0$ on $\partial\bfM=\bfrho^{-1}(0)$) such that the lift $\bfeta$ of $\eta$ to $\bfM$ satisfies $d_{\bfFF}\bfrho=\bfrho\bfeta$ on $\mathring\bfM$, and $\bfrho$ is the lift of $|\rho|$ on a collar neighborhood $\bfT\equiv[0,\epsilon)\times\partial\bfM$ of $\partial\bfM$. The lift of $\phi$ to $\bfM$ is a foliated flow $\bfphi=\{\bfphi^t\}$ of $(\bfM,\bfFF)$.

We will use the b-integral $\smallbsnuint_{\bfM}$, depending on the choice of a trivialization $\bfnu$ of $N\partial\bfM$ satisfying $d\bfrho(\bfnu)=1$. We can apply $\smallbsnuint_{\bfM}$ to b-densities on $\bfM$; the usual integral of their restrictions to $\mathring\bfM$ may not be defined. Assume $\dim\FF$ is even, which is the relevant case in Deninger's program. Then the product of the leafwise Euler density $e(\bfFF)$ and $|\bfomega_b|$ is the restriction of a b-density on $\bfM$, obtaining a b-calculus version of the Connes' $|\bfomega_{\text{\rm b}}|$-Euler characteristic of $\bfFF$,
\[
\bchi_{|\bfomega_{\text{\rm b}}|}(\bfFF)
=\bfnuint_{\bfM}e(\bfFF)\,|\bfomega_{\text{\rm b}}|\;,
\]
which will be called the \emph{b-Connes-Euler characteristic} \index{b-Connes-Euler characteristic} of $\bfFF$ defined by $|\bfomega_{\text{\rm b}}|$ (or of $\FF^1$ defined by $|\omega^1|$). This number will be another ingredient of the trace formula, also denoted by $\bchi_{|\omega^1|}(\FF^1)$. The b-integral can be used to define the b-trace $\bTr$ of smoothing b-pseudodifferential operators on $\bfM$; these operators may not be of trace class. The corresponding concept of b-supertrace will be used, denoted by $\bStr$.

With this generality,~\eqref{heat op - FF Riem - M closed}--\eqref{leafwise Hodge iso - FF Riem - M closed} are not true for $C^\infty(M;\Lambda\FF)$. Using the space $C^{-\infty}(M;\Lambda\FF)$ of leafwise currents does not work either. Instead, we will use the topological complex of leafwise currents that are conormal and dual-conormal at $M^0$ \cite{KohnNirenberg1965,Hormander1971}, \cite[Section~18.2]{Hormander1985-III}, \cite[Chapters~4 and~6]{Melrose1996}, \cite{AlvKordyLeichtnam-conormal}.

\subsection{Conormal and dual-conormal leafwise currents}
\label{ss: general - conormal and dual-conormal leafwise currents}

We first recall the definitions and some properties of conormal and dual-conormal distributions at $M^0$. Let $\Diff(M,M^0)$ be the filtered algebra of differential operators on $C^\infty(M)$ generated by $C^\infty(M)$ and the vector fields on $M$ tangent to $M^0$, and let $H^s(M)$ be the Sobolev space of order $s\in\R$. A distribution $u\in C^{-\infty}(M)$ is said to be conormal at $M^0$ of Sobolev order $s$ if $\Diff(M,M^0)u\subset H^s(M)$. These distributions form a Fr\'echet space $I^{(s)}=I^{(s)}(M,M^0)$ endowed with the projective topology given by the maps $P:I^{(s)}\to H^s(M)$ ($P\in\Diff(M,M^0)$). The spaces $I^{(s)}$ form an inductive spectrum defining an LF-space $I=I(M,M^0)=\bigcup_sI^{(s)}$, with continuous inclusions $C^\infty(M)\subset I\subset C^{-\infty}(M)$. (All inclusions considered here are continuous.) See \cite{AlvKordyLeichtnam-conormal} for the properties of $I$ and of other related spaces.

All spaces of distributions considered here, and their properties, have straightforward extensions for distributional sections of vector bundles.  In particular, for the density bundle $\Omega=\Omega M$, we get the strong dual $I'(M,L)=I(M,L;\Omega)'$, simply denoted by $I'$. The elements of $I'$ are called dual-conormal distributions; in fact, $C^\infty(M)\subset I'\subset C^{-\infty}(M)$ with $I\cap I'=C^\infty(M)$

Let also $K=K(M,M^0)\subset I$ be the closed subspace consisting of elements supported in $M^0$. On the other hand, via the lift to $\bfM$, we get another space, $J=J(M,M^0)$, which is isomorphic to the space of extendable distributions on $\bfM$ conormal at the boundary \cite[Chapter~4]{Melrose1996}. There are canonical injections $C^\infty(M)\subset J\subset C^\infty(M^1)$. Let $K'=K'(M,L)$ and $J'=J'(M,L)$ be defined like $I'$. We get $J'\subset C^{-\infty}(M)$. Moreover there are short exact sequences in the category of continuous linear maps between locally convex spaces \cite[Chapter~2]{Wengenroth2003},
\begin{gather}
0\to K \xrightarrow{\iota} I \xrightarrow{R} J\to0\;,\label{intro - conormal exact sequence}\\
0\leftarrow K' \xleftarrow{R'} I' \xleftarrow{\iota'} J'\leftarrow0\;,
\label{intro - dual-conormal exact sequence}
\end{gather}
where $\iota$ is the inclusion map and $R$ is defined by restriction to $M^1$, and~\eqref{intro - dual-conormal exact sequence} is the transpose of the version of~\eqref{intro - conormal exact sequence} with $\Omega$ ($R'=\iota^\trans$ and $\iota'=R^\trans$). These sequences are relevant because $K$, $J$, $K'$ and $J'$ have better descriptions than $I$ and $I'$. So~\eqref{intro - conormal exact sequence} and~\eqref{intro - dual-conormal exact sequence} will play an important role.

Using the vector bundle $\Lambda\FF$, we get the spaces of conormal and dual-conormal leafwise currents at $M^0$, $I(\FF)=I(M,M^0;\Lambda\FF)$ and $I'(\FF)=I'(M,M^0;\Lambda\FF)$, 
as well as the spaces $K(\FF)$, $J(\FF)$, $K'(\FF)$ and $J'(\FF)$, with a similar notation. All of them are topological complexes with $d_\FF$, and have $\R$-actions $\phi^*=\{\phi^{t\,*}\}$ induced by $\phi$, compatible with $d_\FF$. They give rise to the conormal and dual-conormal leafwise reduced cohomologies, $\bar H^\bullet I(\FF)$ and $\bar H^\bullet I'(\FF)$, as well as the reduced cohomologies $\bar H^\bullet K(\FF)$, $\bar H^\bullet J(\FF)$, $\bar H^\bullet K'(\FF)$ and $\bar H^\bullet J'(\FF)$. All of them with induced $\R$-actions $\phi^*=\{\phi^{t\,*}\}$. The bars are omitted from the notation if the cohomologies are not reduced. There are versions of~\eqref{intro - conormal exact sequence} and~\eqref{intro - dual-conormal exact sequence} for the spaces $K(\FF)$, $I(\FF)$, $J(\FF)$, $K'(\FF)$, $I'(\FF)$ and $J'(\FF)$, where $\iota$, $R$, $\iota'$ and $R'$ are cochain maps. The induced maps in cohomology (resp., reduced cohomology) are denoted by $\iota_*$, $R_*$, $\iota'_*$ and $R'_*$ (resp., $\bar\iota_*$, $\bar R_*$, $\bar\iota'_*$ and $\bar R'_*$).

\subsection{Witten's perturbed complexes}\label{ss: intro - Witten}

To describe the reduced cohomologies of \Cref{ss: general - conormal and dual-conormal leafwise currents} with the $\R$-actions $\phi^*$, we will use the Witten's perturbation $d_\mu=d+\mu{\eta\wedge}$ on $C^{\pm\infty}(L;\Lambda)$ ($\Lambda=\Lambda L=\bigwedge T^*L\otimes\C)$, for $\mu\in\R$ and every leaf $L\subset M^0$. Its cohomology is denoted by $H_\mu^\bullet(L)$. The corresponding perturbed codifferential and Laplace operators are denoted by $\delta_\mu$ and $\Delta_\mu$.

\subsection{Leafwise Witten's perturbed complexes}\label{ss: intro - leafwise Witten}

Recall that $d_{\bfFF}\bfrho=\bfrho\bfeta$ on $\mathring\bfM$ and $\partial\bfM=\bfrho^{-1}(0)$. We will also use the leafwise Witten's perturbation
\[
d_{\bfFF,\mu}=d_{\bfFF}+\mu{\bfeta\wedge}=\bfrho^{-\mu}d_{\bfFF}\bfrho^\mu
\]
on the Sobolev spaces $H^{\pm\infty}(\mathring\bfM;\Lambda\bfFF)\equiv H^{\pm\infty}(M^1;\Lambda\FF^1)$ defined with $\bfg_{\text{\rm b}}\equiv g^1$. Their reduced cohomologies are denoted by $\bar H^\bullet_\mu H^{\pm\infty}(\mathring\bfFF)$ ($\mathring\bfFF=\bfFF|_{\mathring\bfM}\equiv\FF^1$). They satisfy obvious versions of~\eqref{heat op - FF Riem - M closed}--\eqref{leafwise Hodge iso - FF Riem - M closed}. We have the isomorphisms
\begin{equation}\label{intro - multiplication iso}
\bfrho^\mu:(H^{\pm\infty}(\mathring\bfM;\Lambda\bfFF),d_{\bfFF,\mu})
\xrightarrow{\cong}(\bfrho^\mu H^{\pm\infty}(\mathring\bfM;\Lambda\bfFF),d_{\bfFF})\;.
\end{equation}
Let also $\bfphi^{t\,*}_\mu=\bfrho^{-\mu}\bfphi^{t\,*}\bfrho^\mu$ on $H^{\pm\infty}(\mathring\bfM;\Lambda\bfFF)$, which induces an endomorphism $\bfphi^{t\,*}_\mu$ of $\bar H^\bullet_\mu H^{\pm\infty}(\mathring\bfFF)$. For $\mu<\mu'$, the inclusions
\[
\bfrho^{\mu'} H^{\pm\infty}(\mathring\bfM;\Lambda\bfFF)
\subset\bfrho^\mu H^{\pm\infty}(\mathring\bfM;\Lambda\bfFF)
\]
correspond via~\eqref{intro - multiplication iso} to the maps
\begin{equation}\label{bfrho^mu'-mu}
\bfrho^{\mu'-\mu}:H^{\pm\infty}(\mathring\bfM;\Lambda\bfFF)
\to H^{\pm\infty}(\mathring\bfM;\Lambda\bfFF)\;.
\end{equation}
The corresponding perturbed leafwise codifferential and Laplace operators are denoted by $\delta_{\bfFF,\mu}$ and $\Delta_{\bfFF,\mu}$. Finally, for $f\in\Cinftyc(\R)$, $\mu\in\R$ and $0<u\le\infty$, we will use the operator
\[
\bfP_{\mu,u,f}=\int_\R\bfphi^{t*}_\mu\, e^{-u\Delta_{\bfFF},\mu}\,f(t)\,dt
\]
on $H^{\pm\infty}(\mathring\bfM;\Lambda\bfFF)$, which is a version of~\eqref{P_u f - FF Riem - M closed}.

\subsection{Main results leading to the trace formula}\label{ss: main results}

Concerning the above reduced cohomologies, the following are our main achievements.

\begin{thm}\label{t: intro - H^bullet K(FF)} 
We have\footnote{With some abuse of notation, we write $\bigoplus_mA=\bigoplus_mA_m$ and $\prod_mA=\prod_mA_m$ if $A_m=A$ for all $m$.}
\begin{gather*}
K(\FF)\equiv\bigoplus_{L,k}C^\infty(L;\Lambda)\;,\quad
d_\FF\equiv\bigoplus_{L,k}d_{-k-1}\;,\quad
\phi^{t\,*}\equiv\bigoplus_{L,k}e^{-(k+1)\varkappa_Lt}\;,\\
H^\bullet K(\FF)\equiv\bar H^\bullet K(\FF)\equiv\bigoplus_{L,k}H_{-k-1}^\bullet(L)\;,\quad
\phi^{t\,*}\equiv\bigoplus_{L,k}e^{-(k+1)\varkappa_Lt}\;,
\end{gather*}
where $L$ runs over the set of leaves contained in $M^0$ and $k$ runs over $\N_0$.
\end{thm}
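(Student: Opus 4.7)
The strategy is to make the identification explicit by Taylor expansion normal to $M^0$ in the defining function $\rho$, and then simply compute $d_\FF$ and $\phi^{t\,*}$ in this description.

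First, by the classical structure theorem for distributions supported on a codimension-one submanifold, every element of $K=K(M,M^0)$ is a \emph{finite} sum $\sum_k u_k\,\delta^{(k)}(\rho)$ with $u_k\in C^\infty(M^0)$; finiteness comes from $K=\bigcup_s K^{(s)}$ together with the fact that $\delta^{(k)}(\rho)$ has Sobolev order tending to $-\infty$ as $k\to\infty$. Since the fibers of $\varpi\colon T\to M^0$ are transverse to the leaves of $\FF$, $\varpi$ restricts to a local diffeomorphism on each leaf of $\FF|_T$ near $M^0$, inducing a canonical bundle isomorphism $\Lambda\FF|_T\cong\varpi^*\Lambda M^0$. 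Hence every $u\in K(\FF)$ can be written uniquely as a finite sum $\sum_k\varpi^*\alpha_k\cdot\delta^{(k)}(\rho)$, with $\alpha_k\in C^\infty(M^0;\Lambda M^0)=\bigoplus_{L\subset M^0}C^\infty(L;\Lambda)$, giving the asserted identification of vector spaces.

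Computing $d_\FF$ on such an element is then direct: by naturality, $d_\FF(\varpi^*\alpha)=\varpi^*(d\alpha)$, and combining the Leibniz rule with $d_\FF\rho=\rho\eta$ (where $\eta=\varpi^*(\eta|_{M^0})$ on $T$) and the distributional identity $\rho\,\delta^{(k+1)}(\rho)=-(k+1)\,\delta^{(k)}(\rho)$ gives
\[
d_\FF\bigl(\varpi^*\alpha\cdot\delta^{(k)}(\rho)\bigr)=\varpi^*\bigl(d\alpha-(k+1)\,\eta\wedge\alpha\bigr)\cdot\delta^{(k)}(\rho)=\varpi^*(d_{-k-1}\alpha)\cdot\delta^{(k)}(\rho).
\]
For the $\R$-action, the fact that the orbits of $\phi$ in $T$ coincide with the fibers of $\varpi$ gives $\varpi\circ\phi^t=\varpi$, whence $\phi^{t\,*}\varpi^*\alpha=\varpi^*\alpha$; combined with $\phi^{t\,*}\rho=e^{\varkappa_L t}\rho$ near each $L$ and the scaling $\delta^{(k)}(a\rho)=a^{-k-1}\delta^{(k)}(\rho)$ for $a>0$, this produces $\phi^{t\,*}\equiv e^{-(k+1)\varkappa_L t}\,\id$ on the $(L,k)$-component. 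Both operations therefore respect the direct sum decomposition.

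The cohomology assertion is then immediate summand-by-summand: each complex $(C^\infty(L;\Lambda),d_{-k-1})$ is the Witten-perturbed de~Rham complex on a \emph{compact} manifold $L$, whose cohomology $H_{-k-1}^\bullet(L)$ is finite-dimensional by elliptic Hodge theory (in particular Hausdorff), so $H^\bullet K(\FF)\equiv\bar H^\bullet K(\FF)$ and the $\R$-action on the $(L,k)$-summand is still multiplication by $e^{-(k+1)\varkappa_L t}$. The main obstacle I foresee lies in step~1: verifying that the bijection $K(\FF)\to\bigoplus_{L,k}C^\infty(L;\Lambda)$ is a topological isomorphism in the LF-space sense, i.e.\ that the inductive limit structure $K=\bigcup_sK^{(s)}$ matches the locally convex direct sum on the right, and that coefficient extraction and its inverse are continuous. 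This is precisely where the detailed structural results for conormal distribution spaces developed in \cite{AlvKordyLeichtnam-conormal} should be invoked.
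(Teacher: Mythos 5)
Your argument is correct and matches the paper's own proof in substance: the paper also realizes $K(\FF)$ via the normal Taylor expansion in $\rho$ (using Dirac sections $\partial_\rho^k\delta_L^\alpha$ rather than your $\varpi^*\alpha\cdot\delta^{(k)}(\rho)$, which differ only by a choice of trivialization of $NL$), derives the same index shift from $d_\FF\rho=\rho\eta$ together with $\rho\,\delta^{(k+1)}(\rho)=-(k+1)\delta^{(k)}(\rho)$ — the paper phrases this as $d_\FF\partial_\rho=\partial_\rho(d_\FF-\eta\wedge)$ and a duality computation for the base case, whereas you apply the Leibniz/chain rule directly — and gets the $\phi^{t*}$ scaling from $\phi^{t*}\rho=e^{\varkappa_Lt}\rho$. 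The one point worth making explicit, which the paper handles in its Proposition in Section 4.8 via flow leafwise homotopy invariance, is that the nice normal form you use ($\varpi\circ\phi^t=\varpi$ and $\phi^{t*}\rho=e^{\varkappa_Lt}\rho$) is achieved only after replacing $\phi$ by the auxiliary flow $\xi$; the TVS-isomorphism concern you flag at the end is indeed delegated to \cite{AlvKordyLeichtnam-conormal}, exactly as you anticipate.
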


The first identity of \Cref{t: intro - H^bullet K(FF)} follows by considering the partial derivatives $\partial_{\rho}^k$ ($k\in\N_0$) of leafwise currents of $(M,\FF)$ that are of Dirac type at the leaves $L\subset M^0$. It is a consequence of the properties of $\rho$, $\eta$ and $\phi^*$ on $T$.

Now consider $\bfrho$, $\bfeta$ and $\bfphi$ on $(\bfM,\bfFF)$.

\begin{thm}\label{t: intro - bar H^bullet J(FF)} 
Using~\eqref{intro - multiplication iso} with $H^\infty(\mathring\bfM;\Lambda\bfFF)$, we get
\begin{gather*}
J(\FF)=\bigcup_\mu\bfrho^\mu H^\infty(\mathring\bfM;\Lambda\bfFF)
\equiv\varinjlim H^\infty(\mathring\bfM;\Lambda\bfFF)\;,\\
d_{\bfFF}\equiv\varinjlim d_{\bfFF,\mu}\;,\quad
\phi^{t\,*}\equiv\varinjlim\bfphi^{t\,*}_\mu\;,
\end{gather*}
where the inductive limits are defined with the maps~\eqref{bfrho^mu'-mu} as $\mu\downarrow-\infty$. Moreover, there are linear identities, 
\[
\bar H^\bullet J(\FF)\equiv\varinjlim\bar H_\mu^\bullet H^\infty(\mathring\bfFF)\;,\quad
\phi^{t\,*}\equiv\varinjlim\bfphi^{t\,*}_\mu\;.
\]
\end{thm}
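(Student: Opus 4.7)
The plan is to prove this in two stages: first the topological/algebraic identification of $J(\FF)$ as an inductive limit, then the cohomological consequence.

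First I would establish $J(\FF)=\bigcup_\mu\bfrho^\mu H^\infty(\mathring\bfM;\Lambda\bfFF)$ using the small b-calculus description of extendable leafwise currents conormal at the boundary: such a current on $\mathring\bfM$ is precisely of the form $\bfrho^\mu u$ with $u\in H^\infty(\mathring\bfM;\Lambda\bfFF)$ (defined via $\bfg_{\text{\rm b}}\equiv g^1$) for some $\mu\in\R$, the leafwise analogue of Melrose's characterization developed in \cite{AlvKordyLeichtnam-conormal} and \cite[Chapter~4]{Melrose1996}. The multiplication map $\bfrho^\mu:H^\infty\to\bfrho^\mu H^\infty$ is a Fr\'echet isomorphism, and for $\mu>\mu'$ the inclusion $\bfrho^\mu H^\infty\subset\bfrho^{\mu'}H^\infty$ corresponds exactly to~\eqref{bfrho^mu'-mu}, so $J(\FF)$ becomes the inductive limit along these connecting maps as $\mu\downarrow-\infty$.

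Next I would verify compatibility with the differential and the flow. By~\eqref{intro - multiplication iso}, $\bfrho^\mu$ intertwines $d_{\bfFF,\mu}$ on $H^\infty$ with $d_{\bfFF}$ on $\bfrho^\mu H^\infty$, and using $d_\FF\bfrho^\nu=\bfrho^\nu d_{\bfFF,\nu}$ (which follows from $d_{\bfFF}\bfrho=\bfrho\bfeta$) one computes
\[
d_{\bfFF,\mu}\circ\bfrho^{\mu'-\mu}=\bfrho^{-\mu}\circ d_{\bfFF}\circ\bfrho^{\mu'}=\bfrho^{\mu'-\mu}\circ d_{\bfFF,\mu'}\;,
\]
so~\eqref{bfrho^mu'-mu} is a cochain map and the collection forms an inductive system of topological complexes. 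For the $\R$-action, the normal-form property $\bfphi^{t\,*}\bfrho=e^{\varkappa_L t}\bfrho$ near each boundary component over a preserved leaf $L$ gives $\bfrho^{-\mu}\bfphi^{t\,*}\bfrho^\mu=\bfphi^{t\,*}_\mu$ on $H^\infty$, so $\phi^{t\,*}$ on $\bfrho^\mu H^\infty$ corresponds to $\bfphi^{t\,*}_\mu$; intertwining with the connecting maps is analogous.

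Passing to cohomology, the unreduced identity $H^\bullet J(\FF)\equiv\varinjlim H^\bullet_\mu H^\infty(\mathring\bfFF)$ is automatic since cohomology commutes with filtered colimits of complexes, and this is $\R$-equivariant. For the reduced version the canonical linear map
\[
\varinjlim\bar H^\bullet_\mu H^\infty(\mathring\bfFF)\to\bar H^\bullet J(\FF)
\]
is well defined, and surjectivity holds because every cocycle in $J(\FF)$ already lies in some $\bfrho^\mu H^\infty$, yielding via $\bfrho^{-\mu}$ a cocycle in $H^\infty$ for $d_{\bfFF,\mu}$.

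The main obstacle will be injectivity: if $u\in H^\infty$ is a cocycle for $d_{\bfFF,\mu}$ and $\bfrho^\mu u$ lies in the closure of $\im d_{\bfFF}$ inside $J(\FF)$, I must show $\bfrho^{\mu'-\mu}u$ lies in the closure of $\im d_{\bfFF,\mu'}$ inside $H^\infty$ for some $\mu'<\mu$. Closures do not commute with inductive limits in general, so this cannot be purely formal. I would exploit the leafwise Hodge decomposition on $H^\infty(\mathring\bfM;\Lambda\bfFF)$ for the perturbed Laplacian $\Delta_{\bfFF,\mu}$: since $\mathring\bfFF\equiv\FF^1$ is Riemannian of bounded geometry with $\bfg_{\text{\rm b}}\equiv g^1$, the analogues of~\eqref{heat op - FF Riem - M closed}--\eqref{leafwise Hodge iso - FF Riem - M closed} apply, identifying $\bar H^\bullet_\mu H^\infty(\mathring\bfFF)$ with $\ker\Delta_{\bfFF,\mu}\cap H^\infty$. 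Representing classes by harmonic sections then reduces the compatibility of closures along the inductive system to how the harmonic projections behave under multiplication by $\bfrho^{\mu'-\mu}$, which is where the Riemannian-bounded-geometry input of $\FF^1$ does the real work.
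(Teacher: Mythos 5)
Your structural identification of $J(\FF)$, the cochain compatibility of the connecting maps~\eqref{bfrho^mu'-mu} via $d_{\bfFF}\bfrho = \bfrho\bfeta$, and the observation that surjectivity of the canonical map $\varinjlim\bar H^\bullet_\mu H^\infty(\mathring\bfFF)\to\bar H^\bullet J(\FF)$ is immediate (every cocycle sits in some step) all match the paper's line of reasoning, which goes through the identifications~\eqref{J^m(FF) cong ...}, \eqref{J(FF) cong ...} and the twisting isomorphisms~\eqref{bfrho^-m-frac12 - conormal}.

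Where you diverge, and where a genuine gap opens, is in the injectivity argument. You correctly recognize that closures need not commute with inductive limits and that this step cannot be purely formal. But the proposed remedy --- pass to harmonic representatives via the leafwise Hodge decomposition and analyze how the harmonic projectors behave under multiplication by $\bfrho^{\mu'-\mu}$ --- does not address the actual obstruction. The harmonic projectors $\Pi_{\mathring\bfFF,\mu}$ do not intertwine with the connecting maps: $\bfrho^{\mu'-\mu}$ maps $\ker\Delta_{\mathring\bfFF,\mu}$ into $H^\infty$, not into $\ker\Delta_{\mathring\bfFF,\mu'}$, and the induced map on reduced cohomology is $\Pi_{\mu'}\circ\bfrho^{\mu'-\mu}$ restricted to harmonics, which need not be injective. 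So even after representing a class by its harmonic form, you are still left with exactly the question you started with: if a harmonic $\alpha\in\ker\Delta_\mu$ has $\bfrho^\mu\alpha\in\overline{\im d_{\bfFF}}$ inside the whole union $J(\FF)$, why must $\bfrho^{\mu'-\mu}\alpha$ lie in $\overline{\im d_{\bfFF,\mu'}}$ inside a single step? ``Riemannian bounded geometry'' alone gives you the Hodge decomposition on each step, but says nothing about how a convergent net of coboundaries in the union is detected inside a step.

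What the paper actually uses here is a structural property of the LF-space $J(\FF)\cong\bigcup_m J^m(\FF)$: it is an \emph{acyclic} LF-space (recalled in \Cref{ss: J(M L)}, drawing on the auxiliary conormal paper), hence compactly/sequentially retractive. This means that a convergent sequence (together with its limit) in $J(\FF)$ is already contained and convergent in some step $J^m(\FF)$, with the same induced topology. That is precisely what lets one pass from $\bfrho^\mu\alpha=\lim_l d_\FF\varphi_l$ in $J(\FF)$ (with $\varphi_l$ taken smooth and compactly supported by density) to the same convergence inside a fixed step --- this is the argument carried out for $I(\FF)$ in \Cref{ss: injectivity of hat jmath_*} and then transported to $J(\FF)$, and after conjugating by $\bfrho^{-m-1/2}$ it gives injectivity. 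The Hodge isomorphism~\eqref{bar H^bullet J^m(FF) cong ker ...} is used in the paper only to describe $\bar H^\bullet J^m(\FF)$ as a Hilbertian space, not to prove that the colimit computes $\bar H^\bullet J(\FF)$. So the missing ingredient in your proposal is the acyclicity/sequential retractivity of $J(\FF)$, and without it your injectivity step does not go through.
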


\begin{thm}\label{t: intro - reduced conormal cohomology exact sequence}
We have a short exact sequence 
\[
0\to H^\bullet K(\FF) \xrightarrow{\bar\iota_*} \bar H^\bullet I(\FF) \xrightarrow{\bar R_*} \bar H^\bullet J(\FF)\to0\;.
\]
\end{thm}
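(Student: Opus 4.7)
The starting point is the short exact sequence of topological complexes
\[
0 \to K(\FF) \xrightarrow{\iota} I(\FF) \xrightarrow{R} J(\FF) \to 0,
\]
the leafwise-form analogue of~\eqref{intro - conormal exact sequence}, whose arrows commute with $d_\FF$. My plan is to first derive a short exact sequence of ordinary cohomologies via the snake lemma and then pass to reduced cohomology, using the explicit descriptions provided by \Cref{t: intro - H^bullet K(FF)} and \Cref{t: intro - bar H^bullet J(FF)}.

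First, I would verify that the connecting homomorphism $\partial\colon H^\bullet J(\FF) \to H^{\bullet+1} K(\FF)$ vanishes. Given a cocycle $c \in Z^\bullet J(\FF)$, I would pick a lift $\tilde c \in I^\bullet(\FF)$ using the Taylor-like splitting of conormal leafwise forms on the tubular neighborhood $T \equiv (-\epsilon,\epsilon)\times M^0$: a Dirac-type expansion $\sum_k \alpha_k\otimes\delta^{(k)}(\rho)$ accounting for $K(\FF)$, plus an extendable part accounting for $J(\FF)$. The error $d_\FF\tilde c$ lies in $K^{\bullet+1}(\FF)$, and the identity $d_\FF\rho=\rho\eta$ on $T$ converts the leafwise derivative of $\alpha_k\otimes\delta^{(k)}(\rho)$ precisely into $d_{-k-1}\alpha_k\otimes\delta^{(k)}(\rho)$. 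Hence the class of this error under the decomposition of \Cref{t: intro - H^bullet K(FF)} can be killed by a Hodge correction for the Witten-perturbed Laplacian $\Delta_{-k-1}$ on each preserved leaf $L$, yielding the desired cocycle lift of $c$.

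Combined with the snake lemma, the vanishing of $\partial$ yields a short exact sequence $0 \to H^\bullet K(\FF) \to H^\bullet I(\FF) \to H^\bullet J(\FF) \to 0$ in ordinary cohomology. Since $H^\bullet K(\FF)=\bar H^\bullet K(\FF)$ by \Cref{t: intro - H^bullet K(FF)}, the left-hand term is already reduced. For surjectivity of $\bar R_*$, I would use \Cref{t: intro - bar H^bullet J(FF)} together with the leafwise Hodge decomposition for the Witten-perturbed $\Delta_{\bfFF,\mu}$ to represent any class in $\bar H^\bullet J(\FF)$ by a genuine $d_\FF$-cocycle in $J^\bullet(\FF)$, and then apply the lifting procedure of the previous paragraph. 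Exactness at $\bar H^\bullet I(\FF)$ would then follow from $\bar R_*\circ\bar\iota_*=0$ together with a diagram chase using a continuous right inverse of $R$ modulo $\overline{d_\FF I^{\bullet-1}(\FF)}$.

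The main obstacle I anticipate is the injectivity of $\bar\iota_*$: one must show that any cocycle $a \in Z^\bullet K(\FF)$ with $\iota(a)\in\overline{d_\FF I^{\bullet-1}(\FF)}$ already lies in $d_\FF K^{\bullet-1}(\FF)$. The closure on the $I$-side is a priori larger than anything intrinsic to $K(\FF)$, so this step is not formal. My strategy would be to invoke the closed-range property of $d_\FF$ on $K(\FF)$ (encoded in $H^\bullet K(\FF)=\bar H^\bullet K(\FF)$) together with a filtration-by-order-of-conormality argument: decompose any approximating sequence $d_\FF b_n\to\iota(a)$ in $I^\bullet(\FF)$ into its Dirac-expansion and extendable components using the product structure on $T$, control each component separately, and show that the extendable part does not obstruct recognizing $a$ as a genuine $K$-coboundary.
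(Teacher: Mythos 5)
Your overall strategy --- exploiting the topological short exact sequence of $d_\FF$-complexes, building cochain-compatible partial extensions on the tubular neighborhood $T\equiv(-\epsilon,\epsilon)_\rho\times M^0$ using $d_\FF\rho=\rho\eta$, and using that $H^\bullet K(\FF)$ is already Hausdorff --- is the same as the paper's, but there is a genuine gap in your argument for the vanishing of the connecting map $\partial\colon H^\bullet J(\FF)\to H^{\bullet+1}K(\FF)$. Once you pick a naive lift $\tilde c\in I(\FF)$ of a cocycle $c\in ZJ(\FF)$, the error $d_\FF\tilde c$ is a cocycle in $K^{\bullet+1}(\FF)$, but there is no a priori reason its class in $H^{\bullet+1}K(\FF)$ is trivial, and ``killing it by a Hodge correction'' is exactly what cannot be done if the class is nonzero: the Hodge decomposition of \Cref{t: intro - H^bullet K(FF)} lets you \emph{test} whether a cocycle is a coboundary, it cannot make one so. What the paper actually proves (\Cref{p: E_m T d_FF = d_FF E_m} and \Cref{c: E_m d_FF = d_FF E_m}, via the suspension-model computations of \Cref{s: computations suspension foln}) is that the partial extension map $E_m$ can be chosen so as to literally commute with $d_\FF$, so that $\tilde c:=E_mc$ is a cocycle on the nose and no correction is ever needed. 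That commutativity is the crux of the whole theorem, and it does not follow formally from the Taylor picture on $T$.

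The second ingredient you leave implicit is compact retractivity. All approximation statements such as $d_\FF b_n\to\iota(a)$ take place a priori in the LF-space $I(\FF)$, and the cochain projection $F_m:=1-E_mR$ is only defined and continuous between fixed Fr\'echet steps $I^{(s)}(\FF)\to K^{(s')}(\FF)$. To apply it to an approximating sequence you must first know that the sequence and its limit are trapped in a single step with the same topology --- this is exactly what compact retractivity of the Sobolev-order spectrum gives (\Cref{ss: conormal - Sobolev order - compact}), and the paper invokes it at the start of both \Cref{ss: ker bar R_* = im bar iota_*} and \Cref{ss: injectivity of bar iota_*}. Once you have $E_m$ commuting with $d_\FF$ and $F_m$ restricting to the identity on $K^{(s)}(\FF)$ (as in~\eqref{F_m iota_s = j_s s'}), your ``filtration-by-order-of-conormality'' step for the injectivity of $\bar\iota_*$ collapses to one line: $a=F_ma=\lim_nd_\FF F_mb_n\in\bar BK(\FF)=BK(\FF)$ by~\eqref{HK(FF) equiv bar HK(FF)}. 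The same two operators also settle surjectivity of $\bar R_*$ and exactness at $\bar H^\bullet I(\FF)$, with no further Hodge input on the $K$-side.
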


\begin{thm}\label{t: intro - H^bullet K'(FF)} 
Using $L$ and $k$ like in \Cref{t: intro - H^bullet K(FF)}, we have
\begin{gather*}
K'(\FF)\equiv\prod_{L,k}C^{-\infty}(L;\Lambda)\;,\quad
d_\FF\equiv\prod_{L,k}d_k\;,\quad
\phi^{t\,*}\equiv\prod_{L,k}e^{k\varkappa_Lt}\;,\\
H^\bullet K'(\FF)\equiv\bar H^\bullet K'(\FF)\equiv\prod_{L,k}H_k^\bullet(L)\;,\quad
\phi^{t\,*}\equiv\prod_k e^{k\varkappa_Lt}\;.
\end{gather*}
\end{thm}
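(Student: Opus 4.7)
The plan is to establish \Cref{t: intro - H^bullet K'(FF)} by a Taylor expansion argument at $M^0$, in direct parallel with (and topologically dual to) \Cref{t: intro - H^bullet K(FF)}. Working in the tubular neighborhood $T\equiv(-\epsilon,\epsilon)\times M^0$, the defining function $\rho$ satisfies $d_\FF\rho=\rho\eta$ and $\phi^{t*}\rho=e^{\varkappa_Lt}\rho$ near every preserved leaf $L$, and we may assume $\phi^t=\id$ on each $L\subset M^0$ after the normalization in \Cref{ss: general - ingredients}. The key observation is that $K'(\FF)$, being the cokernel of $\iota'\colon J'(\FF)\to I'(\FF)$ in~\eqref{intro - dual-conormal exact sequence}, depends only on the behavior of a dual-conormal current at $M^0$, so the structure theorem should amount to extracting the full Taylor jet at $M^0$.

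First, I would show that every $u\in I'(\FF)$ admits a formal Taylor expansion at $M^0$ of the shape
\[
u\sim\sum_{L,k}\rho^k\otimes v_{L,k}\;,\quad v_{L,k}\in C^{-\infty}(L;\Lambda)\;,
\]
and that $\iota'(J'(\FF))$ consists precisely of those $u$ whose entire jet vanishes; this gives the identification $K'(\FF)\equiv\prod_{L,k}C^{-\infty}(L;\Lambda)$, with a direct product rather than a sum because jets are unrestricted in $k$ and because the strong dual of the LF-sum of \Cref{t: intro - H^bullet K(FF)} is a product. The actions of $d_\FF$ and $\phi^{t*}$ then follow by direct computation. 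Using the Leibniz rule together with $d_\FF\rho=\rho\eta$,
\[
d_\FF(\rho^k\otimes v)=k\rho^{k-1}\,d_\FF\rho\wedge v+\rho^k\otimes dv=\rho^k\otimes(d+k\eta\wedge)v=\rho^k\otimes d_k v\;,
\]
giving $d_\FF\equiv\prod_{L,k}d_k$; and using $\phi^{t*}\rho^k=e^{k\varkappa_Lt}\rho^k$ with $\phi^t|_L=\id$,
\[
\phi^{t*}(\rho^k\otimes v)=e^{k\varkappa_Lt}\,\rho^k\otimes v\;,
\]
giving $\phi^{t*}\equiv\prod_{L,k}e^{k\varkappa_Lt}$. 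The shift from exponent $-(k+1)$ in \Cref{t: intro - H^bullet K(FF)} to $k$ here is consistent with duality, since the pairing of $\partial_\rho^k\delta(\rho)$ with $\rho^k$ involves an extra Jacobian factor $e^{\varkappa_Lt}$ coming from the transformation of $d\rho$.

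For the cohomological statement, since each preserved leaf $L$ is a closed manifold, de~Rham's theorem for currents identifies $H^\bullet(C^{-\infty}(L;\Lambda),d_k)$ with the smooth cohomology $H_k^\bullet(L)$, which is finite-dimensional and hence Hausdorff; so $H_k^\bullet(L)\equiv\bar H_k^\bullet(L)$. Taking cohomology commutes with arbitrary products of complexes whose factors have Hausdorff finite-dimensional cohomology, and the reduced cohomology of such a product agrees with the product, yielding $H^\bullet K'(\FF)\equiv\bar H^\bullet K'(\FF)\equiv\prod_{L,k}H_k^\bullet(L)$ together with the stated action. I expect the main obstacle to be the first step: proving rigorously that the $\infty$-jet map $I'(\FF)\to\prod_{L,k}C^{-\infty}(L;\Lambda)$ is a well-defined continuous surjection with kernel exactly $\iota'(J'(\FF))$. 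This is a Borel-type statement for dual-conormal leafwise currents on $(M,\FF)$ transverse to $M^0$, and matching the topologies carefully (strong dual of an LF-space versus Fr\'echet product) will require the detailed structure theory of $I'$ and $J'$ developed elsewhere in the memoir.
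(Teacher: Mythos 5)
Your local computations are correct, and the structural intuition (that $R'u$ extracts the transverse $\infty$-jet of $u$ at $M^0$) is the right picture. But the route you propose inverts the logic of the paper and, in doing so, creates for yourself a genuine obstacle that the memoir's argument never encounters.

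The paper's proof is pure duality: $K'(\FF)$ is \emph{defined} as $K(\FF;\Omega M)'$, so once the $\Omega M$-coefficient (equivalently, $\Omega N\FF$-coefficient) version of \Cref{t: intro - H^bullet K(FF)} is in place, one has
\[
K(\FF;\Omega N\FF)\equiv\bigoplus_{L,k}C^\infty(L;\Lambda\otimes\Omega^{-k}NL)\;,\qquad
d_\FF\equiv\bigoplus_{L,k}d_{-k}\;,
\]
and the strong dual of a locally convex direct sum of Fr\'echet spaces is the topological product of the duals, so $K'(\FF)\equiv\prod_{L,k}C^{-\infty}(L;\Lambda)$ with $d_\FF\equiv\prod_{L,k}d_k$ drops out with no further analytic input (this is \Cref{c: K'(Lambda FF) equiv prod_k C^-infty(M^0 Lambda)}; the shift from exponent $-k-1$ to $k$ is exactly the bookkeeping of the extra $\Omega M$ factor). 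The $\phi^{t*}$-action is then the transpose of the already-computed action on $K(\FF;\Omega M)$, using the homotopy invariance to replace $\phi$ by $\xi$ (\Cref{p: phi^t* equiv prod_L k e^k varkappa_L t}). You actually notice this dual structure in passing (``because the strong dual of the LF-sum \ldots is a product''), but you use it only to justify the appearance of a product rather than as the engine of the whole proof.

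By contrast, your proposed first step---constructing a well-defined continuous surjective $\infty$-jet map $I'(\FF)\to\prod_{L,k}C^{-\infty}(L;\Lambda)$ with kernel $\iota'(J'(\FF))$---is genuinely problematic as stated, not merely laborious. For a general $u\in I'(\FF)$ the literal transverse Taylor coefficients $\partial_\rho^k u|_{M^0}$ do not make sense (dual-conormal currents need not be differentiable transversally near $M^0$); what is defined is the pairing of $u$ against the Dirac-type elements $\partial_\rho^k\delta_{M^0}^\alpha$ of $K(\FF;\Omega M)$, and once you set this up rigorously you have reconstructed the duality argument. Moreover surjectivity of a jet map onto a full product would be a Borel-type lifting theorem for dual-conormal currents, which is neither needed nor proved anywhere in the paper: surjectivity of $R'$ comes for free from the exactness of the dual-conormal sequence. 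You correctly flag the jet-map step as the main obstacle; the point is that the intended proof does not have this obstacle because it never tries to describe $I'(\FF)$ directly, only its dual quotient $K'(\FF)=K(\FF;\Omega M)'$. The cohomological conclusion and the product-commutes-with-cohomology argument at the end are fine.
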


The identity of \Cref{t: intro - H^bullet K'(FF)} is a consequence of the version of \Cref{t: intro - H^bullet K(FF)} for $K(\FF;\Omega M)$. The shift in the role played by $k$ is due to the introduction of $\Omega M$.

\begin{thm}\label{t: intro - bar H^bullet J'(FF)} 
Using~\eqref{intro - multiplication iso} with $H^{-\infty}(\mathring\bfM;\Lambda\bfFF)$, we get
\begin{gather*}
J'(\FF)=\bigcap_\mu\bfrho^\mu H^{-\infty}(\mathring\bfM;\Lambda\bfFF)
\equiv\varprojlim H^{-\infty}(\mathring\bfM;\Lambda\bfFF)\;,\\
d_{\bfFF}\equiv\varprojlim d_{\bfFF,\mu}\;,\quad
\phi^{t\,*}\equiv\varprojlim\bfphi^{t\,*}_\mu\;.
\end{gather*}
where the projective limits are defined with the maps~\eqref{bfrho^mu'-mu} as $\mu\uparrow+\infty$. Moreover, there are linear identities, 
\[
\bar H^\bullet J'(\FF)\equiv\varprojlim\bar H_\mu^\bullet H^{-\infty}(\mathring\bfFF)\;,\quad
\phi^{t\,*}\equiv\varprojlim\bfphi^{t\,*}_\mu\;.
\]
\end{thm}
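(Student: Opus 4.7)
The approach is to dualize the argument used for \Cref{t: intro - bar H^bullet J(FF)}, working throughout with the density-valued companion of the exact sequence~\eqref{intro - conormal exact sequence} and then transposing. First I would identify $J'(\FF)$ as a topological vector space with $\bigcap_\mu \bfrho^\mu H^{-\infty}(\mathring\bfM;\Lambda\bfFF)$. By definition, $J'(\FF)=J(M,M^0;\Lambda\FF\otimes\Omega M)'$, and \Cref{t: intro - bar H^bullet J(FF)} applied to the $\Omega$-twisted complex gives $J(M,M^0;\Lambda\FF\otimes\Omega M)\equiv\varinjlim \bfrho^\mu H^\infty(\mathring\bfM;\Lambda\bfFF\otimes\Omega)$ as $\mu\downarrow-\infty$, where the density factor is trivialized via the b-form $\bfomega_{\text{\rm b}}$. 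Passing to the strong dual, the dual of an inductive limit is the projective limit of the duals; using that $H^\infty(\mathring\bfM;\Lambda\bfFF\otimes\Omega)'\equiv H^{-\infty}(\mathring\bfM;\Lambda\bfFF)$ via the b-pairing and that the transpose of the inclusion $\bfrho^{\mu'}H^\infty\hookrightarrow\bfrho^\mu H^\infty$ (for $\mu<\mu'$) is multiplication by the corresponding power of $\bfrho$, one obtains the identification
\[
J'(\FF)\equiv\varprojlim H^{-\infty}(\mathring\bfM;\Lambda\bfFF)\;,
\]
with connecting maps $\bfrho^{\mu'-\mu}$ ($\mu<\mu'$), and the underlying filtered set $\bigcap_\mu \bfrho^\mu H^{-\infty}(\mathring\bfM;\Lambda\bfFF)$.

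Next I would transfer the operator $d_{\bfFF}$ and the action $\bfphi^{t\,*}$ through this identification. By construction of~\eqref{intro - multiplication iso}, conjugation by $\bfrho^\mu$ sends $d_{\bfFF}$ on $\bfrho^\mu H^{-\infty}$ to $d_{\bfFF,\mu}=\bfrho^{-\mu}d_{\bfFF}\bfrho^\mu$ on $H^{-\infty}$, and compatibility of these with the transition maps $\bfrho^{\mu'-\mu}$ is immediate since $d_{\bfFF,\mu'}\bfrho^{\mu'-\mu}=\bfrho^{\mu'-\mu}d_{\bfFF,\mu}$. For the flow, the chosen b-defining function $\bfrho$ satisfies $\bfphi^{t\,*}\bfrho$ being a positive multiple of $\bfrho$ determined by the boundary-component exponents $\varkappa_L$, so $\bfphi^{t\,*}$ preserves the filtration by $\bfrho^\mu H^{-\infty}$; pulling back through $\bfrho^\mu$ yields $\bfphi^{t\,*}_\mu=\bfrho^{-\mu}\bfphi^{t\,*}\bfrho^\mu$, again intertwined with $\bfrho^{\mu'-\mu}$. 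This gives the three displayed identities of the first assertion at the level of topological $\R$-equivariant complexes.

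For the cohomological statement, the strategy is to use the leafwise Hodge theory for the bounded-geometry foliation $(\mathring\bfM,\mathring\bfFF)$ equipped with $\bfg_{\text{\rm b}}$. By the versions of~\eqref{heat op - FF Riem - M closed}--\eqref{leafwise Hodge iso - FF Riem - M closed} for $H^{-\infty}(\mathring\bfM;\Lambda\bfFF)$ and $\Delta_{\bfFF,\mu}$ mentioned in \Cref{ss: intro - leafwise Witten}, each $\bar H^\bullet_\mu H^{-\infty}(\mathring\bfFF)$ is realized as $\ker\Delta_{\bfFF,\mu}\subset H^{-\infty}(\mathring\bfM;\Lambda\bfFF)$, and the transition maps induced by $\bfrho^{\mu'-\mu}$ make $\{\bar H^\bullet_\mu H^{-\infty}(\mathring\bfFF)\}$ an inverse system of $\R$-modules. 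A cocycle in $J'(\FF)$, when read through $\bfrho^\mu$, gives a compatible family of $d_{\bfFF,\mu}$-cocycles in $H^{-\infty}(\mathring\bfM;\Lambda\bfFF)$, and a coboundary likewise gives a compatible family of coboundaries; hence there is a natural linear map from $\bar H^\bullet J'(\FF)$ into $\varprojlim\bar H^\bullet_\mu H^{-\infty}(\mathring\bfFF)$. Surjectivity is obtained by selecting harmonic representatives via the leafwise heat projector and checking that they form coherent families, and injectivity from the fact that a cocycle in $\bigcap_\mu\bfrho^\mu H^{-\infty}$ whose class vanishes in every $\bar H^\bullet_\mu H^{-\infty}(\mathring\bfFF)$ is the limit of $d_{\bfFF}$-coboundaries in each layer, hence in $J'(\FF)$ itself.

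The main obstacle is precisely this last commutation of reduced cohomology with the projective limit: closures under non-equivalent topologies could, in principle, fail to assemble into the closure in the limit topology. This is likely why the authors state only a linear identification. I expect to handle it by exploiting that all complexes in the inverse system are canonically isomorphic (the differential being conjugate) and that the connecting maps $\bfrho^{\mu'-\mu}$ commute with the leafwise heat semigroup $e^{-u\Delta_{\bfFF,\mu}}$ up to corresponding conjugation, so a Mittag-Leffler-type argument for the harmonic parts produces coherent representatives, while the closures of $\im d_{\bfFF,\mu}$ intersect consistently thanks to the $\bfrho^\mu$-equivariance already set up in the first two paragraphs.
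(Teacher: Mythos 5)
Your identification of $J'(\FF)$ with $\bigcap_\mu\bfrho^\mu H^{-\infty}(\mathring\bfM;\Lambda\bfFF)\equiv\varprojlim H^{-\infty}(\mathring\bfM;\Lambda\bfFF)$ as an $\R$-equivariant topological complex is correct in substance, and dualizing \Cref{t: intro - bar H^bullet J(FF)} is a legitimate route to it (the paper gets the same TVS identities from the general dual-conormal theory, equations~\eqref{J^prime m(M L) cong ...} and~\eqref{J'(M L) cong ...}, rather than by transposing the $J$-statement, but the two routes give the same thing, modulo the acyclicity/regularity facts about the inductive spectrum that are needed to commute ``dual of $\varinjlim$'' with ``$\varprojlim$ of duals'').

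The cohomological part has a genuine gap. You propose to produce a compatible family of representatives by picking harmonic representatives, and you justify this by asserting that the transition maps $\bfrho^{\mu'-\mu}$ commute with $e^{-u\Delta_{\bfFF,\mu}}$ ``up to corresponding conjugation.'' This is false. The transition map $\bfrho^{\mu'-\mu}\colon (H^{-\infty},d_{\bfFF,\mu'})\to(H^{-\infty},d_{\bfFF,\mu})$ is a cochain map because $d_{\bfFF,\mu}\,\bfrho^{\mu'-\mu}=\bfrho^{\mu'-\mu}\,d_{\bfFF,\mu'}$, but $\delta_{\bfFF,\mu}$ is the formal adjoint of $d_{\bfFF,\mu}$ in the \emph{fixed} $L^2$-structure defined by $\bfg_{\text{\rm b}}$, and multiplication by $\bfrho^{\mu'-\mu}$ is self-adjoint in that $L^2$. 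Taking adjoints in $d_{\bfFF,\mu}\,\bfrho^{\mu'-\mu}=\bfrho^{\mu'-\mu}\,d_{\bfFF,\mu'}$ therefore gives $\bfrho^{\mu'-\mu}\,\delta_{\bfFF,\mu}=\delta_{\bfFF,\mu'}\,\bfrho^{\mu'-\mu}$, which is the \emph{opposite} intertwining from the one you would need. Consequently $\bfrho^{\mu'-\mu}\Delta_{\bfFF,\mu'}\bfrho^{-(\mu'-\mu)}\ne\Delta_{\bfFF,\mu}$, the heat semigroups are not intertwined, $\Pi_{\bfFF,\mu}\bfrho^{\mu'-\mu}\ne\bfrho^{\mu'-\mu}\Pi_{\bfFF,\mu'}$, and harmonic representatives of a compatible family of classes need not themselves be compatible. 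Your surjectivity step therefore does not go through, and your injectivity step (``\ldots hence in $J'(\FF)$ itself'') simply asserts $\bigcap_m\bar BJ^{\prime\,m}(\FF)\subset\bar BJ'(\FF)$ without proof, which is exactly the nontrivial content.

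The fix is to abandon harmonic representatives entirely and run the Mittag-Leffler argument on the coboundary spaces, which is what the paper does: one proves that $BJ'(\FF)$ is dense in each $BJ^{\prime\,(s)}(\FF)$ (because $J'(\FF)$ is dense in each step and $d_\FF$ is continuous), then that $ZJ'(\FF)=\bigcap_s ZJ^{\prime\,(s)}(\FF)$ and $\bar BJ'(\FF)=\bigcap_s\bar BJ^{\prime\,(s)}(\FF)$, and finally that $\varprojlim^1\bar BJ^{\prime\,(s)}(\FF)=0$ by the density of the transition maps (Wengenroth's criterion), so that the projective limit of the short exact sequences $0\to\bar BJ^{\prime\,(s)}\to ZJ^{\prime\,(s)}\to\bar H^\bullet J^{\prime\,(s)}\to0$ stays exact and yields the linear isomorphism $\bar H^\bullet J'(\FF)\cong\varprojlim\bar H^\bullet J^{\prime\,(s)}(\FF)$. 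The Hodge isomorphisms $\bar H^\bullet J^{\prime\,m}(\FF)\cong\ker\Delta_{\mathring\bfFF,m-1/2}$ are then used only afterwards, step by step, to identify the right-hand side with $\varprojlim\bar H^\bullet_\mu H^{-\infty}(\mathring\bfFF)$; they play no role in proving the commutation with the projective limit.
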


There is no essential difference between $J(\FF)$ and $J(\FF;\Omega M)$ because $\mathring\bfFF$ has the invariant transverse density $|\bfomega_{\text{\rm b}}|$. Thus \Cref{t: intro - bar H^bullet J'(FF)} follows from \Cref{t: intro - bar H^bullet J(FF)}. 

\begin{thm}\label{t: intro - reduced dual-conormal cohomology exact sequence} 
We have a short exact sequence
\[
0\leftarrow H^\bullet K'(\FF)\xleftarrow{\bar R'_*}\bar H^\bullet I'(\FF)\xleftarrow{ \bar\iota'_*}\bar H^\bullet J'(\FF)\leftarrow0\;.
\]
\end{thm}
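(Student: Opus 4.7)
The plan is to combine a direct argument parallel to the proof of \Cref{t: intro - reduced conormal cohomology exact sequence} with a duality observation: the sequence in question is obtained by transposing the $\Omega M$-twisted version of the analogous conormal sequence. Concretely, I would start from the short exact sequence of topological complexes
\[
0\leftarrow K'(\FF)\xleftarrow{R'}I'(\FF)\xleftarrow{\iota'}J'(\FF)\leftarrow0\;,
\]
pass to cohomology, and show that the resulting long exact sequence collapses to a short exact sequence in reduced cohomology.

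For surjectivity of $\bar R'_*$, I would use the description of \Cref{t: intro - H^bullet K'(FF)}: every class in $\bar H^\bullet K'(\FF)\equiv H^\bullet K'(\FF)\equiv\prod_{L,k}H_k^\bullet(L)$ is represented by a family of $d_k$-cocycles $\alpha_{L,k}\in C^{-\infty}(L;\Lambda)$. On a collar neighborhood of each $L$ in $M$, the relation $d_\FF\rho=\rho\eta$ and the compatibility $\phi^{t*}\rho=e^{\varkappa_L t}\rho$ allow one to lift each $\alpha_{L,k}$ to a genuine dual-conormal leafwise current by multiplying by $\rho^k$ and extending smoothly, paralleling the Taylor-series construction underlying the identification of $K(\FF)$ in \Cref{t: intro - H^bullet K(FF)}. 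Assembling these lifts in the product topology of $K'(\FF)$ produces a $d_\FF$-cocycle in $I'(\FF)$ mapping to the given class, so $\bar R'_*$ is onto.

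The main obstacle is exactness in the middle, equivalent to injectivity of $\bar\iota'_*$ together with $\ker\bar R'_*\subset\im\bar\iota'_*$. Given a $d_\FF$-cocycle $\alpha\in J'(\FF)$ with $\iota'(\alpha)\in\overline{d_\FF I'(\FF)}$, one must show $\alpha\in\overline{d_\FF J'(\FF)}$. Via \Cref{t: intro - bar H^bullet J'(FF)} and the isomorphism $\bfrho^\mu$ from~\eqref{intro - multiplication iso}, $\alpha$ corresponds to a coherent system of classes in $\varprojlim_\mu\bar H_\mu^\bullet H^{-\infty}(\mathring\bfFF)$; the leafwise Hodge analysis on $(\mathring\bfM,\mathring\bfFF)$ with the bounded-geometry metric $\bfg_{\text{\rm b}}$ provides harmonic representatives for each finite $\mu$. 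The real work is to show that the vanishing hypothesis in $\bar H^\bullet I'(\FF)$ forces these harmonic representatives to vanish, which requires a careful compatibility argument between the Hodge decomposition for $\Delta_{\bfFF,\mu}$, the projective limit structure, and the exactness of the sequence of complexes.

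A cleaner alternative, which I expect the paper to take, is pure duality: apply \Cref{t: intro - reduced conormal cohomology exact sequence} to the $\Lambda\FF\otimes\Omega M$-twisted conormal complexes and take strong topological transposes. Since the sequence lives in the category of continuous linear maps between locally convex spaces in the sense of \cite[Chapter~2]{Wengenroth2003}, and the reduced cohomologies are Hausdorff by construction, transposition produces a short exact sequence in the same category. One then identifies $(H^\bullet K(\FF;\Omega M))'\cong H^\bullet K'(\FF)$, $(\bar H^\bullet I(\FF;\Omega M))'\cong\bar H^\bullet I'(\FF)$ and $(\bar H^\bullet J(\FF;\Omega M))'\cong\bar H^\bullet J'(\FF)$ via the standard pairing of leafwise forms against densities. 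The delicate point, in either approach, is the interaction between reduced cohomology and duality on the LF-spaces at hand; this is where the direct and dual routes converge and where the bulk of the technical effort will reside.
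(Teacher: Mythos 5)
Your proposal diverges from the paper's proof in both of its prongs, and each has a gap. The paper's proof (\Cref{s: short exact seq - dual-conormal}) is a direct chain-level argument paralleling the conormal case, but the move you are missing is that duality enters only at the level of the partial extension maps: the paper sets $F'_m := E_{-m}^\trans$ and $E'_m := F_{-m}^\trans$ (transposes of the conormal-side maps with $\Omega M$-coefficients), establishes the algebraic identities~\eqref{F'_m d_FF = d_FF F'_m}, \eqref{iota'_s' F'_m + E'_m R'_s = j'_s' s}, \eqref{R'_s E'_m = j'_s' s}, \eqref{E'_m d_FF = d_FF E'_m} and the compatibilities~\eqref{F'_m j'_s'_1 s' = j'_s_1 s F'_m_1}--\eqref{E'_m j'_s'_1 s' = j'_s_1 s E'_m_1} under change of Sobolev index, and then runs a purely algebraic computation in the projective presentation of $I'(\FF)$, $J'(\FF)$, $K'(\FF)$. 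No Hodge theory enters the exactness argument: the leafwise Hodge machinery is used only to \emph{identify} $\bar H^\bullet J'(\FF)$ (\Cref{t: intro - bar H^bullet J'(FF)}), not to prove the sequence is exact. The exactness-in-the-middle step that you flag as the main obstacle is handled by taking $a_s = F'_m u_{s'}$, $b_{l,s} = E'_m v_{l,s'}$, verifying that the $a_s$ and $b_{l,s}$ are coherent in $s$, and concluding that they assemble to elements of $ZJ'(\FF)$ and $I'(\FF)$; harmonic representatives play no role there and would only reintroduce the coherence issues the algebraic argument resolves directly.

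As for the pure-duality alternative you expected the paper to take: it is not available. \Cref{p: H(C' d^t) -> H(C d)'} gives surjectivity of the canonical map $H^\bullet(C',d^\trans)\to H^\bullet(C,d)'$ when $C$ is an LCHS, but injectivity requires $C$ Fr\'echet with $\im d$ closed; $I(\FF;\Omega M)$ is an LF-space and $\im d_\FF$ is in general not closed, so the identification $(\bar H^\bullet I(\FF;\Omega M))'\cong\bar H^\bullet I'(\FF)$ does not follow, and transposing an exact sequence of LCSs does not in general preserve exactness. Your instinct that duality should be used is right, but it has to be pushed down to the level of the individual chain maps $E_m$, $F_m$, where transposition is unproblematic and the subsequent argument can be carried out directly.
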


Recall the definition of $\bfP_{\mu,u,f}$ given in \Cref{ss: intro - Witten}.

\begin{thm}\label{t: intro - bfP_u f} 
$\bfP_{\mu,u,f}$ is a smoothing b-pseudodifferential operator, and the map $f\mapsto\bStr\bfP_{\mu,u,f}$ defines a distribution on $\R$.
\end{thm}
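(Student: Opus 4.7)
My plan is to establish the statement in three stages: (i) the leafwise heat semigroup $e^{-u\Delta_{\bfFF,\mu}}$ is a smoothing b-pseudodifferential operator on $\bfM$; (ii) composition with $\bfphi^{t*}_\mu$ preserves this property; (iii) integrating against $f\in\Cinftyc(\R)$ and applying $\bStr$ yields a continuous functional.

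For stage (i), the key observation is that $\Delta_{\bfFF,\mu}=d_{\bfFF,\mu}\delta_{\bfFF,\mu}+\delta_{\bfFF,\mu}d_{\bfFF,\mu}$ is a second-order leafwise b-differential operator, i.e.\ lies in the leafwise part of $\Diffb^2(\bfM;\Lambda\bfFF)$. Indeed, $d_{\bfFF}$ is already a b-operator because $\bfFF$ is tangent to $\partial\bfM$, and the Witten zeroth-order perturbation $\mu\bfeta\wedge$ involves the smooth-up-to-boundary form $\bfeta$ (and its dual), so it respects the b-structure. Via the conjugation identity $d_{\bfFF,\mu}=\bfrho^{-\mu}d_{\bfFF}\bfrho^\mu$, the heat operator pulls back through the isomorphism~\eqref{intro - multiplication iso} to the genuine leafwise heat operator in weighted spaces on $\mathring\bfM\equiv M^1$. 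Since $(\mathring\bfFF,\bfg_{\text{\rm b}})\equiv(\FF^1,g^1)$ is of bounded geometry, the techniques of \cite{AlvKordy2001,AlvKordyLeichtnam2014} produce a smooth leafwise heat kernel with the correct decay; the near-boundary structure provided by the collar $\bfT\equiv[0,\epsilon)\times\partial\bfM$ and the fact that the fibers of $\varpi$ agree with the orbits of $\phi$ let one check that this kernel lifts to an element of $\Psib^{-\infty}(\bfM;\Lambda\bfFF)$ (smooth on the b-double space, vanishing to infinite order on all b-faces except the front face).

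For stage (ii), use $\phi^{t*}\rho=e^{\varkappa_L t}\rho$ near each preserved leaf $L\subset M^0$ to compute
\[
\bfphi^{t*}_\mu=\bfrho^{-\mu}\bfphi^{t*}\bfrho^\mu=e^{\mu\varkappa_L t}\bfphi^{t*}
\]
in the collar around the component of $\partial\bfM$ lying over $L$. Hence $\bfphi^{t*}_\mu$ is just $\bfphi^{t*}$ (which is a smooth b-diffeomorphism pullback preserving $\partial\bfM$ pointwise after the leafwise-homotopy reduction) multiplied by a smooth positive function on $\bfM$. Composition of such a b-pullback with an operator in $\Psib^{-\infty}$ stays in $\Psib^{-\infty}$, since pulling back the Schwartz kernel by the product b-diffeomorphism $\bfphi^t\times\id$ on the b-double space preserves the required regularity and vanishing at boundary faces. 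Thus for each $t$, $\bfphi^{t*}_\mu e^{-u\Delta_{\bfFF,\mu}}\in\Psib^{-\infty}(\bfM;\Lambda\bfFF)$, and moreover the family is smooth in $t$ in the natural Fr\'echet topology on $\Psib^{-\infty}$.

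For stage (iii), since $\Psib^{-\infty}(\bfM;\Lambda\bfFF)$ is complete and the integrand is continuous with compact support, the Bochner integral $\bfP_{\mu,u,f}=\int_\R\bfphi^{t*}_\mu e^{-u\Delta_{\bfFF,\mu}}f(t)\,dt$ lies in $\Psib^{-\infty}(\bfM;\Lambda\bfFF)$, proving the first claim. The b-trace $\bTr$ (hence $\bStr$) is a continuous linear functional on $\Psib^{-\infty}$, and the map $f\mapsto\bfP_{\mu,u,f}$ from $\Cinftyc(\R)$ to $\Psib^{-\infty}(\bfM;\Lambda\bfFF)$ is continuous by a standard argument on integrals of smooth operator-valued families against test functions. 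Composing gives a continuous linear functional on $\Cinftyc(\R)$, i.e.\ a distribution on $\R$.

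The main technical hurdle is stage (i): verifying that the leafwise Witten-perturbed heat kernel, a priori defined only on the open manifold $\mathring\bfM$ via bounded-geometry leafwise heat theory, actually lifts to a smooth kernel on the b-double space with infinite-order vanishing at the side and top faces. This requires combining the transverse-rigidity properties of $\FF^1$ near $\partial\bfM$ (where the structure of $\bfeta$, $\bfrho$, and the collar are tightly controlled) with the standard b-heat calculus of Melrose, and is the place where the careful choices of $\rho$, $\eta$, and $g$ around $M^0$ made in \Cref{ss: general - ingredients} pay off.
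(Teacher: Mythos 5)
Your stage (i) contains a fundamental error: the leafwise heat operator $e^{-u\Delta_{\bfFF,\mu}}$ is \emph{not} a smoothing b-pseudodifferential operator on $\bfM$. The operator $\Delta_{\bfFF,\mu}$ is only leafwise elliptic, so its heat semigroup has a Schwartz kernel that is smooth along the leaves but distributional (essentially a delta) in the transverse direction; the kernel lives on the leafwise equivalence relation $\RR_{\bfFF}\subset\bfM^2$, not as a smooth section of $\Lambda\bfFF\boxtimes(\Lambda\bfFF^*\otimes\Omega\bfM)$ over all of $\bfM^2$. Bounded geometry of $\FF^1$ gives you a smooth \emph{leafwise} heat kernel (a section over the holonomy groupoid, as in \Cref{ss: Schwartz kernels}), but this does not lift to an element of $\Psib^{-\infty}(\bfM;\Lambda\bfFF)$, and no amount of care with the collar structure can make it do so. Consequently stages (ii)--(iii) do not get off the ground as written, because you are composing with an object that is not in $\Psib^{-\infty}$ to begin with.

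The essential smoothing mechanism, which your proposal omits, is the interplay between the flow and the heat operator under the $t$-integral: $\bfphi^{t*}_\mu$ moves points transversally to the leaves on $M^1$, so integrating $\bfphi^{t*}_\mu\,e^{-u\Delta_{\bfFF,\mu}}$ against $f(t)\,dt$ smears the transversal delta into a smooth density — this is exactly the phenomenon recorded in~\eqref{P_psi}. The paper therefore never claims the heat operator alone is smoothing; instead it writes the Schwartz kernel of the \emph{combined} operator explicitly via~\eqref{Schwartz kernel} as a $\Gamma$-sum, and proves in \Cref{p: kappa_P_pm} and \Cref{p: mathring K_l} that this kernel extends smoothly to the b-stretched product $(M_l)^2_{\text{\rm b}}$ and vanishes to all orders at $\lb\cup\rb$. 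That extension requires two ingredients your sketch does not supply: (a) the weighted-Sobolev estimate in \Cref{p: mathring K_l}, which gives $|\mathring K_l(p,q)|\le C\,s(p,q)^a$ for every $a\in\R$ (this is what forces infinite-order vanishing at the side faces, and it uses crucially the compact support of $f$ together with the conjugation $D_{\mathring\FF_l,z}=\rho^aD_{\mathring\FF_l,z+a,z-a}\rho^{-a}$); and (b) the comparison in \Cref{p: P equiv P'} between the general kernel and the explicit suspension-model kernel $\mathring\kappa'_{L,l}$ near $\partial M_l$, which controls the extension to the front face. After $P_l\in\Psib^{-\infty}$ is secured in this way, the distributional statement follows from \Cref{c: (psi f) mapsto kappa_l psi f is cont} and \Cref{c: smallnuint is cont}, which is where your stage (iii) would land correctly once stages (i)--(ii) are replaced.
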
  

Now we will use the integers $\ell(c)$ and $\epsilon_c(k)$ associated to every closed orbit $c$, and the b-Connes-Euler characteristic $\bchi_{|\bfomega_{\text{\rm b}}|}(\bfFF)=\bchi_{|\omega^1|}(\FF^1)$.

\begin{thm}\label{t: intro - lim_u->0 bTrs(bfP_mu u f} 
We have
\[
\lim_{u\downarrow0}\bStr\bfP_{\mu,u,f}=\bchi_{|\omega^1|}(\FF^1)\,f(0)+\sum_c\ell(c)\sum_{k\in\Z^\times}\epsilon_c(k)\,f(k\ell(c))\;,
\]
where $c$ runs in the set of closed orbits of $\phi$.
\end{thm}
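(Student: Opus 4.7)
The plan is to localize the integral defining $\bfP_{\mu,u,f}$ in $t$ around those times at which $\bfphi^t$ admits fixed points in $\bfM$: the origin $t=0$, where all of $\partial\bfM$ is fixed, and the periods $t=k\ell(c)$ for every closed orbit $c$ of $\phi$ and every $k\in\Z^\times$, at each of which an isolated simple closed orbit appears in $\mathring\bfM$. Choose a partition of unity $1=\chi_0+\sum_{c,k}\chi_{c,k}+\chi_\infty$ on $\R$ subordinate to pairwise disjoint small neighborhoods of these values (only finitely many meet $\supp f$, by simplicity), and decompose $f=f_0+\sum_{c,k}f_{c,k}+f_\infty$. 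By linearity of $\bStr$, it suffices to compute $\lim_{u\downarrow0}\bStr\bfP_{\mu,u,f_\bullet}$ for each piece separately.

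For $f_\infty$, the diffeomorphism $\bfphi^t$ has no fixed points in $\mathring\bfM$ for $t\in\supp f_\infty$, so standard off-diagonal heat-kernel estimates kill the interior trace as $u\downarrow 0$. Along $\partial\bfM$, although $\bfphi^t$ restricts to the identity, the conjugating factor in $\bfphi^{t*}_\mu=\bfrho^{-\mu}\bfphi^{t*}\bfrho^\mu$ produces a non-trivial indicial family at $\partial\bfM$ for $t\ne 0$, with weight involving $e^{\mu\varkappa_L t}$ near each leaf $L\subset M^0$; a standard indicial/b-defect analysis then makes the boundary contribution to $\bStr\bfP_{\mu,u,f_\infty}$ vanish as $u\downarrow 0$.

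For $f_{c,k}$ supported near $k\ell(c)$ with $k\in\Z^\times$, the closed orbit $c$ lies in $M^1$ and is disjoint from $T$, because the $\phi$-orbits in $T$ are the fibers of $\varpi$ and therefore never closed. In a $\bfphi$-invariant tube around $c$ contained in $M^1$ the b-supertrace reduces to the ordinary supertrace, and on $c$ itself $\bfphi^{t*}_\mu=\bfphi^{t*}$ because $\bfrho\circ\bfphi^t=\bfrho$ there. The simplicity hypothesis on $c$ — invertibility of $\id-\phi^{k\ell(c)}_*$ on $T_p\FF$ with sign of determinant $\epsilon_c(k)$ — combined with the standard Guillemin--Sternberg/Atiyah--Bott local Lefschetz formula for the heat supertrace twisted by a flow with a non-degenerate closed orbit yields
\[
\lim_{u\downarrow 0}\bStr\bfP_{\mu,u,f_{c,k}}=\ell(c)\,\epsilon_c(k)\,f(k\ell(c));
\]
summation over $(c,k)$ reproduces the closed-orbit term of the trace formula.

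The crucial piece is $f_0$. At $t=0$ the operator is $e^{-u\Delta_{\bfFF,\mu}}$, and a leafwise Getzler rescaling shows that the leafwise supertrace density converges pointwise as $u\downarrow 0$ to the leafwise Euler form $e(\bfFF)$, independently of $\mu$, since the Witten deformation is of order zero and does not enter the top heat coefficient. Integrating this density against $|\bfomega_{\text{\rm b}}|$ via the b-integral $\bfnuint_{\bfM}$ recovers $\bchi_{|\omega^1|}(\FF^1)$. Combined with continuity of $\bStr(\bfphi^{t*}_\mu e^{-u\Delta_{\bfFF,\mu}})$ in $t$ near $0$ — the interior $t$-dependence is $O(t)$ and the boundary indicial defect is controlled as for $f_\infty$ — a dominated-convergence argument replaces $f_0(t)$ by $f_0(0)=f(0)$ and produces $\bchi_{|\omega^1|}(\FF^1)\,f(0)$. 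The main obstacle is exactly this last step: establishing a leafwise McKean--Singer identity in the b-calculus for $\Delta_{\bfFF,\mu}$ tangent to $\partial\bfM$, and obtaining uniform control of the indicial boundary defect, both requiring careful use of the normal-form description of $\bfphi$ and $\bfrho$ near $\partial\bfM$.
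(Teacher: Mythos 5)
Your high-level decomposition --- partition of unity in $t$ around $0$, the periods $k\ell(c)$, and elsewhere --- matches the paper's split of a compact $I\supset\supp f$ into three cases (\Cref{p: lim_u -> 0 bTrs(P_l,u,z)}), and the closed-orbit term via a local Atiyah--Bott analysis and the $t=0$ term via the local index for the Witten complex (\Cref{t: e_z l}) are the right ingredients. But there is a factual error and a substantive gap. The error: you assert that ``$\bfphi^t$ restricts to the identity'' on $\partial\bfM$. This is false for a transversely simple foliated flow: the preserved leaves $L\subset M^0$ are preserved only as sets, and $\phi^t|_L$ is a nontrivial flow on $L$. The reduction ``$\phi^t=\id$ on $M^0$ up to leafwise homotopy'' invoked in \Cref{ss: general - ingredients} is legitimate for the $\R$-actions on leafwise reduced cohomologies, but the b-trace is not a leafwise-homotopy invariant, so it cannot be used here. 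The paper's indicial-family computation (\Cref{p: I_nu(P_pm z lambda))}) correspondingly involves the actual pullbacks $\phi_{L,z+i\lambda}^{t*}$, not scalar multiples of the identity.

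The gap is where you write that ``a standard indicial/b-defect analysis'' handles $f_\infty$ and that the main obstacle is a ``leafwise McKean--Singer identity in the b-calculus.'' Neither of these is what the paper does, and neither is available off the shelf. The actual engine of the proof is an exponentially good comparison, as $u\downarrow0$, of the Schwartz kernel $\mathring\kappa_{l,u}$ for $(\FF^1_l,g_{\text{\rm b},l})$ with the kernel $\mathring\kappa'_{L,l,u}$ of the corresponding operator on the suspension-model foliation $\FF'_{L,l}$ over a collar of each boundary leaf $L$ (\Cref{p: P equiv P'}). On the suspension model, via~\eqref{mathring kappa_pm(rho s [tilde y] [tilde y'])}--\eqref{mathring kappa_pm gamma(rho s [tilde y] [tilde y'])}, the leafwise heat kernel is literally a $\Gamma_L$-averaged family of heat kernels on the closed manifold $L$, so the usual on- and off-diagonal heat estimates and the local Lefschetz formula (\Cref{p: local Lefschetz formula}) apply directly. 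From this comparison one derives the two controlled limits at the b-diagonal, $\kappa_{l,u}|_{\Deltab}\to0$ near $\partial\bfM$ when $f(0)=0$ (\Cref{c: kappa_l u to 0  on Deltab cap beta_b^-1(T_l epsilon'^2)}) and $\str(\kappa_{l,u}|_{\Deltab})\to f(0)\,e(\FF_l,g_{\FF_l})\,|\omega_{\text{\rm b},l}|$ (\Cref{c: lim_u to 0 trs(kappa_l u|_Deltab) equiv f(0) Pf(R_FF_l) |omega_l|}), in the precise topology that makes the b-integral continuous (\Cref{c: smallnuint is cont}, \Cref{r: smallnuint is cont}). Away from the collar, the interior trace is handled with off-diagonal estimates plus the polynomial growth of $\Gamma_l$, and for the $f_{c,k}$ pieces the closed orbit $c$ is converted to an isolated simple fixed point of $T_{\gamma_0}\tilde\phi_{l,x}^{t_0}$ on $L_l$ (\Cref{p: widetilde M^1_l equiv L_l times R} and the discussion following it) before the local Lefschetz formula can be applied. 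So your outline is morally right, but the passage you label as the main obstacle is resolved by the suspension-model comparison, not by a b-calculus McKean--Singer argument, and that comparison is the bulk of the proof.
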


Recall that the definition of $\eta$ was given in \Cref{ss: general - ingredients}.

\begin{thm}\label{t: intro - lim_mu->pm infty ...} If $\dim\FF$ is even, then we can choose $\eta$ and $g$ on $M^0$ so that 
\[
f\mapsto\lim_{u_1\uparrow+\infty,\ u_0\downarrow0}\big(\bStr\bfP_{\mu,u_1,f}-\bStr\bfP_{\mu,u_0,f}\big)
\]
defines a tempered distribution $Z_\mu$ on $\R$, and $Z_\mu\to0$ as $\mu\to\pm\infty$.
\end{thm}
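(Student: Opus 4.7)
The overall strategy is to identify $Z_\mu$ with the $u$-integral of a boundary anomaly term, then to analyse that term at small $u$, large $u$, and as $|\mu|\to\infty$. Since $\phi$ is foliated, $\bfphi^{t*}_\mu$ commutes with $d_{\bfFF,\mu}$, so the leafwise Hodge identity $\Delta_{\bfFF,\mu}=d_{\bfFF,\mu}\delta_{\bfFF,\mu}+\delta_{\bfFF,\mu}d_{\bfFF,\mu}$ gives
\[
\frac{d}{du}\bStr\bfP_{\mu,u,f}=-\bStr(\Delta_{\bfFF,\mu}\bfP_{\mu,u,f})=-\bStr\bigl[d_{\bfFF,\mu},\delta_{\bfFF,\mu}\bfP_{\mu,u,f}\bigr].
\]
The right-hand side would vanish for a genuine trace; for the b-trace it equals Melrose's boundary anomaly $\mathcal{A}_\mu(u,f)$, expressible as an integral over $\partial\bfM$ involving the indicial families of the factors. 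Using the product structure of $\bfg_{\text{\rm b}}$, $\bfeta$, and $\bfphi$ on the collar $\bfT\equiv[0,\epsilon)\times\partial\bfM$, together with the normal form $\bfphi^{t*}\bfrho=e^{\varkappa_Lt}\bfrho$ around each preserved leaf $L\subset M^0$, the indicial operators reduce to Witten Laplacians $\Delta_\mu$ on the leaves $L$ dressed with the scalar action coming from $\varkappa_L$. Therefore
\[
\bStr\bfP_{\mu,u_1,f}-\bStr\bfP_{\mu,u_0,f}=\int_{u_0}^{u_1}\mathcal{A}_\mu(u,f)\,du.
\]

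To show the joint limit exists and is tempered in $f$, the plan is to control $\mathcal{A}_\mu(u,f)$ in both regimes. For $u\downarrow 0$, standard small-time heat expansions on the compact leaves $L\subset M^0$ produce a polyhomogeneous expansion of $\mathcal{A}_\mu(u,\cdot)$ whose potentially divergent coefficients are killed by a Gauss--Bonnet-type supertrace cancellation, available because $\dim\FF$ is even; this leaves $\int_0^1|\mathcal{A}_\mu(u,f)|\,du<\infty$ with bounds controlled by a finite number of Schwartz seminorms of $f$. For $u\uparrow\infty$, the compactness of each preserved leaf provides a positive spectral gap of $\Delta_\mu$ above its kernel, so $\mathcal{A}_\mu(u,f)$ decomposes as an exponentially decaying piece plus a finite-dimensional harmonic contribution; the latter reduces to a finite sum of terms of the shape $e^{-(k+1)\varkappa_Lu}$-weighted pairings against $f$ (using the structure from \Cref{t: intro - H^bullet K(FF)}), whose $u$-integrals combine into a tempered distribution because $f\in\mathcal{S}(\R)$ suffices for convergence.

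For the limit $\mu\to\pm\infty$, invoke Witten's deformation principle on each preserved leaf. By choosing $\eta|_{M^0}$ in the fixed class $\xi$ to be of Morse type on $M^0$ and $g|_{M^0}$ compatibly, the spectrum of $\Delta_\mu$ on $L$ splits into a low-lying ``Morse'' cluster of uniformly bounded dimension plus a gap that grows linearly in $|\mu|$, while the corresponding low eigenforms concentrate near the critical set of $\eta$ in the Bismut--Zhang sense. In the small-$u$ contribution to $Z_\mu$, the local heat coefficients carry a $\mu$-dependent factor whose boundary supertrace is forced to vanish in the limit by the Gauss--Bonnet cancellation combined with the pointwise collapse of the Mathai--Quillen/Bismut--Zhang representative as $|\mu|\to\infty$; in the large-$u$ contribution, the growing spectral gap on the non-harmonic side and the concentration of the harmonic side away from $\partial\bfM$ drive the remaining integral to zero uniformly on bounded subsets of $\mathcal{S}(\R)$.

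\textbf{The main obstacle} is to make a choice of $\eta$ and $g|_{M^0}$ that simultaneously annihilates both the small-$u$ and the large-$u$ asymptotics of $Z_\mu$ as $|\mu|\to\infty$, and to verify that the boundary-anomaly estimates are uniform in $\mu$ across both regimes. This is where the even-dimensionality hypothesis is crucial, and where the collar normalization $\bfphi^{t*}\bfrho=e^{\varkappa_Lt}\bfrho$ must be matched delicately against the Witten scaling so that the indicial computations on each $L\subset M^0$ do not pick up residual $\mu$-dependent mass that would prevent $Z_\mu\to 0$.
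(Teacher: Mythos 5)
Your skeleton matches the paper's reduction: differentiate $\bStr\bfP_{\mu,u}$ in $u$, use the fact that the b-trace fails to vanish on commutators (Melrose's anomaly formula) to rewrite $\tfrac{d}{du}\bStr\bfP_{\mu,u}$ as a boundary contribution controlled by indicial families, observe that those reduce to Witten operators on the compact leaves $L\subset M^0$, then integrate over $u$. This is exactly \Cref{l: bTr([d_FF mu P_mu bfw]) = 2 sum_L Trs(S_L mu)} and \Cref{p: d/du bTrs(P_mu u)}. Where the paper stops, however, is at the identification $\lim_{u_1\uparrow\infty}-\lim_{u_0\downarrow0}=-\tfrac1\pi\sum_L\tfrac1{|\varkappa_L|}\langle Z_{L,\mu},f_L\rangle$, and then it \emph{cites \Cref{t: Z}} from the companion paper \cite{AlvKordyLeichtnam-ziomf} for the existence of a choice of $\eta_L,g_L$ making $Z_{L,\mu}$ tempered with $Z_{L,\mu}\to0$. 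You propose to prove that hard part inline, and the sketch for it has real problems.

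First, your small-$u$ argument invokes the ``Gauss--Bonnet supertrace cancellation'' as if it applied to the integrand; but $Z_\mu$ is built from $\Str(\eta\wedge\delta_z e^{-u\Delta_z})$, not $\Str(e^{-u\Delta_z})$. The cancellation of \Cref{t: e_z l} concerns the diagonal asymptotics of the plain heat kernel and does not directly kill the small-$u$ divergence of the operator you actually have; the companion-paper analysis has to handle this by a separate expansion. Second, your large-$u$ analysis mislabels the decaying weights. The terms $e^{-(k+1)\varkappa_L t}$ you quote from \Cref{t: intro - H^bullet K(FF)} describe the $\R$-action $\phi^{t*}$ on $K(\FF)$, not the heat flow; the correct mechanism at large $u$ is simply that $\delta_\mu$ annihilates $\ker\Delta_\mu$, so $\delta_\mu e^{-u\Delta_\mu}\to 0$ exponentially by the spectral gap (there is no nonzero harmonic contribution at all). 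Third, and most substantively, you never show \emph{how} the freedom in $\eta,g$ can be used to force $Z_\mu\to0$. Morse concentration à la Bismut--Zhang explains localisation but does not by itself yield $Z_\mu\to0$: in fact for $\dim\FF$ odd the analogue of \Cref{t: Z} only allows achieving $Z=\tau\delta_0$ for $\tau\gg0$, never $\tau=0$, which shows the conclusion is not a soft consequence of concentration + even dimension. The even-dimensional hypothesis enters through a specific cancellation in the construction of $\eta$ and $g$ that your sketch does not reproduce, so the proposal has a genuine gap precisely at the step that carries the content of the theorem.
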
 

In \Cref{t: intro - lim_mu->pm infty ...}, for more general choices of $\eta$ and $g$ on $M^0$, the limits of $Z_\mu$ as $\mu\to\pm\infty$ are multiples of the Dirac mass $\delta_0$. These limits may not be zero because the b-trace does not vanish on commutators (it is not a trace). This additional contribution of the b-trace shows up like the eta-invariant of manifolds with boundary \cite{Melrose1993}. When $\dim\FF$ is even, we can prescribe any limit of $Z_\mu$ as $\mu\to\pm\infty$ with appropriate choices of $\eta$ and $g$ on $M^0$ \cite{AlvKordyLeichtnam-ziomf} (see \Cref{t: Z}); in particular, we can prescribe the zero limit. This makes $\bStr\bfP_{\mu,u,f}$ behave like a supertrace as $\mu\to\pm\infty$.

\subsection{The Lefschetz distribution}\label{ss: L_dis(phi)}

It seems there is no reasonable definition of $\Ldis(\phi)$ with a single leafwise reduced cohomology. However, $\bar H^\bullet I(\FF)$ and $\bar H^\bullet I'(\FF)$ together will do the job. Though this may look strange, we hope this idea will be valid in further developments of Deninger's program.

To begin with, by \Cref{t: intro - reduced conormal cohomology exact sequence} and \Cref{t: intro - reduced dual-conormal cohomology exact sequence}, it is enough to consider the actions $\phi^*$ on $H^\bullet K(\FF)$, $\bar H^\bullet J(\FF)$, $H^\bullet K'(\FF)$ and $\bar H^\bullet J'(\FF)$.  

Let us try to define Lefschetz distributions $\LdisK(\phi)$ and $\LdisdK(\phi)$ of $\phi$ on $\bar H^\bullet K(\FF)$ and $\bar H^\bullet K'(\FF)$. By \Cref{t: intro - H^bullet K(FF)} and \Cref{t: intro - H^bullet K'(FF)}, and since all twisted cohomologies $H_\mu^\bullet(L)$ have the same Euler characteristic $\chi(L)$, it makes some sense to define, on $\R^\times$,
\begin{align*}
\LdisK(\phi)&=\sum_{\varkappa_Lt>0}\chi(L)\sum_{k=0}^\infty e^{-(k+1)\varkappa_Lt}
=\sum_{\varkappa_Lt>0}\frac{\chi(L)}{e^{\varkappa_Lt}-1}\;,\\
\LdisdK(\phi)&=\sum_{\varkappa_Lt<0}\chi(L)\sum_{k=0}^\infty e^{k\varkappa_Lt}
=\sum_{\varkappa_Lt<0}\frac{\chi(L)}{1-e^{\varkappa_Lt}}\;.
\end{align*}
In each of these distributions, the conditions on the leaves $L\subset M^0$ guarantee that their contribution to the trace is defined; the other leaves in $M^0$ are omitted as a way of renormalization. Every $L$ has a contribution to just one of these distributions on $\R^\pm$. Taking into account all contributions from leaves $L\subset M^0$ in $\LdisK(\phi)$ and $\LdisdK(\phi)$, we get a combined Lefschetz distribution on $\R^\times$,
\[
\LdisKdK(\phi)=\sum_L\frac{\chi(L)}{|e^{\varkappa_Lt}-1|}\;.
\]
By changing variables and using L'H\^ospital's rule, it follows that every function $|e^{\varkappa_Lt}-1|^{-1}$ on $\R^\times$ can be extended to a distribution $W_L$ on $\R$ given by \cite{Barner1981}
\[
\langle W_L,f\rangle=\int_0^\infty\bigg(\frac{f(t)+f(-t)}{|e^{\varkappa_Lt}-1|}-\frac{2f(0)}{|\varkappa_L|\,t}\bigg)\,dt\;.
\]
Thus $\LdisKdK(\phi)$ can be extended to $\R$ as the distribution
\[
\LdisKdK(\phi)=\sum_L\chi(L)\,W_L\;.
\]

Next, by \Cref{t: intro - bfP_u f}, like in the case of~\eqref{P_u f - FF Riem - M closed}, we can consider the mapping
\[
f\mapsto\lim_{u\uparrow+\infty}\bStr\bfP_{\mu,u,f}
\] 
as the distributional supertrace of the action $\phi^*_\mu$ on $\bar H^\bullet_\mu H^{\pm\infty}(\mathring\bfFF)$. Since $\bfP_{\mu,u,f}$ is not of trace class, its b-supertrace is used here instead of the supertrace as a way of renormalization. By \Cref{t: intro - bar H^bullet J(FF)} and \Cref{t: intro - bar H^bullet J'(FF)}, it makes sense to define the Lefschetz distributions of $\phi$ on $\bar H^\bullet J(\FF)$ and $\bar H^\bullet J'(\FF)$, denoted by $\LdisJ(\phi)$ and $\LdisdJ(\phi)$, by
\begin{align*}
\langle\LdisJ(\phi),f\rangle&=\lim_{\mu\downarrow-\infty}\lim_{u\uparrow+\infty}\bStr\bfP_{\mu,u,f}\;,\\
\langle\LdisdJ(\phi),f\rangle&=\lim_{\mu\uparrow+\infty}\lim_{u\uparrow+\infty}\bStr\bfP_{\mu,u,f}\;.
\end{align*}

From now on, assume $\dim\FF$ is even (the relevant case in Deninger's program is $\dim\FF=2$). By \Cref{t: intro - lim_u->0 bTrs(bfP_mu u f,t: intro - lim_mu->pm infty ...}, we can choose $\eta$ and $g$ on $M^0$ so that
\[
\LdisJ(\phi)=\LdisdJ(\phi)=\bchi_{|\omega^1|}(\FF^1)\,\delta_0
+\sum_c\ell(c)\sum_{k\in\Z^\times}\epsilon_c(k)\,\delta_{k\ell(c)}\;.
\]
The notation $\LdisJdJ(\phi)$ may be used for this distribution, which is considered as a common feature of the actions $\phi^*$ on $\bar H^\bullet I(\FF)$ and $\bar H^\bullet I'(\FF)$.

Finally, by \Cref{t: intro - reduced conormal cohomology exact sequence,t: intro - reduced dual-conormal cohomology exact sequence}, it makes sense to define the combined Lefschetz distribution
\[
\Ldis(\phi)=\LdisIdI(\phi)=\LdisKdK(\phi)+\LdisJdJ(\phi)\;.
\]
By \Cref{t: intro - lim_u->0 bTrs(bfP_mu u f,t: intro - lim_mu->pm infty ...}, the trace formula conjectured by Deninger is satisfied:

\begin{thm}\label{t: L_dis(phi)}
Using the preserved leaves $L$ and the closed orbits $c$, we have
\[
\Ldis(\phi)=\sum_L\chi(L)\,W_L+\bchi_{|\omega^1|}(\FF^1)\,\delta_0
+\sum_c\ell(c)\sum_{k\in\Z^\times}\epsilon_c(k)\,\delta_{k\ell(c)}\;.
\]
\end{thm}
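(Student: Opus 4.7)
The plan is to unpack the definition $\Ldis(\phi)=\LdisKdK(\phi)+\LdisJdJ(\phi)$ and evaluate each summand using the theorems of \Cref{ss: main results}. All the analytic work has already been packaged into those statements; the remaining task is essentially to assemble them correctly.

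First I would handle the contribution $\LdisKdK(\phi)$. By \Cref{t: intro - H^bullet K(FF),t: intro - H^bullet K'(FF)}, the action of $\phi^{t*}$ on $H^\bullet K(\FF)$ and $H^\bullet K'(\FF)$ splits into a sum (resp.\ product) indexed by $(L,k)$ with $L\subset M^0$ preserved and $k\in\N_0$, acting on the Witten-twisted factor $H^\bullet_{-k-1}(L)$ (resp.\ $H^\bullet_k(L)$) as multiplication by the scalar $e^{-(k+1)\varkappa_L t}$ (resp.\ $e^{k\varkappa_L t}$). Since Witten's perturbation is a chain deformation of the de~Rham differential, the ordinary Euler characteristic of each twisted factor is $\chi(L)$. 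Summing the resulting geometric series on the open half-line where each series converges gives the distributions $\LdisK(\phi)$ and $\LdisdK(\phi)$ as displayed just before the theorem; combining them on $\R^\times$ and applying the Barner regularization via the $W_L$ extends the result to a distribution on $\R$, namely $\LdisKdK(\phi)=\sum_L\chi(L)\,W_L$.

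For the contribution $\LdisJdJ(\phi)$ I would split, for $0<u_0<u_1<\infty$,
\[
\bStr\bfP_{\mu,u_1,f}=\bigl(\bStr\bfP_{\mu,u_1,f}-\bStr\bfP_{\mu,u_0,f}\bigr)+\bStr\bfP_{\mu,u_0,f}\;.
\]
Passing to $u_1\uparrow+\infty$ and $u_0\downarrow0$, the bracketed term converges to the tempered distribution $Z_\mu(f)$ by \Cref{t: intro - lim_mu->pm infty ...}, while \Cref{t: intro - lim_u->0 bTrs(bfP_mu u f} identifies the limit of the remaining piece as $\bchi_{|\omega^1|}(\FF^1)\,f(0)+\sum_c\ell(c)\sum_{k\in\Z^\times}\epsilon_c(k)\,f(k\ell(c))$. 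Choosing $\eta$ and $g$ on $M^0$ so that $Z_\mu\to0$ as $\mu\to\pm\infty$ (as is possible in even leaf dimension by \Cref{t: intro - lim_mu->pm infty ...}), the iterated limits defining $\LdisJ(\phi)$ and $\LdisdJ(\phi)$ collapse to the common value $\bchi_{|\omega^1|}(\FF^1)\,\delta_0+\sum_c\ell(c)\sum_{k\in\Z^\times}\epsilon_c(k)\,\delta_{k\ell(c)}=\LdisJdJ(\phi)$, and adding the two contributions yields the stated formula.

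The step I expect to require the most care is the legitimacy of the iterated limit $\lim_{\mu\to\pm\infty}\lim_{u\uparrow+\infty}\bStr\bfP_{\mu,u,f}$ implicit in the definition of $\LdisJdJ(\phi)$; this interchange is controlled precisely by the tempered behaviour of $Z_\mu$ and its asymptotic vanishing supplied by \Cref{t: intro - lim_mu->pm infty ...}, so the burden of the proof ultimately reduces to invoking that theorem. A minor secondary point is verifying that the formal supertrace manipulation for $\LdisKdK(\phi)$ in the first step genuinely realizes the distributional trace of $\phi^{t*}$ on $H^\bullet K(\FF)\oplus H^\bullet K'(\FF)$ after the natural renormalization distinguishing contributions on $\R^+$ and $\R^-$, but this is already recorded in the discussion preceding the theorem and requires only citation, not new computation.
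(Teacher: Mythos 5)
Your proposal is correct and follows essentially the same route as the paper: $\Ldis(\phi)$ is by definition $\LdisKdK(\phi)+\LdisJdJ(\phi)$, the first summand is read off from Theorems \ref{t: intro - H^bullet K(FF)} and \ref{t: intro - H^bullet K'(FF)} together with the Barner regularization giving $\sum_L\chi(L)\,W_L$, and the second summand is computed by splitting $\bStr\bfP_{\mu,u,f}$ as in Theorems \ref{t: intro - lim_u->0 bTrs(bfP_mu u f} and \ref{t: intro - lim_mu->pm infty ...} and choosing $\eta,g$ on $M^0$ so that $Z_\mu\to0$. This is precisely the assembly carried out in the discussion of \Cref{ss: L_dis(phi)}.
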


\section{Short guide}\label{s: guide}

Our arguments involve tools from two different sources: Analysis and Foliations. Concerning Analysis, we mainly use conormal and dual-conormal distributions, analysis on manifolds of bounded geometry and small b-calculus. Concerning Foliations, we mainly use local Reeb's stability, suspension foliations, Riemannian foliations, and differential forms and currents on foliated manifolds.  For the readers' convenience, the needed basic concepts and results from those areas are recalled in \Cref{ch: analytic tools,ch: foln tools}. The specialists on any of them may skip the corresponding chapter, except perhaps the notation. A few short proofs are also recalled in \Cref{ch: analytic tools} because their arguments will be used.

\Cref{ch: fols with simple fol flows} contains a more specific description of foliations with simple foliated flows, explaining all topological and geometric objects that will be used in our analysis. We specially focus on the case of suspension foliations, which describe $\FF$ on a tubular neighborhood $T$ of $M^0$. 

\Cref{ch: conormal} is devoted to the study of the action $\phi^*$ on $\bar H^\bullet I(\FF)$ and $\bar H^\bullet I'(\FF)$, showing \Cref{t: intro - reduced conormal cohomology exact sequence,t: intro - H^bullet K(FF),t: intro - bar H^bullet J(FF),t: intro - reduced dual-conormal cohomology exact sequence,t: intro - H^bullet K'(FF),t: intro - bar H^bullet J'(FF)}.

Finally, \Cref{ch: contribution from M^1} is devoted to the study of $\bStr\bfP_{\mu,u,f}$, showing \Cref{t: intro - bfP_u f,t: intro - lim_u->0 bTrs(bfP_mu u f,t: intro - lim_mu->pm infty ...}.

\chapter{Analytic tools}\label{ch: analytic tools}

\section{Section spaces and operators on manifolds}\label{s: section sps and opers}

The field of coefficients is $\K$, equal to $\R$ or $\C$. We typically consider $\K=\C$, and the few cases where $\K=\R$ will be indicated without changing the notation.

\subsection{Topological vector spaces}\label{ss: TVS}

Let us recall some concepts and fix some conventions concerning topological vector spaces (TVSs); \index{TVS} see \cite{Edwards1965,Horvath1966-I,Kothe1969-I,Schaefer1971,NariciBeckenstein2011,Wengenroth2003} for other concepts we use. We always consider (possibly non-Hausdorff) locally convex spaces (LCSs); \index{LCS} the abbreviation LCHS \index{LCHS} is used in the Hausdorff case. Local convexity is preserved by all operations we use. For instance, we will use the (locally convex) inductive/projective limit of any inductive/projective spectrum (or system) of continuous linear maps between LCSs. If the inductive/projective spectrum is a sequence of continuous inclusions, then the inductive/projective limit is the union/intersection, always endowed with the inductive/projective limit topology. This applies to the locally convex direct sum and the topological product of LCSs. LF-spaces are not assumed to be strict. The (continuous) dual $X'$ of any LCS $X$ is always endowed with the strong topology.

Now fix an inductive spectrum of LCSs of the form $(X_k)=(X_0\subset X_1\subset\cdots)$, and let $X=\bigcup_kX_k$. The condition on $(X_k)$ to be \emph{acyclic} \index{acyclic} means that, for all $k$, there is some $k'\ge k$ such that, for all $k''\ge k'$, the topologies of $X_{k'}$ and $X_{k''}$ coincide on some $0$-neighborhood of $X_k$ \cite[Theorem~6.1]{Wengenroth2003}. In this case, $X$ is Hausdorff if and only if all $X_k$ are Hausdorff \cite[Proposition~6.3]{Wengenroth2003}. It is said that $(X_k)$ is \emph{regular} \index{regular} if any bounded $B\subset X$ is contained and bounded in some step $X_k$. If moreover the topologies of $X$ and $X_k$ coincide on $B$, then $(X_k)$ is said to be \emph{boundedly retractive}. The conditions of being \emph{compactly retractive} \index{compactly retractive} or \emph{sequentially retractive} are similarly defined, using compact sets or convergent sequences. 

If the steps $X_k$ are Fr\'echet spaces, the above properties of $(X_k)$ only depend on the LF-space $X$ \cite[Chapter~6, p.~111]{Wengenroth2003}, and therefore they are considered as properties of $X$. In this case, $X$ is acyclic if and only if it is boundedly/compactly/sequentially retractive \cite[Proposition~6.4]{Wengenroth2003}. As a consequence, acyclic LF-spaces are complete and regular \cite[Corollary~6.5]{Wengenroth2003}. A topological vector subspace $Y\subset X$ is called a \emph{limit subspace} \index{limit subspace} if $Y\equiv\bigcup_k(X\cap Y_k)$ as TVSs.

Assume the steps $X_k$ are LCHSs. It is said that $(X_k)$ is \emph{compact} if the inclusion maps are compact operators. Then $(X_k)$ is acyclic, and so $X$ is Hausdorff. Moreover $X$ is a complete bornological DF Montel space \cite[Theorem~6']{Komatsu1967}.

The above concepts and properties also apply to an inductive/projective spectrum of LCSs consisting of continuous inclusions $X_r\subset X_{r'}$ for $r<r'$ in $\R$ because $\bigcap_rX_r=\bigcap_kX_{r_k}$ and $\bigcup_rX_r=\bigcup_kX_{s_k}$ for sequences $r_k\downarrow-\infty$ and $s_k\uparrow+\infty$.

In the category of continuous linear maps between LCSs, the exactness of a sequence $0\to X \to Y \to Z\to0$ means that it is exact as a sequence of linear maps and consists of topological homomorphisms \cite[Sections~2.1 and~2.2]{Wengenroth2003}.

Given LCSs $X$ and $Y$, let $L(X,Y)$ \index{$L(X,Y)$} denote the LCS of continuous linear maps $X\to Y$ with the topology of uniform convergence over bounded subsets. If $X$ and $Y$ are Banach spaces, then $L(X,Y)$ is also a Banach space whose norm may be denoted by $\|{\cdot}\|_{X,Y}$, with possible simplifications to avoid redundant notation. If $X=Y$, then the notation $\End(X)$ \index{$\End(X)$} is used, as well as $\|{\cdot}\|_X$ if $X$ is a Banach space. 

The following construction will be often used. Given a linear subspace $\AA$ of closed operators, densely defined in $X$ and with values in $Y$, we get the LCS
\begin{equation}\label{Z = u in bigcup_A in AA dom A | AA cdot u subset Y}
Z=\Big\{\,u\in\bigcap_{A\in\AA}\dom A\mid \AA\cdot u\subset Y\,\Big\}
\end{equation}
with the projective topology given by the maps $A:Z\to Y$ ($A\in\AA$).  If $Y$ is a Fr\'echet space, $L(X,Y)\subset\AA$ and $\AA/L(X,Y)$ is countably generated, then $Z$ is easily seen to be a Fr\'echet space. If moreover $Y$ is a Hilbertian space, then $Z$ is easily seen to be a totally reflexive Fr\'echet space using \cite[Theorem~4]{Valdivia1989}.

A \emph{Hilbertian space} \index{Hilbertian space} is a TVS $X$ endowed with a family of Hilbert-space scalar products, all of them with equivalent norms defining the topology of $X$, but none of them is distinguished.

\subsection{Smooth functions on open subsets of $\R^n$}\label{ss: smooth functions on U}

For any open $U\subset\R^n$ ($n\in\N_0=\N\cup\{0\}$), we use the Fr\'echet space $C^\infty(U)$ of smooth ($\K$-valued) functions on $U$, whose topology is described by the semi-norms
\begin{equation}\label{| u |_K C^k}
\|u\|_{K,C^k}=\sup_{x\in K,\ |I|\le k}|\partial^I u(x)|\;,
\end{equation}
for any compact $K\subset U$, $k\in\N_0$ and $I\in\N_0^n$, with standard multi-index notation. For any $S\subset U$, let $C^\infty_S(U)\subset C^\infty(U)$ be the topological vector subspace of smooth functions supported in $S$. The strict LF-space of compactly supported functions is
\begin{equation}\label{Cinftyc(U)}
\Cinftyc(U)=\bigcup_KC^\infty_K(U)\;,
\end{equation}
for compact subsets $K\subset U$.

Straightforward generalizations to the case of functions with values in $\K^l$ ($l\in\N$) can be given by 
\begin{equation}\label{C^infty_./c(U K^l) equiv C^infty_./c(U) otimes K^l}
C^\infty_{{\cdot}/\co}(U,\K^l)\equiv C^\infty_{{\cdot}/\co}(U)\otimes\K^l\;.
\end{equation}
(The notation $C^\infty_{{\cdot}/\co}$ or $C^\infty_{\co/{\cdot}}$ refers to both $C^\infty$ and $\Cinftyc$.)

\subsection{Vector bundles}\label{ss: vector bundless}

We fix a smooth $n$-manifold $M$ and a ($\K$-) vector bundle $E$ of rank $l$ over $M$. Let $E_x\subset E$ ($x\in M$) denote the fibers of $E$, $0_x$ the zero element of $E_x$, and $0_M$ the zero section of $E$. Let $\Omega^aE$ \index{$\Omega^aE$} ($a\in\R$) be the line bundle of $a$-densities of $E$, and $o(E)$ \index{$o(E)$} the flat line bundle of its orientations; as usual, we write $\Omega E=\Omega^1E$. \index{$\Omega E$} Recall that $\Omega^aE\otimes\Omega^bE\equiv\Omega^{a+b}E$. We use the notation $\Lambda E=\bigwedge E^*$ for the exterior bundle of the dual bundle. We may denote $\Lambda^\topd E=\Lambda^lE$, and use similar notation with other gradings and bigradings. \index{$\Lambda E$} \index{$\Lambda^\topd E$} For any submanifold $L\subset M$, we also write $E_L=E|_L$. As particular cases, we have the tangent and cotangent $\R$-vector bundles, $TM$ and $T^*M$, and the associated $\K$-vector bundles $o(M)=o(TM)\otimes\K$, \index{$o(M)$} $\Lambda M=\Lambda TM\otimes\K$, $\Omega^aM=\Omega^aTM\otimes\K$ and $\Omega M=\Omega TM\equiv\Lambda^n M\otimes o(M)$. \index{$\Lambda M$} \index{$\Omega^aM$} \index{$\Omega M$}

\subsection{Smooth and distributional sections}\label{ss: smooth/distributional sections}

Concerning spaces of distributional sections, we follow the notation of \cite{Melrose1996,Hormander1983-I,Hormander1985-III}, with some minor changes to fit our notation for foliations. The precise references of the properties recalled here are given in \cite[Section~2.4]{AlvKordyLeichtnam-conormal}.

Consider the Fr\'echet space $C^\infty(M;E)$ of smooth sections of $E$, whose topology is described by semi-norms $\|{\cdot}\|_{K,C^k}$ defined like in~\eqref{| u |_K C^k}, using charts $(U,x)$ of $M$ and diffeomorphisms of triviality $E_U\equiv U\times\K^l$ with $K\subset U$. Redundant notation is simplified as usual. For instance, in the case of the trivial vector bundle of rank $1$ (resp., $l$), we write $C^\infty(M)$ (resp., $C^\infty(M,\K^l)$). We also write $C^\infty(L,E)=C^\infty(L,E_L)$ and $C^\infty(M;\Omega^a)=C^\infty(M;\Omega^aM)$. If $M$ is fixed, the notation $C^\infty(E)=C^\infty(M;E)$ can be used, but it may be confusing because the space of smooth functions on $E$ is also used. In particular, $\fX(M)=C^\infty(M;TM)$ is the Lie algebra of vector fields. The subspace $C^\infty_S(M;E)$ \index{$C^{\pm\infty}_S(M;E)$} is defined like in \Cref{ss: smooth functions on U}, and the strict LF-space $\Cinftyc(M;E)$ \index{$C^{\pm\infty}_{{\cdot}/\co}(M;E)$} is defined like in~\eqref{Cinftyc(U)}, using compact subsets $K\subset M$. There is a continuous inclusion $\Cinftyc(M;E)\subset C^\infty(M;E)$.

The notation $C^\infty(M;E)$, or $C^\infty(E)$, is also used with any smooth fiber bundle $E$, obtaining a completely metrizable topological space with the weak $C^\infty$ topology.

The space of distributional sections with arbitrary/compact support is
\begin{equation}\label{C^-infty_cdot/c(M;E)}
C^{-\infty}_{{\cdot}/\co}(M;E)=C^\infty_{\co/{\cdot}}(M;E^*\otimes\Omega)'\;.
\end{equation}
The canonical pairing of any $u\in C^{-\infty}_{{\cdot}/\co}(M;E)$ and $v\in C^\infty_{\co/{\cdot}}(M;E^*\otimes\Omega)$ is denoted by $\langle u,v\rangle$ (or $(u,v)$ if the notation $\langle{\cdot},{\cdot}\rangle$ is used for other purposes). Integration of smooth densities on $M$ and the canonical pairing of $E$ and $E^*$ define a continuous dense inclusion $C^\infty_{{\cdot}/\co}(M;E)\subset C^{-\infty}_{{\cdot}/\co}(M;E)$. If $U\subset M$ is open, the extension by zero defines a TVS-embedding $C^{\pm\infty}_\co(U;E)\subset C^{\pm\infty}_\co(M;E)$. 

The above spaces of distributional sections can be also described in terms of the corresponding spaces of distributions as the algebraic tensor product as $C^\infty(M)$-modules \cite[Eq.~(2.5)]{AlvKordyLeichtnam-conormal}
\begin{equation}\label{C^infty(M)-tensor product description of C^pm infty_./c(M E)}
C^{-\infty}_{{\cdot}/\co}(M;E)\equiv C^{-\infty}_{{\cdot}/\co}(M)\otimes_{C^\infty(M)}C^\infty(M;E)\;.
\end{equation}
This tensor product has an induced topology so that this is a TVS-identity. Expressions like~\eqref{C^infty(M)-tensor product description of C^pm infty_./c(M E)} hold for most of the LCSs of distributional sections we will consider, which are also $C^\infty(M)$-modules. Thus, from now on, we will often define and study those spaces for the trivial line bundle or density bundles, and then the notation for arbitrary vector bundles will be used without further comment, and the properties have straightforward extensions.

Given a smooth submersion $\phi:M\to M'$, a smooth/distributional section of $E$ has \emph{compact support in the vertical direction} if its support has compact intersections with the fibers of $\phi$. They form the LCHSs $C^{\pm\infty}_\cv(M;E)$. \index{$C^{\pm\infty}_\cv(M;E)$} Here, $\Cinftycv(M;E)$ has the inductive topology defined like in the case of $\Cinftyc(M;E)$, using~\eqref{| u |_K C^k} and~\eqref{Cinftyc(U)} with closed subsets $K\subset M$ whose intersection with the fibers is compact. $C^{-\infty}_\cv(M;E)$ has the projective topology defined by the (product) maps $f:C^{-\infty}_\cv(M;E)\to C^{-\infty}_\co(M;E)$, for $f\in\Cinftyc(M)$. A version of~\eqref{C^infty(M)-tensor product description of C^pm infty_./c(M E)} is also true for $C^{-\infty}_\cv(M;E)$ in this case.

Consider also the Fr\'echet space $C^k(M)$ ($k\in\N_0$) of $C^k$ functions, with the semi-norms $\|{\cdot}\|_{K,C^k}$ given like in~\eqref{| u |_K C^k}, the LF-space $C^k_\co(M)$ of $C^k$ functions with compact support, defined like in~\eqref{Cinftyc(U)}, and the space $C^{\prime\,-k}_{{\cdot}/\co}(M)$ \index{$C^{\prime\,-k}_{{\cdot}/\co}(M)$} of distributions of order $k$ with arbitrary/compact support, defined like in~\eqref{C^-infty_cdot/c(M;E)}. There are continuous dense inclusions
\begin{equation}\label{C^prime -k'(M E) supset C^prime -k(M E)}
C^{k'}_{{\cdot}/\co}(M)\subset C^k_{{\cdot}/\co}(M)\;,\quad 
C^{\prime\,-k'}_{\co/{\cdot}}(M)\supset C^{\prime\,-k}_{\co/{\cdot}}(M)\quad(k<k')\;,
\end{equation}
with
\begin{equation}\label{bigcap_k C^k_./c(M) = C^infty_./c(M)}
\bigcap_kC^k_{{\cdot}/\co}(M)=C^{\infty}_{{\cdot}/\co}(M)\;,\quad
\bigcup_kC^{\prime\,-k}_\co(M)=C^{-\infty}_\co(M)\;.
\end{equation}
The space $\bigcup_kC^{\prime\,-k}(M)$ consists of the distributions with some order. If $M$ is compact, then every $C^k(M)$ is a Banach space and $\bigcup_kC^{\prime\,-k}(M)=C^{-\infty}(M)$.

$C^{\infty}_\co(M)$ and $C^{k}_\co(M)$ are complete and Hausdorff. $C^{\infty}_{{\cdot}/\co}(M)$ and $C^{k}_{{\cdot}/\co}(M)$ are ultrabornological and barreled. $C^{\pm\infty}_{{\cdot}/\co}(M)$ is a Montel space (in particular, barreled) and reflexive. $C^{\infty}_{{\cdot}/\co}(M)$ is a Schwartz space, and therefore $C^{-\infty}_{{\cdot}/\co}(M)$ is ultrabornological. $C^\infty(M)$ is distinguished. $C^{\pm\infty}_{{\cdot}/\co}(M)$ is webbed.

The type of notation introduced in this section will be used with any LCHS and $C^\infty(M)$-module continuously included in $C^{\pm\infty}(M;E)$.

\subsection{Linear operators on section spaces}\label{ss: ops}

Let $E$ and $F$ be vector bundles over $M$, and let $A:\Cinftyc(M;E)\to C^\infty(M;F)$ be a continuous linear map. The \emph{transpose} of $A$ is the continuous linear map
\begin{gather*}
A^\trans:C^{-\infty}_\co(M;F^*\otimes\Omega)\to C^{-\infty}(M;E^*\otimes\Omega)\;,\\
\langle A^\trans v,u\rangle=\langle v,Au\rangle\;,\quad u\in\Cinftyc(M;E)\;,\quad v\in C^{-\infty}_\co(M;F^*\otimes\Omega)\;.
\end{gather*}
For instance, the transpose of $\Cinftyc(M;E^*\otimes\Omega)\subset C^\infty(M;E^*\otimes\Omega)$ is a continuous dense injection $C^{-\infty}_\co(M;E)\subset C^{-\infty}(M;E)$. If $A^\trans$ restricts to a continuous linear map $\Cinftyc(M;F^*\otimes\Omega)\to C^\infty(M;E^*\otimes\Omega)$, then $A^{tt}:C^{-\infty}_\co(M;E)\to C^{-\infty}(M;F)$ is a continuous extension of $A$, also denoted by $A$. The \emph{Schwartz kernel},\index{Schwartz kernel} $K_A\in C^{-\infty}(M^2;F\boxtimes(E^*\otimes\Omega))$,\index{$K_A$} is determined by the condition $\langle K_A,v\otimes u\rangle = \langle v,Au\rangle$ for $u\in\Cinftyc(M;E)$ and $v\in\Cinftyc(M;F^*\otimes\Omega)$. The Schwartz kernel theorem \cite[Theorem~5.2.1]{Hormander1971} states that we have a linear isomorphism
\begin{equation}\label{Schwartz kernel theorem}
L(\Cinftyc(M;E),C^{-\infty}(M;F))\xrightarrow{\cong} C^{-\infty}(M^2;F\boxtimes (E^*\otimes\Omega M))\;,\quad 
A\mapsto K_A\;.
\end{equation}
Using that $(F^*\otimes \Omega)^*\otimes \Omega\equiv F$, we get
\[
K_{A^\trans}=R^*K_A\in C^{-\infty}(M^2;(E^*\otimes\Omega)\boxtimes F)\;,
\]
where $R: M^2\to M^2$ is given by $R(x,y)=(y,x)$. If $K_A$ is $C^\infty$, we can write
\begin{equation}\label{Au(y) = int_M K_A(y x) u(x)}
Au(x)=\int_MK_A(x,y)u(y)\;,\quad A^\trans v(x)=\int_MK_A(y,x)v(y)\;,
\end{equation}
for $u\in\Cinftyc(M;E)$ and $v\in\Cinftyc(M;F^*\otimes\Omega)$.

There are versions of the construction of $A^\trans$ and $A^{\text{\rm tt}}$ when both the domain and codomain of $A$ have compact support, or no support restriction. For example, for any open $U\subset M$, the transpose of the extension by zero $\Cinftyc(U;E^*\otimes\Omega)\subset\Cinftyc(M;E^*\otimes\Omega)$ is the restriction map
\begin{equation}\label{restriction map}
C^{-\infty}(M;E)\to C^{-\infty}(U,E)\;,\quad u\mapsto u|_U\;,
\end{equation}
and the transpose of the restriction map $C^\infty(M;E^*\otimes\Omega)\to C^\infty(U,E^*\otimes\Omega)$ is the extension by zero
\begin{equation}\label{ext by 0}
C^{-\infty}_\co(U;E)\subset C^{-\infty}_\co(M;E)\;.
\end{equation}
Inclusion maps may be denoted by $\iota$ \index{$\iota$} and restriction maps by $R$, \index{$R$} without further comment. The \emph{singular support} of any $u\in C^{-\infty}(M;E)$, $\sing\supp u$, is the complement of the maximal open subset $U\subset M$ with $u|_U\in C^\infty(U;E)$.

\subsection{Pull-back and push-forward of distributional sections}
\label{ss: pull-back and push-forward of distrib sections}

Any smooth map $\phi:M'\to M$ induces the continuous linear pull-back map
\begin{equation}\label{phi^*: C^infty(M E) -> C^infty(M' phi^*E)}
\phi^*:C^\infty(M;E)\to C^\infty(M';\phi^*E)\;.
\end{equation}
If $\phi$ is a submersion, then it also induces the continuous linear push-forward map
\begin{equation}\label{phi_*: Cinftyc(M' phi^*E otimes Omega_fiber) -> Cinftyc(M E)}
\phi_*:\Cinftyc(M';\phi^*E\otimes\Omega_\fiber)\to \Cinftyc(M;E)\;,
\end{equation}
where $\Omega_\fiber=\Omega_\fiber M'=\Omega\VV$ \index{$\Omega_\fiber$} for the vertical subbundle $\VV=\ker\phi_*\subset TM'$. Moreover, the map~\eqref{phi_*: Cinftyc(M' phi^*E otimes Omega_fiber) -> Cinftyc(M E)} has a continuous extension
\begin{equation}\label{phi_*: C^infty_cv(M' phi^*E otimes Omega_fiber) -> Cinfty(M E)}
\phi_*:C^\infty_\cv(M';\phi^*E\otimes\Omega_\fiber)\to C^\infty(M;E)\;,
\end{equation}
also called push-forward map. Using~\eqref{phi_*: Cinftyc(M' phi^*E otimes Omega_fiber) -> Cinftyc(M E)} and any partition of unity $\{\lambda_j\}$ of $M$ consisting of compactly supported smooth functions, the map~\eqref{phi_*: C^infty_cv(M' phi^*E otimes Omega_fiber) -> Cinfty(M E)} is given by
\begin{equation}\label{phi_*u}
\phi_*u=\sum_j\phi_*(\phi^*\lambda_j\cdot u)\;.
\end{equation}

Since $\phi^*\Omega M\equiv\Omega(TM/\VV)\equiv\Omega^{-1}_{\text{\rm fiber}}\otimes\Omega M'$, transposing the versions of~\eqref{phi^*: C^infty(M E) -> C^infty(M' phi^*E)} and~\eqref{phi_*: Cinftyc(M' phi^*E otimes Omega_fiber) -> Cinftyc(M E)} with $E^*\otimes\Omega M$ and using~\eqref{C^-infty_cdot/c(M;E)}, we obtain continuous extensions of~\eqref{phi_*: Cinftyc(M' phi^*E otimes Omega_fiber) -> Cinftyc(M E)} and~\eqref{phi^*: C^infty(M E) -> C^infty(M' phi^*E)} \cite[Theorem~6.1.2]{Hormander1971},
\begin{gather}
\phi_*:C^{-\infty}_\co(M';\phi^*E\otimes\Omega_\fiber)\to C^{-\infty}_\co(M;E)\;,
\label{phi_*: C^-infty_c(M' phi^*E otimes Omega_fiber) -> C^-infty_c(M E)}\\
\phi^*:C^{-\infty}(M;E)\to C^{-\infty}(M';\phi^*E)\;,
\label{phi_*: C^-infty(M E) -> C^-infty(M' phi^*E)}
\end{gather}
also called push-forward and pull-back maps. Again,~\eqref{phi_*: C^-infty_c(M' phi^*E otimes Omega_fiber) -> C^-infty_c(M E)} has a continuous extension,
\begin{equation}\label{phi_*: C^-infty_cv(M' phi^*E otimes Omega_fiber) -> C^-infty(M E)}
\phi_*:C^{-\infty}_\cv(M';\phi^*E\otimes\Omega_\fiber)\to C^{-\infty}(M;E)\;,
\end{equation}
also called push-forward map, defined like~\eqref{phi_*: C^infty_cv(M' phi^*E otimes Omega_fiber) -> Cinfty(M E)} with~\eqref{phi_*: C^-infty_c(M' phi^*E otimes Omega_fiber) -> C^-infty_c(M E)}.

If $\phi:M'\to M$ is a local diffeomorphism, we can omit $\Omega_\fiber$ in the push-forward maps. If moreover $\phi$ is proper, the compositions $\phi_*\phi^*$ and $\phi^*\phi_*$ are defined on smooth or distributional sections with compact support or no support condition.

The spaces $C^\infty(M';\phi^*E)$ and $C^\infty(M';\phi^*E\otimes\Omega_\fiber)$ become $C^\infty(M)$-modules via the homomorphism of algebras, $\phi^*:C^\infty(M)\to C^\infty(M')$, and we have
\begin{align}
C^{\pm\infty}_{{\cdot}/\co}(M';\phi^*E)&=C^{\pm\infty}_{{\cdot}/\co}(M')\otimes_{C^\infty(M)}C^\infty(M;E)\;,
\label{C^infty(M)-tensor product description of C^pm infty_./c(M' phi^*E)}\\
C^{\pm\infty}_{{\cdot}/\co}(M';\phi^*E\otimes\Omega_\fiber)
&=C^{\pm\infty}_{{\cdot}/\co}(M';\Omega_\fiber)\otimes_{C^\infty(M)}C^\infty(M;E)\;.
\label{C^infty(M)-tensor product description of C^pm infty_cv/c(M' phi^*E otimes Omega_fiber)}
\end{align}
Using~\eqref{C^infty(M)-tensor product description of C^pm infty_./c(M E)},~\eqref{C^infty(M)-tensor product description of C^pm infty_./c(M' phi^*E)} and~\eqref{C^infty(M)-tensor product description of C^pm infty_cv/c(M' phi^*E otimes Omega_fiber)}, we can describe~\eqref{phi^*: C^infty(M E) -> C^infty(M' phi^*E)}--\eqref{phi_*: C^-infty_cv(M' phi^*E otimes Omega_fiber) -> C^-infty(M E)} as the $C^\infty(M)$-tensor products of their trivial-line-bundle versions with the identity map on the space $C^\infty(M;E)$. This kind of description is valid with other spaces of distributional sections with the obvious extensions of~\eqref{C^infty(M)-tensor product description of C^pm infty_./c(M' phi^*E)} and~\eqref{C^infty(M)-tensor product description of C^pm infty_cv/c(M' phi^*E otimes Omega_fiber)}. Thus, in this chapter, we will mainly consider the pull-back and push-forward between spaces of distributions. Only the special case of the pull-back and push-forward between spaces of currents will be briefly indicated a few times.

\subsection{Differential operators}\label{ss: diff ops}

Let $\Diff(M)\subset\End(C^\infty_{{\cdot}/\co}(M))$ be the subalgebra and $C^\infty(M)$-submodule of differential operators, filtered by the order. Every $\Diff^m(M)$ ($m\in\N_0$) is spanned as $C^\infty(M)$-module by all compositions of up to $m$ elements of $\fX(M)$, considered as the Lie algebra of derivations of $C^\infty_{{\cdot}/\co}(M)$. In particular, $\Diff^0(M)\equiv C^\infty(M)$.

On the other hand, let 
\[
P(T^*M)=\bigoplus_{m=0}^\infty P^{(m)}(T^*M)\subset C^\infty(T^*M)
\]
be the subalgebra and $C^\infty(M)$-submodule of functions whose restriction to the fibers are  polynomials, equipped with the grading given by the degree; in particular,
\[
P^{(0)}(T^*M)\equiv C^\infty(M)\;,\quad P^{(1)}(T^*M)\equiv\fX(M)\otimes\C\;. 
\]
For every order $m$, the principal symbol exact sequence
\begin{equation}\label{sigma_m}
0\to\Diff^{m-1}(M)\hookrightarrow\Diff^m(M) \xrightarrow{\sigma_m} P^{(m)}(T^*M)\to0
\end{equation}
is defined so that the principal symbol of any $X\in\fX(M)\subset\Diff^1(M)$ is $\sigma_1(X)=iX\in P^{(1)}(T^*M)$, and $\bigoplus_m\sigma_m$ induces an isomorphism of graded algebras and $C^\infty(M)$-modules,
\[
\bigoplus_{m=0}^\infty\Diff^m(M)/\Diff^{m-1}(M)\xrightarrow{\cong} P(T^*M)\;.
\]

 For vector bundles $E$ and $F$ over $M$, the above concepts can be extended by taking the $C^\infty(M)$-tensor product with $C^\infty(M;F\otimes E^*)$, obtaining
\begin{align*}
\Diff^m(M;E,F)&\subset L(C^\infty_{{\cdot}/\co}(M;E),C^\infty_{{\cdot}/\co}(M;F))\;,\\
P^{(m)}(T^*M;F\otimes E^*)&\subset C^\infty(T^*M;\pi^*(F\otimes E^*))\;,
\end{align*}
where $\pi:T^*M\to M$ is the projection. So $\Diff^0(M;E,F)\equiv C^\infty(M;F\otimes E^*)$. If $E=F$, we write $\Diff(M;E)$, which is a filtered algebra. The principal symbol $\sigma_m$ on $\Diff^m(M;E,F)$ \index{$\Diff^m(M;E,F)$} is given by the $C^\infty(M)$-tensor product of~\eqref{sigma_m} with the identity map on $C^\infty(M;F\otimes E^*)$. Redundant notation is simplified like in \Cref{ss: smooth/distributional sections}. Recall that $A\in\Diff^m(M;E,F)$ is \emph{elliptic} if $\sigma_m(A)(p,\xi)$ is an isomorphism for all $p\in M$ and $0\ne\xi\in T^*_pM$.  If $E$ is a line bundle, then \cite[Eq.~(2.13)]{AlvKordyLeichtnam-conormal}
\begin{equation}\label{Diff^m(M E) equiv Diff^m(M)}
\Diff^m(M;E)\equiv\Diff^m(M)\;.
\end{equation}
For $m=0$, we get $C^\infty(M;E\otimes E^*)\equiv C^\infty(M)$.

For all $A\in\Diff^m(M;E)$, we have $A^\trans\in\Diff^m(M;E^*\otimes\Omega)$, and therefore $A$ has continuous extensions to an endomorphism $A$ of $C^{-\infty}_{{\cdot}/\co}(M;E)$ (\Cref{ss: ops}). A similar map is defined when $A\in\Diff^m(M;E,F)$.

The canonical coordinates of $\R^n\times\R^n\equiv\R^n\times\R^{n*}\equiv T^*\R^n$ are denoted by $(x,\xi)=(x^1,\dots,x^n,\xi^1,\dots,\xi^n)$. Let $dx=dx^1\wedge\dots\wedge dx^n$, $d\xi=d\xi^1\wedge\dots\wedge d\xi^n$, $D^I=D_x^I=(-i)^{|I|}\partial_I=(-i)^{|I|}\partial_{x,I}$ ($i=\sqrt{-1}$) and $\xi^I=\xi^{i_1}\cdots\xi^{i_n}$ ($I=(i_1,\dots,i_n)\in\N_0^n$). For any open $U\subset\R^n$ and $A=\sum_{|I|\le m}a_I(x)\,D^I\in\Diff^m(U)$, write $A=a(x,D)$ for $a(x,\xi)=\sum_{|I|\le m}a_I(x)\,\xi^I$, and then $\sigma_m(A)=\sum_{|I|=m}a_I(x)\,\xi^I$. We have
\begin{equation}\label{Au(x) = ...}
Au(x)=(2\pi)^{-n}\int_{\R^n}e^{i\langle x,\xi\rangle}a(x,\xi)\hat u(\xi)\,d\xi\;,
\end{equation}
for all $u\in\Cinftyc(U)$, where $\hat u$ is the Fourier transform of $u$. The local extension of this expression to the case where $A\in\Diff^m(M;E,F)$ is straightforward, using charts of $M$ and local trivializations of $E$ and $F$, and taking local coefficients $a_I$ with values in $\C^{l'}\otimes\C^{l*}\equiv\C^{l\times l'}$ ($l$ and $l'$ are the ranks of $E$ and $F$).

\subsection{Symbols}\label{ss: symbols}

For any open $U\subset\R^n$ and $l\in\N_0$, a \emph{symbol} of \emph{order} at most $m\in\R$ on $U\times\R^l$, or simply on $U$, is a function $a\in C^\infty(U\times\R^l)$ such that, for any compact $K\subset U$, $I\in\N_0^n$ and $J\in\N_0^l$,
\begin{equation}\label{| a |_K I J m}
\|a\|_{K,I,J,m}:=\sup_{x\in K,\ \xi\in\R^l}\frac{|D_x^I D_\xi^J a(x,\xi)|}{(1+|\xi|)^{m-|J|}}<\infty\;.
\end{equation}
They form a Fr\'echet space $S^m(U\times\R^l)$ with the semi-norms~\eqref{| a |_K I J m}. There are continuous inclusions
\begin{equation}\label{S^m(T^*M) subset S^m'(T^*M)}
S^m(U\times\R^l)\subset S^{m'}(U\times\R^l)\quad(m<m')\;,
\end{equation}
giving rise to the LCSs
\[
S^\infty(U\times\R^l)=\bigcup_mS^m(U\times\R^l)\;,\quad S^{-\infty}(U\times\R^l)=\bigcap_mS^m(U\times\R^l)\;.
\]
$S^\infty(U\times\R^l)$ is an LF-space, and therefore barreled, ultrabornological and webbed \cite[Proposition~3.1]{AlvKordyLeichtnam-conormal}. It is also a filtered algebra and $C^\infty(U)$-module with the pointwise multiplication. The homogeneous components of the corresponding graded algebra are denoted by $S^{(m)}(U\times\R^l)$. \index{$S^{(m)}(U\times\R^l)$} The Fr\'echet space $S^{-\infty}(U\times\R^l)$ is a filtered ideal and $C^\infty(U)$-submodule of $S^\infty(U\times\R^l)$.
The notation $S^m(\R^l)$, $S^{\pm\infty}(\R^l)$ and $S^{(m)}(\R^l)$ is used when $U=\R^0=\{0\}$.

Consider the first-factor projection $U\times\R^l\to U$ to define $\Cinftycv(U\times\R^l)$. There are continuous inclusions
\begin{equation}\label{S^infty(U x R^l) subset C^infty(U x R^l)}
\Cinftycv(U\times\R^l)\subset S^{-\infty}(U\times\R^l)\;,\quad S^\infty(U\times\R^l)\subset C^\infty(U\times\R^l)\;;
\end{equation}
in particular, $S^\infty(U\times\R^l)$ is Hausdorff. The following properties hold \cite[Corollaries~3.4--3.6 and Remark~3.8]{AlvKordyLeichtnam-conormal}: The topologies of $S^\infty(U\times\R^l)$ and $C^\infty(U\times\R^l)$ coincide on $S^m(U\times\R^l)$, however the second inclusion of~\eqref{S^infty(U x R^l) subset C^infty(U x R^l)} is not a TVS-embedding; $\Cinftyc(U\times\R^l)$ is dense in $S^\infty(U\times\R^l)$; and $S^\infty(U\times\R^l)$ is an acyclic Montel space, and therefore complete, boundedly/compactly/sequentially retractive and reflexive.

With more generality, a symbol of order $m$ on a vector bundle $E$ over $M$ is a smooth function on $E$ satisfying~\eqref{| a |_K I J m} via charts of $M$ and local trivializations of $E$, with $K$ contained in the domains of charts where $E$ is trivial. As above, they form a Fr\'echet space $S^m(E)$ with the topology described by the semi-norms given by this version of~\eqref{| a |_K I J m}. The version of~\eqref{S^m(T^*M) subset S^m'(T^*M)} in this setting is true, obtaining the corresponding spaces $S^{\pm\infty}(E)$ and $S^{(m)}(E)$. The above properties have obvious extensions to this setting.

Given another vector bundle $F$ over $M$, the $C^\infty(M)$-tensor product of the above spaces with $C^\infty(M;F)$ gives spaces $S^m(E;F)$, $S^{\pm\infty}(E;F)$ and $S^{(m)}(E;F)$, satisfying analogous properties. Now~\eqref{S^infty(U x R^l) subset C^infty(U x R^l)} becomes $\Cinftycv(E;\pi^*F)\subset S^{-\infty}(E;F)$ and $S^{\infty}(E;F)\subset C^\infty(E;\pi^*F)$, where $\pi:E\to M$ is the projection.

\subsection{Pseudodifferential operators}\label{ss: pseudodiff ops}

The notation of \Cref{ss: symbols} is used here. For any $a\in S^m(U\times\R^n)$, the expression~\eqref{Au(x) = ...} defines a continuous linear map $A=a(x,D):\Cinftyc(U)\to C^\infty(U)$, with Schwartz kernel
\[
K_A(x,y)=(2\pi)^{-n}\left(\int_{\R^n}e^{i\langle x-y,\xi\rangle}a(x,\xi)\,d\xi\right)|dy|\;,
\]
using an oscillatory integral, which is defined as a tempered distribution \cite[Eq.~(4.2)]{Melrose1981}, \cite[Section~7.8]{Hormander1983-I}.

Take an atlas $\{U_k,x_k\}$ of $M$ and an associated $C^\infty$ partition of unity $\{f_k\}$. Via every chart $(U_k,x_k)$, for all $a\in S^m(T^*U_k)$, the above procedure defines a continuous linear map $a(x_k,D_{x_k}):\Cinftyc(U_k)\to C^\infty(U_k)$. 

Let $\Delta\subset M^2$ be the diagonal. A \emph{pseudodifferential operator} of \emph{order} at most $m$ on $M$ is a continuous linear map $A:\Cinftyc(M)\to C^\infty(M)$ such that $K_A$ is $C^\infty$ on $M^2\setminus\Delta$, and, for all $k$, the operator $f_kA:\Cinftyc(U_k)\to\Cinftyc(U_k)$ is of the form $a_k(x_k,D_{x_k})$ for some $a_k\in S^m(T^*U_k)$, which is supported in $\pi^{-1}(\supp f_k)$, where $\pi:T^*M\to M$ is the projection. They form a $C^\infty(M^2)$-module $\Psi^m(M)$ with the pointwise multiplication of their Schwartz kernels by smooth functions on $M^2$. Moreover $\sum_ka_k\in S^m(T^*M)$ defines a class $\sigma_m(A)\in S^{(m)}(T^*M)$, called the \emph{principal symbol}, which is independent of the choices involved, obtaining an exact sequence of $C^\infty(M^2)$-modules,
\[
0\to\Psi^{m-1}(M)\hookrightarrow\Psi^m(M) \xrightarrow{\sigma_m} S^{(m)}(T^*M)\to0\;,
\]
where $S^{(m)}(T^*M)$ is a $C^\infty(M^2)$-module via the restriction linear map $C^\infty(M^2)\to C^\infty(\Delta)\equiv C^\infty(M)$. Then $\Psi(M):=\bigcup_m\Psi^m(M)$ is a filtered $C^\infty(M^2)$-module, and $\Psi^{-\infty}(M):=\bigcap_m\Psi^m(M)$ is the submodule of the operators with $C^\infty$ Schwartz kernel (the \emph{smoothing} operators). All of these concepts are independent of the choices involved. If $m\in\N_0$, then
\[
\Diff^m(M)=\{\,A\in\Psi^m(M)\mid\supp K_A\subset\Delta\,\}\;.
\]

These concepts and properties can be extended to vector bundles by taking the $C^\infty(M^2)$-tensor product with $C^\infty(M^2;F\boxtimes E^*)$, like in the case of differential operators (\Cref{ss: diff ops}). In this case, we use the notation $\Psi^m(M;E,F)$ \index{$\Psi^m(M;E)$} (or $\Psi^m(M;E)$ if $E=F$), \index{$\Psi^m(M;E)$} $S^{(m)}(T^*M;F\otimes E^*)$, etc.  Recall that an operator $A\in\Psi^m(M;E,F)$ is called \emph{elliptic} if $\sigma_m(A)$ has an inverse in $S^{(-m)}(T^*M;F,E)$; i.e., any representative of $\sigma_m(A)$ is an isomorphism at $(p,\xi)\in T^*M$ if $\xi$ is far enough from $0_p$ in $T^*_pM$. The space $\Psi^m(M;E,F)$ is preserved by taking transposes. Thus any $A\in\Psi^m(M;E,F)$ has a continuous extension (\Cref{ss: ops})
\[
A:C^{-\infty}_\co(M;E)\to C^{-\infty}(M;F)\;,
\]
and $\sing\supp Au\subset\sing\supp u$ for all $u\in C^{-\infty}_\co(M;E)$ (\emph{pseudolocality}). Moreover $A\in\Psi^{-\infty}(M;E,F)$ just when it defines a continuous map
\[
A:C^{-\infty}_\co(M;E)\to C^\infty(M;F)\;.
\]
It is said that $A$ is \emph{properly supported} if both factor projections $M^2\to M$ have proper restrictions to $\supp K_A$. In this case, $A$ defines continuous linear maps (\Cref{ss: ops})
\[
A:\Cinftyc(M;E)\to\Cinftyc(M;F)\;,\quad A:C^{-\infty}(M;E)\to C^{-\infty}(M;F)\;,
\]
which gives sense to the composition of properly supported pseudodifferential operators. Any pseudodifferential operator is properly supported modulo smoothing operators, and the symbol map is multiplicative. 

If $A\in\Psi^{-\infty}(M;E)$ and $P,Q\in\Diff(M;E)$, then
\begin{equation}\label{K_PAQ(x y)}
K_{PAQ}(x,y)=P_x\,Q_y^\trans K_A(x,y)\;.
\end{equation}

\subsection{$L^2$ and $L^\infty$ sections}\label{ss: L^2 and L^infty}

The Hilbert space $L^2(M;\Omega^{1/2})$ of square-integrable half-densities is the completion of $\Cinftyc(M;\Omega^{1/2})$ with the scalar product $\langle u,v\rangle=\int_Mu\bar v$. The induced norm is denoted by $\|{\cdot}\|$.

If $M$ is compact, $L^2(M;E)$ \index{$L^2(M;E)$} can be described as the $C^\infty(M)$-tensor product of $L^2(M;\Omega^{1/2})$ and $C^\infty(M;\Omega^{-1/2}\otimes E)$. It is a Hilbertian space with the scalar products $\langle u,v\rangle=\int_M(u,v)\,\omega$, determined by the choice of a Euclidean/Hermitian structure $({\cdot},{\cdot})$ on $E$ and a non-vanishing $\omega\in C^\infty(M;\Omega)$.

When $M$ is not assumed to be compact, any choice of $({\cdot},{\cdot})$ and $\omega$ can be used to define $L^2(M;E)$ and $\langle{\cdot},{\cdot}\rangle$. Now $L^2(M;E)$ and the equivalence class of $\|{\cdot}\|$ depends on the choices involved. The independence still holds for sections supported in any compact $K\subset M$, obtaining the Hilbertian space $L^2_K(M;E)$. These spaces give rise to the strict LF-space $L^2_\co(M;E)$ like in~\eqref{Cinftyc(U)}. We also get the Fr\'echet space
\[
L^2_{\text{\rm loc}}(M;E)=\{\,u\in C^{-\infty}(M;E)\mid\Cinftyc(M)\,u\subset L^2_\co(M;E)\,\}\;,
\]
defining the topology like in~\eqref{Z = u in bigcup_A in AA dom A | AA cdot u subset Y}. If $M$ is compact, then $L^2_{\text{\rm loc/c}}(M;E)\equiv L^2(M;E)$ as TVSs. The spaces $L^2_{\text{\rm loc/c}}(M;E)$ satisfy the obvious version of~\eqref{C^-infty_cdot/c(M;E)}.

Any $A\in\Diff^m(M;E)$ can be considered as a densely defined operator in $L^2(M;E)$. Its adjoint $A^*$ is the closure of the \emph{formal adjoint} $A^*\in\Diff^m(M;E)$, determined by the condition $\langle u,A^*v\rangle=\langle Au,v\rangle$ for all $u,v\in \Cinftyc(M;E)$.

We can also use $({\cdot},{\cdot})$ to define the Banach space $L^\infty(M;E)$ of essentially bounded sections, with the norm $\|u\|_{L^\infty}=\esssup_{x\in M}|u(x)|$. There is a continuous injection $L^\infty(M;E)\subset L^2_{\text{\rm loc}}(M;E)$. \index{$L^\infty(M;E)$} If $M$ is compact, the equivalence class of $\|{\cdot}\|_{L^\infty}$ is independent of $({\cdot},{\cdot})$.

\subsection{Sobolev spaces}\label{ss: Sobolev sps}

Suppose first that $M$ is compact. The \emph{Sobolev space} of \emph{order} $s\in\R$ is the Hilbertian space \index{$H^s(M;E)$}
\begin{equation}\label{H^s(M E)}
H^s(M;E)=\{\,u\in C^{-\infty}(M;E)\mid\Psi^s(M;E)\,u\subset L^2(M;E)\,\}\;,
\end{equation}
with the topology like in~\eqref{Z = u in bigcup_A in AA dom A | AA cdot u subset Y}. It can be equipped with any scalar product $\langle u,v\rangle_s=\langle(1+P)^su,v\rangle$, for any nonnegative symmetric elliptic $P\in\Diff^2(M;E)$ (by the elliptic estimate), where $\langle{\cdot},{\cdot}\rangle$ is defined like in \Cref{ss: L^2 and L^infty} and $(1+P)^s$ is given by the spectral theorem. Let $\|{\cdot}\|_s$ denote the corresponding norm. We have
\begin{equation}\label{H^-s(M E) = Psi^s(M E) cdot L^2(M E) = H^s(M E^* otimes Omega)'}
\Psi^s(M;E)\,L^2(M;E)=H^{-s}(M;E)=H^s(M;E^*\otimes\Omega)'\;.
\end{equation}
If $s\in\N$, we can use $\Diff^s(M;E)$ instead of $\Psi^s(M;E)$ in~\eqref{H^s(M E)} and the first equality of~\eqref{H^-s(M E) = Psi^s(M E) cdot L^2(M E) = H^s(M E^* otimes Omega)'}. There are dense compact inclusions (Rellich theorem) \begin{equation}\label{H^s(M E) subset H^s'(M E)}
H^s(M;E)\subset H^{s'}(M;E)\quad(s'<s)\;.
\end{equation}
So the spaces $H^s(M,E)$ form a compact spectrum. Moreover, there are continuous dense inclusions, for $s>k+n/2$,
\begin{gather}
H^s(M;E)\subset C^k(M;E)\subset H^k(M;E)\;,\label{H^s(M E) subset C^k(M E) subset H^k(M E)}\\
H^{-s}(M;E)\supset C^{\prime\,-k}(M;E)\supset H^{-k}(M;E)\;.\label{H^-s(M E) supset C^prime -k(M E) supset H^-k(M E)}
\end{gather}
The first inclusion of~\eqref{H^s(M E) subset C^k(M E) subset H^k(M E)} is the Sobolev embedding theorem, and~\eqref{H^-s(M E) supset C^prime -k(M E) supset H^-k(M E)} is the transpose of the version of~\eqref{H^s(M E) subset C^k(M E) subset H^k(M E)} with $E^*\otimes\Omega M$. So
\begin{equation}\label{C^infty(M E) = bigcap_s H^s(M E)}
C^\infty(M;E)=\bigcap_sH^s(M;E)\;\quad 
C^{-\infty}(M;E)=\bigcup_sH^s(M;E)\;.
\end{equation}

Any $A\in\Psi^m(M;E)$ defines a bounded operator $A:H^{s+m}(M;E)\to H^s(M;E)$. This can be considered as a densely defined operator in $H^s(M;E)$, which is closable because, after fixing a scalar product in $H^s(M;E)$, the adjoint of $A$ in $H^s(M;E)$ is densely defined since it is induced by $\bar A^\trans\in\Psi^m(M;\bar E^*\otimes\Omega)$ via the identity of real Hilbert spaces, $H^s(M;E)\equiv H^s(M;\bar E)'=H^{-s}(M;\bar E^*\otimes\Omega)$, where the bar stands for the complex conjugate. In the case $s=0$, the adjoint of $A$ is induced by the formal adjoint $A^*\in\Psi^m(M;E)$; if $A\in\Diff^m(M;E)$, then $A^*\in\Diff^m(M;E)$. 

If $M$ is not assumed to be compact, then $H^s(M;E)$ can be defined as the completion of $\Cinftyc(M;E)$ with respect to the scalar product $\langle{\cdot},{\cdot}\rangle_s$ defined by the above choices of $({\cdot},{\cdot})$, $\omega$ and $P$; in this case, $H^s(M;E)$ and the equivalence class of $\|{\cdot}\|_s$ depend on the choices involved. With this generality,~\eqref{H^s(M E)} and the first equality of~\eqref{H^-s(M E) = Psi^s(M E) cdot L^2(M E) = H^s(M E^* otimes Omega)'} are wrong, but the second equality of~\eqref{H^-s(M E) = Psi^s(M E) cdot L^2(M E) = H^s(M E^* otimes Omega)'} is true. 

Like $L^2_{\text{\rm loc/c}}(M;E)$ (\Cref{ss: L^2 and L^infty}), we can define the Fr\'echet space $H^s_{\text{\rm loc}}(M;E)$ and the strict LF-space $H^s_\co(M;E)$, which satisfy the versions of the second equality of~\eqref{H^-s(M E) = Psi^s(M E) cdot L^2(M E) = H^s(M E^* otimes Omega)'} (switching the support condition like in~\eqref{C^-infty_cdot/c(M;E)}) and~\eqref{H^s(M E) subset H^s'(M E)}--\eqref{H^-s(M E) supset C^prime -k(M E) supset H^-k(M E)}. These spaces agree with $H^s(M;E)$ if $M$ is compact. For any open $U\subset M$, the restriction map~\eqref{restriction map} defines a continuous linear map $H^s_{\text{\rm loc}}(M;E)\to H^s_{\text{\rm loc}}(U;E)$, and the extension by zero~\eqref{ext by 0} defines a TVS-embedding $H^s_\co(U;E)\subset H^s_\co(M;E)$. In this case, any $A\in\Psi^m(M;E)$ defines continuous linear maps $A:H^s_\co(M;E)\to H^{s-m}_{\text{\rm loc}}(M;E)$. If $A\in\Diff^m(M;E)$, then it defines continuous linear maps $A:H^s_{\text{\rm c/loc}}(M;E)\to H^{s-m}_{\text{\rm c/loc}}(M;F)$.

For example, $H^s(\R^n)$ can be defined with $\langle u,v\rangle_s=\langle(1+\Delta)^su,v\rangle$, involving the Laplacian $\Delta=-\sum_k\partial_k^2$ and the standard scalar product on $L^2(\R^n)$. Recall that the Fourier transform, $f\mapsto\hat f$, defines an automorphism of the Schwartz space $\SS(\R^n)$, which extends to an automorphism of the space $\SS(\R^n)'$ of tempered distributions \cite[Section~7.1]{Hormander1983-I}, which in turn restricts to a TVS-isomorphism
\begin{equation}\label{H^s(R^n) cong L^2(R^n (1+|xi|^2)^s d xi)}
H^s(\R^n)\xrightarrow{\cong} L^2(\R^n,(1+|\xi|^2)^s\,d\xi)\;,\quad f\mapsto\hat f\;.
\end{equation}
We can use~\eqref{H^s(R^n) cong L^2(R^n (1+|xi|^2)^s d xi)} to give an alternative description of $H^s_{\text{\rm c/loc}}(M;E)$ for arbitrary $M$ and $E$. First, $H^s_K(\R^n)\subset H^s(\R^n)$ has the subspace topology for any compact $K\subset\R^n$. Next, for any open $U\subset\R^n$, we can describe $H^s_\co(U)$ by using $H^s_K(U)\equiv H^s_K(\R^n)$ for all compact $K\subset U$, and we can describe $H^s_{\text{\rm loc}}(U)$ by using $H^s_\co(U)$, as explained before. Then a locally finite atlas and a subordinated $C^\infty$ partition of unity can be used in a standard way to describe $H^m_{\text{\rm c/loc}}(M)$. Finally, $H^s_{\text{\rm c/loc}}(M;E)$ can be described as the $C^\infty(M)$-tensor product of $H^s_{\text{\rm c/loc}}(M)$ with $C^\infty(M;E)$, or, equivalently, using local diffeomorphisms of triviality of $E$.

The norm on $L(H^m(M;E),H^{m'}(M;F))$ (resp., $\End(H^m(M;E))$) will be simply denoted by $\|{\cdot}\|_{m,m'}$ (resp., $\|{\cdot}\|_m$).

\subsection{Weighted spaces}\label{ss: weighted sps}

Assume first that $M$ is compact. Take any $h\in C^\infty(M)$ which is positive almost everywhere. Then the \emph{weighted Sobolev space} $hH^s(M;E)$ is a Hilbertian space; a scalar product $\langle{\cdot},{\cdot}\rangle_{hH^s}$ is given by $\langle u,v\rangle_{hH^s}=\langle h^{-1}u,h^{-1}v\rangle_s$, depending on the choice of a scalar product $\langle{\cdot},{\cdot}\rangle_s$ on $H^s(M;E)$ (\Cref{ss: Sobolev sps}). The corresponding norm is denoted by $\|{\cdot}\|_{hH^s}$. In particular, we get the \emph{weighted $L^2$ space} $hL^2(M;E)$. We have $h>0$ just when $hH^m(M;E)=H^m(M;E)$; in this case, $\langle{\cdot},{\cdot}\rangle_{hH^s}$ can be described like $\langle{\cdot},{\cdot}\rangle_s$ using $h^{-2}\omega$ instead of $\omega$. Thus the notation $hH^m(M;E)$ for $h>0$ is used when changing the density; e.g.,  if it is different from a distinguished choice, say a Riemannian volume.

If $M$ is not compact, $hH^s(M;E)$ and $\langle u,v\rangle_{hH^s}$ depend on $h$ and the chosen definitions of $H^s(M;E)$ and $\langle u,v\rangle_s$ (\Cref{ss: Sobolev sps}). We also get the weighted spaces $hH^s_{\text{\rm c/loc}}(M;E)$, \index{$hH^s_{\text{\rm c/loc}}(M;E)$} and the weighted Banach space $hL^\infty(M;E)$ \index{$hL^\infty(M;E)$} with the norm $\|u\|_{hL^\infty}=\|h^{-1}u\|_{L^\infty}$. There is a continuous injection $hL^\infty(M;E)\subset hL^2_{\text{\rm loc}}(M;E)$.

\subsection{Topological complexes}\label{ss: top complexes}

Recall that a complex $(C,d)$ (over $\C$) consists of a ($\Z$-) graded vector space $C=C^\bullet$ and a linear map $d:C\to C$ which is homogeneous of degree $1$ and satisfies $d^2=0$. If moreover $C$ is a TVS and $d$ is continuous, then $(C,d)$ is called a \emph{topological complex}. Then $\ker d$ and $\im d$ are topological graded subspaces, and the cohomology $H^\bullet(C,d)=\ker d/\im d$ becomes a graded TVS. Its maximal Hausdorff quotient, $\bar H^\bullet(C,d):=H^\bullet(C,d)/\overline 0\equiv\ker d/\overline{\im d}$, is called the \emph{reduced cohomology}. Let $[u]\in H^\bullet(C,d)$ and $\overline{[u]}\in\bar H^\bullet(C,d)$ denote the elements defined by any $u\in\ker d$. If $C$ is a LCS, then $H^\bullet(C,d)$ and $\bar H^\bullet(C,d)$ are also LCSs because this property is inherited by subspaces and quotients \cite[Section~II.4]{Schaefer1971}. We may use the notation $Z=ZC=\ker d$, $B=BC=\im d$ and $\bar B=\bar BC=\overline{\im d}$. 

We always assume $C$ has finitely many nonzero homogeneous components, say $C=C^0\oplus\dots\oplus C^N$. So $d$ is given by a finite sequence of \emph{length} $N$,
\[
C^0 \xrightarrow{d_0} C^1 \xrightarrow{d_1} \cdots \xrightarrow{d_{N-1}} C^N\;.
\]
Negative or decreasing degrees may be also considered without any essential change. Continuous homomorphisms between topological complexes induce continuous linear maps between the corresponding cohomologies and reduced cohomologies. (Usually, the term \emph{chain/cochain complex} is used for decreasing/increasing degrees, and \emph{chain/cochain maps} for the corresponding homomorphisms, but we will ignore that difference.)

The \emph{transpose} of $(C,d)$ is the topological complex $(C',d^\trans)$, graded by $(C')^r=(C^r)'$ ($r=0,\dots,N$). 
For any $[f]\in H^\bullet(C',d^\trans)$, we have $fd=d^\trans(f)=0$, and therefore $f$ induces an element of $H^\bullet(C,d)'$. This defines a canonical continuous linear map $H^\bullet(C',d^\trans)\to H^\bullet(C,d)'$.

\begin{prop}\label{p: H(C' d^t) -> H(C d)'}
The canonical map $H^\bullet(C',d^\trans)\to H^\bullet(C,d)'$ is:
\begin{enumerate}[{\rm(i)}]
\item\label{i: H(C' d^t) -> H(C d)' is surjective} surjective if $C$ is a LCHS; and 
\item\label{i: H(C' d^t) -> H(C d)' is injective} injective if $C$ is a Fr\'echet space and $\im d$ is closed.
\end{enumerate}
\end{prop}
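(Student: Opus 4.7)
The canonical map $\Phi:H^\bullet(C',d^\trans)\to H^\bullet(C,d)'$ sends the class of $f\in\ker d^\trans$ to the functional $\overline{[u]}\mapsto f(u)$ on $Z=\ker d$; this descends to cohomology because $d^\trans f=0$ forces $f|_{\im d}=0$. I would prove the two parts by separate functional-analytic principles.

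For (i), my plan is to invoke the Hahn-Banach extension theorem. Given $\varphi\in H^\bullet(C,d)'$, precomposition with the inclusion $Z\hookrightarrow C$ and the quotient $Z\to Z/\im d=H^\bullet(C,d)$ produces a continuous linear functional $\tilde\varphi\in Z'$ vanishing on $\im d$. Since $C$ is an LCHS, Hahn-Banach extends $\tilde\varphi$ to some $f\in C'$. Because $\im d\subset Z$ and $f|_Z=\tilde\varphi$ vanishes on $\im d$, we have $d^\trans f=f\circ d=0$, and by construction $\Phi([f])=\varphi$.

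For (ii), the plan is to produce, from any $f\in\ker d^\trans$ with $\Phi([f])=0$, a preimage $g\in C'$ with $f=d^\trans g$. The hypothesis forces $f|_Z=0$, so $f$ factors through the quotient $\pi:C\to C/Z$ as $f=\tilde f\circ\pi$ for a unique continuous $\tilde f\in(C/Z)'$. The differential induces a continuous bijection $\bar d:C/Z\to\im d$ with $d=\bar d\circ\pi$. Here the Fr\'echet and closed-image hypotheses enter: both $C/Z$ (a Fr\'echet quotient by the closed subspace $Z=\ker d$) and $\im d$ (closed in the Fr\'echet space $C$) are Fr\'echet, so the open mapping theorem promotes $\bar d$ to a TVS-isomorphism. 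Consequently $\hat f:=\tilde f\circ\bar d^{-1}\in(\im d)'$ is continuous, and a further application of Hahn-Banach extends it to some $g\in C'$. Then $d^\trans g=g\circ d=\hat f\circ\bar d\circ\pi=\tilde f\circ\pi=f$, so $[f]=0$ in $H^\bullet(C',d^\trans)$.

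The principal obstacle is the closed-image hypothesis in (ii): without it, $\bar d$ would be merely a continuous bijection between potentially non-Fr\'echet or non-Hausdorff quotients, the open mapping theorem would not apply, and $\hat f$ could fail to be continuous on $\im d$, blocking the Hahn-Banach extension. The Fr\'echet hypothesis is used only at this step; part (i) relies on nothing beyond Hahn-Banach, which is why it remains valid in the broader LCHS setting.
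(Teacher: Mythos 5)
Your proof is correct and follows exactly the route the paper takes: the paper's proof is a one-liner citing the Hahn--Banach theorem for (i) and the open mapping theorem together with Hahn--Banach for (ii), and your argument supplies precisely the details behind those citations. (One cosmetic slip: the canonical map lands in $H^\bullet(C,d)'$, so the functional is $[u]\mapsto f(u)$ on the unreduced cohomology rather than $\overline{[u]}\mapsto f(u)$; your argument is stated for the unreduced version throughout, so this does not affect anything.)
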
 

\begin{proof}
Property~\ref{i: H(C' d^t) -> H(C d)' is surjective} is an easy consequence of the Hahn-Banach theorem \cite[Theorem~II.4.2]{Schaefer1971}.

Property~\ref{i: H(C' d^t) -> H(C d)' is injective} follows easily from the open mapping theorem \cite[Theorem~III.2.1]{Schaefer1971} and the Hahn-Banach theorem.
\end{proof}

\begin{rem}\label{r: H(C' d^t) -> H(C d)'}
Extensions of~\ref{i: H(C' d^t) -> H(C d)' is injective} can be given by more general versions of the open mapping theorem (see e.g.\ \cite{Bourles2014}).
\end{rem}

\subsection{Differential complexes}\label{ss: diff complexes}

Recall that a \emph{differential complex} of \emph{order} at most $m$ is a topological complex of the form $(C^\infty(M;E),d)$, where $E$ is a ($\Z$-) graded vector bundle and $d\in\Diff^m(M;E)$; it will be simply denoted by $(E,d)$. Necessarily, it is of finite length, say $E=E^0\oplus\dots\oplus E^N$  and $d$ is given by the sequence
\[
C^\infty(M;E^0) \xrightarrow{d_0} C^\infty(M;E^1) \xrightarrow{d_1} \cdots \xrightarrow{d_{N-1}} C^\infty(M;E^N)\;.
\]
The compactly supported version $(\Cinftyc(M;E),d)$ may be also considered, as well as the distributional versions $(C^{-\infty}_{{\cdot}/\co}(M;E),d)$. Recall that $(E,d)$ is called an \emph{elliptic complex} of \emph{order} $m$ if moreover the symbol sequence,
\begin{equation}\label{symbol sequence}
0\to E^0_p \xrightarrow{\sigma_m(d_0)(p,\xi)} E^1_p \xrightarrow{\sigma_m(d_1)(p,\xi)} \cdots 
\xrightarrow{\sigma_m(d_{N-1})(p,\xi)} E^N_p\to0\;,
\end{equation}
is exact for all $p\in M$ and $0\ne\xi\in T^*_pM$. If $N=1$, this agrees with the ellipticity of $d_0\in\Diff^m(M;E^0,E^1)$.
 
Equip $E$ with a Hermitian structure so that its homogeneous components are orthogonal, and equip $M$ with a Riemannian metric $g$, inducing a volume density on $M$. Consider the corresponding scalar product on $L^2(M;E)$. Then the formal adjoint $\delta=d^*$ also defines a differential complex, giving rise to symmetric differential operators $D=d+\delta$ and $\Delta=D^2=d\delta+\delta d$. The ellipticity of the differential complex $d$ is equivalent to the ellipticity of the differential complex $\delta$, and it is also equivalent to the ellipticity of the differential operator $D$ (or $\Delta$).

In the rest of \Cref{ss: diff complexes}, suppose $M$ is closed and $d$ is elliptic. Then $D$ and $\Delta$ have a discrete spectrum. Moreover, we have the following Hodge-type decomposition, and associated equalities and isomorphism:
\begin{equation}\label{Hodge}
\left\{
\begin{array}{c}
C^\infty(M;E)=\ker\Delta\oplus\im\delta\oplus\im d\;,\\[4pt]
\im\delta\oplus\im d=\im D=\im\Delta\;,\\[4pt]
\ker d\cap\ker\delta=\ker D=\ker\Delta\cong H^\bullet(C^\infty(M;E),d)\;.
\end{array}
\right.
\end{equation}
Writing $C=C^\infty(M;E)$, it follows from~\eqref{Hodge} that $d:\im\delta\to\im d$ and $\delta:\im d\to\im\delta$ are TVS-isomorphisms.

Consider also the operators $d$, $\delta$, $D$ and $\Delta$ on $C^{-\infty}(M;E)$ (\Cref{ss: diff ops}). Then $(C^{-\infty}(M;E),d)$ is another differential complex, and the analogue of~\eqref{Hodge} is satisfied with $C^{-\infty}(M;E)$. By ellipticity and since $M$ is compact, $\Delta$ has the same kernel in $C^\infty(M;E)$ and in $C^{-\infty}(M;E)$, obtaining a canonical isomorphism $H^\bullet(C^\infty(M;E),d)\cong H^\bullet(C^{-\infty}(M;E),d)$.

\section{Conormal distributions}\label{s: conormal distribs}

The space of conormal distributions plays a very important role in our work. We mainly follow  \cite{KohnNirenberg1965,Hormander1971}, \cite[Section~18.2]{Hormander1985-III}, \cite[Chapters~3--5]{Simanca1990}, \cite[Chapters~4 and~6]{Melrose1996}, \cite[Chapters~3 and~9]{MelroseUhlmann2008}, which are oriented to the role they play in pseudodifferential operators and generalizations of those operators. The study of its natural topology was begun in \cite[Chapters~4 and~6]{Melrose1996} and continued in \cite{AlvKordyLeichtnam-conormal}.

For the sake of simplicity, we consider the case of the trivial line bundle first. But all definitions, properties and notation have obvious extensions for arbitrary vector bundles, like in \Cref{ss: diff ops,ss: pseudodiff ops}, either by using local trivializations, or by taking $C^\infty(M)$-tensor products with spaces of smooth sections. When needed, the case of arbitrary vector bundles will be used without further comment.

\subsection{Differential operators tangent to a submanifold}\label{ss: Diff(M L)}

Let $L$ be a regular submanifold of $M$ of codimension $n'$ and dimension $n''$, which is a closed subset. Let $\fX(M,L)\subset\fX(M)$ \index{$\fX(M,L)$} be the Lie subalgebra and $C^\infty(M)$-submodule of vector fields tangent to $L$. Using $\fX(M,L)$ instead of $\fX(M)$, we can define the filtered subalgebra and $C^\infty(M)$-submodule $\Diff(M,L)\subset\Diff(M)$ \index{$\Diff(M,L)$} like in \Cref{ss: diff ops}. We have
\begin{equation}\label{A in Diff(M L) => A^t in Diff(M L Omega)}
A\in\Diff(M,L)\Rightarrow A^\trans\in\Diff(M,L;\Omega)\;.
\end{equation}
By the conditions on $L$, every $\Diff^m(M,L)$ ($m\in\N_0$) is locally finitely $C^\infty(M)$-generated, and therefore $\Diff(M,L)$ is countably $C^\infty(M)$-generated. The surjective restriction map $\fX(M,L)\to\fX(L)$, $X\mapsto X|_L$, induces a surjective linear restriction map of filtered algebras and $C^\infty(M)$-modules,
\begin{equation}\label{Diff(M L) -> Diff(L)}
\Diff(M,L)\to\Diff(L)\;,\quad A\mapsto A|_L\;.
\end{equation}

Let $(U,x)$ be a chart of $M$ adapted to $L$; i.e., it is a diffeomorphism
\begin{gather*}
x=(x^1,\dots,x^n)\equiv(x',x''):U\to U'\times U''\;,\\
x'=(x'^1,\dots,x'^{n'})\;,\quad x''=(x''^{1},\dots,x''^{n''})\;,\quad L_0:=L\cap U=\{x'=0\}\;,
\end{gather*}
for some open $U'\subset\R^{n'}$ and $U''\subset\R^{n''}$. If $L$ is of codimension one, then we will use the notation $(x,y)$ instead of $(x',x'')$. For every $m\in\N_0$, $\Diff^m(U,L_0)$ is $C^\infty(U)$-spanned by the operators $x^{\prime I}\partial_{x'}^J\partial_{x''}^K$ with $|J|+|K|\le m$ and $|I|=|J|$; we may use the generators $\partial_{x'}^J\partial_{x''}^K x^{\prime I}$ as well, with the same conditions on the multi-indices.

\subsection{Conormal distributions when $M$ is compact}\label{ss: conormal - Sobolev order - compact} 

Suppose $M$ is compact. Then the space of \emph{conormal distributions} \index{conormal distribution} at $L$ of \emph{Sobolev order} at most $s\in\R$ is the LCS and $C^\infty(M)$-module \index{$I^{(s)}_{{\cdot}/\co}(M,L)$} \index{$I_{{\cdot}/\co}(M,L)$}
\begin{equation}\label{I^(s)(M,L)}
I^{(s)}(M,L)=\{\,u\in C^{-\infty}(M)\mid\Diff(M,L)\,u\subset H^s(M)\,\}\;,
\end{equation}
with the topology like in~\eqref{Z = u in bigcup_A in AA dom A | AA cdot u subset Y}. This is a totally reflexive Fr\'echet space \cite[Proposition~4.1]{AlvKordyLeichtnam-conormal}. We have continuous inclusions
\begin{equation}\label{I^(s)(M L) subset I^(s')(M L)}
I^{(s)}(M,L)\subset I^{(s')}(M,L)\quad(s'<s)\;,
\end{equation}
and consider the LCSs and $C^\infty(M)$-modules \index{$I(M,L)$} \index{$I^{(\infty)}(M,L)$}
\[
I(M,L)=\bigcup_sI^{(s)}(M,L)\;,\quad I^{(\infty)}(M,L)=\bigcap_sI^{(s)}(M,L)\;.
\]
Thus $I(M,L)$ is a Hausdorff LF-space (\Cref{ss: TVS}), and $I^{(\infty)}(M,L)$ is a Fr\'echet space and submodule of $I(M,L)$. The elements of $I(M,L)$ are called \emph{conormal distributions} of $M$ at $L$ (or of $(M,L)$). The spaces $I^{(s)}(M,L)$ form what is called the \emph{Sobolev-order filtration} of $I(M,L)$, or the \emph{Sobolev-order inductive spectrum} defining $I(M,L)$. From~\eqref{I^(s)(M,L)}, it follows that there are canonical continuous inclusions,
\begin{equation}\label{C^infty(M) subset I^(infty)(M L)}
C^\infty(M)\subset I^{(\infty)}(M,L)\;,\quad I(M,L)\subset C^{-\infty}(M)\;.
\end{equation}
Indeed, $C^\infty(M)$ is dense in $I(M,L)$ \cite[Eq.~(6.2.12)]{Melrose1996}, \cite[Corollary~4.6]{AlvKordyLeichtnam-conormal}.

$I(M,L)$ is barreled, ultrabornological, webbed, acyclic and a Montel space, and therefore complete, boundedly/compactly/sequently retractive and reflexive \cite[Corollaries~4.2 and~4.7]{AlvKordyLeichtnam-conormal}  (\Cref{ss: TVS}).

\subsection{Filtration of $I(M,L)$ by the symbol order when $M$ is compact}
\label{ss: conormal - symbol order - compact}

Take a chart of $M$ adapted to $L$, $(U,x=(x',x''))$, like in \Cref{ss: Diff(M L)}. We use the identity $U''\times\R^{n'}\equiv N^*U''$, and the symbol spaces $S^m(U''\times\R^{n'})\equiv S^m(N^*U'')$ (\Cref{ss: symbols}). The following holds true for $s,\bar m\in\R$ \cite[Theorem~18.2.8]{Hormander1985-III}, \cite[Proposition~6.1.1]{Melrose1996}, \cite[Lemma~9.33]{MelroseUhlmann2008}, \cite[Remark~4.4]{AlvKordyLeichtnam-conormal}:
\begin{itemize}
\item If $s<-\bar m-n'/2$, then the map $\Cinftycv(N^*U'')\to C^\infty(U)$, $a\mapsto u$, given by
\[
u(x)=(2\pi)^{-n'}\int_{\R^{n'}}e^{i\langle x',\xi\rangle}a(x'',\xi)\,d\xi\;,
\]
has a continuous extension $S^{\bar m}(N^*U'')\to I^{(s)}(U,L_0)$.
\item If $\bar m>-s-n'/2$, then the map $\Cinftyc(U)\to C^\infty(N^*U'')$, $u\mapsto a$, given by
\[
a(x'',\xi)=\int_{\R^{n'}}e^{-i\langle x',\xi\rangle}u(x',x'')\,dx'\;,
\]
induces a continuous linear map $I_\co^{(s)}(U,L_0)\to S^{\bar m}(N^*U'')$.
\end{itemize}
In what follows, it is convenient to use
\[
a\,|d\xi|\in S^{\bar m}(N^*U'';\Omega N^*U'')\equiv S^{\bar m}(N^*L_0;\Omega N^*L_0)\;.
\]

Assume $M$ is compact. Take a finite cover of $L$ by relatively compact charts $(U_j,x_j)$ of $M$ adapted to $L$, and write $L_j=L\cap U_j$. Let $\{h,f_j\}$ be a $C^\infty$ partition of unity of $M$ subordinated to the open covering $\{M\setminus L,U_j\}$. Then $I(M,L)$ consists of the distributions $u\in C^{-\infty}(M)$ such that $hu\in C^\infty(M\setminus L)$ and $f_ju\in I_\co(U_j,L_j)$ for all $j$. Every $f_ju$ is given by some $a_j\in S^\infty(N^*L_j;\Omega N^*L_j)$ as above. For
\begin{equation}\label{bar m}
\bar m=m+n/4-n'/2\;,
\end{equation}
the condition $a_j\in S^{\bar m}(N^*L_j;\Omega N^*L_j)$ describes the elements $u$ of a $C^\infty(M)$-submodule $I^m(M,L)\subset I(M,L)$, \index{$I^m(M,L)$} which is independent of the choices involved \cite[Proposition~9.33]{MelroseUhlmann2008} (see also \cite[Definition~6.2.19]{Melrose1996} and \cite[Definition~4.3.9]{Simanca1990}). Moreover, applying the versions of semi-norms~\eqref{| u |_K C^k} on $C^\infty(M\setminus L)$ to $hu$ and the versions of semi-norms~\eqref{| a |_K I J m} on $S^{\bar m}(N^*L_j;\Omega N^*L_j)$ to every $a_j$, we get semi-norms on $I^m(M,L)$, which becomes a Fr\'echet space \cite[Sections~6.2 and 6.10]{Melrose1996}.

The version of~\eqref{S^m(T^*M) subset S^m'(T^*M)} for the spaces $S^{\bar m}(N^*L_j;\Omega N^*L_j)$ gives continuous inclusions
\begin{equation}\label{I^m(M L) subset I^m'(M L)}
I^m(M,L)\subset I^{m'}(M,L)\quad(m<m')\;.
\end{equation}
The element $\sigma_m(u)\in S^{(\bar m)}(N^*L;\Omega N^*L)$ represented by $\sum_ja_j\in S^{\bar m}(N^*L;\Omega N^*L)$ is called the \emph{principal symbol} of $u$. This defines the exact sequence
\[
0\to I^{m-1}(M,L)\hookrightarrow I^m(M,L) \xrightarrow{\sigma_m} S^{(\bar m)}(N^*L;\Omega N^*L)\to0\;.
\]
We also get continuous inclusions
\begin{equation}\label{sandwich for I}
I^{(-m-n/4+\epsilon)}(M,L)\subset I^m(M,L)\subset I^{(-m-n/4-\epsilon)}(M,L)\;,
\end{equation}
for all $m\in\R$ and $\epsilon>0$ (cf.\ \cite[Eq.~(6.2.5)]{Melrose1996}, \cite[Eq.~(9.35)]{MelroseUhlmann2008}). So \index{$I^{(\infty)}(M,L)$} \index{$I^{-\infty}(M,L)$}
\[
I(M,L)=\bigcup_mI^m(M,L)\;,\quad I^{(\infty)}(M,L)=I^{-\infty}(M,L):=\bigcap_mI^m(M,L)\;.
\]
The spaces $I^m(M,L)$ form what is called the \emph{symbol-order filtration} of $I(M,L)$, or the  \emph{symbol-order inductive spectrum} defining $I(M,L)$.

\subsection{$I(M,L)$ for non-compact $M$}\label{ss: I(M L) - non-compact}

If $M$ is not assumed to be compact, the spaces and properties of \Cref{ss: conormal - Sobolev order - compact,ss: conormal - symbol order - compact} can be extended as follows \cite[Sections~4.2.2 and~4.3.3]{AlvKordyLeichtnam-conormal}. 

We can similarly define the LCHS $I^{(s)}_{{\cdot}/\co}(M,L)$ by using $C^{-\infty}_{{\cdot}/\co}(M)$ and $H^s_{\text{\rm loc/c}}(M)$. Every $I^{(s)}(M,L)$ is a Fr\'echet space. We can describe $I^{(s)}_\co(M,L)=\bigcup_KI^{(s)}_K(M,L)$ like in~\eqref{Cinftyc(U)}, which is a strict LF-space, and therefore $I_\co(M,L)=\bigcup_sI_\co^{(s)}(M,L)$ is an LF-space; moreover $I_\co(M,L)=\bigcup_KI_K(M,L)$. We also have the LCHS $I^{(\infty)}_\co(M,L)=\bigcap_sI_\co^{(s)}(M,L)$. All of these spaces are modules over $C^\infty(M)$; $I_\co(M,L)$ is a filtered module and $I^{(\infty)}_\co(M,L)$ a submodule. The extension by zero defines a continuous inclusion $I_\co(U,L\cap U)\subset I_\co(M,L)$ for any open $U\subset M$. We also define the space $I^{(\infty)}(M,L)$ like in the compact case, as well as the space $\bigcup_sI^{(s)}(M,L)$, which consists of the conormal distributions with a Sobolev order. But now let (cf.\ \cite[Definition~18.2.6]{Hormander1985-III})
\begin{equation}\label{I(M L) - non-compact M}
I(M,L)=\{\,u\in C^{-\infty}(M)\mid\Cinftyc(M)\,u\subset I_\co(M,L)\,\}\;,
\end{equation}
which is a LCS with the topology like in~\eqref{Z = u in bigcup_A in AA dom A | AA cdot u subset Y}. We have $I(M,L)=\bigcup_sI^{(s)}(M,L)$ if and only if $L$ is compact; thus the spaces $I^{(s)}(M,L)$ form a filtration of $I(M,L)$ just when $L$ is compact. There is an extension of~\eqref{C^infty(M) subset I^(infty)(M L)} for non-compact $M$, taking arbitrary/compact support; in particular, $I_{{\cdot}/\co}(M,L)$ is Hausdorff. The density of the smooth functions with arbitrary/compact support is also true.

The definition of $I^m(M,L)$ can be immediately extended assuming $\{U_j\}$ is locally finite. We can similarly define $I^m_K(M,L)$ for all compact $K\subset M$, and then define $I^m_\co(M,L)$ like in~\eqref{Cinftyc(U)}. The space of conormal distributions with a symbol order is $\bigcup_mI^m(M,L)$, and let $I^{-\infty}_{{\cdot}/\co}(M,L)=\bigcap_mI^m_{{\cdot}/\co}(M,L)$. There are extensions of~\eqref{I^m(M L) subset I^m'(M L)} and~\eqref{sandwich for I}. So $\bigcup_mI^m(M,L)=\bigcup_sI^{(s)}(M,L)$, $I_\co(M,L)=\bigcup_mI^m_\co(M,L)$ and $I^{(\infty)}_{{\cdot}/\co}(M,L)=I^{-\infty}_{{\cdot}/\co}(M,L)$. $\bigcup_mI^m(M,L)$ and $I_{{\cdot}/\co}(M,L)$ are acyclic Montel spaces, and $I(M,L)$ is a Montel space.

If $M$ is the domain of a given smooth submersion, the LCHS $I_\cv(M;E)$ can be defined like $C^{-\infty}_\cv(M;E)$, using $I_\co(M;E)$ instead of $C^{-\infty}_\co(M;E)$. \index{$I_\cv(M;E)$}

\subsection{Pseudodifferential operators vs conormal distributions}\label{ss: Psi^m vs I^m}

Using the diagonal $\Delta\subset M^2$, the Schwartz kernel isomorphism~\eqref{Schwartz kernel theorem} restricts to linear isomorphisms
\[
\Psi^m(M;E,F)\xrightarrow{\cong}I^m(M^2,\Delta;F\boxtimes(E^*\otimes\Omega M))\;,\quad A\mapsto K_A\;,
\]
and a similar one for the whole of $\Psi(M;E,F)$. Via them, $\Psi^m(M;E,F)$ and $\Psi(M;E,F)$ become LCHSs satisfying the properties of the corresponding spaces of conormal distributional sections. In this case, we have $\bar m=m$ in~\eqref{bar m} and $\sigma_m(A)\equiv\sigma_m(K_A)$ for any $A\in\Psi^m(M;E,F)$ \cite{Hormander1965,KohnNirenberg1965}, \cite[Chapter~XVIII]{Hormander1985-III}, \cite[Chapter~6]{Simanca1990}.

\subsection{Dirac sections at submanifolds}\label{ss: Dirac sections}

We have $\Omega NL\otimes\Omega L\equiv\Omega_LM$. The transpose of the restriction map $C^\infty_{\co/{\cdot}}(M;E\otimes\Omega M)\to C^\infty_{\co/{\cdot}}(L;E\otimes\Omega_LM)$ is a continuous inclusion \index{$\delta_L^u$}
\begin{gather}
C^{-\infty}_{{\cdot}/\co}(L;E\otimes\Omega^{-1}NL)\subset C^{-\infty}_{{\cdot}/\co}(M;E)\;,
\label{u in C^-infty_c mapsto delta_L^u}\\
u\mapsto\delta_L^u\;,\quad\langle\delta_L^u,v\rangle=\langle u,v|_L\rangle\;,\quad 
v\in C^\infty_{\co/{\cdot}}(M;E^*\otimes\Omega)\;.\notag
\end{gather}
By restriction of~\eqref{u in C^-infty_c mapsto delta_L^u}, we get a continuous inclusion \cite[p.~310]{GuilleminSternberg1977},
\begin{equation}\label{u in C^infty_c mapsto delta_L^u}
C^\infty_{{\cdot}/\co}(L;E\otimes\Omega^{-1}NL)\subset C^{-\infty}_{{\cdot}/\co}(M;E)\;;
\end{equation}
in this case, we can write $\langle\delta_L^u,v\rangle=\int_Lu\,v|_L$. This is the subspace of \emph{$\delta$-sections} \index{$\delta$-section} or \emph{Dirac sections} at $L$. Actually, the inclusion~\eqref{u in C^infty_c mapsto delta_L^u} induces a continuous injection \cite[Corollary~4.9]{AlvKordyLeichtnam-conormal}
\begin{equation}\label{C^infty_./c(L E otimes Omega^-1 NL) subset H^s_loc/c(M E)}
C^\infty_{{\cdot}/\co}(L;E\otimes\Omega^{-1}NL)\subset H^s_{\text{\rm loc/c}}(M;E)\quad(s<-n'/2)\;,
\end{equation}
with
\[
C^\infty_{{\cdot}/\co}(L;E\otimes\Omega^{-1}NL)\cap H^{-n'/2}_{\text{\rm loc/c}}(M;E)=0\;.
\]

For instance, for any $p\in M$ and $u\in E_p\otimes\Omega_p^{-1}M$, we get $\delta_p^u\in H^s_\co(M;E)$ if $s<-n/2$, with $\langle\delta_p^u,v\rangle=u\cdot v(p)$ for $v\in C^\infty(M;E^*\otimes\Omega)$, obtaining a continuous map
\begin{equation}\label{(p u) mapsto delta_p^u(p)}
M\times C^\infty(M;E\otimes\Omega^{-1})\to H^s_\co(M;E)\;,\quad(p,u)\mapsto\delta_p^{u(p)}\;.
\end{equation}
As a particular case, the Dirac mass at any $p\in\R^n$ is $\delta_p=\delta_p^{1\otimes|dx|^{-1}}\in H^s_\co(\R^n)$. 

The Schwartz kernel of any $A\in L(C^{-\infty}(M;E),C^\infty(M;F))$ has the following description: for all $q\in M$ and $u\in E_q\otimes\Omega_q^{-1}$,
\begin{equation}\label{K_A(cdot q)(u) = A delta_q^u}
K_A(\cdot,q)(u)=A\delta_q^u\;.
\end{equation}

\subsection{Differential operators on conormal distributional sections}\label{ss: diff opers on conormal distribs}

Any $A\in\Diff^k(M;E)$ induces continuous linear maps \cite[Lemma~6.1.1]{Melrose1996}
\begin{equation}\label{A: I^(s)(M L E) -> I^[s-k](M L E)}
A:I^{(s)}_{{\cdot}/\co}(M,L;E)\to I^{(s-k)}_{{\cdot}/\co}(M,L;E)\;,
\end{equation}
which induce a continuous endomorphism $A$ of $I_{{\cdot}/\co}(M,L;E)$. If $A\in\Diff(M,L;E)$, then it clearly induces a continuous endomorphism $A$ of every $I^{(s)}_{{\cdot}/\co}(M,L;E)$.

By~\eqref{u in C^-infty_c mapsto delta_L^u}, for $A\in\Diff(M,L;E)$ and $u\in C^\infty_{{\cdot}/\co}(L;E\otimes\Omega^{-1}NL)$, we have \cite[Eq.~(4.17)]{AlvKordyLeichtnam-conormal}
\begin{equation}\label{A delta_L^u}
A\delta_L^u=\delta_L^{A'u}\;,\quad A'=((A^\trans)|_L)^\trans\in\Diff(L;E\otimes\Omega^{-1}NL)\;,
\end{equation}
where $A^\trans\in\Diff(M,L;E^*\otimes\Omega)$ and $(A^\trans)|_L\in\Diff(L,E^*\otimes\Omega_LM)$ using the vector bundle versions of~\eqref{A in Diff(M L) => A^t in Diff(M L Omega)} and~\eqref{Diff(M L) -> Diff(L)}. By~\eqref{A delta_L^u}, $\Diff(M,L;E)$ preserves the subspace of Dirac sections given by~\eqref{u in C^infty_c mapsto delta_L^u}. Thus~\eqref{C^infty_./c(L E otimes Omega^-1 NL) subset H^s_loc/c(M E)} induces a continuous inclusion
\begin{equation}\label{u mapsto delta_L^u conormal}
C^\infty_{{\cdot}/\co}(L;E\otimes\Omega^{-1}NL)\subset I^{(s)}_{{\cdot}/\co}(M,L;E)\quad(s<-n'/2)\;.
\end{equation}

\subsection{Pull-back of conormal distributions}\label{ss: pull-back of conormal distribs}

If a smooth map $\phi:M'\to M$ is transverse to a regular submanifold $L\subset M$, which is a closed subset, then $L':=\phi^{-1}(L)\subset M'$ is a regular submanifold, which is a closed subset. The trivial-line-bundle version of~\eqref{phi^*: C^infty(M E) -> C^infty(M' phi^*E)} has continuous extensions 
\begin{equation}\label{phi^*: I^m(M L) -> I^m+k/4(M' L')}
\phi^*:I^m(M,L)\to I^{m+k/4}(M',L')\quad(m\in\R)\;,
\end{equation}
where $k=\dim M-\dim M'$  \cite[Theorem~5.3.8]{Simanca1990}, \cite[Proposition~6.6.1]{Melrose1996}. Taking inductive limits and using~\eqref{sandwich for I}, we get a continuous linear map
\begin{equation}\label{phi^*: I(M L) -> I(M' L')}
\phi^*:I(M,L)\to I(M',L')\;.
\end{equation}
If $\phi$ is a submersion, this is a restriction of~\eqref{phi_*: C^-infty(M E) -> C^-infty(M' phi^*E)}. In the case of a vector bundle $E$ over $M$, we get
\begin{equation}\label{phi^*: I(M L E) -> I(M' L' phi^*E)}
\phi^*:I(M,L;E)\to I(M',L';\phi^*E)\;,
\end{equation}
given by the $C^\infty(M)$-tensor product of the map~\eqref{phi^*: I(M L) -> I(M' L')} and the identity map on $C^\infty(M;E)$, using the versions of~\eqref{C^infty(M)-tensor product description of C^pm infty_./c(M E)} and~\eqref{C^infty(M)-tensor product description of C^pm infty_./c(M' phi^*E)} for spaces of conormal distributions (see \Cref{ss: pull-back and push-forward of distrib sections}).

\subsection{Push-forward of conormal distributions}\label{ss: push-forward of conormal distribs}

Let $\phi:M'\to M$ be a smooth submersion, and let $L\subset M$ and $L'\subset M'$ be regular submanifolds, which are closed subsets, such that $\phi(L')\subset L$ and the restriction $\phi:L'\to L$ is also a smooth submersion. Then~\eqref{phi_*: Cinftyc(M' phi^*E otimes Omega_fiber) -> Cinftyc(M E)} and~\eqref{phi_*: C^infty_cv(M' phi^*E otimes Omega_fiber) -> Cinfty(M E)} have continuous extensions
\begin{equation}\label{phi_*: I^m_c(M' L' Omega_fiber) -> I^m+l/2-k/4_c(M L)}
\phi_*:I^m_{\co/\cv}(M',L';\Omega_\fiber)\to I^{m+l/2-k/4}_{\co/{\cdot}}(M,L)\quad(m\in\R)\;,
\end{equation}
where $k=\dim M'-\dim M$ and $l=\dim L'-\dim L$ \cite[Theorem~5.3.6]{Simanca1990}, \cite[Proposition~6.7.2]{Melrose1996}. Taking inductive limits, we get a continuous linear map
\begin{equation}\label{phi_*: I_c/cv(M' L' Omega_fiber) -> I_c/.(M L)}
\phi_*:I_{\co/\cv}(M',L';\Omega_\fiber)\to I_{\co/{\cdot}}(M,L)\;,
\end{equation}
which is a restriction of~\eqref{phi_*: C^-infty_c(M' phi^*E otimes Omega_fiber) -> C^-infty_c(M E)}. In the case of a vector bundle $E$ over $M$, we get
\begin{equation}\label{phi_*: I_c/cv(M' L' phi^*E otimes Omega_fiber) -> I_c/.(M L E)}
\phi_*:I_{\co/\cv}(M',L';\phi^*E\otimes\Omega_\fiber)\to I_{\co/{\cdot}}(M,L;E)\;,
\end{equation}
is given by the $C^\infty(M)$-tensor product of~\eqref{phi_*: I_c/cv(M' L' Omega_fiber) -> I_c/.(M L)} and the identity map on $C^\infty(M;E)$, using the obvious versions of~\eqref{C^infty(M)-tensor product description of C^pm infty_./c(M E)} and~\eqref{C^infty(M)-tensor product description of C^pm infty_cv/c(M' phi^*E otimes Omega_fiber)} for spaces of conormal distributions (see \Cref{ss: pull-back and push-forward of distrib sections}). The map~\eqref{phi_*: I_c/cv(M' L' Lambda) to I_c/.(M L Lambda)} is also a restriction of~\eqref{phi_*: C^-infty_c/cv(M' Lambda) to C^-infty_c/.(M Lambda)}.

\section{Dual-conormal distributions}\label{s: dual-conormal distribs}

The dual space $I(M,L;E)'$ \cite[Chapter~6]{Melrose1996} also plays an important role in our work. Again, the case of $I(M,L)'$ is considered first; its extension for any vector bundle $E$ can be made like in \Cref{s: conormal distribs}, and will be considered without further comment.

\subsection{Dual-conormal distributions when $M$ is compact}\label{ss: dual-conormal distribs - compact} 

Consider the notation of \Cref{ss: conormal - Sobolev order - compact,ss: conormal - symbol order - compact}, where $M$ is assumed to be compact. The space of \emph{dual-conormal distributions} \index{dual-conormal distribution} of $M$ at $L$ (or of $(M,L)$) is \cite[Chapter~6]{Melrose1996} \index{$I'(M,L)$}
\begin{equation}\label{I'(M L) = I(M L Omega)'}
I'(M,L)=I(M,L;\Omega)'\;.
\end{equation}
Let also
\begin{equation}\label{I^prime (s)(M L) = I^(-s)(M L Omega)'}
I^{\prime\,(s)}(M,L)=I^{(-s)}(M,L;\Omega)'\;,\quad I^{\prime\,m}(M,L)=I^{-m}(M,L;\Omega)'\;.
\end{equation}
$I'(M,L)$ is a complete Montel space, and every $I^{\prime\,(s)}(M,L)$ is bornological and barreled \cite[Corollaries~5.1 and~5.2]{AlvKordyLeichtnam-conormal}. 

Transposing the versions of~\eqref{I^(s)(M L) subset I^(s')(M L)} and~\eqref{I^m(M L) subset I^m'(M L)} with $\Omega M$, we get continuous linear restriction maps, for $s'<s$ and $m<m'$,
\begin{equation}\label{I^prime m(M L) to I^prime m'(M L)}
I^{\prime\,(s')}(M,L)\leftarrow I^{\prime\,(s)}(M,L)\;,\quad I^{\prime\,m'}(M,L)\leftarrow I^{\prime\,m}(M,L)\;.
\end{equation}
These maps form projective spectra (the Sobolev-order and symbol-order spectra), giving rise to $\varprojlim I^{\prime\,(s)}(M,L)$ as $s\uparrow+\infty$ and $\varprojlim I^{\prime\,m}(M,L)$ as $m\downarrow-\infty$. Similarly, from~\eqref{C^infty(M) subset I^(infty)(M L)}, we get continuous inclusions,
\begin{equation}\label{C^-infty(M) supset I'(M L) supset C^infty(M)}
C^{-\infty}(M)\supset I'(M,L)\supset C^\infty(M)\;,
\end{equation}
and~\eqref{sandwich for I} gives rise to continuous linear restriction maps
\begin{equation}\label{sandwich for I'}
I^{\prime\,(-m+n/4-\epsilon)}(M,L)\leftarrow I^{\prime\,m}(M,L)\leftarrow I^{\prime\,(-m+n/4+\epsilon)}(M,L)\;,
\end{equation}
for all $m\in\R$ and $\epsilon>0$. We also have \cite[Corollary 5.3]{AlvKordyLeichtnam-conormal}
\begin{equation}\label{I'(M L) equiv projlim I^prime (s)(M L) equiv projlim I^prime m(M L)}
I'(M,L)\equiv\varprojlim I^{\prime\,(s)}(M,L)\equiv\varprojlim I^{\prime\,m}(M,L)\;,
\end{equation}
as $s\uparrow+\infty$ and $m\downarrow-\infty$, where the last equality follows from~\eqref{sandwich for I'}.

The left-hand-side maps of~\eqref{I^prime m(M L) to I^prime m'(M L)} have dense images, which follows from consequences of the Hahn-Banach theorem \cite[Theorems~7.7.5 and~7.7.7~(c)]{NariciBeckenstein2011}, using that their transposes are the analogs of the inclusions~\eqref{I^(s)(M L) subset I^(s')(M L)} with $\Omega M$ by the reflexivity of the spaces $I^{(s)}(M,L;\Omega)$ (\Cref{ss: conormal - Sobolev order - compact}). Similarly, the inclusions~\eqref{C^-infty(M) supset I'(M L) supset C^infty(M)} are dense.

\subsection{Dual-conormal distributions when $M$ is non-compact}\label{ss: dual-conormal distribs - non-compact}

If $M$ is not supposed to be compact, the above concepts and properties can be extended as follows. We can similarly define the space $I'_K(M,L)$ of dual-conormal distributions supported in any compact $K\subset M$. Then define the LCHSs, $I'_\co(M,L)=\bigcup_KI'_K(M,L)$ like in~\eqref{Cinftyc(U)}, and $I'(M,L)$ like in~\eqref{I(M L) - non-compact M} using $I'_\co(M,L)$ instead of $I_\co(M,L)$. These spaces satisfy a version of~\eqref{I'(M L) = I(M L Omega)'}, interchanging arbitrary/compact support like in~\eqref{C^-infty_cdot/c(M;E)}. $I'(M,L)$ is a complete Montel space, and~\eqref{C^-infty(M) supset I'(M L) supset C^infty(M)} is also true. Similarly, we can define the spaces $I^{\prime\,(s)}_{{\cdot}/\co}(M,L)$ and $I^{\prime\,m}_{{\cdot}/\co}(M,L)$, \index{$I^{\prime\,m}_{{\cdot}/\co}(M,L)$} which satisfy a version of~\eqref{I^prime (s)(M L) = I^(-s)(M L Omega)'} interchanging the support condition. Moreover~\eqref{sandwich for I'} and~\eqref{I'(M L) equiv projlim I^prime (s)(M L) equiv projlim I^prime m(M L)} have obvious extensions. 

If $M$ is the domain of a given smooth submersion, the LCHS $I'_\cv(M;E)$ can be defined like $C^{-\infty}_\cv(M;E)$, using $I'_\co(M;E)$ instead of $C^{-\infty}_\co(M;E)$. \index{$I_\cv(M;E)$}

\subsection{Conormal distributions vs dual-conormal distributions}\label{ss: conormal distribs vs dual-conormal distribs}

Assume $M$ is compact. Then \cite[Theorem~8.11]{AlvKordyLeichtnam-conormal}
\[
I(M,L)\cap I'(M,L)=C^\infty(M)\;.
\]

\subsection{Differential operators on dual-conormal distributional sections}
\label{ss: diff opers on dual-conormal distribs}

For any $A\in\Diff(M;E)$, the transpose of $A^\trans$ on $I_{\co/{\cdot}}(M,L;E^*\otimes\Omega)$ (\Cref{ss: diff opers on conormal distribs}) is a continuous endomorphism $A$ of $I'_{{\cdot}/\co}(M,L;E)$, which is a continuous extension of $A$ on $C^\infty(M;E)$, and a restriction of $A$ on $C^{-\infty}(M;E)$ (\Cref{ss: diff ops}). By~\eqref{A: I^(s)(M L E) -> I^[s-k](M L E)}, if $A\in\Diff^m(M;E)$, we get induced continuous linear maps
\begin{equation}\label{A: I^prime [s](M L E) -> I^prime (s-m)(M L E)}
A:I^{\prime\,(s)}_{{\cdot}/\co}(M,L;E)\to I^{\prime\,(s-m)}_{{\cdot}/\co}(M,L;E)\;,
\end{equation}
If $A\in\Diff(M,L;E)$, the transpose of $A^\trans$ of $I^{(-s)}_{\co/{\cdot}}(M,L;E^*\otimes\Omega)$ is a continuous endomorphism $A$ of $I^{\prime\,(s)}_{{\cdot}/\co}(M,L;E)$.

\subsection{Pull-back of dual-conormal distributions}
\label{ss: pull-back of dual-conormal distributions}

With the notation and conditions of \Cref{ss: push-forward of conormal distribs}, transposing the compactly supported cases  of~\eqref{phi_*: I^m_c(M' L' Omega_fiber) -> I^m+l/2-k/4_c(M L)} and~\eqref{phi_*: I_c/cv(M' L' Omega_fiber) -> I_c/.(M L)} with $\Omega M$, we get continuous linear maps 
\begin{gather}
\phi^*:I^{\prime\,m}(M,L)\to I^{\prime\,m+l/2-k/4}(M',L')\quad(m\in\R)\;,\notag\\
\phi^*:I'(M,L)\to I'(M',L')\;.\label{phi^*: I'(M L) -> I'(M' L')}
\end{gather}
 In the case of a vector bundle $E$ over $M$, like in~\eqref{phi^*: I(M L E) -> I(M' L' phi^*E)}, we get
\begin{equation}\label{phi^*: I'(M L E) -> I'(M' L' phi^*E)}
\phi^*:I'(M,L;E)\to I'(M',L';\phi^*E)\;.
\end{equation}
The map~\eqref{phi^*: I'(M L E) -> I'(M' L' phi^*E)} is an extension of~\eqref{phi^*: C^infty(M E) -> C^infty(M' phi^*E)} and a restriction of~\eqref{phi_*: C^-infty(M E) -> C^-infty(M' phi^*E)}.

\subsection{Push-forward of dual-conormal distributions}
\label{ss: push-forward of dual-conormal distributions}

With the notation and conditions of \Cref{ss: pull-back of conormal distribs}, suppose $\phi$ is a submersion. Transposing the versions of~\eqref{phi^*: I^m(M L) -> I^m+k/4(M' L')} and~\eqref{phi^*: I(M L) -> I(M' L')} with $\Omega M$, and using an analog of~\eqref{phi_*u}, we get continuous linear maps,
\begin{gather}
\phi_*:I^{\prime\,m}_{\co/\cv}(M',L'\otimes\Omega_\fiber)\to I^{\prime\,m-k/4}_{\co/{\cdot}}(M,L)\quad(m\in\R)\;, \notag\\
\phi_*:I'_{\co/\cv}(M',L';\Omega_\fiber)\to I'_{\co/{\cdot}}(M,L)\;.
\label{phi_*: I'_c/cv(M' L' Omega_fiber) -> I'_c/.(M L)}
\end{gather}
In the case of a vector bundle $E$ over $M$, like in~\eqref{phi_*: I_c/cv(M' L' phi^*E otimes Omega_fiber) -> I_c/.(M L E)}, we get
\begin{equation}\label{phi_*: I'_c/cv(M' L' phi^*E otimes Omega_fiber) -> I'_c/.(M L E)}
\phi_*:I'_{\co/\cv}(M',L';\phi^*E\otimes\Omega_\fiber)\to I'_{\co/{\cdot}}(M,L;E)\;.
\end{equation}
The map~\eqref{phi_*: I'_c/cv(M' L' Omega_fiber) -> I'_c/.(M L)} is an extension of~\eqref{phi_*: Cinftyc(M' phi^*E otimes Omega_fiber) -> Cinftyc(M E)} and a restriction of~\eqref{phi_*: C^-infty_c(M' phi^*E otimes Omega_fiber) -> C^-infty_c(M E)}.

\section{Bounded geometry}\label{s: bd geom}

\subsection{Basic notation}\label{ss: bd geom - basic notation}

The concepts recalled here become relevant when $M$ is not compact. Equip $M$ with a Riemannian metric $g$, and let $\nabla$ denote its Levi-Civita connection, $R$ its curvature tensor, and $\inj_M\ge0$ its injectivity radius (the infimum of the injectivity radius at all points). If $M$ is connected, we have an induced distance function $d$. If $M$ is not connected, we can also define $d$ taking $d(p,q)=\infty$ if $p$ and $q$ belong to different connected components. Observe that $M$ is complete if $\inj_M>0$. For $r>0$, $p\in M$ and $S\subset M$, let $B(p,r)$ and $\overline B(p,r)$ denote the open and closed $r$-balls centered at $p$, and $\Pen(S,r)$ and $\overline{\Pen}(S,r)$ denote the open and closed $r$-penumbras of $S$ (defined by the conditions $d(\cdot,S)<r$ and $d(\cdot,S)\le r$, respectively). We may add the subscript ``$M$'' to this notation if needed, or a subscript ``$a$'' if we are referring to a family of Riemannian manifolds $M_a$.

\subsection{Manifolds and vector bundles of bounded geometry}\label{ss: mfds and vector bdls of bd geom}

Recall that $M$ is said to be of \emph{bounded geometry} \index{bounded geometry} if $\inj_M>0$ and $\sup|\nabla^mR|<\infty$ for every $m\in\N_0$. This concept has the following chart description. 

\begin{thm}[Eichhorn \cite{Eichhorn1991}; see also \cite{Roe1988I,Schick1996,Schick2001}]\label{t: mfd of bd geom}
$M$ is of bounded geometry if and only if, for some open ball $B\subset\R^n$ centered at $0$, there are normal coordinates $y_p:V_p\to B$ at every $p\in M$ such that the corresponding Christoffel symbols $\Gamma^i_{jk}$, as a family of functions on $B$ parametrized by $i$, $j$, $k$ and $p$, lie in a bounded set of the Fr\'echet space $C^\infty(B)$. This equivalence holds as well replacing the Cristoffel symbols with the metric coefficients $g_{ij}$.
\end{thm}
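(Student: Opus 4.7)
The plan is to translate both directions into statements about the family of exponential maps $\exp_p$ and to use the Jacobi equation as the bridge between the intrinsic curvature tensor $R$ and the metric coefficients read in normal coordinates. The underlying identity, iterated by differentiation, is the Jacobi equation
\[
\nabla_t^2 J + R(\dot\gamma, J)\dot\gamma = 0,
\]
which expresses successive derivatives of $g_{ij}$ along a radial geodesic in terms of $\nabla^j R$ for $j \le k$ and of lower-order Jacobi-field data. The equivalence between bounding the Christoffel symbols and bounding the metric coefficients is then an algebraic afterthought: on one hand, $\Gamma^i_{jk} = \tfrac12 g^{il}(\partial_j g_{kl} + \partial_k g_{jl} - \partial_l g_{jk})$, and on the other hand, metric compatibility $\nabla g = 0$ gives $\partial_k g_{ij} = g_{il}\Gamma^l_{kj} + g_{jl}\Gamma^l_{ki}$, which together with the initial condition $g_{ij}(0) = \delta_{ij}$ recovers $g_{ij}$ inductively in all orders from the $\Gamma^i_{jk}$.

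For the direction $(\Rightarrow)$, assume $\inj_M > 0$ and $\sup_M |\nabla^m R| < \infty$ for every $m \in \N_0$. Choose $r_0 < \inj_M$, set $B = B_{\R^n}(0, r_0)$, and at each $p \in M$ pick an orthonormal frame of $T_pM$ to identify $T_pM$ with $\R^n$ and define $y_p : V_p \to B$ as the inverse of $\exp_p$ in these coordinates. The coefficients $g_{ij}\circ y_p^{-1}$ on $B$ are inner products of Jacobi fields emanating from $p$; iterating the Jacobi equation and applying variation-of-constants and Gr\"onwall-type estimates on $B$, the $C^k$ norm of $g_{ij}$ on $B$ is bounded by a universal polynomial in $\sup_M |\nabla^j R|$ for $j \le k$. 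Since $g_{ij}(0) = \delta_{ij}$, choosing $r_0$ slightly smaller if needed, one also gets a uniform lower bound on $\det(g_{ij})$, and the identity for $\Gamma^i_{jk}$ above yields uniform $C^k$ bounds on the Christoffel symbols for every $k$.

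For the direction $(\Leftarrow)$, assume the family of normal charts $y_p : V_p \to B$ exists with the $\Gamma^i_{jk}$ (equivalently, the $g_{ij}$) lying in a bounded set of $C^\infty(B)$. The existence of the chart on a fixed ball forces $\inj_M \ge r_0 > 0$. In normal coordinates the components $R^i{}_{jkl}$ of the curvature tensor are universal polynomial expressions in $\Gamma^i_{jk}$ and $\partial \Gamma^i_{jk}$, and inductively the components of $\nabla^m R$ are universal polynomials in $g_{ij}$, $g^{ij}$ and partial derivatives of the $\Gamma^i_{jk}$ up to order $m+1$; the uniform $C^\infty$ bounds on the chart data therefore give a uniform bound on $|\nabla^m R|$ at $p = y_p^{-1}(0)$, and $p$ being arbitrary yields $\sup_M |\nabla^m R| < \infty$. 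The main obstacle is the bookkeeping in the $(\Rightarrow)$ step: one must verify that the $C^k$-estimate of $g_{ij}$ on $B$ only requires $|\nabla^j R|$ for $j \le k$ and not indefinitely many more, which is precisely what iterating the Jacobi equation along straight rays in $B$ (and separating radial from angular derivatives via Gauss's lemma $g_{ij}(y)y^j = y^i$) delivers.
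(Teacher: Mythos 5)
The paper does not give its own proof of this theorem; it is quoted from Eichhorn (with Roe, Schick as further references), so there is nothing in the text to compare against. Evaluating your proposal on its own terms: you have identified the correct standard strategy, which is the Jacobi-equation argument underlying Eichhorn's and Kaul's estimates, and the outline is sound. The reduction between $\Gamma^i_{jk}$ and $g_{ij}$ via $\nabla g=0$ together with $g_{ij}(0)=\delta_{ij}$ is correct, the $(\Leftarrow)$ direction (read off $\nabla^m R$ at the center $p=y_p^{-1}(0)$, where $g_{ij}=\delta_{ij}$, from polynomial expressions in derivatives of the chart data) is correct, and the $(\Rightarrow)$ direction via Jacobi fields and Gr\"onwall is the right mechanism.

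Two points worth tightening. First, your claim that the $C^k$ estimate of $g_{ij}$ on $B$ requires only $|\nabla^j R|$ for $j\le k$ is a stronger derivative-count than what the Jacobi-field method naively gives; because $g_{ij}$ is a quadratic expression in Jacobi fields and the Jacobi equation is second order, the honest bookkeeping produces an index shift, and controlling angular derivatives (differentiating the whole Jacobi ODE with respect to the initial direction) introduces further terms beyond what Gauss's lemma alone removes. This does not affect the theorem, since both sides of the equivalence are ``for all $m$'', but it should not be presented as the sharp count without justification. Second, in the $(\Rightarrow)$ direction you invoke a uniform lower bound on $\det(g_{ij})$ after shrinking $r_0$; you should say explicitly that this follows from $g_{ij}(0)=\delta_{ij}$ together with a uniform $C^1$ bound, since without the lower bound the passage $g_{ij}\rightsquigarrow g^{ij}\rightsquigarrow\Gamma^i_{jk}$ is not controlled. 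Neither point is a gap in strategy, only in the bookkeeping that a complete write-up would have to supply.
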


\begin{rem}\label{r: equi-bounded geometry}
Any non-connected Riemannian manifold of bounded geometry can be considered as a family of Riemannian manifolds (the connected components), which are of \emph{equi-bounded geometry} in the sense that they satisfy the condition of bounded geometry with the same bounds.
\end{rem}

\begin{ex}\label{ex: mfds of bd geom}
Typical examples of manifolds of bounded geometry are Lie groups with left invariant metrics, covering spaces of closed Riemannian manifolds and leaves of foliations on closed manifolds.
\end{ex}

From now on in this section, assume $M$ is of bounded geometry and consider the charts $y_p:V_p\to B$ given by \Cref{t: mfd of bd geom}. The radius of $B$ will be denoted by $r_0$. 

\begin{prop}[{Schick \cite[Theorem~A.22]{Schick1996}, \cite[Proposition~3.3]{Schick2001}}]\label{p: |partial_I(y_q y_p^-1)|}
For every multi-index $\alpha$, the function $|\partial_I(y_qy_p^{-1})|$ is bounded on $y_p(V_p\cap V_q)$, uniformly on $p,q\in M$.
\end{prop}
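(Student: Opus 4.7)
The plan is to exploit that each transition map $\phi_{pq}:=y_q\circ y_p^{-1}$ is an \emph{isometry} from $(y_p(V_p\cap V_q),g^{(p)})$ to $(y_q(V_p\cap V_q),g^{(q)})$, where $g^{(p)}$ denotes the matrix of metric coefficients in the chart $y_p$, and to combine this with the uniform $C^\infty$-bounds on $g^{(p)}$ and the associated Christoffel symbols ${}^p\Gamma^i_{jk}$ supplied by \Cref{t: mfd of bd geom}. The essential point throughout is that every constant emerging from the argument depends only on these bounds, not on the basepoints $p,q$.

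First I would establish a uniform $C^1$ bound on $\phi_{pq}$. The pullback identity
\[
g^{(p)}_{ij}(v)=\sum_{k,l}\partial_i\phi_{pq}^k(v)\,\partial_j\phi_{pq}^l(v)\,g^{(q)}_{kl}(\phi_{pq}(v)),
\]
together with the uniform upper bound on the matrices $g^{(p)}$ and their uniform lower ellipticity — which holds, after possibly shrinking the radius $r_0$ of $B$, since $g^{(p)}_{ij}(0)=\delta_{ij}$ for every $p$ and the family $\{g^{(p)}\}$ is uniformly $C^1$-bounded — yields a pointwise estimate $|\partial_I\phi_{pq}|\le C_1$ for all $|I|=1$, uniformly in $p,q$.

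Second, since $\phi_{pq}$ is an isometry, it intertwines the Levi-Civita connections of $g^{(p)}$ and $g^{(q)}$, so in coordinates it satisfies the affine-map identity
\[
\partial_i\partial_j\phi_{pq}^k={}^p\Gamma^n_{ij}\,\partial_n\phi_{pq}^k-{}^q\Gamma^k_{lm}(\phi_{pq})\,\partial_i\phi_{pq}^l\,\partial_j\phi_{pq}^m.
\]
The uniform bounds on ${}^p\Gamma$ and ${}^q\Gamma$ together with all their derivatives from \Cref{t: mfd of bd geom}, combined with the $C^1$ bound on $\phi_{pq}$, immediately give a uniform bound on $|\partial_I\phi_{pq}|$ for $|I|=2$. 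Inducting on $|I|$, repeatedly differentiating this identity expresses $\partial_I\phi_{pq}$ for $|I|\ge 2$ as a universal polynomial in the $\partial_{I'}\phi_{pq}$ with $|I'|<|I|$ and in derivatives of ${}^p\Gamma$ and ${}^q\Gamma$ of order at most $|I|-2$ evaluated at points of $B$ or at $\phi_{pq}(v)\in B$, respectively; by the inductive hypothesis and \Cref{t: mfd of bd geom}, all of these are uniformly bounded. This closes the induction.

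The main obstacle I anticipate is the uniform lower ellipticity of $g^{(p)}$ needed in the first step: without it, the pullback identity cannot be inverted to bound $d\phi_{pq}$. This is where shrinking $r_0$ — which is admissible, as the conclusion of \Cref{t: mfd of bd geom} is preserved under shrinking $B$ — becomes essential: one uses that $g^{(p)}(0)=\mathrm{Id}$ holds uniformly in $p$, together with the uniform $C^1$-bound on $g^{(p)}$, to keep the eigenvalues of $g^{(p)}$ bounded away from zero on a fixed neighborhood of $0$. Once this is in place, the remainder of the argument reduces to bookkeeping with the chain rule.
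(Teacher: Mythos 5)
The paper does not prove this proposition itself — it is cited as a known result to Schick's thesis and paper — so there is no internal proof to compare against. Your argument is correct and is essentially the standard proof that appears in those references. The two ingredients are exactly right: (i) the pullback identity $g^{(p)} = (D\phi_{pq})^{\trans}\,g^{(q)}(\phi_{pq})\,D\phi_{pq}$ plus uniform two-sided ellipticity of the $g^{(p)}$ (obtained from $g^{(p)}(0)=\mathrm{Id}$ and the uniform $C^1$ bound, after shrinking $r_0$) gives the uniform $C^1$ bound on $\phi_{pq}$; and (ii) the second-order coordinate-change identity for the Christoffel symbols, differentiated $|I|-2$ times, closes the induction, since each differentiation produces only derivatives of $\phi_{pq}$ of strictly lower order than the left-hand side together with derivatives of ${}^p\Gamma$ and ${}^q\Gamma$ controlled by \Cref{t: mfd of bd geom}. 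One small stylistic remark: framing $\phi_{pq}$ as an isometry between pullback metrics is harmless but unnecessary — both $y_p$ and $y_q$ are charts for the \emph{same} metric $g$ on $M$, so both the pullback identity and the Christoffel transformation law are just change-of-variable formulas and require no separate isometry argument. Everything else is sound.
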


\begin{prop}[{Shubin \cite[Appendix~A1.1, Lemma~1.2]{Shubin1992}}]\label{p: p_k}
For any $0<2r\le r_0$, there is a subset $\{p_k\}\subset M$ and some $N\in\N$ such that the balls $B(p_k,r)$ cover $M$, and every intersection of $N+1$ sets $B(p_k,2r)$ is empty.
\end{prop}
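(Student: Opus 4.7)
The plan is to construct $\{p_k\}$ by a maximality argument and then extract the bounded-overlap constant $N$ from a volume comparison justified by bounded geometry.

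First I would produce the points. By Zorn's lemma (or, since $M$ is separable, by a greedy inductive choice), pick a maximal subset $\{p_k\}\subset M$ with the property that $d(p_k,p_j)\ge r$ whenever $k\ne j$. Maximality forces $\bigcup_k B(p_k,r)=M$: any $q\in M$ with $d(q,p_k)\ge r$ for all $k$ could be added to the family, contradicting maximality. This handles the covering statement with no use of bounded geometry.

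Next comes the bounded multiplicity. Suppose $q\in\bigcap_{i=1}^{N+1} B(p_{k_i},2r)$ for distinct indices $k_1,\dots,k_{N+1}$. Then each $p_{k_i}\in B(q,2r)$, and the balls $B(p_{k_i},r/2)$ are pairwise disjoint by the separation property of $\{p_k\}$, while each is contained in $B(q,5r/2)$. So it suffices to bound, uniformly in $q$, the ratio $\vol B(q,5r/2)/\inf_{i}\vol B(p_{k_i},r/2)$ by some constant depending only on $r$ and the geometric bounds. Taking $N$ to be any integer strictly larger than this ratio yields the claim.

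The main technical step is this uniform volume comparison, and here I would use \Cref{t: mfd of bd geom} directly. Since $r\le r_0/2<r_0$ and the normal chart $y_p:V_p\to B$ is defined on a ball of radius $r_0$, both $B(q,5r/2)$ and $B(p_{k_i},r/2)$ sit inside the images of such normal charts (after slightly shrinking $r$ if needed, or by composing charts using \Cref{p: |partial_I(y_q y_p^-1)|}). In those normal coordinates the metric coefficients $g_{ij}$ lie in a bounded set of $C^\infty(B)$ and, by standard ODE arguments from bounded curvature together with $\inj_M>0$, they admit a uniform positive lower bound on the determinant of $(g_{ij})$. Hence the Riemannian volume of a Euclidean ball of radius $\rho\le r_0$ in such a chart is bounded above and below by constants times $\rho^n$, independently of the base point. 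This gives $\vol B(q,5r/2)\le C_1 r^n$ and $\vol B(p_{k_i},r/2)\ge C_2 r^n$ with $C_1,C_2$ independent of $p,q$, so $N$ can be chosen as any integer $> C_1/C_2$.

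The only subtle point is the volume lower bound on small balls: one must quote (or reprove) that bounded sectional curvature together with $\inj_M>0$ yields uniform two-sided bounds on $\det(g_{ij})$ in normal coordinates of radius at most $\min\{\inj_M,r_0\}$. This is the content of the bounded-geometry chart description in \Cref{t: mfd of bd geom}, so the argument is self-contained relative to the results already stated in the excerpt.
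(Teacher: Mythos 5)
The paper does not give its own proof of this proposition; it cites it from Shubin's lecture notes. Your argument is essentially the standard Shubin/Vitali proof: maximal $r$-separated net, disjoint $r/2$-balls packed into a ball of fixed radius, and a uniform volume comparison from bounded geometry. That is the right strategy and the first two steps (covering from maximality, disjointness from $r$-separation) are flawless.

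There is, however, a real imprecision in how you justify the upper volume bound $\vol B(q,5r/2)\le C_1 r^n$. The hypothesis is only $2r\le r_0$, so $5r/2$ can be as large as $5r_0/4>r_0$, and then $B(q,5r/2)$ need not lie inside a single normal chart $V_p$ of radius $r_0$. Your parenthetical escape hatch "after slightly shrinking $r$ if needed" does not apply here, because $r$ is given (only bounded by $r_0/2$) and shrinking it changes the statement being proved. "Composing charts" via \Cref{p: |partial_I(y_q y_p^-1)|} can be made to work, but as stated it is a placeholder, not an argument. The cleanest fix keeps the lower bound exactly as you have it (those balls have radius $r/2\le r_0/4$ and genuinely sit in a chart, where the uniform two-sided bounds on $\det(g_{ij})$ from \Cref{t: mfd of bd geom} give $\vol B(p,r/2)\ge C_2 r^n$), and replaces the chart argument for the upper bound by a curvature comparison: bounded geometry gives a uniform lower bound on Ricci curvature, so Bishop--Gromov yields $\vol B(q,R)\le V(R)$ for every $q$ and fixed $R$, with $V$ depending only on $n$ and the curvature bound. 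Alternatively, note that $B(q,5r/2)$ can be covered by a number of charts of radius $r_0/2$ that is bounded in terms of $n$ and $r_0$ alone (by another packing argument at scale $r_0/4$), and apply the chart bound on each piece. Either way the overall structure of your proof is sound; only this one step needs a cleaner justification, and your own framing of the "only subtle point" as the lower bound is slightly off — that part is actually the unproblematic one.
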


A vector bundle $E$ of rank $l$ over $M$ is said to be of \emph{bounded geometry} when it is equipped with a family of local trivializations over the charts $(V_p,y_p)$, for small enough $r_0$, with corresponding defining cocycle $a_{pq}:V_p\cap V_q\to\GL(\C,l)\subset\C^{l\times l}$, such that, for all multi-index $\alpha$, the function $|\partial_I(a_{pq}y_p^{-1})|$ is bounded on $y_p(V_p\cap V_q)$, uniformly on $p,q\in M$. When referring to local trivializations of a vector bundle of bounded geometry, we always mean that they satisfy this condition. If the corresponding defining cocycle is valued in $\operatorname{U}(l)$, then $E$ is said to be of \emph{bounded geometry} as a Hermitian vector bundle. Euclidean vector bundles of bounded geometry are similarly defined.

\begin{ex}\label{ex: bundles of bd geom}
The vector bundle $E$ associated to the principal $\operatorname{O}(n)$-bundle $P$ of orthonormal frames of $M$ and any unitary representation of $\operatorname{O}(n)$ is of bounded geometry in a canonical way. In particular, this applies to $T_\C M$ and $\Lambda M$. If the representation is unitary, then bounded geometry holds as a Hermitian vector bundle. The same is true if we use any reduction $Q$ of $P$ with structural group $H\subset\operatorname{O}(n)$ and any unitary representation of $H$. 
\end{ex}

\begin{ex}\label{ex: operations of bundles of bd geom}
Bounded geometry is preserved by operations of vector bundles induced by operations of vector spaces, like dual vector bundles, direct sums, tensor products, exterior products, densities, etc.
\end{ex}

\begin{ex}\label{ex: widetilde E}
Let $E$ be a vector bundle $E$ over a closed Riemannian manifold $M$, and let $\widetilde M$ be a covering of $M$. Then the lift $\widetilde E$ of $E$ to $\widetilde M$ is of bounded geometry in a canonical way.
\end{ex}

\subsection{Uniform spaces}\label{ss: uniform sps}

For every $m\in\N_0$, a function $u\in C^m(M)$ is said to be \emph{$C^m$-uniformy bounded} if there is some $C_m\ge0$ with $|\nabla^{m'}u|\le C_m$ on $M$ for all $m'\le m$. These functions form the \emph{uniform $C^m$ space} $C_{\text{\rm ub}}^m(M)$, which is a Banach space with the norm $\|{\cdot}\|_{C^m_{\text{\rm ub}}}$ defined by the best constant $C_m$. As usual, we write $C_{\text{\rm ub}}(M)=C_{\text{\rm ub}}^0(M)=C(M)\cap L^\infty(M)$. Equivalently, we may take the norm $\|{\cdot}\|'_{C^m_{\text{\rm ub}}}$ defined by the best constant $C'_m\ge0$ such that $|\partial_I(uy_p^{-1})|\le C'_m$ on $B$ for all $p\in M$ and $|I|\le m$; in fact, it is enough to consider any subset of points $p$ so that $\{V_p\}$ covers $M$ \cite[Theorem~A.22]{Schick1996}, \cite[Proposition~3.3]{Schick2001}. The \emph{uniform $C^\infty$ space} is the Fr\'echet space $\Cinftyub(M)=\bigcap_mC_{\text{\rm ub}}^m(M)$, with the semi-norms $\|{\cdot}\|_{C^m_{\text{\rm ub}}}$ or $\|{\cdot}\|'_{C^m_{\text{\rm ub}}}$. It consists of the functions $u\in C^\infty(M)$ such that all functions $uy_p^{-1}$ lie in a bounded set of $C^\infty(B)$. 

The same definitions apply to functions with values in $\C^l$. Moreover the definition of uniform spaces with covariant derivative can be also considered for non-complete Riemannian manifolds.

\begin{prop}[{Shubin \cite[Appendix~A1.1, Lemma~1.3]{Shubin1992}; see also \cite[Proposition~3.2]{Schick2001}}]\label{p: f_k}
Given $r$, $\{p_k\}$ and $N$ like in \Cref{p: p_k} there is a partition of unity $\{f_k\}$ subordinated to the open covering $\{B(p_k,r)\}$, which is bounded in the Fr\'echet space $\Cinftyub(M)$.
\end{prop}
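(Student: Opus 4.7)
The plan is the standard Shubin-style construction of a uniform partition of unity, exploiting bounded geometry to get uniform control on the derivatives.

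First I would fix a smooth cutoff $\chi\in\Cinftyc(\R^n)$ with $0\le\chi\le1$, with $\chi\equiv1$ on the ball $B(0,r/2)\subset\R^n$ and $\supp\chi\subset B(0,r)$, where we may take $2r\le r_0$ so that $B(0,r)$ is contained in the ball $B$ of \Cref{t: mfd of bd geom}. For each $k$, pull $\chi$ back to $M$ via the normal chart at $p_k$, setting
\[
\chi_k=\chi\circ y_{p_k}\in\Cinftyc(M),\qquad\supp\chi_k\subset\overline B(p_k,r).
\]
By \Cref{t: mfd of bd geom}, the metric coefficients of $g$ in the normal coordinates $y_{p_k}$ lie in a bounded set of $C^\infty(B)$ independent of $k$, so the covariant derivatives $\nabla^m\chi_k$ can be expressed in $y_{p_k}$-coordinates in terms of the coordinate derivatives of $\chi$ and the Christoffel symbols, yielding bounds $|\nabla^m\chi_k|\le C_m$ uniformly in $k$. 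Hence $\{\chi_k\}$ is a bounded family in $\Cinftyub(M)$.

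Next, set $S=\sum_k\chi_k$. By the finite multiplicity from \Cref{p: p_k} (at most $N$ of the balls $B(p_k,2r)$ meet any given point, hence at most $N$ of the $\chi_k$ are nonzero at any point), the sum is locally finite and satisfies the pointwise bound $0\le S\le N\|\chi\|_\infty$; moreover $\nabla^m S=\sum_k\nabla^m\chi_k$ with at most $N$ nonzero terms at each point, so $\|S\|_{C^m_{\text{ub}}}\le NC_m$. On the other hand, since the balls $B(p_k,r)$ cover $M$, for each $x\in M$ there is some $k$ with $d(x,p_k)<r$; it remains to arrange a uniform lower bound. For this, I would instead start with a covering argument with a smaller radius: apply \Cref{p: p_k} to $r/2$ to obtain a second set $\{p_k\}$ with the balls $B(p_k,r/2)$ covering $M$. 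Then for every $x\in M$ some $B(p_k,r/2)$ contains $x$, so $\chi_k(x)=1$ (since $\chi\equiv1$ on $B(0,r/2)$), and consequently $S(x)\ge1$ everywhere.

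Finally, define $f_k=\chi_k/S$. These form a partition of unity subordinated to $\{B(p_k,r)\}$. To see that $\{f_k\}$ is bounded in $\Cinftyub(M)$, I would apply the Leibniz rule to $\chi_k=f_k S$ (or equivalently to $S^{-1}$) and argue by induction on $m$: the uniform upper bound $S\ge1$ together with $\|S\|_{C^m_{\text{ub}}}\le NC_m$ gives $\|S^{-1}\|_{C^m_{\text{ub}}}\le C'_m$, and combined with $\|\chi_k\|_{C^m_{\text{ub}}}\le C_m$ one obtains $\|f_k\|_{C^m_{\text{ub}}}\le C''_m$ independently of $k$.

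The only mildly delicate point is the uniform lower bound on $S$, which is why I would run the construction with two scales ($r/2$ and $r$); once that is in place, the $C^m_{\text{ub}}$ bounds on $f_k$ follow mechanically from bounded geometry via Faà di Bruno applied to $1/S$ and the Leibniz rule. Everything else is essentially transport of the fixed bump $\chi$ through the uniformly controlled charts $y_{p_k}$.
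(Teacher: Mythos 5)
Your construction --- transport a fixed bump $\chi$ through the normal charts $y_{p_k}$, sum, and normalize --- is the standard Shubin argument, and you correctly flag the uniform lower bound on $S=\sum_k\chi_k$ as the one delicate point. But your remedy does not quite match the statement being proved. The proposition says ``Given $r$, $\{p_k\}$ and $N$ like in \Cref{p: p_k}'', so these are data of the problem, while you replace $\{p_k\}$ by a new family obtained from \Cref{p: p_k} at scale $r/2$. For an arbitrary $\{p_k\}$ satisfying only the stated hypotheses (that $\{B(p_k,r)\}$ covers $M$ and $\{B(p_k,2r)\}$ has multiplicity $\le N$), there need not exist any $\rho<r$ for which $\{B(p_k,\rho)\}$ still covers; so with $\chi\equiv1$ only on the ball of radius $r/2$ you cannot guarantee $S\ge1$ without changing the centers, which is what you do.

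The repair that keeps the given $\{p_k\}$ --- and is what Shubin's Lemma~1.3 actually proves --- is to take $\chi\equiv1$ on $B(0,r)$ and $\supp\chi\subset B(0,2r)$, yielding a partition subordinated to $\{B(p_k,2r)\}$ rather than to $\{B(p_k,r)\}$: then $S\ge1$ because $\{B(p_k,r)\}$ covers, the given multiplicity bound on $\{B(p_k,2r)\}$ controls the sum, and your remaining estimates (uniform $C^\infty$ bounds on the $\chi_k$ via \Cref{t: mfd of bd geom}, Fa\`a di Bruno for $S^{-1}$ using $S\ge1$, Leibniz for $f_k=\chi_k/S$) go through exactly as you describe. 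As stated, the proposition appears to quote Shubin with $r$ where $2r$ should stand; for the paper's downstream uses this is immaterial, since only \emph{some} uniformly bounded partition subordinated to a bounded-multiplicity cover by normal charts is needed, and your reparametrized version would serve equally well --- but as a proof of the proposition as literally written, changing the centers rather than the target radius is the gap.
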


For a Hermitian vector bundle $E$ of bounded geometry over $M$, the \emph{uniform $C^m$ space} $C_{\text{\rm ub}}^m(M;E)$ can be defined by introducing $\|{\cdot}\|'_{C^m_{\text{\rm ub}}}$ like the case of functions, using local trivializations of $E$ to consider every $uy_p^{-1}$ in $C^m(B,\C^l)$ for all $u\in C^m(M;E)$. Then, as above, we get the \emph{uniform $C^\infty$ space} $\Cinftyub(M;E)$, \index{$\Cinftyub(M;E)$} which consists of the sections $u\in C^\infty(M;E)$ such that all functions $uy_p^{-1}$ define a bounded set of $\Cinftyub(B;\C^l)$. In particular, $\fXub(M):=\Cinftyub(M;TM)$ is a $\Cinftyub(M)$-submodule and Lie subalgebra of $\fX(M)$.

The subset $\fXcom(M)\subset\fX(M)$ of complete vector fields satisfies $\fXub(M)\subset\fXcom(M)$ \cite[Proposition~3.8]{AlvKordyLeichtnam2020}.

\subsection{Differential operators of bounded geometry}\label{ss: diff ops of bd geom}

Like in \Cref{ss: diff ops}, by using $\fXub(M)$ and $\Cinftyub(M)$ instead of $\fX(M)$ and $C^\infty(M)$, we get the filtered subalgebra and $\Cinftyub(M)$-submodule $\Diffub(M)\subset\Diff(M)$ of differential operators of \emph{bounded geometry}. Observe that
\begin{equation}\label{C^m_ub(M)}
C^m_{\text{\rm ub}}(M)=\{\,u\in C^m(M)\mid\Diffub^m(M)\,u\subset L^\infty(M)\,\}\;.
\end{equation}
The concept of $\Diffub(M)$ can be extended to vector bundles of bounded geometry $E$ and $F$ over $M$ by taking the $\Cinftyub(M)$-tensor product with $\Cinftyub(M;F\otimes E^*)$, obtaining the filtered $\Cinftyub(M)$-submodule $\Diffub(M;E,F)\subset\Diff(M;E,F)$ (or $\Diffub(M;E)$ \index{$\Diffub(M;E)$} if $E=F$). Bounded geometry of differential operators is preserved by compositions and by taking transposes, and by taking formal adjoints in the case of Hermitian vector bundles of bounded geometry; in particular, $\Diffub(M;E)$ is a filtered subalgebra of $\Diff(M;E)$. Using local trivializations of $E$ and $F$ over the charts $(V_p,y_p)$, we get a local description of any operator in $\Diffub^m(M;E,F)$ by requiring its local coefficients to define a bounded subset of the Fr\'echet space $C^\infty(B,\C^{l'}\otimes\C^{l*})$, where $l$ and $l'$ are the ranks of $E$ and $F$ (\Cref{ss: diff ops}). If $E$ is a line bundle of bounded geometry, then \cite[Eq.~(2.24)]{AlvKordyLeichtnam-conormal}
\begin{equation}\label{Diffub^m(M E) equiv Diffub^m(M)}
\Diffub^m(M;E)\equiv\Diffub^m(M)\;.
\end{equation}

Let $P_{\text{\rm ub}}(T^*M)\subset P(T^*M)$ be the graded subalgebra generated by $P_{\text{\rm ub}}^{(0)}(T^*M)\equiv \Cinftyub(M)$ and $P_{\text{\rm ub}}^{(1)}(T^*M)\equiv \fXub(M)$, which is also a $\Cinftyub(M)$-submodule. Restricting~\eqref{sigma_m}, we get a short exact sequence with $\sigma_m:\Diffub^m(M)\to P_{\text{\rm ub}}^{(m)}(T^*M)$. By taking the $\Cinftyub(M)$-tensor product with $\Cinftyub(M;F\otimes E^*)$, we get $P_{\text{\rm ub}}^{(m)}(T^*M;F\otimes E^*)$ and a short exact sequence with $\sigma_m:\Diffub^m(M;E,F)\to P_{\text{\rm ub}}^{(m)}(T^*M;F\otimes E^*)$. 

Using the norms $\|{\cdot}\|'_{C^m_{\text{\rm ub}}}$, it easily follows that every $A\in\Diffub^m(M;E,F)$ defines bounded operators $A:C^{m+s}_{\text{\rm ub}}(M;E)\to C^s_{\text{\rm ub}}(M;F)$ ($s\in\N_0$), which induce a continuous linear map $A:\Cinftyub(M;E)\to \Cinftyub(M;F)$.

\begin{ex}\label{ex: connections of bd geom}
In Example~\ref{ex: bundles of bd geom}, the Levi-Civita connection $\nabla$ induces a connection of bounded geometry on $E$, also denoted by $\nabla$. In particular, $\nabla$ itself is of bounded geometry on $TM$, and induces a connection $\nabla$ of bounded geometry on $\Lambda M$. This holds as well for the connection on $E$ induced by any other Riemannian connection of bounded geometry on $TM$. 
\end{ex}

\begin{ex}\label{ex: connections induced by natural operations} Bounded geometry of connections is preserved by taking the induced connections in the operations with vector bundles of bounded geometry indicated in Example~\ref{ex: operations of bundles of bd geom}. 
\end{ex}

Suppose $E$ and $F$ are Hermitian vector bundles of bounded geometry. Then any unitary connection $\nabla$ of bounded geometry on $E$ can be used to define an equivalent norm $\|{\cdot}\|_{C^m_{\text{\rm ub}}}$ on every Banach space $C_{\text{\rm ub}}^m(M;E)$, like in the case of $C_{\text{\rm ub}}^m(M)$. 

It is said that $A\in\Diff^m(M;E,F)$ is \emph{uniformly elliptic} \index{uniformly elliptic} if, given Hermitian metrics of bounded geometry on $E$ and $F$, there is some $C\ge1$ such that, for all $p\in M$ and $\xi\in T^*_pM$,
\begin{equation}\label{uniformly elliptic}
C^{-1}|\xi|^m\le|\sigma_m(A)(p,\xi)|\le C|\xi|^m\;.
\end{equation}
This condition is independent of the choice of the Hermitian metrics of bounded geometry on $E$ and $F$. Any $A\in\Diff^m_{\text{\rm ub}}(M;E,F)$ satisfies the second inequality. 

\begin{ex}\label{ex: widetilde A}
In \Cref{ex: widetilde E}, for any $A\in\Diff^m(M;E)$, its lift $\widetilde A\in\Diff^m(\widetilde M;\widetilde E)$ is of bounded geometry in a canonical way. Moreover $\widetilde A$ is uniformly elliptic if $A$ is elliptic.
\end{ex}


\subsection{Sobolev spaces of manifolds of bounded geometry}\label{ss: Sobolev bd geom}

For any Hermitian vector bundle $E$ of bounded geometry over $M$, any nonnegative symmetric uniformly elliptic $P\in\Diff^2_{\text{\rm ub}}(M;E)$ can be used to define the Sobolev space $H^s(M;E)$ ($s\in\R$) with a scalar product $\langle {\cdot},{\cdot}\rangle_s$ (\Cref{ss: Sobolev sps}). Any choice of $P$ defines the same Hilbertian space $H^s(M;E)$, which is a $\Cinftyub(M)$-module. In particular, $L^2(M;E)$ is the $\Cinftyub(M)$-tensor product of $L^2(M;\Omega^{1/2})$ and $\Cinftyub(M;E\otimes\Omega^{-1/2})$, and $H^s(M;E)$ is the $\Cinftyub(M)$-tensor product of $H^s(M)$ and $\Cinftyub(M;E)$. For instance, we may take $P=\nabla^*\nabla$ for any unitary connection $\nabla$ of bounded geometry on $E$. 

\begin{ex}\label{ex: widetilde P}
In \Cref{ex: widetilde E} and according to \Cref{ex: widetilde A}, $H^s(\widetilde M;\widetilde E)$ can be defined with the lift $\widetilde P$ of any nonnegative symmetric uniformly elliptic $P\in\Diff^2(M;E)$.
\end{ex}

For $s\in\N_0$, the Sobolev space $H^s(M)$ can be also described with the scalar product
\[
\langle u,v\rangle'_s=\sum_k\sum_{|I|\le s}\int_Bf_k^2(x)\cdot\partial_I(uy_{p_k}^{-1})(x)\cdot\overline{\partial_I(vy_{p_k}^{-1})(x)}\,dx\;,
\]
using the partition of unity $\{f_k\}$ given by \Cref{p: p_k} \cite[Theorem~A.22]{Schick1996}, \cite[Propositions~3.2 and~3.3]{Schick2001}, \cite[Appendices~A1.2 and~A1.3]{Shubin1992}. A similar scalar product $\langle{\cdot},{\cdot}\rangle'_s$ can be defined for $H^s(M;E)$ with the help of local trivializations defining the bounded geometry of $E$. Every $A\in\Diffub^m(M;E,F)$ defines bounded operators $A:H^{m+s}(M;E)\to H^s(M;F)$ ($s\in\R$), which induce continuous maps $A:H^{\pm\infty}(M;E)\to H^{\pm\infty}(M;F)$. For any almost everywhere positive $h\in C^\infty(M)$, we have $hH^m(M;E)=H^m(M;E)$ if and only if $h>0$ and $h^{\pm1}\in\Cinftyub(M)$.

If $m'>m+n/2$, then $H^{m'}(M;E)\subset C_{\text{\rm ub}}^m(M;E)$, continuously, and therefore $H^\infty(M;E)\subset \Cinftyub(M;E)$, continuously \cite[Proposition~2.8]{Roe1988I}. The Schwartz kernel mapping, $A\mapsto K_A$, defines a continuous linear map \cite[Proposition~2.9]{Roe1988I}
\begin{equation}\label{A mapsto K_A - bd geom}
L(H^{-\infty}(M;E),H^\infty(M;F))\to\Cinftyub(M;F\boxtimes(E^*\otimes\Omega))\;.
\end{equation}

\begin{rem}\label{r: supp K_A subset r-penumbra <=> supp Au subset r-penumbra for all u}
By~\eqref{K_A(cdot q)(u) = A delta_q^u}, for any $A\in L(H^{-\infty}(M;E),H^\infty(M;F))$ and $r>0$,
\[
\supp K_A\subset\{\,(p,q)\in M^2\mid d(p,q)\le r\,\}
\]
if and only if $\supp Au\subset\overline{\Pen}(\supp u,r)$ for all $u\in H^{-\infty}(M;E)$.
\end{rem}

Let $\RR$ \index{$\RR$} be the Fr\'echet space of rapidly decreasing functions on the real line. If $P\in\Diffub^m(M;E)$ is uniformly elliptic and essentially self-adjoint, then the spectral theorem defines a continuous functional calculus 
\[
\RR\to L(H^{-\infty}(M;E),H^\infty(M;E))\;,\quad\psi\mapsto\psi(P)\;.
\]
Thus, by~\eqref{A mapsto K_A - bd geom}, the linear map
\begin{equation}\label{psi mapsto K_psi(P)}
\RR\to \Cinftyub(M;E\boxtimes(E^*\otimes\Omega))\;,\quad\psi\mapsto K_{\psi(P)}\;,
\end{equation}
is continuous \cite[Proposition~2.10]{Roe1988I}.

\subsection{Maps of bounded geometry}\label{ss: maps of bd geom}

For $a\in\{1,2\}$, let $M_a$ be a Riemannian manifold of bounded geometry, of dimension $n_a$. Consider a normal chart $y_{a,p}:V_{a,p}\to B_a$ at every $p\in M_a$ satisfying the statement of \Cref{t: mfd of bd geom}. Let $r_a$ denote the radius of $B_a$. For $0<r\le r_a$, let $B_{a,r}\subset\R^{n_a}$ denote the ball centered at the origin with radius $r$. We have $B_a(p,r)=y_{a,p}^{-1}(B_{a,r})$.

A smooth map $\phi:M_1\to M_2$ is said to be of \emph{bounded geometry} if, for some $0<r<r_1$ and all $p\in M_1$, we have $\phi(B_1(p,r))\subset V_{2,\phi(p)}$, and the compositions $y_{2,\phi(p)}\phi y_{1,p}^{-1}$ define a bounded set in the Fr\'echet space $C^\infty(B_{1,r},\R^{n_2})$. This condition is preserved by the composition of maps. The set of smooth maps $M_1\to M_2$ of bounded geometry is denoted by $\Cinftyub(M_1,M_2)$.

Let $\phi\in \Cinftyub(M_1,M_2)$. For every $m\in\N_0\cup\{\infty\}$, using $\|{\cdot}\|'_{C^m_{\text{\rm ub}}}$ in the case where $m<\infty$ (\Cref{ss: uniform sps}) it follows that $\phi^*$ induces a continuous linear map \cite[Eq.~(19)]{AlvKordyLeichtnam2020}
\begin{equation}\label{phi^* on C_ub^m}
\phi^*:C_{\text{\rm ub}}^m(M_2;\Lambda)
\to C_{\text{\rm ub}}^m(M_1;\Lambda)\;.
\end{equation}

Recall that $\phi$ is called \emph{uniformly metrically proper} if, for any $s\ge0$, there is some $t_s\ge0$ so that, for all $p,q\in M_1$,
\[
d_2(\phi(p),\phi(q)) \le s \Rightarrow d_1(p,q) \le t_s\;.
\]
For all $m\in\N_0\cup\{\infty\}$, if $\phi\in \Cinftyub(M_1,M_2)$ is uniformly metrically proper, then $\phi^*$ induces a continuous linear map \cite[Eq.~(21)]{AlvKordyLeichtnam2020}
\begin{equation}\label{phi^* on H^m}
\phi^*:H^m(M_2;\Lambda)\to H^m(M_1;\Lambda)\;.
\end{equation}
If $\phi\in\Diffeo(M_1,M_2)$, and both $\phi$ and $\phi^{-1}$ are of bounded geometry, then $\phi$ is uniformly metrically proper. In this case,~\eqref{phi^* on H^m} can be continuously extended to Sobolev spaces of order $-m$.

The pull-back of a vector bundle of bounded geometry by a map of bounded geometry is of bounded geometry.

Homomorphisms of bounded geometry between vector bundles of bounded geometry have an obvious definition, but we will not use them.

\subsection{Smooth families of bounded geometry}\label{ss: families of bd geom}

Let $T$ be a manifold, and let $\pr_1:M\times T\to M$ denote the first factor projection. A section $u\in C^\infty(M\times T;\pr_1^*E)$ is called a \emph{smooth family of smooth sections} of $E$ (\emph{parametrized by} $T$), and we may use the notation $u=\{\,u_t\mid t\in T\,\}$, where $u_t=u(\cdot,t)\in C^\infty(M;E)$. Its \emph{$T$-support} is $\overline{\{\,t\in T\mid u_t\ne0\,\}}$. If the $T$-support is compact, then $u$ is said to be \emph{$T$-compactly supported}. It is said that $u$ is \emph{$T$-locally $C^\infty$-uniformly bounded} if any $t\in T$ is in some chart $(O,z)$ of $T$ such that the maps $u(y_p\times z)^{-1}$ define a bounded subset of the Fr\'echet space $C^\infty(B\times z(O),\C^l)$, using local trivializations of $E$ over the normal charts $(V_p,y_p)$. 

A \emph{smooth family of differential operators}, $A=\{\,A_t\mid t\in T\,\}\subset\Diff(M;E,F)$, can be defined by using smooth families of $\C$-valued functions, tangent vector fields and sections of $C^\infty(M;F\otimes E^*)$, like in \Cref{ss: diff ops}. For this $A$, the \emph{$T$-support} and the property of being \emph{$T$-compactly supported} is defined like in the case of smooth families of sections. If the smooth families of functions, tangent vector fields and sections used to describe $A$ are $T$-locally $C^\infty$-uniformly bounded, then it is said that $A$ is of \emph{$T$-local bounded geometry} (cf.\ \Cref{ss: diff ops of bd geom}).

A smooth map $\phi:M_1\times T\to M_2$ is called a \emph{smooth family of smooth maps} $M_1\to M_2$ (with \emph{parameters} in $T$). It may be denoted by $\phi=\{\,\phi^t\mid t\in T\,\}$, where $\phi^t=\phi(\cdot,t):M_1\to M_2$. It is said that $\phi$ is of \emph{$T$-local bounded geometry} if every $t\in T$ is in some chart $(O,z)$ of $T$ such that, for some $0<r<r_1$, we have $\phi(B_1(p,r)\times O)\subset V_{2,\phi(p)}$ for all $p\in M_1$, and the compositions $y_{2,\phi(p)}\phi(y_{1,p}\times z)^{-1}$, for $p\in M_1$, define a bounded subset of the Fr\'echet space $C^\infty(B_{1,r}\times z(O),\R^{n_2})$. The composition of smooth families of maps parametrized by $T$ has the obvious sense and preserves the $T$-local bounded geometry condition. In particular, the $\R$-local bounded geometry condition makes sense for a flow $\phi=\{\phi^t\}$ on $M$. Given $X\in\fXcom(M)$ with flow $\phi$, we have $X\in\fXub(M)$ if and only if $\phi$ is of $\R$-local bounded geometry \cite[Proposition~3.18]{AlvKordyLeichtnam2020}.

\subsection{Differential complexes of bounded geometry}\label{ss: diff complexes of bd geom}

With the notation of \Cref{ss: diff complexes}, assume that $M$, $E$ and $d$ are of bounded geometry (\Cref{ss: mfds and vector bdls of bd geom}). Then we may also consider the topological complexes $(\Cinftyub(M;E),d)$ and $(H^{\pm\infty}(M;E),d)$ (\Cref{ss: uniform sps,ss: Sobolev bd geom}).

$(E,d)$ is said to be \emph{uniformly elliptic} if $D$ (or $\Delta$) is uniformly elliptic (\Cref{ss: diff ops of bd geom}); this is equivalent to the obvious extension of~\eqref{uniformly elliptic} for~\eqref{symbol sequence}. In this case, a version of~\eqref{Hodge} is true for $(H^{\pm\infty}(M;E),d)$, where the reduced cohomology is used instead of the cohomology, and the closures of the images of $d$, $\delta$, $D$ and $\Delta$ are used instead of their images.

\section{Small b-calculus}\label{s: b-calculus}

 R.~Melrose introduced b-calculus, a way to extend calculus to manifolds with boundary \cite{Melrose1993,Melrose1996}. We will only use a part of it, called small b-calculus. For the sake of simplicity, we consider only compact manifolds with boundary, and the concepts and notation given here can be extended to the non-compact case like in \Cref{s: section sps and opers}, using compactly supported versions or local versions; some non-compact manifolds with boundary will be used in the paper. For the same reason, several kinds of section spaces and operators will be only defined in the case of functions or half-b-densities. Their extension to arbitrary vector bundles can be defined with tensor product expressions, like in \Cref{s: section sps and opers}. Most of these extensions will be used without further comments.

\subsection{Some notions of b-geometry}\label{ss: b-geometry}

Let $M$ be a compact (smooth) $n$-manifold with boundary, whose interior is denoted by $\mathring M$. There exists a function $x\in C^\infty(M)$ so that $x\ge0$, $\partial M=\{x=0\}$ and $dx\ne0$ on $\partial M$, which is called a \emph{boundary-defining function}. \index{boundary-defining function} Let ${}_+N\partial M\subset N\partial M$ be the inward-pointing subbundle of the normal bundle to the boundary. There is a unique trivialization $\nu\in C^\infty(\partial M;{}_+N\partial M)$ of ${}_+N\partial M$ so that $dx(\nu)=1$. Take a collar neighborhood $T\equiv[0,\epsilon_0)_x\times\partial M_\varpi$ of $\partial M$. (In a product expression, every factor projection may be indicated as subscript of the corresponding factor.) Given coordinates $y=(y^1,\dots,y^{n-1})$ on some open $V\subset\partial M$, we get via $\varpi$ coordinates $(x,y)=(x,y^1,\dots,y^{n-1})$, adapted (to $\partial M$), on the open subset $U\equiv[0,\epsilon_0)\times V\subset M$. There are vector bundles over $M$, $\bT M$ and $\bT^*M$, \index{$\bT M$} \index{$\bT^*M$} called \emph{b-tangent} and \emph{b-cotangent} bundles, which have the same restrictions to $\mathring M$ as $TM$ and $T^*M$, and such that $x\partial_x,\partial_{y^1},\dots,\partial_{y^{n-1}}$ and $x^{-1}dx,dy^1,\dots,dy^{n-1}$ extend to local frames around boundary points. This gives rise to versions of induced vector bundles, like $\bOmega^sM:=\Omega^s(\bT M)$ ($s\in\R$) and $\bOmega M:=\bOmega^1M$. We have
\begin{equation}\label{C^infty(M Omega^s) equiv x^s C^infty(M bOmega^s)}
C^\infty(M;\Omega^s)\equiv x^sC^\infty(M;\bOmega^s)\;.
\end{equation}
Thus the integration operator $\int_M$ is defined on $xC^\infty(M;\bOmega)$, and induces a pairing between $C^\infty(M)$ and $xC^\infty(M;\bOmega)$.

At the points of $\partial M$, the local section $x\partial_x$ is independent of the choice of adapted local coordinates, spanning a trivial line subbundle ${}^{\text{\rm b}}\!N\partial M\subset\bT_{\partial M}M$ with $T\partial M=\bT_{\partial M}M/{}^{\text{\rm b}}\!N\partial M$. So $\bOmega^s_{\partial M}M\equiv\Omega^s\partial M\otimes\Omega^s({}^{\text{\rm b}}\!N\partial M)$, and a restriction map $C^\infty(M;\bOmega^s)\to C^\infty(\partial M;\Omega^s)$ is locally given by
\[
u=a(x,y)\,\Big|\frac{dx}{x}dy\Big|^s\mapsto u|_{\partial M}=a(0,y)\,|dy|^s\;.
\]

A Riemannian structure $g$ on $\bT M$ is called a \emph{b-metric}. \index{b-metric} Locally,
\[
g=a_0\Big(\frac{dx}{x}\Big)^2+2\sum_{j=1}^{n-1}a_{0j}\,\frac{dx}{x}\,dy^j
+\sum_{j,k=1}^{n-1}a_{jk}\,dy^j\,dy^k\;,
\]
where $a_0$, $a_{0j}$ and $a_{jk}$ are $C^\infty$ functions, provided that $g$ is positive definite. If moreover $a_0=1+O(x^2)$ and $a_{0j}=O(x)$ as $x\downarrow0$, then $g$ is called \emph{exact}. In this case, the restriction of $g$ to $\mathring T\equiv(0,\epsilon_0)\times\partial M$ is asymptotically cylindrical, and therefore the restriction of $g$ to $\mathring M$ is a complete Riemannian metric. This restriction is of bounded geometry if it is \emph{cylindrical} around the boundary; i.e., taking $\epsilon_0$ small enough, we have $g=(\frac{dx}{x})^2+h$ on $\mathring T$ for some Riemannian metric $h$ on $\partial M$ (considering $h\equiv\varpi^*h$).

\subsection{Supported and extendible smooth functions}\label{ss: supported and extendible smooth funcs}

Let $\breve M$ \index{$\breve M$} be any closed manifold containing $M$ as submanifold of dimension $n$ (for instance, $\breve M$ can be the double of $M$). Let $M'=\breve M\setminus\mathring M$, which is another compact submanifold with boundary of $\breve M$, of dimension $n$ and with $\partial M'=M\cap M'=\partial M$. 

The concepts, notation and conventions of \Cref{ss: smooth/distributional sections} have straightforward extensions to manifolds with boundary, like the Fr\'echet space $C^\infty(M)$. Its elements are called \emph{extendible functions} \index{extendible function} because the continuous linear restriction map
\begin{equation}\label{R: C^infty(widetilde M) -> C^infty(M)}
R:C^\infty(\breve M)\to C^\infty(M)
\end{equation}
is surjective; in fact, there is a continuous linear extension map $E:C^\infty(M)\to C^\infty(\breve M)$ \cite{Seeley1964}. Since $C^\infty(\breve M)$ and $C^\infty(M)$ are Fr\'echet spaces, the map~\eqref{R: C^infty(widetilde M) -> C^infty(M)} is open by the open mapping theorem, and therefore it is a surjective topological homomorphism. Its null space is $C^\infty_{M'}(\breve M)$.

The Fr\'echet space of \emph{supported functions} \index{supported function} is the closed subspace of the smooth functions on $M$ that vanish to all orders at the points of $\partial M$, \index{$\dot C^\infty(M)$}
\begin{equation}\label{dot C^infty(M) = bigcap_m ge 0 x^m C^infty(M) subset C^infty(M)}
\dot C^\infty(M)=\bigcap_{m\ge0}x^mC^\infty(M)\subset C^\infty(M)\;.
\end{equation}
The extension by zero realizes $\dot C^\infty(M)$ as the closed subspace of functions on $\breve M$ supported in $M$,
\begin{equation}\label{dot C^infty(M) subset C^infty(breve M)}
\dot C^\infty(M)\equiv C^\infty_M(\breve M)\subset C^\infty(\breve M)\;.
\end{equation} 
By~\eqref{dot C^infty(M) = bigcap_m ge 0 x^m C^infty(M) subset C^infty(M)},
\begin{equation}\label{x^m dot C^infty(M) = dot C^infty(M)}
x^m\dot C^\infty(M)=\dot C^\infty(M)\quad(m\in\R)\;,
\end{equation}
and therefore, by~\eqref{C^infty(M Omega^s) equiv x^s C^infty(M bOmega^s)},
\begin{equation}\label{dot C^infty(M bOmega^s) equiv dot C^infty(M Omega^s)}
\dot C^\infty(M;\bOmega^s)\equiv\dot C^\infty(M;\Omega^s)\quad(s\in\R)\;.
\end{equation}

We can similarly define Banach spaces $C^k(M)$ and $\dot C^k(M)$ ($k\in\N_0$) satisfying the analogs of~\eqref{R: C^infty(widetilde M) -> C^infty(M)}--\eqref{dot C^infty(M) subset C^infty(breve M)}, which in turn yield analogs of the first inclusions of~\eqref{C^prime -k'(M E) supset C^prime -k(M E)}, obtaining $C^\infty(M)=\bigcap_kC^k(M)$ and $\dot C^\infty(M)=\bigcap_k\dot C^k(M)$.

\subsection{Supported and extendible distributions}\label{ss: supported and extendible distribs}

The spaces of \emph{supported} and \emph{extendible distributions} \index{supported distribution} \index{extendible distribution} on $M$ are \index{$\dot C^{-\infty}(M)$}
\[
\dot C^{-\infty}(M)=C^\infty(M;\Omega)'\;,\quad C^{-\infty}(M)=\dot C^\infty(M;\Omega)'\;.
\]
These are barreled, ultrabornological, webbed, acyclic DF Montel spaces, and therefore complete, boundedly/compactly/sequentially retractive and reflexive \cite[Proposition~6.1]{AlvKordyLeichtnam-conormal}. Transposing the version of~\eqref{R: C^infty(widetilde M) -> C^infty(M)} with $\Omega M$, we get \cite[Proposition~3.2.1]{Melrose1996}
\begin{equation}\label{dot C^-infty(M) subset C^-infty(breve M)}
\dot C^{-\infty}(M)\equiv C^{-\infty}_M(\breve M)\subset C^{-\infty}(\breve M)\;.
\end{equation}
Similarly,~\eqref{dot C^infty(M) subset C^infty(breve M)} and~\eqref{dot C^infty(M) = bigcap_m ge 0 x^m C^infty(M) subset C^infty(M)} give rise to continuous linear restriction maps
\begin{gather}
R:C^{-\infty}(\breve M)\to C^{-\infty}(M)\;,\label{R: C^-infty(breve M) -> C^-infty(M)}\\
R:\dot C^{-\infty}(M)\to C^{-\infty}(M)\;,\label{R: dot C^-infty(M) -> C^-infty(M)}
\end{gather}
which are surjective topological homomorphisms \cite[Proposition~6.2]{AlvKordyLeichtnam-conormal}. 
According to~\eqref{dot C^-infty(M) subset C^-infty(breve M)}, the map~\eqref{R: dot C^-infty(M) -> C^-infty(M)} is a restriction of~\eqref{R: C^-infty(breve M) -> C^-infty(M)}. There are continuous dense inclusions \cite[Lemma~3.2.1]{Melrose1996}
\begin{equation}\label{Cinftyc(mathring M) subset dot C^infty(M) subset C^infty(M) subset dot C^-infty(M)}
\Cinftyc(\mathring M)\subset\dot C^\infty(M)\subset C^\infty(M)\subset\dot C^{-\infty}(M)\;,
\end{equation}
the last one given by the integration pairing between $C^\infty(M)$ and $C^\infty(M;\Omega)$. The restriction of this pairing to $\dot C^\infty(M;\Omega)$ induces a continuous dense inclusion
\begin{equation}\label{C^infty(M) subset C^-infty(M)}
C^\infty(M)\subset C^{-\infty}(M)\;.
\end{equation}
Moreover~\eqref{R: dot C^-infty(M) -> C^-infty(M)} is the identity map on $C^\infty(M)$. 

As before, from~\eqref{x^m dot C^infty(M) = dot C^infty(M)} and~\eqref{dot C^infty(M bOmega^s) equiv dot C^infty(M Omega^s)}, we get
\begin{align}
x^mC^{-\infty}(M)&=C^{-\infty}(M)\quad(m\in\R)\;,\label{x^m C^-infty(M) = C^-infty(M)}\\
C^{-\infty}(M;\bOmega^s)&\equiv C^{-\infty}(M;\Omega^s)\quad(s\in\R)\;.\label{C^-infty(M bOmega^s) equiv C^-infty(M Omega^s)}
\end{align} 

The Banach spaces $C^{\prime-k}(M)$ and $\dot C^{\prime-k}(M)$ ($k\in\N_0$) \index{$\dot C^{\prime-k}(M)$} are similarly defined. They satisfy the analogs of~\eqref{dot C^-infty(M) subset C^-infty(breve M)}--\eqref{C^-infty(M bOmega^s) equiv C^-infty(M Omega^s)}, and the analogs of the second inclusions of~\eqref{C^prime -k'(M E) supset C^prime -k(M E)}, obtaining $\bigcup_kC^{\prime-k}(M)=C^{-\infty}(M)$ and $\bigcup_k\dot C^{\prime-k}(M)=\dot C^{-\infty}(M)$.

\subsection{Supported and extendible Sobolev spaces}\label{ss: supported and extendible Sobolev sps} 

The \emph{supported Sobolev space} \index{supported Sobolev space} of order $s\in\R$ is the closed subspace of the elements supported in $M$, \index{$\dot H^s(M)$}
\begin{equation}\label{dot H^s(M) subset H^s(breve M)}
\dot H^s(M)=H^s_M(\breve M)\subset H^s(\breve M)\;.
\end{equation}
On the other hand, using the map~\eqref{R: dot C^-infty(M) -> C^-infty(M)}, the \emph{extendible Sobolev space} \index{extendible Sobolev space} of order $s$ is $H^s(M)=R(H^s(\breve M))$ with the inductive topology given by the linear map
\begin{equation}\label{R: H^s(breve M) -> H^s(M)}
R:H^s(\breve M)\to H^s(M)\;.
\end{equation}
The null space of~\eqref{R: H^s(breve M) -> H^s(M)} is $H^s_{M'}(\breve M)$. The analogs of~\eqref{H^s(M E) subset H^s'(M E)}--\eqref{C^infty(M E) = bigcap_s H^s(M E)} hold true in this setting using $\dot C^{\pm\infty}(M)$ and $C^{\pm\infty}(M)$.  Furthermore the spaces $\dot H^s(M)$ and $H^s(M)$ form compact spectra of Hilbertian spaces.

The following properties are satisfied \cite[Proposition~3.5.1]{Melrose1996}. $C^\infty(M)$ is dense in $H^s(M)$, we have
\begin{equation}\label{dot H^s(M) equiv H^-s(M Omega)'}
\dot H^s(M)\equiv H^{-s}(M;\Omega)'\;,\quad H^s(M)\equiv\dot H^{-s}(M;\Omega)'\;,
\end{equation}
and the map~\eqref{R: dot C^-infty(M) -> C^-infty(M)} has a continuous restriction
\begin{equation}\label{R: dot H^s(M) -> H^s(M)}
R:\dot H^s(M)\to H^s(M)\;,
\end{equation}
which is surjective if $s\le1/2$, and injective if $s\ge-1/2$. In particular, $\dot H^0(M)\equiv H^0(M)\equiv L^2(M)$. The null space of~\eqref{R: dot H^s(M) -> H^s(M)} is $\dot H^s_{\partial M}(M)$.

\subsection{The space $\dot C^{-\infty}_{\partial M}(M)$}\label{ss: dot C^-infty_partial M(M)} 

The indicated properties of~\eqref{R: C^-infty(breve M) -> C^-infty(M)} and~\eqref{R: dot C^-infty(M) -> C^-infty(M)} mean that we have short exact sequences in the category of continuous linear maps between LCSs (see also \cite[Proposition~3.3.1]{Melrose1996}), \index{$\dot C^{-\infty}_{\partial M}(M)$}
\begin{gather}
0\to\dot C^{-\infty}(M') \xrightarrow{\iota} C^{-\infty}(\breve M) \xrightarrow{R} C^{-\infty}(M)\to0\;,\notag\\
0\to\dot C^{-\infty}_{\partial M}(M) \xrightarrow{\iota} \dot C^{-\infty}(M) \xrightarrow{R} C^{-\infty}(M)\to0\;.
\label{0 -> dot C^-infty_partial M(M) -> dot C^-infty(M) -> C^-infty(M) -> 0}
\end{gather}

From~\eqref{dot C^-infty(M) subset C^-infty(breve M)}, we get \index{$\dot C^{-\infty}_{\partial M}(M)$}
\begin{equation}\label{dot C^-infty_partial M(M) equiv C^-infty_partial M(breve M)}
\dot C^{-\infty}_{\partial M}(M)\equiv C^{-\infty}_{\partial M}(\breve M)\subset C^{-\infty}(\breve M)\;.
\end{equation}

The analogs of the second inclusion of~\eqref{C^prime -k'(M E) supset C^prime -k(M E)},~\eqref{H^s(M E) subset H^s'(M E)} and~\eqref{H^-s(M E) supset C^prime -k(M E) supset H^-k(M E)} hold true for the spaces $\dot C^{\prime\,-k}_{\partial M}(M)$ and $\dot H^s_{\partial M}(M)$. Thus the spaces $\dot C^{\prime\,-k}_{\partial M}(M)$ \index{$\dot C^{\prime\,-k}_{\partial M}(M)$} and $\dot H^s_{\partial M}(M)$ \index{$\dot H^s_{\partial M}(M)$} form spectra with the same union; the spectrum of spaces $\dot H^s_{\partial M}(M)$ is compact.

The following properties hold for $\dot C^{-\infty}_{\partial M}(M)$ \cite[Corollary~6.4 and~6.5]{AlvKordyLeichtnam-conormal}: it is a limit subspace of the LF-space $\dot C^{-\infty}(M)$; and it is barreled, ultrabornological, webbed acyclic DF Montel space, and therefore complete, reflexive and boundedly/compactly/sequentially retractive. A description of $\dot C^{-\infty}_{\partial M}(M)$ will be indicated in \Cref{r: description of dot C^infty_partial M(M)}.

\subsection{Differential operators acting on $C^{-\infty}(M)$ and $\dot C^{-\infty}(M)$}
\label{ss: diff ops on supp/ext distribs}

The notions of \Cref{ss: diff ops} also have straightforward extensions to manifolds with boundary. The action of any $A\in\Diff(M)$ on $C^\infty(M)$ preserves $\dot C^\infty(M)$, giving rise to extended continuous actions of $A$ on $C^{-\infty}(M)$ and $\dot C^{-\infty}(M)$. They fit into commutative diagrams
\begin{equation}\label{AR = RA}
\begin{CD}
\dot C^{-\infty}(M) @>A>> \dot C^{-\infty}(M) \\
@VRVV @VVRV \\
C^{-\infty}(M) @>A>> C^{-\infty}(M)
\end{CD}
\qquad
\begin{CD}
C^{-\infty}(M) @>A>> C^{-\infty}(M) \\
@A{\iota}AA @AA{\iota}A \\
C^\infty(M) @>A>> C^\infty(M)\;.\hspace{-.2cm}
\end{CD}
\end{equation}
However the analogous diagram
\begin{equation}\label{A inclusion ne inclusion A}
\begin{CD}
\dot C^{-\infty}(M) @>A>> \dot C^{-\infty}(M) \\
@A{\iota}AA @AA{\iota}A \\
C^\infty(M) @>A>> C^\infty(M)
\end{CD}
\end{equation}
may not be commutative. Using the notation $u\mapsto u_\co$ for the injection $C^\infty(M)\subset\dot C^{-\infty}(M)$ of~\eqref{Cinftyc(mathring M) subset dot C^infty(M) subset C^infty(M) subset dot C^-infty(M)}, we have $A(u_\co)-(Au)_\co\in C^{-\infty}_{\partial M}(M)$ for all $u\in C^\infty(M)$ \cite[Eq.~(3.4.8)]{Melrose1996}. 

From~\eqref{R: C^infty(widetilde M) -> C^infty(M)} and its version for vector fields, we get a surjective restriction map
\begin{equation}\label{restriction map Diff(breve M) -> Diff(M)}
\Diff(\breve M)\to\Diff(M)\;,\quad\breve A\mapsto\breve A|_M\;.
\end{equation}
For any $\breve A\in\Diff(\breve M)$ with $\breve A|_M=A$, we have the commutative diagrams
\begin{equation}\label{AR = RA with breve M}
\begin{CD}
C^{-\infty}(\breve M) @>{\breve A}>> C^{-\infty}(\breve M) \\
@VRVV @VVRV \\
C^{-\infty}(M) @>A>> C^{-\infty}(M)\;,\hspace{-.2cm}
\end{CD}
\qquad
\begin{CD}
C^{-\infty}(\breve M) @>{\breve A}>> C^{-\infty}(\breve M) \\
@A{\iota}AA @AA{\iota}A \\
\dot C^{-\infty}(M) @>A>> \dot C^{-\infty}(M)\;,\hspace{-.2cm}
\end{CD}
\end{equation}
where the left-hand side square extends the left-hand side square of~\eqref{AR = RA}.

If $A\in\Diff^m(M)$ ($m\in\N_0$), its actions on $\dot C^{-\infty}(M)$ and $C^{-\infty}(M)$ define continuous linear maps,
\begin{equation}\label{A:H^s(M) -> H^s-m(M)}
A:\dot H^s(M)\to\dot H^{s-m}(M)\;,\quad A:H^s(M)\to H^{s-m}(M)\;.
\end{equation}
The maps~\eqref{R: dot H^s(M) -> H^s(M)} and~\eqref{A:H^s(M) -> H^s-m(M)} fit into a commutative diagram given by the left-hand side square of~\eqref{AR = RA}.

\subsection{Differential operators tangent to the boundary}\label{ss: Diffb(M)}

The concepts of \Cref{s: conormal distribs} can be generalized to the case with boundary when $L=\partial M$ \cite[Chapter~6]{Melrose1996} (see also \cite[Section~4.9]{Melrose1993}), giving rise to the Lie subalgebra and $C^\infty(M)$-submodule $\fXb(M)\subset\fX(M)$ \index{$\fXb(M)$} of vector fields tangent to $\partial M$, called \emph{b-vector fields}. \index{b-vector field} We have $\fXb(M)\equiv C^\infty(M;\bT M)$. \index{$\fXb(M)$} Using $\fXb(M)$  like in \Cref{ss: diff ops}, we get the filtered $C^\infty(M)$-submodule and filtered subalgebra $\Diffb(M)\subset\Diff(M)$ \index{$\Diffb(M)$} of \emph{b-differential operators}; \index{b-differential operator} they are the operators $A\in\Diff(M)$ such that~\eqref{A inclusion ne inclusion A} is commutative \cite[Exercise~3.4.20]{Melrose1996}. The definition of $\Diffb(M)$ can be extended to arbitrary vector bundles like in \Cref{ss: diff ops}. The condition of being tangent to the boundary is closed by taking transposes and formal adjoints. The restriction map~\eqref{restriction map Diff(breve M) -> Diff(M)} satisfies
\begin{equation}\label{Diff(breve M,partial M) = Diffb(M)}
\Diff(\breve M,\partial M)|_M=\Diffb(M)\;.
\end{equation}
For all $a\in\R$ and $k\in\N_0$, we have \cite[Eqs.~(4.2.7) and~(4.2.8)]{Melrose1996}
\begin{equation}\label{Diffb^k(M) x^a = x^a Diffb^k(M)}
\Diffb^k(M)\,x^a=x^a\Diffb^k(M)\;.
\end{equation}
$\Diff(M)$ is spanned by $\partial_x$ and $\Diffb(M)$ as algebra, and therefore
\begin{equation}\label{Diff^k(M) x^a subset x^a-k Diff^k(M)}
\Diff^k(M)\,x^a\subset x^{a-k}\Diff^k(M)\;.
\end{equation}

\subsection{Conormal distributions at the boundary}
\label{ss: conormality at the boundary - Sobolev order}

The spaces of \emph{supported} and \emph{extendible conormal distributions} \index{supported conormal distributions} \index{extendible conormal distribution} at the boundary of Sobolev order $s\in\R$ are the $C^\infty(M)$-modules and LCSs, \index{$\dot\AA^{(s)}(M)$} \index{$\AA^{(s)}(M)$}
\begin{align*}
\dot\AA^{(s)}(M)&=\{\,u\in\dot C^{-\infty}(M)\mid\Diffb(M)\,u\subset\dot H^s(M)\,\}\;,\\
\AA^{(s)}(M)&=\{\,u\in C^{-\infty}(M)\mid\Diffb(M)\,u\subset H^s(M)\,\}\;,
\end{align*}
with the topologies defined like in~\eqref{Z = u in bigcup_A in AA dom A | AA cdot u subset Y}, which are totally reflexive Fr\'echet spaces \cite[Proposition~6.6]{AlvKordyLeichtnam-conormal}. They satisfy the analogs of the continuous inclusions~\eqref{I^(s)(M L) subset I^(s')(M L)}, giving rise to the filtered $C^\infty(M)$-modules and LCSs of \emph{supported} and \emph{extendible conormal distributions} at the boundary, \index{$\dot\AA(M)$} \index{$\AA(M)$}
\begin{equation}\label{dot AA(M) - AA(M)}
\dot\AA(M)=\bigcup_s\dot\AA^{(s)}(M)\;,\quad\AA(M)=\bigcup_s\AA^{(s)}(M)\;,
\end{equation}
which are barreled, ultrabornological and webbed \cite[Corollary~6.7]{AlvKordyLeichtnam-conormal}. By definition, there are continuous inclusions
\begin{equation}\label{dot AA(M) subset dot C^-infty(M)}
\dot\AA(M)\subset\dot C^{-\infty}(M)\;,\quad\AA(M)\subset C^{-\infty}(M)\;.
\end{equation}
Thus $\dot\AA(M)$ and $\AA(M)$ are Hausdorff. We have
\begin{equation}\label{bigcap_s AA^(s)(M) = C^infty(M)}
\bigcap_s\dot\AA^{(s)}(M)=\dot C^\infty(M)\;,\quad\bigcap_s\AA^{(s)}(M)=C^\infty(M)\;,
\end{equation}
obtaining continuous dense inclusions \cite[Proposition~4.1.1 and Lemma~4.6.1]{Melrose1996}
\begin{equation}\label{C^infty(M) subset AA(M)}
\dot C^\infty(M)\subset\dot\AA(M)\;,\quad C^\infty(M)\subset\AA(M),\dot\AA(M)\;.
\end{equation}
By~\eqref{C^infty(M) subset AA(M)} and the density of the inclusions~\eqref{Cinftyc(mathring M) subset dot C^infty(M) subset C^infty(M) subset dot C^-infty(M)} and~\eqref{C^infty(M) subset C^-infty(M)}, it follows that the inclusions~\eqref{dot AA(M) subset dot C^-infty(M)} are also dense. On the other hand, by elliptic regularity, we get continuous inclusions \cite[Eq.~(4.1.4)]{Melrose1996}
\begin{equation}\label{dot AA(M)|_mathring M AA(M) subset C^infty(mathring M)}
\dot\AA(M)|_{\mathring M},\AA(M)\subset C^\infty(\mathring M)\;.
\end{equation}
The maps~\eqref{R: dot H^s(M) -> H^s(M)} restrict to continuous linear maps
\begin{equation}\label{R: dot AA^(s)(M) -> AA^(s)(M)}
R:\dot\AA^{(s)}(M)\to\AA^{(s)}(M)\;,
\end{equation}
which are surjective for $s\le1/2$ and injective for $s\ge-1/2$. If $s=0$, then~\eqref{R: dot AA^(s)(M) -> AA^(s)(M)} is a TVS-isomorphism because $\dot H^0(M)\equiv H^0(M)$. The maps~\eqref{R: dot AA^(s)(M) -> AA^(s)(M)} induce a surjective topological homomorphism \cite[Proposition~4.1.1]{Melrose1996}, \cite[Proposition~6.8]{AlvKordyLeichtnam-conormal}
\begin{equation}\label{R: dot AA(M) -> AA(M)}
R:\dot\AA(M)\to\AA(M)\;,
\end{equation}
which is the identity on $C^\infty(M)$.

\subsection{The spaces $x^mL^\infty(M)$}\label{ss: x^m L^infty(M)}

For $m\in\R$, consider the weighted space $x^mL^\infty(M)$ (\Cref{ss: weighted sps}). There is a continuous inclusion
\[
x^mL^\infty(M)\subset C^{-\infty}(M)\;. 
\]
For $m'<m$, we also have a continuous inclusion
\begin{equation}\label{x^m L^infty(M) subset x^m' L^infty(M)}
x^mL^\infty(M)\subset x^{m'}L^\infty(M)\;,
\end{equation}
and $\Cinftyc(\mathring M)$ is dense in $x^mL^\infty(M)$ with the topology of $x^{m'}L^\infty(M)$ \cite[Proposition~6.10]{AlvKordyLeichtnam-conormal}.

\subsection{Filtration of $\AA(M)$ by bounds}\label{ss: description of AA(M) by bounds}

For every $m\in\R$, let \index{$\AA^m(M)$}
\[
\AA^m(M)=\{\,u\in C^{-\infty}(M)\mid\Diffb(M)\,u\subset x^mL^\infty(M)\,\}\;.
\]
This is another $C^\infty(M)$-module and Fr\'echet space with the topology like in~\eqref{Z = u in bigcup_A in AA dom A | AA cdot u subset Y}. By~\eqref{x^m L^infty(M) subset x^m' L^infty(M)}, there is a continuous inclusion
\begin{equation}\label{AA^m(M) subset AA^m'(M)}
\AA^m(M)\subset\AA^{m'}(M)\quad(m'<m)\;.
\end{equation}
Moreover there are continuous inclusions \cite[Proof of Proposition~4.2.1]{Melrose1996}
\begin{equation}\label{sandwich for AA}
\AA^{(s)}(M)\subset\AA^m(M)\subset\AA^{(\min\{m,0\})}(M)\quad(m<s-n/2-1)\;.
\end{equation}
Hence
\begin{equation}\label{AA(M) = bigcup_m AA^m(M)}
\AA(M)=\bigcup_m\AA^m(M)\;.
\end{equation}
Despite of defining the same LF-space, the filtrations of $\AA(M)$ given by the spaces $\AA^{(s)}(M)$ and $\AA^m(M)$ are not equivalent because, in contrast with~\eqref{bigcap_s AA^(s)(M) = C^infty(M)},
\[
\dot C^\infty(M)=\bigcap_m\AA^m(M)\;.
\]
The following is true \cite[Corollaries~6.14--6.16 and~6.39 and Remark~6.41]{AlvKordyLeichtnam-conormal}: the topologies of $\AA(M)$ and $C^\infty(\mathring M)$ coincide on every $\AA^m(M)$ (however the second inclusion of~\eqref{dot AA(M)|_mathring M AA(M) subset C^infty(mathring M)} is not a TVS-embedding); $\Cinftyc(\mathring M)$ is dense in every $\AA^m(M)$, and therefore in every $\AA^{(s)}(M)$ and $\AA(M)$; and $\AA(M)$ is an acyclic Montel space, and therefore complete, boundedly/compactly/sequentially retractive and reflexive.

\subsection{$\dot\AA(M)$ and $\AA(M)$ vs $I(\breve M,\partial M)$}
\label{ss: dot AA(M) and AA(M) vs I(breve M partial M)}

The restriction maps~\eqref{R: H^s(breve M) -> H^s(M)} define continuous linear maps
\[
R:I^{(s)}(\breve M,\partial M)\to\AA^{(s)}(M)\;,
\]
which induce a surjective topological homomorphism \cite[Proposition~6.18]{AlvKordyLeichtnam-conormal}
\begin{equation}\label{R: I(breve M partial M) -> AA(M)}
R:I(\breve M,\partial M)\to\AA(M)\;.
\end{equation}
The null space of~\eqref{R: I(breve M partial M) -> AA(M)} is $I_{M'}(\breve M,\partial M)$. There are TVS-identities
\begin{equation}\label{dot AA^(s)(M) equiv I^(s)_M(breve M partial M)}
\dot\AA^{(s)}(M)\equiv I^{(s)}_M(\breve M,\partial M)\;,
\end{equation}
inducing a TVS-isomorphism \cite[Corollary~6.20]{AlvKordyLeichtnam-conormal}
\begin{equation}\label{dot AA(M) cong I_M(breve M partial M)}
\dot\AA(M) \xrightarrow{\cong} I_M(\breve M,\partial M)\;.
\end{equation}
Moreover $I_M(\breve M,\partial M)$ is a limit subspace of the LF-space $I(\breve M,\partial M)$ \cite[Proposition~6.19]{AlvKordyLeichtnam-conormal}.

\subsection{Filtration of $\dot\AA(M)$ by the symbol order}
\label{ss: conormality at the boundary - symbol order}

Like in~\eqref{dot AA^(s)(M) equiv I^(s)_M(breve M partial M)}, let \index{$\dot\AA^m(M)$}
\begin{equation}\label{dot AA^m(M) = I^m_M(breve M partial M)}
\dot\AA^m(M)=I^m_M(\breve M,\partial M)\subset I^m(\breve M,\partial M)\quad(m\in\R)\;,
\end{equation}
which are closed subspaces satisfying the analogs of~\eqref{I^m(M L) subset I^m'(M L)} and~\eqref{sandwich for I}. Thus
\[
\dot\AA(M)=\bigcup_m\dot\AA^m(M)\;,\quad\dot C^\infty(M)=\bigcap_m\dot\AA^m(M)\;,
\]
and the TVS-isomorphism~\eqref{dot AA(M) cong I_M(breve M partial M)} is also compatible with the symbol filtration. $\dot\AA(M)$ is an acyclic Montel space, and therefore complete, boundedly/compactly/sequentially retractive and reflexive \cite[Corollary~6.22]{AlvKordyLeichtnam-conormal}.

\subsection{The space $\KK(M)$}\label{ss: KK(M)}

The condition of being supported in $\partial M$ defines the LCHSs and $C^\infty(M)$-modules \index{$\KK^{(s)}(M)$} \index{$\KK^m(M)$} \index{$\KK(M)$}
\[
\KK^{(s)}(M)=\dot\AA^{(s)}_{\partial M}(M)\;,\quad\KK^m(M)=\dot\AA^m_{\partial M}(M)\;,\quad
\KK(M)=\dot\AA_{\partial M}(M)\;.
\]
These are the null spaces of the corresponding restrictions of the map~\eqref{R: dot AA(M) -> AA(M)} to $\dot\AA^{(s)}(M)$, $\dot\AA^m(M)$ and $\dot\AA(M)$. They satisfy the analogs of~\eqref{I^(s)(M L) subset I^(s')(M L)},~\eqref{I^m(M L) subset I^m'(M L)} and~\eqref{sandwich for I}, obtaining $\bigcup_s\KK^{(s)}(M)=\bigcup_m\KK^m(M)$. 

The properties of~\eqref{R: dot AA(M) -> AA(M)} mean that the following sequence is exact in the category of continuous linear maps between LCSs:
\begin{equation}\label{0 -> KK(M) -> dot AA(M) -> AA(M) -> 0}
0\to\KK(M) \xrightarrow{\iota} \dot\AA(M) \xrightarrow{R} \AA(M)\to0\;.
\end{equation}
It is called the \emph{conormal sequence at the boundary}. We have \index{conormal sequence at the boundary}
\[
\KK^{(s)}(M)=\{\,u\in\dot C^{-\infty}_{\partial M}(M)\mid\Diffb(M)\,u\subset\dot H^s_{\partial M}(M)\,\}\;,
\]
with the topology defined like in~\eqref{Z = u in bigcup_A in AA dom A | AA cdot u subset Y}. The following properties hold \cite[Propositions~6.24 and~6.25 and Corollaries~6.26--6.28]{AlvKordyLeichtnam-conormal}: every $\KK^{(s)}(M)$ is a totally reflexive Fr\'echet space; $\KK(M)$ is a limit subspace of the LF-space $\dot\AA(M)$; and $\KK(M)$ is barreled, ultrabornological, webbed and an acyclic Montel space, and therefore complete, boundedly/compactly/sequentially retractive and reflexive.

The TVS-isomorphism~\eqref{dot AA(M) cong I_M(breve M partial M)} restricts to a TVS-identity
\begin{equation}\label{KK(M) cong I_partial M(breve M partial M)}
\KK(M)\equiv I_{\partial M}(\breve M,\partial M)\;,
\end{equation}
which in turn restricts to identities between the LCHSs defining the Sobolev-order and symbol-order filtrations, according to~\eqref{dot AA^(s)(M) equiv I^(s)_M(breve M partial M)} and~\eqref{dot AA^m(M) = I^m_M(breve M partial M)}.

A description of $\KK^{(s)}(M)$ and $\KK(M)$ will be indicated in \Cref{r: description of KK(M)}.

\subsection{Action of $\Diff(M)$ on $\dot\AA(M)$, $\AA(M)$ and $\KK(M)$}
\label{ss: diff ops on dot AA(M) and AA(M)}

Any $A\in\Diff(M)$ defines continuous endomorphisms $A$ of $\dot\AA(M)$, $\AA(M)$ and $\KK(M)$. If $A\in\Diff^k(M)$, these maps also satisfy the analogs of~\eqref{A: I^(s)(M L E) -> I^[s-k](M L E)}. If $A\in\Diffb(M)$, then it defines continuous endomorphisms of $\dot\AA^{(s)}(M)$, $\AA^{(s)}(M)$, $\AA^m(M)$ and $\KK^{(s)}(M)$. All of these maps are restrictions of the endomorphisms $A$ of $\dot C^{-\infty}(M)$, $C^{-\infty}(M)$ and $C^\infty(\mathring M)$, and extensions of the endomorphisms $A$ of $\dot C^\infty(M)$ and $C^\infty(M)$.

\subsection{Partial extension maps}\label{ss: partial extension maps}

Given linear subspaces, $X\subset\AA(M)$ and $Y\subset\dot\AA(M$), a map $E:X\to Y$ is called a \emph{partial extension map} \index{partial extension map} if $R(Y)\subset X$ and $RE=1$ on $X$. The surjectivity of~\eqref{R: dot AA(M) -> AA(M)} is given by the following result. Its proof is recalled here because it will play an important role in our work.

\begin{prop}[Cf.\ {\cite[Section~4.4]{Melrose1996}}]\label{p: E_m}
For all $m\in\R$, there is a continuous linear partial extension map $E_m:\AA^m(M)\to\dot\AA^{(s)}(M)$, \index{$E_m$} where $s=0$ if $m\ge0$, and $m>s\in\Z^-$ if $m<0$. For $m\ge0$, $E_m:\AA^m(M)\to\dot\AA^{(0)}(M)$ is a continuous inclusion map.
\end{prop}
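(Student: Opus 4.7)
The proof splits into the cases $m\ge 0$ and $m<0$.

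When $m\ge 0$, compactness of $M$ gives $\AA^m(M)\subset x^mL^\infty(M)\subset L^\infty(M)\subset L^2(M)$, and every $P\in\Diffb(M)$ sends $\AA^m(M)$ into $x^mL^\infty(M)\subset L^2(M)$. I take $E_m u$ to be the extension by zero of $u$ across $\partial M$, identified via the TVS-isomorphism $L^2(M)\equiv H^0(M)\equiv\dot H^0(M)$ of~\Cref{ss: supported and extendible Sobolev sps} with an element of $\dot C^{-\infty}(M)$ supported in $M$. The crux is the identity
\[
P(E_m u)=E_m(P|_M u)\quad\text{for every }P\in\Diff(\breve M,\partial M)\;,
\]
where $P|_M\in\Diffb(M)$ via~\eqref{Diff(breve M,partial M) = Diffb(M)}. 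This holds because $P$ is tangent to $\partial M$, so integration by parts against a test function on $\breve M$ produces no boundary term: rigorously, approximate $u$ by $\chi_\epsilon u$ for a cutoff $\chi_\epsilon\in C^\infty(M)$ vanishing in an $\epsilon$-neighborhood of $\partial M$; for a tangent vector field $X=a\,x\partial_x+b^i\partial_{y^i}$ in the collar, $X\chi_\epsilon$ is uniformly bounded on a set of measure $O(\epsilon)$, forcing $(X\chi_\epsilon)u\to 0$ in $L^1(\breve M)$. The identity then yields $P(E_m u)\in L^2(\breve M)$ and $\|P(E_m u)\|_{L^2(\breve M)}\le C\,\|P|_M u\|_{x^mL^\infty(M)}$ (using compactness of $M$ and $m\ge 0$), so $E_m u\in I^{(0)}_M(\breve M,\partial M)=\dot\AA^{(0)}(M)$ and $E_m$ is continuous; $RE_m u=u$ is automatic.

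When $m<0$, set $k=-s\in\N$ so that $m+k>0$; I reduce to the first case by antidifferentiating $k$ times, applying the case-$(m+k)$ extension, and differentiating $k$ times. Fix a collar $\breve T\equiv(-\epsilon_0,\epsilon_0)_x\times\partial M$ of $\partial M$ in $\breve M$, a globally defined vector field $\partial_x\in\fX(\breve M)$ equal to $\partial_x$ on $\breve T$, and a cutoff $\chi\in C^\infty(M)$ with $\chi\equiv 1$ near $\partial M$ and $\supp\chi\subset T:=\breve T\cap M$. Decompose $u=\chi u+(1-\chi)u$; the interior part $(1-\chi)u\in\Cinftyc(\mathring M)\subset\dot C^\infty(M)$ extends by zero to an element of $\dot\AA^{(s)}(M)$. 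For the boundary part, define on $T$
\[
v(x,y)=\frac{1}{(k-1)!}\int_0^x(x-t)^{k-1}(\chi u)(t,y)\,dt\quad(x\ge 0)\;,
\]
and extend $v$ by zero to $\{x\le 0\}$ and away from $T$, so that $\partial_x^k v=\chi u$ on $\mathring T$. The main technical claim is that $v\in\AA^{m+k}(M)$, so the previous case gives $E_{m+k}(v)\in\dot\AA^{(0)}(M)$. Since $\partial_x^k\in\Diff^k(\breve M)$ acts continuously on the spectrum of spaces $\dot\AA^{(\bullet)}(M)$ lowering the Sobolev order by $k$ (the analog of~\eqref{A: I^(s)(M L E) -> I^[s-k](M L E)} recalled in~\Cref{ss: diff ops on dot AA(M) and AA(M)}), the formula
\[
E_m u:=\partial_x^k E_{m+k}(v)+\bigl\{\text{extension by zero of }(1-\chi)u\bigr\}
\]
lies in $\dot\AA^{(-k)}(M)=\dot\AA^{(s)}(M)$, and $E_m$ is continuous because each constituent is. The identity $RE_m u=u$ uses the left square of~\eqref{AR = RA with breve M} applied to $\partial_x^k$: $R(\partial_x^k E_{m+k}(v))=\partial_x^k R(E_{m+k}(v))=\partial_x^k v=\chi u$ on $\mathring T$, which combined with the interior part gives $u$.

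\textbf{Main obstacle.} The principal difficulty is verifying $v\in\AA^{m+k}(M)$, namely $\Diffb(M)\cdot v\subset x^{m+k}L^\infty(M)$. After the substitution $t=sx$,
\[
v(x,y)=\frac{x^k}{(k-1)!}\int_0^1(1-s)^{k-1}(\chi u)(sx,y)\,ds\;,
\]
so $\partial_y^\alpha v$ is estimated by differentiating under the integral, while $(x\partial_x)^j v$ is obtained from Leibniz's rule on the factor $x^k$ together with the chain-rule identity that $(x\partial_x)$ acting on $(\chi u)(sx,y)$ equals the $(x\partial_x)$-derivative of $\chi u$ evaluated at $sx$. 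Each iterated derivative $(x\partial_x)^{j'}\partial_y^\alpha(\chi u)$ is bounded by $Cx^m$, and the resulting integrand is dominated by $(1-s)^{k-1}s^m$, which is integrable on $[0,1]$ when $m>-1$. For $m\le -1$ (so the straightforward integral diverges), the required bound is obtained instead by first integrating by parts in $t$ inside the original integral, transferring derivatives from $\chi u$ onto $(x-t)^{k-1}$ (reducing its polynomial degree), thereby expressing $(x\partial_x)^j v$ in terms of iterated integrals of strictly lower order of $u$; iterating until one lands in the regime $m+(\text{degree})>-1$ produces the required estimate, using $m+k>0$ at the final step. Secondary routine checks (independence from the choices of collar and cutoff modulo smooth interior contributions) complete the construction.
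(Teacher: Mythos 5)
Your treatment of the case $m\ge 0$ is correct and essentially the same as the paper's: the point is that $\AA^m(M)\subset L^2(M)=\dot H^0(M)$ for $m\ge 0$, that b-operators commute with extension by zero (so there are no boundary terms), and that this exhibits $\AA^m(M)\hookrightarrow\dot\AA^{(0)}(M)$ continuously.

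There is, however, a genuine gap in the case $m<0$. Your $k$-fold antiderivative
\[
v(x,y)=\frac{1}{(k-1)!}\int_0^x(x-t)^{k-1}(\chi u)(t,y)\,dt
\]
integrates from the boundary $t=0$. Since $u\in\AA^m(M)$ allows $|u(t,y)|\sim t^m$ with $m<0$, the integrand near $t=0$ behaves like $x^{k-1}t^m$, so the integral diverges as soon as $m\le -1$, and the statement requires precisely this range whenever $k\ge 2$ (indeed $m>s=-k$ only forces $m>-k$). You acknowledge the issue but propose to fix it by integrating by parts, "transferring derivatives from $\chi u$ onto $(x-t)^{k-1}$." This cannot work: (a) the integral defining $v$ does not contain a derivative of $u$ that could be transferred; (b) if instead you attempt to transfer a derivative \emph{onto} $u$ (i.e., pull out an antiderivative of $u$), the antiderivative from $t=0$ is exactly the object whose existence is in question; and (c) any integration by parts over $[0,x]$ produces boundary terms at $t=0$, where $u$ is singular. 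The root of the problem is that you aim directly for $v\in\AA^{m+k}(M)$ with $m+k>0$, which forces $v$ to vanish at the boundary at a positive rate, which in turn forces the antiderivative to be anchored at $x=0$.

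The paper's proof avoids this by antiderivating from an \emph{interior} reference point: $Ju(x,y)=\int_1^x u(\xi,y)\,d\xi$ (or $\int_\epsilon^x$ in the collar version of \Cref{r: E_m m < 0}). This integral converges for every $m<0$. The price is that $J$ only gains one b-order per step, capped at order $0$ (since $Ju$ tends to a constant, not to $0$, as $x\downarrow0$), so one takes $J:\AA^m\to\AA^{m+\delta}$ with $m+\delta\le 0$ and iterates $N=-s>-m$ times to land in $\AA^0\subset\dot\AA^{(0)}$, then applies $\partial_x^N$ to reach $\dot\AA^{(s)}$. Thus $E_m=\partial_x^N\circ E_0\circ J^N$, and there is no need (and no way) to achieve the intermediate estimate $v\in\AA^{m+k}$ with $m+k>0$ that your argument relies upon.
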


\begin{proof}
First, let us consider the non-compact $n$-manifold with boundary $\R^n_1:=[0,\infty)\times\R^{n-1}$, whose double is $\R^n$. Consider the canonical coordinates on $\R^n_1$ given by the factor projections, $x:\R^n_1\to[0,\infty)$ and $y:\R^n_1\to\R^{n-1}$. We use the obvious generalization to the non-compact case of the spaces of extendible and supported conormal distributions at the boundary, of Sobolev order $s$, whose definitions involve $H^s_{\text{\rm loc}}(\R^n_1)$ and $\dot H^s_{\text{\rm loc}}(\R^n_1)$ like in \Cref{ss: I(M L) - non-compact}. 

For $m\ge0$, since $x^mL^\infty(\R^n_1)\subset L^2_{\text{\rm loc}}(\R^n_1)$, continuously, we get $\AA^m(\R^n_1)\subset\dot\AA^{(0)}(\R^n_1)$, continuously. This also follows from~\eqref{sandwich for AA} using that $\dot\AA^{(0)}(\R_1^n)\equiv\AA^{(0)}(\R_1^n)$. Thus $E_m$ must be the inclusion map in this case.

Now fix $m<0$. For $0<\delta\le1$ such that $m+\delta\le0$ if $m\ne-1$, and $m+\delta<0$ if $m=-1$, we have a continuous linear map $J:\AA^m(\R^n_1)\to\AA^{m+\delta}(\R^n_1)$ defined by
\begin{equation}\label{Ju(x y)}
Ju(x,y)=\int_1^xu(\xi,y)\,d\xi\;.
\end{equation}
 So, for $-m<-s=:N\in\N$, we get the continuous linear maps (see \Cref{ss: diff ops on dot AA(M) and AA(M)})
\[
\AA^m(\R^n_1) \xrightarrow{J^N} \AA^0(\R^n_1) \xrightarrow{E_0} \dot\AA^{(0)}(\R^n_1) \xrightarrow{\partial_x^N} \dot\AA^{(s)}(\R^n_1)\;,
\]
whose composition is the desired extension $E_m$. For all $u\in\AA^m(\R^n_1)$, we have
\begin{equation}\label{supp Eu}
\partial\R^n_1\cap\supp E_mu\subset\{0\}\times y(\supp u)\;.
\end{equation}

Consider now a compact manifold with boundary $M$. Cover $\partial M$ with a finite collection of adapted charts $(U_j,(x_j,y_j))$, and let $\{\lambda_j,\mu\}$ be a partition of unity subordinated to the open covering $\{U_j,\mathring M\}$ of $M$. By the case of $\R^n_1$, we directly get $\AA^m(U_j)\subset\dot\AA^{(0)}(U_j)$, continuously, if $m\ge0$. By~\eqref{supp Eu}, if $m<0$ and $-m<N\in\N$, we get a continuous linear partial extension map $E_{m,j}:\AA^m(U_j)\to\dot\AA^{(-N)}(U_j)$, which preserves the condition of being compactly supported. Then the result follows with $E_m:\AA^m(M)\to\dot\AA^{(s)}(M)$ defined by $E_mu=\mu u+\sum_jE_{m,j}(\lambda_ju)$.
\end{proof}

\begin{rem}\label{r: E_m m < 0}
Consider the case where $m<0$ in the proof of \Cref{p: E_m}. Taking a collar neighborhood of the boundary, $T\equiv[0,\epsilon)_x\times\partial M_\varpi$, we can use adapted charts $(U_j\equiv[0,\epsilon)\times V_j,(x,y_j))$ defined by charts $(V_j,y_j)$ of $\partial M$, like in \Cref{ss: b-geometry}. Then the operators $\partial_x\in\Diff(U_j)$ can be combined to define an operator $\partial_x\in\Diff(T)$, which indeed is the derivative operator on $C^\infty(T)\equiv C^\infty([0,\epsilon),C^\infty(\partial M))$. On the other hand, by integrating from $\epsilon$ to $x$, like in~\eqref{Ju(x y)}, we get a continuous linear map $J:\AA^m(T)\to\AA^{m+\delta}(T)$; in fact, this defines a continuous endomorphism $J$ of $C^\infty(\mathring T)$. In this way, a continuous linear extension map $E_{m,T}:\AA^m(T)\to\dot\AA^{(s)}(T)$ can be defined like in the case of $\R^n_1$. Then $E_m:\AA^m(M)\to\dot\AA^{(s)}(M)$ can be given by $E_mu=\mu u+E_{m,T}(\lambda u)$, where $\{\lambda,\mu\}$ is a partition of unity subordinated to the open covering $\{T,\mathring M\}$ of $M$.
\end{rem}

\begin{rem}\label{r: E_m vector bundle}
A version of \Cref{p: E_m} with a vector bundle $E$ over $M$ can be achieved by taking the $C^\infty(M)$-tensor product with the identity map on $C^\infty(M;E\otimes E^*)$. We can also adapt the proof as follows. With the notation of \Cref{r: E_m m < 0}, there is an identity $E_T\equiv\varpi^*E_{\partial M}\equiv[0,\epsilon)\times E_{\partial M}$ over $T$, which induces trivializations $E_{U_j}\equiv[0,\epsilon)\times E_{V_j}\equiv[0,\epsilon)\times V_j\times \C^l$ over domains $U_j\equiv[0,\epsilon)\times V_j$. Like in \Cref{r: E_m m < 0}, these local trivializations can be used to define $\partial_x\in\Diff^1(T;E)$, which is considered as the derivative operator on $C^\infty([0,\epsilon),C^\infty(\partial M;E))\equiv C^\infty(T;E)$. As usual, integration by parts shows that
\begin{equation}\label{partial_x^t = -partial_x}
\partial_x^\trans=-\partial_x\in\Diff^1(T;E^*\otimes\Omega)\;.
\end{equation}
If $E=\Lambda M$, then $\partial_x\in\Diff^1(T;\Lambda)$ is the Lie derivative with respect to $\partial_x\in\fX(T)$. 
\end{rem}

\begin{rem}\label{r: E_m compactly supported} 
By~\eqref{supp Eu}, all steps of the proof of \Cref{p: E_m} have obvious compactly supported versions. This also applies to \Cref{r: E_m m < 0,r: E_m vector bundle}.
\end{rem}

Given $m$ and $s$ satisfying the conditions of \Cref{p: E_m}, let us denote by $E_{m,s}$ the partial extension map constructed in the proof of \Cref{p: E_m}. This notation will make it easier to analyze its dependence on $m$ and $s$ in the following result.

\begin{prop}\label{p: E_m s}
Let $s'\le s$ and $m'\le m$ such that the maps $E_{m,s}$, $E_{m,s'}$ and $E_{m',s'}$ are defined. Then $E_{m,s}u=E_{m',s'}u$ for all $u\in\AA^m(M)$.
\end{prop}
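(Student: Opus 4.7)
The plan is to reduce to the local model $\R^n_1$ and verify the equality by tracing through the explicit formulas from the proof of \Cref{p: E_m}.

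First, using the same partition of unity $\{\lambda_j,\mu\}$ from the construction of both $E_{m,s}$ and $E_{m',s'}$, the common interior contribution $\mu u\in\dot C^\infty(M)$ cancels and the identity $E_{m,s}u=E_{m',s'}u$ reduces to the local equalities $E_{m,s,j}(\lambda_ju)=E_{m',s',j}(\lambda_ju)$ in each adapted chart $U_j\equiv[0,\epsilon)\times V_j$. So I may assume throughout that $M=\R^n_1$ and that $u$ is compactly supported meeting the boundary.

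Second, I would break the identity into elementary moves joining $(m,s)$ to $(m',s')$ by a finite chain: (a) decrease $s$ by one unit with $m$ fixed, and (b) decrease $m$ with $s$ fixed. Move (b) is essentially tautological: the local formula $E_{m,s,j}=\partial_x^N\,E_0\,J^N$ with $N=-s$ depends only on the underlying distribution $\lambda_ju$, not on the ambient space $\AA^m(\R^n_1)$ in which it is regarded. Continuity of $J:\AA^{m}\to\AA^{m+\delta}$ for every admissible $(m,\delta)$ ensures that applying $J$ to $\lambda_ju\in\AA^m\subset\AA^{m'}$ (via~\eqref{AA^m(M) subset AA^m'(M)}) yields the same element of $\AA^{m'+\delta}$ whichever ambient space is used, and the same compatibility holds for iterations and for the final $\partial_x^N$.

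Move (a) is the essential step; it reduces to the identity
\[
\partial_x^{N+1}\,E_0\,J^{N+1}\,u=\partial_x^{N}\,E_0\,J^{N}\,u\quad\text{in}\ \dot\AA^{(-N-1)}(\R^n_1),
\]
equivalently $\partial_x(E_0\,Jv)=E_0(v)$ as supported distributions, where $v:=J^Nu\in\AA^0(\R^n_1)$. Pairing against a test density $\phi\in C^\infty(\R^n_1;\Omega)$ and integrating by parts using~\eqref{partial_x^t = -partial_x} produces, in addition to the desired interior term $E_0(v)$, a boundary contribution at $x=0$ coming from $Jv(0,\cdot)$. The cancellation of this contribution — which is not obvious point-wise for a single application of $\partial_x$ — follows once the full composition $\partial_x^N\,E_0\,J^N$ is considered: the $N$ successive integrations defining $v$ start from a fixed endpoint away from $\partial M$, and the $N$ successive derivatives produce Dirac terms at $\partial M$ which, by a bookkeeping argument tracking the boundary values $\partial_x^k(J^{N+1-k}u)|_{x=0}$, assemble to exactly match the boundary terms produced by the parallel computation for $N$. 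This bookkeeping of boundary contributions is the main obstacle and the principal computational content of the proof; once verified, moves (a) and (b) chain together to yield the claimed equality.
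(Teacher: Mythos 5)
Your localization to $\R^n_1$ and the splitting into moves (b) (lower $m$, $s$ fixed) and (a) (lower $s$, $m$ fixed) correctly captures the structure of the paper's proof: move (b) is the paper's immediate first equality $E_{m',s'}u = E_{m,s'}u$, and move (a) corresponds to the paper's assertion that $\partial_x^{k}E_0J^{k}$ is the inclusion $\AA^0(\R^n_1)\hookrightarrow\dot\AA^{(-k)}(\R^n_1)$.

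The gap is in move (a). You note that pairing $\partial_x(E_0 Jv)$ with an extendible test density and integrating by parts produces a boundary contribution coming from $Jv(0,\cdot)$, but you then assert, without verification, that over the full chain these contributions ``assemble to exactly match.'' That assertion is essentially the whole proposition; calling it ``bookkeeping~\dots\ once verified'' names the gap rather than closing it. And the chaining makes the claim look improbable rather than routine: writing $w_c$ for the extension by zero of $w\in\AA^0(\R^n_1)$ and $\delta_0$ for the Dirac section at $\partial\R^n_1$, the jump formula $\partial_x(g_c) = (\partial_x g)_c + g(0,\cdot)\,\delta_0$ for continuous $g$ iterates to
\[
\partial_x^{k} E_0 J^{k} w = w_c + \sum_{j=1}^{k}\bigl(J^{j} w\bigr)(0,\cdot)\,\delta_0^{(j-1)}\,,
\]
so each step $k\mapsto k+1$ adds a boundary term of strictly higher distributional order, with nothing already present to cancel it, and $(J^{j}w)(0,\cdot)=-\int_0^1 J^{j-1}w\,d\xi$ does not generally vanish (note that $J$ integrates from the interior point $x=1$). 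So move (a), taken one step at a time, simply does not hold, and it is not apparent that the accumulated terms from a chain of such moves drop out either. The paper avoids the chain entirely: it passes from $N$ to $N'$ in one step via $\partial_x^{N'}E_0J^{N'}=\partial_x^{N}(\partial_x^{k}E_0J^{k})J^{N}$ and asserts that $\partial_x^{k}E_0J^{k}$ equals the inclusion, with the single remark that $\AA^0\subset L^\infty$. That is the one identity your proof needs; you should isolate it and prove it directly, rather than deferring to an unverified bookkeeping that, on inspection, runs straight into the extra boundary terms you feared.
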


\begin{proof}
According to the proof of \Cref{p: E_m}, it is enough to consider the case of $\R^n_1$. 

If $m'\ge0$, there is nothing to prove.

In the case $m<0$, we have $s,s'\in\Z^-$ with $m'>s>s'$. Let $N=-s$, $N'=-s'$ and $k=s-s'=N'-N$ in $\Z^+$. Since $\AA^0(\R^n_1)\subset L^\infty(\R^n_1)$, the composition
\[
\AA^0(\R^n_1) \xrightarrow{J^k} \AA^0(\R^n_1) \hookrightarrow \dot\AA^{(0)}(\R^n_1) 
\xrightarrow{\partial_x^k} \dot\AA^{(-k)}(\R^n_1)
\]
is equal to the inclusion map $\AA^0(\R^n_1)\hookrightarrow\dot\AA^{(-k)}(\R^n_1)$. So, for all $u\in\AA^m(\R^n_1)$, since $J^Nu\in\AA^0(\R^n_1)$, we have
\[
E_{m',s'}u=E_{m,s'}u=\partial_x^{N'}J^{N'}u=\partial_x^N\partial_x^kJ^kJ^Nu=\partial_x^NJ^Nu=E_{m,s}u\;.
\]

In the case $m'<0\le m$, we have $s=0$ and $m'>s'\in\Z^-$. Then the result follows with a similar argument using $k=-s'\in\Z^+$.
\end{proof}

\begin{cor}\label{c: E_m s}
For all $s$ and $m$ such that the map $E_{m,s}$ is defined, we have $E_{m,s}u=u$ for all $u\in\Cinftyc(\mathring M)$.
\end{cor}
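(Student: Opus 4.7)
The plan is to reduce the statement to the easy range $m\geq 0$, $s=0$, where $E_{m,0}$ is literally the inclusion map by \Cref{p: E_m}, and then use \Cref{p: E_m s} to propagate the equality to arbitrary $(m,s)$.

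First I would observe that any $u\in\Cinftyc(\mathring M)$ is supported away from $\partial M$, and hence lies in $\dot C^\infty(M)\subset\AA^{\bar m}(M)$ for every $\bar m\in\R$. Choose $\bar m\geq\max(m,0)$. By \Cref{p: E_m}, $E_{\bar m,0}:\AA^{\bar m}(M)\to\dot\AA^{(0)}(M)$ is the continuous inclusion arising from $\AA^{\bar m}(M)\subset\AA^{(0)}(M)\equiv\dot\AA^{(0)}(M)$, with the identification given by the TVS-isomorphism $R$ at Sobolev order $0$. For $u\in\Cinftyc(\mathring M)$ this inclusion coincides with the canonical embedding $\dot C^\infty(M)\subset\dot\AA^{(0)}(M)$, since functions supported in the interior admit a unique extension by zero. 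Thus $E_{\bar m,0}u=u$ in $\dot\AA^{(0)}(M)$, and the equality remains valid in $\dot\AA^{(s)}(M)$ after composing with the inclusion $\dot\AA^{(0)}(M)\subset\dot\AA^{(s)}(M)$.

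Then I would apply \Cref{p: E_m s} with the pair $(\bar m,0)$ playing the role of $(m,s)$ in that proposition, and the target pair $(m,s)$ playing the role of $(m',s')$. The hypotheses $m\leq\bar m$ and $s\leq 0$ hold by construction of $\bar m$ and because $s\leq 0$ always when $E_{m,s}$ is defined. The conclusion delivers
\[
E_{m,s}u=E_{\bar m,0}u=u,
\]
as required.

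I anticipate no substantive obstacle, since the argument is essentially a single invocation of \Cref{p: E_m s}. The only point deserving verification is that the canonical embedding $\Cinftyc(\mathring M)\subset\dot\AA^{(0)}(M)$ matches the image of $u$ under $R^{-1}:\AA^{(0)}(M)\to\dot\AA^{(0)}(M)$; this follows from the uniqueness of zero extension for distributions whose support avoids $\partial M$.
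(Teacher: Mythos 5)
Your proposal is correct and is essentially the paper's own proof: reduce to a large $\bar m\ge\max(m,0)$ where $E_{\bar m,0}$ is the inclusion by \Cref{p: E_m}, then apply \Cref{p: E_m s} with $(\bar m,0)$ as the larger pair and the target $(m,s)$ as the smaller one. The only point worth flagging explicitly is that when the target $s<0$, the intermediate map $E_{\bar m,s}$ with $\bar m\ge 0$ only exists in the extended family introduced in \Cref{r: E_m s with m ge 0 and s < 0}; the paper's terse proof implicitly relies on that remark just as you do.
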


\begin{proof}
Use that $\Cinftyc(\mathring M)\subset\AA^m(M)$ and apply \Cref{p: E_m s}.
\end{proof}

\begin{rem}\label{r: E_m s with m ge 0 and s < 0}
The proof of \Cref{p: E_m s} can be also applied to maps $E_{m,s}$ with $m\ge0$ and $s\in\Z^-$, defined $E_m$ like in the case $m<0$. Including these maps, the map $E_{m,s'}$ of the statement is always defined under the other assumptions.
\end{rem}

\begin{rem}\label{r: E_m s - variants of the defn}
\Cref{p: E_m s,c: E_m s} are also true with the definitions of $E_m$ given in \Cref{r: E_m m < 0,r: E_m vector bundle}, with similar proofs.
\end{rem}

\subsection{$L^2$ and $L^\infty$ half-b-densities}\label{ss: L^2 and L^infty and L^infty half-b-densities}

We have 
\begin{align}
L^2(M;\bOmega^{\frac12})&\equiv x^{-\frac12}L^2(M;\Omega^{\frac12})\;,
\label{L^2(M bOmega^1/2) equiv x^-1/2 L^2(M Omega^1/2)}\\
L^\infty(M;\bOmega^{\frac12})&\equiv x^{-\frac12}L^\infty(M;\Omega^{\frac12})\;,
\label{L^infty(M bOmega^1/2) equiv x^-1/2 L^infty(M Omega^1/2)}
\end{align}
where~\eqref{L^2(M bOmega^1/2) equiv x^-1/2 L^2(M Omega^1/2)} holds as Hilbert spaces, and~\eqref{L^infty(M bOmega^1/2) equiv x^-1/2 L^infty(M Omega^1/2)} holds as LCHSs endowed with a family of equivalent Banach space norms \cite[Eqs.~(6.51) and~(6.52)]{AlvKordyLeichtnam-conormal}.

Equip $M$ with a b-metric $g$ (\Cref{ss: b-geometry}), and endow $\mathring M$ with the restriction of $g$, also denoted by $g$. With the corresponding Euclidean/Hermitean structures on $\Omega^{1/2}\mathring M$ and $\bOmega^{1/2}M$, we get $L^\infty(\mathring M;\Omega^{\frac12})\equiv L^\infty(M;\bOmega^{\frac12})$ as Banach spaces.

\subsection{b-Sobolev spaces}\label{ss: b-Sobolev}

For $m\in\N_0$, the \emph{b-Sobolev spaces} \index{b-Sobolev space} of \emph{order} $\pm m$ are the $C^\infty(M)$-modules and Hilbertian spaces defined by the following analogs of~\eqref{H^s(M E)} and~\eqref{H^-s(M E) = Psi^s(M E) cdot L^2(M E) = H^s(M E^* otimes Omega)'}: \index{$\Hb^m(M;\bOmega^{\frac12})$}
\begin{gather*}
\Hb^m(M;\bOmega^{\frac12})=\{\,u\in L^2(M;\bOmega^{\frac12})\mid
\Diffb^m(M;\bOmega^{\frac12})\,u\subset L^2(M;\bOmega^{\frac12})\,\}\;,\\
\Diffb^m(M;\bOmega^{\frac12})\,L^2(M;\bOmega^{\frac12})=\Hb^{-m}(M;\bOmega^{\frac12})
= \Hb^m(M;\bOmega^{\frac12})'\;.
\end{gather*}
Any finite set of $C^\infty(M)$-generators of $\Diffb^m(M;\bOmega^{1/2})$ defines a scalar product on $\Hb^{\pm m}(M;\bOmega^{1/2})$. The intersections/unions of the spaces $\Hb^m(M;\bOmega^{1/2})$ ($m\in\Z$) are denoted by $\Hb^{\pm\infty}(M;\bOmega^{1/2})$. In particular, $\Hb^\infty(M;\bOmega^{1/2})=\AA^{(0)}(M;\bOmega^{1/2})$.

\subsection{Weighted b-Sobolev spaces}\label{ss: weighted b-Sobolev}

We will also use the \emph{weighted b-Sobolev space} $x^a\Hb^m(M;\bOmega^{1/2})$ ($a\in\R$), \index{$x^a\Hb^m(M;\bOmega^{1/2})$} another Hilbertian space defined like in \Cref{ss: weighted sps}. We have \cite[Section~6.19]{AlvKordyLeichtnam-conormal}
\[
\bigcap_{a,m}x^aH^m_{\text{\rm b}}(M;\bOmega^{\frac12})=\dot C^\infty(M;\bOmega^{\frac12})\;.
\]

\subsection{Action of $\Diffb^m(M)$ on weighted b-Sobolev spaces}
\label{ss: (pseudo) diff ops on weighted b-Sobolev}

We have
\begin{equation}\label{Diffb^m(M bOmega^1/2) equiv Diffb^m(M) equiv Diffb^m(M Omega^1/2)}
\Diffb^m(M;\bOmega^{\frac12})\equiv\Diffb^m(M)\equiv\Diffb^m(M;\Omega^{\frac12})\;,
\end{equation}
like in~\eqref{Diff^m(M E) equiv Diff^m(M)}. Moreover any $A\in\Diffb^k(M;\bOmega^{1/2})$ defines continuous linear maps \cite[Lemma 5.14]{Melrose1993}
\[
A:x^aH^m_{\text{\rm b}}(M;\bOmega^{\frac12})\to x^aH^{m-k}_{\text{\rm b}}(M;\bOmega^{\frac12})\;,
\]
which induce a continuous endomorphism $A$ of $x^aH^{\pm\infty}_{\text{\rm b}}(M;\bOmega^{1/2})$.

\subsection{A description of $\AA(M)$}\label{ss: a description of AA(M)}

In this subsection, unless the contrary is indicated, assume the following properties:
\begin{enumerate}[{\rm(A)}]

\item\label{i: g is of bounded geometry} $\mathring M$ is of bounded geometry with $g$.

\item\label{i: A'}  The collar neighborhood $T$ of $\partial M$ can be chosen so that:
\begin{enumerate}[(a)]

\item\label{i: extension A'} every $A\in\fX(\partial M)$ has an extension $A'\in\fXb(T)$ such that $A'$ is $\varpi$-projectable to $A$, and $A'|_{\mathring T}$ is orthogonal to the $\varpi$-fibers; and

\item\label{i: fX_ub(mathring M)|_mathring T is generated by x partial_x and the vector fields A'} $\fXub(\mathring M)|_{\mathring T}$ is $\Cinftyub(\mathring M)|_{\mathring T}$-generated by $x\partial_x$ and the restrictions $A'|_{\mathring T}$ of the vector fields $A'$ of~\ref{i: extension A'}, for $A\in\fX(\partial M)$.

\end{enumerate}
\end{enumerate}
For instance,~\ref{i: g is of bounded geometry} and~\ref{i: A'} are true if $\mathring T$ is cylindrical with $g$ (\Cref{ss: b-geometry}). The following properties hold \cite[Corollaries~6.32,~6.34,~6.35,~6.37,~6.38 and~6.40 and Propositions~6.33 and~6.36]{AlvKordyLeichtnam-conormal}: the restriction to $\mathring M$ defines a continuous injection $C^\infty(M)\subset\Cinftyub(\mathring M)$ (thus $\Cinftyub(\mathring M)$ becomes a $C^\infty(M)$-module); as $\Cinftyub(\mathring M)$-modules,
\begin{align*}
\Diffub^m(\mathring M)&\equiv\Diffb^m(M)\otimes_{C^\infty(M)}\Cinftyub(\mathring M)\;,\\
\Diffub^m(\mathring M;\Omega^{\frac12})&\equiv\Diffb^m(M;\bOmega^{\frac12})\otimes_{C^\infty(M)}\Cinftyub(\mathring M)\;;
\end{align*}
as $C^\infty(M)$-modules and Hilbertian spaces, for $m\in\Z$,
\begin{alignat*}{2}
H^m(\mathring M;\Omega^{1/2})&\equiv\Hb^m(M;\bOmega^{1/2})\;,&\quad
H^m(\mathring M)&=x^{-1/2}\Hb^m(M)\;,\\
H^{\pm\infty}(\mathring M;\Omega^{1/2})&\equiv\Hb^{\pm\infty}(M;\bOmega^{1/2})\;,&\quad
H^{\pm\infty}(\mathring M)&=x^{-1/2}\Hb^{\pm\infty}(M)\;;
\end{alignat*}
as $C^\infty(M)$-modules and LCHSs, for $m\in\R$,
\begin{gather}
\AA^m(M;\Omega^{1/2})\equiv x^{m+1/2}\Hb^\infty(M;\bOmega^{1/2})\;,\notag\\
\AA^m(M)\equiv x^m\Hb^\infty(M)\equiv x^{m+1/2}H^\infty(\mathring M)\;,
\label{AA^m(M) equiv x^m Hb^infty(M) equiv x^m+1/2 H^infty(mathring M)}\\
\AA(M)\equiv\bigcup_mx^m\Hb^\infty(M)=\bigcup_mx^mH^\infty(\mathring M)\;.
\label{AA(M) equiv bigcup_m x^m Hb^infty(M) = bigcup_m x^m H^infty(mathring M)}
\end{gather}
Actually, the first identities of~\eqref{AA^m(M) equiv x^m Hb^infty(M) equiv x^m+1/2 H^infty(mathring M)} and~\eqref{AA(M) equiv bigcup_m x^m Hb^infty(M) = bigcup_m x^m H^infty(mathring M)} are independent of $g$, and therefore they hold true without the assumptions~\ref{i: g is of bounded geometry} and~\ref{i: A'} \cite[Remark~6.41]{AlvKordyLeichtnam-conormal}.

\subsection{Dual-conormal distributions at the boundary}\label{ss: AA'(M)}

Consider the LCHSs \cite[Section~18.3]{Hormander1985-III}, \cite[Chapter~4]{Melrose1996}, \index{$\KK'(M)$} \index{$\AA'(M)$} \index{$\dot\AA'(M)$}
\[
\KK'(M)=\KK(M;\Omega)'\;,\quad\AA'(M)=\dot\AA(M;\Omega)'\;,\quad\dot\AA'(M)=\AA(M;\Omega)'\;,
\]
which are complete Montel spaces \cite[Proposition~6.42]{AlvKordyLeichtnam-conormal}. The elements of $\AA'(M)$ (resp., $\dot\AA'(M)$) will be called \emph{extendible} (resp., \emph{supported}) \emph{dual-conormal distributions} at the boundary. Consider also the LCHSs \index{extendible dual-conormal distributions} \index{supported dual-conormal distributions} \index{$\KK^{\prime\,(s)}(M)$} \index{$\KK^{\prime\,m}(M)$} \index{$\AA^{\prime\,(s)}(M)$} \index{$\dot\AA^{\prime\,(s)}(M)$}
\[
\KK^{\prime\,(s)}(M)=\KK^{(-s)}(M;\Omega)'\;,\quad
\KK^{\prime\,m}(M)=\KK^{-m}(M;\Omega)'\;,
\]
and, similarly, define $\AA^{\prime\,(s)}(M)$, $\AA^{\prime\,m}(M)$, $\dot\AA^{\prime\,(s)}(M)$ and $\dot\AA^{\prime\,m}(M)$. The spaces $\KK^{\prime\,(s)}(M)$, $\AA^{\prime\,(s)}(M)$ and $\dot\AA^{\prime\,(s)}(M)$ are bornological and barreled \cite[Corollary~6.43]{AlvKordyLeichtnam-conormal}. The transpositions of the analogs of~\eqref{I^(s)(M L) subset I^(s')(M L)} and~\eqref{I^m(M L) subset I^m'(M L)} for the spaces $\KK^{(s)}(M;\Omega)$, $\KK^m(M;\Omega)$, $\dot\AA^{(s)}(M;\Omega)$ and $\dot\AA^m(M;\Omega)$ are continuous linear restriction maps
\begin{gather*}
\KK^{\prime\,(s')}(M)\to\KK^{\prime\,(s)}(M)\;,\quad\KK^{\prime\,m}(M)\to\KK^{\prime\,m'}(M)\;,\\
\AA^{\prime\,(s')}(M)\to\AA^{\prime\,(s)}(M)\;,\quad\AA^{\prime\,m}(M)\to\AA^{\prime\,m'}(M)\;,
\end{gather*}
for $s<s'$ and $m<m'$. These maps form projective spectra, giving rise to projective limits. The spaces $\KK^{\prime\,(s)}(M)$, $\KK^{\prime\,m}(M)$, $\AA^{\prime\,(s)}(M)$ and $\AA^{\prime\,m}(M)$ satisfy the analogs of~\eqref{sandwich for I'} and~\eqref{I'(M L) equiv projlim I^prime (s)(M L) equiv projlim I^prime m(M L)} \cite[Corollary~6.44]{AlvKordyLeichtnam-conormal}.

Similarly, transposing the analogs of~\eqref{I^(s)(M L) subset I^(s')(M L)} and~\eqref{AA^m(M) subset AA^m'(M)} for the spaces $\AA(M,\Omega M)$, we get continuous inclusions
\[
\dot\AA^{\prime\,(s)}(M)\supset\dot\AA^{\prime\,(s')}(M)\;,\quad\dot\AA^{\prime\,m}(M)\supset\dot\AA^{\prime\,m'}(M)\;,
\]
for $s<s'$ and $m<m'$. The version of~\eqref{sandwich for AA} with $\Omega M$ yields continuous inclusions
\begin{equation}\label{sandwich for dot AA'}
\dot\AA^{\prime\,(s)}(M)\supset\dot\AA^{\prime\,m}(M)\supset\dot\AA^{\prime\,(\max\{m,0\})}(M)\quad(m>s+n/2+1)\;.
\end{equation}
We also have \cite[Corollary~6.44]{AlvKordyLeichtnam-conormal}
\begin{equation}\label{dot AA'(M) = bigcap_s dot AA^prime (s)(M) = bigcap_m dot AA^prime m(M)}
\dot\AA'(M)=\bigcap_s\dot\AA^{\prime\,(s)}(M)=\bigcap_m\dot\AA^{\prime\,m}(M)\;,
\end{equation}
where the last equality is a consequence of~\eqref{sandwich for dot AA'}.

Transposing the versions of~\eqref{dot C^infty(M) = bigcap_m ge 0 x^m C^infty(M) subset C^infty(M)},~\eqref{dot AA(M) subset dot C^-infty(M)} and~\eqref{C^infty(M) subset AA(M)} with $\Omega M$, we get continuous inclusions \cite[Section~4.6]{Melrose1996}
\begin{align}
C^\infty(M)\subset\AA'(M)\subset C^{-\infty}(M),\dot C^{-\infty}(M)\;,
\label{C^infty(M) subset AA'(M) subset C^-infty(M)}\\
\dot C^\infty(M)\subset\dot\AA'(M)\subset\dot C^{-\infty}(M),C^{-\infty}(M)\;,
\label{dot C^infty(M) subset dot AA'(M) subset dot C^-infty(M)}
\end{align}
and $R:\dot C^{-\infty}(M)\to C^{-\infty}(M)$ restricts to the identity map on $\AA'(M)$ and $\dot\AA'(M)$. The first inclusion of~\eqref{dot C^infty(M) subset dot AA'(M) subset dot C^-infty(M)} is dense; in fact, $\Cinftyc(\mathring M)$ is dense in every $\dot\AA^{\prime\,m}(M)$, and therefore in $\dot\AA'(M)$ \cite[Corollary~6.50 and Remark~6.51]{AlvKordyLeichtnam-conormal}.

\subsection{Dual-conormal sequence at the boundary}\label{ss: dual conomal seq at the boundary}

Transposing maps in the version of~\eqref{0 -> KK(M) -> dot AA(M) -> AA(M) -> 0} with $\Omega M$, we get the sequence
\begin{equation}\label{dual-conormal seq at the boundary}
0\leftarrow\KK'(M) \xleftarrow{R'} \AA'(M) \xleftarrow{\iota'} \dot\AA'(M)\leftarrow0\;,
\end{equation}
where $R'=\iota^\trans$ and $\iota'=R^\trans$. It is called the \emph{dual-conormal sequence at the boundary} \index{dual-conormal sequence at the boundary} of $M$, which is exact in the category of continuous linear maps between LCSs \cite[Proposition~6.45]{AlvKordyLeichtnam-conormal}.

\subsection{$\dot\AA(M)$ and $\AA(M)$ vs $\AA'(M)$}\label{ss: ABC}

Using~\eqref{dot AA(M) subset dot C^-infty(M)},~\eqref{C^infty(M) subset AA(M)} and~\eqref{C^infty(M) subset AA'(M) subset C^-infty(M)}, we have \cite[Proposition~18.3.24]{Hormander1985-III}, \cite[Theorem~4.6.1]{Melrose1996}
\begin{equation}\label{ABC}
\dot\AA(M)\cap\AA'(M)=C^\infty(M)\;. 
\end{equation}

\subsection{A description of $\dot\AA'(M)$}\label{ss: description of dot AA'(M)}

If~\ref{i: g is of bounded geometry} and~\ref{i: A'} are true, then, for $m\in\R$ \cite[Corollaries~6.48 and~6.49]{AlvKordyLeichtnam-conormal},
\begin{gather}
\dot\AA^{\prime\,m}(M)\equiv x^m\Hb^{-\infty}(M)=x^{m-\frac12}H^{-\infty}(\mathring M)\;,
\label{dot AA^prime m(M) equiv ...}\\
\dot\AA'(M)\equiv\bigcap_mx^m\Hb^{-\infty}(M)=\bigcap_mx^mH^{-\infty}(\mathring M)\;.
\label{dot AA'(M) equiv ...}
\end{gather}
The first identities of~\eqref{dot AA^prime m(M) equiv ...} and~\eqref{dot AA'(M) equiv ...} are independent of $g$, and hold without the assumptions~\ref{i: g is of bounded geometry} and~\ref{i: A'}.

\subsection{Action of $\Diff(M)$ on $\AA'(M)$, $\dot\AA'(M)$ and $\KK'(M)$}\label{ss: Diff(M) on AA'(M)}

Any $A\in\Diff(M)$ induces continuous linear endomorphisms $A$ of $\AA'(M)$, $\dot\AA'(M)$ and $\KK'(M)$ \cite[Proposition~4.6.1]{Melrose1996}, which are the transposes of $A^\trans$ on $\dot\AA(M;\Omega)$, $\AA(M;\Omega)$ and $\KK(M;\Omega)$ (\Cref{ss: ops,ss: diff ops on dot AA(M) and AA(M)}). If $A\in\Diff^k(M)$, these maps satisfy the analogs of~\eqref{A: I^prime [s](M L E) -> I^prime (s-m)(M L E)}. If $A\in\Diffb(M)$, it induces continuous endomorphisms of $\AA^{\prime\,(s)}(M)$, $\AA^{\prime\,m}(M)$, $\dot\AA^{\prime\,(s)}(M)$ and $\KK^{\prime\,(s)}(M)$.

\subsection{The b-stretched product}\label{ss: b-stretched product}

Let $Y_1, \ldots, Y_r$ be the connected components of $\partial M$. Consider the submanifold $B:=\bigcup_{j=1}^r Y_j^2$ of the $C^\infty$ manifold with corners $M^2$. Its inward-pointing spherical normal bundle is $S_+NB={}_+NB/\R^+$, where $\R^+$ acts on ${}_+NB$ by multiplication. The \emph{b-stretched product} \index{b-stretched product} $M^2_{\text{\rm b}}$ \index{$M^2_{\text{\rm b}}$} is the compact smooth manifold with corners obtained from $M^2$ by blowing-up $B$ \cite[Sections~4.1 and~4.2]{Melrose1993}, \cite[Chapter~4]{Melrose1996}, with corresponding surjective smooth blow-down map $\beta_{\text{\rm b}}:M^2_{\text{\rm b}}\to M^2$; namely, $M^2_{\text{\rm b}}=S_+NB\sqcup(M^2\setminus B)$, and $\beta_{\text{\rm b}}$ is the combination of the projection $S_+NB\to B$ and the identity map on $M^2\setminus B$. The topology and $C^\infty$ structure of $M^2_{\text{\rm b}}$ can be described as follows. 

For any $C^1$ curve $\chi:[0,1]\to M^2$ with $\chi(0)\in B$ and $\chi((0,1])\subset M^2\setminus B$, let $\tilde\chi:[0,1]\to M^2_{\text{\rm b}}$ be the lift of $\chi$ so that $\tilde\chi(0)$ is defined by $\chi'(0)$. Then a subset $U\subset M^2_{\text{\rm b}}$ is open if it has open intersections with $S_+NB$ and $M^2\setminus B$, and, for any such curve $\chi$ with $\tilde\chi([0,1])\subset U$, we have $\tilde\eta([0,1])\subset U$ for all $C^1$ curve $\eta:[0,1]\to M^2$ of the same type as $\chi$ and $C^1$-close enough to $\chi$. 

Let $x$ and $x'$ denote the lifts to $M^2$ of the boundary-defining function $x$ from the left and right factors. The $C^\infty$ function
\[
\tau:=\frac{x-x'}{x+x'}:M^2\setminus(\partial M)^2\to[-1,1]
\]
has a continuous extension $\tau$ to the open neighborhood $S_+NB\sqcup(M^2\setminus(\partial M)^2)$ of $S_+NB$ in $M^2_{\text{\rm b}}$. Then $C^\infty(M^2_{\text{\rm b}})$ is locally generated by $\tau$ and $\beta_{\text{\rm b}}^*C^\infty(M^2)$.

The manifold with corners $M^2_{\text{\rm b}}$ has three boundary hypersurfaces, \index{$\ff$} \index{$\lb$} \index{$\rb$} \index{front face} \index{left boundary} \index{right boundary}
\[
\ff=\beta_{\text{\rm b}}^{-1}(B)\;,\quad
\lb=\overline{\beta_{\text{\rm b}}^{-1}(\partial M\times\mathring M)}\;,\quad
\rb=\overline{\beta_{\text{\rm b}}^{-1}(\mathring M\times\partial M)}\;,
\]
called the \emph{front face}, and the \emph{left} and \emph{right boundaries}.  They satisfy $\lb\cap\rb=\emptyset$. Another embedded, compact submanifold of $M^2_{\text{\rm b}}$ is the \emph{b-diagonal}, \index{b-diagonal} $\Deltab=\overline{\beta_{\text{\rm b}}^{-1}(\Delta\setminus B)}$, \index{$\Deltab$} where $\Delta\subset M^2$ is the diagonal. We have $\Deltab\pitchfork\ff$, $\Delta_{\text{\rm b},0}:=\Deltab\cap\ff=\partial\Deltab$ and $\Deltab\cap\lb=\Deltab\cap\rb=\emptyset$. Moreover $\beta_{\text{\rm b}}:\Deltab\to\Delta\equiv M$ is a diffeomorphism, where the last identity is given by the diagonal map. 

Let also $x=\beta_{\text{\rm b}}^*x$ and $x'=\beta_{\text{\rm b}}^*x'$ on $M^2_{\text{\rm b}}$. Thus $r=x+x'$ is a defining function of $\ff$ in $M^2_{\text{\rm b}}$ (in the same sense as in \Cref{ss: b-geometry} for $\partial M$). For adapted local coordinates $(x,y)$, the lifts $y$ and $y'$ of $y$ to open subsets of $M^2$ and $M^2_{\text{\rm b}}$ are defined like $x$ and $x'$. Then $(r,\tau,y,y')$ or $(x,\tau,y,y')$ are local coordinates of $M^2_{\text{\rm b}}$ around points of $\ff$, the submanifold $\Deltab$ is locally described by the conditions $\tau=0$ and $y=y'$, and $\Delta_{\text{\rm b},0}$ is locally described by the conditions $r=\tau=0$ and $y=y'$. Other local coordinates $(r,s,y,y')$ or $(x,s,y,y')$ of $M^2_{\text{\rm b}}$ around points of $\mathring\ff$ are defined using the function
\[
s:=\frac{1+\tau}{1-\tau}=\frac{x}{x'}:M^2_{\text{\rm b}}\setminus\rb\to(0,\infty)\;.
\]

With the obvious extensions to manifolds with corners of some concepts of \Cref{ss: b-geometry,ss: Diffb(M)}, we get the following \cite[Section~4.5]{Melrose1993}. First,
\begin{alignat*}{2}
\bT M^2&\equiv(\bT M)^2\;,&\quad\bT M^2_{\text{\rm b}}&\equiv\beta_{\text{\rm b}}^*(\bT M^2)\;,\\
\fXb(M^2)&\equiv C^\infty(M^2;\bT M^2)\;,&\quad \fXb(M^2_{\text{\rm b}})&\equiv C^\infty(M^2;\bT M^2_{\text{\rm b}})\;. 
\end{alignat*}
Second, any vector field in $\fX(M^2,B)$ can be lifted to a vector field in $\fX(M^2,\ff)$; in particular, the lifts to $M^2$ of $\fXb(M)$, from the left and right factors, generate $\fXb(M^2)$ over $C^\infty(M^2)$. Third, there is a lifting map $\beta_{\text{\rm b}}^*:\fXb(M^2)\to\fXb(M^2_{\text{\rm b}})$, whose image spans $\fXb(M^2_{\text{\rm b}})$ over $C^\infty(M^2_{\text{\rm b}})$. It induces a lifting map $\beta_{\text{\rm b}}^*:\Diffb^m(X^2)\to\Diffb^m(M^2_{\text{\rm b}})$, whose image spans $\Diffb^m(M^2_{\text{\rm b}})$ over $C^\infty(M^2_{\text{\rm b}})$ \cite[Exercise~4.11]{Melrose1993}. For instance, using the above local coordinates, the lift of $x\partial_x$ is $\frac{1}{2}(1+\tau)r\partial_r+\frac{1}{2}(1-\tau^2)\partial_\tau$. Finally, the lift to $M^2_{\text{\rm b}}$ of $\fXb(M)$ from the left factor of $M^2$ is a Lie subalgebra of $\fXb(M^2_{\text{\rm b}})$ transverse to $\Deltab$, giving rise to natural isomorphisms $N\Deltab\cong\bT M$ and $N^*\Deltab\cong\bT ^*M$ \cite[Lemmas~4.5 and~4.6]{Melrose1993}. Thus there is a canonical isomorphism $\bOmega^{1/2}(M^2_{\text{\rm b}})|_{\Deltab}\cong\bOmega M$ (cf.\ \cite[Eq.~(4.125)]{Melrose1993}).

\subsection{The b-pseudodifferential operators}\label{ss: b-pseudodiff opers}

A refinement of the Schwartz kernel theorem gives a bijection \cite[Lemma~4.20]{Melrose1993}
\begin{gather*}
L(\dot C^\infty(M;\bOmega^{\frac12}),C^{-\infty}(M;\bOmega^{\frac12}))\to
C^{-\infty}(M^2_{\text{\rm b}}; \beta_{\text{\rm b}}^*(\bOmega^{\frac12}\boxtimes\bOmega^{\frac12}))\;,\\
A\mapsto\kappa_A\;,\quad\langle Au,v\rangle= \langle\kappa_A,\beta_{\text{\rm b}}^*(v\otimes u) \rangle\;,\quad
u,v\in\dot C^\infty(M;\bOmega^{\frac12})\;.
\end{gather*}

The concept of conormal distributional sections can be also extended to submanifolds whose boundary is given by a transverse intersection with the boundary faces, like $\Deltab\subset M^2_{\text{\rm b}}$. Then a continuous linear map $A:\dot C^\infty(M;\bOmega^{1/2})\to C^{-\infty}(M;\bOmega^{1/2})$ is called a \emph{b-pseudodifferential operator} \index{b-pseudodifferential operator} of \emph{order} at most $m\in\R$ if $\kappa_A\in I^m(M^2_{\text{\rm b}},\Deltab;\beta_{\text{\rm b}}^*(\bOmega^{1/2}\boxtimes\bOmega^{1/2}))$ and $\kappa_A$ vanishes to all orders at $\lb\cup\rb$ \cite[Definition~4.22]{Melrose1993}. Such operators form a $C^\infty(M^2_{\text{\rm b}})$-module $\Psib^m(M;\bOmega^{1/2})$, obtaining the filtered $C^\infty(M^2_{\text{\rm b}})$-module $\Psib(M;\bOmega^{1/2})=\bigcup_m\Psib^m(M;\bOmega^{1/2})$. The submodule $\Psib^{-\infty}(M;\bOmega^{1/2}):=\bigcap_m\Psib^m(M;\bOmega^{1/2})$ (resp., $\Diffb(M;\bOmega^{1/2})$) of $\Psib(M;\bOmega^{1/2})$ consists of the operators $A\in\Psib(M;\bOmega^{1/2})$ with smooth $\kappa_A$ (resp., $\supp\kappa_A\subset\Deltab$).  The obvious generalization of the definition of principal symbol, like in \Cref{ss: conormal - symbol order - compact}, now gives the \emph{principal b-symbol} exact sequence, \index{principal b-symbol} \index{$\bsigma_m$}
\[
0\to\Psib^{m-1}(M;\bOmega^{\frac12})\hookrightarrow\Psib^m(M;\bOmega^{\frac12}) 
\xrightarrow{\bsigma_m} S^{(m)}(\bT^*M;\bOmega^{\frac12})\to0\;.
\]
The principal b-symbol is used to define \emph{b-ellipticity} \index{b-elliptic} like ellipticity in the case of pseudodifferential operators (\Cref{ss: pseudodiff ops}).

Omitting $\bOmega^{1/2}$, if $A\in\Psib^{-\infty}(M)$ and $\kappa:=\kappa_A$ is supported in the domain of a chart $(x,s,y,y')$, then we can write $\kappa=\kappa'(x,s,y,y')\,s^{-1}ds\,dy'$ because $\kappa$ is rapidly decreasing as $s\downarrow0$ and as $s\uparrow+\infty$, obtaining 
\begin{equation}\label{Au(x y) = ...}
Au(x,y)=\int_{\partial M}\int_0^\infty\kappa'(x,s,y,y')u\Big(\frac{x}{s},y'\Big)\frac{ds}{s}\,dy'\;,
\end{equation}
for all $u\in\dot C^\infty(M)$ supported in the domain of the chart $(x,y)$.

Any $A\in\Psib^m(M;\bOmega^{1/2})$ defines continuous endomorphisms $A$ of $\dot C^\infty(M;\bOmega^{1/2})$ and $C^{\pm\infty}(M;\bOmega^{1/2})$ \cite[Propositions~4.29 and~4.34 and Exercise 4.33]{Melrose1993}. In this sense, $\Psib(M;\bOmega^{1/2})$ becomes a filtered algebra with the operation of composition \cite[Propositions~5.20]{Melrose1993}, and the principal b-symbol map is multiplicative.

\subsection{The indicial family}\label{s: indicial}

Let $A\in\Psib^m(M;\bOmega^{1/2})$ ($m\in\R$) and write $\kappa=\kappa_A$. Roughly speaking, the \emph{indicial family} \index{indicial family} of $A$ is an entire family, $I_\nu(A,\lambda)\in\Psi^m(\partial M;\Omega^{1/2})$ ($\lambda\in\C)$, \index{$I_\nu(A,\lambda)$} depending on the trivialization $\nu$ of ${}_+N\partial M$ (\Cref{ss: b-geometry}), defined by taking the ``fiberwise'' Mellin transform of certain conormal distributional section defined by $\kappa|_{\ff}$. Thus $I_\nu(A,\lambda)=0$ for all $\lambda$ just when $\kappa|_{\ff}=0$.

The indicial family can be also described as follows. For $z\in\C$ and $m\in\R$, the mapping $A\mapsto x^{-z}Ax^z$ defines an automorphism of $\Psib^m(M;\bOmega^{1/2})$ \cite[Proposition~5.7]{Melrose1993}. Hence every $A\in\Psib(M;\bOmega^{1/2})$ defines a continuous endomorphism $A$ of $x^kC^\infty(M;\bOmega^{1/2})$ ($k\in\N$). Therefore a continuous endomorphism $A_\partial$ of $C^\infty(\partial M;\Omega^{1/2})$ is well defined by $A_\partial v=Au|_{\partial M}$ if $u\in C^\infty(M;\bOmega^{1/2})$ with $u|_{\partial M}=v$ \cite[Eq.~(5.31)]{Melrose1993}. Then \cite[Proposition~5.8]{Melrose1993}
\begin{equation}\label{I_nu(A lambda) = (x^-i lambda A x^i lambda)_partial}
I_\nu(A,\lambda)=(x^{-i\lambda}Ax^{i\lambda})_\partial\;.
\end{equation}

We will only use the indicial family in the following cases, where $\bOmega^{1/2}$ is omitted for the sake of simplicity. First, if $A\in\Psib^{-\infty}(M)$ and $\kappa$ is supported in the domain of a chart $(x,s,y,y')$, as described in~\eqref{Au(x y) = ...}, then $I_\nu(A,\lambda)\in\Psib^{-\infty}(\partial M;\Omega^{1/2})$ is given by
	\begin{equation}\label{K_I_nu(A lambda)(y y')}
		K_{I_\nu(A,\lambda)}(y,y')=\int_0^{\infty}s^{-i\lambda}\kappa'(0,s,y,y')\frac{ds}{s}\;.
	\end{equation}
The support condition can be obviously removed by using a partition of unity or a collar neighborhood of $\partial M$. Second, if $A\in\Diffb^m(M)$ ($m\in\N_0$) is locally given by
	\[
		A=\sum_{j+|I|\le m}a_{j,I}(x,y)(xD_x)^jD^I_y\;,
	\]
using adapted local coordinates $(x,y)$, then
	\begin{equation}\label{I_nu(A lambda) for A in Diffb^m(M)}
		I_\nu(A,\lambda)=\sum_{j+|I|\le m}a_{j,I}(0,y)\lambda^jD^I_y\;.
	\end{equation}

The indicial family is multiplicative  \cite[Corollary of Proposition~5.20]{Melrose1993}, and compatible with the operation of taking formal adjoints of b-differential operators (Cf.~\cite[Eq.~(4.112)]{Melrose1993}).

\subsection{The b-integral}\label{ss: b-int}

The \emph{b-integral} \index{b-integral} is a linear map $\smallnuint=\smallnuint_M:C^1(M;\bOmega)\to\C$, \index{$\smallnuint$} depending on $\nu$, defined by \cite[Lemma~4.62]{Melrose1993}
\[
\nuint u=\lim_{\epsilon\downarrow0}\left(\int_{x\ge\epsilon} u+\ln\epsilon\int_{\partial M} u|_L\right)\;,
\] 
using a boundary-defining function $x$ with $dx(\nu)=1$. Another trivialization $\mu\in C^\infty(M;{}_+N\partial M)$ is of the form $\mu=a\nu$ for some $0<a\in C^\infty(\partial M)$, and 
\[
\muint u-\nuint u=\int_{\partial M}\ln a\cdot u|_{\partial M}\;.
\]

\begin{lem}\label{l: smallnuint is cont}
$\smallnuint$ is continuous with the $C^1$ topology on $C^1(M;\bOmega)$.
\end{lem}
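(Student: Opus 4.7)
The plan is to localize near $\partial M$ via a partition of unity, and then apply a one-variable Taylor expansion in the boundary-defining function $x$ to extract the logarithmic divergence explicitly.

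First, pick a collar neighborhood $T\equiv[0,\epsilon_0)_x\times\partial M_\varpi$ and $\lambda\in C^\infty(M)$ with $\supp\lambda\subset T$ and $\lambda\equiv1$ near $\partial M$. Then $(1-\lambda)u$ has $\supp(1-\lambda)u\subset\{x\ge\delta\}$ for some $\delta>0$, so $(1-\lambda)u|_{\partial M}=0$ and $\smallnuint(1-\lambda)u=\int_M(1-\lambda)u$, which is continuous with the $C^0$ topology. It remains to handle $\lambda u$, which is supported in $T$. Cover $\partial M$ by finitely many charts $(V_j,y_j)$, obtaining adapted charts $(U_j\equiv[0,\epsilon_0)\times V_j,(x,y_j))$, and pick a subordinate partition of unity $\{\chi_j\}$ on $\partial M$, lifted via $\varpi$. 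Then $\lambda u=\sum_j\chi_j\lambda u$ with each $\chi_j\lambda u$ compactly supported in $U_j$ and of the form $a_j(x,y_j)\,\bigl|\tfrac{dx}{x}\wedge dy_j\bigr|$ for some $a_j\in C^1_\co(U_j)$ with $\|a_j\|_{C^1}\le C_j\|u\|_{C^1}$.

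The key computation is local: since $a_j\in C^1$, the fundamental theorem of calculus gives
\[
b_j(x,y):=\frac{a_j(x,y)-a_j(0,y)}{x}=\int_0^1\partial_xa_j(tx,y)\,dt,
\]
which extends continuously to $x=0$ and satisfies $\|b_j\|_{L^\infty}\le\|\partial_xa_j\|_{L^\infty}\le C_j'\|u\|_{C^1}$. For $\epsilon\in(0,\epsilon_0)$,
\[
\int_{x\ge\epsilon}\chi_j\lambda u
=\int_\epsilon^{\epsilon_0}\!\!\int_{V_j}b_j(x,y)\,dy\,dx
+(\ln\epsilon_0-\ln\epsilon)\int_{V_j}a_j(0,y)\,dy,
\]
and since $\int_{\partial M}(\chi_j\lambda u)|_{\partial M}=\int_{V_j}a_j(0,y)\,dy$, adding $\ln\epsilon\cdot\int_{\partial M}(\chi_j\lambda u)|_{\partial M}$ cancels the $\ln\epsilon$ term. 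Taking $\epsilon\downarrow0$ (using that $b_j$ is bounded), we get
\[
\nuint\chi_j\lambda u
=\int_0^{\epsilon_0}\!\!\int_{V_j}b_j(x,y)\,dy\,dx
+\ln\epsilon_0\int_{V_j}a_j(0,y)\,dy.
\]

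From this formula continuity is immediate: the first summand is bounded by $\epsilon_0\cdot\vol(V_j)\cdot\|b_j\|_{L^\infty}\le C\|u\|_{C^1}$, and the second by $|\ln\epsilon_0|\cdot\vol(V_j)\cdot\|a_j\|_{L^\infty}\le C\|u\|_{C^0}$. Summing over $j$ and adding the $C^0$-continuous contribution of $(1-\lambda)u$ yields $|\smallnuint u|\le C\|u\|_{C^1}$. The argument presents no genuine obstacle; the only mild point is that the $C^1$ regularity (rather than $C^0$) is essential precisely to make $b_j$ bounded, which in turn is what guarantees both the existence of the defining limit and the continuity of the functional.
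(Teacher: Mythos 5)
Your proof is correct and follows essentially the same route as the paper's: localize to adapted charts near $\partial M$, split $a(x,y)=a(0,y)+(a(x,y)-a(0,y))$ so that the $\ln\epsilon$ terms cancel, and bound the remainder via $\partial_x a$. The only difference is cosmetic — you make the partition-of-unity reduction and the mean-value step (writing $b_j=\int_0^1\partial_x a_j(tx,y)\,dt$) explicit, whereas the paper states the reduction as obvious and bounds $a(x,y)-a(0,y)$ directly by $\epsilon_0\max|\partial_x a|$.
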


\begin{proof}
Consider a chart $(V,y)$ of $\partial M$, and the adapted local coordinates $(x,y)$ on $U\equiv[0,\epsilon_0)_x\times V\subset M$ ($\epsilon_0>0$). Since $\smallnuint\equiv\int_M$ on $C^1_\co(\mathring M;\bOmega)\equiv C^1_\co(\mathring M;\Omega)$, it is easy to see that it is enough to prove the continuity of $\smallnuint$ on $C^1_\co(U;\bOmega)$. Every $u\in C^1_\co(U;\bOmega)$ is of the form $u(x,y)=a(x,y)\,|x^{-1}dx\,dy|$ for some $a\in C^1_\co(U)$. Then 
\[
\nuint u=\ln\epsilon_0\cdot\int_Va(0,y)\,dy
+ \lim_{\epsilon\downarrow0}\left(\int_V\int_\epsilon^{\epsilon_0}(a(x,y)-a(0,y))\,\frac{dx}{x}\,dy\right)\;.
\]
Hence
\[
\left|\nuint u\right|\le\vol V\cdot\Big(\ln\epsilon_0\cdot\max_{y\in V}|a(0,y)|
+\epsilon_0\cdot\max_{(\xi,y)\in U}|\partial_xa(\xi,y)|\Big)\;.\qedhere
\]
\end{proof}

\begin{cor}\label{c: smallnuint is cont}
Let $T$ and $T'$ be collar neighborhoods of $\partial M$ in $M$ with $\overline{T'}\subset T$. For any sequence $u_k$ in $C^1(M;\bOmega)$, if $u_k|_T\to0$ in $C^1(T;\bOmega)$ and $\lim_k\int_{M\setminus T'}u_k=a\in\C$, then $\lim_k\smallnuint u_k=a$.
\end{cor}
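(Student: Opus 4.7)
The plan is to reduce to Lemma~\ref{l: smallnuint is cont} by splitting each $u_k$ via a partition of unity adapted to the open cover $\{T,M\smallsetminus\overline{T'}\}$ of $M$. Concretely, I would choose $\phi,\psi\in C^\infty(M)$ with $\phi+\psi=1$, $\supp\phi\subset T$, $\supp\psi\subset M\smallsetminus\overline{T'}$, and $\phi\equiv1$ on a neighborhood of $\overline{T'}$. Writing
\[
\nuint u_k=\nuint(\phi u_k)+\nuint(\psi u_k)
\]
by linearity, the strategy is then to treat the two pieces independently: the first by soft continuity on a collar, the second by relating the b-integral to the ordinary integral away from $\partial M$.

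For the first piece, I would observe that, extending by zero, $\phi u_k$ defines an element of $C^1(M;\bOmega)$, and the hypothesis $u_k|_T\to0$ in $C^1(T;\bOmega)$ together with the fact that multiplication by the bounded smooth function $\phi$ is continuous on $C^1(T;\bOmega)$ gives $\phi u_k\to0$ in $C^1(M;\bOmega)$. Then Lemma~\ref{l: smallnuint is cont} yields $\smallnuint(\phi u_k)\to0$ directly.

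For the second piece, since $\supp(\psi u_k)\subset M\smallsetminus\overline{T'}$ is bounded away from $\partial M$, the b-density $\psi u_k$ restricts to an ordinary compactly supported $C^1$-density on $\mathring M$, so $\smallnuint(\psi u_k)=\int_M\psi u_k$. Using $\psi u_k=u_k-\phi u_k$ on $M\smallsetminus T'$, I would write
\[
\int_M\psi u_k=\int_{M\smallsetminus T'}u_k-\int_{M\smallsetminus T'}\phi u_k.
\]
The first term tends to $a$ by hypothesis. The second is an ordinary integral over the compact set $(M\smallsetminus T')\cap\supp\phi$, which is contained in $T\smallsetminus T'$ and hence bounded away from $\partial M$; on such a set the b-density $|x^{-1}dx\,dy|$ and the ordinary density $|dx\,dy|$ differ by the bounded factor $1/x$, so $u_k|_T\to0$ in $C^1(T;\bOmega)$ forces $\phi u_k\to0$ uniformly there, and this integral tends to $0$. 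Adding everything gives $\smallnuint u_k\to a$.

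There is no serious obstacle; the only minor point worth checking is the last comparison between the b-topology on $T$ and the ordinary $C^0$ topology on the intermediate shell $T\smallsetminus T'$, which is immediate from the pointwise bound $x\ge x_0>0$ there.
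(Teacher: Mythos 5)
Your proof is correct and follows essentially the same route as the paper: both split $u_k$ via a partition of unity subordinated to $\{T, M\setminus\overline{T'}\}$, apply Lemma~\ref{l: smallnuint is cont} to the piece supported in $T$, and reduce the remaining piece to the ordinary integral over a compact set away from $\partial M$.
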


\begin{proof}
Let $\{\lambda,\mu\}$ be a smooth partition of unity of $M$ subordinated to the open covering $\{T,M\setminus\overline{T'}\}$. Then $\lim_k\lambda u_k=0$ in $C^1(M;\bOmega)$, obtaining $\lim_k\smallnuint\lambda u_k=0$ by \Cref{l: smallnuint is cont}. Moreover $\lim_k\int_{M\setminus T'}\lambda u_k=0$. Therefore
\[
\lim_k\nuint u_k=\lim_k\nuint\mu u_k=\lim_k\int_{M\setminus T'}\mu u_k
=\lim_k\int_{M\setminus T'}u_k=a\;.\qedhere
\]
\end{proof}

\begin{rem}\label{r: smallnuint is cont}
Consider a collar neighborhood of $\partial M$ in $M$ of the form $T\equiv[0,\epsilon)_x\times\partial M_\varpi$, and the intermediate space
\begin{align*}
C^1(T;\bOmega)&\equiv C^1([0,\epsilon),C^1(\partial M;\bOmega_{\partial M}))\\
&\subset C^{0,1}_\varpi(T;\bOmega):=C^1([0,\epsilon),C^0(\partial M;\bOmega_{\partial M}))\\
&\subset C^0(T;\bOmega)\equiv C^0([0,\epsilon),C^0(\partial M;\bOmega_{\partial M}))\;.
\end{align*}
Then $\smallnuint_M$ is actually defined on
\[
\{\,u\in C^0(M;\bOmega)\mid u|_T\in C^{0,1}_\varpi(T;\bOmega)\,\}\;,
\]
and the proof of \Cref{l: smallnuint is cont} shows that it is continuous with the obvious topology defined by the topologies of $C^0(M;\bOmega)$ and $C^{0,1}_\varpi(T;\bOmega)$. So Corollary~\ref{c: smallnuint is cont} is true with the weaker condition $u_k|_T\to0$ in $C^{0,1}_\varpi(T;\bOmega)$.
\end{rem}

\subsection{The b-trace}\label{ss: b-trace}

Any $A\in\Psib^{-\infty}(M;\bOmega^{1/2})$ is of trace class  if and only if $A\in r\Psib^{-\infty}(M;\bOmega^{1/2})$ (i.e., $\kappa_A|_{\ff}=0$). The \emph{b-trace} $\bTr:\Psib^{-\infty}(M;\bOmega^{1/2})\to\C$ is an extension of the trace $\Tr:r\Psib^{-\infty}(M;\bOmega^{1/2})\to\C$ given by \index{b-trace} \index{$\bTr$}
\[
\bTr A=\nuint_M\kappa_A|_{\Deltab}\;,
\]
using the canonical isomorphism $\bOmega^{1/2}(M^2_{\text{\rm b}})|_{\Deltab}\cong\bOmega M$ (\Cref{ss: b-stretched product}). If $A,B\in\Psi^{-\infty}_{\text{\rm b}}(M;\bOmega^{1/2})$, then \cite[Proposition 5.9]{Melrose1993}
\begin{equation}\label{bTr[A,B]}
\bTr[A,B]=-\frac{1}{2\pi i} \int_{-\infty}^{+\infty}\Tr(\partial_{\lambda}I_\nu(A,\lambda)\, I_\nu(B,\lambda))\,d\lambda\;.
\end{equation}
This equality also holds if $A\in\Diffb(M;\bOmega^{1/2})$ and $B\in\Psib^{-\infty}(M;\bOmega^{1/2})$ \cite[Lemma 5.10]{Melrose1993}.

If $E$ is a $\Z_2$-graded Hermitian vector bundle over $M$ with degree involution $\sw$ ($\sw u=(-1)^ku$ for $u\in E^k$ and $k\in\Z_2$), and $A\in\Psi^{-\infty}_{\text{\rm b}}(M;E)$ is homogeneous of degree zero, then its \emph{b-supertrace} is $\bStr A=\bTr(A\sw)$. \index{b-supertrace} \index{$\bStr$} This notion extends the supertrace $\Str(B)$ of any homogeneous operator $B\in r\Psi^{-\infty}_{\text{\rm b}}(M;E)$ of degree zero. \index{$\Str$}

\section{Conormal sequence}\label{s: conormal seq}

In this section and the next one, for the sake of simplicity, we only consider submanifolds of codimension one because that is the only case we use. However, the results can be extended to submanifolds of arbitrary codimension with more work.

\subsection{Cutting along a submanifold}\label{ss: cutting}

Again, for brevity reasons, we consider only the case of a closed manifold and the trivial line bundle. Like in other sections, the spaces of distributions we are going to define have obvious extensions to non-compact manifolds and arbitrary vector bundles, taking compact support or no support conditions, and taking regular submanifolds that are closed subspaces. We will consider those types of extensions without further comment.

Let $M$ be a closed connected manifold, and $L\subset M$ be a (possibly non-connected) regular closed submanifold of codimension one. $M\setminus L$ may have several connected components. First assume also that $L$ is transversely oriented. Then, like in the case with boundary (\Cref{ss: b-geometry}), there is some real-valued smooth function $x$ on some tubular neighborhood $T$ of $L$ in $M$, with projection $\varpi:T\to L$, so that $L=\{x=0\}$ and $dx\ne0$ on $L$. Any function $x$ satisfying these conditions is called a \emph{defining function} \index{defining function} of $L$ on $T$. We can suppose $T\equiv(-\epsilon,\epsilon)_x\times L_\varpi$, for some $\epsilon>0$. (If $M$ and $L$ were not compact, and $L$ were a regular submanifold that is a closed subset, then the tubular neighborhood would have a more involved expression, using a smooth positive function $\epsilon(y)$ on $L$ instead of a fixed positive number $\epsilon$.) For any atlas $\{V_j,y_j\}$ of $L$, we get an atlas of $T$ of the form $\{U_j\equiv(-\epsilon,\epsilon)\times V_j,(x,y)\}$, whose charts are adapted to $L$. The corresponding local vector fields $\partial_x\in\fX(U_j)$ can be combined to define a vector field $\partial_x\in\fX(T)$; we can consider $\partial_x$ as the derivative operator on $C^\infty(T)\equiv C^\infty((-\epsilon,\epsilon),C^\infty(L))$. For every $j$, $\Diff(U_j,L\cap U_j)$ is spanned by $x\partial_x,\partial_j^1,\dots,\partial_j^{n-1}$ using the operations of $C^\infty(U_j)$-module and algebra, where $\partial_j^k=\partial/\partial y_j^k$. Using $T\equiv(-\epsilon,\epsilon)_x\times L$, any $A\in\Diff(L)$ induces an operator $1\otimes A\in\Diff(T,L)$, such that $(1\otimes A)(u(x)v(y))=u(x)\,(Av)(y)$ for $u\in C^\infty(-\epsilon,\epsilon)$ and $v\in C^\infty(L)$. This defines a canonical injection $\Diff(L)\equiv1\otimes\Diff(L)\subset\Diff(T,L)$ so that $(1\otimes A)|_L=A$. (This also shows the surjectivity of~\eqref{Diff(M L) -> Diff(L)} in this case.) Moreover $\Diff(T)$ (resp., $\Diff(T,L)$) is spanned by $\partial_x$ (resp., $x\partial_x$) and $1\otimes\Diff(L)$ using the operations of $C^\infty(T)$-module and algebra. Clearly,
\begin{equation}\label{[partial_x 1 otimes Diff(L)] = 0}
[\partial_x,1\otimes\Diff(L)]=0\;,\quad[\partial_x, x\partial_x]=\partial_x\;,
\end{equation}
yielding
\begin{equation}\label{[partial_x Diff^k(T L)] subset ...}
[\partial_x,\Diff^k(T,L)]\subset\Diff^k(T,L)+\Diff^{k-1}(T,L)\,\partial_x\;.
\end{equation}
$\Diff^k(T,L)$ and $\Diff^k(T)$ satisfy the obvious versions of~\eqref{Diffb^k(M) x^a = x^a Diffb^k(M)} and~\eqref{Diff^k(M) x^a subset x^a-k Diff^k(M)}.

For a vector bundle $E$ over $M$, there is an identity $E_T\equiv(-\epsilon,\epsilon)\times E_L$ over $T\equiv(-\epsilon,\epsilon)\times L$, which can be used to define $\partial_x\in\Diff^1(T;E)$. With this interpretation of $\partial_x$ and using tensor products like in~\eqref{C^infty(M)-tensor product description of C^pm infty_./c(M E)}, the vector bundle versions of the concepts and properties of this section are straightforward.

Let $\bfM$ \index{$\bfM$} be the smooth manifold with boundary defined by ``cutting'' $M$ along $L$; i.e., modifying $M$ only on the tubular neighborhood $T\equiv(-\epsilon,\epsilon)\times L$, which is replaced with $\bfT=((-\epsilon,0]\sqcup[0,\epsilon))\times L$ in the obvious way. ($\bfM$ is the blowing-up $[M,L]$ of $M$ along $L$ \cite[Chapter~5]{Melrose1996}.) Thus $\partial\bfM\equiv L\sqcup L$ because $L$ is transversely oriented, and $\mathring\bfM\equiv M\setminus L$. A canonical projection $\bfpi:\bfM\to M$ \index{$\bfpi$} is defined as the combination of the identity map $\mathring\bfM\to M\setminus L$ and the map $\bfT\to T$ given by the product of the canonical projection $(-\epsilon,0]\sqcup[0,\epsilon)\to(-\epsilon,\epsilon)$ and $\id_L$. This projection realizes $M$ as a quotient space of $\bfM$ by the equivalence relation defined by the homeomorphism $h\equiv h_0\times\id$ of $\partial\bfM\equiv\partial\bfT=(\{0\}\sqcup\{0\})\times L$, where $h_0$ switches the two points of $\{0\}\sqcup\{0\}$. Moreover $\bfpi:\bfM\to M$ is a local embedding of a compact manifold with boundary to a closed manifold of the same dimension. 

Like in \Cref{ss: pull-back and push-forward of distrib sections}, we have the continuous linear pull-back map
\begin{equation}\label{bfpi^*: C^infty(M) -> C^infty(bfM)}
\bfpi^*:C^\infty(M)\to C^\infty(\bfM)\;,
\end{equation}
which is clearly injective. The transpose of the version of~\eqref{bfpi^*: C^infty(M) -> C^infty(bfM)} with $\Omega M$ and $\Omega\bfM\equiv\bfpi^*\Omega M$ is the continuous linear push-forward map
\begin{equation}\label{bfpi_*: dot C^-infty(bfM) -> C^-infty(M)}
\bfpi_*:\dot C^{-\infty}(\bfM)\to C^{-\infty}(M)\;,
\end{equation}
which is a surjective topological homomorphism \cite[Proposition~7.4]{AlvKordyLeichtnam-conormal}.

After distinguishing a connected component $L_0$ of $L$, let $\widetilde M$ \index{$\widetilde M$} and $\widetilde L$ be the quotients of $\bfM\sqcup\bfM\equiv\bfM\times\Z_2$ and $\partial\bfM\sqcup\partial\bfM\equiv\partial\bfM\times\Z_2$ by the equivalence relation generated by $(p,a)\sim(h(p),a)$ if $\bfpi(p)\in L\setminus L_0$ and $(p,a)\sim(h(p),a+1)$ if $\bfpi(p)\in L_0$ ($p\in\bfpi^{-1}(L)=\partial\bfM$ in both cases). Let us remark that $\widetilde M$ may not be homeomorphic to the double of $\bfM$, which is the quotient of $\bfM\times\Z_2$ by the equivalence relation generated by $(p,0)\sim(p,1)$, for $p\in\partial\bfM$. Note that $\widetilde M$ is a closed connected manifold and $\widetilde L$ is a closed regular submanifold. Moreover the quotient $\widetilde T$ of $\bfT\sqcup\bfT$ becomes a tubular neighborhood of $\widetilde L$ in $\widetilde M$. The combination $\bfpi\sqcup\bfpi:\bfM\sqcup\bfM\to M$ induces a two-fold covering map $\tilde\pi:\widetilde M\to M$, whose restrictions to $\widetilde L$ and $\widetilde T$ are trivial two-fold coverings of $L$ and $T$, respectively; i.e., $\widetilde L\equiv L\sqcup L$ and $\widetilde T\equiv T\sqcup T$. The group of deck transformations of $\tilde\pi:\widetilde M\to M$ is $\{\id,\sigma\}$, where $\sigma:\widetilde M\to\widetilde M$ is induced by the map $\sigma_0:\bfM\times\Z_2\to\bfM\times\Z_2$ defined by switching the elements of $\Z_2$. The composition of the injection $\bfM\to\bfM\times\Z_2$, $p\mapsto(p,0)$, with the quotient map $\bfM\sqcup\bfM\to\widetilde M$ is a smooth embedding $\bfM\to\widetilde M$. This will be considered as an inclusion map of a regular submanifold with boundary, obtaining $\partial\bfM\equiv\widetilde L$.

Since $\tilde\pi$ is a two-fold covering map, we have continuous linear maps (\Cref{ss: pull-back and push-forward of distrib sections})
\begin{gather}
\tilde\pi_*:C^\infty(\widetilde M) \to C^\infty(M)\;,\quad
\tilde\pi^*:C^\infty(M) \to C^\infty(\widetilde M)\;,\notag \\
\tilde\pi^*:C^{-\infty}(M) \to C^{-\infty}(\widetilde M)\;,\quad
\tilde\pi_*:C^{-\infty}(\widetilde M) \to C^{-\infty}(M)\;,\label{tilde pi^*: C^-infty(M) -> C^-infty(widetilde M)}
\end{gather}
both pairs of maps satisfying
\begin{equation}\label{tilde pi_* tilde pi^* = 2}
\tilde\pi_*\tilde\pi^*=2\;,\quad\tilde\pi^*\tilde\pi_*=A_\sigma\;,
\end{equation}
where $A_\sigma:C^{\pm\infty}(\widetilde M)\to C^{\pm\infty}(\widetilde M)$ is given by $A_\sigma u=u+\sigma_*u$. Using the continuous linear restriction and inclusion maps given by~\eqref{R: C^infty(widetilde M) -> C^infty(M)} and~\eqref{dot C^-infty(M) subset C^-infty(breve M)}, we get the commutative diagrams
\begin{equation}\label{CD: tilde pi_* iota = bfpi_*}
\begin{CD}
C^\infty(\widetilde M) @>R>> C^\infty(\bfM) \\
@A{\tilde\pi^*}AA @AA{\bfpi^*}A \\
C^\infty(M) @= C^\infty(M)\;,\hspace{-.2cm}
\end{CD}
\qquad
\begin{CD}
\dot C^{-\infty}(\bfM) @>{\iota}>> C^{-\infty}(\widetilde M) \\
@V{\bfpi_*}VV @VV{\tilde\pi_*}V \\
C^{-\infty}(M) @= C^{-\infty}(M)\;,\hspace{-.2cm}
\end{CD}
\end{equation}
the second one is the transpose of the density-bundles version of the first one.

\subsection{Lift of differential operators from $M$ to $\widetilde M$}\label{ss: lift of diff ops}

For any $A\in\Diff(M)$, let $\widetilde A\in\Diff(\widetilde M)$ denote its lift via the covering map $\tilde\pi:\widetilde M\to M$.  The action of $\widetilde A$ on $C^{\pm\infty}(\widetilde M)$ corresponds to the action of $A$ on $C^{\pm\infty}(M)$ via $\tilde\pi^*:C^{\pm\infty}(M)\to C^{\pm\infty}(\widetilde M)$ and $\tilde\pi_*:C^{\pm\infty}(\widetilde M)\to C^{\pm\infty}(M)$. According to~\eqref{restriction map Diff(breve M) -> Diff(M)}, $\widetilde A|_{\bfM}\in\Diff(\bfM)$ is the lift of $A$ via the local embedding $\bfpi:\bfM\to M$, sometimes also denoted by $\widetilde A$. \index{$\widetilde A$} The action of $\widetilde A$ on $C^\infty(\bfM)$ (resp., $C^{-\infty}(\bfM)$) corresponds to the action of $A$ on $C^\infty(M)$ (resp., $C^{-\infty}(M)$) via $\bfpi^*:C^\infty(M)\to C^\infty(\bfM)$ (resp., $\bfpi_*:C^{-\infty}(\bfM)\to C^{-\infty}(M)$). If $A\in\Diff(M,L)$, then $\widetilde A\in\Diff(\widetilde M,\widetilde L)$ and $\widetilde A|_{\bfM}\in\Diffb(\bfM)$ by~\eqref{Diff(breve M,partial M) = Diffb(M)}.

\subsection{The spaces $C^{\pm\infty}(M,L)$}\label{ss: C^-infty(M L)}

Consider the closed subspaces, \index{$C^\infty(M,L)$} \index{$C^k(M,L)$}
\begin{equation}\label{C^infty(M L) subset C^infty(M)}
C^\infty(M,L)\subset C^\infty(M)\;,\quad C^k(M,L)\subset C^k(M)\quad(k\in\N_0)\;,
\end{equation}
consisting of functions that vanish to all orders at the points of $L$ in the first case, and that vanish up to order $k$ at the points of $L$ in the second case. Then let \index{$C^{-\infty}(M,L)$} \index{$C^{\prime\,-k}(M,L)$}
\[
C^{-\infty}(M,L)=C^\infty(M,L;\Omega)'\;,\quad C^{\prime\,-k}(M,L)=C^k(M,L;\Omega)'\;.
\]
$C^{-\infty}(M,L)$ is a barreled, ultrabornological, webbed, acyclic DF Montel space, and therefore complete, boundedly/compactly/sequentially retractive and reflexive \cite[Corollary~7.1]{AlvKordyLeichtnam-conormal}. Note that~\eqref{bfpi^*: C^infty(M) -> C^infty(bfM)} restricts to TVS-isomorphisms
\begin{equation}\label{bfpi^*: C^infty(M L) cong dot C^infty(bfM)}
\bfpi^*:C^\infty(M,L)\xrightarrow{\cong}\dot C^\infty(\bfM)\;,\quad
\bfpi^*:C^k(M,L)\xrightarrow{\cong}\dot C^k(\bfM)\;.
\end{equation}
Taking the transposes of its versions with density bundles, it follows that~\eqref{bfpi_*: dot C^-infty(bfM) -> C^-infty(M)} restricts to TVS-isomorphisms
\begin{equation}\label{bfpi_*: C^-infty(bfM) cong C^-infty(M L)}
\bfpi_*:C^{-\infty}(\bfM)\xrightarrow{\cong}C^{-\infty}(M,L)\;,\quad
\bfpi_*:C^{\prime\,-k}(\bfM)\xrightarrow{\cong}C^{\prime\,-k}(M,L)\;.
\end{equation}
So the spaces $C^\infty(M,L)$, $C^k(M,L)$, $C^{-\infty}(M,L)$ and $C^{\prime\,-k}(M,L)$ satisfy the analogs of~\eqref{C^prime -k'(M E) supset C^prime -k(M E)} and~\eqref{bigcap_k C^k_./c(M) = C^infty_./c(M)}.

On the other hand, there are Hilbertian spaces $H^r(M,L)$ ($r>n/2$) \index{$H^r(M,L)$} and $H^{\prime\,s}(M,L)$ ($s\in\R$), \index{$H^{\prime\,s}(M,L)$} continuously included in $C^0(M,L)$ and $C^{-\infty}(M,L)$, resp., such that the second map of~\eqref{bfpi^*: C^infty(M L) cong dot C^infty(bfM)} for $k=0$ and the first map of~\eqref{bfpi_*: C^-infty(bfM) cong C^-infty(M L)} restrict to a TVS-isomorphisms
\begin{equation}\label{bfpi^*: H^r(M L) cong dot H^r(bfM)}
\bfpi^*:H^r(M,L)\xrightarrow{\cong}\dot H^r(\bfM)\;,\quad
\bfpi_*:H^s(\bfM)\xrightarrow{\cong}H^{\prime\,s}(M,L)\;.
\end{equation}
For $s=0$, the second TVS-isomorphism of~\eqref{bfpi^*: H^r(M L) cong dot H^r(bfM)} becomes
\begin{equation}\label{bfpi_*: L^2(bfM) cong L^2(M)}
\bfpi_*:L^2(\bfM)\xrightarrow{\cong}L^2(M)\;.
\end{equation}
By~\eqref{dot H^s(M) equiv H^-s(M Omega)'},
\begin{equation}\label{H^prime -r(M,L) equiv H^r(M L Omega)'}
H^{\prime\,-r}(M,L)\equiv H^r(M,L;\Omega)'\;,\quad H^r(M,L)\equiv H^{\prime\,-r}(M,L;\Omega)'\;.
\end{equation}
Now, the second identity of~\eqref{H^prime -r(M,L) equiv H^r(M L Omega)'} can be used to extend the definition of $H^r(M,L)$ for all $r\in\R$.

Alternatively, we may also use trace theorems \cite[Theorem~7.53 and~7.58]{Adams1975} to define $H^m(M,L)$ for $m\in\Z^+$, and then use the first identity of~\eqref{H^prime -r(M,L) equiv H^r(M L Omega)'} to define $H^{\prime\,-m}(M,L)$.

From~\eqref{bfpi^*: C^infty(M) -> C^infty(bfM)},~\eqref{bfpi_*: dot C^-infty(bfM) -> C^-infty(M)},~\eqref{bfpi^*: H^r(M L) cong dot H^r(bfM)} and the analogs of~\eqref{H^s(M E) subset C^k(M E) subset H^k(M E)}--\eqref{C^infty(M E) = bigcap_s H^s(M E)} mentioned in \Cref{ss: supported and extendible Sobolev sps}, we get
\begin{alignat}{2}
C^\infty(M,L)&=\bigcap_rH^r(M,L)\;,&\quad C^{-\infty}(M,L)&=\bigcup_sH^{\prime\,s}(M,L)\;,
\label{C^infty(M L) = bigcap_r H^r(M L)}\\
\intertext{as well as a continuous inclusion and a continuous linear surjection,}
C^\infty(M)&\subset\bigcap_sH^{\prime\,s}(M,L)\;,&\quad C^{-\infty}(M)&\leftarrow\bigcup_rH^r(M,L)\;.
\label{C^infty(M) subset bigcap_sH^prime s(M L)}
\end{alignat}
By~\eqref{H^prime -r(M,L) equiv H^r(M L Omega)'} and~\eqref{C^infty(M L) = bigcap_r H^r(M L)},
\begin{equation}\label{C^infty(M L) = C^-infty(M L Omega)'}
C^\infty(M,L)=C^{-\infty}(M,L;\Omega)'\;.
\end{equation}

The transpose of the version of the first inclusion of~\eqref{C^infty(M L) subset C^infty(M)} with $\Omega M$ is a  surjective topological homomorphism \cite[Proposition~7.4]{AlvKordyLeichtnam-conormal}
\begin{equation}\label{R :C^-infty(M) -> C^-infty(M L)}
R:C^{-\infty}(M)\to C^{-\infty}(M,L)\;,
\end{equation}
whose restriction to $C^\infty(M)$ is the identity. It can be also described as the composition
\[
C^{-\infty}(M) \xrightarrow{\tilde\pi^*} C^{-\infty}(\widetilde M) \xrightarrow{R} C^{-\infty}(\bfM) \xrightarrow{\bfpi_*} C^{-\infty}(M,L)\;.
\]
The canonical pairing between $C^\infty(M)$ and $C^\infty(M,L;\Omega)$ defines a continuous dense inclusion
\begin{equation}\label{C^infty(M) subset C^-infty(M L)}
C^\infty(M)\subset C^{-\infty}(M,L)
\end{equation}
such that~\eqref{R :C^-infty(M) -> C^-infty(M L)} is the identity on $C^\infty(M)$. We also get commutative diagrams
\begin{equation}\label{CD dot C^-infty(bfM) -R-> C^-infty(bfM)}
\begin{CD}
C^\infty(\bfM) @<{\iota}<< \dot C^\infty(\bfM) \\
@A{\bfpi^*}AA @A{\cong}A{\bfpi^*}A \\
C^\infty(M) @<{\iota}<< C^\infty(M,L)\;,\hspace{-.2cm}
\end{CD}
\qquad
\begin{CD}
\dot C^{-\infty}(\bfM) @>R>> C^{-\infty}(\bfM) \\
@V{\bfpi_*}VV @V{\cong}V{\bfpi_*}V \\
C^{-\infty}(M) @>R>> C^{-\infty}(M,L)\;,\hspace{-.2cm}
\end{CD}
\end{equation}
the second one is the transpose of the density-bundles version of the first one.

\subsection{The space $C^{-\infty}_L(M)$}\label{ss: C^-infty_L(M)}

The condition of being supported in $L$ define closed subspaces, \index{$C^{-\infty}_L(M)$} \index{$C^{\prime\,-k}_L(M)$} \index{$H^s_L(M)$}
\[
C^{-\infty}_L(M)\subset C^{-\infty}(M)\;,\quad C^{\prime\,-k}_L(M)\subset C^{\prime\,-k}(M)\;,\quad H^s_L(M)\subset H^s(M)\;,
\]
which are the null spaces of restrictions of~\eqref{R :C^-infty(M) -> C^-infty(M L)}. These spaces satisfy continuous inclusions analogous to~\eqref{C^prime -k'(M E) supset C^prime -k(M E)},~\eqref{H^s(M E) subset H^s'(M E)} and~\eqref{H^-s(M E) supset C^prime -k(M E) supset H^-k(M E)}. The following properties hold \cite[Corollaries~7.2 and~7.3]{AlvKordyLeichtnam-conormal}: $C^{-\infty}_L(M)$ is a limit subspace of the LF-space $C^{-\infty}(M)$; and it is a barreled, ultrabornological, webbed, acyclic DF Montel space, and therefore complete, boundedly/compactly/sequentially retractive and reflexive.

According to~\eqref{dot C^-infty_partial M(M) equiv C^-infty_partial M(breve M)} and \Cref{ss: cutting}, we have \cite[Eq.~(7.19)]{AlvKordyLeichtnam-conormal}
\begin{equation}\label{dot C^-infty_partial bfM(bfM) equiv C^-infty_L(M) oplus C^-infty_L(M)}
\dot C^{-\infty}_{\partial\bfM}(\bfM)\equiv C^{-\infty}_L(M)\oplus C^{-\infty}_L(M)\;,
\end{equation}
The maps~\eqref{bfpi_*: dot C^-infty(bfM) -> C^-infty(M)} and~\eqref{tilde pi^*: C^-infty(M) -> C^-infty(widetilde M)} have restrictions
\begin{equation}\label{bfpi_*: dot C^-infty_partial bfM(bfM) -> C^-infty_L(M)}
\bfpi_*=\tilde\pi_*:\dot C^{-\infty}_{\partial\bfM}(\bfM)\to C^{-\infty}_L(M)\;,\quad
\tilde\pi^*:C^{-\infty}_L(M)\to\dot C^{-\infty}_{\partial\bfM}(\bfM)\;.
\end{equation}
Using~\eqref{dot C^-infty_partial bfM(bfM) equiv C^-infty_L(M) oplus C^-infty_L(M)}, these maps are given by $\bfpi_*(u,v)=u+v$ and $\tilde\pi^*u=(u,u)$.

Moreover the right-hand side diagram of~\eqref{CD dot C^-infty(bfM) -R-> C^-infty(bfM)} can be completed to get the commutative diagram
\begin{equation}\label{CD: ... dot C^-infty(bfM) -R-> C^-infty(bfM) -> 0}
\begin{CD}
0\to\dot C^{-\infty}_{\partial\bfM}(\bfM) @>{\iota}>> \dot C^{-\infty}(\bfM) @>R>> C^{-\infty}(\bfM)\to0 \\
@V{\bfpi_*}VV @V{\bfpi_*}VV @V{\cong}V{\bfpi_*}V \\
0\to C^{-\infty}_L(M) @>{\iota}>> C^{-\infty}(M) @>R>> C^{-\infty}(M,L)\to0\;.\hspace{-.2cm}
\end{CD}
\end{equation}
The bottom row of this diagram is exact in the category of continuous linear maps between LCSs by the properties of~\eqref{R :C^-infty(M) -> C^-infty(M L)}.

\subsection{A description of $C^{-\infty}_L(M)$}\label{ss: description of C^-infty_L(M)}

According to~\eqref{u in C^-infty_c mapsto delta_L^u} and \Cref{ss: diff ops,ss: cutting}, we have TVS-isomorphisms 
\begin{equation}\label{C^-infty(L Omega^-1 NL) cong partial_x^m C^-infty(L Omega^-1 NL)}
\partial_x^m:C^{-\infty}(L;\Omega^{-1}NL)\xrightarrow{\cong}\partial_x^mC^{-\infty}(L;\Omega^{-1}NL)
\subset C^{-\infty}_L(M)\;,
\end{equation}
for $m\in\N_0$, inducing TVS-isomorphisms \cite[Proposition~7.7]{AlvKordyLeichtnam-conormal}
\begin{gather}
\bigoplus_{m=0}^\infty C^{-\infty}(L;\Omega^{-1}NL)\xrightarrow{\cong}C^{-\infty}_L(M)\;,
\label{bigoplus_m C^0_m -> C^-infty_L(M)}\\
\bigoplus_{m=0}^kC^{m-k}(L;\Omega^{-1}NL)\xrightarrow{\cong}C^{\prime\,-k}_L(M)\quad(k\in\N_0)\;.
\label{bigoplus_m^k C^m-k(L Omega^-1NL) cong C^prime -k_L(M)}
\end{gather}

\begin{rem}[{See \cite[Exercise~3.3.18]{Melrose1996}}]\label{r: description of dot C^infty_partial M(M)}
In \Cref{ss: dot C^-infty_partial M(M)}, for any compact manifold with boundary $M$, the analogs of \eqref{bigoplus_m C^0_m -> C^-infty_L(M)} and~\eqref{bigoplus_m^k C^m-k(L Omega^-1NL) cong C^prime -k_L(M)} for $\dot C^{-\infty}_{\partial M}(M)$ follows from their application to $C^{-\infty}_{\partial M}(\breve M)$.
\end{rem}

\subsection{Action of $\Diff(M)$ on $C^{-\infty}(M,L)$ and $C^{-\infty}_L(M)$}
\label{ss: action of Diff(M) on C^-infty(M L) and C^-infty_L(M)}

For every $A\in\Diff(M)$, $A^\trans$ preserves $C^\infty(M,L;\Omega)$, and therefore $A$ induces a continuous linear map $A=A^{tt}$ on $C^{-\infty}(M,L)$. By locality, it restricts to a continuous endomorphism $A$ of $C^{-\infty}_L(M)$.

\subsection{The space $J(M,L)$}\label{ss: J(M L)}

According to \Cref{ss: conormality at the boundary - Sobolev order,ss: C^-infty(M L)}, there is a LCHS $J(M,L)$, with a dense continuous inclusion
\begin{equation}\label{J^(infty)(M L) subset C^-infty(M L}
J(M,L)\subset C^{-\infty}(M,L)\;,
\end{equation}
so that~\eqref{bfpi_*: C^-infty(bfM) cong C^-infty(M L)} restricts to a TVS-isomorphism \index{$J(M,L)$}
\begin{equation}\label{bfpi_*: AA(bfM) cong J(M L)}
\bfpi_*:\AA(\bfM)\xrightarrow{\cong}J(M,L)\;,
\end{equation}
where $\AA(\bfM)$ is defined in~\eqref{dot AA(M) - AA(M)}. By~\eqref{dot AA(M)|_mathring M AA(M) subset C^infty(mathring M)}, there is a continuous inclusion
\[
J(M,L)\subset C^\infty(M\setminus L)\;.
\]
We also get spaces $J^{(s)}(M,L)$ \index{$J^{(s)}(M,L)$} and $J^m(M,L)$ \index{$J^m(M,L)$} ($s,m\in\R$) corresponding to $\AA^{(s)}(\bfM)$ and $\AA^m(\bfM)$ via~\eqref{bfpi_*: AA(bfM) cong J(M L)}. Let $\bfx$ be an extension of $|x|$ to $M$ so that it is positive and smooth on $M\setminus L$. Its lift $\bfpi^*\bfx$ is a boundary-defining function of $\bfM$, also denoted by $\bfx$. Using the first map of~\eqref{bfpi_*: C^-infty(bfM) cong C^-infty(M L)} and second map of~\eqref{bfpi^*: H^r(M L) cong dot H^r(bfM)}, and according to \Cref{ss: lift of diff ops}, we can also describe
\begin{align}
J^{(s)}(M,L)&=\{\,u\in C^{-\infty}(M,L)\mid\Diff(M,L)\,u\subset H^{\prime\,s}(M,L)\,\}\;,\label{J^(s)(M,L)}\\
J^m(M,L)&=\{\,u\in C^{-\infty}(M,L)\mid\Diff(M,L)\,u\subset\bfx^mL^\infty(M)\,\}\;,\notag
\end{align}
with topologies like in~\eqref{Z = u in bigcup_A in AA dom A | AA cdot u subset Y}. These spaces satisfy the analogs of~\eqref{I^(s)(M L) subset I^(s')(M L)},~\eqref{dot AA(M) - AA(M)} and~\eqref{AA^m(M) subset AA^m'(M)}--\eqref{AA(M) = bigcup_m AA^m(M)}. Using~\eqref{C^infty(M) subset bigcap_sH^prime s(M L)} and~\eqref{J^(s)(M,L)}, we get a continuous dense inclusion \cite[Corollary~7.14]{AlvKordyLeichtnam-conormal}\index{$J^{(\infty)}(M,L)$}
\begin{equation}\label{C^infty(M) subset J^(infty)(M L)}
C^\infty(M)\subset J(M,L)\;.
\end{equation}
In fact, $\Cinftyc(M\setminus L)$ is dense in every $J^{(s)}(M,L)$ and $J^m(M,L)$, and therefore in $J(M,L)$ \cite[Corollaries~7.14 and~7.17 and the analog of Remark~6.41 for $J(M,L)$]{AlvKordyLeichtnam-conormal}. Moreover the following properties hold \cite[Corollaries~7.11--7.13 and~7.15]{AlvKordyLeichtnam-conormal}: every $J^{(s)}(M, L)$ is a totally reflexive Fr\'echet space; $J(M,L)$ is barreled, ultrabornological, webbed and an acyclic Montel space, and therefore complete, boundedly/compactly/sequentially retractive and reflexive; and the topologies of $J(M,L)$ and $C^\infty(M\setminus L)$ coincide on every $J^m(M,L)$.

\subsection{A description of $J(M,L)$}\label{ss: description of J(M L)}

Take a b-metric $\bfg$ on $\bfM$ satisfying~\ref{i: g is of bounded geometry} and~\ref{i: A'} (\Cref{ss: a description of AA(M)}), and consider its restriction to $\mathring\bfM$. Consider also the boundary-defining function $\bfx$ of $\bfM$ (\Cref{ss: J(M L)}). Taking $m\in\R$, we have TVS-isomorphisms \cite[Corollaries~7.16 and~7.18]{AlvKordyLeichtnam-conormal}
\begin{gather}
J^m(M,L)\cong\bfx^m\Hb^\infty(\bfM)\equiv \bfx^{m+1/2}H^\infty(\mathring\bfM)\;,
\label{J^m(M L) cong ...}\\
J(M,L)\cong\bigcup_m\bfx^m\Hb^\infty(\bfM)=\bigcup_m\bfx^mH^\infty(\mathring\bfM)\;.
\label{J(M L) cong ...}
\end{gather}
The first isomorphisms of~\eqref{J^m(M L) cong ...} and~\eqref{J(M L) cong ...} are independent of $\bfg$; thus they hold without assuming~\ref{i: g is of bounded geometry} and~\ref{i: A'} \cite[the analog of Remark~6.41 for $J(M,L)$]{AlvKordyLeichtnam-conormal}.

\subsection{$I(M,L)$ vs $\dot\AA(\bfM)$ and $J(M,L)$}\label{ss: I(M,L) vs dot AA(bfM)$ and J(M L)}

According to \Cref{ss: pull-back of conormal distribs,ss: push-forward of conormal distribs}, we have the continuous linear maps
\begin{equation}\label{tilde pi^* - tilde pi_*}
\tilde\pi^*:I(M,L)\to I(\widetilde M,\widetilde L)\;,\quad
\tilde\pi_*:I(\widetilde M,\widetilde L)\to I(M,L)\;,
\end{equation}
which are restrictions of the maps~\eqref{tilde pi^*: C^-infty(M) -> C^-infty(widetilde M)}, and therefore they satisfy~\eqref{tilde pi_* tilde pi^* = 2}. These maps are compatible with the symbol and Sobolev filtrations because $\tilde\pi:\widetilde M\to M$ is a covering map (\Cref{ss: pull-back of conormal distribs,ss: push-forward of conormal distribs}).

Since~\eqref{dot AA(M) cong I_M(breve M partial M)} gives a TVS-embedding $\dot\AA(\bfM)\subset I(\widetilde M,\widetilde L)$, which preserves the Sobolev-order and symbol-order filtrations, the map $\tilde\pi_*$ of~\eqref{tilde pi^* - tilde pi_*} has the restriction
\begin{equation}\label{bfpi_*: dot AA(bfM) -> I(M L)}
\bfpi_*:\dot\AA(\bfM)\to I(M,L)\;.
\end{equation}
By~\eqref{bfpi_*: L^2(bfM) cong L^2(M)} and according to \Cref{ss: lift of diff ops}, the map~\eqref{bfpi_*: dot AA(bfM) -> I(M L)} restricts to a TVS-isomorphism
\begin{equation}\label{bfpi_*: dot AA^(0)(bfM) cong I^(0)(M L)}
\bfpi_*:\dot\AA^{(0)}(\bfM)\xrightarrow{\cong} I^{(0)}(M,L)\;.
\end{equation}

On the other hand, the map~\eqref{R :C^-infty(M) -> C^-infty(M L)} restricts to a continuous linear map
\begin{equation}\label{R: I(M L) -> J(M L)}
R:I(M,L)\to J(M,L)\;,
\end{equation}
which is the identity on $C^\infty(M)$, and can be also described as the composition
\[
I(M,L) \xrightarrow{\tilde\pi^*} I(\widetilde M,\widetilde L) \xrightarrow{R} \AA(\bfM) \xrightarrow{\bfpi_*} J(M,L)\;.
\]
Both~\eqref{bfpi_*: dot AA(bfM) -> I(M L)} and~\eqref{R: I(M L) -> J(M L)} are surjective topological homomorphisms \cite[Proposition~7.29]{AlvKordyLeichtnam-conormal}, and therefore $C^\infty(M)$ is dense in $J(M,L)$ \cite[Corollary~7.32]{AlvKordyLeichtnam-conormal}.

\subsection{The space $K(M,L)$}\label{ss: K(M L)}

Like in \Cref{ss: KK(M)}, the condition of being supported in $L$ defines the LCHSs and $C^\infty(M)$-modules \index{$K^{(s)}(M,L)$} \index{$K^m(M,L)$} \index{$K(M,L)$}
\[
K^{(s)}(M,L)=I^{(s)}_L(M,L)\;,\quad K^m(M,L)=I^m_L(M,L)\;,\quad K(M,L)=I_L(M,L)\;.
\]
These are closed subspaces of $I^{(s)}(M,L)$, $I^m_L(M,L)$ and $I(M,L)$, respectively; more precisely, they are the null spaces of the corresponding restrictions of the map~\eqref{R: I(M L) -> J(M L)}. The identity~\eqref{dot C^-infty_partial bfM(bfM) equiv C^-infty_L(M) oplus C^-infty_L(M)} restricts to a TVS-identity
\begin{equation}\label{KK(bfM) equiv K(M L) oplus K(M L)}
\KK(\bfM)\equiv K(M,L)\oplus K(M,L)\;.
\end{equation}
Furthermore the maps~\eqref{bfpi_*: dot C^-infty_partial bfM(bfM) -> C^-infty_L(M)} induce continuous linear maps
\begin{equation}\label{bfpi_*: KK(bfM) -> K(M L)}
\bfpi_*:\KK(\bfM)\to K(M,L)\;,\quad
\tilde\pi^*:K(M,L)\to\KK(\bfM)\;.
\end{equation}
Using~\eqref{KK(bfM) equiv K(M L) oplus K(M L)}, these maps are given by $\bfpi_*(u,v)=u+v$ and $\tilde\pi^*u=(u,u)$.

By~\eqref{dot AA^(s)(M) equiv I^(s)_M(breve M partial M)} and~\eqref{dot AA^m(M) = I^m_M(breve M partial M)}, $K^{(s)}(M,L)$ and $K^m(M,L)$ satisfy analogs of~\eqref{KK(bfM) equiv K(M L) oplus K(M L)}, using $\KK^{(s)}(\bfM)$ and $\KK^m(\bfM)$. The following properties hold true \cite[Corollaries~7.19--7.21 and~7.23]{AlvKordyLeichtnam-conormal}: $K(M,L)$ is a limit subspace of the LF-space $I(M,L)$; every $K^{(s)}(M,L)$ is a totally reflexive Fr\'echet space; moreover it is barreled, ultrabornological and webbed, and therefore so is $K(M,L)$; and $K(M,L)$ is an acyclic Montel space, and therefore complete, boundedly/compactly/sequentially retractive and reflexive.

\begin{ex}\label{ex: Diff(M)}
With the notation of \Cref{ss: pseudodiff ops}, $\Diff(M)\equiv K(M^2,\Delta)$ becomes a filtered $C^\infty(M^2)$-submodule of $\Psi(M)$, with the order filtration corresponding to the symbol filtration. In this way, $\Diff(M)$ also becomes a LCHS satisfying the above properties. If $M$ is compact, it is also a filtered subalgebra of $\Psi(M)$.
\end{ex}

\subsection{A description of $K(M,L)$}\label{ss: description of K(M L)}

By~\eqref{u mapsto delta_L^u conormal} and~\eqref{A: I^(s)(M L E) -> I^[s-k](M L E)}, for $s<-1/2$, every isomorphism~\eqref{C^-infty(L Omega^-1 NL) cong partial_x^m C^-infty(L Omega^-1 NL)} restricts to a TVS-isomorphism 
\begin{equation}\label{C^infty(L Omega^-1 NL) cong partial_x^m C^infty(L Omega^-1 NL)}
\partial_x^m:C^\infty(L;\Omega^{-1}NL)\xrightarrow{\cong}\partial_x^mC^\infty(L;\Omega^{-1}NL)
\subset K^{(s-m)}(M,L)\;,
\end{equation}
Then~\eqref{bigoplus_m C^0_m -> C^-infty_L(M)} restricts to a TVS-isomorphisms \cite[Proposition~7.26]{AlvKordyLeichtnam-conormal}
\begin{gather}
\bigoplus_{m=0}^\infty C^\infty(L;\Omega^{-1}NL)\xrightarrow{\cong}K(M,L)\;,\label{bigoplus_m C^1_m -> K(M L)}\\
\bigoplus_{m<-s-\frac12}C^\infty(L;\Omega^{-1}NL)\xrightarrow{\cong}K^{(s)}(M,L)\quad(s<-1/2)\;.\label{bigoplus_m<-s-1/2 C^1_m cong K^(s)(M L)}
\end{gather}

\begin{rem}\label{r: description of KK(M)}
In \Cref{ss: KK(M)}, for any compact manifold with boundary $M$, the analogs of~\eqref{bigoplus_m C^1_m -> K(M L)} and~\eqref{bigoplus_m<-s-1/2 C^1_m cong K^(s)(M L)} for $\KK(M)$ follows from their application to $K(\breve M,\partial M)$ using~\eqref{KK(M) cong I_partial M(breve M partial M)}.
\end{rem}

\subsection{The conormal sequence}\label{conormal sequence}

The diagram~\eqref{CD: ... dot C^-infty(bfM) -R-> C^-infty(bfM) -> 0} has the restriction
\begin{equation}\label{CD: conormal seqs}
\begin{CD}
0\to\KK(\bfM) @>\iota>> \dot\AA(\bfM) @>R>> \AA(\bfM)\to0 \\
@V{\bfpi_*}VV @V{\bfpi_*}VV @V{\cong}V{\bfpi_*}V \\
0\to K(M,L) @>\iota>> I(M,L) @>R>> J(M,L)\to0\;.\hspace{-.2cm}
\end{CD}
\end{equation}
The bottom row of~\eqref{CD: conormal seqs} is exact in the category of continuous linear maps between LCSs \cite[Corollary~7.30]{AlvKordyLeichtnam-conormal}; it will be called the \emph{conormal sequence} of $M$ at $L$ (or of $(M,L)$). \index{conormal sequence}

The surjectivity of~\eqref{R: I(M L) -> J(M L)} can be realized with the partial extension maps given by the following consequence of \Cref{p: E_m}, whose proof is recalled by its relevance in \Cref{ch: conormal,ch: dual-conormal}.

\begin{cor}[{\cite[Corollary~7.31]{AlvKordyLeichtnam-conormal}}]\label{c: E_m: J^m(M L) -> I^(s)(M L)}
For all $m\in\R$, there is a continuous linear partial extension map $E_m:J^m(M,L)\to I^{(s)}(M,L)$, where $s=0$ if $m\ge0$, and $m>s\in\Z^-$ if $m<0$. For $m\ge0$, $E_m:J^m(M,L)\to I^{(0)}(M)$ is a continuous inclusion map.
\end{cor}

\begin{proof}
By the commutativity of~\eqref{CD: conormal seqs} and using \Cref{p: E_m}, we can define $E_m:J^m(M,L)\to I^{(s)}(M,L)$ as the composition
\[
J^m(M,L) \xrightarrow{\bfpi_*^{-1}} \AA^m(\bfM) \xrightarrow{E_m} \dot\AA^{(s)}(\bfM) 
\xrightarrow{\bfpi_*} I^{(s)}(M,L)\;.
\]
The last assertion follows from \Cref{p: E_m,p: E_m s} and~\eqref{bfpi_*: dot AA^(0)(bfM) cong I^(0)(M L)}.
\end{proof}

According to this proof, \Cref{r: E_m m < 0,r: E_m vector bundle,r: E_m compactly supported,p: E_m s,c: E_m s,r: E_m s with m ge 0 and s < 0,r: E_m s - variants of the defn} have obvious versions for the maps given by \Cref{c: E_m: J^m(M L) -> I^(s)(M L)}.

\subsection{Action of $\Diff(M)$ on  the conormal sequence}\label{ss: Diff(M) on the conormal seq}

According to \Cref{ss: diff opers on conormal distribs}, every $A\in\Diff(M)$ defines a continuous linear map $A$ on $I(M,L)$, which preserves $K(M,L)$, and induces a continuous linear map $A$ on $J(M,L)$. This map satisfies the analog of~\eqref{A: I^(s)(M L E) -> I^[s-k](M L E)}.

The map $A$ on $J(M,L)$ can be also described as a restriction of $A$ on $C^{-\infty}(M,L)$ (\Cref{ss: action of Diff(M) on C^-infty(M L) and C^-infty_L(M)}). On the other hand,  according to \Cref{ss: diff ops on dot AA(M) and AA(M)}, the lift $\widetilde A\in\Diff(\bfM)$ defines continuous linear maps on the top spaces of~\eqref{CD: conormal seqs} which correspond to the operators defined by $A$ on the bottom spaces via the maps $\bfpi_*$. If $A\in\Diff(M,L)$, then it defines continuous endomorphisms $A$ of $J^{(s)}(M,L)$ and $J^m(M,L)$.

\subsection{Pull-back maps on the conormal sequence}\label{ss: pull-back of the conormal seq}

Consider the notation and conditions of \Cref{ss: pull-back of conormal distribs}. By the exactness of the conormal sequences of $(M,L)$ and $(M',L')$ in the category of continuous linear maps between LCSs, the map~\eqref{phi^*: I(M L) -> I(M' L')} induces continuous linear maps,
\begin{gather}
\phi^*:K(M,L)\to K(M',L')\;,\label{phi^*: K(M L) to K(M' L')}\\
\phi^*:J(M,L)\to J(M',L')\;.\label{phi^*: J(M L) to J(M' L')}
\end{gather}
The map~\eqref{phi^*: K(M L) to K(M' L')} is the restriction of~\eqref{phi^*: I(M L) -> I(M' L')}, which is well defined because the map~\eqref{phi^*: I(M L) -> I(M' L')} can be locally defined, and~\eqref{phi^*: J(M L) to J(M' L')} is the induced map in the quotient. These maps are compatible with the maps $\iota$ and $R$ of the conormal sequences, and satisfy the analog of~\eqref{phi^*: I(M L E) -> I(M' L' phi^*E)}.

\subsection{Push-forward maps on the conormal sequence}\label{ss: push-forward of the conormal seq}

Consider the notation and conditions of \Cref{ss: push-forward of conormal distribs}. Like in \Cref{ss: pull-back of the conormal seq}, the map~\eqref{phi_*: I_c/cv(M' L' Omega_fiber) -> I_c/.(M L)} induces continuous linear maps,
\begin{gather}
\phi_*:K(M',L';\Omega_\fiber)\to K(M,L)\;,\label{phi_*: K(M' L' Omega_fiber) to K(M L)}\\
\phi_*:J(M',L';\Omega_\fiber)\to J(M,L)\;.
\label{phi_*: J(M' L' Omega_fiber) to J(M L)}
\end{gather}
They are also compatible with the maps $\iota$ and $R$ of the conormal sequences, and satisfy the analog of~\eqref{phi_*: I_c/cv(M' L' phi^*E otimes Omega_fiber) -> I_c/.(M L E)}.

\subsection{Case where $L$ is not transversely orientable}\label{ss: case where L is not transversely orientable}

If $L$ is not transversely orientable, we still have a tubular neighborhood $T$ of $L$ in $M$, but there is no defining function $x$ of $L$ in $T$ trivializing the projection $\varpi:T\to L$. We can cut $M$ along $L$ as well to produce a bounded compact manifold, $\bfM$, with a projection $\bfpi:\bfM\to M$ and a boundary collar $\bfT$ over $T$.

Using a boundary-defining function $\bfx$ of $\bfM$, we get the same definitions, properties and descriptions of $C^{\pm\infty}(M,L)$ and $J(M,L)$ (\Cref{ss: C^-infty(M L),ss: J(M L),ss: description of J(M L)}). $C^{-\infty}_L(M)$ and $K(M,L)$ also have the same definitions (\Cref{ss: C^-infty_L(M),ss: K(M L)}). However~\eqref{dot C^-infty_partial bfM(bfM) equiv C^-infty_L(M) oplus C^-infty_L(M)} and~\eqref{KK(bfM) equiv K(M L) oplus K(M L)} are not true because the covering map $\bfpi:\partial\bfM\to L$ is not trivial, and the descriptions given in~\eqref{bigoplus_m C^0_m -> C^-infty_L(M)},~\eqref{bigoplus_m^k C^m-k(L Omega^-1NL) cong C^prime -k_L(M)},~\eqref{bigoplus_m C^1_m -> K(M L)} and~\eqref{bigoplus_m<-s-1/2 C^1_m cong K^(s)(M L)} need a slight modification. This problem can be solved as follows.

Let $\check\pi:\check L\to L$ \index{$\check\pi$} \index{$\check L$} denote the two-fold covering of transverse orientations of $L$, and let $\check\sigma$ \index{$\check\sigma$} denote its deck transformation different from the identity. Since the lift of $NL$ to $\check L$ is trivial, $\check\pi$ on $\check L\equiv\{0\}\times\check L$ can be extended to a two-fold covering $\check\pi:\check T:=(-\epsilon,\epsilon)_x\times\check L\to T$, \index{$\check T$} for some $\epsilon>0$. Its deck transformation different from the identity is an extension of $\check\sigma$ on $\check L\equiv\{0\}\times\check L$, also denoted by $\check\sigma$. Then $\check L$ is transversely oriented in $\check T$; i.e., its normal bundle $N\check L$ is trivial. Thus $C^{-\infty}_{\check L}(\check T)$ and $K(\check T,\check L)$ satisfy~\eqref{dot C^-infty_partial bfM(bfM) equiv C^-infty_L(M) oplus C^-infty_L(M)},~\eqref{bigoplus_m C^0_m -> C^-infty_L(M)},~\eqref{bigoplus_m^k C^m-k(L Omega^-1NL) cong C^prime -k_L(M)},~\eqref{KK(bfM) equiv K(M L) oplus K(M L)},~\eqref{bigoplus_m C^1_m -> K(M L)} and~\eqref{bigoplus_m<-s-1/2 C^1_m cong K^(s)(M L)}. Since $N\check L\equiv\check\pi^*NL$, the map $\check\sigma$ lifts to a homomorphism of $N\check L$, which induces a homomorphism of $\Omega^{-1}NL$ also denoted by $\check\sigma$. Let $L_{-1}$ be the union of non-transversely oriented connected components of $L$, and $L_1$ the union of its transversely oriented components. Correspondingly, let $\check L_{\pm1}=\check\pi^{-1}(L_{\pm1})$ and $\check T_{\pm1}=(-\epsilon,\epsilon)\times\check L_{\pm1}$. Since $\check\sigma^*x=\pm x$ on $T_{\pm1}$, the isomorphisms~\eqref{bigoplus_m C^0_m -> C^-infty_L(M)},~\eqref{bigoplus_m^k C^m-k(L Omega^-1NL) cong C^prime -k_L(M)},~\eqref{bigoplus_m C^1_m -> K(M L)} and~\eqref{bigoplus_m<-s-1/2 C^1_m cong K^(s)(M L)} become true in this case by replacing $C^r(L;\Omega^{-1}NL)$ ($r\in\Z\cup\{\pm\infty\}$) with the direct sum of the spaces
\[
\{\,u\in C^r(L_{\pm1};\Omega^{-1}NL_{\pm1})\mid\check\sigma^*u=\pm u\,\}\;.
\]

The other results about $C^{-\infty}_L(M)$ and $K(M,L)$ (\Cref{ss: C^-infty_L(M),ss: description of C^-infty_L(M),ss: K(M L),ss: description of K(M L)}) can be obtained by using these extensions of~\eqref{bigoplus_m C^0_m -> C^-infty_L(M)},~\eqref{bigoplus_m^k C^m-k(L Omega^-1NL) cong C^prime -k_L(M)},~\eqref{bigoplus_m C^1_m -> K(M L)} and~\eqref{bigoplus_m<-s-1/2 C^1_m cong K^(s)(M L)} instead of~\eqref{dot C^-infty_partial bfM(bfM) equiv C^-infty_L(M) oplus C^-infty_L(M)} and~\eqref{KK(bfM) equiv K(M L) oplus K(M L)}. \Cref{conormal sequence,ss: Diff(M) on the conormal seq,ss: pull-back of the conormal seq,ss: push-forward of the conormal seq} also have strightforward extensions.

\section{Dual-conormal sequence}\label{s: dual-conormal seq}

\subsection{The spaces $K'(M,L)$ and $J'(M,L)$}\label{ss: K'(M L) and J'(M L)}

Consider the notation of \Cref{s: conormal seq} assuming that $L$ is transversely oriented; the extension to the non-transversely orientable case can be made with the procedure of \Cref{ss: case where L is not transversely orientable}. Like in \Cref{ss: dual-conormal distribs - compact,ss: AA'(M)}, let \index{$K'(M,L)$} \index{$J'(M,L)$} 
\[
K'(M,L)=K(M,L;\Omega)'\;,\quad J'(M,L)=J(M,L;\Omega)'\;.
\]
By~\eqref{bfpi_*: AA(bfM) cong J(M L)} and~\eqref{KK(bfM) equiv K(M L) oplus K(M L)},
\begin{equation}\label{dot AA'(bfM) equiv J'(M L)}
\KK'(\bfM)\equiv K'(M,L)\oplus K'(M,L)\;,\quad\dot\AA'(\bfM)\equiv J'(M,L)\;.
\end{equation}
Let also \index{$K^{\prime\,(s)}(M,L)$} \index{$K^{\prime\,m}(M,L)$} \index{$J^{\prime\,(s)}(M,L)$} \index{$J^{\prime\,m}(M,L)$} 
\begin{equation}\label{K^prime[s](M L)}
\left\{
\begin{gathered}
K^{\prime\,(s)}(M,L)=K^{(-s)}(M,L;\Omega)'\;,\quad K^{\prime\,m}(M,L)=K^{-m}(M,L;\Omega)'\;,\\
J^{\prime\,(s)}(M,L)=J^{(-s)}(M,L;\Omega)'\;,\quad J^{\prime\,m}(M,L)=J^{-m}(M,L;\Omega)'\;,
\end{gathered}
\right.
\end{equation}
which satisfy the analog of~\eqref{dot AA'(bfM) equiv J'(M L)}. Like in \Cref{ss: AA'(M)}, for $s<s'$ and $m<m'$, we get continuous linear restriction maps
\[
K^{\prime\,(s')}(M,L)\to K^{\prime\,(s)}(M,L)\;,\quad K^{\prime\,m}(M,L)\to K^{\prime\,m'}(M,L)\;,
\]
and continuous injections
\[
J^{\prime\,(s')}(M,L)\subset J^{\prime\,(s)}(M,L)\;,\quad J^{\prime\,m'}(M,L)\subset J^{\prime\,m}(M,L)\;,
\]
forming projective spectra. By~\eqref{dot AA'(bfM) equiv J'(M L)} and its analog for the spaces~\eqref{K^prime[s](M L)}, and according to \Cref{ss: AA'(M)}, we get that the spaces $K^{\prime\,(s)}(M,L)$ and $K^{\prime\,m}(M,L)$ satisfy the analogs of~\eqref{sandwich for I'} and~\eqref{I'(M L) equiv projlim I^prime (s)(M L) equiv projlim I^prime m(M L)}, and the spaces $J^{\prime\,(s)}(M,L)$ and $J^{\prime\,m}(M,L)$ satisfy the analogs of~\eqref{sandwich for dot AA'} and~\eqref{dot AA'(M) = bigcap_s dot AA^prime (s)(M) = bigcap_m dot AA^prime m(M)} \cite[Corollary~8.3]{AlvKordyLeichtnam-conormal}. Furthermore, $K'(M,L)$ and $J'(M,L)$ are complete Montel spaces \cite[Corollary~8.1]{AlvKordyLeichtnam-conormal}, and $K^{\prime\,(s)}(M,L)$ and $J^{\prime\,(s)}(M,L)$ are bornological and barreled \cite[Corollary~8.2]{AlvKordyLeichtnam-conormal}.

Like in \Cref{ss: AA'(M)}, the versions of~\eqref{C^infty(M L) = C^-infty(M L Omega)'},~\eqref{J^(infty)(M L) subset C^-infty(M L} and~\eqref{C^infty(M) subset J^(infty)(M L)} with $\Omega M$ induce continuous inclusions
\begin{equation}\label{C^infty(M L) subset J'(M L) subset C^-infty(M)}
C^{-\infty}(M)\supset J'(M,L)\supset C^\infty(M,L)\;.
\end{equation}

\subsection{A description of $J'(M,L)$}\label{ss: description of J'(M L)}

With the notation and conditions of \Cref{ss: description of J(M L)}, we have the following \cite[Corollaries~8.4 and~8.5]{AlvKordyLeichtnam-conormal}:
\begin{gather}
J^{\prime\,m}(M,L)\cong\bfx^m\Hb^{-\infty}(\bfM)=\bfx^{m-\frac12}H^{-\infty}(\mathring\bfM)\;,
\label{J^prime m(M L) cong ...}\\
J'(M,L)\cong\bigcap_m\bfx^m\Hb^{-\infty}(\bfM)=\bigcap_m\bfx^mH^{-\infty}(\mathring\bfM)\;.
\label{J'(M L) cong ...}
\end{gather}
Actually, the first isomorphisms of~\eqref{J^prime m(M L) cong ...} and~\eqref{J'(M L) cong ...} are independent of $g$, and hold true without the assumptions~\ref{i: g is of bounded geometry} and~\ref{i: A'}. Furthermore $\Cinftyc(M\setminus L)$ is dense in every $J^{\prime\,m}(M,L)$ and in $J'(M,L)$ \cite[Corollary~8.6]{AlvKordyLeichtnam-conormal}. Therefore the right-hand side inclusion of~\eqref{C^infty(M L) subset J'(M L) subset C^-infty(M)} is also dense.

\subsection{Description of $K'(M,L)$}\label{ss: description of K'(M L)}

The transposes of the versions of~\eqref{bigoplus_m C^1_m -> K(M L)} and~\eqref{bigoplus_m<-s-1/2 C^1_m cong K^(s)(M L)} with $\Omega M$ are TVS-isomorphisms \cite[Corollary~8.7]{AlvKordyLeichtnam-conormal},
\begin{gather}
K'(M,L)\xrightarrow{\cong}\prod_{m=0}^\infty C^{-\infty}(L)\;,\label{K'(M L) cong prod_m=0^infty C^-infty(L)}\\
K^{\prime\,(s)}(M,L)\xrightarrow{\cong}\prod_{m<s-1/2}C^{-\infty}(L)\quad(s>1/2)\;,
\label{K^prime (s)(M L) cong prod_m<s-1/2 C^-infty(L)}
\end{gather}
because
\[
C^{\infty}(L;\Omega^{-1}NL\otimes\Omega M)'=C^{\infty}(L;\Omega)'=C^{-\infty}(L)\;.
\]

\subsection{Dual-conormal sequence}\label{ss: dual-conormal seq}

The transpose of the density-bundles  version of~\eqref{CD: conormal seqs} is the commutative diagram
\begin{equation}\label{CD: dual-conormal seqs}
\begin{CD}
0\leftarrow\KK'(\bfM) @<{R'}<< \AA'(\bfM) @<{\iota'}<< \dot\AA'(\bfM)\leftarrow0 \\
@A{\bfpi^*}AA @A{\bfpi^*}AA @A{\bfpi^*}A{\cong}A \\
0\leftarrow K'(M,L) @<{R'}<< I'(M,L) @<{\iota'}<< J'(M,L)\leftarrow0\;,\hspace{-.2cm}
\end{CD}
\end{equation}
where $R'=\iota^\trans$ and $\iota'=R^\trans$. Its bottom row is exact in the category of continuous linear maps between LCSs \cite[Proposition~8.8]{AlvKordyLeichtnam-conormal}, and is called the \emph{dual-conormal sequence} of $M$ at $L$ (or of $(M,L)$). \index{dual-conormal sequence}

\subsection{Action of $\Diff(M)$ on the dual-conormal sequence}\label{ss: action of Diff(M) on the dual-conormal seq}

With the notation of \Cref{ss: Diff(M) on the conormal seq}, consider the actions of $A^\trans$ and $\widetilde A^\trans$ on the bottom and top spaces of the version of~\eqref{CD: conormal seqs} with $\Omega M$ and $\Omega\bfM$. Taking transposes again, we get induced actions of $A$ and $\widetilde A$ on the bottom and top spaces of~\eqref{CD: dual-conormal seqs}, which correspond one another via the linear maps $\bfpi^*$. These maps satisfy the analogs of~\eqref{A: I^prime [s](M L E) -> I^prime (s-m)(M L E)}.

\subsection{Pull-back maps on the dual-conormal sequence}
\label{ss: pull-back of the dual-conormal seq}

Consider the notation and conditions of \Cref{ss: pull-back of dual-conormal distributions} (the same as in \Cref{ss: push-forward of conormal distribs}). Like in~\Cref{ss: pull-back of dual-conormal distributions}, transposing the compactly supported case of the analog of~\eqref{phi_*: I_c/cv(M' L' phi^*E otimes Omega_fiber) -> I_c/.(M L E)} for~\eqref{phi_*: K(M' L' Omega_fiber) to K(M L)} and~\eqref{phi_*: J(M' L' Omega_fiber) to J(M L)} with $E=\Omega M$, we get continuous linear maps,
\begin{gather}
\phi^*:K'(M,L)\to K'(M',L')\;,\label{phi^*: K'(M L) to K'(M' L')}\\
\phi^*:J'(M,L)\to J'(M',L')\;.\label{phi^*: J'(M L) to J'(M' L')}
\end{gather}
They are compatible with the maps $\iota'$ and $R'$ of the dual-conormal sequences, and satisfy the analog of~\eqref{phi^*: I'(M L E) -> I'(M' L' phi^*E)}.

\subsection{Push-forward maps on the dual-conormal sequence}
\label{ss: push-forward of the dual-conormal seq}

Consider the notation and conditions of \Cref{ss: push-forward of dual-conormal distributions} (the same as in \Cref{ss: pull-back of conormal distribs}). Like in \Cref{ss: push-forward of dual-conormal distributions}, transposing the analogs of~\eqref{phi^*: I(M L E) -> I(M' L' phi^*E)} for~\eqref{phi^*: K(M L Lambda) to K(M' L' Lambda)} and~\eqref{phi^*: J(M L Lambda) to J(M' L' Lambda)} with $E=\Omega M$, and using an analog of~\eqref{phi_*u}, we get continuous linear maps,
\begin{gather}
\phi_*:K'(M',L';\Omega_\fiber)\to K'(M,L)\;,\label{phi_*: K'(M' L' Omega_fiber) to K'(M L)}\\
\phi_*:J'(M',L';\Omega_\fiber)\to J'(M,L)\;.\label{phi_*: J'(M' L' Omega_fiber) to J'(M L)}
\end{gather}
They are compatible with the maps $\iota'$ and $R'$ of the dual-conormal sequences, and satisfy the analog of~\eqref{phi_*: I'_c/cv(M' L' phi^*E otimes Omega_fiber) -> I'_c/.(M L E)}.

\section{Currents}\label{s: currents}

Here, again, the manifold $M$ may not be compact, and $L\subset M$ is a regular submanifold that is a closed subset. When using $J(M,L;\Lambda)$ or $K(M,L;\Lambda)$, it is also assumed that $L$ is of codimension one. 

\subsection{Differential forms and currents}\label{ss: diff forms and currents}

Consider the space $C^\infty(M;\Lambda)$ of smooth differential forms, and the space $C^{-\infty}(M;\Lambda)$ of currents. The most typical example of elliptic complex is given by the de~Rham derivative $d$ on $C^\infty(M;\Lambda)$, giving rise to the de~Rham cohomology $H^\bullet(M)$. The extension of $d$ to $C^{-\infty}(M;\Lambda)$ is another topological complex, which produces isomorphic cohomology \cite{deRham1984}. We typically consider cohomology with complex coefficients without further comment; real cohomology classes are only considered in a few cases, where it is indicated; the same applies to other cohomologies that will be considered. The basic properties of  $(C^{\pm\infty}(M;\Lambda),d)$ and $H^\bullet(M)$ can be seen in \cite{deRham1984,BottTu1982}; for instance, the general properties of elliptic complexes apply in this setup (\Cref{ss: diff complexes}). Some properties will be seen in \Cref{s: Witten} with more generality.

A Riemannian metric $g$ on $M$ defines a Hermitian structure on $\Lambda M$, also denoted by $g$. Then we have the additional operators $\delta$ (the \emph{de~Rham coderivative}), $D$ and $\Delta$ (the \emph{Laplacian}) of \Cref{ss: diff complexes}. If needed, the subscript ``$M$'' may be added to this notation, and to other similar notation.

We may also consider the de~Rham complex with coefficients in a flat vector bundle $\FF$, $d=d^\FF$ on $C^\infty(M;\Lambda\otimes\FF)$. As above, $g$ and a Hermitian structure on $\FF$ induce additional operators $\delta=\delta^\FF$, $D=D^\FF$ and $\Delta=\Delta^\FF$.

For any $V\in\fX(M)$, let $\iota_V$ and $\LL_V$ denote the corresponding inner product and Lie derivative on $C^\infty(M;\Lambda)$. For $\eta=V^\flat\in C^\infty(M;\Lambda^1)$, we write ${\eta\lrcorner}=-(\eta\wedge)^*=-\iota_V$. Let $\sw$ be the degree involution on $\Lambda M$. For the bundle of Clifford algebra of $T^*M$, we have the identity $\Cl(T^*M)\equiv\Lambda_\R M$ defined by the symbol of filtered algebras. Via this identity, the left Clifford multiplication by $\eta$ is $c(\eta)={\eta\wedge}+{\eta\lrcorner}$, and the composition of $\sw$ with the right Clifford multiplication by $\eta$ is $\hat c(\eta)={\eta\wedge}-{\eta\lrcorner}$.

\subsection{Product of differential forms and currents}
\label{ss: product of diff forms and currents}

The exterior product of smooth differential forms has continuous extensions,\footnote{This holds with more generality under conditions on the wavefront set \cite[Theorem~8.2.10]{Hormander1971}, but we will not use it.}
\begin{equation}\label{extension of exterior product to currents}
C^{\pm\infty}(M;\Lambda)\otimes C^{\mp\infty}(M;\Lambda)\to C^{-\infty}(M;\Lambda)\;,
\end{equation}
For example, with the notation of \Cref{ss: Dirac sections}, assuming that $M$ and $L$ are oriented, it easily follows that, for $\alpha\in C^\infty(M;\Lambda)$ and $\beta\in C^\infty(L;\Lambda\otimes\Omega^{-1}NL)$,
\begin{equation}\label{alpha wedge delta_L^beta}
\alpha\wedge\delta_L^\beta=\delta_L^{\alpha|_L\wedge\beta}\;,\quad
\delta_L^\beta\wedge\alpha=\delta_L^{\beta\wedge\alpha|_L}\;.
\end{equation}

\subsection{Currents on oriented manifolds}

Assume $M$ is oriented. The orientation induces a canonical identity $\Omega M\equiv\Lambda^nM$. Then, for every degree $k$, the non-degenerate pairing $\Lambda^kM\otimes\Lambda^{n-k}M\to\Lambda^nM$ defined by the wedge product induces a canonical identity
\begin{equation}\label{(Lambda^rM)^* otimes Omega M equiv Lambda^n-rM}
(\Lambda^kM)^*\otimes\Omega M\equiv\Lambda^{n-k}M\;.
\end{equation}
By~\eqref{(Lambda^rM)^* otimes Omega M equiv Lambda^n-rM}, the space~\eqref{C^-infty_cdot/c(M;E)} becomes
\begin{equation}\label{C^-infty(M Lambda^r) equiv C^infty_c(M Lambda^n-r)'}
C^{-\infty}_{{\cdot}/\co}(M;\Lambda^k)\equiv C^\infty_{\co/{\cdot}}(M;\Lambda^{n-k})'
\end{equation}
in this case. This identity corresponds to a pairing
\[
C^{\pm\infty}_{{\cdot}/\co}(M;\Lambda^k)\otimes C^{\mp\infty}_{\co/{\cdot}}(M;\Lambda^{n-k})\to\C\;,
\]
which will be denoted with parentheses to distinguish it from the scalar product. This pairing can be given by the composition of~\eqref{extension of exterior product to currents} and the extension
\[
C^{-\infty}_\co(M;\Lambda^n)\to\C\;,\quad\alpha\mapsto(\alpha,1)\;,
\]
of $\int_M:\Cinftyc(M;\Lambda M)\to\C$.

\subsection{Hodge operator on oriented manifolds}\label{ss: perturbed ops on oriented mfds}

Continuing with the assumption of orientation, let $\star$ on $\Lambda M$ denote the $\C$-linear extension of the Hodge operator $\star$ on the real forms, which is unitary, and let $\bar\star$ denote its $\C$-antilinear extension. These operators are determined by the conditions, for $\alpha,\beta\in C^\infty(M;\Lambda)$, 
\[
\alpha\wedge\overline{\star\beta}=g(\alpha,\beta)\,\dvol=\alpha\wedge\bar\star\beta\;,
\]
where $\dvol=\star1$ is the volume form. Recall that, on $C^\infty(M;\Lambda^k)$, 
\begin{equation}\label{star}
\left\{
\begin{gathered}
\star^2=(-1)^{nk+k}\;,\quad
\delta=(-1)^{nk+n+1}\star\,d\,\star\;,\quad
{\eta\lrcorner}=(-1)^{nk+n+1}\star\,{\eta\wedge}\,\star\;,\\
d\,\star=(-1)^k\star\,\delta\;,\quad
\delta\,\star=(-1)^{k+1}\star\,d\;,\quad
\Delta\,\star=\star\,\Delta\;,\\
{\eta\wedge}\,\star=(-1)^k\,\star\,{\eta\lrcorner}\;,\quad
{\eta\lrcorner}\,\star=(-1)^{k+1}\,\star\;{\eta\wedge}\;.
\end{gathered}
\right.
\end{equation}
The equalities~\eqref{star} are also true with $\bar\star$, and can be extended to $C^{-\infty}(M;\Lambda)$.

For all $\alpha\in C^\infty(M;\Lambda^k)$ and $\beta\in\Cinftyc(M;\Lambda^{n-k})$,
\[
\alpha\wedge\beta=(-1)^{kn+k}\alpha\wedge\bar\star^2\beta=(-1)^{kn+k}g(\alpha,\bar\star\beta)\dvol\;,
\]
yielding
\begin{equation}\label{(alpha beta) = (-1)^rn+r langle alpha star beta rangle}
(\alpha,\beta)=(-1)^{kn+k}\langle\alpha,\bar\star\beta\rangle\;.
\end{equation}

\subsection{Pull-back and push-forward of currents}
\label{ss: pull-back and push-forward of currents}

Given a smooth map $\phi:M'\to M$, recall that its tangent map $T\phi=\phi_*:TM'\to TM$ defines a homomorphism $\phi_*:TM'\to \phi^*TM$, which induces a homomorphism
\begin{equation}\label{phi^*: phi^* Lambda M to Lambda M'}
\phi^*:\phi^*\Lambda M\to\Lambda M'\;.
\end{equation}
Then recall that the pull-back homomorphism
\begin{equation}\label{phi^*: C^infty(M Lambda) to C^infty(M' Lambda)}
\phi^*:C^\infty(M;\Lambda)\to C^\infty(M';\Lambda)
\end{equation}
can be given as the composition
\begin{equation}\label{composition -pull-back - diff forms}
C^\infty(M;\Lambda)\xrightarrow{\phi^*}C^\infty(M';\phi^*\Lambda M)\xrightarrow{\phi^*}C^\infty(M';\Lambda)\;,
\end{equation}
where the first map $\phi^*$ is given by~\eqref{phi^*: C^infty(M E) -> C^infty(M' phi^*E)}, and the second map $\phi^*$ is induced by~\eqref{phi^*: phi^* Lambda M to Lambda M'}.

Now, suppose $\phi$ is a submersion and its vertical subbundle $\VV$ is oriented. Let $\pi_\topd:\Lambda\VV\to\Lambda^\topd\VV$ denote the canonical projection. The orientation of $\VV$ gives a canonical identity $\Omega_\fiber\equiv\Lambda^\topd\VV$. So
\[
\phi^*\Lambda M\otimes\Omega_\fiber\equiv\phi^*\Lambda M\otimes\Lambda^\topd\VV\subset\phi^*\Lambda M\otimes\Lambda\VV\equiv\Lambda M'\;.
\]
Moreover, $\pi_\topd:\Lambda\VV\to\Lambda^\topd\VV$ induces a projection
\begin{equation}\label{pi_top: Lambda M' to phi^*Lambda M otimes Omega_fiber}
\pi_\topd:\Lambda M'\to\phi^*\Lambda M\otimes\Omega_\fiber\;.
\end{equation}
The push-forward homomorphism or \emph{integration along the fibers} \cite[Section~I.6]{BottTu1982},
\begin{equation}\label{phi_*: C^infty_c/cv(M' Lambda) to C^infty_c/.(M Lambda)}
\phi_*:C^\infty_{\co/\cv}(M';\Lambda)\to C^\infty_{\co/{\cdot}}(M;\Lambda)\;,
\end{equation}
can be described as the composition
\begin{equation}\label{composition -push-forward - diff forms}
C^\infty_{\co/\cv}(M';\Lambda)\xrightarrow{\pi_\topd}C^\infty_{\co/\cv}(M';\phi^*\Lambda M\otimes\Omega_\fiber)
\xrightarrow{\phi_*}C^\infty_{\co/{\cdot}}(M;\Lambda)\;,
\end{equation}
where $\pi_\topd$ is induced by~\eqref{pi_top: Lambda M' to phi^*Lambda M otimes Omega_fiber}, and $\phi_*$ is given by~\eqref{phi_*: Cinftyc(M' phi^*E otimes Omega_fiber) -> Cinftyc(M E)} with $E=\Lambda M$.

We also get the push-forward and pull-back maps on currents,
\begin{gather}
\phi_*:C^{-\infty}_{\co/\cv}(M';\Lambda)\to C^{-\infty}_{\co/{\cdot}}(M;\Lambda)\;,
\label{phi_*: C^-infty_c/cv(M' Lambda) to C^-infty_c/.(M Lambda)}\\
\phi^*:C^{-\infty}(M;\Lambda)\to C^{-\infty}(M';\Lambda)\;,
\label{phi^*: C^-infty(M Lambda) to C^-infty(M' Lambda)}
\end{gather}
given by the compositions
\begin{gather}
C^{–\infty}_{\co/\cv}(M';\Lambda)\xrightarrow{\pi_\topd}C^{–\infty}_{\co/\cv}(M';\phi^*\Lambda M\otimes\Omega_\fiber)
\xrightarrow{\phi_*}C^\infty_{\co/{\cdot}}(M;\Lambda)\;,
\label{composition -push-forward - currents}\\
C^{-\infty}(M;\Lambda)\xrightarrow{\phi^*}C^{-\infty}(M';\phi^*\Lambda M)\xrightarrow{\phi^*}C^{-\infty}(M';\Lambda)\;,
\label{composition - pull-back - currents}
\end{gather}
where $\phi_*$ and the first map $\phi^*$ are given by~\eqref{phi_*: C^-infty_c(M' phi^*E otimes Omega_fiber) -> C^-infty_c(M E)}--\eqref{phi_*: C^-infty_cv(M' phi^*E otimes Omega_fiber) -> C^-infty(M E)} with $E=\Lambda M$, and $\pi_\topd$ is induced by~\eqref{pi_top: Lambda M' to phi^*Lambda M otimes Omega_fiber}. The notation $\fint_\phi$ is also used for $\phi_*$, or $\fint_F$ if $\phi$ is a trivial bundle with typical fiber $F$.

\begin{prop}\label{p: phi_* is transpose of phi^* - forms - currents}
The compactly supported case of~\eqref{phi_*: C^-infty_c/cv(M' Lambda) to C^-infty_c/.(M Lambda)} is the transpose of~\eqref{phi^*: C^infty(M Lambda) to C^infty(M' Lambda)}, and~\eqref{phi^*: C^-infty(M Lambda) to C^-infty(M' Lambda)} is the transpose of the compactly supported case of~\eqref{phi_*: C^infty_c/cv(M' Lambda) to C^infty_c/.(M Lambda)}.
\end{prop}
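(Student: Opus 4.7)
The plan is to verify both transpose relations by unwinding the defining compositions~\eqref{composition -pull-back - diff forms}, \eqref{composition -push-forward - diff forms}, \eqref{composition -push-forward - currents} and~\eqref{composition - pull-back - currents}, and applying the already-established transposition relations for pull-back/push-forward of distributional sections of vector bundles from \Cref{ss: pull-back and push-forward of distrib sections}.

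For the first claim, the current push-forward factors via~\eqref{composition -push-forward - currents} as $\pi_\topd$ followed by the bundle-valued push-forward~\eqref{phi_*: C^-infty_c(M' phi^*E otimes Omega_fiber) -> C^-infty_c(M E)} with $E=\Lambda M$; by definition, this second step is the transpose of the smooth pull-back $\phi^*:C^\infty(M;(\Lambda M)^*\otimes\Omega M)\to C^\infty(M';\phi^*((\Lambda M)^*\otimes\Omega M))$. Using the identity $\phi^*\Omega M\equiv\Omega_\fiber^{-1}\otimes\Omega M'$, the transpose of the entire composition~\eqref{composition -push-forward - currents} becomes the smooth pull-back just mentioned followed by the fiberwise transpose of the bundle map~\eqref{pi_top: Lambda M' to phi^*Lambda M otimes Omega_fiber}, viewed as a map $C^\infty(M';\phi^*((\Lambda M)^*\otimes\Omega M))\to C^\infty(M';(\Lambda M')^*\otimes\Omega M')$. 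The heart of the matter is to identify this last map, via the canonical bundle isomorphisms $(\Lambda^k M)^*\otimes\Omega M\cong\Lambda^{n-k}M\otimes o(M)$ (and similarly for $M'$), with the wedge pull-back $\phi^*:\phi^*\Lambda M\to\Lambda M'$ of~\eqref{phi^*: phi^* Lambda M to Lambda M'} (tensored with the identity on the $o$-twists), which is precisely the second step of~\eqref{composition -pull-back - diff forms}.

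That identification is the pointwise linear-algebra core of the proof. Choosing locally a horizontal complement $\HH$ of $\VV=\ker\phi_*$, we obtain $T^*M'\equiv\HH^*\oplus\VV^*$ and $\Lambda^k T^*M'=\bigoplus_{p+q=k}\Lambda^p\HH^*\otimes\Lambda^q\VV^*$. In these terms, \eqref{phi^*: phi^* Lambda M to Lambda M'} is the inclusion $\Lambda\HH^*\hookrightarrow\Lambda T^*M'$ as the summands with $q=0$, while $\pi_\topd$ of~\eqref{pi_top: Lambda M' to phi^*Lambda M otimes Omega_fiber} is the projection onto the summands with $q=\dim\VV$, identified with $\phi^*\Lambda M\otimes\Omega_\fiber$. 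Under the nondegenerate wedge pairing $\Lambda^kT^*M'\otimes\Lambda^{n'-k}T^*M'\to\Lambda^{n'}T^*M'\equiv\Omega M'\otimes o(M')$, these two maps are readily seen to be transposes of each other, and the computation is independent of the choice of $\HH$, so the identity globalizes.

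The second claim is proved by the same strategy: one unwinds~\eqref{composition - pull-back - currents} into the bundle-valued pull-back~\eqref{phi_*: C^-infty(M E) -> C^-infty(M' phi^*E)} followed by the fiberwise map~\eqref{phi^*: phi^* Lambda M to Lambda M'}; by construction~\eqref{phi_*: C^-infty(M E) -> C^-infty(M' phi^*E)} is the transpose of the smooth push-forward~\eqref{phi_*: Cinftyc(M' phi^*E otimes Omega_fiber) -> Cinftyc(M E)} applied with the $(\Lambda M)^*\otimes\Omega M$-twist, and the same fiberwise-transpose identification between $\pi_\topd$ and the wedge pull-back shows that the resulting composition transposes~\eqref{composition -push-forward - diff forms}. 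The main obstacle is the careful bookkeeping of the canonical bundle identifications $(\Lambda^k)^*\otimes\Omega\cong\Lambda^{n-k}\otimes o$ that make the two sides live in comparable spaces; once the pointwise linear-algebra step is in hand, the rest of the proof is a matter of assembling the pieces, and continuity (from the cited results) ensures the relations hold on the full distributional section spaces.
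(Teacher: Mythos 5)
Your proof is correct, but takes a genuinely different route from the paper's. The paper first reduces to the oriented case via double covers, then uses the identification $C^{-\infty}_{{\cdot}/\co}(M;\Lambda^k)\equiv C^\infty_{\co/{\cdot}}(M;\Lambda^{n-k})'$ together with density of smooth forms in currents to reduce the whole transposition statement to a single identity on smooth forms, namely the projection formula $\int_{M'}\phi^*\alpha\wedge\beta=\int_M\alpha\wedge\phi_*\beta$, which is quoted from Bott--Tu. You instead unwind the defining compositions~\eqref{composition -push-forward - currents} and~\eqref{composition - pull-back - currents} into their two stages, note that the bundle-valued stages are transposes of each other by the very construction in \Cref{ss: pull-back and push-forward of distrib sections}, and then reduce the whole matter to the pointwise linear-algebra observation that the inclusion $\phi^*:\phi^*\Lambda M\hookrightarrow\Lambda M'$ and the top-degree projection $\pi_\topd:\Lambda M'\to\phi^*\Lambda M\otimes\Omega_\fiber$ are adjoint to one another under the wedge pairings, after the canonical identifications $(\Lambda^k)^*\otimes\Omega\cong\Lambda^{n-k}\otimes o$. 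The fiberwise computation (only the $\Lambda^{n-a}\HH^*\otimes\Lambda^p\VV^*$ component of $v$ survives in $\iota(u)\wedge v$) is right, and independence of the choice of $\HH$ makes it global. What the paper's route buys is brevity and offloading the global identity to a standard reference; what your route buys is self-containment — no external citation and no density argument, since everything is defined by transposition to begin with — at the cost of more bookkeeping of bundle identifications and $o$-twists, which you gesture at but would need to execute carefully (in particular $o(M')\equiv\phi^*o(M)\otimes o(\VV)$, with $o(\VV)$ trivialized by the fiber orientation) to make the argument airtight.
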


\begin{proof}
By passing to double covers of orientations, we can assume $M$ and $M'$ are oriented, and therefore we can use~\eqref{C^-infty(M Lambda^r) equiv C^infty_c(M Lambda^n-r)'}. By the density of the space of smooth forms in the space of currents (\Cref{ss: smooth/distributional sections}), it is enough to check the statement on smooth forms, where it is given by \cite[Proposition~6.15~(b)]{BottTu1982}: for $\alpha\in C^\infty(M;\Lambda)$ and $\beta\in\Cinftyc(M';\Lambda)$,
\[
(\phi^*\alpha,\beta)=\int_{M'}\phi^*\alpha\wedge\beta=\int_M\alpha\wedge\phi_*\beta=(\alpha,\phi_*\beta)\;.\qedhere
\]
\end{proof}

\subsection{Homotopy operators}\label{ss: homotopy opers}

Recall that any smooth homotopy, $H:M'\times I\to M$ ($I=[0,1]$), induces a continuous homotopy operator $\sh:C^\infty(M';\Lambda)\to C^\infty(M;\Lambda)$ (a linear map, which is homogeneous of degree $-1$, and satisfies $H_1^*-H_0^*=\sh d+d\sh$, where $H_t=H(\cdot,t):M'\to M$). For instance, we can take $\sh$ equal to the composition \cite[Section~4]{BottTu1982}
\begin{equation}\label{sh = fint_I H^*}
C^\infty(M;\Lambda) \xrightarrow{H^*} C^\infty(M'\times I;\Lambda)
\xrightarrow{\fint_I} C^\infty(M';\Lambda)\;.
\end{equation}

\subsection{Pull-back of conormal currents}\label{ss: pull-back of conormal currents}

With the notations and conditions of \Cref{ss: pull-back of conormal distribs}, the map~\eqref{phi^*: C^infty(M Lambda) to C^infty(M' Lambda)} has a continuous extension
\begin{equation}\label{phi^*: I(M L Lambda) to I(M' L' Lambda)}
\phi^*:I(M,L;\Lambda)\to I(M',L;\Lambda)\;,
\end{equation}
which can be given as the composition
\begin{equation}\label{composition - pull-back - conormal currents}
I(M,L;\Lambda)\xrightarrow{\phi^*}I(M',L';\phi^*\Lambda M)\xrightarrow{\phi^*}I(M',L;\Lambda)\;,
\end{equation}
where the first map $\phi^*$ is given by~\eqref{phi^*: I(M L E) -> I(M' L' phi^*E)} with $E=\Lambda M$, and the second map $\phi^*$ is induced by~\eqref{phi^*: phi^* Lambda M to Lambda M'}. If $\phi$ is a smooth submersion with oriented vertical subbundle, then~\eqref{phi^*: I(M L Lambda) to I(M' L' Lambda)} is also a restriction of~\eqref{phi^*: C^-infty(M Lambda) to C^-infty(M' Lambda)}.

Similarly, when $L$ is of codimension one, there are continuous homomorphisms,
\begin{gather}
\phi^*:K(M,L;\Lambda)\to K(M',L';\Lambda)\;,\label{phi^*: K(M L Lambda) to K(M' L' Lambda)}\\
\phi^*:J(M,L;\Lambda)\to J(M',L';\Lambda)\;,\label{phi^*: J(M L Lambda) to J(M' L' Lambda)}
\end{gather}
which can be given as the compositions
\begin{gather}
K(M,L;\Lambda)\xrightarrow{\phi^*}K(M',L';\phi^*\Lambda M)\xrightarrow{\phi^*}K(M',L;\Lambda)\;,
\label{composition - pull-back - K-currents}\\
J(M,L;\Lambda)\xrightarrow{\phi^*}J(M',L';\phi^*\Lambda M)\xrightarrow{\phi^*}J(M',L;\Lambda)\;,
\label{composition - pull-back - J-currents}
\end{gather}
where the first maps $\phi^*$ are given by the analogs of~\eqref{phi^*: I(M L E) -> I(M' L' phi^*E)} with $E=\Lambda M$ for~\eqref{phi^*: K(M L) to K(M' L')} and~\eqref{phi^*: J(M L) to J(M' L')}, and the second maps $\phi^*$ are induced by~\eqref{phi^*: phi^* Lambda M to Lambda M'}.

\subsection{Push-forward of conormal currents}\label{ss: push-forward of conormal currents}

With the notations and conditions of \Cref{ss: push-forward of conormal distribs}, assume also that the vertical subbundle of $\phi$ is oriented. Then the push-forward homomorphism~\eqref{phi_*: C^infty_c/cv(M' Lambda) to C^infty_c/.(M Lambda)} has a continuous extension
\begin{equation}\label{phi_*: I_c/cv(M' L' Lambda) to I_c/.(M L Lambda)}
\phi_*:I_{\co/\cv}(M',L';\Lambda)\to I_{\co/{\cdot}}(M,L;\Lambda)\;,
\end{equation}
which can be described as the composition
\begin{equation}\label{composition - push-forward - conormal currents}
I_{\co/\cv}(M',L';\Lambda)\xrightarrow{\pi_\topd}I_{\co/\cv}(M',L';\phi^*\Lambda M\otimes\Omega_\fiber)
\xrightarrow{\phi_*}I_{\co/{\cdot}}(M,L;\Lambda)\;,
\end{equation}
where $\pi_\topd$ is induced by~\eqref{pi_top: Lambda M' to phi^*Lambda M otimes Omega_fiber}, and $\phi_*$ is given by~\eqref{phi_*: I_c/cv(M' L' phi^*E otimes Omega_fiber) -> I_c/.(M L E)} with $E=\Lambda M$. The map~\eqref{phi_*: I_c/cv(M' L' Lambda) to I_c/.(M L Lambda)} is also a restriction of~\eqref{phi_*: C^-infty_c/cv(M' Lambda) to C^-infty_c/.(M Lambda)}.

Similarly, if $L$ is of codimension one, there are continuous homomorphisms,
\begin{gather}
\phi_*:K_{\co/\cv}(M',L';\Lambda)\to K_{\co/{\cdot}}(M,L;\Lambda)\;,
\label{phi_*: K_c/cv(M' L' Lambda) to K_c/.(M L Lambda)}\\
\phi_*:J_{\co/\cv}(M',L';\Lambda)\to J_{\co/{\cdot}}(M,L;\Lambda)\;.
\label{phi_*: J_c/cv(M' L' Lambda) to J_c/.(M L Lambda)}
\end{gather}
which can be described as the compositions
\begin{gather}
K_{\co/\cv}(M',L';\Lambda)\xrightarrow{\pi_\topd}K_{\co/\cv}(M',L';\phi^*\Lambda M\otimes\Omega_\fiber)
\xrightarrow{\phi_*}K_{\co/{\cdot}}(M,L;\Lambda)\;,
\label{composition - push-forward - K-currents}\\
J_{\co/\cv}(M',L';\Lambda)\xrightarrow{\pi_\topd}J_{\co/\cv}(M',L';\phi^*\Lambda M\otimes\Omega_\fiber)
\xrightarrow{\phi_*}J_{\co/{\cdot}}(M,L;\Lambda)\;,
\label{composition - push-forward - J-currents}
\end{gather}
where the maps $\pi_\topd$ are induced by~\eqref{pi_top: Lambda M' to phi^*Lambda M otimes Omega_fiber}, and the maps $\phi_*$ are given by the analogs of~\eqref{phi_*: I_c/cv(M' L' phi^*E otimes Omega_fiber) -> I_c/.(M L E)} with $E=\Lambda M$ for~\eqref{phi_*: K(M' L' Omega_fiber) to K(M L)} and~\eqref{phi_*: J(M' L' Omega_fiber) to J(M L)}.

\subsection{Pull-back of dual-conormal currents}\label{ss: pull-back of dual-conormal currents}

Consider the notations and conditions of \Cref{ss: pull-back of dual-conormal distributions} (the same as in \Cref{ss: push-forward of conormal distribs}). The map~\eqref{phi^*: C^infty(M Lambda) to C^infty(M' Lambda)} has a continuous extension
\begin{equation}\label{phi^*: I'(M L Lambda) to I'(M' L' Lambda)}
\phi^*:I'(M,L;\Lambda)\to I'(M',L';\Lambda)\;,
\end{equation}
which can be given as the composition
\[
I'(M,L;\Lambda)\xrightarrow{\phi^*}I'(M',L';\phi^*\Lambda M)\xrightarrow{\phi^*}I'(M',L';\Lambda)\;,
\]
using~\eqref{phi^*: I'(M L E) -> I'(M' L' phi^*E)} like in~\eqref{composition - pull-back - conormal currents}. The map~\eqref{phi^*: I'(M L Lambda) to I'(M' L' Lambda)} is also a restriction of~\eqref{phi^*: C^-infty(M Lambda) to C^-infty(M' Lambda)}.

Similarly, when $L$ is of codimension one, there are continuous homomorphisms,
\begin{gather}
\phi^*:K'(M,L;\Lambda)\to K'(M',L';\Lambda)\;,\label{phi^*: K'(M L Lambda) to K'(M' L' Lambda)}\\
\phi^*:J'(M,L;\Lambda)\to J'(M',L';\Lambda)\;,\label{phi^*: J'(M L Lambda) to J'(M' L' Lambda)}
\end{gather}
which can be given as the compositions
\begin{gather*}
K'(M,L;\Lambda)\xrightarrow{\phi^*}K'(M',L';\phi^*\Lambda M)\xrightarrow{\phi^*}K'(M',L';\Lambda)\;,\\
J'(M,L;\Lambda)\xrightarrow{\phi^*}J'(M',L';\phi^*\Lambda M)\xrightarrow{\phi^*}J'(M',L';\Lambda)\;,
\end{gather*}
using the analogs of~\eqref{phi^*: I'(M L E) -> I'(M' L' phi^*E)} for~\eqref{phi^*: K'(M L) to K'(M' L')} and~\eqref{phi^*: J'(M L) to J'(M' L')} like in~\eqref{composition - pull-back - K-currents} and~\eqref{composition - pull-back - J-currents}.

\subsection{Push-forward of dual-conormal currents}\label{ss: push-forward of dual-conormal currents}

With the notations and conditions of \Cref{ss: push-forward of dual-conormal distributions}, assume also that the vertical subbundle of $\phi$ is oriented. Then the map~\eqref{phi_*: C^infty_c/cv(M' Lambda) to C^infty_c/.(M Lambda)} has a continuous extension
\begin{equation}\label{phi_*: I'_c/cv(M' L' Lambda) to I'_c/.(M L Lambda)}
\phi_*:I'_{\co/\cv}(M',L';\Lambda)\to I'_{\co/{\cdot}}(M,L;\Lambda)\;,
\end{equation}
which can be described as the composition
\[
I'_{\co/\cv}(M',L';\Lambda)\xrightarrow{\pi_\topd}I'_{\co/\cv}(M',L';\phi^*\Lambda M\otimes\Omega_\fiber)
\xrightarrow{\phi_*}I'_{\co/{\cdot}}(M,L;\Lambda)\;,
\]
using~\eqref{phi_*: I'_c/cv(M' L' phi^*E otimes Omega_fiber) -> I'_c/.(M L E)} like in~\eqref{composition - push-forward - conormal currents}. The map~\eqref{phi_*: I'_c/cv(M' L' Lambda) to I'_c/.(M L Lambda)} is also a restriction of~\eqref{phi_*: C^-infty_c/cv(M' Lambda) to C^-infty_c/.(M Lambda)}.

Similarly, if $L$ is of codimension one, there are continuous homomorphisms,
\begin{gather}
\phi_*:K'_{\co/\cv}(M',L';\Lambda)\to K'_{\co/{\cdot}}(M,L;\Lambda)\;,
\label{phi_*: K'_c/cv(M' L' Lambda) to K'_c/.(M L Lambda)}\\
\phi_*:J'_{\co/\cv}(M',L';\Lambda)\to J'_{\co/{\cdot}}(M,L;\Lambda)\;,
\label{phi_*: J'_c/cv(M' L' Lambda) to J'_c/.(M L Lambda)}
\end{gather}
which can be described as the compositions
\begin{gather*}
K'_{\co/\cv}(M',L';\Lambda)\xrightarrow{\pi_\topd}K'_{\co/\cv}(M',L';\phi^*\Lambda M\otimes\Omega_\fiber)
\xrightarrow{\phi_*}K'_{\co/{\cdot}}(M,L;\Lambda)\;,\\
J'_{\co/\cv}(M',L';\Lambda)\xrightarrow{\pi_\topd}J'_{\co/\cv}(M',L';\phi^*\Lambda M\otimes\Omega_\fiber)
\xrightarrow{\phi_*}J'_{\co/{\cdot}}(M,L;\Lambda)\;,
\end{gather*}
using the analogs of~\eqref{phi_*: I'_c/cv(M' L' phi^*E otimes Omega_fiber) -> I'_c/.(M L E)} for~\eqref{phi_*: K'(M' L' Omega_fiber) to K'(M L)} and~\eqref{phi_*: J'(M' L' Omega_fiber) to J'(M L)} like in~\eqref{composition - push-forward - K-currents} and~\eqref{composition - push-forward - J-currents}.

\begin{prop}\label{p: phi_* is transpose of phi^* - I-currents - I'-currents}
The compact-support cases of~\eqref{phi_*: I'_c/cv(M' L' Lambda) to I'_c/.(M L Lambda)}--\eqref{phi_*: J'_c/cv(M' L' Lambda) to J'_c/.(M L Lambda)} are transposes of~\eqref{phi^*: I(M L Lambda) to I(M' L' Lambda)},~\eqref{phi^*: K(M L Lambda) to K(M' L' Lambda)} and~\eqref{phi^*: J(M L Lambda) to J(M' L' Lambda)}; and~\eqref{phi^*: I'(M L Lambda) to I'(M' L' Lambda)}--\eqref{phi^*: J'(M L Lambda) to J'(M' L' Lambda)} are transposes of the compact-support cases of~\eqref{phi_*: I_c/cv(M' L' Lambda) to I_c/.(M L Lambda)},~\eqref{phi_*: K_c/cv(M' L' Lambda) to K_c/.(M L Lambda)} and~\eqref{phi_*: J_c/cv(M' L' Lambda) to J_c/.(M L Lambda)}.
\end{prop}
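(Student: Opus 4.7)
My plan is to reduce the transpose identity to its analogue for smooth forms and currents, already established in \Cref{p: phi_* is transpose of phi^* - forms - currents}, by exploiting the fact that all the conormal and dual-conormal versions of pull-back and push-forward are restrictions of their $C^{\pm\infty}(\cdot;\Lambda)$ counterparts. The two halves of the statement---push-forward on $I'$, $K'$, $J'$ as transpose of pull-back on $I$, $K$, $J$, and pull-back on $I'$, $K'$, $J'$ as transpose of push-forward on $I$, $K$, $J$---are parallel, and I would treat them by the same mechanism.

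First, as in the proof of \Cref{p: phi_* is transpose of phi^* - forms - currents}, I would pass to double covers of orientations of $M$ and $M'$ to reduce to the oriented case. Under this reduction, the isomorphism~\eqref{(Lambda^rM)^* otimes Omega M equiv Lambda^n-rM} becomes canonical, so the defining pairings for $I'$, $J'$, $K'$ as duals of $\Omega$-twisted conormal/dual-conormal spaces are identified with wedge-product-and-integrate pairings; by~\eqref{extension of exterior product to currents}, these are realized as restrictions of the standard duality between $C^{-\infty}_{\co}(\cdot;\Lambda)$ and $C^\infty(\cdot;\Lambda)$, so the abstract transpose pairing and the wedge pairing agree on all the subspaces in sight.

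Next I would combine two ingredients. On the one hand, the maps~\eqref{phi^*: I(M L Lambda) to I(M' L' Lambda)},~\eqref{phi^*: K(M L Lambda) to K(M' L' Lambda)},~\eqref{phi^*: J(M L Lambda) to J(M' L' Lambda)} are restrictions of the current pull-back~\eqref{phi^*: C^-infty(M Lambda) to C^-infty(M' Lambda)}, while the compact-support cases of~\eqref{phi_*: I'_c/cv(M' L' Lambda) to I'_c/.(M L Lambda)},~\eqref{phi_*: K'_c/cv(M' L' Lambda) to K'_c/.(M L Lambda)},~\eqref{phi_*: J'_c/cv(M' L' Lambda) to J'_c/.(M L Lambda)} are restrictions of the compact-support case of~\eqref{phi_*: C^-infty_c/cv(M' Lambda) to C^-infty_c/.(M Lambda)}; these restriction statements are recorded at the ends of \Cref{ss: pull-back of conormal currents,ss: push-forward of conormal currents,ss: pull-back of dual-conormal currents,ss: push-forward of dual-conormal currents}. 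On the other hand, \Cref{p: phi_* is transpose of phi^* - forms - currents} provides the transpose identity at the ambient $C^{\pm\infty}$ level. Combined with the pairing compatibility from the previous paragraph, this shows that for $\beta$ in the appropriate compactly supported dual-conormal space and $\alpha$ in the matching conormal space, the identity $\langle\phi_*\beta,\alpha\rangle=\langle\beta,\phi^*\alpha\rangle$ descends from $C^{\pm\infty}(\cdot;\Lambda)$ to the subspaces of interest.

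For the $I$--$I'$ and $J$--$J'$ pairs, the argument is completed by density and continuity: smooth forms (with compact support away from $L$ in the $J, J'$ cases) are dense in the relevant spaces by the density results in \Cref{ss: conormal - Sobolev order - compact,ss: I(M L) - non-compact,ss: J(M L),ss: AA'(M),ss: description of J'(M L)}, and all maps and pairings involved are continuous. The main obstacle is the $K$--$K'$ pair, where $K(M,L;\Lambda)$ contains no nonzero smooth form at all: it is described (\Cref{ss: description of K(M L)}) as the direct sum $\bigoplus_{m=0}^\infty C^\infty(L;\Lambda\otimes\Omega^{-1}NL)$ via $\partial_x^m\delta_L^u$. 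Here I would work directly on this generating family together with the analogous description~\eqref{K'(M L) cong prod_m=0^infty C^-infty(L)} of $K'$, computing $\phi^*$ and $\phi_*$ on Dirac sections via~\eqref{A delta_L^u} and~\eqref{alpha wedge delta_L^beta} and the fact that $\phi$ restricts to a smooth submersion $L'\to L$ with $\phi^{-1}(L)=L'$. This reduces the transpose identity on $K$--$K'$ to the analogous one on $C^\infty(L;\cdot)$--$C^{-\infty}(L;\cdot)$, which is a direct application of \Cref{p: phi_* is transpose of phi^* - forms - currents} to the base manifolds.
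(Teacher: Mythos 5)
Your argument is correct in spirit but follows a genuinely different route for the $K$--$K'$ and $J$--$J'$ cases, and would require some extra work that the paper avoids. The paper handles $I$--$I'$ by comparing commutative diagrams across the dense inclusions into $C^{\pm\infty}$ and invoking \Cref{p: phi_* is transpose of phi^* - forms - currents} --- essentially the same idea as your density step --- but then it deduces $K$--$K'$ and $J$--$J'$ \emph{from} the $I$--$I'$ case via the (dual-)conormal sequences: the pull-backs and push-forwards commute with $\iota$, $R$, $\iota'$, $R'$, and the surjectivity of $R'\colon I'\to K'$ (for $K$) together with the injectivity of $\iota'\colon J'\hookrightarrow I'$ (for $J$) transfer the identity in one step. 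This entirely avoids the obstacle you flag for $K$--$K'$, since $K$ itself never enters and density in $K$ is never needed. Your route for $J$--$J'$ can be made to work but needs a correction: $J(M,L;\Lambda)$ is not literally a subspace of $C^{-\infty}(M;\Lambda)$ (it sits in the quotient $C^{-\infty}(M,L;\Lambda)$), so the ``restriction'' claim as you state it is imprecise; the density argument does go through using $\Cinftyc(M'\setminus L';\Lambda)$ dense in $J'_\co(M',L';\Lambda)$ and $C^\infty(M;\Lambda^*\otimes\Omega)$ dense in $J(M,L;\Lambda^*\otimes\Omega)$, where all pairings reduce to the ambient wedge-and-integrate pairing. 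Your Dirac-section computation for $K$--$K'$ is plausible but would first require establishing that $\phi^*$ and $\phi_*$ act diagonally on the direct-sum/product descriptions of $K$ and $K'$ with coefficients in $\Lambda M$; the paper only proves this diagonality for the leafwise bundle $\Lambda\FF$ (\Cref{p: phi^* equiv bigoplus_k phi^*,p: phi_* equiv bigoplus_k phi_*,p: phi^* equiv prod_k phi^*}), and the items you cite,~\eqref{A delta_L^u} and~\eqref{alpha wedge delta_L^beta}, are ingredients for such a computation rather than a substitute for it. The exact-sequence deduction is shorter precisely because it renders all of these extra verifications unnecessary.
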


\begin{proof}
We have the commutative diagrams
\[
\begin{CD}
I_\co(M',L';\Lambda) @>{\phi_*}>> I_\co(M,L;\Lambda) \\
@AAA @AAA \\
\Cinftyc(M';\Lambda) @>{\phi_*}>> \Cinftyc(M;\Lambda)
\end{CD}
\qquad
\begin{CD}
I'(M',L';\Lambda) @<{\phi^*}<< I'(M,L;\Lambda)\phantom{\;.} \\
@VVV @VVV \\
C^{-\infty}(M';\Lambda) @<{\phi^*}<< C^{-\infty}(M;\Lambda)\;,
\end{CD}
\]
where the vertical arrows are continuous dense inclusions given by~\eqref{C^infty(M) subset I^(infty)(M L)} and~\eqref{C^-infty(M) supset I'(M L) supset C^infty(M)} with $\Lambda M$. By \Cref{p: phi_* is transpose of phi^* - forms - currents}, the transpose of the first diagram is
\[
\begin{CD}
I'(M',L';\Lambda) @<{(\phi_*)^\trans}<< I'(M,L;\Lambda)\phantom{\;.} \\
@VVV @VVV \\
C^{-\infty}(M';\Lambda) @<{\phi^*}<< C^{-\infty}(M;\Lambda)\;,
\end{CD}
\]
where the vertical arrows are again inclusion maps. Comparing the second and third diagrams, we get
\begin{equation}\label{(phi_*)^t = phi^*: I'(M L Lambda) to I'(M' L' Lambda)}
(\phi_*)^\trans=\phi^*:I'(M,L;\Lambda)\to I'(M',L';\Lambda)\;.
\end{equation}

The analogous argument with the commutative diagrams
\[
\begin{CD}
I(M',L';\Lambda) @<{\phi^*}<< I(M,L;\Lambda) \\
@AAA @AAA \\
C^\infty(M';\Lambda) @<{\phi^*}<< C^\infty(M;\Lambda)
\end{CD}
\qquad
\begin{CD}
I'_\co(M',L';\Lambda) @>{\phi_*}>> I'_\co(M,L;\Lambda) \\
@VVV @VVV \\
C^{-\infty}_\co(M';\Lambda) @>{\phi_*}>> C^{-\infty}_\co(M;\Lambda)
\end{CD}
\]
shows that
\begin{equation}\label{(phi^*)^t = phi_*: I'(M' L' Lambda) to I'(M L Lambda)}
(\phi^*)^\trans=\phi_*:I'_\co(M',L';\Lambda)\to I'_\co(M,L;\Lambda)\;.
\end{equation}

Next, consider the commutative diagrams
\[
\begin{CD}
K_\co(M',L';\Lambda) @>{\phi_*}>> K_\co(M,L;\Lambda) \\
@V{\iota}VV @VV{\iota}V \\
I_\co(M',L';\Lambda) @>{\phi_*}>> I_\co(M,L;\Lambda)
\end{CD}
\qquad
\begin{CD}
K'(M',L';\Lambda) @<{\phi^*}<< K'(M,L;\Lambda)\phantom{\;.} \\
@A{R'}AA @AA{R'}A \\
I'(M',L';\Lambda) @<{\phi^*}<< I'(M,L;\Lambda)\;.
\end{CD}
\]
As above, comparing the second one with the transposition of the first one, and using~\eqref{(phi_*)^t = phi^*: I'(M L Lambda) to I'(M' L' Lambda)} and the surjectivity of $R':I'(M,L;\Lambda)\to K'(M,L;\Lambda)$ (\Cref{ss: dual-conormal seq}), we get
\[
(\phi_*)^\trans=\phi^*:K'(M,L;\Lambda)\to K'(M',L';\Lambda)\;.
\] 

A similar argument with the commutative diagrams
\[
\begin{CD}
K(M',L';\Lambda) @<{\phi^*}<< K(M,L;\Lambda) \\
@V{\iota}VV @VV{\iota}V \\
I(M',L';\Lambda) @<{\phi^*}<< I(M,L;\Lambda)
\end{CD}
\qquad
\begin{CD}
K'_\co(M',L';\Lambda) @>{\phi_*}>> K'_\co(M,L;\Lambda)\phantom{\;,} \\
@A{R'}AA @AA{R'}A \\
I'_\co(M',L';\Lambda) @>{\phi_*}>> I'_\co(M,L;\Lambda)\;,
\end{CD}
\]
using~\eqref{(phi^*)^t = phi_*: I'(M' L' Lambda) to I'(M L Lambda)}, shows that
\[
(\phi^*)^\trans=\phi_*:K'_\co(M',L';\Lambda)\to K'_\co(M,L;\Lambda)\;.
\]

Now, consider the commutative diagrams
\[
\begin{CD}
J_\co(M',L';\Lambda) @>{\phi_*}>> J_\co(M,L;\Lambda) \\
@A{R}AA @AA{R}A \\
I_\co(M',L';\Lambda) @>{\phi_*}>> I_\co(M,L;\Lambda)
\end{CD}
\qquad
\begin{CD}
J'(M',L';\Lambda) @<{\phi^*}<< J'(M,L;\Lambda)\phantom{\;.} \\
@V{\iota'}VV @VV{\iota'}V \\
I'(M',L';\Lambda) @<{\phi^*}<< I'(M,L;\Lambda)\;.
\end{CD}
\]
Again, comparing the second one with the transposition of the first one, and using~\eqref{(phi_*)^t = phi^*: I'(M L Lambda) to I'(M' L' Lambda)} and the injectivity of $\iota':J'(M,L;\Lambda)\to I'(M,L;\Lambda)$ (\Cref{ss: dual-conormal seq}), we get
\[
(\phi_*)^\trans=\phi^*:J'(M,L;\Lambda)\to J'(M',L';\Lambda)\;.
\] 

Finally, the same argument with the commutative diagrams
\[
\begin{CD}
J(M',L';\Lambda) @<{\phi^*}<< J(M,L;\Lambda) \\
@A{R}AA @AA{R}A \\
I(M',L';\Lambda) @<{\phi^*}<< I(M,L;\Lambda)
\end{CD}
\qquad
\begin{CD}
J'_\co(M',L';\Lambda) @>{\phi_*}>> J'_\co(M,L;\Lambda)\phantom{\;,} \\
@V{\iota'}VV @VV{\iota'}V \\
I'_\co(M',L';\Lambda) @>{\phi_*}>> I'_\co(M,L;\Lambda)\;,
\end{CD}
\]
using~\eqref{(phi^*)^t = phi_*: I'(M' L' Lambda) to I'(M L Lambda)}, gives
\[
(\phi^*)^\trans=\phi_*:J'_\co(M',L';\Lambda)\to J'_\co(M,L;\Lambda)\;.\qedhere
\]
\end{proof}

\section{Witten's perturbation of the de~Rham complex}\label{s: Witten}

\subsection{Witten's complex}\label{ss: Witten's complex}

The notation $z=\mu+i\lambda\in\C$ ($i=\sqrt{-1}$) will be used for a complex parameter. Any closed real $\eta\in C^\infty(M;\Lambda^1)$ induces the \emph{Witten's operators} on $C^\infty(M;\Lambda)$, depending on the parameter $z\in\C$ \cite{Witten1982,Novikov1981,Novikov1982,Pajitnov1987,BravermanFarber1997}, \index{Witten's operators}
\begin{equation}\label{perturbed opers}
\left\{
\begin{aligned}
d_z&=d+z\,\eta\wedge\;,\quad
\delta_z=d_z^*=\delta-\bar z\,{\eta\lrcorner}\;,\\
D_z&=d_z+\delta_z=D+\mu\hat c(\eta)+i\lambda c(\eta)\;,\\
\Delta_z&=D_z^2=d_z\delta_z+\delta_zd_z=\Delta+\mu\sH_\eta-i\lambda\sJ_\eta+|z|^2|\eta|^2\;,
\end{aligned}
\right.
\end{equation}
where $\sH_\eta=\LL_V+\LL_V^*$ is of order zero and $\sJ_\eta=\LL_V-\LL_V^*$ is of order one. Here, $d_z$ is an elliptic complex, giving rise to the \emph{twisted cohomology} $H_z^\bullet(M)$, whose isomorphism class depends only on the real class $\xi:=[\eta]\in H^1(M)$ and $z\in\C$. The more explicit notation $d_{z\eta}$, $\delta_{z\eta}$, $D_{z\eta}$ and $\Delta_{z\eta}$ may be used if needed.

Suppose the manifold $M$ is closed, and let $n=\dim M$. Then $\Delta_z$ has a discrete spectrum, and the perturbed operators satisfy~\eqref{Hodge}. We get the \emph{twisted Betti numbers}, $\beta_z^k=\beta_z^k(M,\xi)=\dim H_z^k(M)$ ($k=0,\dots,n$), \index{$\beta_z^k$} whose alternate sum is the Euler characteristic, $\sum_k(-1)^k\beta_z^k=\chi(M)$ \cite[Proposition~1.40]{Farber2004}. Every $\beta_z^k$ is independent of $z$ outside a discrete subset of $\C$, where $\beta_z^k$ jumps (Mityagin and Novikov \cite[Theorem~1]{Novikov2002}). This ground value of $\beta_z^k$, denoted by $\betaNo^k=\betaNo^k(M,\xi)$, is called the $k$th \emph{Novikov Betti number}. \index{$\betaNo^k$} \index{Novikov Betti number} Moreover $\beta_z^k=\betaNo^k$ for $|\mu|\gg0$ \cite[Theorem~2.8]{Farber1995}, \cite[Lemma~1.3]{BravermanFarber1997} \cite[Eq.~(2.9)]{AlvKordyLeichtnam-ziomf}.

Since $\eta$ is real, we have $\overline{d_z\alpha}=d_{\bar z}\bar\alpha$ for all $\alpha\in C^\infty(M;\Lambda)$. So conjugation induces a $\C$-antilinear isomorphism $H_z^k(M)\cong H_{\bar z}^k(M)$, yielding $\beta_z^k=\beta_{\bar z}^k$.

For $\alpha\in C^\infty(M;\Lambda^r)$ and $\beta\in C^\infty(M;\Lambda)$, we have
\begin{equation}\label{d antiderivative with d_z and d_-z}
d(\alpha\wedge\beta)=d_z\alpha\wedge\beta+(-1)^r\alpha\wedge d_{-z}\beta\;.
\end{equation}
It follows that the mappings $(\alpha,\beta)\mapsto\alpha\wedge\beta$ and $(\alpha,\beta)\mapsto\alpha\wedge\bar\beta$ induce maps,
\begin{equation}\label{bilinear and sesquilinear maps}
H_z^r(M)\times H_{-z}^s(M) \to H^{r+s}(M)\;,\quad
H_z^r(M)\times H_{-\bar z}^s(M) \to H^{r+s}(M)\;,
\end{equation}
the first one is bilinear and the second one is sesquilinear.  By density and continuity, the formula~\eqref{d antiderivative with d_z and d_-z} has an extension to the product~\eqref{extension of exterior product to currents} of smooth differential forms and currents.

\subsection{Interpretation as coefficients in a flat line bundle}\label{ss: flat line bundle}

If $\eta=dF$ for some real function $F\in C^\infty(M)$, we get the original operators introduced by Witten \cite{Witten1982}, which satisfy
\begin{equation}\label{Witten's opers}
\left\{
\begin{gathered}
d_z=e^{-zF}\,d\,e^{zF}=e^{-i\lambda F}\,d_\mu\,e^{i\lambda F}\;,\quad
\delta_z=e^{\bar zF}\,\delta\,e^{-\bar zF}=e^{-i\lambda F}\,\delta_\mu\,e^{i\lambda F}\;,\\
D_z=e^{-i\lambda F}\,D_\mu\,e^{i\lambda F}\;,\quad
\Delta_z=e^{-i\lambda F}\,\Delta_\mu\,e^{i\lambda F}\;. 
\end{gathered}
\right.
\end{equation}
Thus we have an isomorphism of differential complexes,
\[
e^{zF}:(C^\infty(M;\Lambda),d_z)\xrightarrow{\cong}(C^\infty(M;\Lambda),d)\;,
\]
which induces an isomorphism $H_z^\bullet(M)\cong H^\bullet(M)$.

Let $\LL$ be the trivial line bundle $M\times\C$ with the flat structure that corresponds to the trivial flat structure by the multiplication isomorphism $e^F:\LL\to M\times\C$, $(p,u)\mapsto(p,e^{F(p)}u)$. Its flat covariant derivative is determined by the condition $d^\LL1=dF$. Every power $\LL^z$ is similarly defined by the function $zF$. We have $d_z\equiv d^{\LL^z}$ on $C^\infty(M;\Lambda)\equiv C^\infty(M;\Lambda\otimes\LL^z)$. Moreover $\delta_z\equiv\delta^{\LL^z}$ and $\Delta_z\equiv\Delta^{\LL^z}$ using the standard Hermitian structure on $\LL^z$. 

For arbitrary $\eta$, take the minimal regular covering $\pi:\widetilde M\to M$ so that the lift $\tilde\eta$ of $\eta$ is exact, say $\tilde\eta=dF$ for some real function $F\in C^\infty(\widetilde M)$. Thus $d_{\widetilde M,z}=e^{-zF}\,d_{\widetilde M}\,e^{zF}$ on $C^\infty(\widetilde M;\Lambda)$ corresponds to $d_{M,z}$ on $C^\infty(M;\Lambda)$ via the injection $\pi^*:C^\infty(M;\Lambda)\to C^\infty(\widetilde M;\Lambda)$. Let $\Gamma=\Aut(\pi)$ be the group of deck transformations of $\widetilde M$. The action of every $\gamma\in\Gamma$ will be denoted by $T_\gamma$ or by $\tilde p\mapsto\gamma\cdot\tilde p$. Since $dF$ is $\Gamma$-invariant, there is a monomorphism $\Gamma\to\R$, $\gamma\mapsto c_\gamma$, so that $F(\gamma\cdot\tilde p)=F(\tilde p)+c_\gamma$ for all $\tilde p\in\widetilde M$; its image is the group of periods of the cohomology class $[\eta]$. 

Let $\widetilde\LL$ be the flat line bundle over $\widetilde M$ defined with $F$ as above. The flat structure of $\widetilde\LL$ is invariant by the first factor action of $\Gamma$ on $\widetilde\LL$, given by $\gamma\cdot_1(\tilde p,u)=(\gamma\cdot\tilde p,u)$. Thus the corresponding quotient Hermitian line bundle $\LL\equiv M\times\C$ has an induced flat structure determined by the condition $d^\LL1=z\eta$. We have $d_z\equiv d^{\LL^z}$, $\delta_z\equiv\delta^{\LL^z}$ and $\Delta_z\equiv\Delta^{\LL^z}$ on $C^\infty(M;\Lambda)\equiv C^\infty(M;\Lambda\otimes\LL^z)$.

Using the monomorphism $\Gamma\to\R^\times$, $\gamma\mapsto a_\gamma:=e^{c_\gamma}$, we can also define the diagonal action of $\Gamma$ on $\widetilde M\times\C$, $\gamma\cdot(\tilde p,u)=(\gamma\cdot\tilde p,a_\gamma u)$, which preserves the vector bundle and trivial flat structures. Moreover the isomorphism $e^F:\widetilde\LL\to\widetilde M\times\C$ is equivariant with respect to the first factor and diagonal actions of $\Gamma$. Hence $\LL$ can be also described as the quotient of the trivial flat line bundle $\widetilde M\times\C$ by the diagonal action of $\Gamma$. Let $\widetilde\omega\in C^\infty(\widetilde M;\widetilde\LL)$ be defined by $\widetilde\omega(\tilde p)=(\tilde p,e^{F(\tilde p)})$, which corresponds to $1\in C^\infty(\widetilde M)\equiv C^\infty(\widetilde M;\widetilde\LL)$ by the isomorphism $e^F:\widetilde\LL\to\widetilde M\times\C$. This section is $\Gamma$-invariant and satisfies $d^{\widetilde\LL}\tilde\omega=\tilde\eta\otimes\widetilde\omega$ in $C^\infty(\widetilde M;\Lambda\otimes\widetilde\LL)$. So it induces a non-vanishing section $\omega$ of $\LL$ satisfying $d^\LL\omega=\eta\otimes\omega$ in $C^\infty(M;\Lambda\otimes\LL)$. Furthermore
\begin{equation}\label{C^-infty(M Lambda otimes LL^z) equiv C^-infty(M Lambda)}
\left\{
\begin{gathered}
C^{\pm\infty}(M;\Lambda\otimes\LL^z)\equiv C^{\pm\infty}(M;\Lambda)\otimes\R\omega^z\equiv C^{\pm\infty}(M;\Lambda)\;,\\
d^{\LL^z}\equiv d_z\otimes1\equiv d_z\;,\quad H_z^\bullet(M)\equiv H^\bullet(M,\LL^z)\;,\\
\delta\equiv\delta_z\otimes1\equiv\delta_z\;,\quad
D\equiv D_z\otimes1\equiv D_z\;,\quad\Delta\equiv\Delta_z\otimes1\equiv\Delta_z\;,
\end{gathered}
\right.
\end{equation}
writing $d=d^{\LL^z}$, $\delta=\delta^{\LL^z}$, $D=D^{\LL^z}$, $\Delta=\Delta^{\LL^z}$. Since $(\LL^z)^*\equiv\LL^{-z}$, this gives an interpretation of~\eqref{d antiderivative with d_z and d_-z} and~\eqref{bilinear and sesquilinear maps}.

\subsection{Witten's perturbation vs pull-back and push-forward homomorphisms}
\label{ss: Witten vs pull-back and push-forward}

For a smooth map $\phi:M'\to M$, let $\eta'=\phi^*\eta$. The homomorphism $\phi^*:C^\infty(M;\Lambda)\to C^\infty(M';\Lambda)$ satisfies $\phi^*d_{z\eta}=d_{z\eta'}\phi^*$. If $\phi$ is a smooth submersion, then $\phi_*:C^\infty_{\co/\cv}(M';\Lambda)\to C^\infty_{\co/{\cdot}}(M;\Lambda)$ satisfies $\phi_*d_{z\eta'}=d_{z\eta}\phi_*$ by \cite[Proposition~I.6.14 and~I.6.15~(a)]{BottTu1982}.

\subsection{Perturbation of pull-back homomorphisms}\label{ss: perturbation of pull-back homs}

Consider the notation of \Cref{ss: flat line bundle}. For a smooth map $\phi:M\to M$, take a lift $\tilde\phi:\widetilde M\to\widetilde M$. Then $\tilde\phi^*_z:=e^{-zF}\,\tilde\phi^*\,e^{zF}=e^{z(\tilde\phi^*F-F)}\,\tilde\phi^*$ is an endomorphism of $(C^\infty(\widetilde M;\Lambda),d_{\widetilde M,z})$ by~\eqref{Witten's opers}. We have $T_\gamma^*(\tilde\phi^*F-F)=\tilde\phi^*F-F$ for all $\gamma\in\Gamma$, obtaining $T_\gamma^*\tilde\phi^*_z=\tilde\phi^*_zT_\gamma^*$. So $\tilde\phi^*_z$ induces an endomorphism $\phi^*_z$ of $(C^\infty(M;\Lambda),d_z)$, which depends on the choice of the lift $\tilde\phi$ of $\phi$. In the case of a flow $\phi=\{\phi^t\}$ on $M$, there is a unique lift to a flow $\tilde\phi=\{\tilde\phi^t\}$ on $\widetilde M$, giving rise to a canonical definition of $\phi^{t*}_z$, \index{$\phi^{t*}_z$} called the \emph{perturbation} of $\phi^{t*}$ defined by $\eta$ with parameter $z$.

\subsection{Witten's operators on oriented manifolds}\label{ss: perturbed ops on oriented mfds}

In this subsection, assume $M$ is oriented. If moreover $M$ is closed, then the maps~\eqref{bilinear and sesquilinear maps} and integration on $M$ define nondegenerate pairings,
\begin{equation}\label{H_z^k(M) times H_-z^n-kM) to C}
H_z^k(M)\times H_{-z}^{n-k}(M)\to\C\;,\quad
H_z^k(M)\times H_{-\bar z}^{n-k}(M)\to\C\;,
\end{equation}
the first one is bilinear and the second one is sesquilinear. Therefore $\beta_z^k=\beta_{-z}^{n-k}=\beta_{-\bar z}^{n-k}=\beta_{\bar z}^k$.

\subsection{Witten's operators vs Hodge star operator}\label{ss: Witten vs Hodge star}

Continuing with the condition of orientation, the equalities~\eqref{star} yield
\begin{equation}\label{delta_z}
\left\{
\begin{gathered}
\delta_z=(-1)^{nk+n+1}\star\,d_{-\bar z}\,\star=(-1)^{nk+n+1}\,\bar\star\,d_{-z}\,\bar\star\;,\\
\begin{alignedat}{3}
d_z\,\star&=(-1)^k\star\,\delta_{-\bar z}\;,&\quad
\delta_z\,\star&=(-1)^{k+1}\star\,d_{-\bar z}\;,&\quad
\Delta_z\,\star&=\star\,\Delta_{-\bar z}\;,\\
d_z\,\bar\star&=(-1)^k\,\bar\star\,\delta_{-z}\;,&\quad\delta_z\,\bar\star&=(-1)^{k+1}\,\bar\star\;d_{-z}\;,&\quad
\Delta_z\,\bar\star&=\bar\star\,\Delta_{-z}\;.
\end{alignedat}
\end{gathered}
\right.
\end{equation}
Then we get a linear isomorphism $\star:\ker\Delta_z\to\ker\Delta_{-\bar z}$ and an antilinear isomorphism $\bar\star:\ker\Delta_z\to\ker\Delta_{-z}$. If $M$ is closed, they induce an explicit linear isomorphism $H_z^k(M)\cong H_{-\bar z}^{n-k}(M)$ and an antilinear isomorphism $H_z^k(M)\cong H_{-z}^{n-k}(M)$ by~\eqref{Hodge}.

Using~\eqref{d antiderivative with d_z and d_-z} and the Stokes theorem, we get
\begin{equation}\label{d_z equiv (-1)^k+1 d_-z^t}
d_z\equiv(-1)^{k+1}\,d_{-z}^\trans\;,
\end{equation}
as maps $C^{-\infty}(M;\Lambda^k)\to C^{-\infty}(M;\Lambda^{k+1})$ using~\eqref{C^-infty(M Lambda^r) equiv C^infty_c(M Lambda^n-r)'}. This identity also follows from~\eqref{star},~\eqref{delta_z} and~\eqref{(alpha beta) = (-1)^rn+r langle alpha star beta rangle}: for $\alpha\in C^\infty(M;\Lambda^k)$ and $\beta\in C^\infty(M;\Lambda^{n-k-1})$,
\begin{align*}
(d_z\alpha,\beta)&=(-1)^{(k+1)n+k+1}\langle d_z\alpha,\bar\star\beta\rangle
=(-1)^{(k+1)n+k+1}\langle\alpha,\delta_z{\bar\star\beta}\rangle\\
&=(-1)^{(kn+1}\langle\alpha,\bar\star d_{-z}\beta\rangle=(-1)^{k+1}(\alpha,d_{-z}\beta)\;.
\end{align*}
This argument also applies to $\delta_z$ and $\Delta_z$, giving
\begin{gather}
\delta_z\equiv(-1)^k\,\delta_{-z}^\trans:C^{-\infty}(M;\Lambda^k)\to C^{-\infty}(M;\Lambda^{k-1})\;,\notag\\
\Delta_z\equiv\Delta_{-z}^\trans:C^{-\infty}(M;\Lambda^k)\to C^{-\infty}(M;\Lambda^k)\;.
\label{Delta_z equiv Delta_-bar z^t}
\end{gather}

\subsection{Perturbed operators with two parameters}\label{ss: two parameters} 

We will also consider perturbed operators of the form
\[
D_{z,z'}=d_z+\delta_{z'}\;,\quad\Delta_{z,z'}=D_{z,z'}^2=d_z\delta_{z'}+\delta_{z'}d_z\;,
\]
depending on two parameters $z,z'\in\C$. They are not symmetric if $z\ne z'$, but their leading symbol is symmetric.

\subsection{Witten's operators on manifolds of bounded geometry}\label{ss: Witten - mfds of bd geom}

Consider now the notation of \Cref{ss: diff forms and currents,ss: Witten's complex}. Assume $M$ is of bounded geometry and $\eta\in\Cinftyub(M;\Lambda^1)$ (\Cref{ss: uniform sps}). Then the differential complex $d_z$ is uniformly bounded and uniformly elliptic for all $z\in\C$. 

Using also the notation of \Cref{ss: perturbation of pull-back homs}, assume that $\phi:M\to M$ is of bounded geometry. Then $\tilde\phi^*F-F$ induces a function in $\Cinftyub(M)$. For $m\in\N_0\cup\{\infty\}$, it follows from~\eqref{phi^* on C_ub^m} that $\phi^*_z$ defines a continuous linear endomorphism of $C_{\text{\rm ub}}^m(M;\Lambda)$ f. If moreover $\phi:M\to M$ is uniformly metrically proper, then, by~\eqref{phi^* on H^m}, $\phi^*_z$ also defines a continuous linear endomorphism of $H^m(M;\Lambda)$. 

If $\phi$ is a diffeomorphism and both of $\phi^{\pm1}$ are of bounded geometry, then $\phi^*_z$ defines a continuous linear endomorphism of $H^m(M;\Lambda)$ for all $m\in\Z\cup\{\pm\infty\}$. To show this, we can assume $M$ is oriented with a standard argument using the covering of orientations. Then, by the version of second equality of~\eqref{H^-s(M E) = Psi^s(M E) cdot L^2(M E) = H^s(M E^* otimes Omega)'} for open manifolds and~\eqref{(Lambda^rM)^* otimes Omega M equiv Lambda^n-rM}, $\phi^*_z$ on $H^{-m}(M;\Lambda)$ ($m\in\N_0\cup\{\infty\}$) is the transpose of $(\phi^{-1})^*_{-z}$ on $H^m(M;\Lambda^{n-\bullet})$.

In the cases of $C_{\text{\rm ub}}^\infty(M;\Lambda)$ and $H^{\pm\infty}(M;\Lambda)$, all of the above endomorphisms are cochain maps with $d_z$.

The symmetric hyperbolic equation
\begin{equation}\label{wave}
\partial_t\alpha_t=iD_z\alpha_t\;,\quad\alpha_0=\alpha\;,
\end{equation}
on any open subset of $M$ and with $t$ in any interval containing $0$, any solution satisfies the finite propagation speed property \cite[Proof of Proposition~1.1]{Chernoff1973} (see also \cite[Theorem~1.4]{CheegerGromovTaylor1982}, \cite[Proof of Proposition~7.20]{Roe1998})
\begin{equation}\label{unit propagation speed}
\supp\alpha_t\subset\Pen(\supp\alpha,|t|)\;.
\end{equation}
In particular, given any $\alpha\in C^\infty(M;\Lambda)$, this is true for $\alpha_t=e^{itD_z}\alpha$.

For $\psi\in\RR$ (\Cref{ss: Sobolev bd geom}), we may use the notation $k_z=k_{\psi,z}=K_{\psi(D_z)}$, where $\psi(D_z)$ is given by the spectral theorem. We may also use the notation $k_{u,z}=k_{\psi_u,z}$ for any family of functions $\psi_u\in\RR$ depending on a parameter $u$.  

For any $\psi\in\SS$ (\Cref{ss: Z}), we have \cite[Proof of Theorem~5.5]{Roe1988I}
\begin{equation}\label{psi(D_z)}
\psi(D_z)=(2\pi)^{-1}\int_{-\infty}^{+\infty} e^{i\xi D_z}\hat\psi(\xi)\,d\xi\;.
\end{equation}
According to Remark~\ref{r: supp K_A subset r-penumbra <=> supp Au subset r-penumbra for all u}, it follows from~\eqref{unit propagation speed} and~\eqref{psi(D_z)} that, for all $r>0$,
\begin{equation}\label{supp hat psi subset [-R R] => supp K_psi(D_z) subset ...}
\supp\hat\psi\subset[-r,r]\Rightarrow\supp k_{\psi,z}\subset\{\,(p,q)\in M^2\mid d(p,q)\le r\,\}\;.
\end{equation}

For instance, for $\psi_u(x)=e^{-ux^2}$ ($u>0$), we get the \emph{perturbed heat kernel} $k_{u,z}=K_{e^{-u\Delta_z}}$. It satisfies the following estimate like the usual heat kernel \cite{ButtigEichhorn1991}: for all $u_0>0$ and $m_1,m_2,m_3\in\N_0$, there are $C_1,C_2>0$ so that, for all $0<u\le u_0$, 
\begin{equation}\label{heat kernel estimates}
|\partial_u^{m_1}\nabla_p^{m_2}\nabla_q^{m_3}k_{z,u}(p,q)|\le C_1u^{-(n+m_2+m_3)/2-m_1}e^{-C_2d^2(p,q)/u}\;.
\end{equation}
In particular, $k_{z,u}\in\Cinftyub(M^2;\Lambda\boxtimes(\Lambda^*\otimes\Omega))$ for every $u>0$.

To estimate more general kernels, consider the Fr\'echet algebra and $\C[z]$-module $\AA$ \index{$\AA$} which consists of the functions $\psi:\R\to\C$ that can be extended to entire functions on $\C$ such that, for every compact $K\subset\R$, the set $\{\,x\mapsto \psi(x+iy)\mid y\in K\,\}$ is bounded in $\SS$ \cite[Section~4]{Roe1987}. It has the following properties: $\AA\subset\SS$; $\AA$ contains all functions with compactly supported smooth Fourier transform, as well as the Gaussian $x\mapsto e^{-x^2}$; if $\psi\in\AA$ and $u>0$, then $\psi_u\in\AA$, where $\psi_u(x)=\psi(ux)$; and, by the Paley-Wiener theorem, for every $\psi\in\AA$ and $c>0$, there is some $A_c>0$ such that, for all $\xi\in\R$,
\begin{equation}\label{|hat psi(xi)| le A_c e^-c |xi|}
\big|\hat\psi(\xi)\big|\le A_ce^{-c|\xi|}\;.
\end{equation}
Define the semi-norms $\|{\cdot}\|_{\AA,C,r}$ ($C>0$ and $r\in\N_0$) on $\AA$ by
\[
\|\psi\|_{\AA,C,r}=\max_{j+k\le r} \int_{-\infty}^{+\infty}|\xi^j\partial^k_{\xi}\hat{\psi}(\xi)|\,e^{C|\xi|}\,d\xi\;.
\]

\begin{lem}\label{l: exponential decay}
If $\psi\in\AA$ and $N>n/2$, then, for any $W>0$, there is some $C'_1=C'_1(z,W)>0$ such that, for all $p,q\in M$ and $m,m_1,m_2\in\N_0$ with $m_1+m_2\le m$,
\[
|\nabla_p^{m_1}\nabla_q^{m_2}k_z(p,q)|\le C'_1 e^{-Wd(p,q)}  \|\psi\|_{\AA, W,N+m}\;.
\]
\end{lem}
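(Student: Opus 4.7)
My plan is to combine the functional-calculus formula \eqref{psi(D_z)} with the finite-propagation consequence \eqref{supp hat psi subset [-R R] => supp K_psi(D_z) subset ...} and the continuity of the Schwartz-kernel map \eqref{psi mapsto K_psi(P)} from $\RR$ to $\Cinftyub$. The guiding idea is standard: for $\psi\in\AA$, $\hat\psi$ decays faster than any exponential (Paley--Wiener, cf.\ \eqref{|hat psi(xi)| le A_c e^-c |xi|}), so only frequencies $|\xi|\gtrsim d(p,q)$ contribute to $k_z(p,q)=K_{\psi(D_z)}(p,q)$, and that contribution comes with the exponential factor $e^{-Wd(p,q)}$.

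Concretely, I fix $\phi\in C^\infty(\R)$ with $\phi\equiv 1$ on $(-\infty,0]$ and $\phi\equiv 0$ on $[1/2,\infty)$, and for $R=d(p,q)\ge 2$ I set $\chi_R(\xi)=\phi(|\xi|-R+1)$, so that $\chi_R\equiv 1$ on $\{|\xi|\le R-1\}$, $\supp\chi_R\subset\{|\xi|\le R-1/2\}$, and all derivatives $\partial^l\chi_R$ are bounded uniformly in $R$ by constants depending only on $\phi$. Decomposing $\hat\psi=\hat\psi_1+\hat\psi_2$ with $\hat\psi_1=\chi_R\hat\psi$ and $\hat\psi_2=(1-\chi_R)\hat\psi$, the estimate \eqref{supp hat psi subset [-R R] => supp K_psi(D_z) subset ...} applied to $\psi_1$ gives $\supp k_{\psi_1,z}\subset\{d(p,q)\le R-1/2\}$, hence $\nabla_p^{m_1}\nabla_q^{m_2}K_{\psi_1(D_z)}(p,q)=0$ and so $\nabla_p^{m_1}\nabla_q^{m_2}k_z(p,q)=\nabla_p^{m_1}\nabla_q^{m_2}K_{\psi_2(D_z)}(p,q)$. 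I then apply \eqref{psi mapsto K_psi(P)} to $\psi_2$: for each $m\in\N_0$ there exist a constant $C_m$ and an integer $k(m)$ (controllable by $N+m$ once $N>n/2$, which is where that hypothesis enters) so that the left-hand side is bounded by $C_m\|\psi_2\|_{\RR,k(m)}$. Finally, Fourier inversion and integration by parts express Schwartz semi-norms of $\psi_2$ through integrals $\int|\xi^a\partial_\xi^b\hat\psi_2(\xi)|\,d\xi$ with $a+b\le N+m$; Leibniz on $(1-\chi_R)\hat\psi$, the uniform bounds on $\partial^l\chi_R$, and inserting $e^{W|\xi|}e^{-W|\xi|}$ in the integrand (using $e^{-W|\xi|}\le e^{-W(R-1)}$ on $\supp\hat\psi_2\subset\{|\xi|\ge R-1\}$) yield
\[
\|\psi_2\|_{\RR,k(m)}\le C'_m\,e^{-W(R-1)}\,\|\psi\|_{\AA,W,N+m}=C'_m\,e^W\,e^{-WR}\,\|\psi\|_{\AA,W,N+m}.
\]
For the small-distance range $R<2$ where the cutoff setup degenerates, I apply \eqref{psi mapsto K_psi(P)} directly to $\psi$, bound its Schwartz semi-norms by $\|\psi\|_{\AA,W,N+m}$ via the same Fourier-inversion estimate, and use $e^{-WR}\ge e^{-2W}$ on this range to absorb the resulting constants into $C'_1(z,W)$.

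The main (modest) technical point is precisely the choice of cutoff $\chi_R$: a naive scaling cutoff such as $\chi(\xi/R)$ introduces $R^{-l}$ blow-ups in Leibniz and effectively shifts the exponential rate from $W$ to $W/2$, which would prevent the matching of $W$ on both sides of the inequality. The cutoff above, with a transition zone of fixed width $1/2$ independent of $R$, simultaneously realizes the sharp exponential rate $W$ and keeps derivatives of the cutoff bounded uniformly in $R$; this is what makes it possible to conclude with the norm $\|\psi\|_{\AA,W,N+m}$ rather than a stronger norm $\|\psi\|_{\AA,W',N+m}$ for some $W'>W$.
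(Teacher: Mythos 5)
Your proof is correct, and it takes a genuinely different technical route from the paper's. The paper bounds $k_z(p,q)$ directly from the Fourier representation~\eqref{psi(D_z)} combined with finite propagation speed and Sobolev embedding (its ``one can show'' step), then inserts the exponential weight only on the tail $|\xi|>d(p,q)-\epsilon$; for the higher derivatives it replaces $\psi$ by $(1+x^2)^m\psi$ and exploits the elliptic relation between $(1+\Delta_{z,p})^{m_1}(1+\Delta_{-z,q})^{m_2}k_z$ and covariant derivatives, never introducing a cutoff. You instead introduce the explicit frequency cutoff $\chi_R$ with a fixed-width transition zone, split $\hat\psi=\chi_R\hat\psi+(1-\chi_R)\hat\psi$, annihilate the near-diagonal part exactly via~\eqref{supp hat psi subset [-R R] => supp K_psi(D_z) subset ...}, and push the far part through~\eqref{psi mapsto K_psi(P)}. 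Both arguments exploit the same Paley--Wiener decay and the same finite propagation speed; your decomposition makes the ``only $|\xi|\gtrsim d(p,q)$ contributes'' dichotomy more vivid, and your observation about fixed-width versus scaling cutoffs is exactly the right technical point for realizing the sharp rate $W$, while the paper's route keeps the index bookkeeping entirely in-house via the $(1+x^2)^m$ substitution.

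One caveat worth making explicit: your proposal relies on~\eqref{psi mapsto K_psi(P)} not merely as a qualitative continuity statement but as a quantitative estimate with a specific seminorm index $k(m)\le N+m$, and that index appears only in your parenthetical ``(controllable by $N+m$ once $N>n/2$)''. As stated in the text, \eqref{psi mapsto K_psi(P)} only asserts continuity; the precise seminorm matching would have to be extracted from Roe's proof of that proposition or re-derived. The paper sidesteps this by manipulating the Fourier integral of the kernel directly, so the index $N+m$ emerges transparently from the computation. This is a lean on the reference rather than a flaw---the underlying arithmetic does work out---but if one wanted your proof to stand alone at the same level of detail, the parenthetical claim about $k(m)$ would need a short justification (essentially the Dirac-section and Sobolev-embedding computation that the paper compresses into its own ``one can show'').
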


\begin{proof}
Using~\eqref{unit propagation speed},~\eqref{psi(D_z)} and the Sobolev embedding theorem, one can show that, for every $\epsilon>0$, there is some $C_0=C_0(z,\epsilon)>0$ so that, for all $\psi\in \AA$ and $p,q\in M$,
\[
|k_z(p,q)|\le C_0\int_{|\xi|>d(p,q)-\epsilon} \big|(1-\partial_\xi^2)^N\hat\psi(\xi)\big|\,d\xi\;.
\]
Hence, for some fixed $\epsilon>0$, we obtain that, for any $W>0$, there is some $C_1=C_1(z,W)>0$ such that, for all $p,q\in M$, 
\begin{align*}
|k_z(p,q)| &\le C_0\int_{|\xi|>d(p,q)-\epsilon}  e^{-W|\xi|}\,\big|(1-\partial_\xi^2)^N\hat\psi(\xi)\big|\, e^{W|\xi|}\,d\xi \notag\\ 
&\le C_1 e^{-Wd(p,q)}  \int_{-\infty}^{+\infty}\big|(1-\partial_\xi^2)^N\hat\psi(\xi)\big|\, e^{W|\xi|} \,d\xi \notag\\
&= C_1 e^{-Wd(p,q)}  \|\psi\|_{\AA, W,N}\;.
\end{align*}
By using $(1+x^2)^m\psi(x)$ ($m\in\N_0$) instead of $\psi(x)$, we also get
\[
|(1+\Delta_{z,p})^{m_1}(1+\Delta_{-z,q})^{m_2}k_z(p,q)|\le C_1 e^{-Wd(p,q)}\|\psi\|_{\AA, W,N+m}\;,
\]
according to~\eqref{C^-infty(M Lambda^r) equiv C^infty_c(M Lambda^n-r)'} and~\eqref{Delta_z equiv Delta_-bar z^t}, yielding the estimate of the statement.
\end{proof}

\subsection{Witten's operators on regular coverings of compact manifolds}
\label{ss: perturbed Schwartz kernels on regular coverings of compact mfds}

Let $\pi:\widetilde M\to M$, $\Gamma$, $\gamma\cdot\tilde p$, $T_\gamma$ and $g_{\widetilde M}$ be like in \Cref{ss: flat line bundle}. Recall that $\widetilde M$ is bounded geometry with $g_{\widetilde M}$. 

Let $|{\cdot}|:\Gamma\to\N_0$ denote the word length function defined by any finite set of generators $\gamma_1,\dots,\gamma_k$ of $\Gamma$; recall that $|\gamma|$ is the minimum length of the expressions of $\gamma$ as products of elements $\gamma_i^{\pm1}$. It is well known that there is some $c_1\ge1$ such that
\begin{equation}\label{word-metric}
c_1^{-1}\,|\gamma|\le d_{\widetilde M}(\gamma\cdot \tilde p,\tilde p)\le c_1\,|\gamma|
\end{equation}
for all $\tilde p\in\widetilde M$ and $\gamma\in\Gamma$. Therefore, given any compact $\bfK\subset\widetilde M^2$, we have
\begin{equation}\label{word-metric with bfK}
c_1^{-1}\,|\gamma|-c_2 \le d_{\widetilde M}(\gamma\cdot \tilde p,\tilde q) \le c_1\,|\gamma|+c_2
\end{equation}
for all $\gamma\in\Gamma$ and $(\tilde p,\tilde q)\in\bfK$, where $c_2=\max d_{\widetilde M}(\bfK)\ge0$. 

Let $\eta$ be a closed real $1$-form on $M$ whose lift to $\widetilde M$ is exact; say $\tilde\eta=d_{\widetilde M}F$ for some $F\in C^\infty(\widetilde M,\R)$. For $z\in\C$, let $D_z=D_{M,z}$, $\Delta_z=\Delta_{M,z}$, $\widetilde D_z=D_{\widetilde M,z}$ and $\widetilde\Delta_z=\Delta_{\widetilde M,z}$ (\Cref{ss: Witten's complex}). For any $\psi\in\RR$, let $k_z=K_{\psi(D_z)}$ and $\tilde k_z=K_{\psi(\widetilde D_z)}$ (\Cref{ss: Witten - mfds of bd geom}). For every $\tilde p\in\widetilde M$, let $[\tilde p]=\pi(\tilde p)$. We look for conditions on $\psi$ to get
\begin{equation}\label{k([tilde p] [tilde q]) equiv ...}
k_z([\tilde p],[\tilde q])\equiv\sum_\gamma T_\gamma^*\tilde k_z(\gamma\cdot\tilde p,\tilde q)
\end{equation}
for all $\tilde p,\tilde q\in\widetilde M$, using the identity
\[
\Lambda_{\gamma\cdot\tilde p}\widetilde M\boxtimes(\Lambda_{\tilde q}\widetilde M^*\otimes\Omega_{\tilde q}\widetilde M)
\equiv\Lambda_{[p]} M\boxtimes(\Lambda_{[q]} M^*\otimes\Omega_{[q]} M)\;.
\]
In particular,~\eqref{k([tilde p] [tilde q]) equiv ...} holds if $\hat\psi\in\Cinftyc(\R)$, which can be proved as follows. In this case, $\tilde k_z$ is supported in a penumbra of the diagonal (\Cref{ss: diff ops of bd geom}). By~\eqref{word-metric with bfK}, taking $\bfK=\bfF^2$ for some fundamental domain $\bfF\subset\widetilde M$, it follows that the right-hand side of~\eqref{k([tilde p] [tilde q]) equiv ...} has a finite number of nonzero terms. So it defines a smooth section on $M^2$, which can be checked to be $k_z$ using~\eqref{K_A(cdot q)(u) = A delta_q^u}. 

Examples where~\eqref{k([tilde p] [tilde q]) equiv ...} fails are easy to construct. For instance, if $\Gamma$ is non-amenable, it is well known that the spectrum of $\widetilde\Delta$ on functions has a gap of the form $(0,\epsilon)$ for some $\epsilon>0$, and therefore~\eqref{k([tilde p] [tilde q]) equiv ...} fails for $\psi(D)$ and $\psi(\widetilde D)$ if $\psi$ is even and supported in $(-\epsilon,\epsilon)$, with $\psi(0)\ne0$.

Consider the Fr\'echet algebra and $\C[z]$-module $\AA$ of \Cref{ss: Witten - mfds of bd geom}.

\begin{prop}\label{p: k(y y') equiv ...}
If $\psi\in\AA$, then~\eqref{k([tilde p] [tilde q]) equiv ...} holds, where the series is convergent in the Fr\'echet space $C^\infty(\widetilde M^2;\Lambda\widetilde M\boxtimes(\Lambda\widetilde M^*\otimes\Omega\widetilde M))$.
\end{prop}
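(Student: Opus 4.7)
My plan has three stages: show the series converges in $C^\infty(\widetilde M^2;\cdots)$, show the limit descends via $\pi\times\pi$, and identify the descended section with $k_z$.

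For the convergence, I will combine Lemma~\ref{l: exponential decay} with the word-length comparison~\eqref{word-metric with bfK}. Fix a compact $\bfK\subset\widetilde M^2$ with $c_2=\max d_{\widetilde M}(\bfK)$, an integer $m\in\N_0$, and $N>n/2$. Since each $T_\gamma$ is an isometry of $(\widetilde M,g_{\widetilde M})$, pointwise norms and covariant derivatives are preserved by $T_\gamma^*$, so for $m_1+m_2\le m$, any $W>0$, and $(\tilde p,\tilde q)\in\bfK$,
\[
\bigl|\nabla^{m_1}_{\tilde p}\nabla^{m_2}_{\tilde q}\,T_\gamma^*\tilde k_z(\gamma\!\cdot\!\tilde p,\tilde q)\bigr|
\le C_1'(z,W)\,e^{-Wd_{\widetilde M}(\gamma\cdot\tilde p,\tilde q)}\,\|\psi\|_{\AA,W,N+m}
\le C_1'\,e^{Wc_2}\,e^{-(W/c_1)|\gamma|}\,\|\psi\|_{\AA,W,N+m}.
\]
Every finitely generated group has at most exponential growth, so $\#\{\gamma:|\gamma|\le R\}\le Ce^{aR}$ for some $a>0$. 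Choosing $W>ac_1$ (and $W$ increasingly large with $m$) makes $\sum_\gamma e^{-(W/c_1)|\gamma|}$ convergent, so the series converges absolutely in every $C^m$ seminorm on $\bfK$, and therefore in the Fr\'echet topology of $C^\infty(\widetilde M^2;\Lambda\widetilde M\boxtimes(\Lambda\widetilde M^*\otimes\Omega\widetilde M))$ to a smooth section $\kappa$.

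For the descent, the key point is that $\tilde\eta=\pi^*\eta$ is $\Gamma$-invariant, so $T_\gamma^*$ commutes with $\widetilde D_z$ for every $\gamma\in\Gamma$; by the spectral theorem, $T_\gamma^*$ also commutes with $\psi(\widetilde D_z)$. The $\Gamma$-equivariance of $\psi(\widetilde D_z)$ translates into the bundle-theoretic identity $\tilde k_z(\gamma\!\cdot\!\tilde p,\gamma\!\cdot\!\tilde q)=T_\gamma\,\tilde k_z(\tilde p,\tilde q)\,T_{\gamma^{-1}}$. A direct check via the substitution $\gamma\mapsto\gamma\gamma_0^{-1}$ then shows $\kappa(\gamma_0\!\cdot\!\tilde p,\gamma_0\!\cdot\!\tilde q)=T_{\gamma_0}\kappa(\tilde p,\tilde q)T_{\gamma_0^{-1}}$, so $\kappa$ descends to a smooth section $\bar\kappa$ on $M^2$, and the identity~\eqref{k([tilde p] [tilde q]) equiv ...} will hold as soon as $\bar\kappa=k_z$.

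For the identification, let $A$ be the operator with Schwartz kernel $\bar\kappa$. Since $\pi$ intertwines $D_z$ and $\widetilde D_z$, the symmetric hyperbolic equation~\eqref{wave} on $\widetilde M$ is $\pi$-related to its analogue on $M$; finite propagation speed~\eqref{unit propagation speed} and well-posedness give $\pi^*\circ e^{i\xi D_z}=e^{i\xi\widetilde D_z}\circ\pi^*$ for every $\xi\in\R$. Integrating against $\hat\psi(\xi)$ via the Fourier representation~\eqref{psi(D_z)}, where the integral converges absolutely thanks to~\eqref{|hat psi(xi)| le A_c e^-c |xi|}, yields $\pi^*\psi(D_z)=\psi(\widetilde D_z)\pi^*$ on $C^\infty(M;\Lambda)$. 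On the other hand, for $v\in C^\infty(M;\Lambda)$, the lift $\pi^*v$ is bounded and smooth, and the exponential decay of $\tilde k_z$ established above makes the integral $\int_{\widetilde M}\tilde k_z(\tilde p,\tilde q)\,\pi^*v(\tilde q)\,d\tilde q$ absolutely convergent; splitting $\widetilde M$ as a $\Gamma$-union of translates of a fundamental domain $\bfF$ and using $\Gamma$-invariance of $\pi^*v$ rewrites this integral as $\int_\bfF\kappa(\tilde p,\tilde q)\,\pi^*v(\tilde q)\,d\tilde q=\pi^*(Av)(\tilde p)$. Thus $\pi^*Av=\pi^*\psi(D_z)v$, whence $A=\psi(D_z)$ and $\bar\kappa=k_z$.

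The main obstacle is the final identification: justifying the interchange of summation and integration, and making sense of $\psi(\widetilde D_z)(\pi^*v)$ for a section that is smooth and bounded but neither $L^2$ nor compactly supported. This is exactly where the exponential decay provided by the Paley--Wiener estimate~\eqref{|hat psi(xi)| le A_c e^-c |xi|} (which is the defining feature of $\AA$) is crucial; without it, $\pi^*\circ\psi(D_z)$ and $\psi(\widetilde D_z)\circ\pi^*$ would not be comparable term by term, and the series in~\eqref{k([tilde p] [tilde q]) equiv ...} would have no a priori meaning.
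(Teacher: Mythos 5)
Your convergence argument coincides with the paper's: the same combination of Lemma~\ref{l: exponential decay}, the word-length comparison~\eqref{word-metric with bfK}, and the at-most-exponential growth of $\Gamma$ yields uniform convergence of all covariant derivatives on compacts. The divergence comes in establishing the identity itself. The paper's proof exploits the work it set up just before the proposition: the identity is already proved there for $\hat\psi\in\Cinftyc(\R)$ (the sum is locally finite by propagation speed, and the check is immediate via~\eqref{K_A(cdot q)(u) = A delta_q^u}), and then the proposition's estimate~\eqref{| sum_gamma in Gamma ... T_gamma^* tilde k_z(gamma cdot tilde y tilde y') |} combined with the continuity of $\psi\mapsto K_{\psi(P)}$ lets one approximate $\psi\in\AA$ by functions with compactly supported Fourier transform and pass to the limit on both sides of~\eqref{k([tilde p] [tilde q]) equiv ...}. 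You instead attempt a direct identification: descend $\kappa$ to $M^2$ via $\Gamma$-equivariance of $\tilde k_z$, then argue $\pi^*\psi(D_z)=\psi(\widetilde D_z)\pi^*$ and compare kernels on a fundamental domain.

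The direct route is conceptually appealing, but it hides a nontrivial gap in the step where you equate $\psi(\widetilde D_z)(\pi^*v)(\tilde p)$ with $\int_{\widetilde M}\tilde k_z(\tilde p,\tilde q)\,\pi^*v(\tilde q)\,d\tilde q$. The section $\pi^*v$ is bounded but neither compactly supported nor in $H^{-\infty}(\widetilde M;\Lambda)$ (on a non-compact manifold, constants are in no Sobolev space), so neither the spectral-theorem definition of $\psi(\widetilde D_z)$ nor its Schwartz kernel formula applies to $\pi^*v$ as written. You note that the exponential decay of $\tilde k_z$ makes the integral absolutely convergent, which is true, but that alone does not show the integral equals $\pi^*\psi(D_z)v$: you still need a limiting argument (e.g., cutoffs $\chi_n\uparrow 1$ plus finite propagation speed, plus control of the contribution from large $|\xi|$ in~\eqref{psi(D_z)} via~\eqref{|hat psi(xi)| le A_c e^-c |xi|}) that amounts to yet another approximation. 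In effect your route relocates the approximation rather than avoiding it, and leaves it implicit. The paper's version is sharper precisely because in the $\hat\psi\in\Cinftyc(\R)$ case all of these support and integrability issues evaporate (the sum over $\Gamma$ is finite, the kernel is supported near the diagonal), so the only analytic input needed in the proposition itself is the uniform estimate~\eqref{| sum_gamma in Gamma ... T_gamma^* tilde k_z(gamma cdot tilde y tilde y') |} and the density of such $\psi$ in $\AA$. If you want to keep the direct route, you should spell out the cutoff argument; otherwise, the approximation from the compactly-supported-Fourier-transform case is the more economical path and matches how the surrounding text was organized.
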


\begin{proof}
First, let us prove that the series is uniformly convergent with all covariant derivatives on any fixed compact subset $\bfK\subset{\widetilde M}^2$.

By \Cref{l: exponential decay}, for any $W>0$ and $N>n/2$, there is some $C_1=C_1(z,W)>0$ such that, for all $\tilde p,\tilde q\in\widetilde M$, $m\in\N_0$ and $m_1+m_2\le m$,
\[
\big|\nabla_{\tilde p}^{m_1}\nabla_{\tilde q}^{m_2}\tilde k_z(\tilde p,\tilde q)\big|
\le C_1 e^{-Wd_{\widetilde M}(\tilde p,\tilde q)}  \|\psi\|_{\AA, W,N+m}\;.
\]
Then, by~\eqref{word-metric with bfK}, 
\begin{equation}\label{|... tilde k_z(gamma cdot tilde y tilde y')|}
\big|\nabla_{\tilde p}^{m_1}\nabla_{\tilde q}^{m_2}\tilde k_z(\gamma\cdot\tilde p,\tilde q)\big|\\
\le C'_1 e^{-\frac{W}{c_1}\,|\gamma|}\,\|\psi\|_{\AA, W, N+m}
\end{equation}
for $\gamma\in\Gamma$, $(\tilde p,\tilde q)\in\bfK$ and $m_1+m_2\le m$, where $C'_1=C_1e^{Wc_2}$. Since the growth of $\Gamma$ is at most exponential, there is some $W_0>0$ such that
\begin{equation}\label{sum_gamma in Gamma e^-W_0 |gamma| < infty}
\sum_{\gamma\in \Gamma} e^{-W_0\,|\gamma|}<\infty\;.
\end{equation}
Choosing $W>c_1W_0$, it follows from~\eqref{|... tilde k_z(gamma cdot tilde y tilde y')|} and~\eqref{sum_gamma in Gamma e^-W_0 |gamma| < infty} that there is some $C=C(z,\bfK,W,N)>0$ such that
\begin{equation}\label{| sum_gamma in Gamma ... T_gamma^* tilde k_z(gamma cdot tilde y tilde y') |}
\Big|\sum_{\gamma\in \Gamma}\nabla_{\tilde p}^{m_1}\nabla_{\tilde q}^{m_2}
T_\gamma^*\tilde k_z(\gamma\cdot\tilde p,\tilde q)\Big|
\le C\,\|\psi\|_{\AA, W, N+m}\;.
\end{equation}
So the series in~\eqref{k([tilde p] [tilde q]) equiv ...} is uniformly convergent on $\bfK$ with all covariant derivatives.

The identity~\eqref{k([tilde p] [tilde q]) equiv ...} for any $\psi\in\AA$ follows from~\eqref{| sum_gamma in Gamma ... T_gamma^* tilde k_z(gamma cdot tilde y tilde y') |}, approximating $\psi$ in $\AA$ by a sequence of functions with compactly supported Fourier transform.
\end{proof}

\begin{rem}\label{r: k(y y') equiv ...}
\Cref{p: k(y y') equiv ...} will be applied to an abelian covering. In that case, or, more generally, when $\Gamma$ has polynomial growth, its proof can be slightly modified so that it works for any $\psi\in\SS$. However not only this proposition, but also the estimate~\eqref{|... tilde k_z(gamma cdot tilde y tilde y')|} will be used later, and we need $\psi\in\AA$ to get the exponential factor of this estimate.
\end{rem}

\subsection{Local index formula for the Witten's complex}\label{ss: local index formulae}

Suppose $M$ is of bounded geometry and consider the perturbed heat operator $e^{-t\Delta_z}$ ($t>0$) in $L^2(M;\Lambda)$, defined by the spectral theorem. By the ellipticity of $\Delta_z$, the operator $e^{-t\Delta_z}$ is smoothing and let $k_{z,t}\in\Cinftyub(M^2;\Lambda\boxtimes(\Lambda^*\otimes\Omega))$ denote its Schwartz kernel (the perturbed heat kernel). It has an asymptotic expansion as $t\downarrow0$ in $\Cinftyub(M^2;\Lambda\boxtimes(\Lambda^*\otimes\Omega))$ of the form
\begin{equation}\label{asymptotic expansion}
k_{z,t}(p,q)\sim h_t(p,q)\sum_{j=0}^\infty t^j\Theta_{z,j}(p,q)\cdot|{\dvol}|(q)\;,
\end{equation}
where $|{\dvol}|$ denotes the Riemannian density and
\[
h_t(p,q)=\frac{1}{(4\pi t)^{n/2}}e^{-d(p,q)^2/4t}\;,\quad\Theta_{z,j}\in\Cinftyub(M^2;\Lambda\boxtimes\Lambda^*)\;.
\]
This expression can be formally differentiated to obtain also asymptotic expansions of the derivatives of $k_{z,t}(p,q)$ with respect to $t$, $p$ and $q$. On the diagonal $\Delta\subset M^2$, the terms $\Theta_{z,j}$ can be locally described with algebraic expressions of the local coefficients of the metric and the form $\eta$, and their derivatives. When $z=0$, we simply write $k_t$ and $\Theta_j$. (See e.g.\ \cite[Section~1.8.1]{Gilkey1995} or \cite[Section~2.5]{BerlineGetzlerVergne2004}.) We have $\Theta_{z,j}(p,p)=\Theta_{\mu,j}(p,p)$ by~\eqref{Witten's opers} since $\eta$ is locally exact.

For even $n$, let $e(M,g)\in C^\infty(M;\Lambda\otimes o(M))=C^\infty(M;\Omega)$ denote the Euler density of $(M,g)$ (the representative of the Euler class given by the Chern-Weil theory).

\begin{thm}[{\cite[Theorem 13.4]{BismutZhang1992}; see also \cite[Theorem~1.5]{AlvGilkey2021a}}]\label{t: e_z l}
We have:
\begin{enumerate}[{\rm(i)}]
\item\label{i: str Theta_z j(p p) = 0} $\str\Theta_{z,j}(p,p)=0$ for $j<n/2$; and,
\item\label{i: str Theta_z n/2(p p) |dvol(p)| = e(M g)(p)} if $n$ is even, then $\str\Theta_{z,n/2}(p,p)\,|\dvol(p)|=e(M,g)(p)$.
\end{enumerate}
\end{thm}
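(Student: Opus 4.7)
The theorem is the local Chern--Gauss--Bonnet formula for the Witten-perturbed Euler--de~Rham operator $D_z=d_z+\delta_z$, and its content is that the perturbation $z\eta$ does not affect the diagonal heat-kernel supertrace at its leading order. My plan is to reduce to $z=\mu\in\R$ and then run Getzler's rescaling, exploiting that the $\mu$-dependent terms have strictly lower Getzler weight than $\Delta$ itself and therefore drop out of the rescaled limit. The reduction is immediate from \eqref{Witten's opers}: near any $p$ one can write $\eta=dF$ locally, so $\Delta_z=e^{-i\lambda F}\Delta_\mu e^{i\lambda F}$, which gives $k_{z,t}(p,p)=k_{\mu,t}(p,p)$ and hence $\Theta_{z,j}(p,p)=\Theta_{\mu,j}(p,p)$ as already noted in the excerpt.

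For real $\mu$, work in Riemann normal coordinates at $p$ with a radially parallel orthonormal frame, identifying $\Lambda T^*_pM$ with $\Cl(T^*_pM)$ via the symbol map. Apply Getzler's rescaling $x\mapsto\epsilon x$, $t\mapsto\epsilon^2 t$, with exterior multiplications assigned Getzler weight $+1$ and interior contractions weight $-1$. The standard theorem states that $\epsilon^n k_{\mu,\epsilon^2 t}(\epsilon x,\epsilon x)$ converges as $\epsilon\downarrow0$ to the heat kernel of a generalized harmonic oscillator built from the Riemann curvature $R$ at $p$, and its diagonal supertrace at $x=0$ equals the Pfaffian of $R$, i.e.\ $e(M,g)(p)/|\dvol(p)|$. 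The crucial observation for both (i) and (ii) is that the perturbation $\Delta_\mu-\Delta=\mu\sH_\eta+\mu^2|\eta|^2$ has Getzler weight at most $1$: the zeroth-order operator $\sH_\eta$ expands in the Clifford frame into terms carrying at most one uncontracted exterior or interior factor paired with a derivative of $\eta$, while $|\eta|^2$ is a scalar of weight $0$. Hence these terms contribute $O(\epsilon)$ after rescaling and disappear in the limit, so the leading heat-kernel asymptotic \eqref{asymptotic expansion} for $\Delta_\mu$ agrees with the unperturbed one. Extracting coefficients yields (i), since the Berezin integral of any form of degree strictly less than $n$ vanishes, and (ii), since the top coefficient is exactly the Pfaffian of $R$, independent of $\mu$ and $\eta$.

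The main obstacle will be the Getzler-weight bookkeeping for $\sH_\eta=\LL_V+\LL_V^*$, since the two terms of $\LL_V=d\,\iota_V+\iota_V\,d$ involve products of exterior and interior multiplications with derivatives, and one must carefully verify that no summand carries two uncontracted exterior factors (which would have weight $+2$ and survive the rescaling). A conceptually cleaner fallback, bypassing Getzler's analysis, is Gilkey's invariance theory: $\str\Theta_{\mu,j}(p,p)\,|\dvol|$ is a universal local invariant in the jets of $(g,\eta)$ at $p$ of appropriate Riemannian weight, which forces vanishing for $j<n/2$; at $j=n/2$ the only admissible invariants (given the parity of $\str$) are $e(M,g)$ and exact-form corrections in $\eta$, the latter being killed by McKean--Singer $\int_M\str k_{\mu,t}(p,p)\,|\dvol|=\chi(M)$, and the Euler coefficient is fixed by evaluation at $\mu=0$.
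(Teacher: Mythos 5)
The paper does not prove \Cref{t: e_z l}: it cites \cite{BismutZhang1992} and \cite{AlvGilkey2021a} and, in \Cref{r: e_z l}, remarks that the authors have (but omit for brevity) an alternative Getzler-rescaling proof of part~\ref{i: str Theta_z n/2(p p) |dvol(p)| = e(M g)(p)}. Your proposal essentially supplies that omitted proof and extends it naturally to part~\ref{i: str Theta_z j(p p) = 0}, since the same rescaled limit kills every coefficient $\str\Theta_{\mu,j}(p,p)$ with $j<n/2$ by the Berezin-integral argument. Your reduction to real $\mu$ via $k_{z,t}(p,p)=k_{\mu,t}(p,p)$ is exactly the observation the paper records just before the theorem. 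The Getzler-weight bookkeeping that you flag as the main obstacle does check out: a short computation gives
\[
\sH_\eta=(\nabla_jV)^k\big(e^j\wedge\iota_{e_k}+e^k\wedge\iota_{e_j}\big)-\operatorname{div}V\,,
\]
and in the $(c,\hat c)$ picture
\[
e^j\wedge\iota_{e_k}+e^k\wedge\iota_{e_j}=\delta_{jk}+\tfrac12\big(c(e^j)\hat c(e^k)+c(e^k)\hat c(e^j)\big)\,,
\]
so the dangerous weight-$2$ term $c(e^j)c(e^k)$ cancels precisely because $\sH_\eta$ is symmetric in $(j,k)$. Hence $\mu\sH_\eta+\mu^2|\eta|^2$ has Getzler weight at most $1$ and disappears from the rescaled limit, as you claim. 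Your Gilkey-invariance ``fallback'' is, in spirit, the route of \cite{AlvGilkey2021a}, but as sketched it would require the full classification of $\operatorname{O}(n)$-invariant scalar $n$-densities built from jets of $(g,\eta)$ with $d\eta=0$---not merely the McKean--Singer normalization---to exclude pointwise $\eta$-corrections; stated as an alternative sketch, that is acceptable.
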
 

\begin{rem}\label{r: e_z l}
In the given references, \Cref{t: e_z l} was stated for compact manifolds, but its proof is a local computation, and therefore compactness is irrelevant. We have an additional proof of \Cref{t: e_z l}~\ref{i: str Theta_z n/2(p p) |dvol(p)| = e(M g)(p)} using Getzler's rescaling, following \cite[Section~4.3]{BerlineGetzlerVergne2004}. In the case $n=2$, this can be also checked directly. We omit the details of our alternative proof for brevity reasons.
\end{rem}

\subsection{Local Lefschetz trace formula for the Witten's complex}\label{ss: local Lefschetz formulae}

Let $\phi:U\to V$ be a smooth map between open subsets of $M$ with $U\subset V$, whose fixed point set is denoted by $\Fix(\phi)$. Recall that a fixed point $p$ of $\phi$ is called \emph{simple} if the eigenvalues of $\phi_*:T_pM\to T_pM$ are different from $1$. This means that the graph of $\phi$ is transverse to $\Delta$ in $M^2$ at $(p,p)$; in particular, $p$ is isolated in $\Fix(\phi)$. \index{$\Fix(\phi)$} In this case, let \index{$\epsilon_p(\phi)$}
\begin{equation}\label{epsilon_p(phi)}
\epsilon_p=\epsilon_p(\phi)=\sign\det(\id-\phi_*:T_pM\to T_pM)\in\{\pm1\}\;.
\end{equation}
Assume $V$ is simply connected, and therefore $\eta=dF$ on $V$ for some $F\in C^\infty(V)$. Consider the perturbed linear map $\phi^*_z=e^{z(\phi^*F-F)}\,\phi^*:\phi^*\Lambda V\to\Lambda U$ ($z\in\C$) (\Cref{ss: perturbation of pull-back homs}). Take any relatively compact open neighborhood $W$ of $p$ in $U$ such that $\overline W\cap\Fix(\phi)=\{p\}$. Without loss of generality, we can assume that $U$ is an open subset of a manifold of bounded geometry (or even of a closed manifold), where $\eta$ and $\phi$ can be extended to a closed real 1-form and a smooth map.

\begin{prop}\label{p: local Lefschetz formula}
For all $z\in\C$,
\[
\lim_{t\downarrow0}\int_{q\in W}\str(\phi^*_zk_{z,t}(\phi(q),q)=\epsilon_p(\phi)\;.
\]
\end{prop}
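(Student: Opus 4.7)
The plan is to show that the $t\downarrow 0$ limit localizes near the simple fixed point $p$ and, after rescaling, reduces to a Gaussian integral that equals $\epsilon_p(\phi)$.

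First, I would reduce to an arbitrarily small ball around $p$. Since $p$ is the only fixed point of $\phi$ in $\overline W$, there is a $\delta>0$ such that $d(\phi(q),q)\ge \delta$ on $\overline W\setminus B(p,r)$ for any fixed $r>0$. Extending $\phi$, $\eta$ and $g$ to a manifold of bounded geometry and using the Gaussian estimates~\eqref{heat kernel estimates} for $k_{z,t}$, the integrand on $\overline W\setminus B(p,r)$ is bounded by $C_1\,t^{-n/2}e^{-C_2\delta^2/t}$, which vanishes as $t\downarrow 0$. Thus it suffices to evaluate
\[
\lim_{t\downarrow 0}\int_{B(p,r)}\str\bigl(\phi^*_z\,k_{z,t}(\phi(q),q)\bigr)
\]
for $r$ arbitrarily small.

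On $B(p,r)$ I would substitute the diagonal heat-kernel asymptotic expansion~\eqref{asymptotic expansion}. Since $\Delta_z$ and $\Delta$ have the same principal symbol, the leading coefficient $\Theta_{z,0}$ equals the identity on the diagonal. Choosing normal coordinates $y$ centred at $p$, we have $\phi(y)=Ay+O(|y|^2)$ with $A:=\phi_*|_p$, and $d(\phi(y),y)^2=|(A-I)y|^2+O(|y|^3)$. The perturbation factor $e^{z(F(\phi(y))-F(y))}$ entering $\phi^*_z$ equals $1+O(|y|)$ since $F(\phi(p))=F(p)$, and, under parallel transport identifying $\Lambda_q$ and $\Lambda_{\phi(q)}$ with $\Lambda_p$, $\phi^*$ is close to the linear map $A^*:\Lambda^\bullet_p\to\Lambda^\bullet_p$ to leading order. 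Performing the substitution $y=\sqrt t\,w$, the Gaussian factor becomes $(4\pi)^{-n/2}e^{-|(A-I)w|^2/4}$ up to corrections of order $\sqrt t$, the Jacobian $t^{n/2}$ cancels the prefactor $(4\pi t)^{-n/2}$, and all error terms (from the Taylor expansions of $\phi$ and $F$, the non-Euclidean metric, the remainder in~\eqref{asymptotic expansion}, and the $t^j$-terms with $j\ge 1$) are dominated by the Gaussian and contribute $o(1)$ as $t\downarrow 0$. The surviving term is
\[
\str(A^*)\int_{\R^n}(4\pi)^{-n/2}e^{-|(A-I)w|^2/4}\,dw
=\frac{\det(I-A)}{|\det(A-I)|}=\epsilon_p(\phi),
\]
using the standard identity $\str(A^*)=\det(I-A)$ on $\Lambda^\bullet\R^n$, the elementary Gaussian integral $\int e^{-|Bw|^2/4}\,dw=(4\pi)^{n/2}/|\det B|$ for invertible $B$, and that $|\det(A-I)|=|\det(I-A)|$.

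The main obstacle is the careful bookkeeping of the error terms, uniformly in $t$. The Gaussian decay of the heat kernel, together with the invertibility of $A-I$ ensured by simplicity of $p$, makes each polynomial correction $|w|^k$ integrate against $e^{-|(A-I)w|^2/4}$ to a finite quantity, so the corrections carry their own factor $t^{k/2}$ (from the rescaling) and vanish. The whole argument is essentially the local Atiyah--Bott computation, and it is unaffected by the Witten perturbation because $\Delta_z$ and $\Delta$ share the same principal symbol and $\phi^*_z$ coincides with $\phi^*$ at the fixed point. A cosmetic simplification is available by first using~\eqref{Witten's opers} to reduce to the case of real $z=\mu$, via the identity $\phi^*_z k_{z,t}(\phi(q),q)=\phi^*_\mu k_{\mu,t}(\phi(q),q)$, but it is not needed for the argument above.
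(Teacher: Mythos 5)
Your argument is correct and is exactly the paper's intended proof: localize near $p$ via the Gaussian off-diagonal decay, rescale by $\sqrt t$, and run the standard Atiyah--Bott local computation, using the observation that $e^{z(F\phi(x)-F(x))}=1+O(|x|)$ so the Witten perturbation does not affect the leading term. The paper states the proof only as a pointer to \cite{AtiyahBott1967} together with that key expansion; you have simply spelled out the details, including the harmless reduction to real $z=\mu$ via~\eqref{Witten's opers} which the paper uses implicitly in its remark that $\Theta_{z,j}(p,p)=\Theta_{\mu,j}(p,p)$.
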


\begin{proof}
This follows like in the analytic proof of the Lefschetz trace formula \cite{AtiyahBott1967} (see also \cite[Chapter~10]{Roe1998} or \cite[Section~3.9]{Gilkey1995}), using~\eqref{asymptotic expansion} and the expression
\[
e^{z(F\phi(x)-F(x))}=1+O(|x|)\;,
\]
in terms of normal coordinates $x=(x^1,\dots,x^n)$ centered at $p$.
\end{proof}

\subsection{A tempered distribution associated to some closed 1-forms}\label{ss: Z} 

Assume $M$ is closed, and let $\SS=\SS(\R)$ \index{$\SS$} (\Cref{ss: Sobolev sps}). We would like to define a limit \index{$Z(M,g,\eta)$}
\begin{equation}\label{Z}
Z=Z(M,g,\eta)=\lim_{\mu\to+\infty}Z_\mu
\end{equation}
in $\SS'$, where $Z_\mu=Z_\mu(M,g,\eta)\in\SS'$ ($\mu\gg0$) should be given by
\begin{equation}\label{Z_mu}
\langle Z_\mu,f\rangle=-\frac{1}{2\pi}\int_0^\infty\int_{-\infty}^{+\infty}\Str\left({\eta\wedge}\,\delta_ze^{-u\Delta_z}\right)
\,\hat f(\nu)\,d\lambda\,du\;,
\end{equation}
for all $f\in\SS$, where $\Str$ denotes the supertrace. If $Z(M,g,-\eta)$ is defined, then $Z_\mu(M,g,\eta)\in\SS'$ is defined for $\mu\ll0$, and, in $\SS'$,
\[
-Z(M,g,-\eta)=\lim_{\mu\to-\infty}Z_\mu(M,g,\eta)\;.
\]

\begin{thm}[{\cite[Theorems~1.1--1.4]{AlvKordyLeichtnam-ziomf}}]\label{t: Z}
Let $M$ be a closed manifold of dimension $n$. For every real class $\xi\in H^1(M)$ and $\tau\gg0$, there is some $\eta\in\xi$ and some Riemannian metric $g$ on $M$ such that~\eqref{Z} and~\eqref{Z_mu} define the tempered distribution $Z=\tau\delta_0$, using the Dirac distribution  $\delta_0$ on $\R$. If $n$ is even, this property holds for all $\tau\in\R$, and we can choose $\eta\in\xi$ so that $Z(M,g,\pm\eta)$ is defined and $\pm Z(M,g,\pm\eta)=\tau\delta_0$.
\end{thm}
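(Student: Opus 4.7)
The first task is to check that~\eqref{Z_mu} defines a tempered distribution for $|\mu|\gg 0$. Since $M$ is closed, $\eta\wedge\,\delta_z e^{-u\Delta_z}$ is trace class and smoothing for each fixed $u>0$, and its supertrace admits the small-$u$ expansion given by~\eqref{asymptotic expansion}; the supertrace cancellations of \Cref{t: e_z l}~\ref{i: str Theta_z j(p p) = 0} ensure that the $u\downarrow 0$ singularity of $\Str({\eta\wedge}\,\delta_z e^{-u\Delta_z})$ is integrable. For $u\to+\infty$, the Witten deformation produces a spectral gap for $\Delta_\mu$, uniform in $\lambda$ on compact sets, once $|\mu|\gg 0$, yielding exponential decay in $u$. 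Combining these bounds with finite propagation speed to control the $\lambda$-behaviour, the double integral against $\hat f(\lambda)$ converges absolutely for $f\in\SS$ and depends continuously on $f$.

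The second step is to identify $Z=\lim_{\mu\to+\infty}Z_\mu$ in $\SS'$. Take $\eta$ to be a Morse representative of $\xi$ (every real class in $H^1(M)$ admits one by Novikov-Morse theory). By the Helffer-Sj\"ostrand Agmon analysis, for $\mu\gg 0$ the low spectrum of $\Delta_\mu$ concentrates exponentially at the zero set of $\eta$, while the remaining spectrum is bounded below by $c\mu$ for some $c>0$. The off-critical contributions to the integrand of~\eqref{Z_mu} then vanish in the limit, and the surviving local model near each zero $p$ of $\eta$ reduces to a Witten-deformation computation on $T_pM$. Performing the $\lambda$-integration against $\hat f(\lambda)$ and then the $u$-integration on each model, the local contribution collapses to a constant multiple of $f(0)$, since the concentrated spectrum becomes $\lambda$-independent in the limit. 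Summing over the zeros, $Z$ takes the form $\tau(M,g,\eta)\,\delta_0$ with an explicit formula for $\tau$ in terms of local data at the zeros.

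Third, to prescribe the value of $\tau$: after fixing a Morse $\eta\in\xi$, the local coefficient at each zero can be tuned by modifying $g$ in disjoint small balls around the zero set, which is flexible enough to produce any sufficiently large positive $\tau$. In even dimensions, the Hodge-star identities $\Delta_z\,\star = \star\,\Delta_{-\bar z}$ of~\eqref{delta_z} intertwine the Witten deformations by $\pm\eta$, and combined with the freedom granted by the chirality involution $\sw$ (non-trivial only when $n$ is even) they allow every $\tau\in\R$ to be realised and ensure the symmetry $Z(M,g,-\eta)=-\tau\delta_0$.

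The main obstacle is upgrading the pointwise identification $\langle Z_\mu,f\rangle\to\tau f(0)$ to convergence in $\SS'$: one needs uniform-in-$\mu$ bounds on all Schwartz semi-norms of $Z_\mu$, which require uniform resolvent and heat-kernel estimates for $\Delta_z$ in a strip about the real $z$-axis. Carrying out this semiclassical analysis rigorously is the substance of \cite{AlvKordyLeichtnam-ziomf}.
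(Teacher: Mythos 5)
This theorem is imported from \cite{AlvKordyLeichtnam-ziomf} and is not proved in the present paper, so there is no internal proof to compare against; I can only evaluate the sketch on its own terms.

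The overall strategy you outline (Witten deformation with a Morse representative, spectral concentration at zeros, $\lambda$-integration collapsing to $f(0)$, tuning $\tau$ via local modifications of $g$, and exploiting a Hodge-star intertwining in the even-dimensional case) is a plausible architecture for such a result. However, there are two concrete imprecisions.

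First, the appeal to \Cref{t: e_z l}~\ref{i: str Theta_z j(p p) = 0} for the integrability of the $u\downarrow 0$ singularity does not directly apply: that result controls $\str\Theta_{z,j}(p,p)$, i.e.\ the coefficients in the on-diagonal expansion of the \emph{heat kernel} $k_{z,u}(p,p)$. The integrand of~\eqref{Z_mu} is $\Str\bigl({\eta\wedge}\,\delta_z e^{-u\Delta_z}\bigr)$, where the operator ${\eta\wedge}\,\delta_z$ is a first-order degree-preserving operator applied to the heat operator. The small-$u$ expansion of the resulting kernel on the diagonal has its own coefficients, obtained by applying ${\eta_p\wedge}\,\delta_{z,p}$ (hence differentiating in the off-diagonal variable, bringing down factors involving $d(p,q)/u$) before restricting to the diagonal. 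The supertrace cancellations needed here are not the same as in \Cref{t: e_z l}; they require a separate computation (of the type done in local index theory for $\Str(A e^{-u\Delta})$ with a generic operator $A$) and that computation is nontrivial. So "the supertrace cancellations of \Cref{t: e_z l} ensure integrability" is an unproved claim, not a consequence of the cited result.

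Second, the assertion that the chirality involution $\sw$ is "non-trivial only when $n$ is even" misidentifies the object. In this paper $\sw$ is the \emph{degree} involution ($\sw u=(-1)^k u$ on $\Lambda^k$), which is non-trivial in every dimension $n\ge1$. The parity dichotomy in the statement more plausibly comes from the Hodge-star relations~\eqref{delta_z} (e.g.\ $\Delta_z\,\star=\star\,\Delta_{-\bar z}$, which exchanges $+\eta$ and $-\eta$ perturbations and exchanges degree $k$ with degree $n-k$), whose parity-dependent signs are exactly the mechanism by which even $n$ gains the extra symmetry $\pm Z(M,g,\pm\eta)=\tau\delta_0$ and the freedom to achieve all $\tau\in\R$. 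You mention these relations but subordinate them to the wrong operator; the argument as written does not establish the even-dimensional refinement.
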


\begin{rem}\label{r: Z}
If $n$ is even, we can choose $\eta$ and $g$ in \Cref{t: Z} so that $Z(M,g,\pm\eta)=0$. 
\end{rem}

\chapter{Foliation tools}\label{ch: foln tools}

\section{Foliations}\label{s: folns}

Standard references on foliations are \cite{HectorHirsch1981-A,HectorHirsch1983-B,CamachoLinsNeto1985,Godbillon1991,CandelConlon2000-I,CandelConlon2003-II}, and for analysis on foliations see \cite{Connes1982,MooreSchochet1988}. 

\subsection{Basic concepts}\label{ss: folns}

Recall that a (\emph{smooth}) \emph{foliation} \index{foliation} $\FF$ on a manifold $M$, with \emph{codimension} $n'$ and \emph{dimension} $n''$ ($\codim\FF=n'$, $\dim\FF=n''$), can be described by a \emph{foliated atlas} $\{U_k,x_k\}$ of $M$. The \emph{foliated charts} or \emph{foliated coordiates} $(U_k,x_k)$ are of the form
\begin{equation}\label{x = (x' x'')}
x_k=(x'_k,x''_k):U_k\to x_k(U_k)=\Sigma_k\times B''_k\;,
\end{equation}
where $B''_k$ is an open ball of $\R^{n''}$ and $\Sigma_k$ is open in $\R^{n'}$, and the corresponding changes of coordinates are locally of the form
\begin{equation}\label{changes of foliated coordinates}
x_lx_k^{-1}(u,v)=(h_{lk}(u),g_{lk}(u,v))\;.
\end{equation}
We will use the notation
\[
x_k=(x^1_k,\dots,x^n_k)=(x'^1_k,\dots,x'^{n'}_k,x''^{n'+1}_k,\dots,x''^n_k)\;.
\]
It is also said that $(M,\FF)$ is a \emph{foliated manifold}. The open sets $U_k$ and the projections $x'_k:U_k\to\Sigma_k$ are said to be \emph{distinguished}, the fibers of $x'_k$ are called \emph{plaques}, and the fibers of $x''_k$ are called \emph{local transversals} defined by $(U_k,x_k)$, which can be identified with $\Sigma_k$ via $x'_k$. Thus the sets $\Sigma_k$ can be considered as local transversals of $\FF$ with disjoint closures. The open subsets of all plaques form a base of a topology on $M$, called the \emph{leaf topology}, becoming a smooth manifold of dimension $n''$ with the obvious charts induced by $\{U_k,x_k\}$, and its connected components are called \emph{leaves}. The leaf through any point $p$ may be denoted by $L_p$. The \emph{$\FF$-saturation} of a subset $S\subset M$, denoted by $\FF(S)$, is the union of leaves that meet $S$.

Foliations on manifolds with boundary are similarly defined, assuming the boundary is either tangent or transverse to the leaves; we will only use the case where the boundary is tangent to the leaves (it is a union of leaves).

If a smooth map $\phi:M'\to M$ is transverse to (the leaves of) $\FF$, then the connected components of the inverse images $\phi^{-1}(L)$ of the leaves $L$ of $\FF$ are the leaves of a smooth foliation $\phi^*\FF$ on $M'$ of codimension $n'$, called \emph{pull-back} of $\FF$ by $\phi$. In particular, for the inclusion map of any open subset, $\iota:U\hookrightarrow M$, the pull-back $\iota^*\FF$ is the \emph{restriction} $\FF|_U$.

Any connected manifold $M$ can be considered as a foliation with one leaf, also denoted by $M$. On the other hand, we can consider the foliation by points on $M$, denoted by $M^\delta$ ($\delta$ refers to the discreteness of the leaf topology). Given foliations $\FF_a$ on manifolds $M_a$ ($a=1,2$), the products of leaves of $\FF_1$ and $\FF_2$ are the leaves of the \emph{product foliation} $\FF_1\times\FF_2$, whose charts can be defined using products of charts of $\FF_1$ and $\FF_2$.

\subsection{Holonomy}\label{ss: holonomy}

After considering a refinement if necessary, we can assume the foliated atlas $\{U_k,x_k\}$ is \emph{regular} in the following sense: it is locally finite; for every $k$, there is a foliated chart $(\widetilde U_k,\tilde x_k)$ such that $\overline{U_k}\subset\widetilde U_k$ and $\tilde x_k$ extends $x_k$; and, if $U_{kl}:=U_k\cap U_l\ne\emptyset$, then there is another foliated chart $(U,x)$ such that $\overline{U_k}\cup\overline{U_l}\subset U$. In this case,~\eqref{changes of foliated coordinates} holds on the whole of $U_{kl}$, obtaining the \emph{elementary holonomy transformations} $h_{kl}:x'_l(U_{kl})\to x'_k(U_{kl})$, determined by the condition $h_{kl}x'_l=x'_k$ on $U_{kl}$. The collection $\{U_k,x'_k,h_{kl}\}$ is called a \emph{defining cocycle}. The maps $h_{kl}$ generate the \emph{holonomy pseudogroup} \index{holonomy pseudogroup} $\HH$ \index{$\HH$} on $\Sigma:=\bigsqcup_k\Sigma_k$, \index{$\Sigma$} which is unique up to certain \emph{equivalence} of pseudogroups \cite{Haefliger1980}. This $\Sigma$ can be considered as a \emph{complete transversal} of $\FF$, in the sense that it meets all leaves. The notation $(\Sigma,\HH)$ may be also used.  The $\HH$-orbit of every $\bar p\in\Sigma$ is denoted by $\HH(\bar p)$. The maps $x'_k$ induce a homeomorphism between the leaf space, $M/\FF$, and the orbit space, $\Sigma/\HH$.

The paths in the leaves are called \emph{leafwise paths} when considered in $M$. Let $c:I:=[0,1]\to M$ be a leafwise path with $p:=c(0)\in U_k$ and $q:=c(1)\in U_l$. There is a partition of $I=[0,1]$, $0=t_0<t_1<\dots<t_m=1$, and a sequence of indices, $k=k_1,k_2,\dots,k_m=l$, such that $c([t_{i-1},t_i])\subset U_{k_i}$ for $i=1,\dots,m$. The composition $h_c=h_{k_mk_{m-1}}\cdots h_{k_2k_1}$, wherever defined, is a diffeomorphism with $x'_k(p)\in\dom h_c\subset\Sigma_k$ and $x'_l(q)=h_cx'_k(p)\in\im h_c\subset\Sigma_l$. The tangent map $h_{c*}:T_{x'_k(p)}\Sigma_k\to T_{x'_l(q)}\Sigma_l$ is called \emph{infinitesimal holonomy} \index{infinitesimal holonomy} of $c$. The germ $\bfh_c$ of $h_c$ at $x'_k(p)$, called \emph{germinal holonomy} \index{germinal holonomy} of $c$, depends only on $\FF$ and the end-point homotopy class of $c$ in $L=L_p$. In particular, taking $q=p$ and $l=k$, this defines the \emph{holonomy homomorphism} onto the \emph{holonomy group}, $\bfh=\bfh_L:\pi_1(L,p)\to\Hol(L,p)$. The isomorphism class of $\Hol(L,p)$ is independent of $p$; thus the notation $\Hol L$ may be used, like $\pi_1L$. If $\Hol L$ is trivial, then $L$ is said to be \emph{without holonomy}. Residually many leaves have no holonomy \cite{Hector1977a,EpsteinMillettTischler1977}. If all leaves have no holonomy, then $\FF$ is said to be \emph{without holonomy}. The kernel of $\bfh:\pi_1 L\to\Hol L$ defines the \emph{holonomy cover} \index{holonomy cover} $\widetilde L=\widetilde L^{\text{\rm hol}}$ of $L$. If $D$ is a compact domain of a leaf $L$ with smooth boundary, then $\FF$ can be completely described in some neighborhood of $D$ in $M$ by the composition
\[
\pi_1D \to \pi_1L \xrightarrow{\bfh} \Hol L\;,
\]
where the first homomorphism is induced by $D\hookrightarrow L$ \cite[Section~2.7]{Haefliger1962} (see also \cite[Theorem~2.1.7]{HectorHirsch1981-A}, \cite[Theorem~IV.2]{CamachoLinsNeto1985}, \cite[Theorem~II.2.29]{Godbillon1991}, \cite[Theorem~2.3.9]{CandelConlon2000-I}). This description, called Reeb's local stability, involves the so-called \emph{suspension foliation}, which allows the lifting of smooth paths from $L$ to nearby leaves, continuously in the $C^\infty$ topology.

\subsection{Infinitesimal transformations and transverse vector fields}\label{ss: infinitesmal transfs}

The vectors tangent to the leaves form the \emph{tangent bundle} $T\FF\subset TM$, obtaining also the \emph{normal bundle} $N\FF=TM/T\FF$, the  \emph{cotangent bundle} $T^*\FF=(T\FF)^*$ and the \emph{conormal bundle} $N^*\FF=(N\FF)^*$, the flat line bundles of \emph{tangent/normal orientations}, $o(\FF)=o(T\FF)$ and $o(N\FF)$, the \emph{tangent/normal density bundles}, $\Omega^a\FF=\Omega^aT\FF$ ($a\in\R$) and $\Omega^aN\FF$ (removing ``$a$'' from the notation when it is $1$), and the \emph{tangent/normal exterior bundles}, $\Lambda\FF=\bigwedge T^*\FF\otimes\C$ \index{$\Lambda\FF$} and $\Lambda N\FF=\bigwedge N^*\FF\otimes\C$. \index{$\Lambda N\FF$} Again, we typically consider these density and exterior bundles with complex coefficients, without changing the notation; the few cases of real coefficients will be indicated. The terms \emph{tangent/normal} vector fields, densities and differential forms are used for their smooth sections. Sometimes, ``\emph{leafwise}'' is used instead of ``tangent''. Any $X\in TM$ (resp., $X\in\fX(M)$) canonically defines an element of $N\FF$ (resp., $C^\infty(M;N\FF)$) denoted by $\overline{X}$. For any smooth local transversal $\Sigma$ of $\FF$ through a point $p\in M$, there is a canonical isomorphism $T_p\Sigma\cong N_p\FF$.

A smooth vector bundle $E$ over $M$, endowed with a flat $T\FF$-partial connection, is said to be {\em$\FF$-flat}. For instance, $N\FF$ is $\FF$-flat with the Bott $T\FF$-partial connection $\nabla^\FF$, given by $\nabla^\FF_V\overline{X}=\overline{[V,X]}$ for $V\in\fX(\FF):=C^\infty(M;T\FF)$ \index{$\fX(\FF)$} and $X\in\fX(M)$. For every leafwise path $c$ from $p$ to $q$, its infinitesimal holonomy can be considered as a homomorphism $h_{c*}:N_p\FF\to N_q\FF$, which is the $\nabla^\FF$-parallel transport along $c$.

$\fX(\FF)$ is a Lie subalgebra and $C^\infty(M)$-submodule of $\fX(M)$, whose normalizer is denoted by $\fX(M,\FF)$, \index{$\fX(M,\FF)$} obtaining the quotient Lie algebra $\olfX(M,\FF)=\fX(M,\FF)/\fX(\FF)$. \index{$\olfX(M,\FF)$} The elements of $\fX(M,\FF)$ (resp., $\olfX(M,\FF)$) are called \emph{infinitesimal transformations} (resp., \emph{transverse vector fields}). The projection of every $X\in\fX(M,\FF)$ to $\olfX(M,\FF)$ is also denoted by $\overline{X}$; in fact, 
\[
\olfX(M,\FF)\equiv\{\,\overline X\in C^\infty(M;N\FF)\mid\nabla^\FF\overline X=0\,\}\subset C^\infty(M;N\FF)\;. 
\]
Any $X\in\fX(M)$ is in $\fX(M,\FF)$ if and only if every restriction $X|_{U_k}$ can be projected by $x'_k$, defining an $\HH$-invariant vector field on $\Sigma$, also denoted by $\overline X$. This induces a canonical isomorphism of $\olfX(M,\FF)$ to the Lie algebra $\fX(\Sigma,\HH)$ \index{$\fX(\Sigma,\HH)$} of $\HH$-invariant tangent vector fields on $\Sigma$.

When $M$ is not closed, we can consider the subsets of complete vector fields, $\fXcom(\FF)\subset\fX(\FF)$ and $\fXcom(M,\FF)\subset\fX(M,\FF)$. Let $\olfXcom(M,\FF)\subset\olfX(M,\FF)$ be the projection of $\fXcom(M,\FF)$. 


\subsection{Holonomy groupoid}\label{ss: holonomy groupoid}

On the space of leafwise paths in $M$, with the compact-open topology, two leafwise paths are declared to be equivalent if they have the same end points and the same germinal holonomy. This is an equivalence relation, and the corresponding quotient space, $\fG=\Hol(M,\FF)$, becomes a smooth manifold of dimension $n+n'$ in the following way. An open neighborhood $\fU$ of a class $[c]$ in $\fG$, with $c(0)\in U_k$ and $c(1)\in U_l$, is defined by the leafwise paths $d$ such that $d(0)\in U_k$, $d(1)\in U_l$, $x'_kd(0)\in\dom h_c$, and $h_d$ and $h_c$ have the same germ at $x'_kd(0)$. Local coordinates on $\fU$ are given by $[d]\mapsto(d(0),x''_ld(1))$. Moreover, $\fG$ \index{$\fG$} is a Lie groupoid, called the \emph{holonomy groupoid}, \index{holonomy groupoid} where the space of units $\fG^{(0)}\equiv M$ is defined by the constant paths, the source and range projections $\bfs,\bfr:\fG\to M$ are given by the first and last points of the paths, the operation is induced by the opposite of the usual path product, and the inversion is induced by the usual path inversion. Note that $\fG$ is Hausdorff if and only if $\HH$ is \emph{quasi-analytic} in the following sense: for any $h\in\HH$ and open $O\subset\Sigma$ with $\overline O\subset\dom h$, if $h|_O=\id_O$, then $h$ is the identity on some neighborhood of $\overline O$. Observe also that $\bfs,\bfr:\fG\to M$ are smooth submersions, and $(\bfr,\bfs):\fG\to M^2$ is a smooth immersion. Let $\RR_\FF=\{\,(p,q)\in M^2\mid L_p=L_q\,\}\subset M^2$, which is not a regular submanifold in general, and let $\Delta\subset M^2$ be the diagonal. We have $(\bfr,\bfs)(\fG)=\RR_\FF$ and $(\bfr,\bfs)(\fG^{(0)})=\Delta$. For any leaf $L$ and $p\in L$, we have $\Hol(L,p)=\bfs^{-1}(p)\cap\bfr^{-1}(p)$, the map $\bfr:\bfs^{-1}(p)\to L$ is the covering projection $\widetilde L^{\text{\rm hol}}\to L$, and $\bfs:\bfr^{-1}(p)\to L$ corresponds to $\bfr:\bfs^{-1}(p)\to L$ by the inversion of $\fG$. Thus $(\bfr,\bfs):\fG\to M^2$ is injective if and only if all leaves have trivial holonomy groups, but, even in this case, this map may not be a topological embedding. The fibers of $\bfs$ and $\bfr$ define smooth foliations of codimension $n$ on $\fG$. We also have the smooth foliation $\bfs^*\FF=\bfr^*\FF$ of codimension $n'$ with leaves $\bfs^{-1}(L)=\bfr^{-1}(L)=(\bfr,\bfs)^{-1}(L^2)$ for leaves $L$ of $\FF$, and every restriction $(\bfr,\bfs):(\bfr,\bfs)^{-1}(L^2)\to L^2$ is a smooth covering projection.

Let $\FF_k=\FF|_{U_k}$, $\fG_k=\Hol(U_k,\FF_k)$ and $\RR_k=\RR_{\FF_k}$. The set $\bigcup_k\fG_k$ (resp., $\bigcup_k\RR_k$) is an open neighborhood of $\fG^{(0)}$ in $\fG$ (resp., of $\Delta$ in $\RR_\FF$). Furthermore, by the regularity of $\{U_k,x_k\}$, the map $(\bfr,\bfs):\bigcup_k\fG_k\to M^2$ is a smooth embedding with image $\bigcup_k\RR_k$; we will write $\bigcup_k\fG_k\equiv\bigcup_k\RR_k$.

\subsection{The convolution algebra on $\fG$ and its global action}\label{ss: global action}

Consider the notation of \Cref{ss: holonomy groupoid}. For the sake of simplicity, assume $\fG$ is Hausdorff \cite{Connes1979}. The extension of the following concepts to the case where $\fG$ is not Hausdorff can be made like in \cite{Connes1982}.

Given a vector bundle $E$ over $M$, let $S=\bfr^*E\otimes \bfs^*(E^*\otimes\Omega\FF)$, which is a vector bundle over $\fG$. Let $C^\infty_{\text{\rm cs}}(\fG;S)\subset C^\infty(\fG;S)$ denote the subspace of sections $k\in C^\infty(\fG;S)$ such that $\supp k\cap\bfs^{-1}(K)$ is compact for all compact $K\subset M$; in particular, $C^\infty_{\text{\rm cs}}(\fG;S)=\Cinftyc(\fG;S)$ if $M$ is compact. Similarly, define $C^\infty_{\text{\rm cr}}(\fG;S)$ by using $\bfr$ instead of $\bfs$. Both $C^\infty_{\text{\rm cs}}(\fG;S)$ and $C^\infty_{\text{\rm cr}}(\fG;S)$ are associative algebras with the \emph{convolution} product defined by
\[
(k_1*k_2)(\gamma)=\int_{\bfs(\epsilon)=\bfs(\gamma)}k_1(\gamma\epsilon^{-1})\,k_2(\epsilon)
=\int_{\bfr(\delta)=\bfr(\gamma)}k_1(\delta)\,k_2(\delta^{-1}\gamma)\;,
\]
and $C^\infty_{\text{\rm csr}}(\fG;S):=C^\infty_{\text{\rm cs}}(\fG;S)\cap C^\infty_{\text{\rm cr}}(\fG;S)$ and $\Cinftyc(\fG;S)$ are subalgebras. 

The \emph{global action} of $C^\infty_{\text{\rm cr}}(\fG;S)$ on $C^\infty(M;E)$ is the left action defined by
\[
(k\cdot u)(p)=\int_{\bfr(\gamma)=p}k(\gamma)\,u(\bfs(\gamma))\;.
\]
In this way, $C^\infty_{\text{\rm cr}}(\fG;S)$ can be considered as an algebra of operators on $C^\infty(M;E)$. Moreover $C^\infty_{\text{\rm csr}}(\fG;S)$ preserves $\Cinftyc(M;E)$, obtaining an algebra of operators on $\Cinftyc(M;E)$. It can be said that these operators are defined by a leafwise version of a smooth Schwartz kernel (cf.\ \Cref{ss: ops}).

Let $S'=\bfr^*(E^*\otimes\Omega\FF)\otimes \bfs^*E$. The mapping $k\mapsto k^\trans$, $ k^\trans(\gamma)=k(\gamma^{-1})$, defines anti-homomorphisms $C^\infty_{\text{\rm cs/cr}}(\fG;S)\to C^\infty_{\text{\rm cr/cs}}(\fG;S')$ and $C^\infty_{\text{\rm csr}}(\fG;S)\to C^\infty_{\text{\rm csr}}(\fG;S')$, obtaining a leafwise version of the transposition of operators (cf.\ \Cref{ss: ops}). Similarly, using $E=\Omega^{1/2}\FF$, or if $E$ has a Hermitian structure and we fix a non-vanishing leafwise density, we get a leafwise version of taking adjoint operators. Moreover, in this case, $\Cinftyc(\fG;S)$ is $*$-algebra.

\subsection{Leafwise metric}\label{ss: leafwise metric}

A Euclidean structure $g_\FF$ on $T\FF$ is called a \emph{leafwise} (\emph{Riemannian}) \emph{metric} \index{leafwise metric} of $\FF$. The corresponding \emph{leafwise distance} \index{leafwise distance} is the map $d_\FF:M^2\to[0,\infty]$ given by the distance function of the leaves on $\RR_\FF$, taking $d_\FF(M^2\setminus\RR_\FF)=\infty$. For $p\in M$, $S\subset M$ and $r>0$, the \emph{open} and \emph{closed leafwise balls}, $B_\FF(p,r)$ and $\overline B_\FF(p,r)$, and the \emph{open} and \emph{closed leafwise penumbras}, \index{leafwise penumbra} $\Pen_\FF(S,r)$ \index{$\Pen_\FF(S,r)$} and $\overline{\Pen}_\FF(S,r)$, \index{$\overline{\Pen}_\FF(S,r)$} are defined with $d_\FF$ like in the case of Riemannian metrics (\Cref{s: bd geom}). The Levi-Civita connection on the leaves defines a $T\FF$-partial connection on $T\FF$, also denoted by $\nabla^\FF$.

Equip the foliation $\bfr^*\FF$ on $\fG$ with the leafwise Riemannian metric so that the foliated immersion $(\bfr,\bfs):(\fG,\bfr^*\FF)\to(M^2,\FF^2)$ is isometric on the leaves. Let $d_{\bfr}:\fG\to[0,\infty]$ denote the leafwise distance for the foliation on $\fG$ defined by the fibers of $\bfr$, and consider the corresponding open and closed leafwise penumbras, $\Pen_{\bfr}(\fG^{(0)},r)$ and $\overline{\Pen}_{\bfr}(\fG^{(0)},r)$. Note that we get the same penumbras by using $\bfs$ instead of $\bfr$; indeed, they are given by the conditions $d_\FF^{\text{\rm hol}}<r$ and $d_\FF^{\text{\rm hol}}\le r$, resp., where $d_\FF^{\text{\rm hol}}:\fG\to[0,\infty)$ is defined by
\[
d_\FF^{\text{\rm hol}}(\gamma)=\inf_c\length(c)\;,
\]
with $c$ running in the piecewise smooth representatives of $\gamma$.

For example, if $M$ is endowed with a Riemannian metric, its restriction to the leaves defines a leafwise Riemannian metric. In this case, $d_\FF\ge d_M$ (the distance function of $M$), and the leafwise metric of $\bfr^*\FF$ is given by the Riemannian metric on $\fG$ so that the immersion $(\bfr,\bfs):\fG\to M^2$ is isometric. 

By the smooth lifting of leafwise paths to nearby leaves, it easily follows that $d_\FF^{\text{\rm hol}}:\fG\to[0,\infty)$ and $d_\FF:\RR_\FF\to[0,\infty)$ are upper semicontinuous. Moreover $d_\FF^{\text{\rm hol}}\equiv d_\FF$ on $\bigcup_k\fG_k\equiv\bigcup_k\RR_k$. Using the convexity radius (see e.g.\ \cite[Section~6.3.2]{Petersen1998}), it follows that, after refining $\{U_k,x_k\}$ if necessary, we can assume $d_\FF$ is continuous on $\bigcup_k\RR_k$.

\begin{lem}\label{l: d_FF reaches maximum/minimum}
The following properties hold for any compact $K\subset M^2$:
\begin{enumerate}[{\rm(i)}]
\item\label{i: d_FF reaches maximum} If $K\subset\RR_\FF$, then $d_\FF|_K$ reaches a finite maximum at some point.
\item\label{i: d_FF reaches minimum} If $K\cap\Delta=\emptyset$, then $\inf d_\FF(K)>0$. If moreover $\inf d_\FF(K)$ is small enough, then it is the minimum of $d_\FF|_K$. 
\end{enumerate}
\end{lem}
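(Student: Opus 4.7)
The plan is to handle (i) by upper semicontinuity, (ii) positivity by comparison with the ambient distance, and attainment of the minimum in (ii) by local plaque analysis near the diagonal.

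For (i), since the excerpt notes that $d_\FF:\RR_\FF\to[0,\infty)$ is upper semicontinuous and takes finite values, the restriction $d_\FF|_K$ of an upper semicontinuous function with finite values to a compact set attains its supremum; this supremum is therefore a finite maximum.

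For the positivity statement in (ii), I would equip $M$ with a Riemannian metric $g$ with induced distance $d_M$. Since leafwise paths are also paths in $M$, one has $d_\FF(p,q)\ge d_M(p,q)$ for $(p,q)\in\RR_\FF$ (and $d_\FF=\infty$ outside $\RR_\FF$). The map $d_M:M^2\to[0,\infty)$ is continuous with $d_M^{-1}(0)=\Delta$. As $K$ is compact and disjoint from $\Delta$, we get $\inf_K d_M>0$, hence $\inf_K d_\FF\ge \inf_K d_M>0$.

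The main content is the attainment of the minimum in (ii). Set $\alpha=\inf d_\FF(K)$ and let $K'=\pi_1(K)\cup\pi_2(K)\subset M$, which is compact. Using the regular atlas $\{U_k,x_k\}$ with extensions $(\widetilde U_k,\tilde x_k)$, local finiteness implies only finitely many charts meet $K'$, and one can produce a uniform $\epsilon_0>0$ (depending on $K'$) such that:
\begin{enumerate}[(a)]
\item whenever $p\in K'$ and $q\in M$ satisfy $d_\FF(p,q)<\epsilon_0$, the pair $(p,q)$ belongs to some $\RR_k$ with $U_k$ meeting $K'$, and the minimizing leafwise geodesic from $p$ to $q$ lies in a single plaque of $\widetilde U_k$;
\item on $\{(p,q)\in\widetilde\RR_k:d_\FF(p,q)<\epsilon_0\}$, the leafwise distance $d_\FF$ agrees with the in-plaque Riemannian distance and is continuous.
\end{enumerate}
Assuming $\alpha<\epsilon_0$, pick a minimizing sequence $(p_n,q_n)\in K$ with $d_\FF(p_n,q_n)\to\alpha$. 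For large $n$, by (a) we have $(p_n,q_n)\in\RR_{k_n}$ with $k_n$ in a finite set, so after extracting we may take $k_n=k$ constant and $(p_n,q_n)\to(p,q)\in K$. Since $\overline{\RR_k}\subset\widetilde\RR_k$ by the regular-atlas extension, $(p,q)\in\widetilde\RR_k$, and (b) gives
\[
d_\FF(p,q)=\lim_n d_\FF(p_n,q_n)=\alpha\;,
\]
so the infimum is attained at $(p,q)\in K$.

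The main obstacle is establishing (a) and (b) uniformly on the compact projection $K'$: one needs a lower bound on the leafwise radius inside which the minimizing geodesic stays in a single (extended) plaque and depends continuously on its endpoints. This is a uniform version of the convexity-radius argument used elsewhere in the excerpt, combined with the regular-atlas property $\overline{U_k}\subset\widetilde U_k$, which is precisely what lets the limit $(p,q)$ be controlled by continuity of $d_\FF$ on $\widetilde\RR_k$ even though $\bigcup_k\RR_k$ itself need not be closed in $M^2$.
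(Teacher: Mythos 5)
Your proposal is correct and follows essentially the same route the paper takes: part (i) via upper semicontinuity, and the attainment of the minimum in (ii) via the continuity of $d_\FF$ on $\bigcup_k\RR_k$, using the regular-atlas extensions $(\widetilde U_k,\tilde x_k)$ so that a minimizing sequence in $\RR_k$ converges into $\widetilde\RR_k$ where the in-plaque distance is continuous. One small difference worth noting: for the positivity $\inf d_\FF(K)>0$ you invoke the comparison $d_\FF\ge d_M$ with an ambient Riemannian metric, which is a cleaner and more self-contained argument than the paper's (which extracts positivity from the same continuity/convergence reasoning as attainment); apart from that, your steps (a)--(b) are exactly the uniform convexity-radius statements the paper appeals to when asserting that $d_\FF$ can be assumed continuous on $\bigcup_k\RR_k$ after refining the atlas.
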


\begin{proof}
Using that $K$ is compact, $\Delta=\{d_\FF=0\}$, $\RR_\FF=\{d_\FF<\infty\}$, and $\bigcup_k\RR_k$ is a neighborhood of $\Delta$ in $\RR_\FF$ containing $K\cap\{d_\FF\le r\}$ for some $r<0$, we get~\ref{i: d_FF reaches maximum} by the upper semicontinuity of $d_\FF$, and~\ref{i: d_FF reaches minimum} by the continuity of $d_\FF$ on $\bigcup_k\RR_k$.
\end{proof}

\begin{rem}\label{r: d_FF^hol reaches maximum/minimum}
The obvious version of \Cref{l: d_FF reaches maximum/minimum} for $d_\FF^{\text{\rm hol}}$ and compact subsets of $\fG$ can be proved with analogous arguments.
\end{rem}

From now on, suppose the leaves with $g_\FF$ are complete Riemannian manifolds. Then their exponential maps define a smooth map $\exp_\FF:T\FF\to M$.

With the notation of \Cref{ss: global action}, let $C^\infty_{\text{\rm p}}(\fG;S)\subset C^\infty(\fG;S)$ denote the subspace of sections supported in leafwise penumbras of $\fG^{(0)}$. This is a subalgebra of $C^\infty_{\text{\rm csr}}(\fG;S)$, and the leafwise transposition restricts to an anti-homomorphism $C^\infty_{\text{\rm p}}(\fG;S)\to C^\infty_{\text{\rm p}}(\fG;S')$ \cite[Section~4.6]{AlvKordyLeichtnam2020}.

\subsection{Foliated maps and foliated flows}\label{ss: fol maps}

A \emph{foliated map} \index{foliated map} $\phi:(M_1,\FF_1)\to(M_2,\FF_2)$ is a map $\phi:M_1\to M_2$ that maps leaves of $\FF_1$ to leaves of $\FF_2$. In this case, assuming that $\phi$ is smooth, its tangent map defines homomorphisms $\phi_*:T\FF_1\to T\FF_2$ and $\phi_*:N\FF_1\to N\FF_2$, where the second one is compatible with the corresponding flat partial connections. We also get an induced Lie groupoid homomorphism $\Hol(\phi):\Hol(M_1,\FF_1)\to\Hol(M_2,\FF_2)$, defined by $\Hol(\phi)([c])=[\phi c]$. The set of smooth foliated maps $(M_1,\FF_1)\to(M_2,\FF_2)$ is denoted by $C^\infty(M_1,\FF_1;M_2,\FF_2)$.  A smooth family $\phi=\{\,\phi^t\mid t\in T\,\}$ of foliated maps $(M_1,\FF_1)\to(M_2,\FF_2)$ can be considered as the smooth foliated map $\phi:(M_1\times T,\FF_1\times T^\delta)\to(M_2,\FF_2)$.

For example, if a smooth map $\psi:M'\to M$ is transverse to a foliation $\FF$ on $M$, then it is a foliated map $(M',\psi^*\FF)\to(M,\FF)$. Moreover $\psi_*:N\psi^*\FF\to N\FF$ restricts to isomorphisms between the fibers; i.e., it induces an isomorphism $\psi_*:N\psi^*\FF\xrightarrow{\cong}\psi^*N\FF$ of $\psi^*\FF$-flat vector bundles over $M'$.

Let $\Diffeo(M,\FF)$ be the group of foliated diffeomorphisms (or transformations) of $(M,\FF)$. A smooth flow $\phi=\{\phi^t\}$ on $M$ is called \emph{foliated} \index{foliated flow} if $\phi^t\in\Diffeo(M,\FF)$ for all $t\in\R$. More generally, a local flow $\phi:\Omega\to M$, defined on some open neighborhood $\Omega$ of $M\times\{0\}$ in $M\times\R$, is called \emph{foliated} if it is a foliated map $(\Omega,(\FF\times\R^\delta)|_\Omega)\to(M,\FF)$. Then $\fX(M,\FF)$ consists of the smooth vector fields whose local flow is foliated, and $\fXcom(M,\FF)$ \index{$\fXcom(M,\FF)$} consists of the complete smooth vector fields whose flow is foliated.

Let $X\in\fXcom(M,\FF)$, with foliated flow $\phi=\{\phi^t\}$, and let $\bar\phi$ be the local flow on $\Sigma$ generated by $\overline X\in\fX(\Sigma,\HH)$ (\Cref{ss: holonomy,ss: infinitesmal transfs}). The following properties hold \cite[Section~4.8]{AlvKordyLeichtnam2020}: via $x'_k:U_k\to\Sigma_k$, the local flow defined by $\phi$ on every $U_k$ corresponds to the restriction of $\bar\phi$ to $\Sigma_k$; and $\bar\phi$ is $\HH$-equivariant in an obvious sense.

Take another vector field $Y\in\fXcom(M,\FF)$ with foliated flow $\psi=\{\psi^t\}$.

\begin{lem}\label{l: overline Y = overline X <=> phi^t(L) = psi^t(L)}
We have $\overline Y=\overline X$ if and only if $\phi^t(L)=\psi^t(L)$ for all $t\in\R$ and every leaf $L$.
\end{lem}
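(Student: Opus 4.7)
The plan is to prove both implications by a single computation concerning the smooth curve $\sigma\colon\R\to M$ defined by $\sigma(t)=\phi^{-t}(\psi^t(p))$ for an arbitrary fixed point $p\in M$. Applying the chain rule to $(a,b)\mapsto\phi^a(\psi^b(p))$, and using the basic fact that the generator of a flow is invariant under that flow (so $(\phi^{-t})_*(X(\psi^t(p)))=X(\sigma(t))$), one computes
\[
\sigma'(t)=-X(\sigma(t))+(\phi^{-t})_*(Y(\psi^t(p)))=(\phi^{-t})_*(Z(\psi^t(p))),
\]
where $Z:=Y-X$. The crucial observation is that $\overline Y=\overline X$ in $\olfX(M,\FF)$ is equivalent to $Z\in\fX(\FF)$, i.e.\ $Z$ being everywhere tangent to the leaves.

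For the implication $(\Rightarrow)$, assume $\overline Y=\overline X$, so $Z\in\fX(\FF)$. Since $\phi^{-t}$ is a foliated diffeomorphism, its differential maps $T\FF$ to $T\FF$, and hence $\sigma'(t)\in T_{\sigma(t)}\FF$ for every $t\in\R$. Thus $\sigma$ is a smooth curve everywhere tangent to $\FF$; since $\sigma(0)=p$, it remains in the single leaf $L_p$ for all $t$. Rewriting this as $\psi^t(p)=\phi^t(\sigma(t))\in\phi^t(L_p)$ shows that the two leaves $\phi^t(L_p)$ and $\psi^t(L_p)$ share the point $\psi^t(p)$ and therefore coincide; since $p$ was arbitrary, $\phi^t(L)=\psi^t(L)$ for every leaf $L$ and every $t$.

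For the implication $(\Leftarrow)$, assume $\phi^t(L)=\psi^t(L)$ for all $t$ and every leaf $L$. Taking $L=L_p$ gives $\psi^t(p)\in\psi^t(L_p)=\phi^t(L_p)$, equivalently $\sigma(t)\in L_p$ for every $t$. Therefore $\sigma$ is a smooth curve into the immersed submanifold $L_p$, so $\sigma'(0)\in T_p\FF$. Since $\sigma'(0)=Z(p)=Y(p)-X(p)$, this yields $Y(p)-X(p)\in T_p\FF$ for all $p\in M$, that is, $\overline Y=\overline X$. The only mildly delicate step is the formula for $\sigma'(t)$, but it is entirely routine; once it is in place both directions reduce to the correspondence $\overline Y=\overline X\Leftrightarrow Y-X\in\fX(\FF)$ together with the fact that a smooth curve tangent to $\FF$ stays in one leaf.
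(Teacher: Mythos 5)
Your proof is correct but takes a genuinely different route from the paper's. The paper passes to the transverse picture: it identifies $\overline Y=\overline X$ with equality of the induced local flows $\bar\phi=\bar\psi$ on the complete transversal $\Sigma$, then uses the fact that the local flow of any $X\in\fX(M,\FF)$ on a distinguished chart $U_k$ projects via $x'_k$ to $\bar\phi$ on $\Sigma_k$, and finally globalizes by noting that a foliated flow preserving leaves locally in time preserves them for all time. Your argument instead works entirely inside $M$: you introduce the curve $\sigma(t)=\phi^{-t}(\psi^t(p))$, compute $\sigma'(t)=(\phi^{-t})_*\bigl((Y-X)(\psi^t(p))\bigr)$, and trade the reduction ``$\overline Y=\overline X\Leftrightarrow Y-X\in\fX(\FF)$'' against the observation that $\sigma$ stays in a single leaf iff it is everywhere tangent to $\FF$. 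What your approach buys is that it is elementary, coordinate-free, and avoids invoking the holonomy pseudogroup machinery; what the paper's buys is that it stays within the formalism (distinguished charts, transverse flows) that is used repeatedly elsewhere in the text, so it reads as a small dictionary translation rather than a new argument. One point worth making explicit in your $(\Leftarrow)$ direction is why a smooth curve in $M$ with image in a leaf is smooth into the leaf (so that $\sigma'(0)\in T_p\FF$ is legitimate): this is the fact that leaves are initial (weakly embedded) submanifolds, or, more concretely, that connectedness of $\R$ forces $\sigma$ to be continuous into the leaf topology, after which tangency follows from the plaque description.
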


\begin{proof}
The condition $\overline Y=\overline X$ is equivalent to $\bar\phi=\bar\psi$, which means that the local flows defined by $\phi$ and $\psi$ on every $U_k$ correspond to the same local flow on $\Sigma_k$ via $\pi'_k$. In turn, this is equivalent to the existence of some open $\Omega\subset M\times\R$, containing $M\times\{0\}$, such that $\phi(p,t)$ and $\psi(p,t)$ are in the same leaf for all $(p,t)\in\Omega$. But this is equivalent to $\phi^t(L)=\psi^t(L)$ for all leaf $L$ and $t\in\R$ because $\phi$ and $\psi$ are foliated flows.
\end{proof}

A smooth homotopy $H:M_1\times I\to M_2$ ($I=[0,1]$) between foliated maps $\phi,\psi:(M_1,\FF_1)\to(M_2,\FF_2)$ is said to be \emph{leafwise} (or \emph{integrable}) \index{leafwise homotopy} if it is a foliated map $(M_1\times I,\FF_1\times I)\to(M_2,\FF_2)$. When there is such a leafwise homotopy, it is said that $\phi$ and $\psi$ are \emph{leafwisely homotopic}.

A smooth \emph{leafwise homotopy} between foliated flows on $(M,\FF)$, $\phi=\{\phi^t\}$ and $\psi=\{\psi^t\}$, is a smooth family $H=\{H^t\}$, where every $H^t:M\times I\to M$ is a leafwise homotopy between $\phi^t$ and $\psi^t$; in other words, it can be considered as a leafwise homotopy $H:M\times\R\times I\to M$ between the corresponding foliated maps $\phi,\psi:(M\times\R,\FF\times\R^\delta)\to(M,\FF)$. If moreover every $H(\cdot,\cdot,s):M\times\R\to M$ is a flow, then $H$ is called a smooth \emph{flow leafwise homotopy}. \index{flow leafwise homotopy}

\begin{prop}\label{p: there exists a leafwise homotopy}
Let $X,Y\in\fXcom(M,\FF)$, with foliated flows $\phi=\{\phi^t\}$ and $\psi=\{\psi^t\}$, such that $V:=Y-X\in\fX_\co(\FF)$. Then there is a flow leafwise homotopy $H:M\times\R\times I\to M$ between $\phi$ and $\psi$ such that $H(p,t,s)=\phi^t(p)$ for all $p\in M$ with $\phi^t(p)=\psi^t(p)$.
\end{prop}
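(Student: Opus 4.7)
The plan is to interpolate the infinitesimal generators linearly. Set $X_s := X + sV = (1-s)X + sY$ for $s \in I$. Since $V$ is compactly supported and $X$ is complete, each $X_s$ is complete; since $V \in \fX_\co(\FF)$ is leafwise, we have $\overline{V} = 0$ in $N\FF$, so $X_s \in \fXcom(M,\FF)$ with $\overline{X_s} = \overline{X}$ for every $s$. Let $\phi_s = \{\phi_s^t\}$ denote the foliated flow generated by $X_s$, and define $H(p,t,s) := \phi_s^t(p)$. Smoothness of $H$ in all three variables follows from standard smooth dependence of ODE solutions on parameters, together with the fact that the perturbation $V$ is compactly supported.

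The required boundary conditions $H(\cdot,\cdot,0) = \phi$ and $H(\cdot,\cdot,1) = \psi$ are immediate, and for each fixed $s$, the family $\{H(\cdot,t,s)\}_t$ is by construction the flow of $X_s$. To verify that $H$ is a leafwise homotopy, note that since all $X_s$ share the common transverse projection $\overline{X} \in \olfXcom(M,\FF)$, \Cref{l: overline Y = overline X <=> phi^t(L) = psi^t(L)} yields $\phi_s^t(L) = \phi^t(L)$ for every leaf $L$ and all $s,t \in \R$. Hence, for fixed $(p,t)$, the entire $s$-path $s \mapsto H(p,t,s)$ lies inside the single leaf $\phi^t(L_p) = L_{\phi^t(p)}$, which shows that each $H^t := H(\cdot,t,\cdot):M \times I \to M$ is a foliated map $(M \times I, \FF \times I) \to (M,\FF)$. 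Thus $H$ is a flow leafwise homotopy between $\phi$ and $\psi$.

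The fixed-point compatibility condition, $H(p,t,s) = \phi^t(p)$ whenever $\phi^t(p) = \psi^t(p)$, is the subtle step. The easy case is when the trajectory $\{\phi^\tau(p) : \tau \in [0,t]\}$ avoids $\supp V$: there $X_s \equiv X$ along the trajectory, so $\phi_s^\tau(p) = \phi^\tau(p)$ for all $\tau \in [0,t]$ and all $s$, and the condition holds automatically. In particular, this covers the case of leaves disjoint from $\supp V$ (so $V$ vanishes on the entire leaf, forcing $\phi$ and $\psi$ to agree there for all time) and the case where $p$ is a common fixed point of both flows (where $X(p) = Y(p) = 0$, hence $X_s(p) = 0$ for all $s$).

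The main obstacle, and the one I expect to require real work, is handling coincidences $\phi^t(p) = \psi^t(p)$ where the two trajectories traverse $\supp V$ differently but meet at the same endpoint: the pure linear interpolation need not respect these coincidences (e.g., a leafwise rotational $V$ whose period divides $t$). Overcoming this will likely require refining the linear interpolation — for instance, replacing $sV$ with $\sigma(s)V$ where $\sigma$ is chosen so that, on a neighborhood of the coincidence locus in $M \times \R$, the interpolated trajectories are deformed within the leaves in a way that preserves the endpoint via a partition-of-unity argument adapted to the compact set $\supp V$ and its iterated $\phi$-images on $[0,t]$. With such a refinement in place, all remaining claims are routine from the construction.
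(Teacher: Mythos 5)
Your construction — the linear interpolation $X_s := X + sV$ with flow $\phi_s$ and $H(p,t,s) := \phi_s^t(p)$ — is \emph{exactly} the paper's proof (they write $Z_s$ and $\xi_s$ for your $X_s$ and $\phi_s$), and your verification of the leafwise homotopy property via \Cref{l: overline Y = overline X <=> phi^t(L) = psi^t(L)} also matches.

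Where you diverge is in treating the final clause ``$H(p,t,s)=\phi^t(p)$ for all $p\in M$ with $\phi^t(p)=\psi^t(p)$'' as a condition at a \emph{fixed} pair $(p,t)$. Under that reading your worry is real — your rotational example is a genuine counterexample to the linear interpolation (take $M=S^1$ foliated by one leaf, $X=0$, $V=2\pi\partial_\theta$, $t=1$: both flows are the identity at time $1$, but $\phi_s^1$ is rotation by $2\pi s$). But that is not the intended reading, and the paper's one-line conclusion only makes sense under the orbit-wise reading: the clause is a condition on $p$ alone, namely that $\phi^\tau(p)=\psi^\tau(p)$ for \emph{every} $\tau\in\R$. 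In that case, differentiating in $\tau$ gives $V(\phi^\tau(p))=0$ for all $\tau$, so every $X_s$ agrees with $X$ along the whole $\phi$-orbit of $p$, and $\phi_s^t(p)=\phi^t(p)$ for all $s,t$ by uniqueness of ODE solutions. This is a mild strengthening of your own ``easy case'' paragraph — what's needed is that $V$ vanish on the trajectory, not that the trajectory avoid $\supp V$ — and it covers in particular the leaves disjoint from $\supp V$ and the common fixed points, exactly as you note. The ``refinement'' you sketch with a cutoff $\sigma(s)$ is therefore unnecessary, and would not actually resolve the rotational example anyway (any reparametrization with $\sigma(0)=0$, $\sigma(1)=1$ still passes through intermediate non-integer multiples of the period). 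In short: your proof is complete and correct for the statement as intended; the ``main obstacle'' you flagged is an artifact of a more demanding (and here unintended) reading of the quantifiers.
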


\begin{proof}
Since $X\in\fXcom(M,\FF)$ and $V\in\fX_\co(\FF)$, we have $Z_s:=X+sV\in\fXcom(M,\FF)$ ($s\in I$). Let $\xi_s:M\times\R\to M$ denote the flow of every $Z_s$. Since $\overline Z_s=\overline X$ for all $s$, it follows from \Cref{l: overline Y = overline X <=> phi^t(L) = psi^t(L)} that the statement holds with $H:M\times\R\times I\to M$ defined by $H(\cdot,\cdot,s)=\xi_s$.
\end{proof}

\subsection{Differential operators on foliated manifolds}\label{ss: diff ops on fol mfds}

Like in \Cref{ss: diff ops}, using $\fX(\FF)$ instead of $\fX(M)$, we get the filtered subalgebra and $C^\infty(M)$-submodule of \emph{leafwise differential operators}, \index{leafwise differential operator} $\Diff(\FF)\subset\Diff(M)$,  \index{$\Diff(\FF)$} and a \emph{leafwise principal symbol} exact sequence for every order $m$, \index{leafwise principal symbol} \index{$\Fsigma_m$}
\[
0\to\Diff^{m-1}(\FF)\hookrightarrow\Diff^m(\FF) \xrightarrow{\Fsigma_m} P^{(m)}(T^*\FF)\to0\;.
\]
Moreover these concepts can be extended to vector bundles $E$ and $F$ over $M$ like in \Cref{ss: diff ops}, obtaining the filtered $C^\infty(M)$-submodule $\Diff(\FF;E,F)$ (or $\Diff(\FF;E)$ if $E=F$) of $\Diff(M;E,F)$, and the \emph{leafwise principal symbol} $\Fsigma_m:\Diff^m(\FF;E,F)\to P^{(m)}(T^*\FF;F\otimes E^*)$. The diagram
\[
\begin{CD}
\Diff^m(\FF;E,F) @>{\Fsigma_m}>> P^{(m)}(T^*\FF;F\otimes E^*) \\
@VVV @VVV \\
\Diff^m(M;E,F) @>{\sigma_m}>> P^{(m)}(T^*M;F\otimes E^*)
\end{CD}
\]
is commutative, where the left-hand side vertical arrow denotes the inclusion homomorphism, and the right-hand side vertical arrow is induced by the restriction homomorphism $T^*M\to T^*\FF$. The condition of being a leafwise differential operator is preserved by compositions and by taking transposes, and by taking formal adjoints in the case of Hermitian vector bundles; in particular, $\Diff(\FF;E)$ is a filtered subalgebra of $\Diff(M;E)$.  It is said that $A\in\Diff^m(\FF;E,F)$ is \emph{leafwisely elliptic} if the symbol $\sigma_m(A)(p,\xi)$ is an isomorphism for all $p\in M$ and $0\ne\xi\in T^*_p\FF$. In this way, the concepts of \emph{leafwise differential complex} \index{leafwise differential complex} and its \emph{leafwise ellipticity} can be defined like in \Cref{ss: diff complexes}.

A smooth family of leafwise differential operators, $A=\{\,A_t\mid t\in T\,\}\subset\Diff^m(\FF;E,F)$, can be canonically considered as a leafwise differential operator $A\in\Diff^m(\FF\times T^\delta;\pr_1^*E,\pr_1^*F)$, where $\pr_1:M\times T\to M$ is the first-factor projection.

On the other hand, considering the canonical injection $N^*\FF\subset T^*M$, it is said that $A\in\Diff^m(M;E,F)$ is \emph{transversely elliptic} \index{transversely elliptic} if the symbol $\sigma_m(A)(p,\xi)$ is an isomorphism for all $p\in M$ and $0\ne\xi\in N^*_p\FF$. The concept of \emph{transverse ellipticity} has an obvious extension to differential complexes like in \Cref{ss: diff complexes}.

We can use $\Diff(\FF;E)$ to define variants of the section spaces recalled in \Cref{ss: smooth/distributional sections}. For instance, for $m\in\N_0$, we have the LCHS
\[
C^{0,m}_\FF(M;E)=\{\,u\in C(M;E)\mid\Diff^m(\FF;E)\cdot u\subset C(M;E)\,\}\;,
\]
with the topology defined like in~\eqref{Z = u in bigcup_A in AA dom A | AA cdot u subset Y}. Let also $C^{0,\infty}_\FF(M;E)=\bigcap_mC^{0,m}(M;E)$. If $\FF$ is described by a submersion $\varpi:M\to M'$, then the subscript $\varpi$ may be used instead of $\FF$, which agrees with the notation already used in Remark~\ref{r: smallnuint is cont}.

\subsection{Transverse structures}\label{ss: transverse structures}

Recall that $(\Sigma,\HH)$ denotes the holonomy pseudogroup of $\FF$. An (\emph{invariant}) \emph{transverse structure} \index{transverse structure} of $\FF$ is an $\HH$-invariant structure on $\Sigma$. It can be also considered as a $\nabla^\FF$-parallel structure on $N\FF$. For our purposes, it is enough to consider structures on $\Sigma$ (resp., on $N\FF$) defined by smooth sections of bundles associated with $T\Sigma$ (resp., $N\FF$) satisfying some conditions. For instance, we will use the concepts of a \emph{transverse orientation}, a \emph{transverse Riemannian metric} and a \emph{transverse parallelism}. The existence of these transverse structures defines the classes of \emph{transversely orientable}, (\emph{transversely}) \emph{Riemannian}, and \emph{transversely parallelizable} (\emph{TP}) \emph{foliations}. \index{transversely oriented foliation} \index{Riemannian foliation} \index{TP foliation}

A transverse orientation of $\FF$ can be simply described as an orientation of $N\FF$, which is necessarily  $\nabla^\FF$-parallel. It can be determined by a non-vanishing real form $\omega\in C^\infty(M;\Lambda^{n'}N\FF)$; i.e., some real $\omega\in C^\infty(M;\Lambda^{n'})$ defining $\FF$ in the sense that $T\FF = \{\,Y\in TM\mid\iota_Y\omega=0\,\}$.  By Frobenius theorem, the integrability of $T\FF$ means that $d\omega=\eta\wedge\omega$ for some real $\eta\in C^\infty(M;\Lambda^1)$, which is unique modulo $C^\infty(M;\Lambda^1N\FF)$. All other pairs of differential forms $\omega'$ and $\eta'$ satisfying these conditions are of the form $\omega'=e^f\omega$ and $\eta'=\eta+df$ for any real function $f\in C^\infty(M)$. We have $d\omega=0$ just when $\omega$ defines an \emph{invariant transverse volume form}. Any invariant transverse volume form $\omega$ defines an invariant transverse density $|\omega|\in C^\infty(M;\Omega N\FF)$, which can be considered as an invariant transverse measure.

\begin{rem}\label{r: non transversely oriented}
Even when $\FF$ is not transversely oriented, it is defined by some real $\omega\in C^\infty(M;\Lambda^{n'}N\FF\otimes o(N\FF))\equiv C^\infty(M;\Omega N\FF)$, and we have $d\omega=\omega\wedge\eta$ for some real 1-form $\eta$, as above.
\end{rem}

A transverse parallelism can be described as a global frame of $N\FF$ consisting of transverse vector fields $\overline{X_1},\dots,\overline{X_{n'}}$. If its linear span is a Lie subalgebra $\fg\subset\overline\fX(M,\FF)$, it is called a \emph{transverse Lie structure}, giving rise to the concept of ($\fg$-)\emph{Lie foliation}. If moreover $\overline{X_1},\dots,\overline{X_{n'}}\in\olfXcom(M,\FF)$, then the TP or Lie foliation $\FF$ is said to be \emph{complete}. \index{Lie foliation}

Let $G$ be the simply connected Lie group with Lie algebra $\fg$ as above. Then $\FF$ is a $\fg$-Lie foliation just when $\HH$ is equivalent to some pseudogroup generated by restrictions of some left translations on some open $T\subset G$, which is complete just when we can take $T=G$.

Similarly, a transverse Riemannian metric can be described as a $\nabla^\FF$-parallel Euclidean structure on $N\FF$. It is always induced by a Riemannian metric on $M$ such that every $x'_k:U_k\to\Sigma_k$ is a Riemannian submersion, which is called a \emph{bundle-like metric}.\index{bundle-like metric} Thus $\FF$ is Riemannian if and only if it can be endowed with a bundle-like metric on $M$.

It is said that $\FF$ is \emph{transitive at} a point $p\in M$ when the evaluation map $\ev_p:\fX(M,\FF)\to T_pM$ is surjective, or, equivalently, the evaluation map $\overline{\ev}_p:\olfX(M,\FF)\to N_p\FF$ is surjective. The transitive point set is open and saturated. If $\FF$ is transitive at every point, then it is called \emph{transitive}. \index{transitive foliation} If $\ev_p(\fXcom(M,\FF))$ spans $T_pM$ for all $p\in M$, then $\FF$ is called \emph{transversely complete} (\emph{TC}). \index{TC foliation} Since $\ev_p:\fXcom(\FF)\to T_p\FF$ is surjective \cite[Section~4.5]{Molino1988}, $\FF$ is TC if and only if $\overline{\ev}_p(\olfXcom(M,\FF))$ spans $N_p\FF$ for all $p\in M$. 

All TP foliations are transitive, and all transitive foliations are Riemannian. On the other hand, Molino's theory describes Riemannian foliations in terms of TP foliations \cite{Molino1988}. A Riemannian foliation is called \emph{complete} if, using Molino's theory, the corresponding TP foliation is TC. Furthermore Molino's theory describes TC foliations in terms of complete Lie foliations with dense leaves. In turn, complete Lie foliations have the following description due to Fedida \cite{Fedida1971,Fedida1973} (see also \cite[Theorem~4.1 and Lemma~4.5]{Molino1988}). Assume $M$ is connected and $\FF$ a complete $\fg$-Lie foliation. Let $G$ be the simply connected Lie group with Lie algebra $\fg$. Then there is a regular covering $\pi:\widetilde M\to M$ (the \emph{holonomy covering}), a fiber bundle $D:\widetilde M\to G$ (the \emph{developing map}) \index{developing map} and a monomorphism $h:\Gamma:=\Aut(\pi)\equiv\pi_1L/\pi_1\widetilde L\to G$ (the \emph{holonomy homomorphism}) \index{holonomy homomorphism} such that the leaves of $\widetilde\FF:=\pi^*\FF$ are the fibers of $D$, and $D$ is $h$-equivariant with respect to the left action of $G$ on itself by left translations. As a consequence, $\pi$ restricts to diffeomorphisms between the leaves of $\widetilde\FF$ and $\FF$. The subgroup $\Hol\FF=\im h\subset G$, isomorphic to $\Gamma$, is called the \emph{global holonomy group}. 

The Molino's description also gives a precise equivalence between the holonomy pseudogroup $\HH$ and the pseudogroup on $G$ generated by the action of $\Hol\FF$ by left translations. Thus the leaves are dense if and only if $\Hol\FF$ is dense in $G$, which means $\fg=\olfX(M,\FF)$. 

The $\widetilde\FF$-leaf through every $\tilde p\in\widetilde M$ will be denoted by $\widetilde L_{\tilde p}$. Since $D$ induces an identity $\widetilde M/\widetilde\FF\equiv G$, the $\pi$-lift and $D$-projection of vector fields define identities
\begin{equation}\label{overline fX(M FF) equiv ... equiv fX(G Hol FF)}
\olfX(M,\FF)\equiv\olfX(\widetilde M,\widetilde\FF,\Gamma)\equiv\fX(G,\Hol\FF)\;.
\end{equation}
(Given an action, the group is added to the notation of a space of vector fields to indicate the subspace of invariant elements.) These identities give a precise realization of $\fg\subset\olfX(M,\FF)$ as the Lie algebra of left invariant vector fields on $G$. 

If a smooth map $\psi:M'\to M$ is transverse to $\FF$, since $\psi_*:N\psi^*\FF\to N\FF$ restricts to isomorphisms between the fibers and is compatible with the corresponding flat partial connections (\Cref{ss: fol maps}), it follows that any transverse structure of $\FF$ canonically induces a transverse structure of $\psi^*\FF$ of the same type.

\subsection{Foliations of codimension one}\label{ss: folns of codim 1}

In this section, assume $\FF$ is of codimension one ($n'=1$ and $n''=n-1$). Then the notation $(x,y)=(x,y^1,\dots,y^{n-1})$ is used for the foliated coordinates instead $(x',x'')$.

Suppose also that $\FF$ is transversely oriented. Thus there are real forms $\omega,\eta\in C^\infty(M;\Lambda^1)$ such that $\omega$ defines $\FF$ and its transverse orientation, and $d\omega=\eta\wedge\omega$ (\Cref{ss: transverse structures}). There is some $X\in\fX(M)$ with $\omega(X)=1$; in fact, $\overline X\in C^\infty(M;N\FF)$ and $\omega$ determine each other. Now $\FF$ is Riemannian just when $\omega$ can be chosen so that $d\omega=0$; i.e., $X\in\fX(M,\FF)$. Actually, $\FF$ is an $\R$-Lie foliation in this case because $\R\cdot\overline{X}$ is a Lie subalgebra of $\olfX(M,\FF)$.

\subsection{Complete $\R$-Lie foliations}\label{ss: complete R-Lie folns} 

$\FF$ is a complete $\R$-Lie foliation when there is some $Z\in\fXcom(M,\FF)$ so that $\overline Z$ has no zeros. This means that the orbits of the foliated flow $\phi:M\times\R\to M$ of $Z$ are transverse to $\FF$. Its Fedida's description is given by some $\pi:\widetilde M\to M$, $D:\widetilde M\to\R$ and $h:\Gamma\to\R$ (\Cref{ss: transverse structures}). Let $\widetilde Z\in\fXcom(\widetilde M,\widetilde\FF)$ and $\tilde\phi:\widetilde M\times\R\to\widetilde M$ be the lifts of $Z$ and $\phi$. Then $\widetilde Z$ is $\Gamma$-invariant and $D$-projectable. Without loss of generality, we can assume $D_*\widetilde Z=\partial_x\in\fX(\R)$, where $x$ denotes the standard global coordinate of $\R$. Thus $\tilde\phi$ is $\Gamma$-equivariant and induces via $D$ the flow $\bar\phi=\{\bar\phi^t\}$ on $\R$ defined by $\bar\phi^t(x)=t+x$. Since $\bar\phi^t$ preserves every $\Hol\FF$-orbit in $\R$ if and only if $t\in\Hol\FF$, it follows that $\phi^t$ preserves every leaf of $\FF$ if and only if $t\in\Hol\FF$.

\subsection{Foliations almost without holonomy}\label{ss: folns almost w/o hol}

Assume $M$ is compact. It is said that $\FF$ is \emph{almost without holonomy} \index{almost without holonomy foliation} when all non-compact leaves have no holonomy. The structure of such a foliation was described by Hector \cite{Hector1972c,Hector1978}. In the case where $\FF$ has a finite number of leaves with holonomy and is transversely oriented, the description of $\FF$ is as follows. Let $M^0$ be the finite union of compact leaves with holonomy. Let $M^1=M\setminus M^0$, whose connected components are denoted by $M^1_l$ ($l=1,\dots,k$), and let $\FF^1_l=\FF|_{M^1_l}$. Then, for every $l$, there is a connected compact manifold $M_l$, possibly with boundary, endowed with a smooth transversely oriented foliation $\FF_l$ tangent to the boundary, such that, equipping $\bfM:=\bigsqcup_lM_l$ with the combination $\bfFF$ of the foliations $\FF_l$, there is foliated smooth local embedding $\bfpi:(\bfM,\bfFF)\to(M,\FF)$, preserving the transverse orientations, so that:
\begin{itemize}

\item $\bfpi:\mathring M_l\to M^1_l$ is a diffeomorphism for all $l$ (we may write $\mathring M_l\equiv M^1_l$);

\item $\bfpi:\partial \bfM\to M^0$ is a $2$-fold covering map; and

\item every $\FF_l$ is one of the following models: 
\begin{enumerate}[{\rm(1)}]\addtocounter{enumi}{-1}

\item\label{i: model 0} $\FF_l$ is given by a trivial bundle over $[0,1]$,

\item\label{i: model 1} $\mathring\FF_l:=\FF_l|_{\mathring M_l}$ is given by a fiber bundle over $S^1$, or 

\item\label{i: model 2} all leaves of $\mathring\FF_l$ are dense in $\mathring M_l$.

\end{enumerate}
\end{itemize}
Thus $M$ is obtained by gluing the manifolds $M_l$ along corresponding pairs of boundary components. Equivalently, $\bfM$ can be described by cutting $M$ along $M^0$ like in \Cref{s: conormal seq}. Since $\FF$ is transversely oriented, the restriction of $\bfpi:\partial\bfM\to M^0$ to every connected component of $\partial\bfM$ is a diffeomorphism to its image. Thus $\partial \bfM\equiv M^0\sqcup M^0$. The restriction of $\bfFF$ to the interior $\mathring\bfM$ is denoted by $\mathring\bfFF$. Thus $\bfpi$ restricts to a foliated diffeomorphism $(\mathring\bfM,\mathring\bfFF)\xrightarrow{\approx}(M^1,\FF^1)$.

\begin{rem}\label{r: Hector's description of folns almost w/o hol}
In the above description, we have the following:
\begin{enumerate}[{\rm(i)}]

\item\label{i: models are Lie foliations} If $\FF_l$ is a model~\ref{i: model 2}, then $\mathring\FF_l$ becomes a complete $\R$-Lie foliation after a possible change of the differentiable structure of $\mathring M_l$, keeping the same differentiable structure on the leaves \cite[Theorem~2]{Hector1978}.

\item\label{i: adding leaves to M^0} The description holds as well if $M^0$ is any finite union of compact leaves, including all leaves with holonomy. In particular, if $\FF_l$ is a model~\ref{i: model 1} with $\partial M_l=\emptyset$, then $M_l=M$ can be cut into models~\ref{i: model 0} by adding compact leaves to $M^0$. Conversely, if all foliations $\FF_l$ are models~\ref{i: model 0}, then $\FF$ is a model~\ref{i: model 1} with $\partial M=\emptyset$.

\end{enumerate}
\end{rem}

\section{Differential forms on foliated manifolds}\label{s: dif forms on fold mfds}

\subsection{The leafwise complex}\label{ss: leafwise complex}

Let $d_\FF\in\Diff^1(\FF;\Lambda\FF)$ be given by $(d_\FF\alpha)|_L=d_L(\alpha|_L)$ for every leaf $L$ and $\alpha\in C^\infty(M;\Lambda\FF)$. \index{$d_\FF$} Then $(C^\infty(M;\Lambda\FF),d_\FF)$ is a differential complex, called the \emph{leafwise} or \emph{tangential} (\emph{de~Rham}) \emph{complex}. \index{leafwise complex} The elements of $C^\infty(M;\Lambda\FF)$ are called \emph{leafwise forms}; \index{leafwise form} the leafwise forms in $\ker d_\FF$ (resp., $\im d_\FF$) are called \emph{leafwise-closed forms} (resp., \emph{leafwise-exact forms}). \index{leafwise-closed form} \index{leafwise-exact form} The leafwise complex gives rise to the \emph{leafwise} or \emph{tangential cohomology} \index{leafwise cohomology} $H^\bullet(\FF)$. \index{$H^\bullet(\FF)$} The leafwise complex is not elliptic if $n'>0$, and therefore it makes sense to consider also its reduced cohomology $\bar H^\bullet(\FF)$ \index{$\bar H^\bullet(\FF)$} (\Cref{ss: top complexes}). The more precise notation $H^\bullet C^\infty(\FF)=H^\bullet(\FF)$ and $\bar H^\bullet C^\infty(\FF)=\bar H^\bullet(\FF)$ may be also used. Recall that we typically take complex coefficients without any comment; the case of real coefficients will be indicated. Compactly supported versions may be also considered when $M$ is not compact.

We can also take coefficients in any complex $\FF$-flat vector bundle $E$ over $M$, obtaining the differential complex $C^\infty(M;\Lambda\FF\otimes E)$ with $d_\FF\in\Diff^1(\FF;\Lambda\FF\otimes E)$, and the corresponding cohomology, $H^\bullet(\FF;E)$, and reduced cohomology, $\bar H^\bullet(\FF;E)$. For example, we can consider the vector bundle $E$ defined by the $\GL(n')$-principal bundle of (real) normal frames and any unitary representation of $\GL(n')$, with the $\FF$-flat structure induced by the $\FF$-flat structure of $N\FF$. A particular case is $\Lambda N\FF$, which gives rise to the differential complex $(C^\infty(M;\Lambda\FF\otimes\Lambda N\FF),d_\FF)$. Note that
\begin{equation}\label{Lambda FF subset Lambda FF otimes Lambda N FF}
\Lambda\FF\equiv\Lambda\FF\otimes\Lambda^0N\FF\subset\Lambda\FF\otimes\Lambda N\FF\;,
\end{equation}
and therefore $C^\infty(M;\Lambda\FF)$ becomes a subcomplex of $C^\infty(M;\Lambda\FF\otimes\Lambda N\FF)$ with $d_\FF$.

\subsection{Bigrading of differential forms}\label{ss: bigrading}

Consider any splitting 
\begin{equation}\label{splitting}
TM=T\FF\oplus\bfH\cong T\FF\oplus N\FF\;,
\end{equation}
for some vector subbundle $\bfH\subset TM$. Recall that $\Lambda\bfH=\bigwedge\bfH^*\otimes\C$. The splitting~\eqref{splitting} induces a decomposition
\begin{equation}\label{Lambda M}
\Lambda M\equiv\Lambda\FF\otimes\Lambda\bfH\cong\Lambda\FF\otimes\Lambda N\FF\;,
\end{equation}
giving rise to the bigrading of $\Lambda M$ defined by \index{$\Lambda^{u,v}M$}
\begin{equation}\label{Lambda^u v M}
\Lambda^{u,v}M\equiv\Lambda^v\FF\otimes\Lambda^u\bfH\cong\Lambda^v\FF\otimes\Lambda^uN\FF\;,
\end{equation}
and the corresponding bigrading of $C^\infty(M;\Lambda)$ with bihomogeneous components
\[
C^\infty(M;\Lambda^{u,v})\equiv C^\infty(M;\Lambda^v\FF\otimes\Lambda^uN\FF)\;.
\]
In particular, $\Lambda^{0,v}M\equiv\Lambda^v\FF$ and $\Lambda^{u,0}M\equiv\Lambda^u\bfH$, and then the identity of~\eqref{Lambda^u v M} becomes\footnote{This order in the wedge product, introduced in  \cite{AlvKordyLeichtnam2020} and different from \cite{AlvKordy2001}, produces simpler sign expressions. However, the transverse degree is written first in the bigrading, like in the extension to foliations of the Leray-Serre spectral sequence.} $\Lambda^{0,v}M\otimes\Lambda^{u,0}M\equiv\Lambda^{u,v}M$, $\alpha\otimes\beta\equiv\alpha\wedge\beta$.

This bigrading depends on $\bfH$, but the spaces $\Lambda^{\ge u,\cdot}M$ and $C^\infty(M;\Lambda^{\ge u,\cdot})$ are independent of $\bfH$ (see e.g.\ \cite{Alv1989a}). There are canonical identities
\begin{equation}\label{bigwedge^ge u cdot T^*M / bigwedge^ge u+1 cdot T^*M}
\Lambda^{\ge u,\cdot}M/\Lambda^{\ge u+1,\cdot}M\equiv\Lambda^{u,\cdot}M\equiv\Lambda\FF\otimes\Lambda^uN\FF\;,
\end{equation}
where only $\Lambda^{u,\cdot}M$ depends on $\bfH$.

\subsection{Bihomogeneous components of the derivative}\label{ss: d_i j}

The de~Rham derivative on $C^\infty(M;\Lambda)$ decomposes into bihomogeneous components,
\begin{equation}\label{d = d_0 1 + d_1 0 + d_2 -1}
d=d_{0,1}+d_{1,0}+d_{2,-1}\;,
\end{equation}
where the double subscript denotes the corresponding bidegree. By comparing bidegrees in the anti-derivation formula of $d$, we also get that every $d_{i,1-i}$ ($i\in\{0,1,2\}$) \index{$d_{i,1-i}$} satisfies the same anti-derivation formula. Thus $d_{2,-1}$ is of order 0. The other components, $d_{0,1}$ and $d_{1,0}$, are of order 1. Moreover, $d_{2,-1}=0$ if and only if $\bfH$ is completely integrable. By comparing bi-degrees in $d^2=0$, we get \cite{Alv1989a}
\begin{equation}\label{d_0,1^2=...=0}
  d_{0,1}^2=d_{0,1}d_{1,0}+d_{1,0}d_{0,1}=0\;.
\end{equation}
So $(C^\infty(M;\Lambda),d_{0,1})$ is a differential complex of order one. In fact, via~\eqref{Lambda M},
\begin{equation}\label{d_0 1 equiv d_FF}
d_{0,1}\equiv d_\FF\;.
\end{equation}
Moreover
\begin{equation}\label{d_0 1 = d on C^infty(M Lambda^n' cdot)}
d_{0,1}=d:C^\infty(M;\Lambda^{n',\bullet})\to C^\infty(M;\Lambda^{n',\bullet+1})\;.
\end{equation}

\subsection{Basic complex}\label{ss: basic complex}

It is said that $\alpha\in C^\infty(M;\Lambda)$ is a \emph{basic form} if $\iota_X\alpha=\iota_Xd\alpha=0$ for all $X\in\fX(\FF)$. This means that $\alpha$ is an $\FF$-parallel section of $\Lambda N\FF\equiv\Lambda^{\bullet,0}M$; i.e., $\alpha\in C^\infty(M;\Lambda^{\bullet,0})\cap\ker d_{0,1}$. The basic forms form a subcomplex of the de~Rham complex, called the \emph{basic complex}. \index{{basic complex}} It is isomorphic to the complex of $\HH$-invariant forms on $\Sigma$ via the distinguished projections $x'_k:U_k\to\Sigma_k$ (\Cref{ss: folns}).

\subsection{Bihomogeneous components of the coderivative}\label{ss: delta_i j}

Given a leafwise metric $g_\FF$, the coderivative on the leaves defines an operator $\delta_\FF\in\Diff^1(M;\Lambda\FF)$, like in the case of $d_\FF$.

Fix a Riemannian metric $g$ on $M$. Using $\bfH=T\FF^\perp$ and taking formal adjoints in~\eqref{d = d_0 1 + d_1 0 + d_2 -1} and~\eqref{d_0,1^2=...=0}, we get a decomposition of the coderivative on $C^\infty(M;\Lambda)$,
\begin{equation}\label{delta = delta_0 -1 + delta_-1 0 + delta_-2 1}
\delta=\delta_{0,-1}+\delta_{-1,0}+\delta_{-2,1}\;,
\end{equation}
and the bihomogeneous components $\delta_{-i,i-1}=d_{i,1-i}^*$ \index{$\delta_{-i,i-1}$} satisfy the analog of~\eqref{d_0,1^2=...=0}. 

The metric $g$ induces a leafwise metric $g_\FF$. It also induces an Euclidean structure on $N\FF$, which in turn induces a Hermitian structure on $\Lambda N\FF$. Thus the adjoint $\delta_\FF=d_\FF^*$ \index{$\delta_\FF$} is also defined on $C^\infty(M;\Lambda\FF\otimes\Lambda N\FF)$. The analogue of~\eqref{d_0 1 equiv d_FF},
\begin{equation}\label{delta_0 -1 equiv delta_FF}
\delta_{0,-1}\equiv\delta_\FF
\end{equation}
via~\eqref{Lambda M}, holds if and only if $g$ is bundle-like \cite[Lemma~4.12]{AlvKordyLeichtnam2020}. Thus, in this case, $\delta=\delta_{0,-1}\equiv\delta_\FF$ on $C^\infty(M;\Lambda^{0,\bullet})\equiv C^\infty(M;\Lambda\FF)$ via~\eqref{Lambda FF subset Lambda FF otimes Lambda N FF} and~\eqref{Lambda^u v M}. 
 
The following operators will be also used:
\begin{equation}\label{D_0 D_perp Delta_0}
\left\{
\begin{gathered}
D_0=d_{0,1}+\delta_{0,-1}\;,\quad
D_\perp=d_{1,0}+\delta_{-1,0}\;,\\
\Delta_0=D_0^2=d_{0,1}\delta_{0,-1}+\delta_{0,-1}d_{0,1}\;.
\end{gathered}
\right.
\end{equation}

\subsection{Bigrading vs orientations}\label{ss: bigrading vs orientations}

Recall that a transverse orientation of $\FF$ can be described by a non-vanishing real form $\omega\in C^\infty(M;\Lambda^{n'}N\FF)\equiv C^\infty(M;\Lambda^{n',0})$. According to \Cref{ss: transverse structures}, there is a real 1-form $\eta$ satisfying $d\omega=\eta\wedge\omega$. We write $\eta=\eta_0+\eta_1$, where $\eta_0\in C^\infty(M;\Lambda^{0,1})$ is determined by $\omega$, and $\eta_1\in C^\infty(M;\Lambda^{1,0})$ can be chosen arbitrarily. 

On the other hand, an orientation of $T\FF$ is called a (\emph{leafwise} or \emph{tangential}) \emph{orientation} of $\FF$, which can be described by a non-vanishing real form $\chi\in C^\infty(M;\Lambda^{n''}\FF)\equiv C^\infty(M;\Lambda^{0,n''})$. It is said $\FF$ is \emph{oriented} \index{oriented foliation} if it is endowed with an orientation. Given transverse and tangential orientations of $\FF$, described by forms $\omega$ and $\chi$ as above, we consider the induced orientation of $M$ defined by the non-vanishing real form $\chi\wedge\omega\in C^\infty(M;\Lambda^{n',n''})=C^\infty(M;\Lambda^n)$.

Suppose that $M$ is a Riemannian manifold and take $\bfH=T\FF^\perp$. Then, using~\eqref{Lambda M}, the induced Hodge star operators, $\star$ on $\Lambda M$, $\star_\FF$ \index{$\star_\FF$} on $\Lambda\FF$ and $\star_\perp$ \index{$\star_\perp$} on $\Lambda\bfH$, satisfy\footnote{The sign of this expression, used in \cite[Eq.~(42)]{AlvKordyLeichtnam2020}, is different from the sign used in \cite[Lemma~3.2]{AlvKordy2001} by the different choices of induced orientation of $M$.} \cite[Lemma~4.8]{AlvTond1991}, \cite[Lemma~3.2]{AlvKordy2001}, \cite[Eq.~(42)]{AlvKordyLeichtnam2020}
\begin{equation}\label{star equiv (-1)^u(n''-v) star_FF otimes star_perp}
\star\equiv(-1)^{u(n''-v)}{\star_\FF}\otimes{\star_\perp}:\Lambda^{u,v}M\to\Lambda^{n'-u,n''-v}M\;.
\end{equation}
If $\omega=\star_\perp1$ and $\chi=\star_\FF1$, then $\chi\wedge\omega=\star1$. We have
\begin{equation}\label{delta_-i i-1 = (-1)^nk+n+1 star d_i 1-i star}
\delta_{-i,i-1}=(-1)^{nk+n+1}\star d_{i,1-i}\,\star
\end{equation}
on $C^\infty(M;\Lambda^k)$, and
\begin{equation}\label{delta_FF = (-1)^n''v+n''+1 star_FF d_FF star_FF}
\delta_\FF=(-1)^{n''v+n''+1}\star_\FF d_\FF\,\star_\FF
\end{equation}
on $C^\infty(M;\Lambda^v\FF)$. Using~\eqref{star equiv (-1)^u(n''-v) star_FF otimes star_perp}--\eqref{delta_FF = (-1)^n''v+n''+1 star_FF d_FF star_FF}, we easily get
\[
\delta_{0,-1}\equiv\delta_\FF+{\eta_0\lrcorner}
\]
on $C^\infty(M;\Lambda^{0,v})\equiv C^\infty(M;\Lambda^v\FF)$.

\subsection{Leafwise Euler form}\label{ss: e(FF g_FF)}

If $\FF$ is oriented, then $\Omega\FF\equiv\Lambda^{0,n''}M\equiv\Lambda^{n''}\FF$. If moreover $\FF$ is equipped with a leafwise Riemannian metric $g_\FF$ and $n''$ is even, then the \emph{leafwise Euler form}\index{leafwise Euler form} $e(\FF,g_\FF)\in C^\infty(M;\Lambda^{n''}\FF)\equiv C^\infty(M;\Omega\FF)$\index{$e(\FF,g_\FF)$} is defined by the Euler form of the leaves (\Cref{ss: local index formulae}). When $\FF$ is not oriented, $e(\FF,g_\FF)$ is defined as an element of $C^\infty(M;\Lambda^{n''}\FF\otimes o(\FF))\equiv C^\infty(M;\Omega\FF)$.

\subsection{Leafwise currents}\label{ss: leafwise currents}

We may also consider the continuous extension of $d_\FF$ to $C^{-\infty}(M;\Lambda\FF)$ (\Cref{ss: diff ops}), defining another topological complex whose cohomology and reduced cohomology are denoted by $H^\bullet C^{-\infty}(\FF)$ and $\bar H^\bullet C^{-\infty}(\FF)$ (see \Cref{ss: leafwise complex}). The elements of $C^{-\infty}(M;\Lambda\FF)$ are called \emph{leafwise currents}.\index{leafwise currents} In general, $C^\infty(M;\Lambda\FF)\hookrightarrow C^{-\infty}(M;\Lambda\FF)$ does not induce an isomorphism in cohomology or reduced cohomology (consider a foliation by points). 

Like in~\eqref{extension of exterior product to currents}, the exterior product has continuous extensions,
\[
C^{\pm\infty}(M;\Lambda\FF)\otimes C^{\mp\infty}(M;\Lambda\FF)\to C^{-\infty}(M;\Lambda\FF)\;,
\]
with a corresponding extension of the property of $d_\FF$ to be a derivation. Given a leafwise metric $g_\FF$ on $M$, we can also consider the continuous extension $\delta_\FF$ to $C^{-\infty}(M;\Lambda\FF)$

The concept of leafwise currents with coefficients in any $\FF$-flat vector bundle $E$ can be also considered, and the obvious notation is used for the corresponding topological complex and its cohomology and reduced cohomology. In particular, $E$ can be any vector bundle associated with $N\FF$.

\subsection{Bigrading of currents}\label{ss: bigrading of currents}

Consider also the bigrading of $C^{-\infty}(M;\Lambda)$ induced by the bigrading of $\Lambda M$, and the continuous extensions to $C^{-\infty}(M;\Lambda)$ of the operators $d_{i,1-i}$, which satisfy~\eqref{d_0 1 equiv d_FF} and~\eqref{d_0 1 = d on C^infty(M Lambda^n' cdot)}. Given a metric $g$ on $M$, we can also consider the continuous extensions of the operators $\delta_{-i,i-1}$ to $C^{-\infty}(M;\Lambda)$.

If $M$ is oriented, then~\eqref{(Lambda^rM)^* otimes Omega M equiv Lambda^n-rM},~\eqref{C^-infty(M Lambda^r) equiv C^infty_c(M Lambda^n-r)'} and~\eqref{d_z equiv (-1)^k+1 d_-z^t} for $z=0$ give
\begin{gather}
(\Lambda^{u,v}M)^*\otimes\Omega M\equiv\Lambda^{n'-u,n''-v}M\;,\
\label{(Lambda^u v M)^* otimes Omega M equiv Lambda^n'-u n''-v M}\\
C^{-\infty}(M;\Lambda^{u,v})\equiv\Cinftyc(M;\Lambda^{n'-u,n''-v})'\;,
\label{C^-infty(M Lambda^u v) equiv C^infty_c(M Lambda^n'-u n''-v)'}\\
d_{0,1}\equiv(-1)^{u+v+1}\,d_{0,1}^\trans:C^{-\infty}(M;\Lambda^{u,v})\to C^{-\infty}(M;\Lambda^{u,v+1})\;. 
\label{d_0 1 equiv (-1)^u+v+1 d_0 1^t}
\end{gather}
When $M$ is not oriented, these identities hold after adding the tensor product with $o(M)$ to the exterior bundles in the right-hand sides, or working locally, or passing to the double cover of orientations. By~\eqref{d_0 1 = d on C^infty(M Lambda^n' cdot)}, if $u=n'$, then~\eqref{d_0 1 equiv (-1)^u+v+1 d_0 1^t} agrees with~\eqref{d_z equiv (-1)^k+1 d_-z^t} for $z=0$ on $C^{-\infty}(M;\Lambda^{n',v})$. By~\eqref{Lambda M},~\eqref{d_0 1 equiv d_FF} and~\eqref{d_0 1 = d on C^infty(M Lambda^n' cdot)}, if $u=0$, then~\eqref{(Lambda^u v M)^* otimes Omega M equiv Lambda^n'-u n''-v M}--\eqref{d_0 1 equiv (-1)^u+v+1 d_0 1^t} become
\begin{gather}
(\Lambda^v\FF)^*\otimes\Omega M\equiv\Lambda^{n',n''-v}M\equiv\Lambda^{n''-v}\FF\otimes\Lambda^{n'}N\FF
\equiv\Lambda^{n''-v}\FF\otimes\Omega N\FF\;,
\label{(Lambda^v FF)^* otimes Omega M equiv Lambda^n' n''-v M}\\
C^{-\infty}(M;\Lambda^v\FF)\equiv\Cinftyc(M;\Lambda^{n',n''-v})'
\equiv\Cinftyc(M;\Lambda^{n''-v}\FF\otimes\Omega N\FF)'\;,
\label{C^-infty(M Lambda^v FF) equiv C^infty_c(M Lambda^n''-v FF otimes Omega N FF)'}\\
d_\FF\equiv(-1)^{v+1}\,d^\trans\equiv(-1)^{v+1}\,d_\FF^\trans:
C^{-\infty}(M;\Lambda^v\FF)\to C^{-\infty}(M;\Lambda^{v+1}\FF)\;. 
\label{d_FF equiv (-1)^v+1 d_FF^t}
\end{gather}

\subsection{Pull-back of leafwise forms}\label{ss: pull-back of leafwise forms}

Let $\phi\in C^\infty(M',\FF';M,\FF)$. Like in~\eqref{phi^*: C^infty(M Lambda) to C^infty(M' Lambda)} and~\eqref{composition -pull-back - diff forms}, the homomorphisms $\phi_*:T\FF'\to T\FF$ and $\phi_*:N\FF'\to N\FF$ induce continuous homomorphisms,
\begin{gather}
\phi^*:(C^\infty(M;\Lambda\FF\otimes\Lambda N\FF),d_\FF)\to(C^\infty(M';\Lambda\FF'\otimes\Lambda N\FF'),d_{\FF'})\;,
\label{phi^*}\\
\phi^*:(C^\infty(M;\Lambda\FF),d_\FF)\to(C^\infty(M';\Lambda\FF'),d_{\FF'})\;,
\label{phi^* on leafwise forms}
\end{gather}
the second one is a restriction of the first one according to~\eqref{Lambda FF subset Lambda FF otimes Lambda N FF}.

On the other hand, $\phi^*:C^\infty(M;\Lambda)\to C^\infty(M';\Lambda)$ has restrictions
\[
\phi^*:C^\infty(M;\Lambda^{\ge u,\cdot})\to C^\infty(M';\Lambda^{\ge u,\cdot})\;,
\]
which induce~\eqref{phi^*} using~\eqref{bigwedge^ge u cdot T^*M / bigwedge^ge u+1 cdot T^*M}.

\subsection{Bihomogeneous components of pull-back homomorphisms}\label{ss: phi^*_i -i}

For any smooth map $\phi:M'\to M$, the homomorphism $\phi^*:C^\infty(M;\Lambda)\to C^\infty(M';\Lambda)$ decomposes into bihomogeneous components,\index{$\phi^*_{i,-i}$}
\[
\phi^*=\cdots+\phi^*_{-1,1}+\phi^*_{0,0}+\phi^*_{1,-1}+\cdots
\]
If $\phi\in C^\infty(M',\FF';M,\FF)$, then $\phi^*_{i,-i}=0$ for $i<0$. Moreover, via~\eqref{Lambda M},
\begin{equation}\label{phi^*_0 0 equiv phi^*}
\phi^*_{0,0}\equiv\phi^*\;,
\end{equation}
where the right-hand side is~\eqref{phi^*}.

\subsection{Bihomogeneous components of the Lie derivative}\label{ss: LL_X i -i}

For any $X\in\fX(M)$, by comparing bidegrees in Cartan's formula, $\LL_X=d\iota_X+\iota_Xd$, we get a decomposition into bi-homogeneous components,
\[
\LL_X=\LL_{X,-1,1}+\LL_{X,0,0}+\LL_{X,1,-1}+\LL_{X,2,-2}\;.
\]
For instance,
\begin{equation}\label{LL_X 0 0}
\LL_{X,0,0}=d_{0,1}\iota_{\bfV X}+\iota_{\bfV X}d_{0,1}+d_{1,0}\iota_{\bfH X}+\iota_{\bfH X}d_{1,0}\;,
\end{equation}
where $\bfV:TM\to T\FF$ and $\bfH:TM\to\bfH$ denote the projections defined by~\eqref{splitting}. By comparing bidegrees in the derivation formula of $\LL_X$, we also get that every $\LL_{X,i,-i}$\index{$\LL_{X,i,-i}$} ($i\in\{-1,0,1,2\}$) satisfies the same derivation formula. Thus $\LL_{X,-1,1}$, $\LL_{X,1,-1}$ and $\LL_{X,2,-2}$ are of order zero. For the sake of simplicity, we will write $\Theta_X=\LL_{X,0,0}$,\index{$\Theta_X$} which is of order 1.

If $X\in\fX(M,\FF)$, then $\LL_{X,-1,1}=0$, obtaining
\begin{equation}\label{Theta_X d_0 1 = d_0 1 Theta_X}
\Theta_Xd_{0,1}=d_{0,1}\Theta_X
\end{equation} 
by comparing bi-degrees in the formula $\LL_Xd=d\LL_X$. On the other hand, by~\eqref{LL_X 0 0}, if $X\in C^\infty(M;\bfH)$, then, for all $f\in C^\infty(M)$,
\begin{equation}\label{Theta_fX = f Theta_X}
\Theta_{fX}=f\Theta_X
\end{equation}
on $C^\infty(M;\Lambda^{0,\bullet})\equiv C^\infty(M;\Lambda\FF)$. If $d_{1,0}f=0$, then~\eqref{Theta_fX = f Theta_X} holds on $C^\infty(M;\Lambda)$ by~\eqref{LL_X 0 0} and the derivation formula of $\Theta_X$.

\subsection{Local descriptions}\label{ss: local descriptions}

Let $(U,x)$ be a foliated chart of $\FF$, with $x=(x',x'')$, like in~\eqref{x = (x' x'')}. To emphasize the difference between the coordinates $x'$ and $x''$, we use the following notation on $U$ or $x(U)$. Let $x^{\prime i}=x^i$ and $\partial_i^{\prime}=\partial_i$ for $i\le n'$, and $x^{\prime\prime i}=x^i$ and $\partial_i^{\prime\prime}=\partial_i$ for $i>n'$. Thus, when using $x^{\prime i}$ or $\partial_i^{\prime}$, it will be understood that $i$ runs in $\{1,\dots,n'\}$, and, when using $x^{\prime\prime i}$ or $\partial_i^{\prime\prime}$, it will be understood that $i$ runs in $\{n'+1,\dots,n\}$. For multi-indices of the form $J=\{j_1,\dots,j_r\}$ with $1\le j_1<\dots<j_r\le n$, let $dx^J=dx^{j_1}\wedge\dots\wedge dx^{j_r}$ be denoted by $dx^{\prime J}$ or $dx^{\prime\prime J}$ if $J$ only contains indices in $\{1,\dots,n'\}$ or $\{n'+1,\dots,n\}$, respectively. Using functions $f_I\in C^\infty(U)$, $d_\FF$ on $U$ can be described by
\begin{equation}\label{d_FF(f_I dx^prime prime I)}
d_\FF(f_I\,dx^{\prime\prime I})=\partial''_jf_I\,dx^{\prime\prime j}\wedge dx^{\prime\prime I}\;.
\end{equation}
Since the forms $dx^{\prime J}$ are basic,~\eqref{d_0 1 equiv d_FF} on $U$ means that
\begin{equation}\label{d_0 1(f_IJ dx''^I wedge dx'^J)}
d_{0,1}(f_{IJ}\,dx^{\prime\prime I}\wedge dx^{\prime J})=d_\FF(f_{IJ}\,dx^{\prime\prime I})\wedge dx^{\prime J}\;,
\end{equation}
using functions $f_{IJ}\in C^\infty(U)$.

Given a metric $g$ on $M$, the local description
\begin{equation}\label{delta_0 -1(f_IJ dx''^I wedge dx'^J)}
\delta_{0,-1}(f_{IJ}\,dx^{\prime\prime I}\wedge dx^{\prime J})=\delta_\FF(f_{IJ}\,dx^{\prime\prime I})\wedge dx^{\prime J}
\end{equation}
is satisfied just when $g$ is bundle-like \cite[Lemma~3.4]{AlvKordy2001}; in fact this is a local expression of~\eqref{d_0 1 equiv d_FF}.

From~\eqref{LL_X 0 0}, we also get that, on $C^\infty(U,\Lambda^{0,{\bullet}})$,
\begin{equation}\label{d_1,0 = dx^prime i wedge Theta_bfH partial'_i}
d_{1,0}={dx^{\prime i}\wedge}\,\Theta_{\bfH\partial'_i}\;.
\end{equation}
Since $d_{1,0}$ is an anti-derivation, it follows that
\begin{equation}\label{d_1 0(f_IJ dx''^I wedge dx'^J)}
d_{1,0}(f_{IJ}\,dx^{\prime\prime I}\wedge dx^{\prime J})=(-1)^{|I|}\Theta_{\bfH\partial'_i}(f_{IJ}\,dx^{\prime\prime I})\wedge dx^{\prime i}\wedge dx^{\prime J}\;.
\end{equation}

\subsection{Bigrading of leafwise forms}\label{ss: bigrading of leafwise forms}

Suppose $\FF$ is subfoliation of another smooth foliation $\GG$ on $M$. Like in \Cref{ss: bigrading}, for any choice of a complement $\bfG$ of $T\FF$ in $T\GG$, we have $\Lambda\GG=\Lambda\FF\otimes\Lambda\bfG$, obtaining a bigrading of $\Lambda\GG$ defined by $\Lambda^{u,v}\GG=\Lambda^v\FF\otimes\Lambda^u\bfG$, and a corresponding bigrading of $C^\infty(M;\Lambda\GG)$. The decomposition~\eqref{d = d_0 1 + d_1 0 + d_2 -1} has an obvious version for $d_\GG$ satisfying analogous properties.

\subsection{Push-forward and pull-back of leafwise currents}\label{ss: push-forward of leafwise currents}

With the notation of \Cref{ss: pull-back and push-forward of currents}, assume $\phi:M'\to M$ is a  smooth submersion and $\VV$ oriented. Using any complement $\HH$ of $\VV$ in $TM'$, we get a corresponding bigrading of $\Lambda M'$ with $\phi^*\Lambda M\otimes\Omega_\fiber M'\equiv\Lambda^{\bullet,\topd}M'$. Suppose $M$ is equipped with a smooth foliation $\FF$, and let $\FF'=\phi^*\FF$. Choose complements, $\bfH$ of $T\FF$ in $TM$ and $\bfH'$ of $T\FF'$ in $TM'$. The tangent map $\phi_*$ defines an identity $\bfH'\equiv\phi^*\bfH$. Consider the bigradings of $\Lambda M$ and $\Lambda M'$ induced by $(\FF,\bfH)$ and $(\FF',\bfH')$. Then the maps~\eqref{phi_*: C^infty_c/cv(M' Lambda) to C^infty_c/.(M Lambda)}--\eqref{phi^*: C^-infty(M Lambda) to C^-infty(M' Lambda)} have restrictions compatible with $d_{0,1}$,
\begin{gather*}
\phi_*:C^{\pm\infty}_{\co/\cv}(M';\Lambda^{u,\bullet})\to C^{\pm\infty}_{\co/{\cdot}}(M;\Lambda^{u,\bullet-p})\quad(p=\dim\VV)\;,\\
\phi^*:C^{-\infty}(M;\Lambda^{u,\bullet})\to C^{-\infty}(M';\Lambda^{u,\bullet})\;.
\end{gather*}
For $u=0$, by~\eqref{Lambda FF subset Lambda FF otimes Lambda N FF}, they are continuous homomorphisms,
\begin{gather}
\phi_*:(C^{\pm\infty}_{\co/\cv}(M';\Lambda\FF'),d_{\FF'})\to(C^{\pm\infty}_{\co/{\cdot}}(M;\Lambda\FF),d_\FF)\;,
\label{phi_*: C^pm infty_c/cv(M' Lambda FF') to C^pm infty_c/.(M Lambda FF)}\\
\phi^*:(C^{-\infty}(M;\Lambda\FF),d_\FF)\to(C^{-\infty}(M';\Lambda\FF'),d_{\FF'})\;.
\label{phi^*: C^-infty(M Lambda FF) to (C^-infty(M' Lambda FF')}
\end{gather}
Like in~\eqref{phi_*: C^-infty_c/cv(M' Lambda) to C^-infty_c/.(M Lambda)}--\eqref{composition - pull-back - currents}, the maps~\eqref{phi_*: C^pm infty_c/cv(M' Lambda FF') to C^pm infty_c/.(M Lambda FF)} and~\eqref{phi^*: C^-infty(M Lambda FF) to (C^-infty(M' Lambda FF')} can be also defined as the compositions
\begin{gather}
C^{\pm\infty}_{\co/\cv}(M';\Lambda\FF')\xrightarrow{\pi_\topd}C^{\pm\infty}_{\co/\cv}(M';\phi^*\Lambda\FF)
\xrightarrow{\phi_*}C^{\pm\infty}_{\co/{\cdot}}(M;\Lambda\FF)\;,
\label{push-forward - composition - leafwise currents}\\
C^{-\infty}(M;\Lambda\FF)\xrightarrow{\phi^*}C^{-\infty}(M';\phi^*\Lambda \FF)\xrightarrow{\phi^*}C^{-\infty}(M';\Lambda\FF')\;.
\label{pull-back - composition - leafwise currents}
\end{gather}

We can directly extend the definition of~\eqref{phi_*: C^pm infty_c/cv(M' Lambda FF') to C^pm infty_c/.(M Lambda FF)} to the case where $M'$ is a manifold with boundary, assuming $\FF'$ is tangent or transverse to the boundary. It is a cochain map when $\FF'$ is tangent to the boundary. If $\FF'$ is transverse to the boundary and $\phi|_{\partial M'}:\partial M'\to M$ is a submersion, the Stokes' formula gives
\begin{equation}\label{phi_* d_FF' - d_FF phi_* = (phi|_partial M')_* iota^*}
\phi_*d_{\FF'}-d_\FF\phi_*=(\phi|_{\partial M'})_*\iota^*:\Cinftyc(M';\Lambda\FF')\to\Cinftyc(M;\Lambda\FF)\;,
\end{equation}
where $\iota:\partial M'\hookrightarrow M'$.

\subsection{Leafwise homotopy operators}\label{ss: leafwise homotopy opers}

With the notation of \Cref{ss: homotopy opers}, suppose $M$ and $M'$ are equipped with respective smooth foliations $\FF$ and $\FF'$, $H$ is a leafwise homotopy, and consider $H_t^*:C^\infty(M;\Lambda\FF)\to C^\infty(M';\Lambda\FF')$ ($t\in I$). Then we similarly get a continuous linear map $\sh:C^\infty(M;\Lambda\FF)\to C^\infty(M';\Lambda\FF')$, called a \emph{leafwise homotopy operator},\index{leafwise homotopy operator} which is homogeneous of degree $-1$ and satisfies $H_1^*-H_0^*=\sh d_\FF+d_{\FF'}\sh$. By using~\eqref{phi^* on leafwise forms},~\eqref{phi_* d_FF' - d_FF phi_* = (phi|_partial M')_* iota^*} and~\eqref{phi_*: C^pm infty_c/cv(M' Lambda FF') to C^pm infty_c/.(M Lambda FF)}, $\sh$ can be given as the composition
\begin{equation}\label{sh = fint_I H^* leafwise}
C^\infty(M;\Lambda\FF) \xrightarrow{H^*} C^\infty(M'\times I;\Lambda(\FF'\times I))
\xrightarrow{\fint_I} C^\infty(M';\Lambda\FF')\;.
\end{equation}
So $H_0$ and $H_1$ induce the same homomorphisms $H^\bullet C^\infty(\FF)\to H^\bullet C^\infty(\FF')$ and $\bar H^\bullet C^\infty(\FF)\to\bar H^\bullet C^\infty(\FF')$. 

Suppose $H$ is transverse to $\FF$ and $H^*\FF=\FF'\times I$. Let $\pr_1:M'\times I\to M'$ denote the first-factor projection. Consider the bigradings defined by $\FF$, $\FF'$, and complements $\bfH$ and $\bfH'$ of their tangent bundles. So $H_*$ defines a homomorphism $\pr_1^*\bfH'\to\bfH$ whose restrictions to the fibers are isomorphisms. Then~\eqref{sh = fint_I H^* leafwise} is the bihomogeneous component of bidegree $(0,-1)$ of~\eqref{sh = fint_I H^*}.

If moreover $H$ is a submersion, then~\eqref{phi_*: C^pm infty_c/cv(M' Lambda FF') to C^pm infty_c/.(M Lambda FF)} and~\eqref{phi^*: C^-infty(M Lambda FF) to (C^-infty(M' Lambda FF')} give a continuous extension of the maps of~\eqref{sh = fint_I H^* leafwise},
\[
C^{-\infty}(M;\Lambda\FF) \xrightarrow{H^*} C^{-\infty}(M'\times I;\Lambda(\FF'\times I))
\xrightarrow{\fint_I} C^{-\infty}(M';\Lambda\FF')\;.
\]
Their composition, $\sh:C^{-\infty}(M;\Lambda\FF)\to C^{-\infty}(M';\Lambda\FF')$, satisfies $H_1^*-H_0^*=\sh d_\FF+d_{\FF'}\sh$. Thus $H_0$ and $H_1$ also induce the same homomorphisms $H^\bullet C^{-\infty}(\FF)\to H^\bullet C^{-\infty}(\FF')$ and $\bar H^\bullet C^{-\infty}(\FF)\to\bar H^\bullet C^{-\infty}(\FF')$.

\section{Witten's perturbation on foliated manifolds}\label{s: Witten's perturbation on fold mfds}

The operators acting on differential forms on foliated manifolds (\Cref{s: dif forms on fold mfds}) are extended now by taking Witten's perturbations (\Cref{s: Witten}).

\subsection{Perturbation vs bigrading}\label{ss: perturbation vs bigrading}

Using the notation of \Cref{ss: Witten's complex,s: dif forms on fold mfds}, write\footnote{In \cite[Section~11]{AlvKordyLeichtnam2020}, we took $\eta\in C^\infty(M;\Lambda^{0,1})$. However a general $\eta$ is needed, and therefore additional work is required in \Cref{s: Witten's perturbation on fold mfds,s: Witten's opers on Riem folns of bd geom}.} $\eta=\eta_0+\eta_1$ with $\eta_0\in C^\infty(M;\Lambda^{0,1})\equiv C^\infty(M;\Lambda^1\FF)$ and $\eta_1\in C^\infty(M;\Lambda^{1,0})$. The condition $d\eta=0$ means
\begin{equation}\label{d_0 1 eta_0 = ... = 0}
d_{0,1}\eta_0=d_{1,0}\eta_1=d_{1,0}\eta_0+d_{0,1}\eta_1=0\;.
\end{equation}
Like in~\eqref{d = d_0 1 + d_1 0 + d_2 -1} and~\eqref{delta = delta_0 -1 + delta_-1 0 + delta_-2 1}, we get
\[
d_z=d_{z,0,1}+d_{z,1,0}+d_{2,-1}\;,\quad\delta_z=\delta_{z,0,-1}+\delta_{z,-1,0}+\delta_{-2,1}\;,
\]
where
\begin{alignat*}{2}
d_{z,0,1}&=d_{0,1}+z\,{\eta_0\wedge}\;,&\quad d_{z,1,0}&=d_{1,0}+z\,{\eta_1\wedge}\;,\\
\delta_{z,0,-1}&=\delta_{0,1}-\bar z\,{\eta_0\lrcorner}\;,&\quad\delta_{z,-1,0}&=\delta_{-1,0}-\bar z\,{\eta_1\lrcorner}\;.
\end{alignat*}
We will also use the perturbed versions of the operators~\eqref{D_0 D_perp Delta_0}, denoted by $D_{z,0}$, $D_{z,\perp}$ and $\Delta_{z,0}$, defined with the operators $d_{z,i,1-i}$ and $\delta_{z,i,i-1}$. \index{$d_{z,i,1-i}$} \index{$\delta_{z,i,i-1}$} \index{$D_{z,0}$} \index{$D_{z,\perp}$} \index{$\Delta_{z,0}$}

There is an obvious analog of~\eqref{d_0,1^2=...=0} for the operators $d_{z,i,1-i}$, giving rise to analogous relations for the operators $\delta_{z,i,i-1}$. In particular, $d_{z,0,1}$ and $\delta_{z,0,-1}$ define leafwise differential complexes. By~\eqref{delta_z}, the expressions~\eqref{delta_-i i-1 = (-1)^nk+n+1 star d_i 1-i star} and~\eqref{delta_FF = (-1)^n''v+n''+1 star_FF d_FF star_FF} have direct extensions to this setting as well. 

Concerning uniform leafwise/transverse ellipticity, symmetry and being non-negative, the perturbations $d_{z,0,1}$, $\delta_{z,0,-1}$, $D_{z,0}$, $D_{z,\perp}$ and $\Delta_{z,0}$ satisfy the same properties as $d_{0,1}$, $\delta_{0,-1}$, $D_0$, $D_{\perp}$ and $\Delta_0$. 

By~\eqref{d_1,0 = dx^prime i wedge Theta_bfH partial'_i}, on a foliated chart $(U,x)$, we get
\[
d_{1,0}\eta_0=dx^{\prime i}\wedge\Theta_{\bfH\partial_i}\eta_0=-\Theta_{\bfH\partial_i}\eta_0\wedge dx^{\prime i}\;.
\]
But, writing $\eta_1=h_i\,dx^{\prime j}$, by~\eqref{d_0 1 eta_0 = ... = 0},
\[
d_{1,0}\eta_0=-d_{0,1}\eta^1=-\partial''_jh_i\,dx^{\prime\prime j}\wedge dx^{\prime i}\;.
\]
So 
\[
\Theta_{\bfH\partial_i}\eta_0=\partial''_jh_i\,dx^{\prime\prime j}\;.
\]
Then, since $\Theta_{\bfH\partial_i}$ is a derivation, on $C^\infty(M;\Lambda\FF)$,
\begin{equation}\label{[Theta_bfH eta_0 wedge]}
[\Theta_{\bfH\partial_i},{\eta_0\wedge}]={(\Theta_{\bfH\partial_i}\eta_0)\wedge}
=\partial''_jh_i\,{dx^{\prime\prime j}\wedge}=(d_\FF h_i)\wedge=[d_\FF,h_i]\;.
\end{equation}
Thus~\eqref{Theta_X d_0 1 = d_0 1 Theta_X} has the following change in this setting:
\[
[\Theta_{\bfH\partial_i},d_{z,0,1}]=z[d_\FF,h_i]\;.
\]

\subsection{Perturbation of the leafwise complex}\label{ss: perturbation of the leafwise complex}

Consider also the perturbed leafwise complex, $d_{\FF,z}=d_\FF+z\eta_0\wedge$\index{$d_{\FF,z}$} on $C^\infty(M;\Lambda\FF)$, or on $C^\infty(M;\Lambda\FF\otimes\Lambda N\FF)$, as well as its formal adjoint $\delta_{\FF,z}=\delta_\FF-\bar z\,{\eta_0\lrcorner}$,\index{$\delta_{\FF,z}$} and the induced perturbations, $D_{\FF,z}$\index{$D_{\FF,z}$} of $D_\FF$ and $\Delta_{\FF,z}$\index{$\Delta_{\FF,z}$} of $\Delta_\FF$. They satisfy the obvious versions of~\eqref{d_0 1 equiv d_FF} and~\eqref{d_0 1(f_IJ dx''^I wedge dx'^J)}. If $g$ is bundle-like, they also satisfy the obvious versions of~\eqref{delta_0 -1 equiv delta_FF} and~\eqref{delta_0 -1(f_IJ dx''^I wedge dx'^J)}.

\subsection{Perturbation with two parameters}\label{ss: leafwise 2 parameters}

For $z,z'\in\C$, the operators $D_{0,z,z'}$\index{$D_{0,z,z'}$} and $\Delta_{0,z,z'}$\index{$\Delta_{0,z,z'}$} are defined like $D_{z,z'}$ and $\Delta_{z,z'}$ (\Cref{ss: two parameters}), by using $d_{z,0,1}$ and $\delta_{z',0,-1}$ instead of $d_z$ and $\delta_{z'}$. In other words, $D_{0,z,z'}$ is the component of $D_{z,z'}$ that preserves the transverse degree, and $\Delta_{z,z'}=D_{z,z'}^2$. They are uniformly leafwise elliptic, with a symmetric leading symbol.

The operators $D_{\FF,z,z'}$\index{$D_{\FF,z,z'}$} and $\Delta_{\FF,z,z'}$\index{$\Delta_{\FF,z,z'}$} on $C^\infty(M;\Lambda\FF)$, or on $C^\infty(M;\Lambda\FF\otimes\Lambda N\FF)$, are defined like $D_{z,z'}$ and $\Delta_{z,z'}$, by using $d_{\FF,z}$ and $\delta_{\FF,z'}$ instead of $d_z$ and $\delta_{z'}$. They are also uniformly leafwise elliptic, with a symmetric leading symbol. If $g$ is bundle-like, they also agree with $D_{z,z'}$ and $\Delta_{z,z'}$ via~\eqref{Lambda M}.

\subsection{Perturbation vs foliated maps}\label{ss: perturbation vs fold maps}

With the notation of \Cref{ss: flat line bundle,ss: perturbation of pull-back homs} for a smooth foliated map $\phi:(M,\FF)\to(M,\FF)$, let $\widetilde\FF$ and $\tilde\eta_j$ ($j=1,2$) be the lifts of $\FF$ and $\eta_j$ to $\widetilde M$. Thus $\tilde\eta_0=d_{0,1}F\equiv d_\FF F$ and $\tilde\eta_1=d_{1,0}F$. Any lift $\tilde\phi$ of $\phi$ to $\widetilde M$ is a foliated diffeomorphism of $(\widetilde M,\widetilde\FF)$. The endomorphism $\phi^{t*}_z$ of $(C^\infty(M;\Lambda),d_z)$ decomposes into the sum of bihomogeneous components $\phi^*_{z,i,-i},$\index{$\phi^*_{z,i,-i}$} like in \Cref{ss: phi^*_i -i}, whose lifts to $C^\infty(\widetilde M;\Lambda)$ are $e^{z(\tilde\phi^*F-F)}\tilde\phi^*_{i,-i}$. Then $\phi^*_{z,0,0}$ is an endomorphism of $(C^\infty(M;\Lambda),d_{z,0,1})$. 

Similarly, the endomorphism $\phi^*$ of $(C^\infty(M;\Lambda\FF\otimes\Lambda N\FF),d_\FF)$ given by~\eqref{phi^*} has a perturbation $\phi^*_z$, which is an endomorphism of $(C^\infty(M;\Lambda\FF\otimes\Lambda N\FF),d_{\FF,z})$. We have $\phi^*_{z,0,0}\equiv\phi^*_z$ like in~\eqref{phi^*_0 0 equiv phi^*}. By restriction using~\eqref{Lambda FF subset Lambda FF otimes Lambda N FF}, we get an endomorphism $\phi^*_z$ of $(C^\infty(M;\Lambda\FF),d_{\FF,z})$, which is a perturbation of the endomorphism $\phi^*$ of $(C^\infty(M;\Lambda\FF),d_{\FF})$ given by~\eqref{phi^* on leafwise forms}.

\section{Analysis on Riemannian foliations of bounded geometry}\label{s: analysis on Riem folns}

In this section, $\FF$ is a Riemannian foliation on a possibly open manifold $M$, equipped with a bundle-like metric $g$. We adopt the notation of \Cref{s: bd geom} for the metric concepts of $M$. 

\subsection{Riemannian foliations of bounded geometry}\label{ss: transverse structures of bd geometry}

The vector subbundle $\bfH:=T\FF^\perp\subset TM$ is called \emph{horizontal},\index{horizontal subbundle} giving rise to the concepts of \emph{horizontal} vectors, vector fields and frames. Consider the corresponding splitting~\eqref{splitting}, obtaining orthogonal projections $\bfV:TM\to T\FF$ and $\bfH:TM\to\bfH$. The O'Neill tensors \cite{ONeill1966} of the local Riemannian submersions defining $\FF$ can be combined to produce $(1,2)$-tensors $\sT$\index{$\sT$} and $\sA$\index{$\sA$} on $M$, defined by
\begin{align*}
\sT_EF&=\bfH\nabla_{\bfV E}(\bfV F)+\bfV\nabla_{\bfV E}(\bfH F)\;,\\
\sA_EF&=\bfH\nabla_{\bfH E}(\bfV F)+\bfV\nabla_{\bfH E}(\bfH F)\;,
\end{align*}
for $E,F\in\fX(M)$.  By \cite[Theorem~4]{ONeill1966}, if $M$ is connected, given $g$ and any $p\in M$, the foliation $\FF$ is determined by $\sT$, $\sA$ and $T_p\FF$. 

The \emph{adapted} Riemannian connection $\rnabla$\index{$\rnabla$} on $M$ is defined by
\[
\rnabla_EF=\bfV\nabla_E(\bfV F)+\bfH\nabla_E(\bfH F)\;,
\]
for $E,F\in\fX(M)$ \cite{AlvTond1991}. It satisfies the following properties \cite[Section~3]{AlvKordyLeichtnam2014}, \cite[Section~5]{AlvKordyLeichtnam2020}: for $V\in\fX(\FF)$ and $X\in C^\infty(M;\bfH)$,
\begin{gather}
\nabla_V-\rnabla_V=\sT_V\;,\quad\nabla_X-\rnabla_X=\sA_X\;,\label{nabla-rnabla}\\
\bfV([X,V])=\rnabla_XV-\sT_VX\;.\label{bV[X,V]}
\end{gather} 
Moreover, the leaves are $\rnabla$-totally geodesic, the $\rnabla$-geodesics in the leaves are the $\nabla^\FF$-geodesics, and $\rnabla$ and $\nabla$ have the same geodesics orthogonal to the leaves.

Let $x':U\to\Sigma$ be a distinguished submersion around any $p\in M$. Consider the Riemannian metric on $\Sigma$ such that $x'$ is a Riemannian submersion, and let $\cnabla$ and $\cexp$ denote the corresponding Levi-Civita connection and exponential map of $\Sigma$. For all horizontal $X,Y\in\fX(U,\FF|_U)$, we have $\rnabla_XY\in\fX(U,\FF|_U)$ and $\overline{\rnabla_XY}=\cnabla_{\overline X}\overline Y$ \cite[Lemma~1~(3)]{ONeill1966}.

Let $\rexp$\index{$\rexp$} denote the exponential map of the geodesic spray of $\rnabla$ (see e.g.\ \cite[pp.~96--99]{Poor1981}). The maps $\rexp$ and $\cexp$ restrict to diffeomorphisms of some open neighborhoods, $V$ of $0$ in $T_pM$ and $\check V$ of $0$ in $T_{x'(p)}\Sigma$, to some open neighborhoods, $O$ of $p$ in $M$ and $\check O$ of $x'(p)$ in $\Sigma$.  Moreover we can suppose $O\subset U$, $x'_*(V)\subset\check V$ and $x'(O)\subset\check O$, and we have $x'\,\rexp=\cexp\,x'_*$ on $V\cap T_p\FF^\perp$. Let $\kappa=\kappa_p$ be the smooth map of some neighborhood $W$ of $0$ in $T_pM$ to $M$ defined by
$$
\kappa_p(X)=\rexp_q(\rP_{\bfH X}\bfV X)\;,
$$
where $q=\rexp_p(\bfH X)$, and $\rP_{\bfH X}:T_pM\to T_qM$ denotes the $\rnabla$-parallel transport along the $\rnabla$-geodesic $t\mapsto\rexp_p(t\bfH X)$, $0\le t\le1$, which is orthogonal to the leaves. Assume $W\subset V$ and $\kappa(W)\subset O$, and therefore $x'_*(W)\subset\cV$ and $x'\kappa(W)\subset\cO$. For $X,Y\in W$, we have $X-Y\in T_p\FF$ if and only if $\kappa(X)$ and $\kappa(Y)$ belong to the same plaque in $U$ \cite[Proposition~6.1]{AlvKordyLeichtnam2014}. Moreover $x'\kappa(X)=\cexp\,x'_*(X)$ for all $X\in W\cap T_p\FF^\perp$, and $\kappa$ defines a diffeomorphism of some neighborhood of $0$ in $T_pM$ to some neighborhood of $p$ in $M$ with $\kappa_*\equiv\id:T_0(TM)\equiv T_pM\to T_pM$  \cite[Proposition~6.2 and Corollary~6.3]{AlvKordyLeichtnam2014}. Consider identities $T_p\FF^\perp\equiv\R^{n'}$ and $T_p\FF\equiv\R^{n''}$ given by the choice of horizontal and vertical orthonormal frames at $p$. Then, for some open balls centered at the origin, $B'$ in $\R^{n'}$ and $B''$ in $\R^{n''}$, we can assume $\kappa$ is a diffeomorphism of $B'\times B''$ to some open neighborhood of $p$, obtaining foliated coordinates $x=(x',x''):=\kappa^{-1}:U:=\kappa(B'\times B'')\to B'\times B''$, which are said to be \emph{normal}. As usual, $g_{ij}$ denotes the corresponding metric coefficients and $(g^{ij})=(g_{ij})^{-1}$. It is said that $\FF$ has \emph{positive injectivity bi-radius}\index{positive injectivity bi-radius} if there are normal foliated coordinates $x_p:U_p\to B'\times B''$ at every $p\in M$ such that the balls $B'$ and $B''$ are independent of $p$. Then $\FF$ is said to be of \emph{bounded geometry}\index{Riemannian foliation of bounded geometry} if it has positive injectivity bi-radius, and the functions $|\nabla^mR|$, $|\nabla^m\sT|$ and $|\nabla^m\sA|$ are uniformly bounded on $M$ for every $m\in\N_0$ \cite[Definition~8.1]{AlvKordyLeichtnam2014}.

\begin{ex}\label{ex: Riem foln of bd geometry}
Let $H$ be a connected Lie group, $L\vartriangleleft H$ a normal connected Lie subgroup and $\Gamma\subset H$ a discrete subgroup. Then the projection of the translates of $L$ to $\Gamma\backslash H$ are the leaves of a Riemannian foliation of bounded geometry with the bundle-like metric induced by any left invariant metric on $H$.
\end{ex}

The following chart characterization of bounded geometry for Riemannian foliations is connected with another definition given by Sanguiao \cite[Definition~1.7]{Sanguiao2008}.

\begin{thm}[{\cite[Theorem~8.4]{AlvKordyLeichtnam2014}}]\label{t: foln of bd geometry}
  With the above notation, $\FF$ is of bounded geometry if and only if there is a normal foliated chart $x_p:U_p\to B'\times B''$ at every $p\in M$, such that the balls $B'$ and $B''$ are independent of $p$, and the corresponding coefficients $g_{ij}$ and $g^{ij}$, as family of smooth functions on $B'\times B''$ parametrized by $i$, $j$ and $p$, lie in a bounded subset of the Fr\'echet space $C^\infty(B'\times B'')$.
\end{thm}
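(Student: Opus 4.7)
The overall strategy parallels Eichhorn's chart characterization of bounded Riemannian geometry (\Cref{t: mfd of bd geom}), but with the twist that the normal foliated chart $\kappa_p$ is built from the adapted connection $\rnabla$, not from the Levi-Civita connection $\nabla$. Thus the Christoffel symbols that appear naturally from the construction of $\kappa_p$ are those of $\rnabla$, and one has to translate uniform bounds on $(R,\sT,\sA)$ and their $\nabla$-derivatives into uniform bounds on the $g_{ij}$ in $C^\infty(B'\times B'')$, and vice versa. The bridge is~\eqref{nabla-rnabla}: writing $\nabla=\rnabla+\sT+\sA$ (acting on the appropriate slots), any $\nabla$-covariant derivative is a polynomial expression in $\rnabla$-derivatives of $\sT$ and $\sA$, and conversely any $\rnabla$-derivative is a polynomial in $\nabla$-derivatives of $\sT$ and $\sA$. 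Consequently, uniform $C^\infty$ bounds with respect to $\nabla$ are equivalent to uniform $C^\infty$ bounds with respect to $\rnabla$ for tensors of any type, once uniform bounds on the pair $(\sT,\sA)$ and all their derivatives are in hand.

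For the forward direction, assume $\FF$ is of bounded geometry. Positive injectivity bi-radius directly provides normal foliated charts $\kappa_p:B'\times B''\to U_p$ with $B',B''$ independent of $p$. To bound the $g_{ij}$ in $\kappa_p$-coordinates, I would split the argument according to the block structure of $\kappa_p$: along the horizontal slice $\{x''=0\}$, the map $\kappa_p$ coincides with the $\rnabla$-exponential along $T_p\FF^\perp$, while along each plaque it is the $\rnabla$-parallel transport followed by the leafwise exponential of $\nabla^\FF$. In both regimes, the Jacobi-type equations for the coordinate vector fields are driven by the curvature of $\rnabla$ and by $\sT$, $\sA$ (which encode the discrepancy between $\nabla^\FF$-geodesics in the leaves, horizontal $\rnabla$-geodesics, and the bracket $\bfV[X,V]$ via~\eqref{bV[X,V]}). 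Since the bounded geometry hypothesis gives uniform bounds on $|\nabla^m R|$, $|\nabla^m\sT|$, $|\nabla^m\sA|$, the translation principle above converts these into uniform bounds on the $\rnabla$-curvature and its $\rnabla$-derivatives. Solving the Jacobi equations with uniformly bounded coefficients on a fixed box $B'\times B''$ then yields uniform $C^\infty$ bounds on $g_{ij}=g(\partial_i,\partial_j)$, and hence, by Cramer's rule, on $g^{ij}$, which is the required bound in the Fr\'echet space $C^\infty(B'\times B'')$.

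For the converse, assume the $g_{ij}$ (and hence $g^{ij}$) lie in a bounded set of $C^\infty(B'\times B'')$ uniformly in $p$. Positive injectivity bi-radius is built into the hypothesis. In each chart $\kappa_p$, the Christoffel symbols of $\nabla$ and $\rnabla$, the curvature $R$, and the tensors $\sT$, $\sA$ (which are defined by the projections $\bfV$, $\bfH$ — themselves determined by the block structure of $g_{ij}$ with respect to the leafwise/transverse splitting encoded in the chart) are polynomial expressions in $g_{ij}$, $g^{ij}$ and their coordinate derivatives. Hence they lie in bounded subsets of $C^\infty(B'\times B'';\cdot)$. Converting back to $\nabla$-covariant derivatives of $R$, $\sT$, $\sA$ by the same bridge used above, one obtains the uniform pointwise bounds $|\nabla^m R|,|\nabla^m\sT|,|\nabla^m\sA|\le C_m$ required for bounded geometry of $\FF$.

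The main obstacle is the rigorous execution of the first half: controlling $g_{ij}$ in the $\kappa_p$-charts from curvature bounds. Unlike the classical Eichhorn setting, one does not have a single exponential map to work with; rather, $\kappa_p$ is the composition $\rexp_q\circ \rP_{\bfH X}$ of a horizontal $\rnabla$-exponential and a $\rnabla$-parallel transport followed by the plaque exponential. The mixed partial derivatives $\partial'_i\partial''_j g_{k\ell}$ therefore satisfy a \emph{coupled} system of Jacobi-type equations in the horizontal and vertical directions, whose coupling is precisely controlled by $\sT$ and $\sA$ (and by the projectability/holonomy properties of horizontal vector fields on a Riemannian foliation, cf.~\eqref{bV[X,V]}). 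Handling this coupling and inductively propagating $C^k$ bounds from $k=0$ to all $k$ — while keeping track of the subtle interplay between $\nabla$ and $\rnabla$ that ensures equivalence of the two notions of $C^\infty$-boundedness — is where the real work lies.
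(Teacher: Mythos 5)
Note first that the paper does not prove this theorem: it is imported verbatim from \cite[Theorem~8.4]{AlvKordyLeichtnam2014}, so there is no ``paper's own proof'' here against which to compare. What I can assess is whether your plan would in fact yield a proof of the statement.

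Your overall strategy — adapt Eichhorn's chart characterization (\Cref{t: mfd of bd geom}) to the foliated setting, using the adapted connection $\rnabla$ to build the normal foliated chart $\kappa_p$, and translate between $\nabla$-bounds and $\rnabla$-bounds via the identity $\nabla=\rnabla+\sT+\sA$ from~\eqref{nabla-rnabla} — is the right template, and the converse direction (reading off $R,\sT,\sA$ and their covariant derivatives as polynomial expressions in $g_{ij}$, $g^{ij}$ and their coordinate derivatives, using that $T\FF$ is the coordinate distribution $\{x'=\text{const}\}$ in a foliated chart so the projections $\bfV$, $\bfH$ are metric-determined) is essentially correct as you describe it.

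The genuine gap is the one you yourself flag in the forward direction, but it is larger than your write-up suggests in one specific place: the adapted connection $\rnabla$ is \emph{not} torsion-free. Its torsion has a horizontal component governed by the integrability tensor $\sA$ (concretely, $\rnabla_XY-\rnabla_YX-[X,Y]=-\bfV[X,Y]$ for horizontal $X,Y$, which is $-2\sA_XY$ up to sign conventions), plus mixed horizontal/vertical pieces. Your ``Jacobi-type equations driven by the curvature of $\rnabla$'' therefore do not close: the variational equation for $\rnabla$-geodesics along the horizontal exponential and for the parallel-transported vertical slices carries explicit torsion terms, not just curvature terms, and these torsion terms must themselves be estimated uniformly before the ODE argument gives $C^k$ bounds on $g_{ij}$ on a fixed box $B'\times B''$. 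The proof plan as written treats the translation $\nabla\leftrightarrow\rnabla$ and the Jacobi analysis as two cleanly separable steps, but because torsion enters the Jacobi system, the translation has to be done \emph{inside} the ODE estimate, with an induction on the order of derivative that simultaneously controls $\rnabla$-curvature, $\rnabla$-torsion, and their $\rnabla$-derivatives from the bounds on $|\nabla^mR|$, $|\nabla^m\sT|$, $|\nabla^m\sA|$. This is the real technical content that distinguishes the foliated case from Eichhorn's, and it is precisely what your sketch acknowledges not having done. So the approach is right, but as submitted this is a plan for a proof rather than a proof, and the torsion issue needs to be addressed explicitly rather than folded into ``coupling controlled by $\sT$ and $\sA$.''
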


For the rest of \Cref{s: analysis on Riem folns}, let us assume that $\FF$ is of bounded geometry. Then $M$ and the disjoint union of the leaves are of bounded geometry \cite[Remark~8.2 and Proposition~8.6]{AlvKordyLeichtnam2014}. Consider the foliated charts $y_p:V_p\to B$ and foliated charts $x_p:U_p\to B'\times B''$ given by \Cref{t: mfd of bd geom,t: foln of bd geometry}. Let $r_0$, $r'_0$ and $r''_0$ denote the radii of the balls $B$, $B'$ and $B''$. For $0<r\le r_0$, $0<r'\le r'_0$ and $0<r''\le r''_0$, let $R_r$, $B'_{r'}$ and $B''_{r''}$ denote the balls in $\R^{n'}$ and $\R^{n''}$ centered at the origin with radii $r$, $r'$ and $r''$, respectively. If $r$ is small enough, then $V_{p,r}:=x_p^{-1}(B_r)\subset U_p$ for all $p$ \cite[Proposition~8.6]{AlvKordyLeichtnam2014}. On the other hand, if $r'+r''\le r_0$, then $U_{p,r',r''}:=x_p^{-1}(B'_{r'}\times B''_{r''})\subset V_p$ for all $p$ by the triangle inequality. Then the following subsets are bounded in the corresponding Fr\'echet spaces \cite[Proposition~5.6 and~5.7]{AlvKordyLeichtnam2020}:
\begin{equation}\label{x_p y_p^-1}
\left\{
\begin{aligned}
\{\,x_py_p^{-1}\mid p\in M\,\}&\subset C^\infty(B,\R^{n'}\times\R^{n''})\;,\\
\{\,y_px_p^{-1}\mid p\in M\,\}&\subset C^\infty(B'_{r'}\times B''_{r''},\R^n)\;.
\end{aligned}
\right.
\end{equation}

Let $E$ be the Hermitian vector bundle of bounded geometry associated to the principal $\operatorname{O}(n)$-bundle of orthonormal frames on $M$ and a unitary representation of $\operatorname{O}(n)$ (Example~\ref{ex: bundles of bd geom}). Since $\nabla$ on $TM$ is of bounded geometry, it follows from~\eqref{nabla-rnabla} that $\rnabla$ is also of bounded geometry. Thus we get induced connections $\nabla$ and $\rnabla$ of bounded geometry on $E$ (Example~\ref{ex: connections of bd geom}). By~\eqref{nabla-rnabla}, we also get that $\rnabla$ can be used instead of $\nabla$ to define equivalent versions of $\|{\cdot}\|_{C_{\text{\rm ub}}^m}$ and $\langle\cdot,\cdot\rangle_m$ in the spaces $C_{\text{\rm ub}}^m(M;E)$ and $H^m(M;E)$.  Since the subsets~\eqref{x_p y_p^-1} are bounded, if $B'$ and $B''$ are small enough, then we can use the coordinates $(U_p,x_p)$ instead of coordinates of $(V_p,y_p)$ to define equivalent versions of $\|{\cdot}\|'_{C_{\text{\rm ub}}^m}$ and $\langle\cdot,\cdot\rangle'_m$. Similarly, given another bundle $F$ like $E$, we can use the coordinates $(U_p,x_p)$ instead of $(V_p,y_p)$ to describe $\Diffub^m(M;E,F)$ by requiring that the local coefficients form a bounded subset of the Fr\'echet space $C^\infty(B'\times B'';\C^{l'}\otimes\C^{l*})$, where $l$ and $l'$ are the ranks of $E$ and $F$.

The condition of being leafwise differential operators of bounded geometry is preserved by compositions, and by taking transposes and formal adjoints. They form a filtered $\Cinftyub(M)$-submodule $\Diffub(\FF;E,F)\subset\Diff(\FF;E,F)$. The notation $\Diffub(\FF;E)$\index{$\Diffub(\FF;E)$} is used if $E=F$; this is a filtered subalgebra of $\Diff(\FF;E)$. The concepts of \emph{uniform leafwise ellipticity}\index{uniformly leafwise elliptic} for operators in $\Diff^m(\FF;E,F)$ can be defined like uniform ellipticity (\Cref{ss: diff ops of bd geom}), and can be extended to leafwise differential complexes of order $m$ like in \Cref{ss: diff complexes}. The same applies to \emph{uniform transverse ellipticity} for operators in $\Diff^m(M;E,F)$ and for differential complexes of order $m$. If $P\in\Diffub^2(\FF;E)$ is uniformly leafwise elliptic and $Q\in\Diffub^2(M;E)$ is uniformly transversely elliptic, and both $P$ and $Q$ are symmetric and non-negative, then $H^s(M;E)$ ($s\in\R$) can be described with the scalar product $\langle u,v\rangle_s=\langle((1+P)^s+(1+Q)^s)u,v\rangle$. 

Let $\fXub(\FF)$\index{$\fXub(\FF)$} and $\fXub(M,\FF)$\index{$\fXub(M,\FF)$} denote the intersections of $\fXub(M)$ with $\fX(\FF)$ and $\fX(M,\FF)$, respectively. Then $\Diff_{\text{\rm ub}}(\FF)$ can be also described like in \Cref{ss: diff ops}, using $\Cinftyub(M)$ and $\fXub(\FF)$ instead of $C^\infty(M)$ and $\fX(M)$, and $\Diffub(\FF;E,F)$ can be also described as the $\Cinftyub(M)$-tensor product of $\Diff_{\text{\rm ub}}(\FF)$ and $\Cinftyub(M;E,F)$.

\subsection{Operators of bounded geometry on differential forms}\label{ss: opers of bd geom on diff forms}

Since $\nabla$ and $\rnabla$ are of bounded geometry on $TM$, the induced connections $\nabla$ and $\rnabla$ on $\Lambda M$ are of bounded geometry as well (\Cref{ex: connections of bd geom}). Using \Cref{ex: bundles of bd geom,ex: connections of bd geom}, we get that $\bfH$ and $T\FF$ are also of bounded geometry, and the restrictions of $\rnabla$ to $\bfH$ and $T\FF$ are of bounded geometry \cite[Section~6]{AlvKordyLeichtnam2020}. Thus every $\Lambda^{u,v}M$ is of bounded geometry (\Cref{ex: operations of bundles of bd geom}), and $\rnabla$ is of bounded geometry on $\Lambda^{u,v}M$ (\Cref{ex: connections induced by natural operations}). So this also applies to $\Lambda\FF\equiv\Lambda^{0,\bullet}M$.

By using $\rnabla$ instead of $\nabla$ in the definitions of $\|{\cdot}\|_{C_{\text{\rm ub}}^m}$ and $\langle\cdot,\cdot\rangle_s$ ($m\in\N_0$ and $s\in\R$), it follows that the spaces $C_{\text{\rm ub}}^m(M;\Lambda)$ and $H^s(M;\Lambda)$ inherit the bigrading of $\Lambda M$, and therefore $\Cinftyub(M;\Lambda)$ and $H^{\pm\infty}(M;\Lambda)$ have an induced bigrading. 

The following properties hold \cite[Section~3]{AlvKordy2001}, \cite[Section~6]{AlvKordyLeichtnam2020}: the canonical projections $\Lambda M\to\Lambda^{u,v}M$, the operators  $\star$, $\star_\FF$ or $\star_\perp$ (under appropriate orientability assumptions), and the operators of~\eqref{d = d_0 1 + d_1 0 + d_2 -1},~\eqref{delta = delta_0 -1 + delta_-1 0 + delta_-2 1} and~\eqref{D_0 D_perp Delta_0} are of bounded geometry; the differential complexes $d_{0,1}$ and $\delta_{0,-1}$ are uniformly leafwise elliptic; the differential operators $D_0$ and $\Delta_0$ are symmetric and uniformly leafwise elliptic; the differential operator $D_\perp$ is uniformly transversely elliptic; and there is an endomorphism of bounded geometry, $K$ of $\Lambda M$, such that\footnote{In \cite[Eq.~(55)]{AlvKordyLeichtnam2020}, $D_0$ should be $\delta_{0,-1}$, like in \cite[Proposition~3.1]{AlvKordy2001}.}
\begin{equation}\label{D_perp delta_0 -1 + delta_0 -1 D_perp}
D_\perp\delta_{0,-1}+\delta_{0,-1}D_\perp=K\delta_{0,-1}+\delta_{0,-1}K\;.
\end{equation}

Let us recall the definition of $K$ and the proof of~\eqref{D_perp delta_0 -1 + delta_0 -1 D_perp} because an extension will be needed, which is slightly more general than the extension considered in \cite[Section~11]{AlvKordyLeichtnam2020}. Let $\Theta:\fX(\FF)\to C^\infty(M;\bfH^*\otimes T\FF)$ be the differential operator defined by $\Theta_XV=\bfV([X,V])$ (the expression~\eqref{bV[X,V]}), which induces a differential operator $\Theta:C^\infty(M;\Lambda\FF)\to C^\infty(M;\bfH^*\otimes\Lambda\FF)$. If $X\in\fX(M,\FF)\cap C^\infty(M;\bfH)$, then $\Theta_X$ on $C^\infty(M;\Lambda\FF)$ agrees with $\Theta_X$ on $C^\infty(M;\Lambda^{0,{\bullet}})$ via~\eqref{Lambda M} (\Cref{ss: LL_X i -i}). A homomorphism $\Xi:\Lambda\FF\to \bfH^*\otimes\Lambda\FF$ can be locally defined by\index{$\Xi_X$}
\[
\Xi_X=(-1)^{(n''-v)v}[\Theta_X,\star_\FF]\star_\FF
\]
on $C^\infty(M;\Lambda^v\FF)$, for any $X\in C^\infty(M;\bfH)$, where $\star_\FF$ is defined with any choice of local orientation of $\FF$. Using~\eqref{Lambda M}, its tensor product with the identity on $\Lambda\bfH$ is a homomorphism $\Xi:\Lambda M\to \bfH^*\otimes\Lambda M$. Using the notation of \Cref{ss: local descriptions} on any normal foliated chart $(U,x)$, the local expression
\[
K={dx^{\prime i}\wedge}\,\Xi_{\bfH\partial'_i}
\]
defines an endomorphism of $\Lambda M$. A computation using~\eqref{delta_FF = (-1)^n''v+n''+1 star_FF d_FF star_FF},~\eqref{Theta_X d_0 1 = d_0 1 Theta_X},~\eqref{delta_0 -1(f_IJ dx''^I wedge dx'^J)} and~\eqref{d_1 0(f_IJ dx''^I wedge dx'^J)} gives
\begin{equation}\label{d_1 0 delta_0 -1 + delta_0 -1 d_1 0}
d_{1,0}\delta_{0,-1}+\delta_{0,-1}d_{1,0}=K\delta_{0,-1}+\delta_{0,-1}K\;,
\end{equation}
yielding~\eqref{D_perp delta_0 -1 + delta_0 -1 D_perp} by the analog of~\eqref{d_0,1^2=...=0} for the operators $\delta_{-i,i-1}$.

\subsection{Foliated maps of bounded geometry}
\label{ss: foliated maps of bd geom}

For $a=1,2$, let $\FF_a$ be a Riemannian foliation of bounded geometry on a manifold $M_a$ with a bundle-like metric. To refer to each $\FF_a$, the subscript ``$a$'' is added to the notation used in \Cref{ss: transverse structures of bd geometry}: $n'_a$, $n''_a$, $y_{a,p}:V_{a,p}\to B_a$, $x_{a,p}:U_{a,p}\to B'_a\times B''_a$, $r_{a,0}$, $r'_{a,0}$ and $r''_{a,0}$, $V_{a,p,r}$ and $U_{a,p,r',r''}$. Like in the case of uniform spaces and differential operators, in the definition of bounded geometry for maps $M_1\to M_2$, we can replace the charts  $(V_{1,p},y_{1,p})$ and $(V_{2,\phi(p)},y_{2,\phi(p)})$, and sets $B_1(p,r)$ with the charts $(U_{1,p},x_{1,p})$ and $(U_{2,\phi(p)},x_{2,\phi(p)})$, and sets $U_{1,p,r',r''}$. Let $\Cinftyub(M_1,\FF_1;M_2,\FF_2)$ be the subset of $C^\infty(M_1,\FF_1;M_2,\FF_2)$ consisting of foliated maps of bounded geometry. For any $m\in\N_0$ and $\phi\in \Cinftyub(M_1,\FF_1;M_2,\FF_2)$, using the versions of $\|\cdot\|'_{C^m_{\text{\rm ub}}}$ and $\langle\cdot,\cdot\rangle'_m$ defined with the foliated charts $(U_p,x_p)$ in the case where $m<\infty$ (\Cref{ss: transverse structures of bd geometry}), we get the following versions of~\eqref{phi^* on C_ub^m} and~\eqref{phi^* on H^m} \cite[Section~8]{AlvKordyLeichtnam2020}:~\eqref{phi^*} induces continuous homomorphisms,
\begin{equation}\label{phi^*: C_ub^m -> C_ub^m - foliated}
\phi^*:C_{\text{\rm ub}}^m(M_2;\Lambda\FF_2\otimes\Lambda N\FF_2)\to 
C_{\text{\rm ub}}^m(M_1;\Lambda\FF_1\otimes\Lambda N\FF_1)\;,
\end{equation}
and, if $\phi$ is uniformly metrically proper,     
\begin{equation}\label{phi^*: H^m -> H^m - foliated}
\phi^*:H^m(M_2;\Lambda\FF_2\otimes\Lambda N\FF_2)\to H^m(M_1;\Lambda\FF_1\otimes\Lambda N\FF_2)\;.
\end{equation}
In particular, we get~\eqref{phi^*: H^m -> H^m - foliated} if $\phi$ is a foliated diffeomorphism with $\phi^{\pm1}$ of bounded geometry. In this case, it can be continuously extended to Sobolev spaces of order $-m$ using the version of the second equality of~\eqref{H^-s(M E) = Psi^s(M E) cdot L^2(M E) = H^s(M E^* otimes Omega)'} for open manifolds,~\eqref{Lambda M} and~\eqref{(Lambda^u v M)^* otimes Omega M equiv Lambda^n'-u n''-v M}, like in \Cref{ss: maps of bd geom}.

\subsection{Leafwise functional calculus}\label{ss: Leafwise functional calculus}

Consider the notation of \Cref{ss: transverse structures of bd geometry,ss: opers of bd geom on diff forms}. Like in~\eqref{wave} and~\eqref{unit propagation speed}, the hyperbolic equation
\begin{equation}\label{leafwise wave}
\partial_t\alpha_t=iD_0\alpha_t\;,\quad\alpha_0=\alpha\;,
\end{equation}
has a unique solution on any open subset of $M$ and for $t$ in any interval containing zero, which satisfies \cite[Theorem~1.3]{Chernoff1973}, \cite[Proposition~1.2]{Roe1987}
\begin{equation}\label{leafwise unit propagation speed}
\supp\alpha_t\subset\Pen_\FF(\supp\alpha,|t|)\;.
\end{equation}

The operators $D_0$ and $\Delta_0$, with domain $\Cinftyc(M;\Lambda)$, are essentially self-adjoint in $L^2(M;\Lambda)$ \cite[Theorem~2.2]{Chernoff1973}, and their self-adjoint extensions are also denoted by $D_0$ and $\Delta_0$. The functional calculus of $D_0$, given by the spectral theorem, assigns a (bounded) operator $\psi(D_0)$ to every (bounded) measurable function $\psi$ on $\R$; in particular, we have a unitary operator $e^{itD_0}$ and a bounded self-adjoint operator $e^{-t\Delta_0}$ on $L^2(M;\Lambda)$. The notation $\Pi_0=e^{-\infty\Delta_0}$ is used for the orthogonal projection of $L^2(M;\Lambda)$ to $\ker D_0=\ker\Delta_0$ in $L^2(M;\Lambda)$.

If $\alpha\in\Cinftyc(M;\Lambda)$, the solution of~\eqref{leafwise wave} is given by $\alpha_t=e^{itD_0}\alpha$. For every $m\in\N_0$, there is some $C_m\ge0$ such that \cite[Section~IV.2]{Taylor1981}, \cite[Proposition~1.4]{Roe1987}, \cite[Proposition~7.1]{AlvKordyLeichtnam2020}
\begin{equation}\label{|e^itD_0 alpha|_m}
\|e^{itD_0}\alpha\|_m\le e^{C_m|t|}\|\alpha\|_m\;,
\end{equation}
for all $\alpha\in\Cinftyc(M;\Lambda)$.

On the other hand, like in~\eqref{psi(D_z)}, for $\psi\in\SS$, we get
\begin{equation}\label{psi(D_0)}
\psi(D_0)=\frac{1}{2\pi}\int_{-\infty}^{+\infty}\hat\psi(\xi)e^{i\xi D_0}\,d\xi\;.
\end{equation}
Taking $\psi\in\AA$ (\Cref{ss: Witten - mfds of bd geom}), it follows from~\eqref{|hat psi(xi)| le A_c e^-c |xi|},~\eqref{|e^itD_0 alpha|_m} and~\eqref{psi(D_0)} that, for every $m\in\Z\cup\{\pm\infty\}$, the functional calculus $\psi\mapsto\psi(D_0)$ restricts to a continuous homomorphisms of $\C[z]$-modules and algebras \cite[Proposition~4.1]{Roe1987}, \cite[Proposition~7.2]{AlvKordyLeichtnam2020},
\begin{equation}\label{functional calculus}
\AA\to\End(H^m(M;\Lambda))\;,\quad\AA\to\End(H^\infty(M;\Lambda))\;.
\end{equation}
By taking coefficients in $o(M)$ and transposition (see \Cref{ss: bigrading of currents}), $\psi\mapsto\psi(D_0)$ also induces continuous homomorphisms of $\C[z]$-modules and algebras,
\begin{equation}\label{functional calculus on currents}
\AA\to\End(H^{-m}(M;\Lambda))\;,\quad\AA\to\End(H^{-\infty}(M;\Lambda))\;.
\end{equation}

\subsection{Leafwise Hodge decomposition}\label{ss: Leafwise Hodge}

According to~\eqref{functional calculus}, the operator $e^{-t\Delta_0}$ ($t>0$) restricts to a continuous endomorphism of $H^\infty(M;\Lambda)$. As pointed out in \cite{Sanguiao2008}, using the bounded geometry and uniform leafwise/transverse ellipticity of the operators considered in \Cref{ss: opers of bd geom on diff forms}, and applying~\eqref{D_perp delta_0 -1 + delta_0 -1 D_perp} and~\eqref{functional calculus}, the arguments of \cite{AlvKordy2001} can be adapted to show the following, where $\Delta_0$ is considered on $H^\infty(M;\Lambda)$ \cite{Sanguiao2008}, \cite[Theorem~7.3 and Corollary~7.4]{AlvKordyLeichtnam2020}: there is a TVS-direct-sum decomposition, 
\begin{equation}\label{leafwise Hodge decomposition}
H^\infty(M;\Lambda)=\ker\Delta_0\oplus\overline{\im d_{0,1}}\oplus\overline{\im\delta_{0,-1}}\;,
\end{equation}
whose terms are orthogonal in $L^2(M;\Lambda)$; the map
\begin{equation}\label{leafwise heat flow}
[0,\infty]\times H^\infty(M;\Lambda)\to H^\infty(M;\Lambda)\;,\quad(t,\alpha)\mapsto e^{-t\Delta_0}\alpha\;,
\end{equation}
is well-defined and continuous; and $\Pi_0:H^\infty(M;\Lambda)\to\ker\Delta_0$ induces a TVS-isomorphism
\begin{equation}\label{leafwise Hodge iso}
\bar H(H^\infty(M;\Lambda),d_{0,1}) \xrightarrow{\cong} \ker\Delta_0\;,
\end{equation}
whose inverse is induced by $\ker\Delta_0\hookrightarrow H^\infty(M;\Lambda)$. The analogs of~\eqref{leafwise Hodge decomposition}--\eqref{leafwise Hodge iso} with $H^{-\infty}(M;\Lambda)$ are also true.

By~\eqref{Lambda M} and~\eqref{d_0 1 equiv d_FF}, we can consider $(H^\infty(M;\Lambda\FF),d_\FF)$ as a topological subcomplex of $(H^\infty(M;\Lambda),d_{0,1})$, and the notation $H^\bullet H^\infty(\FF)$ and $\bar H^\bullet H^\infty(\FF)$ is used for its cohomology and reduced cohomology. By~\eqref{delta_0 -1(f_IJ dx''^I wedge dx'^J)}, $\delta_\FF$ on $H^\infty(M;\Lambda\FF)$ is also given by $\delta_{0,-1}$. Thus we get the operators $D_\FF=d_\FF+\delta_\FF$ and $\Delta_\FF=D_\FF^2=\delta_\FF d_\FF+d_\FF\delta_\FF$ on $H^\infty(M;\Lambda\FF)$, which are essentially self-adjoint in $L^2(M;\Lambda\FF)$. Let $\Pi_\FF=e^{-\infty\Delta_\FF}$ be the orthogonal projection to $\ker D_\FF=\ker\Delta_\FF$ in $L^2(M;\Lambda\FF)$. Then~\eqref{|e^itD_0 alpha|_m}--\eqref{leafwise Hodge iso} have obvious versions with $D_\FF$, $\Delta_\FF$, $\bar H^\bullet H^\infty(\FF)$ and $\Pi_\FF$ \cite{Sanguiao2008}, \cite[Section~7]{AlvKordyLeichtnam2020}.

\subsection{A class of smoothing operators}\label{ss: smoothing operators}

Suppose $\FF$ is of codimension one for the sake of simplicity. (The case of codimension${}>1$ can be treated like in \cite{AlvKordy2008a}.) Assume also that $M$ is endowed with a bundle-like metric $g$ so that $\FF$ is of bounded geometry. Let $\phi:M\times\R\to M$ be a foliated flow of $\R$-local bounded geometry, whose infinitesimal generator is $Z\in\fXub(M,\FF)$ (\Cref{ss: families of bd geom}). Assume $\inf_M|\overline Z|>0$; in particular, the orbits of $\phi$ are transverse to the leaves. Given $f\in\Cinftyc(\R)$, consider the following operators on $H^{-\infty}(M;\Lambda\FF)$. For every $\psi\in\AA$, the operator
\begin{equation}\label{P with f}
P=\int_\R\phi^{t*}\,f(t)\,dt\,\psi(D_\FF)
\end{equation}
is defined by the version of~\eqref{functional calculus on currents} for $D_\FF$ and the version of~\eqref{phi^*: H^m -> H^m - foliated} for $\phi^{t*}$ on $H^{-\infty}(M;\Lambda\FF)$. The subscripts ``$\psi$'' or ``$f$'' may be added to the notation of $P$ if needed, or the subscript ``$u$'' in the case of functions $\psi_u\in\AA$ depending on a parameter $u$. For example, we may take $\psi_u(x)=e^{-ux^2}$ and the corresponding operators $P_u$\index{$P_u$} ($u>0$) on $H^{-\infty}(M;\Lambda\FF)$. Let also\index{$P_\infty$}
\[
P_\infty=\int_\R\phi^{t*}\,f(t)\,dt\,\Pi_\FF\;.
\]
The following properties hold \cite[Propositions~9.1,~9.4 and~9.6 and Corollaries~9.2,~9.3 and~9.5]{AlvKordyLeichtnam2020}: every $P_{\psi,f}$, given by~\eqref{P with f}, is smoothing, obtaining continuous bilinear maps\footnote{In \cite[Propositions~9.1 and Corollary~9.2]{AlvKordyLeichtnam2020}, only the continuous dependence on $\psi\in\AA$ is indicated, but the additional continuous dependence on $f\in\Cinftyc(\R)$ is given by \cite[Proposition~9.6]{AlvKordyLeichtnam2020}, indicated in~\eqref{|P_psi|_m m'}.}
\begin{equation}\label{P_psi}
\left\{
\begin{alignedat}{2}
\AA\times\Cinftyc(\R)&\to L(\textstyle{H^{-\infty}(M;\Lambda\FF),H^\infty(M;\Lambda\FF)})\;,&\quad(\psi,f)&\mapsto P_{\psi,f}\;,\\
\AA\times\Cinftyc(\R)&\to\Cinftyub(M^2;\Lambda\FF\boxtimes(\Lambda\FF^*\otimes\Omega M))\;,&\quad(\psi,f)&\mapsto K_{P_{\psi,f}}\;;
\end{alignedat}
\right.
\end{equation}
$P_\infty$ is smoothing, with
\begin{equation}\label{P_infty}
\left\{
\begin{alignedat}{2}
\lim_{u\to\infty}P_u&=P_\infty&\quad&\text{in}\quad L(H^{-\infty}(M;\Lambda\FF),H^\infty(M;\Lambda\FF))\;,\\
\lim_{u\to\infty}K_{P_u}&=K_{P_\infty}&\quad&\text{in}\quad
\Cinftyub(M^2;\Lambda\FF\boxtimes(\Lambda\FF^*\otimes\Omega M))\;;
\end{alignedat}
\right.
\end{equation}
and, for any compact $I\subset\R$ containing $\supp f$ and $m,m'\in\N_0$ \rnote{$m\le m'$ in $\N_0$?}, there are some $C,C'>0$ and $N\in\N_0$, depending on $m$, $m'$ and $I$, such that
\begin{equation}\label{|P_psi|_m m'}
\|P_{\psi,f}\|_{m,m'} \le C'\|\psi\|_{\AA,C,N}\,\|f\|_{I,C^N}\;.
\end{equation}

\subsection{Description of some Schwartz kernels}\label{ss: Schwartz kernels}

In \Cref{ss: smoothing operators}, $\overline Z$ defines the structure of a transversely complete $\R$-Lie foliation on $\FF$, and therefore we can consider also the notation of \Cref{ss: complete R-Lie folns}. Then the lift $\tilde g$ of $g$ to $\widetilde M$ is a bundle-like metric of $\widetilde\FF=\pi^*\FF$, and $\widetilde Z\in\fXub(\widetilde M,\widetilde\FF)$. Assume $D_*\widetilde Z=\partial_x\in\fX(\R)$ and $\bar\phi^t(x)=t+x$; hence $\phi^t$ preserves every leaf of $\FF$ if and only if $t\in\Hol\FF$ (\Cref{ss: complete R-Lie folns}).

For any $\psi\in\AA$ and $f\in\Cinftyc(\R)$, we have the smoothing operator $P$ given by~\eqref{P with f}, and a similar smoothing operator $\widetilde P$ is defined by using $\tilde\phi$ and $\widetilde\FF$ instead of $\phi$ and $\FF$. We are going to describe their Schwartz kernels.

Let $\fG=\Hol(M,\FF)$ and $\widetilde\fG=\Hol(\widetilde M,\widetilde\FF)$, whose source and range maps are denoted by $\bfs,\bfr:\fG\to M$ and $\tilde\bfs,\tilde\bfr:\widetilde\fG\to\widetilde M$ (\Cref{ss: holonomy groupoid}). Since the leaves of $\FF$ and $\widetilde\FF$ have trivial holonomy groups, the smooth immersions $(\bfr,\bfs):\fG\to M^2$ and $(\tilde\bfr,\tilde\bfs):\widetilde\fG\to\widetilde M^2$ are injective, with images $\RR_\FF$ and $\RR_{\widetilde\FF}$. Via these injections, the restriction $\pi\times\pi:\RR_{\widetilde\FF}\to\RR_\FF$ corresponds to the Lie groupoid homomorphism $\pi_\fG:=\Hol(\pi):\widetilde\fG\to\fG$ (\Cref{ss: fol maps}), which is a covering map with $\Aut(\pi_\fG)\equiv\Gamma$. In fact, since $\widetilde\FF$ is defined by the fiber bundle $D$, we get that $\RR_{\widetilde\FF}$ is a regular submanifold of $\widetilde M^2$, and $(\tilde\bfr,\tilde\bfs):\widetilde\fG\to\RR_{\widetilde\FF}$ is a diffeomorphism. We may write $\fG\equiv\RR_\FF$ and $\widetilde\fG\equiv\RR_{\widetilde\FF}$.

Consider the $C^\infty$ vector bundles, $S=\bfr^*\Lambda\FF\otimes\bfs^*(\Lambda\FF\otimes\Omega\FF)$ over $\fG$ and $\widetilde S={\tilde\bfr^*\Lambda\widetilde\FF}\otimes{\tilde\bfs^*(\Lambda\widetilde\FF\otimes\Omega\widetilde\FF)}$ over $\widetilde\fG$. Note that $\widetilde S\equiv\pi_\fG^*S$, and any $k\in C^\infty(\fG;S)$ lifts via $\pi_\fG$ to a section $\tilde k\in C^\infty(\widetilde{\fG};\widetilde S)$. Since $\pi$ restricts to diffeomorphisms of the leaves of $\widetilde\FF$ to the leaves of $\FF$, it follows that $\tilde k\in C^\infty_{\text{\rm p}}(\widetilde{\fG};\widetilde S)$ if and only if $k\in C^\infty_{\text{\rm p}}(\fG;S)$.

For any $\psi\in\RR$, the collection of Schwartz kernels $k_L:=K_{\psi(D_L)}$, for all leaves $L$ of $\FF$, defines a section $k=k_\psi$ of $S$. This also applies to the operators $\psi(D_{\widetilde L})$ on the leaves $\widetilde L$ of $\widetilde\FF$, obtaining a section $\tilde k=\tilde k_\psi$ of $\widetilde S$.

If $\hat\psi\in\Cinftyc(\R)$, then $k_\psi\in C^\infty_{\text{\rm p}}(\fG;S)$, and the global action of $k_\psi$ on $\Cinftyc(M;\Lambda\FF)$ (\Cref{ss: global action}) agrees with the restriction of the operator $\psi(D_\FF)$ on $H^\infty(M;\Lambda\FF)$ defined by the version of~\eqref{functional calculus} for $D_\FF$ \cite[Proposition~10.1]{AlvKordyLeichtnam2020}. Precisely, if $\supp\hat\psi\subset[-R,R]$ for some $R>0$, then $\supp k_\psi\subset\overline{\Pen}_\FF(\fG^{(0)},R)$ by~\eqref{supp hat psi subset [-R R] => supp K_psi(D_z) subset ...}, and therefore $\supp\psi(D_\FF)\alpha\subset\overline{\Pen}_\FF(\supp\alpha,R)$ for all $\alpha\in H^\infty(M;\Lambda\FF)$ by Remark~\ref{r: supp K_A subset r-penumbra <=> supp Au subset r-penumbra for all u}.

Let $\widetilde\Lambda=D^*dx\equiv dx$, which is an invariant transverse volume form of $\widetilde\FF$ defining the same transverse orientation as $\overline{\widetilde Z}$. Since $\widetilde\Lambda$ is $\Gamma$-invariant by the $h$-equivariance of $D$, it defines a transverse volume form $\Lambda$ of $\FF$, which defines the same transverse orientation as $\overline Z$. These $\widetilde\Lambda$ and $\Lambda$ define invariant transverse densities $|\widetilde\Lambda|$ and $|\Lambda|$ of $\widetilde\FF$ and $\FF$.

Let $\tilde p,\tilde q\in\widetilde M$ over $p,q\in M$, and write $t_{\tilde p,\tilde q}=D(\tilde q)-D(\tilde p)$. If $\psi\in\AA$, then\footnote{There is an error in the statement of \cite[Proposition~10.3]{AlvKordyLeichtnam2020}: it is written $f(t_{\tilde p,\tilde q})$ instead of $f(t_{\tilde p,\tilde q}-h(\gamma))$. However, its proof shows the expression given in~\eqref{Schwartz kernel}.}
\begin{equation}\label{Schwartz kernel}
K_P(p,q)
\equiv\sum_{\gamma\in\Gamma}
T_\gamma^*\,\tilde\phi^{t_{\tilde p,\tilde q}-h(\gamma)*}
\tilde k\big(T_\gamma\tilde\phi^{t_{\tilde p,\tilde q}-h(\gamma)}(\tilde p),\tilde q\big)\,f(t_{\tilde p,\tilde q}-h(\gamma))\,|\Lambda|(q)\;,
\end{equation}
defining a convergent series in $\Cinftyub(\widetilde M^2;\widetilde S)$ \cite[Proposition~10.3]{AlvKordyLeichtnam2020}. Here, the identity $\widetilde S_{(\tilde p,\tilde q)}\equiv S_{(p,q)}$ is used, and the leafwise part of the density of $K_P(\cdot,q)$ at $q$ is given by the density of $\tilde k(\cdot,\tilde q)$ at $\tilde q$.

\section{Witten's operators on Riemannian foliations of bounded geometry}
\label{s: Witten's opers on Riem folns of bd geom}

Consider the notation of \Cref{ss: perturbation vs bigrading} with our assumption that $\FF$ is Riemannian of bounded geometry. Suppose also that $\eta\in\Cinftyub(M;\Lambda^1)$, and therefore $\eta_0\in\Cinftyub(M;\Lambda^{0,1})\equiv \Cinftyub(M;\Lambda^1\FF)$ and $\eta_1\in\Cinftyub(M;\Lambda^{1,0})$. Thus the operators $d_{z,i,1-i}$, $\delta_{z,i,i-1}$, $D_{z,0}$, $D_{z,\perp}$ and $\Delta_{z,0}$ are of bounded geometry. Arguing like in~\eqref{D_perp delta_0 -1 + delta_0 -1 D_perp}, we get
\begin{multline*}
(d_{1,0}\,{\eta_0\lrcorner}+{\eta_0\lrcorner}\,d_{1,0})(f_{IJ}\,dx^{\prime\prime I}\wedge dx^{\prime J})\\
\begin{aligned}
&=(K\,{\eta_0\lrcorner}+{\eta_0\lrcorner}\,K)(f_{IJ}\,dx^{\prime\prime I}\wedge dx^{\prime J})\\
&\phantom{={}}{}+(-1)^{(n''+1)|I|+n''}\big(\star_\FF[\Theta_{\bfH\partial'_i},{\eta_0\wedge}]\star_\FF(f_{IJ}\,dx^{\prime\prime I})\big)
\wedge dx^{\prime i}\wedge dx^{\prime\prime J}\;.
\end{aligned}
\end{multline*}
Using~\eqref{delta_0 -1(f_IJ dx''^I wedge dx'^J)} and~\eqref{[Theta_bfH eta_0 wedge]}, it follows that
\begin{multline*}
(d_{1,0}\,{\eta_0\lrcorner}+{\eta_0\lrcorner}\,d_{1,0}-K\,{\eta_0\lrcorner}-{\eta_0\lrcorner}\,K)
(f_{IJ}\,dx^{\prime\prime I}\wedge dx^{\prime J})\\
\begin{aligned}
&=(-1)^{(n''+1)|I|+n''}\big(\star_\FF[d_\FF,h_i]\star_\FF(f_{IJ}\,dx^{\prime\prime I})\big)
\wedge dx^{\prime i}\wedge dx^{\prime\prime J}\\
&=-(-1)^{|I|}\big(\delta_\FF(h_if_{IJ}\,dx^{\prime\prime I})\big)
\wedge dx^{\prime i}\wedge dx^{\prime\prime J}\\
&\phantom{={}}{}+(-1)^{|I|}\big(\delta_\FF(f_{IJ}\,dx^{\prime\prime I})\big)
\wedge \eta_1\wedge dx^{\prime\prime J}\\
&=-\delta_{0,-1}\big(\eta_1\wedge f_{IJ}\,dx^{\prime\prime I}\wedge dx^{\prime\prime J}\big)
-\eta_1\wedge\delta_{0,-1}\big(f_{IJ}\,dx^{\prime\prime I}\wedge dx^{\prime\prime J}\big)\;.
\end{aligned}
\end{multline*}
This shows that
\begin{equation}\label{d_1,0 eta_0 lrcorner + eta_0 lrcorner d_1 0}
d_{1,0}\,{\eta_0\lrcorner}+{\eta_0\lrcorner}\,d_{1,0}=K\,{\eta_0\lrcorner}+{\eta_0\lrcorner}\,K
-\delta_{0,-1}\,{\eta_1\wedge}-{\eta_1\wedge}\,\delta_{0,-1}\;.
\end{equation}
Combining~\eqref{d_1 0 delta_0 -1 + delta_0 -1 d_1 0} and~\eqref{d_1,0 eta_0 lrcorner + eta_0 lrcorner d_1 0}, and using that ${\eta_0\lrcorner}$ is an anti-derivation, we compute
\begin{multline*}
d_{z,1,0}\delta_{z,0,-1}+\delta_{z,0,-1}d_{z,1,0}\\
\begin{aligned}
&=K\delta_{0,-1}+\delta_{0,-1}K-\bar z(K\,{\eta_0\lrcorner}+{\eta_0\lrcorner}\,K
-\delta_{0,-1}\,{\eta_1\wedge}-{\eta_1\wedge}\,\delta_{0,-1})\\
&\phantom{={}}{}+z({\eta_1\wedge}\,\delta_{0,-1}+\delta_{0,-1}\,{\eta_1\wedge})
+|z|^2({\eta_1\wedge}\,{\eta_0\lrcorner}+{\eta_0\lrcorner}\,{\eta_1\wedge})\\
&=K\delta_{z,0,-1}+\delta_{z,0,-1}K+2\Re z\,({\eta_1\wedge}\,\delta_{0,-1}+\delta_{0,-1}\,{\eta_1\wedge})\\
&\phantom{={}}{}-2\Re z\,\bar z({\eta_1\wedge}\,{\eta_0\lrcorner}+{\eta_0\lrcorner}\,{\eta_1\wedge})\\
&=K_z\delta_{z,0,-1}+\delta_{z,0,-1}K_z\;,
\end{aligned}
\end{multline*}
where $K_z=K+2\Re z\,{\eta_1\wedge}$ is an endomorphism of $\Lambda M$ of bounded geometry. Using also the analog of~\eqref{d_0,1^2=...=0} for the operators $d_{z,i,1-i}$, it follows that\footnote{The equality
\[
d_{-\bar z,1,0}\delta_{z,0,-1}+\delta_{z,0,-1}d_{-\bar z,1,0}=K\delta_{z,0,-1}+\delta_{z,0,-1}K
\]
is also true, but this does not fit the analog of~\eqref{d_0,1^2=...=0}.}
\begin{equation}\label{D_z perp delta_z 0 -1 + delta_z 0 -1 D_z perp}
D_{z,\perp}\delta_{z,0,-1}+\delta_{z,0,-1}D_{z,\perp}=K_z\delta_{z,0,-1}+\delta_{z,0,-1}K_z\;.
\end{equation}
Using this key equality, we get straightforward generalizations of all results in \Cref{ss: Leafwise functional calculus,ss: Leafwise Hodge} for $d_{z,0,1}$, $\delta_{z,0,-1}$, $D_{z,0}$ and $\Delta_{z,0}$, which also have obvious versions for $d_{\FF,z}$, $\delta_{\FF,z}$, $D_{\FF,z}$ and $\Delta_{\FF,z}$. Let $\Pi_{0,z}$ and $\Pi_{\FF,z}$ denote the corresponding versions of $\Pi_0$ and $\Pi_\FF$.

Let $\phi:(M,\FF)\to(M,\FF)$ be a smooth foliated map of bounded geometry. Since $\eta\in \Cinftyub(M;\Lambda)$, we get versions of the continuity of~\eqref{phi^*: C_ub^m -> C_ub^m - foliated} and~\eqref{phi^*: H^m -> H^m - foliated} for $\phi^*_z$, assuming $\phi$ is uniformly metrically proper for the second one (\Cref{ss: maps of bd geom}). In particular, this applies to any foliated flow of $\R$-local bounded geometry (\Cref{ss: families of bd geom}), $\phi=\{\phi^t\}$ on $(M,\FF)$, using its unique lift $\tilde\phi=\{\tilde\phi^t\}$ to $\widetilde M$. Then the definitions and results of \Cref{ss: smoothing operators,ss: Schwartz kernels} have obvious twisted extensions using $\phi^{t*}_z$, $D_{\FF,z}$ and $\Pi_{\FF,z}$. The subscript ``$z$'' may be added to the notation $P$, $P_u$, $P_\infty$, $k$, $\tilde k$, $k_u$ and $\tilde k_u$ in this setting.

Recall that $D_{0,z,z'}$ and $\Delta_{0,z,z'}$ are uniformly leafwise elliptic with a symmetric leading symbol (\Cref{ss: leafwise 2 parameters}). Moreover, they are of bounded geometry. Then the obvious version of~\eqref{leafwise wave} with $D_{0,z,z'}$ has a unique solution, which satisfies the obvious analogs of~\eqref{leafwise unit propagation speed} and~\eqref{|e^itD_0 alpha|_m}. Thus $\psi(D_{0,z,z'})$ ($\psi\in\SS$) can be defined by the analog of~\eqref{psi(D_0)}, obtaining corresponding analogs of~\eqref{functional calculus} and~\eqref{functional calculus on currents}. Then, using $\phi^{t*}_z$ and $D_{\FF,z,z'}$, we get obvious extensions of the definitions and results of \Cref{ss: smoothing operators,ss: Schwartz kernels}, except for the statements involving $\Pi_\infty$ and $P_\infty$. The double subscript ``$z,z'$'' may be added to the notation $P$, $P_u$, $k$, $\tilde k$, $k_u$ and $\tilde k_u$ in this setting. However, if $z\ne z'$ and $\eta\ne0$,  $D_{0,z,z'}$ and $\Delta_{0,z,z'}$ are not symmetric, and therefore the results of \Cref{ss: Leafwise Hodge} cannot be generalized for these operators.

\chapter{Foliations with simple foliated flows}\label{ch: fols with simple fol flows}

\section{Simple foliated flows}\label{s: simple fol flows}

\subsection{Simple flows}\label{ss: simple flows}

Let $\phi:\Omega\to M$ be a smooth local flow, where $\Omega$ is an open neighborhood of $M\times\{0\}$ in $M\times\R$. Let $Z\in\fX(M)$ be the infinitesimal generator. For $p\in M$ and $t\in\R$, let
\[
\Omega_p=\{\,t\in\R\mid(p,t)\in\Omega\,\}\;,\quad\Omega^t=\{\,q\in M\mid(q,t)\in\Omega\,\}\;,
\]
and let $\phi^t=\phi(\cdot,t):\Omega^t\to M$. The fixed point set is
\[
\Fix(\phi)=\{\,p\in M\mid p\in\Fix(\phi^t)\ \forall t\in\Omega_p\ \text{close enough to $0$}\,\}\;,
\]
which equals the zero set of $Z$. Recall that a fixed point $p$ of $\phi$ is called \emph{simple} (or \emph{transverse}) if it is a simple fixed point of $\phi^t$ for all $t\ne0$ close enough to $0$ in $\Omega_p$ (see \Cref{ss: local Lefschetz formulae}). In this case, the associated number $\epsilon_p(\phi^t)$, defined in~\eqref{epsilon_p(phi)}, is independent of $t>0$ close enough to $0$ in $\Omega_p$, and is denoted by $\epsilon_p=\epsilon_p(\phi)$. If the fixed points of $\phi$ are simple, then $\Fix(\phi)$ is a discrete subset of $M$. For a fixed point $p$, we can write $\phi^t_*=e^{tA}$ on $T_pM$ for some endomorphism $A$ of $T_pM$. Then $p$ is simple just when $A$ is an automorphism.

Now, assume $Z$ is complete, and therefore we can take $\Omega=M\times\R$. On $M\setminus\Fix(\phi)$, let $N\phi$ denote the normal bundle of the foliation defined by the orbits of $\phi$; i.e., $N_p\phi=T_pM/\R Z(p)$ for every $p\in M\setminus\Fix(\phi)$. Let $\CC=\CC(\phi)$\index{$\CC(\phi)$} denote the set of closed orbits of $\phi$ (without including fixed points). For any $c\in\CC$, let $\ell(c)$\index{$\ell(c)$} denote its minimum positive period. For every subset $I\subset\R$, let \index{$\CC_I(\phi)$}
\[
\CC_I=\CC_I(\phi)=\{\,c\in\CC\mid\ell(c)\in I\,\}\;.
\]
The nonzero periods of all closed orbits form the set\index{$\PP(\phi)$}
\[
\PP=\PP(\phi)=\{\,k\ell(c)\mid c\in\CC,\ k\in\Z^\times\,\}\;.
\]
For all $c\in\CC$, $k\in\Z$ and $p\in c$, let $\phi^{k\ell(c)}_*:N_p\phi\to N_p\phi$ be the homomorphism induced by $\phi^{k\ell(c)}_*:T_pM\to T_pM$. Recall that $c$ is called \emph{simple} when the eigenvalues of $\phi^{k\ell(c)}_*:N_p\phi\to N_p\phi$ are different from $1$ for some (and therefore for all) $p\in c$ and $k\in\Z^\times$; in this case, let\index{$\epsilon_c(k)$}
\[
  \epsilon_c(k)=\epsilon_c(k,\phi)=\sign\det\big(\id-\phi^{k\ell(c)}_*:N_p\phi\to N_p\phi\big)\in\{\pm1\}\;.
\]
Every simple closed orbit $c$, there are neighborhoods, $V$ where $c$ in $M$ and $I$ of $\ell(c)$ in $\R$, such that $c$ is the only closed orbit whose first positive period is in $I$, and moreover that $V\cap\Fix(\phi)=\emptyset$.

The flow $\phi$ is called \emph{simple}\index{simple flow} if all of its fixed points and closed orbits are simple. If moreover $M$ is closed, then $\Fix(\phi)$ is finite, and $\CC_I(\phi)$ are finite for all compact $I\subset\R$. Therefore $\PP(\phi)$ is a discrete subset of $\R$.

\subsection{Transversely simple foliated flows}\label{ss: simple fol flows}

Let $\FF$ be a transversely oriented smooth foliation of codimension one on a closed manifold $M$. We assume $M$ is closed for the sake of simplicity, but the concepts and properties recalled here also have obvious versions when $M$ is a manifold with boundary, where both $\FF$ and $\phi$ are tangent to $\partial M$. Some generalizations to non-compact manifolds will be also indicated and needed.

Let $\phi=\{\phi^t\}$ be a foliated flow on $M$ and let $Z\in\fX(M,\FF)$ be its infinitesimal generator (\Cref{ss: fol maps}). Let $M^0$\index{$M^0$} be the union of leaves preserved by $\phi$, and let $M^1=M\setminus M^0$.\index{$M^1$} The $\phi$-invariant set $M^0$ is compact because it is the zero set of $\overline{Z}\in\olfX(M,\FF)\subset C^\infty(M;N\FF)$. Therefore the $\phi$-invariant set $M^1$ is open in $M$. Moreover $\phi$ is transverse to the leaves on $M^1$. So there is a canonical isomorphism $N\phi\cong T\FF$ on $M^1$, and $\FF$ is transitive at every point of $M^1$ (\Cref{ss: transverse structures}); in particular, the leaves in $M^1$ have no holonomy. Consider the notation of \Cref{ss: holonomy,ss: infinitesmal transfs,ss: fol maps}, using the notation $(x_k,y_k)$ instead of $(x'_k,x''_k)$ because $\codim\FF=1$. Let $\bar\phi$ be the local flow on $\Sigma$ generated by $\overline Z\in\fX(\Sigma,\HH)$. Via the homeomorphism $M/\FF\to\Sigma/\HH$ induced by the coordinates $x_k:U_k\to\Sigma_k$, the leaves preserved by $\phi$ correspond to the $\HH$-orbits preserved by $\bar\phi$, which indeed form $\Fix(\bar\phi)$ because the $\HH$-orbits are totally disconnected. $\overline Z$ is $\HH$-invariant, and $\bar\phi$ is $\HH$-equivariant in an obvious sense.

Since $\dim\Sigma=1$, for all simple $\bar p\in\Fix(\bar\phi)$, there is some $\varkappa=\varkappa_{\bar p}\in\R^\times$ such that $\bar\phi^t_*\equiv e^{\varkappa t}$ on $T_{\bar p}\Sigma\equiv\R$. By the $\HH$-equivariance of $\bar\phi$, we get $\varkappa_{\bar p}=\varkappa_{\bar q}$ for all $\bar q\in\HH(\bar p)\subset\Fix(\bar\phi)$. Thus we can use the notation $\varkappa_L=\varkappa_{\bar p}$ if $\HH(\bar p)$ corresponds to a leaf $L$.

The leaves preserved by $\phi$ that correspond to simple fixed points of $\bar\phi$ are said to be \emph{transversely simple}. If all leaves preserved by $\phi$ are transversely simple, then $\phi$ {\rm(}or $Z${\rm)} is called \emph{transversely simple};\index{transversely simple foliated flow} if moreover its closed orbits are simple, then $\phi$ {\rm(}or $Z${\rm)} is said to be \emph{weakly simple}.\index{weakly simple foliated flow} If $\phi$ is weakly simple, every closed orbit is contained either in $M^0$ or in $M^1$, and its (possibly non-simple) fixed points belong to $M^0$.

Suppose $\phi$ is transversely simple unless otherwise stated. Then $M^0$ is a finite union of compact leaves because every fixed point of $\bar\phi$ is isolated. For any point $p$ in a preserved leaf $L$, there are foliated coordinates $(x,y):U\to\Sigma\times B$, where $\Sigma\subset\R$ is an open interval containing $0$, so that  \cite[Lemma~3.2]{AlvKordyLeichtnam2022}
\begin{equation}\label{overline Z = varkappa_Lx partial_x}
x(p)=0\;,\quad\overline Z=\varkappa_Lx\partial_x\;,\quad\bar\phi^t(x)=e^{\varkappa_Lt}x\;.
\end{equation}
Hence the following properties hold \cite[Propositions~3.4 and~3.5]{AlvKordyLeichtnam2022}:
\begin{enumerate}[{\rm(A)}]
\setcounter{enumi}{2}
\item\label{i: almost w/o hol} $\FF$ is almost without holonomy with finitely many leaves with holonomy.
\item\label{i: homotheties} The holonomy groups of the compact leaves are groups of germs at $0$ of homotheties on $\R$, for some choice of $\{U_k,(x_k,y_k)\}$.
\end{enumerate}

According to~\ref{i: almost w/o hol} and Remark~\ref{r: Hector's description of folns almost w/o hol}~\ref{i: adding leaves to M^0}, we can consider Hector's description with this choice of $M^0$ and $M^1$, even though there may be leaves without holonomy in $M^0$. With the notation of \Cref{ss: folns almost w/o hol}, since $\bfpi:(M_l,\FF_l)\to(M,\FF)$ is a foliated local embedding and $\bfpi:\partial M_l\to M^0$ a local diffeomorphism, any $A\in\fX(M,M^0)$ has a lift $A_l\in\fXb(M_l)$. Moreover $A_l\in\fX(M_l,\FF_l)$ if $A\in\fX(M,\FF)$. Thus any (foliated) flow $\zeta=\{\zeta^t\}$ on $(M,\FF)$ preserving $M^0$ can be lifted via $\bfpi$ to a (foliated) flow $\zeta_l=\{\zeta_l^t\}$ on $(M_l,\FF_l)$ preserving $\partial M_l$. If a foliated flow $\zeta$ on $(M,\FF)$ is weakly simple, then $\zeta_l$ is also weakly simple (on the foliated manifold with boundary $(M_l,\FF_l)$). The restrictions $A_l|_{\mathring M_l}\equiv A|_{M^1_l}$ and $\zeta_l|_{\mathring M_l}\equiv\zeta|_{M^1_l}$ are also denoted by $A_l$ and $\zeta_l$. In particular, this notation applies to $Z$ and $\phi$, obtaining $Z_l$ and $\phi_l=\{\phi_l^t\}$, which induces the structure of a complete $\R$-Lie foliation on $\FF^1_l$. According to \Cref{ss: folns almost w/o hol}, we consider the transverse orientation of every $\FF_l$ so that $\bfpi:(M_l,\FF_l)\to(M,\FF)$ is compatible with the transverse orientations. However, we will consider the transverse orientation of every $\FF^1_l$ defined by $\overline Z_l$. Now Hector's description has the following more specific cases \cite[Section~3]{AlvKordyLeichtnam2022}:
\begin{enumerate}[(a)]
\setcounter{enumi}{2}
\item\label{i: fiber bundle} $\FF$ is given by a fiber bundle $M\to S^1$ with connected fibers.

\item\label{i: minimal Lie foln} $\FF$ is an $\R$-Lie foliation with dense leaves.

\item\label{i: all folns FF_l are models (1)} $M^0\ne\emptyset$, $\Hol L\cong\Z$ for all leaves $L\subset M^0$, and the foliations $\FF^1_l$ are given by fiber bundles $M^1_l\to S^1$ with connected fibers.

\item\label{i: all folns FF_l are models (2)} $M^0\ne\emptyset$, $\Hol L$ is a finitely generated abelian group of rank $>1$ for all leaves $L\subset M^0$, and all foliations $\FF^1_l$ are minimal $\R$-Lie foliations.

\end{enumerate}
The case~\ref{i: fiber bundle} can be considered as a model~\ref{i: model 1} with empty boundary, avoiding the use of models~\ref{i: model 0}, or it can be cut into models~\ref{i: model 0} by adding a finite number of leaves without holonomy to $M^0$. Except in this case, $M^1$ is just the transitive point set of $\FF$, and $\fX(M,\FF)$ spans $\fX(M,M^0)$ as $C^\infty(M)$-module by~\eqref{overline Z = varkappa_Lx partial_x}.

For every leaf $L\subset M^0$, its holonomy homomorphism $\bfh=\bfh_L$ is induced by a homomorphism $\hat h=\hat h_L:\pi_1L\to\Diffeo^+(\R,0)$ whose image consists of homotheties; i.e., writing $\Gamma=\Gamma_L=\pi_1 L/\ker\hat h$, $\hat h$ induces a monomorphism $h=h_L:\Gamma\to\Diffeo^+(\R,0)$, $\gamma\mapsto h_\gamma$, with $h_\gamma(x)=a_\gamma x$ for some monomorphism $\Gamma\to\R^+\equiv(\R^+,\times)$, $\gamma\mapsto a_\gamma=a_{L,\gamma}$. The restriction of $\FF$ to some neighborhood of $L$ can be described as the suspension of $h$ (\Cref{ss: holonomy}); its definition for this case will be recalled in \Cref{ss: suspension - homotheties}. On the other hand, every $\FF^1_l$ has a Fedida's description, which will be better analyzed in \Cref{ss: components of M^1}.

\begin{rem}\label{r: case where M is not compact}
The concepts recalled in this subsection do not need the compactness of $M$. Only the completeness of $Z$ and compactness of $M^0$ are needed to extend the indicated notions and properties.
\end{rem}

\subsection{Existence of simple foliated flows}\label{ss: existence}

For a transversely oriented foliation $\FF$ of codimension one on a closed foliated manifold $M$, the following conditions are equivalent \cite[Propositions~6.1 and~6.3 and Theorem~6.9]{AlvKordyLeichtnam2022}:
\begin{enumerate}[{\rm(a)}]
\setcounter{enumi}{6}
\item\label{i: A-B} It satisfies~\ref{i: almost w/o hol} and~\ref{i: homotheties}.
\item\label{i: preserved leaves are simple} There is a transversely simple foliated flow.
\item\label{i: weakly simple foliated flow trivial on preserved leaves} There is a weakly simple foliated flow.
\item\label{i: simple foliated flow} There is a simple foliated flow.
\end{enumerate}
Moreover the families of foliated flows $\phi$ satisfying~\ref{i: preserved leaves are simple},~\ref{i: weakly simple foliated flow trivial on preserved leaves} or~\ref{i: simple foliated flow} induce the same family of local flows $\bar\phi$ on $\Sigma$. In the case~\ref{i: weakly simple foliated flow trivial on preserved leaves}, it can be also assumed that $M^0\subset\Fix(\phi)$ and there are no closed orbits in some neighborhood of $M^0$, obtaining the same family of local flows $\bar\phi$ on $\Sigma$.

A more precise description of the foliations satisfying these equivalent conditions is given in \cite[Theorem~6.9]{AlvKordyLeichtnam2022}, but it will not be needed here.

\section{Case of suspension foliations}\label{s: suspension}

\subsection{Suspension foliations defined with homotheties}\label{ss: suspension - homotheties}

For a pointed connected closed manifold $(L,p)$, let $\hat h:\pi_1L=\pi_1(L,p)\to\Diffeo^+(\R,0)$ be a homomorphism whose image consists of homotheties, like in \Cref{s: simple fol flows}. Therefore, writing $\Gamma=\pi_1L/\ker\hat h$, $\hat h$ induces a monomorphism $h:\Gamma\to\Diffeo^+(\R,0)$, $\gamma\mapsto h_\gamma$, where $h_\gamma(x)=a_\gamma x$ for some monomorphism $\Gamma\to\R^+$, $\gamma\mapsto a_\gamma$; in particular, $\Gamma$ is abelian, torsion free and finitely generated. Let $\pi=\pi_L:(\widetilde L,\tilde p)\to(L,p)$ be the pointed regular covering map with $\pi_1\widetilde L=\pi_1(\widetilde L,\tilde p)\equiv\ker\hat h$, and therefore $\Aut(\pi)\equiv\Gamma$. Like in \Cref{ss: flat line bundle,ss: perturbed Schwartz kernels on regular coverings of compact mfds}, the canonical left action of every $\gamma\in\Gamma$ on $\widetilde L$ is denoted by $T_\gamma$ or $\tilde y\mapsto\gamma\cdot\tilde y$, and write $[\tilde y]=\pi(\tilde y)$ for $\tilde y\in\widetilde L$. For the diagonal left $\Gamma$-action on $\widetilde M=\R\times\widetilde L$, $\gamma\cdot(x,\tilde y)=(a_\gamma x,\gamma\cdot\tilde y)$, the orbit space $M=\Gamma\backslash\widetilde M$ is called a \emph{suspension manifold}. The canonical projection $\pi_M:\widetilde M\to M$ is a $\Gamma$-cover with deck transformations $h_\gamma\times T_\gamma$ ($\gamma\in\Gamma$). Write $[x,\tilde y]=\pi_M(x,\tilde y)$ for $(x,\tilde y)\in\widetilde M$. 

Let $\widetilde\varpi:\widetilde M\to\widetilde L$ denote the second-factor projection, and let $\widetilde\FF$ be the foliation on $\widetilde M$ with leaves $\{x\}\times\widetilde L$ ($x\in\R$). Since $\widetilde\varpi$ is $\Gamma$-equivariant, we get an induced fiber bundle map $\varpi:M\to L$, defined by $\varpi([x,\tilde y])=\pi(\tilde y)$. On the other hand, since $\widetilde\FF$ and its canonical transverse orientation are $\Gamma$-invariant, we also get an induced transversely oriented foliation $\FF$ on $M$, called a \emph{suspension foliation}, which is transverse to the fibers of $\varpi$. The typical fiber of $\varpi$ is $\R$ because the corresponding fibers of $\widetilde\varpi$ and $\varpi$ can be identified via $\pi_M$. Since $0$ is fixed by the $\Gamma$-action on $\R$, the leaf $\{0\}\times\widetilde L\equiv\widetilde L$ of $\widetilde\FF$ is $\Gamma$-invariant, and $\pi_M(\{0\}\times\widetilde L)\equiv L$ is a compact leaf of $\FF$. The other leaves of $\widetilde\FF$ are diffeomorphic via $\pi_M$ to the corresponding leaves of $\FF$ because the elements of $\Gamma\setminus\{e\}$ have no fixed points in $\R^\times$. Given any $\tilde p\in\widetilde L$ with $\pi(\tilde p)=p\in L$, the fiber $\varpi^{-1}(p)\equiv\widetilde\varpi^{-1}(\tilde p)=\R\times\{\tilde p\}\equiv\R$ is a global transversal of $\FF$ through $p\equiv[0,\tilde p]$. Note that the holonomy homomorphism $\bfh:\pi_1L\to\Hol L$ is induced by $h$, and therefore $\widetilde L^{\text{\rm hol}}\equiv\widetilde L$ (\Cref{ss: holonomy}). The standard orientation of $\R$ induces a transverse orientation of $\widetilde\FF$, which is $\Gamma$-invariant because the image of $h$ consists of orientation preserving homotheties, giving rise to a transverse orientation of $\FF$. Let $\bfH\subset TM$ and $\widetilde\bfH\subset T\widetilde M$ be the linear subbundles of vectors tangent to the fibers of $\varpi$ and $\widetilde\varpi$, which induce bigradings of $\Lambda M$ and $\Lambda\widetilde M$ satisfying $d_{2,-1}=0$ (\Cref{ss: bigrading,ss: d_i j}).  For $\tilde p\in\widetilde M$ and $p\in M$, we will use the identities $\Lambda_{\tilde p}\widetilde\FF\equiv\Lambda_{\widetilde\varpi(\tilde p)}\widetilde L$ and $\Lambda_p\FF\equiv\Lambda_{\varpi(p)}L$ induced by $\widetilde\varpi$ and $\varpi$.

\subsection{Transversely simple flows on suspension foliations}\label{ss: transv simple flows - suspension}

Let $\phi=\{\phi^t\}$ be any transversely simple foliated flow on $M$ and let $Z\in\fXcom(M,\FF)$ be its infinitesimal generator. Let us recall the notation of \Cref{ss: simple fol flows} in this case (see also \Cref{ss: folns almost w/o hol}). Without loss of generality, we can assume $M^0=\pi_M(\{0\}\times\widetilde L)\equiv L$ for the description around a compact leaf. By~\eqref{overline Z = varkappa_Lx partial_x}, we can suppose the lifts of $\phi$ and $Z$ to $\widetilde M$, denoted by $\tilde\phi$ and $\widetilde Z$, are of the form
\begin{equation}\label{tilde phi^t(tilde y x) widetilde Z}
\tilde\phi^t(x,\tilde y)=(e^{\varkappa t}x,\tilde\phi_x^t(\tilde y))\;,\quad\widetilde Z=(\varkappa x\partial_x,\widetilde Z_x)\;,
\end{equation}
for some $\varkappa\in\R^\times$, and smooth families, $\{\,\widetilde\phi^t_x\mid x,t\in\R\,\}\subset\Diffeo(\widetilde L)$ and $\{\,\widetilde Z_x\mid x\in\R\,\}\subset\fX(\widetilde L)$, with $\tilde\phi_x^0=\id_{\widetilde L}$. In particular, $\widetilde Z_0$ and $\tilde\phi_0^t$ are the restrictions of $\widetilde Z$ and $\tilde\phi^t$ to $\widetilde L\equiv\{0\}\times\widetilde L$. Thus $\widetilde Z_0$ is $\Gamma$-invariant and $\tilde\phi_0=\{\tilde\phi_0^t\}$ is $\Gamma$-equivariant, inducing the restrictions of $Z$ and $\phi^t$ to $L$, denoted by $Z_0$ and $\phi_0=\{\phi_0^t\}$; we may also use the notation $Z_L=Z_0$ and $\phi_L=\{\phi_L^t\}=\{\phi_0^t\}$. The $\Gamma$-equivariance of $\tilde\phi^t$ and the $\Gamma$-invariance of $\widetilde Z$ mean that, for all $\gamma\in\Gamma$ and $x,t\in\R$,
\begin{equation}\label{T_gamma tilde phi_x^t = tilde phi_a_gamma x^t T_gamma}
T_\gamma\tilde\phi_x^t=\tilde\phi_{a_\gamma x}^tT_\gamma\;,\quad T_{\gamma*}\widetilde Z_x=\widetilde Z_{a_\gamma x}\;.
\end{equation}
The only preserved leaf of $\tilde\phi$, $\{0\}\times\widetilde L\equiv\widetilde L$, is transversely simple. Now $\widetilde M^0=\{0\}\times\widetilde L$ and $\widetilde M^1=\widetilde M\setminus\widetilde M^0=\R^\times\times\widetilde L$, which has two connected components, $\widetilde M^1_\pm=\R^\pm\times\widetilde L$. In this case, $\widetilde M_\pm=(\R^\pm\cup\{0\})\times\widetilde L$, with $\mathring{\widetilde M}_\pm=\widetilde M^1_\pm$ and $\partial\widetilde M_\pm=\widetilde M^0\equiv\widetilde L$. The connected components of $M^1=M\setminus M^0$ are $M^1_\pm=\pi_M(\widetilde M^1_\pm)$, and we have $M_\pm=\pi_M(\widetilde M_\pm)$, with $\mathring M_\pm=M^1_\pm$ and $\partial M_\pm=M^0\equiv L$. The restriction $\pi_M:\widetilde M_\pm\to M_\pm$ will be denoted by $\pi_{M_\pm}$. The foliations $\widetilde\FF^1_\pm=\mathring{\widetilde\FF}_\pm$ on $\widetilde M^1_\pm=\mathring{\widetilde M}_\pm$ and $\widetilde\FF_\pm$ on $\widetilde M_\pm$ are restrictions of $\widetilde\FF$, and the foliations $\FF^1_\pm=\mathring\FF_\pm$ on $M^1_\pm=\mathring M_\pm$ and $\FF_\pm$ on $M_\pm$ are restrictions of $\FF$.  We have $\bfM=M_+\sqcup M_-$ (resp., $\widetilde{\bfM}=\widetilde M_+\sqcup\widetilde M_-$), equipped with the combination $\bfFF$ (resp., $\widetilde\bfFF$) of $\FF_+$ and $\FF_-$ (resp., $\widetilde\FF_+$ and $\widetilde\FF_-$). The restriction of $\bfFF$ to $\mathring\bfM$ is denoted by $\mathring\bfFF$. Now the map $\bfpi:\bfM\to M$ (resp., $\tilde\bfpi:\widetilde\bfM\to\widetilde M$) is the combination of the inclusion maps $M_\pm\hookrightarrow M$ (resp., $\widetilde M_\pm\hookrightarrow\widetilde M$). The combination of the maps $\pi_{M_\pm}$ is a covering projection $\pi_{\bfM}:\widetilde\bfM\to\bfM$. Moreover $\varpi$ (resp., $\widetilde\varpi$) restricts to global collar neighborhoods of the boundaries, $\varpi_\pm:M_\pm\to\partial M_\pm\equiv L$ (resp., $\widetilde\varpi_\pm:\widetilde M_\pm\to\partial\widetilde M_\pm\equiv\widetilde L$), whose combination is a global collar neighborhood of the boundary, $\bfvarpi:\bfM\to\partial\bfM\equiv L\sqcup L$ (resp., $\widetilde\bfvarpi:\widetilde\bfM\to\partial\widetilde\bfM\equiv\widetilde L\sqcup\widetilde L$). Like in \Cref{ss: suspension - homotheties}, for $\tilde p\in\widetilde M_\pm$ and $p\in M_\pm$, we have canonical identities $\Lambda_{\tilde p}\widetilde\FF_\pm\equiv\Lambda_{\widetilde\varpi_\pm(\tilde p)}\widetilde L$ and $\Lambda_p\FF_\pm\equiv\Lambda_{\varpi_\pm(p)}L$.

Recall also that any $A\in\fX(M,\FF)$, with foliated flow $\zeta=\{\zeta^t\}$, induces vector fields $A_\pm\in\fX(M_\pm,\FF_\pm)\subset\fXb(M_\pm)$, with foliated flows $\zeta_\pm=\{\zeta_\pm^t\}$, whose restrictions to $\mathring M_\pm\equiv M^1_\pm$ are denoted in the same way. In particular, we get $Z_\pm$ with flow $\phi_\pm=\{\phi_\pm^t\}$. The same kind of notation is used for vector fields and flows induced by elements of $\fXcom(\widetilde M,\widetilde\FF)$. Then $\FF^1_\pm\equiv\mathring\FF_\pm$ on $M^1_\pm\equiv\mathring M_\pm$ is a transversely complete $\R$-Lie foliation with the structure defined by $Z_\pm\in\fXcom(M^1_\pm,\FF^1_\pm)$ (see Remark~\ref{r: case where M is not compact}). In its Fedida's description (\Cref{ss: transverse structures}), $\widetilde M^1_\pm$ is the holonomy covering of $M^1_\pm$, whose group of deck transformations is also $\Gamma$, the developing map $D_\pm:\widetilde M^1_\pm\to\R$ is given by $D_\pm(x,y)=\varkappa^{-1}\ln|x|=:t$, the holonomy monomorphism $h_\pm:\Gamma\to\R$ is given by $h_\pm(\gamma)=\varkappa^{-1}\ln a_\gamma$, and therefore $\Hol\FF_\pm=\{\,\varkappa^{-1}\ln a_\gamma\mid\gamma\in\Gamma\,\}$. Thus $\widetilde Z_\pm\in\fXcom(\widetilde M^1_\pm,\widetilde\FF^1_\pm)$ is $D_\pm$-projectable and $(D_\pm)_*\widetilde Z_\pm=\partial_t$. Furthermore $\phi^t_\pm$ preserves every leaf of $\FF_\pm$ if and only if $t=h_\pm(\gamma)=\varkappa^{-1}\ln a_\gamma$ for some $\gamma\in\Gamma$ (\Cref{ss: complete R-Lie folns}).

Let $\tilde\xi=\{\tilde\xi^t\}$ be the weakly simple foliated flow on $(\widetilde M,\widetilde\FF)$, with infinitesimal generator is $\widetilde Y\in\fXcom(\widetilde M,\widetilde\FF)$, given by
\begin{equation}\label{tilde xi^t(tilde y x) widetilde Y}
\tilde\xi^t(x,\tilde y)=(e^{\varkappa t}x,\tilde y)\;,\quad\widetilde Y=(\varkappa x\partial_x,0)\;.
\end{equation}
We have $\overline{\widetilde Y}=\overline{\widetilde Z}\equiv\varkappa x\partial_x$, $\Fix(\tilde\xi^t)=\widetilde L$, and the orbits of $\tilde\xi^t$ on $\mathring{\widetilde M}_\pm$ are the fibers of the restriction $\widetilde\varpi:\mathring{\widetilde M}_\pm\to\widetilde L$. Since $\tilde\xi^t$ is $\Gamma$-equivariant and $\widetilde Y$ is $\Gamma$-invariant, they project to $M$ obtaining a weakly simple foliated flow $\xi^t$ on $(M,\FF)$ and its infinitesimal generator $Y\in\fX(M,\FF)$. We have $\overline Y=\overline Z\equiv\varkappa x\partial_x$, $\Fix(\xi^t)=L$, and the orbits of $\xi^t$ on $\mathring M_\pm$ are the fibers of the restriction $\varpi:\mathring M_\pm\to L$.

On the one hand, we consider the restriction of the transverse orientations of $\widetilde\FF$ and $\FF$ to $\widetilde\FF_\pm$ and $\FF_\pm$, and, on the other hand, we consider the transverse orientations of $\widetilde\FF^1_\pm$ and $\FF^1_\pm$ induced by $\widetilde Z_\pm$ and $Z_\pm$, which corresponds to the standard orientation of $\R$ by $D_\pm$ (\Cref{ss: folns almost w/o hol,ss: simple fol flows}). They agree on $\widetilde M^1_+$ and $M^1_+$ (resp., $\widetilde M^1_-$ and $M^1_-$) if and only if $\kappa>0$ (resp., $\kappa<0$).

\subsection{A defining form of $\FF$}\label{ss: omega - suspension}

For $k=\rank\Gamma$, fix generators $\gamma_1,\dots,\gamma_k$ of $\Gamma$. Let $c_i$ be a piecewise smooth loop in $L$ based at $p$ such that $[c_i]\in\pi_1(L,p)$ projects to $\gamma_i$, and let $a_i=a_{\gamma_i}$. By the universal coefficients and Hurewicz theorems, there are closed $1$-forms $\beta_1,\dots,\beta_k$ on $L$ such that $\delta_{ij}=\langle[\beta_i],[c_j]\rangle=\int_0^1c_j^*\beta_i$ and $\langle[\beta_i],\ker\hat h\rangle=0$. Every $\pi^*\beta_i$ is exact on $\widetilde L$. Let $\eta=-\ln(a_1)\,\beta_1-\dots-\ln(a_k)\,\beta_k$ and $\tilde\eta=\pi^*\eta=dF$ for some $F\in C^\infty(\widetilde L)$. Note that $h(\Gamma)\cong\Gamma$ is the group of periods of $\eta$. With some abuse of notation, write also $F\equiv\widetilde\varpi^*F\in C^\infty(\widetilde M)$, and
\begin{gather*}
\eta\equiv\varpi^*\eta\in C^\infty(M;\Lambda^{0,1})\equiv C^\infty(M;\Lambda^1\FF)\;,\\
\tilde\eta\equiv\widetilde\varpi^*\tilde\eta=\pi_M^*\eta=dF\in C^\infty(\widetilde M;\Lambda^{0,1})
\equiv C^\infty(M;\Lambda^1\widetilde\FF)\;,
\end{gather*}
using~\eqref{bigwedge^ge u cdot T^*M / bigwedge^ge u+1 cdot T^*M}. Thus $\eta=\eta_0$ in this case, with the notation of \Cref{ss: perturbation vs bigrading}. It is easy to check that
\begin{equation}\label{T_gamma^* F = F + ln a_gamma}
T_\gamma^*F=F-\ln a_\gamma
\end{equation}
for all $\gamma\in\Gamma$, yielding $T_\gamma^*e^F=a_\gamma^{-1}e^F$. It easily follows that the $1$-form $\tilde\omega=|\varkappa|^{-1}e^F\,dx$ on $\widetilde M$ is $\Gamma$-invariant. (Recall that the $\Gamma$-action on $\widetilde M$ is given by $\gamma \cdot (x,\tilde y) = (a_\gamma x,\gamma\cdot\tilde y)$.) Furthermore $T\widetilde\FF=\ker\tilde\omega$ and $\tilde\omega$ defines the transverse orientation of $\widetilde\FF$. Therefore $\tilde\omega$ induces a $1$-form $\omega$ on $M$ satisfying $T\FF=\ker\omega$ and defining the transverse orientation of $\FF$. On the other hand, it is easy to compute $d\tilde\omega=\tilde\eta\wedge\tilde\omega$, yielding $d\omega=\eta\wedge\omega$. The vector field $\widetilde X=(|\varkappa|e^{-F}\partial_x,0)\equiv|\varkappa|e^{-F}\partial_x\in C^\infty(\widetilde M;\widetilde\bfH)$ is determined by $\tilde\omega(\widetilde X)=1$. Thus $\widetilde X$ is $\Gamma$-invariant and induces the vector field $X\in C^\infty(M;\bfH)$ satisfying $\omega(X)=1$. So $\widetilde X$ and $X$ also define the transverse orientations of $\widetilde\FF$ and $\FF$. On the other hand, $\tilde\omega(\widetilde Z)=\sign(\varkappa)e^Fx$ by~\eqref{tilde phi^t(tilde y x) widetilde Z}, yielding $\sign\tilde\omega(\widetilde Z_\pm)=\pm\sign(\varkappa)$, and therefore
\[
\sign\omega(Z_\pm)=\pm\sign(\varkappa)\;.
\]
So the transverse orientation of $\FF^1_\pm$ is also defined by the restrictions to $M^1_\pm$ of $\pm\sign(\varkappa)\overline X$ or $\pm\sign(\varkappa)\omega$.

\subsection{A defining function of $M^0$}\label{ss: def func of M^0 - suspension}

Let $\tilde\rho=e^Fx$, which is a defining function of $\widetilde M^0\equiv\widetilde L$ on the whole of $\widetilde M$. Moreover $\tilde\rho$ is $\Gamma$-invariant by~\eqref{T_gamma^* F = F + ln a_gamma}, and therefore it induces a defining function $\rho$ of $M^0\equiv L$ on the whole of $M$. It is easy to compute
\[
d\tilde\rho=e^F(x\tilde\eta+dx)=\tilde\rho\tilde\eta+|\varkappa|\tilde\omega\;,
\]
yielding
\begin{equation}\label{d rho}
d\rho=\rho\eta+|\varkappa|\omega\;,
\end{equation}
and therefore
\begin{equation}\label{d rho wedge omega = rho eta wedge omega}
d\rho\wedge\omega=\rho\eta\wedge\omega\;.
\end{equation}
Since $\tilde\xi^{t*}\tilde\rho=e^{\varkappa t}\tilde\rho$ by~\eqref{tilde xi^t(tilde y x) widetilde Y}, we also get
\begin{equation}\label{xi^t* rho = e^varkappa t rho}
\xi^{t*}\rho=e^{\varkappa t}\rho\;.
\end{equation}

The global tubular neighborhood $\widetilde\varpi:\widetilde M\to\widetilde L\equiv\widetilde M^0$ can be trivialized with $\tilde\rho$, obtaining $\widetilde M\equiv\R_{\tilde\rho}\times\widetilde L_{\widetilde\varpi}$ besides $\widetilde M=\R_x\times\widetilde L_{\widetilde\varpi}$. Thus the global tubular neighborhood $\varpi:M\to L\equiv M^0$ can be trivialized with $\rho$, obtaining $M\equiv\R_\rho\times L_\varpi$. According to \Cref{s: conormal seq}, we have corresponding vector fields $\partial_x,\partial_{\tilde\rho}\in\fX(\widetilde M)$ and $\partial_\rho\in\fX(M)$, and operators $\partial_x,\partial_{\tilde\rho}\in\Diff^1(\widetilde M;\Lambda)$ and $\partial_\rho\in\Diff^1(M;\Lambda)$.  We compute
\[
|\varkappa|\partial_{\tilde\rho}=|\varkappa|\partial_{\tilde\rho}(x)\,\partial_x
=|\varkappa|\partial_{\tilde\rho}(e^{-F}\tilde\rho)\,\partial_x
=|\varkappa|e^{-F}\,\partial_x=\widetilde X\in\fX(\widetilde M)\;.
\]
It easily follows that
\[
|\varkappa|\partial_{\tilde\rho}=|\varkappa|e^{-F}\,\partial_x\in\Diff^1(M;\Lambda)\;.
\]
But, with the notation of \Cref{ss: LL_X i -i}, it is easy to check that
\[
\partial_x=\Theta_{\partial_x}\in\Diff^1(\widetilde M;\Lambda)\;.
\]
Since $\widetilde X\in C^\infty(\widetilde M;\widetilde\bfH)$ and $d_{1,0}F=0$, we can apply~\eqref{Theta_fX = f Theta_X} on $C^\infty(\widetilde M;\Lambda)$ to get
\[
\Theta_{\widetilde X}=|\varkappa|e^{-F}\Theta_{\partial_x}=|\varkappa|e^{-F}\partial_x
=|\varkappa|\partial_{\tilde\rho}\in\Diff^1(\widetilde M;\Lambda)\;.
\]
So, by derivation formula of $\Theta_{\widetilde X}$,~\eqref{Theta_X d_0 1 = d_0 1 Theta_X} and~\eqref{Theta_fX = f Theta_X}, and since $\partial_x\in\fX(\widetilde M,\widetilde\FF)\cap C^\infty(\widetilde M;\widetilde\bfH)$ and $\LL_{\widetilde X}\tilde\eta=0$,
\begin{align*}
[d_{0,1},\Theta_{\widetilde X}]&=|\varkappa|[d_{0,1},e^{-F}\Theta_{\partial_x}]
=|\varkappa|[d_{0,1},e^{-F}]\Theta_{\partial_x}\\
&=-|\varkappa|e^{-F}{\tilde\eta\wedge}\,\Theta_{\partial_x}=-{\tilde\eta\wedge}\,\Theta_{\widetilde X}
=-\Theta_{\widetilde X}\,{\tilde\eta\wedge}\;.
\end{align*}
Hence $|\varkappa|\partial_\rho=X\in\fX(M)$, and
\begin{gather}
|\varkappa|\partial_\rho=\Theta_X\in\Diff^1(M;\Lambda)\;,\label{| varkappa | partial_rho = Theta_X}\\
[\Theta_X,d_{0,1}]={\eta\wedge}\,\Theta_X=\Theta_X\,{\eta\wedge}\in\Diff^1(M;\Lambda)\;.\label{[Theta_X d_0 1]}
\end{gather}
Note also that $\Theta_{\widetilde X}\tilde\omega=0$, and therefore $\Theta_X\omega=0$. Moreover, $\Theta_X\eta=0$.

For any $\epsilon>0$, the restriction $\widetilde\varpi:\widetilde T_\epsilon:=\{|\tilde\rho|<\epsilon\}\to\widetilde L$ is a smaller tubular neighborhood of $\widetilde L$ in $\widetilde M$, which induces a smaller tubular neighborhood $\varpi:T_\epsilon:=\{|\rho|<\epsilon\}\to L$ of $L$ in $M$. Let $\widetilde T^1_\epsilon=\widetilde T_\epsilon\cap\widetilde M^1$, $\widetilde T^1_{\pm,\epsilon}=\widetilde T_{\epsilon}\cap\widetilde M^1_\pm$, $T^1_\epsilon=T_\epsilon\cap M^1$ and $T^1_{\pm,\epsilon}=T_\epsilon\cap M^1_\pm$.

\subsection{A boundary-defining function of $M_\pm$}\label{ss: boundary-def func of M_pm - suspension}

Now consider the boundary-defining function $\tilde\rho_\pm=e^F|x|=\pm\tilde\rho=|\tilde\rho|$ on $\widetilde M_\pm$, which is $\Gamma$-invariant, and therefore it induces a boundary-defining function $\rho_\pm$ on $M_\pm$ satisfying
\begin{equation}\label{rho_pm = pm rho = | rho |}
\rho_\pm=\pm\rho=|\rho|\;.
\end{equation}
If there is no danger of confusion, with some abuse of notation, these boundary-defining functions may be simply denoted by $\tilde\rho$ and $\rho$. Furthermore, we have the boundary-defining function $\tau=\tau_\pm:=|x|$ on $\widetilde M_\pm$.

The global collar neighborhood $\widetilde\varpi:\widetilde M_\pm\to\partial\widetilde M_\pm\equiv\widetilde L$ can be trivialized with, either $\tau$, or $\tilde\rho$, obtaining
\begin{align}
\widetilde M_\pm&\equiv[0,\infty)_\tau\times\widetilde L_{\widetilde\varpi}
\label{widetilde M_pm equiv [0 infty)_tau times widetilde L_varpi}\\
&\equiv[0,\infty)_{\tilde\rho}\times\widetilde L_{\widetilde\varpi}\;.
\label{widetilde M_pm equiv [0 infty)_tilde rho times widetilde L_varpi}
\end{align}
So the global collar neighborhood $\varpi:M_\pm\to\partial M\equiv L$ can be trivialized with $\rho$, obtaining
\begin{equation}\label{M_pm equiv [0 infty)_rho times L_varpi}
M_\pm\equiv[0,\infty)_\rho\times L_\varpi\;.
\end{equation}
By~\eqref{d rho},~\eqref{| varkappa | partial_rho = Theta_X},~\eqref{[Theta_X d_0 1]} and~\eqref{rho_pm = pm rho = | rho |},
\begin{gather}
d\rho=\rho\eta\pm|\varkappa|\omega\;,\label{d rho_pm}\\
[\partial_\rho,d_{0,1}]={\eta\wedge}\,\partial_\rho\in\Diff^1(M_\pm;\Lambda)\;,\label{[partial_rho d_0 1]}
\end{gather}
using the operator $\partial_\rho\in\Diff^1(M_\pm;\Lambda)$ introduced in \Cref{r: E_m m < 0,r: E_m vector bundle}. We have $\partial_\rho\omega=0$, and therefore $\partial_\rho\eta=0$.

Observe that $\tilde\rho^{-1}\tilde\omega=|\varkappa x|^{-1}dx$ is a basic form of $\mathring{\widetilde\FF}_\pm$, and therefore $\rho^{-1}\omega$ is a basic form of $\mathring\FF_\pm$. The transverse orientation of $\mathring\FF_\pm=\FF^1_\pm$ is also defined by the basic form 
\begin{equation}\label{omega_pm = sign(omega(Z_pm)) rho_pm^-1omega}
\omega_{\text{\rm b},\pm}=\sign(\varkappa)\,\rho^{-1}\omega=\pm\sign(\varkappa)\,\rho_\pm^{-1}\omega=\sign(\omega(Z_\pm))\,\rho_\pm^{-1}\omega\;,
\end{equation}
whose lift to $\mathring{\widetilde M}_\pm$ is $\tilde\omega_{\text{\rm b},\pm}=(\varkappa x)^{-1}\,dx$; in fact, $\tilde\omega_{\text{\rm b},\pm}(\widetilde Z_\pm)=1$, and therefore $\omega_{\text{\rm b},\pm}(Z_\pm)=1$. By~\eqref{d rho_pm} and~\eqref{omega_pm = sign(omega(Z_pm)) rho_pm^-1omega},
\begin{equation}\label{rho_pm^-1 d rho_pm}
\rho_\pm^{-1}\,d\rho_\pm=\eta+\varkappa\,\omega_{\text{\rm b},\pm}\;.
\end{equation}

Let $\nu=\nu_\pm$ be the unique smooth trivialization of ${}_+N\partial M_\pm$ so that $d\rho(\nu)=1$ (\Cref{ss: b-geometry}). By~\eqref{d rho_pm}, $\nu$ is represented by the restriction of $\pm|\varkappa|^{-1}X$ to $L\equiv\partial M_\pm$.

The combination of $\rho_+$ and $\rho_-$ is a boundary-defining function $\bfrho$ on $\bfM$, and the combination of $\nu_+$ and $\nu_-$ is the unique smooth trivialization $\bfnu$ of ${}_+N\partial\bfM$ so that $d\bfrho(\bfnu)=1$. Similarly, we define $\tilde\bfrho$ on $\widetilde\bfM$ and $\tilde\bfnu$ on $\partial\widetilde\bfM$.

For $\epsilon>0$, the restriction $\widetilde\varpi_\pm=\widetilde\varpi:\widetilde T_{\pm,\epsilon}:=\{\tilde\rho_\pm<\epsilon\}\to\widetilde L$ is a smaller collar neighborhood of the boundary in $\widetilde M_\pm$, which induces a smaller collar neighborhood $\varpi_\pm=\varpi:T_{\pm,\epsilon}:=\{\rho_\pm<\epsilon\}\to L$ of the boundary in $M_\pm$. By combination, we get smaller collar neighborhoods of the boundaries, $\widetilde\bfvarpi:\widetilde\bfT_\epsilon:=\{\tilde\bfrho<\epsilon\}\to\partial\widetilde\bfM=\widetilde L\sqcup\widetilde L$ and $\bfvarpi:\bfT_\epsilon:=\{\bfrho<\epsilon\}\to\partial\bfM=L\sqcup L$. We have $\mathring{\widetilde T}_\epsilon\equiv\widetilde T^1_\epsilon$, $\mathring{\widetilde T}_{\pm,\epsilon}\equiv\widetilde T^1_{\pm,\epsilon}$, $\mathring T_\epsilon\equiv T^1_\epsilon$ and $\mathring T_{\pm,\epsilon}\equiv T^1_{\pm,\epsilon}$.

\subsection{The metric $g_M$}\label{ss: g_M}

Take a Riemannian metric $g_L$ on $L$, and let $g_{\widetilde L}$ be its lift to $\widetilde L$. Consider leafwise metrics, $g_{\FF}=\varpi^* g_L$ for $\FF$ and $g_{\widetilde\FF}=\widetilde\varpi^*g_{\widetilde L}$ for $\widetilde\FF$; their restrictions to $\FF_\pm$ and $\widetilde\FF_\pm$ may be denoted by $g_{\FF_\pm}$ and $g_{\widetilde\FF_\pm}$. Consider also the metric $g_M=\omega^2+g_\FF$ on $M$. The lift of $g_M$ to $\widetilde M$ is
\[
g_{\widetilde M}=\tilde\omega^2+g_{\widetilde\FF}=|\varkappa|^{-2}e^{2F}\,(dx)^2+g_{\widetilde\FF}\;.
\]
With respect to $g_M$, the transverse volume form is $\omega$, $X$ is unitary and orthogonal to $\FF$, and $T\FF^\perp=\bfH$.

\subsection{The b-metrics $g_{\text{\rm b},\pm}$}\label{ss: g_b pm}

Define also the metric $g_{\text{\rm b},\pm}=\rho_\pm^{-2}\omega^2+g_{\FF}=\omega_{\text{\rm b},\pm}^2+g_{\FF}$ on $\mathring M_\pm=M^1_\pm$, where the last equality uses~\eqref{omega_pm = sign(omega(Z_pm)) rho_pm^-1omega}. It is bundle-like for $\mathring\FF_\pm=\FF^1_\pm$, and its lift to $\mathring{\widetilde M}_\pm$ is
\begin{equation}\label{tilde g_pm}
\tilde g_{\text{\rm b},\pm}=\tilde\rho_\pm^{-2}\,\tilde\omega^2+g_{\widetilde\FF}
=\tilde\omega_{\text{\rm b},\pm}^2+g_{\widetilde\FF}
=(\varkappa x)^{-2}\,(dx)^2+g_{\widetilde\FF}\;.
\end{equation}
With respect to $g_{\text{\rm b},\pm}$, the transverse volume form is $\omega_{\text{\rm b},\pm}$, $Z_\pm$ is unitary and orthogonal to $\mathring\FF_\pm$, and $T\mathring\FF_\pm^\perp=\bfH|_{\mathring M_\pm}$. The metrics $\tilde g_{\text{\rm b},\pm}$ and $g_{\text{\rm b},\pm}$ on $\mathring{\widetilde M}_\pm$ and $\mathring M_\pm$ are restrictions of b-metrics on $\widetilde M_\pm$ and $M_\pm$, also denoted by $\tilde g_{\text{\rm b},\pm}$ and $g_{\text{\rm b},\pm}$. In the rest of this subsection, $\mathring{\widetilde M}_\pm$ and $\mathring M_\pm$ (resp., $\widetilde M_\pm$ and $M_\pm$) are assumed to be endowed with the metrics (resp., b-metrics) $\tilde g_{\text{\rm b},\pm}$ and $g_{\text{\rm b},\pm}$. By~\eqref{rho_pm^-1 d rho_pm}, if $\eta\ne0$ or $\varkappa^2\ne1$, then the b-metrics $\tilde g_{\text{\rm b},\pm}$ and $g_{\text{\rm b},\pm}$ are not exact (\Cref{s: b-calculus}).

\begin{prop}\label{p: FF_pm is of bd geom}
$\mathring\FF_\pm$ is of bounded geometry.
\end{prop}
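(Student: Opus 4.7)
My plan for proving Proposition~\ref{p: FF_pm is of bd geom} is to reduce the verification of bounded geometry for $\mathring\FF_\pm$ with $g_{\text{\rm b},\pm}$ to the trivial product foliation on a product with bundle-like product metric, using the developing map of the complete $\R$-Lie foliation $\FF^1_\pm$ as an isometric reparametrization.

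First I would rewrite the lifted metric~\eqref{tilde g_pm} in the coordinate $t=D_\pm(x,\tilde y)=\varkappa^{-1}\ln|x|$ on $\mathring{\widetilde M}_\pm$. Since $dt=(\varkappa x)^{-1}\,dx$ on $\mathring{\widetilde M}_\pm$, the identification
\[
(\mathring{\widetilde M}_\pm,\tilde g_{\text{\rm b},\pm})\xrightarrow{(D_\pm,\widetilde\varpi_\pm)}(\R_t\times\widetilde L,\,dt^2+g_{\widetilde L})
\]
is an isometry that carries the leaves of $\mathring{\widetilde\FF}_\pm$ to the fibers $\{t\}\times\widetilde L$ of the first projection. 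Under this identification, the $\Gamma$-action $\gamma\cdot(x,\tilde y)=(a_\gamma x,\gamma\cdot\tilde y)$ becomes the diagonal action $\gamma\cdot(t,\tilde y)=(t+h_\pm(\gamma),\gamma\cdot\tilde y)$, which is by isometries of the product metric and preserves the product foliation with leaves $\{t\}\times\widetilde L$. Hence $(\mathring M_\pm,g_{\text{\rm b},\pm},\mathring\FF_\pm)$ is, up to isometric and foliated isomorphism, the quotient of $(\R\times\widetilde L,dt^2+g_{\widetilde L})$ by a properly discontinuous free action of $\Gamma$ by foliated isometries.

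Next I would argue that the lifted object $(\R\times\widetilde L,dt^2+g_{\widetilde L})$ with product foliation is a Riemannian foliation of bounded geometry. The leaf $L\subset M^0$ is a compact leaf and $\widetilde L$ is its holonomy cover with deck group $\Gamma$ acting by isometries of $g_{\widetilde L}$ (the lift of any Riemannian metric on the closed manifold $L$); by Example~\ref{ex: mfds of bd geom}, $(\widetilde L,g_{\widetilde L})$ is a Riemannian manifold of bounded geometry. Taking a product with $(\R,dt^2)$ preserves bounded geometry, and the product foliation with leaves $\{t\}\times\widetilde L$ is trivially a Riemannian foliation of bounded geometry: normal foliated charts of the required form (as in Theorem~\ref{t: foln of bd geometry}) can be obtained as products of an interval in the $t$-direction with normal charts at points of $\widetilde L$ coming from its bounded geometry, and in these coordinates the metric coefficients form a bounded subset of $C^\infty(B'\times B'')$.

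Finally I would descend bounded geometry to the quotient. Since $\Gamma$ acts freely and properly discontinuously by foliated isometries, the covering map $\R\times\widetilde L\to\mathring M_\pm$ is a local isometry and a foliated local diffeomorphism. Choosing the radii $r'_0,r''_0$ small enough that the product normal foliated charts around any $(t,\tilde y)$ inject into $\mathring M_\pm$, we obtain for every $[t,\tilde y]\in\mathring M_\pm$ a normal foliated chart $x_{[t,\tilde y]}:U_{[t,\tilde y]}\to B'_{r'_0}\times B''_{r''_0}$ whose metric coefficients are exactly those of a chart upstairs; since the upstairs family is bounded in $C^\infty(B'_{r'_0}\times B''_{r''_0})$, so is the downstairs family, verifying the chart characterization in Theorem~\ref{t: foln of bd geometry}. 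The main (minor) obstacle is the bookkeeping for the descent step — making sure the injectivity radius in the $t$-direction is uniform despite the $\Gamma$-translations, which follows because the $\Gamma$-translations $t\mapsto t+h_\pm(\gamma)$ act as isometries of $\R$ and $h_\pm(\Gamma)\subset\R$ is discrete (being the image of the injective homomorphism $h_\pm:\Gamma\to\R$ from a finitely generated abelian group onto a discrete subgroup of $\R$ under our hypothesis that $L$ is a transversely simple preserved leaf).
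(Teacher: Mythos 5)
Your overall strategy — identify $(\mathring{\widetilde M}_\pm,\tilde g_{\text{\rm b},\pm},\mathring{\widetilde\FF}_\pm)$ isometrically with the product $(\R\times\widetilde L,\,dt^2+g_{\widetilde L})$ carrying the product foliation, check bounded geometry upstairs, and descend — is exactly the paper's approach. But your resolution of what you rightly flag as the crucial step of the descent, uniformity of chart injectivity, contains a false claim that makes the argument break down.

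You assert that $h_\pm(\Gamma)\subset\R$ is discrete as a consequence of $L$ being transversely simple. This is not true in general. Transverse simplicity only forces $\varkappa_L\ne0$; it says nothing about the rank of $\Gamma$. The paper explicitly allows $\Gamma$ finitely generated abelian of rank $>1$ (case~\ref{i: all folns FF_l are models (2)} in \Cref{ss: simple fol flows}), in which situation $h_\pm(\Gamma)$ is a dense subgroup of $\R$ and $\FF^1_\pm$ has dense leaves. So there is no uniform lower bound for the gap in the $t$-direction between $\Gamma$-translates, and the argument you give for being able to "choose $r'_0,r''_0$ small enough" does not go through.

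What rescues the descent — and what the paper actually does — is that the $t$-direction plays no role at all in the injectivity of the chart. Take the chart domain $\widetilde U_{\tilde p,\pm}=\R^\pm\times B_{\widetilde L}(\tilde q,r)$ with $r=\inj_L$. Since $\pi_L:B_{\widetilde L}(\tilde q,r)\to B_L(q,r)$ is a diffeomorphism, no nontrivial $\gamma\in\Gamma$ maps $B_{\widetilde L}(\tilde q,r)$ into itself; hence no nontrivial $\gamma$ identifies two points of $\widetilde U_{\tilde p,\pm}$, regardless of the $t$-coordinates, and $\pi_M:\widetilde U_{\tilde p,\pm}\to U_{p,\pm}$ is already a diffeomorphism onto its image. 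In other words, the properly discontinuous action on the $\widetilde L$-factor (which holds because $\pi_L:\widetilde L\to L$ is a covering of a closed manifold) controls the quotient entirely. You do not need — and in case~\ref{i: all folns FF_l are models (2)} cannot have — any discreteness in the $t$-direction. Replacing your discreteness argument with this observation fixes the gap and recovers the paper's proof.
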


\begin{proof}
$\mathring{\widetilde M}_\pm$ is of bounded geometry because it is the Riemannian product of $(\R^\pm,(\varkappa x)^{-2}(dx)^2)$ and $(\widetilde L,g_{\widetilde L})$, which are of bounded geometry since $L$ is compact and the change of coordinate $t=\varkappa^{-1}\ln|x|$ defines an isometry between $(\R^\pm,(\varkappa x)^{-2}(dx)^2)$ and $(\R,(dt)^2)$. Via this isometry, $\tilde\omega_{\text{\rm b},\pm}\equiv(\varkappa x)^{-1}\,dx$ on $\R^\pm$ is the pull-back of $dt$ on $\R$. On the other hand, the leaves of $\mathring{\widetilde\FF}_\pm$ are the fibers $\{x\}\times\widetilde L$ ($x\in\R^\pm$), and the O'Neill tensors of $\mathring{\widetilde\FF}_\pm$ with $\tilde g_{\text{\rm b},\pm}$ vanish. Hence, on $\mathring M_\pm$ with $\mathring\FF_\pm$ and $g_{\text{\rm b},\pm}$, all covariant derivatives of the curvature tensor are uniformly bounded, and the O'Neill tensors vanish.

Finally, the bi-injectivity radius of $\mathring{\widetilde\FF}_\pm$ with $\tilde g_{\text{\rm b},\pm}$ is positive because a normal foliated chart centered at any $\tilde p=(x_0,\tilde q)$ is given by
\[
\tilde\chi_{\tilde p,\pm}=(t_{x_0},\tilde y_{\tilde q}):\widetilde U_{\tilde p,\pm}=\R^\pm\times B_{\widetilde L}(\tilde q,r)\to\R\times B\;,
\]
where $t_{x_0}=\varkappa^{-1}(\ln|x|-\ln|x_0|)$, $r=\inj_L\le\inj_{\widetilde L}$, $B$ is the open ball in $\R^{n-1}$ of radius $r$ and center $0$, and $\tilde y_{\tilde q}:B_{\widetilde L}(\tilde q,r)\to B$ is a normal chart of $\widetilde L$. Let $q=\pi(\tilde q)$, $p=\pi_M(\tilde p)$ and $U_{p,\pm}=\pi_M(\widetilde U_{\tilde p,\pm}$). Then $\pi:B_{\widetilde L}(\tilde q,r)\to B_L(q,r)$ is a diffeomorphism, obtaining a normal chart $y_q:B_L(q,r)\to B$ of $L$ that corresponds to $\tilde y_{\tilde q}$ via $\pi$. So $\pi_M:\widetilde U_{\tilde p,\pm}\to U_{p,\pm}$ is also a diffeomorphism, and $\tilde\chi_{\tilde p,\pm}$ induces via $\pi_M$ a normal foliated chart of $\mathring\FF_\pm$ with $g_{\text{\rm b},\pm}$, centered at $p$,
\[
\chi_{p,\pm}\equiv(t_{x_0},y_q):U_{p,\pm}\equiv\R^\pm\times B_L(q,r)\to\R\times B\;.
\]
This shows that the injectivity bi-radius of $\mathring\FF_\pm$ with $g_{\text{\rm b},\pm}$ is positive.
\end{proof}

By \Cref{p: FF_pm is of bd geom} and according to \Cref{ss: transverse structures of bd geometry}, $\mathring M_\pm$ is of bounded geometry (the property~\ref{i: g is of bounded geometry} of \Cref{ss: a description of AA(M)}).

\begin{prop}\label{p: (M_pm g_pm) satisfies (B)}
$M_\pm$ satisfies the property~\ref{i: A'} of \Cref{ss: a description of AA(M)}.
\end{prop}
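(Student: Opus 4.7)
I will verify \ref{i: extension A'} and \ref{i: fX_ub(mathring M)|_mathring T is generated by x partial_x and the vector fields A'} by working on the $\Gamma$-cover $\widetilde M_\pm$, where the product structure~\eqref{widetilde M_pm equiv [0 infty)_tau times widetilde L_varpi} and the cylindrical form of $\tilde g_{\text{\rm b},\pm}$ make everything explicit, and then descending via $\pi_{M_\pm}$. Throughout, I use the collar neighborhood $T=T_{\pm,\epsilon}\equiv[0,\epsilon)_\rho\times L_\varpi$ from~\eqref{M_pm equiv [0 infty)_rho times L_varpi}, whose lift is $\widetilde T_{\pm,\epsilon}\equiv[0,\epsilon)_\tau\times\widetilde L_{\widetilde\varpi}$ (switching between the coordinates $\rho_\pm=e^F\tau$ and $\tau$ is harmless since $F$ is pulled back from the compact $L$).

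For \ref{i: extension A'}, given $A\in\fX(L)\equiv\fX(\partial M_\pm)$, lift it to $\widetilde A\in\fX(\widetilde L)$ and set $\widetilde{A'}=(0,\widetilde A)\in\fX(\widetilde M_\pm)$ via the product splitting $\widetilde M_\pm\equiv[0,\infty)_\tau\times\widetilde L$. Because the $\Gamma$-action on $\widetilde M_\pm$ factors as the product of $\tau\mapsto a_\gamma\tau$ and the action on $\widetilde L$, $\widetilde{A'}$ is $\Gamma$-invariant and descends to a smooth $A'\in\fX(M_\pm)$. It is tangent to the leaves $\{\tau=\mathrm{const}\}\times\widetilde L$ of $\widetilde\FF_\pm$, hence tangent to $\partial M_\pm=\{\rho=0\}$, so $A'\in\fXb(M_\pm)|_T\subset\fXb(T)$. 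By construction $(\varpi_\pm)_*A'=A$, and on $\mathring T$ the leaves are $g_{\text{\rm b},\pm}$-orthogonal to the $\varpi$-fibers (which are tangent to $\bfH=\R X$) by the decomposition $g_{\text{\rm b},\pm}=\omega_{\text{\rm b},\pm}^2+g_\FF$, so $A'|_{\mathring T}\perp\varpi$-fibers.

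For \ref{i: fX_ub(mathring M)|_mathring T is generated by x partial_x and the vector fields A'}, let $V\in\fXub(\mathring M_\pm)$ and write $\widetilde V$ for its $\Gamma$-invariant lift to $\mathring{\widetilde M}_\pm$. Using the isometry $\mathring{\widetilde M}_\pm\equiv\R_t\times\widetilde L$ with the product metric $(dt)^2+g_{\widetilde L}$ (via $t=\varkappa^{-1}\log|x|$, under which $x\partial_x=\varkappa^{-1}\partial_t$ has constant unit-multiple norm), the product splitting yields a unique decomposition $\widetilde V=f\,x\partial_x+\widetilde W$ on $\widetilde T_{\pm,\epsilon}\cap\mathring{\widetilde M}_\pm$, where $f\in C^\infty(\mathring{\widetilde T}_{\pm,\epsilon})$ and $\widetilde W$ is leafwise. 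Both $f$ and $\widetilde W$ are $\Gamma$-invariant (since $x\partial_x$ and the leafwise subbundle are $\Gamma$-invariant), and uniform boundedness of $\widetilde V$ in the cylindrical metric forces $f$ and $\widetilde W$ to be uniformly bounded in $C^\infty_{\text{\rm ub}}$. They descend to $f\in C^\infty_{\text{\rm ub}}(\mathring T_{\pm,\epsilon})$ and a leafwise $W\in\fXub(\mathring T_{\pm,\epsilon})$. To conclude, pick a finite family $A_1,\dots,A_N\in\fX(L)$ spanning $TL$ at every point (possible since $L$ is compact), take the extensions $A_1',\dots,A_N'$ from \ref{i: extension A'}, and use a smooth partition of unity on $L$ together with dual frames over local trivializations of $TL$ to write $W=\sum_i c_i\,A_i'|_{\mathring T}$ with $c_i\in C^\infty_{\text{\rm ub}}(\mathring T_{\pm,\epsilon})$. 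Combined with $f\,x\partial_x$, this gives the required expression of $V|_{\mathring T}$.

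The main technical point is controlling the uniform bounds with all derivatives: in (b), the key input is that the change of variable $t=\varkappa^{-1}\log|x|$ turns $\tilde g_{\text{\rm b},\pm}$ into an honest Riemannian product on $\R\times\widetilde L$, so that decomposing a uniformly bounded vector field into its $\partial_t$-component and its leafwise component preserves uniform $C^\infty$-boundedness; everything else is bookkeeping using that $L$ is compact and that all objects involved are pulled back via the $\Gamma$-equivariant trivialization.
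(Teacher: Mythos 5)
Your proof is correct and takes essentially the same approach as the paper's: pass to the $\Gamma$-cover $\widetilde M_\pm$, exploit the $\Gamma$-equivariant product/cylindrical structure (and the identity $\tilde\rho\partial_{\tilde\rho}=x\partial_x$) to verify conditions~(a) and~(b) there, and descend the $\Gamma$-invariant leafwise lifts $\widetilde A'$ to the required extensions $A'\in\fX(M_\pm)$. The paper's proof is far terser, simply asserting that the verification is easy given the product structure established in the proof of \Cref{p: FF_pm is of bd geom}; you have filled in the decomposition/uniform-boundedness bookkeeping that it leaves implicit.
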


\begin{proof}
According to the proof of \Cref{p: FF_pm is of bd geom}, it is easy to check that $(\widetilde M_\pm,\tilde g_{\text{\rm b},\pm})$ satisfies~\ref{i: A'} on the whole of $\widetilde M_\pm$ with $\tilde\rho\partial_{\tilde\rho}$ and the extensions $B'=(B,0)\in\fX(\widetilde M_\pm)$ of vector fields $B\in\fX(\widetilde L)$. It follows that $(M_\pm,g_{\text{\rm b},\pm})$ also satisfies~\ref{i: A'} on the whole of $M_\pm$ with $\rho\partial_\rho$ and the extensions $A'\in\fX(M_\pm)$ of vector fields $A\in\fX(L)$ defined as follows. For every $A\in\fX(L)$, let $\widetilde A$ denote its lift to $\widetilde L$. Then $\widetilde A'=(\widetilde A,0)\in\fX(\widetilde M_\pm)$ is $\Gamma$-invariant, and therefore it is $\pi_{M_\pm}$-projectable to a vector field $A'\in\fX(M_\pm)$.
\end{proof}

\begin{prop}\label{p: d(ln rho) in C^infty_ub(mathring M_pm Lambda^1)}
We have $d(\ln\rho)\in C^\infty_{\text{\rm ub}}(\mathring M_\pm;\Lambda^1)$.
\end{prop}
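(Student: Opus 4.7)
\medskip

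\noindent\textbf{Proof proposal.} The plan is to show the stated bound after lifting to the cover $\pi_{M_\pm}:\widetilde M_\pm\to M_\pm$, where the geometry becomes a flat Riemannian product and the form in question is almost parallel.

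First, I would identify the form to be estimated. On $\mathring M_\pm$ the function $\rho$ has constant sign, so $d(\ln\rho)=\rho^{-1}d\rho=\rho_\pm^{-1}d\rho_\pm$ by \eqref{rho_pm = pm rho = | rho |}, and \eqref{rho_pm^-1 d rho_pm} gives
\[
d(\ln\rho)=\eta+\varkappa\,\omega_{\text{\rm b},\pm}\quad\text{on } \mathring M_\pm.
\]
So the task reduces to proving that this leafwise-plus-transverse $1$-form lies in $C^\infty_{\text{\rm ub}}(\mathring M_\pm;\Lambda^1)$ with respect to $g_{\text{\rm b},\pm}$.

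Second, I would lift the situation to $\mathring{\widetilde M}_\pm=\R^\pm\times\widetilde L$. By \eqref{tilde g_pm}, $\tilde g_{\text{\rm b},\pm}=(\varkappa x)^{-2}(dx)^2+g_{\widetilde L}$, and the change of variable $t=\varkappa^{-1}\ln|x|$ (used already in the proof of \Cref{p: FF_pm is of bd geom}) turns $(\mathring{\widetilde M}_\pm,\tilde g_{\text{\rm b},\pm})$ into the Riemannian product $(\R\times\widetilde L,(dt)^2+g_{\widetilde L})$. Under this identification, $\tilde\omega_{\text{\rm b},\pm}=(\varkappa x)^{-1}dx=dt$, and the lift of $\eta+\varkappa\omega_{\text{\rm b},\pm}$ is
\[
\tilde\eta+\varkappa\,dt\in C^\infty(\R\times\widetilde L;\Lambda^1).
\]
Since $F\in C^\infty(\widetilde L)$ and $\tilde\eta=\widetilde\varpi^*dF$ only involves $\widetilde L$-directions, $\tilde\eta$ is independent of $t$, and $\varkappa\,dt$ is parallel for the product connection.

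Third, I would verify bounded geometry on the cover. The form $\varkappa\,dt$ has vanishing covariant derivatives of all orders. For $\tilde\eta$, note that $\tilde\eta=\pi^*\eta$ for the regular covering $\pi:\widetilde L\to L$ of the closed manifold $L$; since $\widetilde L$ with $g_{\widetilde L}$ is of bounded geometry (\Cref{ex: widetilde E} and the associated bundle construction), and $\eta\in C^\infty(L;\Lambda^1)$ is smooth on the compact $L$, all covariant derivatives of $\tilde\eta$ are uniformly bounded on $\widetilde L$; lifting to the Riemannian product $\R\times\widetilde L$ preserves these bounds. Therefore $\tilde\eta+\varkappa\,dt\in C^\infty_{\text{\rm ub}}(\R\times\widetilde L;\Lambda^1)$.

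Fourth, I would descend. The lifted form is $\Gamma$-invariant (since $\tilde\eta$ and $dt$ are, using $\Gamma$ acting as $(t,\tilde y)\mapsto(t+h_\pm(\gamma),\gamma\cdot\tilde y)$ and the $\Gamma$-invariance of $\tilde\eta$). Because $\pi_{M_\pm}$ is a Riemannian covering between the cover $\R\times\widetilde L$ and $(\mathring M_\pm,g_{\text{\rm b},\pm})$, the uniform bounds on covariant derivatives descend to $\mathring M_\pm$, giving $d(\ln\rho)=\eta+\varkappa\,\omega_{\text{\rm b},\pm}\in C^\infty_{\text{\rm ub}}(\mathring M_\pm;\Lambda^1)$, as desired.

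There is no serious obstacle; the only point that requires attention is ensuring the change of variable $t=\varkappa^{-1}\ln|x|$ really produces a Riemannian product metric and that $\widetilde L$ (as an infinite regular cover of a compact manifold) is of bounded geometry, so that the pull-back $\tilde\eta=\pi^*\eta$ of a smooth form from $L$ has uniformly bounded covariant derivatives.
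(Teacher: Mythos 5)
Your proposal is correct and follows essentially the same route as the paper: rewrite $d(\ln\rho)=\eta+\varkappa\,\omega_{\text{\rm b},\pm}$ via the formula for $d\rho$, then observe that $\eta$ is uniformly bounded because it pulls back from the compact leaf $L$, and that $\omega_{\text{\rm b},\pm}$ is uniformly bounded because it is the $g_{\text{\rm b},\pm}$-transverse volume form (both facts resting on the bounded geometry established in \Cref{p: FF_pm is of bd geom}). The paper cites these bounds directly; you unfold them by lifting to the Riemannian product $\R\times\widetilde L$, which is the same underlying computation carried out one level lower.
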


\begin{proof}
On the one hand, by the compactness of $L$ and the definition of $g_{\text{\rm b},\pm}$, we have $\eta\in C^\infty_{\text{\rm ub}}(\mathring M_\pm;\Lambda^{0,1})$. On the other hand, $\rho^{-1}\omega\in C^\infty_{\text{\rm ub}}(\mathring M_\pm;\Lambda^{1,0})$ by \Cref{p: FF_pm is of bd geom}, since $\omega_{\text{\rm b},\pm}=\sign(\varkappa)\rho^{-1}\omega$ is the $g_{\text{\rm b},\pm}$-transverse volume form. Hence $d(\ln\rho)=\eta\pm|\varkappa|\rho^{-1}\omega\in C^\infty_{\text{\rm ub}}(\mathring M_\pm;\Lambda^1)$  by~\eqref{d rho_pm}. 
\end{proof}

Let $|{\cdot}|:\Gamma\to\N_0$ be the word length function given by a finite generating set. There is some $c_0>0$ so that, for all $\gamma\in\Gamma$,
\begin{equation}\label{|ln a_gamma| le c_0 |gamma|}
|\ln a_\gamma| \le c_0\,|\gamma|\;.
\end{equation}
By~\eqref{word-metric},~\eqref{tilde xi^t(tilde y x) widetilde Y} and~\eqref{tilde g_pm}, for all $\tilde p\in\mathring{\widetilde M}_\pm$ and $\gamma\in\Gamma$,
\[
c_1^{-1}|\gamma|\le d_{\mathring{\widetilde\FF}_\pm}\big(\gamma^{-1}\cdot\xi^{h_\pm(\gamma)}(\tilde p),\tilde p\big)\le c_1|\gamma|\;,
\]
using the holonomy homomorphism $h_\pm:\Gamma\to\R$ of $\FF^1_\pm\equiv\mathring\FF_\pm$ (\Cref{ss: transv simple flows - suspension}).

\begin{lem}\label{l: d_widetilde L(tilde y tilde y') > C ln(rho([x tilde y]) / c rho([x tilde y'])}
There are $C>0$ and $c\ge1$ so that, for all $\tilde y,\tilde y'\in\widetilde L$ and $x\in\R^\times$,
\[
d_{\widetilde L}(\tilde y,\tilde y')\ge C\ln \frac{\rho([x,\tilde y])}{c\rho([x,\tilde y'])}\;.
\]
\end{lem}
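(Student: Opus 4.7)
\medskip

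\noindent\textbf{Proof plan.} The key observation is that the ratio $\rho([x,\tilde y])/\rho([x,\tilde y'])$ depends only on $\tilde y$ and $\tilde y'$, not on $x$. Indeed, recall from \Cref{ss: def func of M^0 - suspension} that the $\Gamma$-invariant defining function of $\widetilde M^0$ on $\widetilde M$ is $\tilde\rho = e^F x$, where $F \in C^\infty(\widetilde L)$ satisfies $\tilde\eta = dF$ for $\tilde\eta = \pi^*\eta$, and $\rho$ is induced on $M$ via $\pi_M$. Therefore, for all $x \in \R^\times$ and $\tilde y, \tilde y' \in \widetilde L$,
\[
\frac{\rho([x,\tilde y])}{\rho([x,\tilde y'])} = \frac{e^{F(\tilde y)} x}{e^{F(\tilde y')} x} = e^{F(\tilde y) - F(\tilde y')}\;,
\]
so (with any $c \ge 1$)
\[
\ln\frac{\rho([x,\tilde y])}{c\,\rho([x,\tilde y'])} = F(\tilde y) - F(\tilde y') - \ln c\;.
\]

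\medskip

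\noindent The plan is then to reduce the inequality to a Lipschitz estimate for $F$. Since $L$ is compact and $\eta$ is smooth on $L$, the $g_L$-pointwise norm $|\eta|$ is uniformly bounded on $L$, say by some $C_0 > 0$. Because $g_{\widetilde L} = \pi^*g_L$ and $\tilde\eta = \pi^*\eta = dF$, we get $|dF|_{g_{\widetilde L}} \le C_0$ on $\widetilde L$. Integrating along a minimizing geodesic from $\tilde y'$ to $\tilde y$ yields
\[
|F(\tilde y) - F(\tilde y')| \le C_0\, d_{\widetilde L}(\tilde y, \tilde y')\;.
\]
Taking $c = 1$ and $C = 1/C_0$ (assuming $\eta \ne 0$; the case $\eta = 0$ is trivial since then $F$ is constant on connected components of $\widetilde L$ and the right-hand side vanishes for $c=1$), we conclude
\[
d_{\widetilde L}(\tilde y, \tilde y') \ge C_0^{-1}(F(\tilde y) - F(\tilde y'))
= C \ln\frac{\rho([x,\tilde y])}{c\,\rho([x,\tilde y'])}\;.
\]

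\medskip

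\noindent There is no genuine obstacle: the lemma is essentially a restatement of the fact that, on the transversely oriented suspension, the deck group $\Gamma$ rescales the defining function $x$ multiplicatively by $a_\gamma$ while simultaneously translating $F$ by $-\ln a_\gamma$, so that $\tilde \rho = e^F x$ descends. The only point worth verifying carefully is that the conclusion is sign-consistent: both $\rho([x,\tilde y])$ and $\rho([x,\tilde y'])$ have the sign of $x$, so their ratio is strictly positive and the logarithm is well defined on all of $\R^\times \times \widetilde L \times \widetilde L$.
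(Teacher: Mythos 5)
Your proof is correct and takes a genuinely different (and more direct) route than the paper. The paper moves $\tilde y$ and $\gamma\cdot\tilde y'$ into a fundamental domain $\bfF$, bounds $e^{F}$ there by a constant $C_0$, then combines the word-length estimates~\eqref{word-metric with bfK} and~\eqref{|ln a_gamma| le c_0 |gamma|} to control $a_{\gamma^{-1}}$ in terms of $d_{\widetilde L}(\tilde y,\tilde y')$; this produces constants $C=1/(c_0c_1)$ and $c=C_0e^{c_0c_1c_2}$. You instead notice the cancellation of $x$ in the ratio, so that $\ln(\rho([x,\tilde y])/\rho([x,\tilde y']))=F(\tilde y)-F(\tilde y')$ for any $x\in\R^\times$, and then use the global Lipschitz bound $|dF|_{g_{\widetilde L}}\le\|\eta\|_{L^\infty}$ (valid because $\widetilde L$ is complete as a Riemannian covering of the compact $L$, so Hopf--Rinow supplies minimizing geodesics). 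This avoids the fundamental-domain bookkeeping and word-metric machinery entirely, and even gives the sharper constant $c=1$. The trade-off is cosmetic: the word-metric constants $c_0,c_1,c_2$ already appear elsewhere in the paper's estimates, so the authors' choice keeps the constants uniform across lemmas; your Lipschitz-on-$F$ trick is in fact exactly the device used in the paper's proof of \Cref{p: phi^-t(T_epsilon) subset T_kappa epsilon}, so it is entirely in the spirit of the text. One tiny remark: you say ``$F$ is constant on connected components of $\widetilde L$'' in the degenerate case $\eta=0$, but $\widetilde L$ is connected (being the covering of the pointed connected $L$ corresponding to $\ker\hat h\subset\pi_1 L$); this changes nothing in the argument.
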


\begin{proof}
Let $\bfF\subset\widetilde L$ be a fundamental domain. Without loss of generality, we can assume $\tilde y\in \bfF$. Take some $\gamma\in\Gamma$ such that
$\gamma\cdot \tilde y' \in \bfF$. Then
\[
\rho([x,\tilde y])=e^{F(\tilde y)}x\;, \quad 
\rho([x,\tilde y'])=\rho([a_\gamma x,\gamma\cdot\tilde y'])=e^{F(\gamma\cdot\tilde y')}a_\gamma x\;.
\]
There is some $C_0\ge1$ such that, for all $\tilde y_1, \tilde y_2\in \bfF$,
\[
C_0^{-1}e^{F(\tilde y_1)}\le e^{F(\tilde y_2)}\le C_0e^{F(\tilde y_1)}\;.
\]
So, using~\eqref{word-metric with bfK} with $\bfK=\bfF^2$ and~\eqref{|ln a_gamma| le c_0 |gamma|},
\[
\frac{\rho([x,\tilde y])}{\rho([x,\tilde y'])}=\frac{e^{F(\tilde y)}}{e^{F(\gamma\cdot\tilde y')}a_\gamma}
\le C_0a_{\gamma^{-1}}\le C_0e^{c_0\,|\gamma^{-1}|}\le C_0e^{c_0c_1(d_{\widetilde L}(\tilde y',\tilde y)+c_2)}\;.\qedhere
\]
\end{proof}

\begin{cor}\label{c: Pen_FF_l(T_epsilon R) subset T_ce^R/C epsilon}
For $R,\epsilon>0$, we have $\Pen_\FF(T_\epsilon, R)\subset T_{ce^{R/C}\epsilon}$.
\end{cor}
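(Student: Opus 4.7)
The plan is to unpack the definition of the leafwise penumbra and reduce the bound directly to Lemma~\ref{l: d_widetilde L(tilde y tilde y') > C ln(rho([x tilde y]) / c rho([x tilde y'])} via the covering $\pi_M:\widetilde M\to M$.

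First, I would take an arbitrary $p\in\Pen_\FF(T_\epsilon,R)$. By definition, there is some $q\in T_\epsilon$ in the same leaf $L'$ of $\FF$ as $p$ with $d_{L'}(p,q)<R$, realized by a leafwise path $c:[0,1]\to L'$ from $p$ to $q$ of length $<R$. Next, I would fix any lift $\tilde p\in\widetilde M$ of $p$ and use the covering property of $\pi_M$ (which is a local isometry between corresponding leaves of $\widetilde\FF$ and $\FF$ with respect to $g_{\widetilde\FF}$ and $g_\FF=\varpi^*g_L$) to lift $c$ to a leafwise path $\tilde c$ in $\widetilde M$ starting at $\tilde p$ and ending at some $\tilde q$ over $q$, of the same length as $c$. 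Hence $\tilde p$ and $\tilde q$ lie in a common leaf of $\widetilde\FF$.

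Since the leaves of $\widetilde\FF$ are precisely the fibers $\{x\}\times\widetilde L$, there exist $x\in\R$ and $\tilde y,\tilde y'\in\widetilde L$ with $\tilde p=(x,\tilde y)$ and $\tilde q=(x,\tilde y')$. The path $\tilde c$ projects via $\widetilde\varpi$ to a path in $\widetilde L$ from $\tilde y$ to $\tilde y'$ of length equal to that of $\tilde c$ (because $g_{\widetilde\FF}=\widetilde\varpi^*g_{\widetilde L}$), so $d_{\widetilde L}(\tilde y,\tilde y')<R$.

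Now I would apply Lemma~\ref{l: d_widetilde L(tilde y tilde y') > C ln(rho([x tilde y]) / c rho([x tilde y'])}: using that $\rho(p)=\rho([x,\tilde y])$ and $\rho(q)=\rho([x,\tilde y'])$ have the same sign (both equal the corresponding $e^{F}x$), the ratio $\rho([x,\tilde y])/\rho([x,\tilde y'])$ coincides with $|\rho(p)|/|\rho(q)|$ and is positive, so
\[
C\ln\frac{|\rho(p)|}{c\,|\rho(q)|}\le d_{\widetilde L}(\tilde y,\tilde y')<R\;,
\]
which rearranges to $|\rho(p)|<c\,e^{R/C}\,|\rho(q)|<c\,e^{R/C}\,\epsilon$, that is, $p\in T_{ce^{R/C}\epsilon}$. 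Since every step is essentially formal after the lemma is in hand, there is no real obstacle; the only point deserving care is the path-lifting argument that guarantees $\tilde p$ and $\tilde q$ can be chosen in the same leaf of $\widetilde\FF$, which is exactly the property needed to invoke the lemma.
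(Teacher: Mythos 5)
Your proof is correct and is exactly the argument that the paper leaves implicit: lift a short leafwise path from a given point into $T_\epsilon$ to the covering $\widetilde M$, observe that the lift stays in a single fiber $\{x\}\times\widetilde L$ so that the leafwise distance bound becomes a bound on $d_{\widetilde L}(\tilde y,\tilde y')$, and then rearrange the inequality of Lemma~\ref{l: d_widetilde L(tilde y tilde y') > C ln(rho([x tilde y]) / c rho([x tilde y'])}. The only point worth stating explicitly (which you handle correctly by noting that $\rho([x,\tilde y])/\rho([x,\tilde y'])=e^{F(\tilde y)-F(\tilde y')}>0$) is that the ratio in the lemma is positive, so it really equals $|\rho(p)|/|\rho(q)|$; the degenerate case $x=0$ (i.e.\ $p$ on the compact leaf $L$) is vacuously fine since then $\rho(p)=0$.
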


\begin{lem}\label{l: d_FF(phi^t(p) p) ge inj_L}
For $p\in\mathring M_\pm$ and $t\in\R^\times$, if $\phi^t(L_p)=L_p$, then $d_{\FF}(\phi^t(p),p)\ge\inj_L$.
\end{lem}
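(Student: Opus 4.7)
The strategy is to lift the whole situation to $\widetilde M$, use the explicit form of $\tilde\phi^t$ from \eqref{tilde phi^t(tilde y x) widetilde Z} to reduce $d_\FF(\phi^t(p),p)$ to a distance between two specific points in $\widetilde L$, and then exploit the Klingenberg-type lower bound on loop length coming from the Riemannian covering $\pi:\widetilde L\to L$.

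\textbf{Reduction to a distance in $\widetilde L$.} Because $\mathring\FF_\pm$ is a complete $\R$-Lie foliation whose global holonomy is $h_\pm(\Gamma)=\{\varkappa^{-1}\ln a_\gamma:\gamma\in\Gamma\}\subset\R$ (\Cref{ss: transv simple flows - suspension}), the hypothesis $\phi^t(L_p)=L_p$ with $t\ne 0$ forces $t=h_\pm(\gamma)=\varkappa^{-1}\ln a_\gamma$ for a unique $\gamma\in\Gamma\setminus\{e\}$; in particular $a_\gamma=e^{\varkappa t}\ne 1$. Choose a lift $\tilde p=(x,\tilde y)\in\mathring{\widetilde M}_\pm$ of $p$. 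Then \eqref{tilde phi^t(tilde y x) widetilde Z} gives
\[
\tilde\phi^t(\tilde p)=(a_\gamma x,\tilde\phi^t_x(\tilde y))=\gamma\cdot\bigl(x,\gamma^{-1}\cdot\tilde\phi^t_x(\tilde y)\bigr)\;,
\]
so $\phi^t(p)=\pi_M(x,\gamma^{-1}\cdot\tilde\phi^t_x(\tilde y))$. Because $\Gamma$ acts freely on $\R^\times$ (the map $\gamma\mapsto a_\gamma$ being injective), the restriction of $\pi_M$ to $\widetilde L_{\tilde p}=\{x\}\times\widetilde L$ is injective; using $\varpi\circ\pi_M|_{\widetilde L_{\tilde p}}=\pi$ one checks that it is in fact an isometric diffeomorphism $(\widetilde L_{\tilde p},g_{\widetilde L})\to(L_p,g_\FF|_{L_p})$. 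Since both $\tilde p$ and $(x,\gamma^{-1}\cdot\tilde\phi^t_x(\tilde y))$ lie in $\widetilde L_{\tilde p}$,
\[
d_\FF(\phi^t(p),p)=d_{\widetilde L}\bigl(\tilde y,\gamma^{-1}\cdot\tilde\phi^t_x(\tilde y)\bigr)\;.
\]

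\textbf{Geometric lower bound.} To conclude, concatenate the orbit arc $\sigma:s\mapsto\phi^s(p)$ ($s\in[0,t]$) with a minimizing leafwise geodesic $\beta$ from $\phi^t(p)$ back to $p$; this is a loop in $M$ based at $p$ whose $\pi_M$-lift starting at $\tilde p$ ends at $\gamma\cdot\tilde p$, hence represents the class $\gamma\in\Gamma=\pi_1 L/\ker\hat h$. Its projection under $\varpi:M\to L$ is a loop at $q=\pi(\tilde y)\in L$ whose class in $\pi_1L$ maps to $\gamma\ne e$, so the projected loop is non-contractible, and by the Klingenberg systole estimate its length is at least $2\inj_L$. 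Since $\varpi|_{L_p}$ is a local isometry, $\length(\varpi\circ\beta)=\length(\beta)=d_\FF(\phi^t(p),p)$, so
\[
d_\FF(\phi^t(p),p)\ge 2\inj_L-\length(\varpi\circ\sigma)\;.
\]

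\textbf{Main obstacle.} The hard part will be bounding $\length(\varpi\circ\sigma)\le\inj_L$. Writing $Z=\sign(\varkappa)\rho X+Z^\parallel$ with $Z^\parallel$ tangent to $\FF$, one has $\varpi_*Z=\varpi_*Z^\parallel$ and $|\varpi_*Z|_{g_L}=|Z^\parallel|_{g_\FF}$, so $\length(\varpi\circ\sigma)=\int_0^t|Z^\parallel(\phi^s(p))|_{g_\FF}\,ds$. Since $Z^\parallel$ vanishes on $M^0$ and, together with~\eqref{tilde phi^t(tilde y x) widetilde Z}, is suitably controlled in terms of the $x$-coordinate along the flow, the bound holds on a neighborhood of $M^0$; away from $M^0$ the argument will require subdividing $\sigma$ into pieces, exploiting the weakly simple structure of $\phi$ (so that the composite loop class $\gamma$ obtained for any sub-arc forces a length contribution $\ge 2\inj_L$ to the leafwise return), and invoking the transverse simplicity to rule out degenerate regimes such as $\phi^t(p)=p$. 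This last step is where I expect all the work to be.
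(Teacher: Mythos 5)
Your first two paragraphs (the lift to $\widetilde M$ and the reduction $d_\FF(\phi^t(p),p)=d_{\widetilde L}(\tilde y,\gamma^{-1}\cdot\tilde\phi^t_x(\tilde y))$) are correct and essentially implicit in the paper's argument too. The gap you yourself flag in the "Main obstacle" paragraph, however, is fatal for the route you chose, not just "where all the work is": your systole bound produces $d_\FF(\phi^t(p),p)\ge 2\inj_L-\length(\varpi\circ\sigma)$, and $\length(\varpi\circ\sigma)=\int_0^t|\varpi_*Z(\phi^s(p))|\,ds$ is simply not bounded by $\inj_L$. The leafwise component of $Z$ is uniformly bounded along $\mathring M_\pm$, so this integral scales like $|t|=|\varkappa^{-1}\ln a_\gamma|$, which is unbounded over $\gamma\in\Gamma$; none of the devices you hint at (subdividing $\sigma$, invoking transverse simplicity) reduce this to a constant $\le\inj_L$. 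Subdividing does not help because a sub-arc need not close up into a loop representing a nontrivial class; transverse simplicity controls the normal behaviour near $M^0$, not the leafwise displacement away from it.

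The paper takes a much shorter, purely local route that avoids looking at the orbit arc at all. It fixes the single foliated chart $\chi_q=(x,y_q)$ on $U_q=\pi_M\bigl(\R\times B_{\widetilde L}(\tilde q,\inj_L)\bigr)$, whose plaque through $p$ is exactly the leafwise open ball $B_{L_p}(p,\inj_L)$ (this is where $\inj_L$ enters, via the choice of the transversal size of the chart, not via a systole). It then argues that $\phi^t(p)$ cannot lie in that plaque: either $\phi^t(p)\notin U_q$, or $\phi^t(p)\in U_q$ and by \eqref{tilde phi^t(tilde y x) widetilde Z} its $x$-coordinate in the chart is $e^{\varkappa t}x_0\ne x_0$, so it lands in a different plaque. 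Either way $\phi^t(p)\notin B_{L_p}(p,\inj_L)$ and the bound follows. So the two proofs diverge precisely at the step you identified as hard: instead of trying to subtract off an orbit contribution from a $2\inj_L$ systole bound, the paper converts the statement into "$\phi^t(p)$ lies outside a single plaque" and reads off the $\inj_L$ lower bound directly from the geometry of the foliated chart.
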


\begin{proof}
We have $p=[x_0,\tilde q]$ for some $x_0\in\R^\pm$ and $\tilde q\in\widetilde L$, and let $q=\pi(\tilde q)\in L$. For $r=\inj_L$, take normal charts $\tilde y_{\tilde q}:B_{\widetilde L}(\tilde q,r)\to B$ and $y_q:B_L(q,r)\to B$ like in the proof of \Cref{p: FF_pm is of bd geom}. We have the foliated chart of $\widetilde\FF$,
\[
\tilde\chi_{\tilde q}=(x,\tilde y_{\tilde q}):\widetilde U_{\tilde q}=\R\times B_{\widetilde L}(\tilde q,r)\to\R\times B\;,
\]
which induces via $\pi_M$ a foliated chart of $\FF$,
\[
\chi_q=(x,y_q):U_q=\pi_M(\widetilde U_{\tilde q})\equiv\R\times B_L(q,r)\to\R\times B\;.
\]
On the one hand, if $\phi^t(p)\in U_q$, then $\chi_q\phi^t(p)=(e^{\varkappa t}x_0,\tilde\phi^t_{x_0}(\tilde q))$ by~\eqref{tilde phi^t(tilde y x) widetilde Z}, with $e^{\varkappa t}x_0\ne x_0$ because $t\ne0$. So $p$ and $\phi^t(p)$ lie in different plaques of $(U_q,\chi_q)$. On the other hand, if $\phi^t(p)\not\in U_q$, then a fortiori $\phi^t(p)$ is not in the plaque of $(U_q,\chi_q)$ through $p$. In any case, $d_\FF(\phi^t(p),p)\ge r$ because the plaque through $p$ is $B_{L_p}(p,r)\equiv B_{\widetilde L}(\tilde q,r)$.
\end{proof}

\begin{prop}\label{p: d_FF(phi^t(p) p) ge c_1^{-1} |gamma| - c_2}
If $Z_\pm\in\fXub(M^1_\pm,\FF^1_\pm)$, then, for any compact $I\subset\R$, there are $c_1,c_2>0$ such that, for all $p\in\mathring M_\pm$ and $\gamma\in\Gamma$ with $h_\pm(\gamma)\in I$,
\[
d_{\FF}\big(\phi^{h_\pm(\gamma)}(p),p\big)\ge c_1^{-1}|\gamma|-c_2\;.
\]
\end{prop}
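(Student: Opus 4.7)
The natural strategy is to pass to the holonomy covering $\widetilde M_\pm^1 \equiv \mathring{\widetilde M}_\pm$ where the flow has the explicit product-like form~\eqref{tilde phi^t(tilde y x) widetilde Z}, and reduce the leafwise distance in $\FF$ to a Riemannian distance in $\widetilde L$ via the identification of $L_p$ with a single leaf of $\widetilde\FF$.

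Fix $p \in \mathring M_\pm$ and lift to $\tilde p = (x_0,\tilde q) \in \mathring{\widetilde M}_\pm$. By~\eqref{tilde phi^t(tilde y x) widetilde Z}, $\tilde\phi^{h_\pm(\gamma)}(\tilde p) = (a_\gamma x_0, \tilde\phi^{h_\pm(\gamma)}_{x_0}(\tilde q))$, so that $\gamma^{-1} \cdot \tilde\phi^{h_\pm(\gamma)}(\tilde p) = (x_0, \gamma^{-1}\cdot\tilde\phi^{h_\pm(\gamma)}_{x_0}(\tilde q))$ lies on the same $\widetilde\FF$-leaf $\{x_0\}\times\widetilde L$ as $\tilde p$. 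The first key observation is that the restriction $\pi_M : \{x_0\}\times\widetilde L \to L_p$ is an \emph{isometry} (not merely a covering): it is surjective onto $L_p$ by path-lifting, and injective because two preimages would differ by some $\gamma\in\Gamma$ with $a_\gamma x_0 = x_0$, forcing $a_\gamma=1$ and hence $\gamma=e$ (since $h:\Gamma\to\R^+$ is a monomorphism and $x_0\ne0$); isometry follows because both metrics are pulled back from $g_L$. Consequently
\[
d_\FF(\phi^{h_\pm(\gamma)}(p),p) = d_{\widetilde L}\bigl(\gamma^{-1}\cdot\tilde\phi^{h_\pm(\gamma)}_{x_0}(\tilde q),\,\tilde q\bigr)\;.
\]

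The triangle inequality combined with the fact that $\Gamma$ acts on $(\widetilde L,g_{\widetilde L})$ by isometries yields
\[
d_\FF(\phi^{h_\pm(\gamma)}(p),p) \;\ge\; d_{\widetilde L}(\gamma^{-1}\cdot\tilde q,\tilde q) \;-\; d_{\widetilde L}\bigl(\tilde\phi^{h_\pm(\gamma)}_{x_0}(\tilde q),\tilde q\bigr)\;.
\]
The word-metric estimate~\eqref{word-metric} (applied with generating set used to define $|{\cdot}|$) bounds the first term below by $c_1^{-1}|\gamma|$, independently of $\tilde q$. For the second term, write $t = h_\pm(\gamma) \in I$ and use the ODE satisfied by the curve $s\mapsto\tilde\phi^s_{x_0}(\tilde q)$, namely $\partial_s\tilde\phi^s_{x_0}(\tilde q) = \widetilde Z_{e^{\varkappa s}x_0}(\tilde\phi^s_{x_0}(\tilde q))$, which gives
\[
d_{\widetilde L}\bigl(\tilde\phi^t_{x_0}(\tilde q),\tilde q\bigr) \;\le\; \int_0^{|t|} \bigl|\widetilde Z_{e^{\varkappa s}x_0}\bigr|_{g_{\widetilde L}}\,ds\;.
\]
Here the hypothesis $Z_\pm \in \fXub(M^1_\pm,\FF^1_\pm)$ is used: lifting to the $\Gamma$-cover preserves uniform bounds, and the b-metric decomposition $\tilde g_{\text{\rm b},\pm} = (\varkappa x)^{-2}(dx)^2 + g_{\widetilde L}$ forces the leafwise component $|\widetilde Z_x|_{g_{\widetilde L}}$ to be uniformly bounded in $(x,\tilde y)\in\mathring{\widetilde M}_\pm$ by some constant $C$. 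Thus the second term is bounded by $C\sup_{t\in I}|t|=:c_2$, yielding the claim with $c_1$ as in~\eqref{word-metric}.

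The only substantive point is the isometric identification $L_p\cong\widetilde L$ via the $\{x_0\}$-slice; once that is noted, everything else is a clean combination of the standard word-metric estimate with a uniform integral bound coming from bounded geometry. I do not anticipate a real obstacle beyond checking that the hypothesis $Z_\pm\in\fXub$ truly gives a global bound on $|\widetilde Z_x|_{g_{\widetilde L}}$ over all $x\in\R^\pm$ (rather than only locally in $x$), which follows from the $\Gamma$-equivariance~\eqref{T_gamma tilde phi_x^t = tilde phi_a_gamma x^t T_gamma} together with compactness of a fundamental domain for $\Gamma$ acting on $\mathring{\widetilde M}_\pm$ relative to any compact $x$-window, combined with the exact scaling behavior of the b-metric under $x\mapsto a_\gamma x$.
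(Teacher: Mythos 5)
Your proof is correct and follows the same strategy as the paper: lift to $\mathring{\widetilde M}_\pm$, identify $L_p$ isometrically with the slice $\{x_0\}\times\widetilde L$ so that $d_\FF(\phi^{h_\pm(\gamma)}(p),p)=d_{\widetilde L}(\gamma^{-1}\cdot\tilde\phi^{h_\pm(\gamma)}_{x_0}(\tilde q),\tilde q)$, and bound this below by a word-metric term minus a uniform displacement bound. The only cosmetic differences are that you re-derive the compact-set estimate~\eqref{word-metric with bfK} inline from~\eqref{word-metric} and the triangle inequality, and you obtain the displacement bound $R$ from an explicit ODE estimate on $|\widetilde Z_x|_{g_{\widetilde L}}$ (bounded via the orthogonal decomposition of $\tilde g_{\text{\rm b},\pm}$) rather than by directly citing the $\R$-local bounded geometry of $\phi_\pm$ as in \Cref{ss: families of bd geom}; these are equivalent.
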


\begin{proof}
Since $I$ is compact and $\phi_\pm$ is of $\R$-local bounded geometry on $\mathring M_\pm$ (\Cref{ss: families of bd geom}), there is some $R>0$ such that $d_{\widetilde L}(\tilde\phi^t_x(\tilde y),\tilde y)\le R$ for all $x\in\R^\pm$, $t\in I$ and $\tilde y \in\widetilde L$. Given any fundamental domain $\bfF\subset\widetilde L$, let $\bfK=\overline{\Pen}_{\widetilde L}(\bfF,R)\times\bfF$. By~\eqref{word-metric with bfK}, there are $c_1\ge1$ and $c_2\ge0$ such that 
\begin{equation}\label{d_widetilde L(tilde y gamma cdot tilde phi^t_x(tilde y)) ge c_1^-1 |gamma| - c_2}
d_{\widetilde L}(\gamma\cdot\tilde\phi^t_x(\tilde y),\tilde y')\ge c_1^{-1}|\gamma|-c_2
\end{equation}
for all $x\in\R^\pm$, $t\in I$, $\tilde y,\tilde y'\in\bfF$ and $\gamma\in\Gamma$, because $(\tilde\phi^t_x(\tilde y),\tilde y')\in\bfK$.

Any $p\in\mathring M_\pm$ is of the form $p=[x,\tilde y]$ for some $x\in\R^\pm$ and $\tilde y\in\bfF$. Let $\gamma\in\Gamma$ with $t:=h_\pm(\gamma)=\varkappa^{-1}\ln a_\gamma\in I$. Then $\phi^t(L_p)=L_p$ (\Cref{ss: suspension - homotheties}), and, by~\eqref{d_widetilde L(tilde y gamma cdot tilde phi^t_x(tilde y)) ge c_1^-1 |gamma| - c_2},
\begin{align*}
d_\FF(\phi^t(p),p)&=d_\FF([e^{\varkappa t}x,\tilde\phi^t_x(\tilde y)],[x,\tilde y])
=d_\FF([a_\gamma x,\tilde\phi^t_x(\tilde y)],[x,\tilde y])\\
&=d_\FF([x,\gamma^{-1}\cdot\tilde\phi^t_x(\tilde y)],[x,\tilde y])
=d_{\widetilde L}(\gamma^{-1}\cdot\tilde\phi^t_x(\tilde y),\tilde y)
\ge c_1^{-1}|\gamma|-c_2\;.\qedhere
\end{align*}
\end{proof}

\begin{cor}\label{c: d_FF(phi^t(p) p) ge c_3 |gamma|}
If $Z_\pm\in\fXub(M^1_\pm,\FF^1_\pm)$, then, for any compact $I\subset\R^\times$, there is some $c_3>0$ such that, for all $p\in\mathring M_\pm$ and $\gamma\in\Gamma$ with $h_\pm(\gamma)\in I$,
\[
d_{\FF}\big(\phi^{h_\pm(\gamma)}(p),p\big)\ge c_3|\gamma|\;.
\] 
\end{cor}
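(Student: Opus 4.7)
The plan is to combine the two preceding results: Proposition~\ref{p: d_FF(phi^t(p) p) ge c_1^{-1} |gamma| - c_2} already gives an affine lower bound $c_1^{-1}|\gamma|-c_2$, which is asymptotically linear in $|\gamma|$ but becomes useless (possibly negative) for small $|\gamma|$. On the other hand, Lemma~\ref{l: d_FF(phi^t(p) p) ge inj_L} gives a fixed positive lower bound $\inj_L$ whenever $\phi^t(L_p)=L_p$ and $t\neq0$. The hypothesis $h_\pm(\gamma)\in I\subset\R^\times$ ensures $h_\pm(\gamma)\neq0$, and since $h_\pm(\gamma)\in\Hol\FF_\pm$, the flow $\phi^{h_\pm(\gamma)}$ preserves the leaf $L_p$, so the lemma applies.

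First, I would apply Proposition~\ref{p: d_FF(phi^t(p) p) ge c_1^{-1} |gamma| - c_2} with the given compact $I$ to obtain constants $c_1,c_2>0$ with
\[
d_\FF\bigl(\phi^{h_\pm(\gamma)}(p),p\bigr)\ge c_1^{-1}|\gamma|-c_2
\]
for all $p\in\mathring M_\pm$ and all $\gamma\in\Gamma$ with $h_\pm(\gamma)\in I$. Choose a threshold $N:=2c_1c_2$; then for $|\gamma|\ge N$ the right-hand side is $\ge\tfrac12c_1^{-1}|\gamma|$.

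Next I would handle the range $1\le|\gamma|<N$. Since $I\subset\R^\times$, the condition $h_\pm(\gamma)\in I$ forces $\gamma\neq e$, so $|\gamma|\ge1$. Because $h_\pm(\gamma)\in\Hol\FF_\pm$, the diffeomorphism $\phi^{h_\pm(\gamma)}$ preserves every leaf of $\FF$, and Lemma~\ref{l: d_FF(phi^t(p) p) ge inj_L} gives
\[
d_\FF\bigl(\phi^{h_\pm(\gamma)}(p),p\bigr)\ge\inj_L\ge\frac{\inj_L}{N}\,|\gamma|
\]
uniformly in $p\in\mathring M_\pm$ and in such $\gamma$.

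Setting $c_3:=\min\bigl\{\tfrac12c_1^{-1},\,\inj_L/N\bigr\}>0$ then yields the desired uniform bound in both regimes, completing the proof. There is no serious obstacle here; the only thing to watch is that the two ingredients (Proposition~\ref{p: d_FF(phi^t(p) p) ge c_1^{-1} |gamma| - c_2} and Lemma~\ref{l: d_FF(phi^t(p) p) ge inj_L}) really cover the complementary ranges, which relies on the hypothesis $I\subset\R^\times$ excluding $\gamma=e$ and on the compactness of $I$ providing uniform constants.
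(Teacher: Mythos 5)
Your proposal is correct and follows essentially the same two-regime argument as the paper: combine the affine lower bound from \Cref{p: d_FF(phi^t(p) p) ge c_1^{-1} |gamma| - c_2} for large $|\gamma|$ with the uniform lower bound $\inj_L$ from \Cref{l: d_FF(phi^t(p) p) ge inj_L} for the finitely many small $|\gamma|\ge1$, then take a suitable minimum for $c_3$. The only cosmetic difference is your threshold $2c_1c_2$ versus the paper's $c_1c_2$, and you helpfully spell out the point (used implicitly in the paper) that $h_\pm(\gamma)\in I\subset\R^\times$ forces $\gamma\ne e$ and makes $\phi^{h_\pm(\gamma)}$ leaf-preserving so the lemma applies.
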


\begin{proof}
By \Cref{l: d_FF(phi^t(p) p) ge inj_L,p: d_FF(phi^t(p) p) ge c_1^{-1} |gamma| - c_2}, the result follows taking $c_3>0$ such that, for all $\gamma\in\Gamma$,
\[
c_3|\gamma|\le
\begin{cases}
\inj_L & \text{if $|\gamma|\le c_1c_2$}\\
c_1^{-1}|\gamma|-c_2 & \text{if $|\gamma|>c_1c_2$}\;.\qedhere
\end{cases}
\]
\end{proof}

\begin{prop}\label{p: phi^-t(T_epsilon) subset T_kappa epsilon}
If $Z_\pm\in\fXub(M^1_\pm,\FF^1_\pm)$, then, for any compact $I\subset\R$, there exists some $c'>0$ such that $\phi^t(T_\epsilon)\subset T_{c'\epsilon}$ for all $t\in I$ and $\epsilon>0$. 
\end{prop}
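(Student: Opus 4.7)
My plan is to reduce the problem to estimating how the function $\rho$ transforms under the flow, via the explicit formula $\rho=e^F x$ on $\widetilde M$, and then to bound the change in $F$ along a leaf by integrating $\tilde\eta=dF$ against the leafwise component of $\widetilde Z$.

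First, since $\phi^t$ preserves $M^0$ and each component $M^1_\pm$, it suffices to work in $M^1_+$ or $M^1_-$ separately and lift to $\widetilde M^1_\pm$. Using \eqref{tilde phi^t(tilde y x) widetilde Z}, for $\tilde p=(x_0,\tilde y_0)\in\widetilde M^1_\pm$ and $t\in\R$, I would compute
\[
\tilde\rho(\tilde\phi^t(\tilde p))=e^{F(\tilde\phi^t_{x_0}(\tilde y_0))}\,e^{\varkappa t}x_0
=e^{\varkappa t}\,e^{F(\tilde\phi^t_{x_0}(\tilde y_0))-F(\tilde y_0)}\,\tilde\rho(\tilde p)\;,
\]
so that passing to $M$ gives $\rho(\phi^t(p))=e^{\varkappa t}\,e^{\Delta F(t,\tilde p)}\,\rho(p)$ with $\Delta F(t,\tilde p):=F(\tilde\phi^t_{x_0}(\tilde y_0))-F(\tilde y_0)$. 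The goal thereby reduces to finding, for each compact $I\subset\R$, a uniform bound $\sup_{t\in I,\tilde p}|\Delta F(t,\tilde p)|<\infty$.

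To bound $|\Delta F|$, I would write $\Delta F(t,\tilde p)=\int_0^t dF(\partial_s\tilde\phi^s_{x_0}(\tilde y_0))\,ds=\int_0^t\tilde\eta(\bfV\widetilde Z(\tilde\phi^s(\tilde p)))\,ds$, where $\bfV\widetilde Z$ is the $\widetilde L$-component of $\widetilde Z$. Here two ingredients from the suspension setup intervene: since $\eta$ is a smooth form on the compact manifold $L$ and $\tilde\eta=\pi^*\eta$, its pointwise norm with respect to $g_{\widetilde L}$ is uniformly bounded on $\widetilde L$; and since $Z_\pm\in\fXub(M^1_\pm,\FF^1_\pm)$, the vector field $\widetilde Z_\pm\in\fXub(\widetilde M^1_\pm,\widetilde\FF^1_\pm)$ with respect to $\tilde g_{\text{\rm b},\pm}$, whose leafwise part agrees with $g_{\widetilde L}$ on the fibers $\{x\}\times\widetilde L$; hence $|\bfV\widetilde Z|_{g_{\widetilde L}}$ is uniformly bounded. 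Combining these two uniform bounds gives $|\Delta F(t,\tilde p)|\le C|t|$ with $C$ independent of $\tilde p$, and hence $|\rho(\phi^t(p))|\le e^{|\varkappa|\,|t|+C|t|}|\rho(p)|$. Taking $c':=\sup_{t\in I}e^{(|\varkappa|+C)|t|}$ yields the required inclusion $\phi^t(T_\epsilon)\subset T_{c'\epsilon}$ for all $t\in I$ and $\epsilon>0$.

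The only non-trivial step is the uniform boundedness of $|\bfV\widetilde Z|_{g_{\widetilde L}}$, which is where the hypothesis $Z_\pm\in\fXub$ is genuinely used. Because $\tilde g_{\text{\rm b},\pm}=\tilde\omega_{\text{\rm b},\pm}^2+g_{\widetilde\FF}$ splits orthogonally into its horizontal and leafwise parts, and the leafwise factor is the pullback of $g_{\widetilde L}$, the leafwise component of $\widetilde Z$ measured in $g_{\widetilde L}$ coincides with its leafwise component in $\tilde g_{\text{\rm b},\pm}$; the uniform boundedness then follows directly from $\widetilde Z_\pm\in\fXub(\widetilde M^1_\pm,\widetilde\FF^1_\pm)$. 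I expect no significant obstacle beyond checking this metric comparison carefully.
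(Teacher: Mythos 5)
Your proposal is correct and follows essentially the same route as the paper's proof. You both reduce the claim to showing that $\Delta F(t,\tilde p)=F(\tilde\phi^t_{x_0}(\tilde y_0))-F(\tilde y_0)$ is uniformly bounded above for $t\in I$, using the lift formula $\rho(\phi^t(p))=e^{\varkappa t}e^{\Delta F}\rho(p)$. The paper bounds $\Delta F$ by integrating $\tilde\eta=dF$ along a minimizing geodesic in $\widetilde L$ and invoking $d_{\widetilde L}(\tilde y,\tilde\phi^t_x(\tilde y))\le R$ (which itself comes from the $\R$-local bounded geometry implied by $Z_\pm\in\fXub$), while you integrate $\tilde\eta$ along the flow line itself and bound the leafwise speed $|\bfV\widetilde Z|$ directly. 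These are interchangeable, since the flow-line length upper-bounds the geodesic distance; the hypothesis $Z_\pm\in\fXub$ and the compactness of $L$ (hence $\|\eta\|_{L^\infty}<\infty$) are used in the same essential way in both.
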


\begin{proof}
Take some $R>0$ like in the proof of \Cref{p: d_FF(phi^t(p) p) ge c_1^{-1} |gamma| - c_2}. Let $\tilde y\in\widetilde L$ and $x\in\R^\pm$ such that $\rho([x,\tilde y])<\epsilon$. If $\bfc:[0,1]\to\widetilde L$ is a minimizing geodesic segment from $\tilde y$ to $\tilde\phi^t_x(\tilde y)$, then
\[
F(\tilde\phi^t_x(\tilde y))-F(\tilde y)=\int_0^1\bfc^*dF=\int_0^1\bfc^*\tilde\eta\le\|\tilde\eta\|_{L^\infty}d_{\widetilde L}(\tilde y,\tilde\phi^t_x(\tilde y))\le\|\eta\|_{L^\infty}R\;.
\]
Take also some $c'_1>0$ such that $e^{\varkappa t}\le c'_1$ for all $t\in I$. Then
\[
\rho(\phi^t([x,\tilde y]))=e^{F(\tilde \phi^t_x(\tilde y))-F(\tilde y)+\varkappa t}\rho([x,\tilde y])\le e^{\|\eta\|_{L^\infty}R}c'_1\epsilon\;.\qedhere
\]
\end{proof}

\begin{prop}\label{p: L subset overline W}
Suppose $\Gamma$ is nontrivial. For any $\epsilon>0$, there is some $0<\epsilon'<\epsilon$ such that, for all leaf $L'$ of $\FF$, if a connected component $W$ of $L'\cap T_\epsilon$ meets $T_{\epsilon'}$, then $L\subset\overline W$.
\end{prop}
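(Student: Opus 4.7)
The plan is to reduce to the covering picture of the suspension, where $\widetilde\FF$ is the trivial bundle $\widetilde\varpi:\widetilde M\to\widetilde L$ and everything is controlled by the sublevel sets of the primitive $F\in C^\infty(\widetilde L)$ of $\tilde\eta$, which satisfies $F(\gamma\cdot\tilde y)=F(\tilde y)-\ln a_\gamma$. The case $L'=L$ is trivial since $L\cap T_\epsilon=L$. Otherwise $L'=\pi_M(\{x_0\}\times\widetilde L)$ for some $x_0\ne0$, and $\pi_M$ restricts to a diffeomorphism $\{x_0\}\times\widetilde L\to L'$; under this identification, $L'\cap T_\epsilon$ corresponds to $V_a:=\{F<a\}\subset\widetilde L$ with $a:=\ln(\epsilon/|x_0|)$, and $W$ corresponds to a connected component $\widetilde W$ of $V_a$. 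The hypothesis that $W$ meets $T_{\epsilon'}$ yields $\tilde y^*\in\widetilde W$ with $F(\tilde y^*)<a':=\ln(\epsilon'/|x_0|)$, so the relevant gap is $a-a'=\ln(\epsilon/\epsilon')$.

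The first step is to produce a sequence $\tilde y_n\in\widetilde W$ with $F(\tilde y_n)\to-\infty$. Since $\Gamma$ is nontrivial and embeds in $\R^+$, pick $\gamma_0\in\Gamma$ with $a_{\gamma_0}>1$, and fix a loop $\sigma_0$ in $L$ based at some $p_L\in L$ representing $\gamma_0\in\pi_1(L,p_L)/\ker\hat h$. For each $y^*\in L$, conjugating $\sigma_0$ by a minimizing geodesic from $y^*$ to $p_L$ yields a loop $\sigma_{y^*}$ based at $y^*$ whose class in the abelian group $\Gamma$ is still $\gamma_0$, and whose length is uniformly bounded in $y^*$ by the compactness of $L$. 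Set
\[
C_0:=\sup_{y^*\in L}\sup_s\int_{\sigma_{y^*}|_{[0,s]}}\eta\;,
\]
and choose $\epsilon'>0$ so that $\ln(\epsilon/\epsilon')>C_0$. Lifting $\sigma_{y^*}$ to $\widetilde L$ from $\tilde y^*$ yields a path from $\tilde y^*$ to $\gamma_0\cdot\tilde y^*$ along which $F\le F(\tilde y^*)+C_0<a'+C_0<a$, so the lift stays inside $V_a$; hence $\gamma_0\cdot\tilde y^*\in\widetilde W$. Iterating with the lift started at $\gamma_0^n\cdot\tilde y^*$ (whose $F$-value has only decreased), one obtains $\tilde y_n:=\gamma_0^n\cdot\tilde y^*\in\widetilde W$ for every $n\ge0$, with $F(\tilde y_n)=F(\tilde y^*)-n\ln a_{\gamma_0}\to-\infty$.

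The second step promotes this to $L\subset\overline W$. Fix any $z\in L$ and any smooth path $\tau$ in $L$ from $y^*$ to $z$, with finite $\eta$-variation $C_\tau:=\sup_s|\int_{\tau|_{[0,s]}}\eta|$. Lifting $\tau$ to $\widetilde L$ starting at $\tilde y_n$ ends at some $\tilde z_n'\in\pi^{-1}(z)$, and along the lift $F$ stays within $C_\tau$ of $F(\tilde y_n)$; for $n$ large enough the whole lift lies in $V_a$, so $\tilde z_n'\in\widetilde W$ and $[x_0,\tilde z_n']\in W$. Writing $\tilde z_n'=\delta_n\cdot\tilde z$ for a fixed lift $\tilde z\in\pi^{-1}(z)$ and $\delta_n\in\Gamma$, the identity $F(\tilde z_n')=F(\tilde z)-\ln a_{\delta_n}\to-\infty$ forces $a_{\delta_n}\to+\infty$, whence
\[
[x_0,\tilde z_n']=[a_{\delta_n}^{-1}x_0,\tilde z]\longrightarrow[0,\tilde z]=z\quad\text{in }M\;.
\]
Since $z\in L$ was arbitrary, $L\subset\overline W$.

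The main technical obstacle is Step 1: one must choose $\epsilon'$ so that the lifted loop $\sigma_{y^*}$ never escapes $V_a$, uniformly in $y^*\in L$. This is exactly the role of the uniform constant $C_0$, whose existence rests on the compactness of $L$ and the abelianness of $\Gamma$ (used to shift basepoints freely without altering the class of $\sigma_{y^*}$). Once Step 1 is secured, Step 2 imposes no additional constraint on $\epsilon'$, because the budget $a-F(\tilde y_n)$ grows without bound and thus absorbs any fixed finite fluctuation $C_\tau$ along a path $\tau$ in $L$.
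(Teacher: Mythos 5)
Your proof is correct, and it takes a route parallel to but distinct from the paper's. Both rest on the same core observations: $L'\cap T_\epsilon$ is identified with the sublevel set $\{F<a\}$ in $\widetilde L$, and iterating a deck transformation $\gamma_0$ with $a_{\gamma_0}>1$ pushes $F\to-\infty$ while staying inside a connected component of that sublevel set, so $\rho\to0$ along $\varpi$-fibers. Where you differ is in how you certify connectivity and choose $\epsilon'$: the paper bounds the oscillation of $F$ over a connected compact fundamental domain $\bfF\subset\widetilde L$ and propagates along the connected union $W_0=\bigcup_{m\ge0}\gamma^m\cdot\bfF$, handling all of $L$ at once; you instead bound the $\eta$-variation along lifted loops conjugate to a fixed $\sigma_0$ (uniformly, by compactness of $L$) and then reach an arbitrary $z\in L$ via the lift of a path $\tau$, certifying membership in $\widetilde W$ one target point at a time. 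Each buys something: the paper's version is slicker and treats $L$ globally through $\bfF$; yours avoids having to choose a connected fundamental domain, and since the budget $a-F(\tilde y_n)$ grows without bound, your Step~2 imposes no extra constraint on $\epsilon'$, exactly as you note. One point worth making explicit is the sign of the deck element reached by the lift of $\sigma_{y^*}$: here the lift starting at $\tilde y^*$ does end at $\gamma_0\cdot\tilde y^*$ (not $\gamma_0^{-1}\cdot\tilde y^*$) because $\int_{\sigma_0}\eta=-\ln a_{\gamma_0}$, which matches $F(\gamma_0\cdot\tilde y^*)-F(\tilde y^*)=-\ln a_{\gamma_0}$ from~\eqref{T_gamma^* F = F + ln a_gamma}, together with the injectivity of $\gamma\mapsto a_\gamma$.
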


\begin{proof}
Let $\bfF\subset\widetilde L$ be a fundamental domain. We can choose $0<\epsilon'<\epsilon$ such that $e^{F(\tilde y)-F(\tilde y')}\epsilon'<\epsilon$ for all $\tilde y,\tilde y'\in\bfF$. Let $W$ be a connected component of $L'\cap T_\epsilon$ that meets $T_{\epsilon'}$ at some point $[x,\tilde y]$. We can assume $\tilde y\in\bfF$. For every $\tilde y'\in\bfF$, we have $[x,\tilde y']\in L'$ and
\[
|\rho([x,\tilde y'])|=e^{F(\tilde y')}|x|=e^{F(\tilde y')-F(\tilde y)}|\rho([x,\tilde y])|<e^{F(\tilde y')-F(\tilde y)}\epsilon'<\epsilon\;.
\]
So $\pi_M(\{x\}\times\bfF)\subset W$ because $\bfF$ is connected. Since $\Gamma$ is nontrivial and $h$ is injective, there is some $\gamma\in\Gamma$ such that $\gamma\cdot\bfF\cap\bfF\ne\emptyset$ and $a_\gamma>1$. Then $W_0:=\bigcup_{m=0}^\infty\gamma^m\cdot\bfF$ is connected in $\widetilde L$. Moreover, by~\eqref{T_gamma^* F = F + ln a_gamma}, for all $m\in\N_0$ and $\tilde y'\in\bfF$,
\[
|\rho([x,\gamma^m\tilde y'])|=e^{F(\gamma^m\tilde y')}|x|=a_\gamma^{-m}e^{F(\tilde y')}|x|=a_\gamma^{-m}|\rho([x,\tilde y'])|<a_\gamma^{-m}\epsilon\;,
\]
which is${}<\epsilon$ and converges to $0$ as $m\to\infty$. Since $[x,\gamma^m\tilde y']\in\varpi^{-1}([\tilde y'])$, it follows that $[x,\gamma^m\tilde y']\to[0,\tilde y']\equiv[\tilde y']$ as $m\to\infty$. Hence $\pi_M(\{x\}\times W_0)\subset W$ and $L\subset\overline{\pi_M(\{x\}\times W_0)}$.
\end{proof}

\subsection{The b-metrics $g_{\text{\rm c},\pm}$}\label{ss: g_c pm}

Using~\eqref{M_pm equiv [0 infty)_rho times L_varpi}, we can also define the metric $g_{\text{\rm c},\pm}\equiv(\varkappa\rho)^{-2}\,(d\rho)^2+g_L$ on $\mathring M_\pm$ and its lift $\tilde g_{\text{\rm c},\pm}\equiv(\varkappa\tilde\rho)^{-2}\,(d\rho)^2+g_{\widetilde L}$ on $\mathring{\widetilde M}_\pm$. These are restrictions to the interiors of b-metrics, also denoted by $g_{\text{\rm c},\pm}$ and $\tilde g_{\text{\rm c},\pm}$. The b-metrics $\varkappa^2g_{\text{\rm c},\pm}$ and $\varkappa^2\tilde g_{\text{\rm c},\pm}$ are exact and cylindrical around the boundary (\Cref{ss: b-geometry}); in particular, the level hypersurfaces of $\rho$ and $\tilde\rho$ are totally geodesic for $g_{\text{\rm c},\pm}$ and $\tilde g_{\text{\rm c},\pm}$.

\begin{prop}\label{p: g_c pm & g_pm are quasi-isometric}
The metrics $g_{\text{\rm c},\pm}$ and $g_{\text{\rm b},\pm}$ are quasi-isometric on $\mathring M_\pm$; more precisely, 
\[
|{\cdot}|_{g_{\text{\rm c},\pm}}\le\sqrt{2(1+\varkappa^{-2}\|\eta\|_{L^\infty})}\,|{\cdot}|_{g_{\text{\rm b},\pm}}\;,\quad
|{\cdot}|_{g_{\text{\rm b},\pm}}\le \sqrt2\,|{\cdot}|_{g_{\text{\rm c},\pm}}\;.
\]
\end{prop}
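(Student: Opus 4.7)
The plan is to reduce the inequality to a pointwise comparison of the two metrics via the identity~\eqref{rho_pm^-1 d rho_pm}. For any $v\in TM_\pm$, I would decompose $v=v_\bfV+v_\bfH$ with respect to the splitting $TM_\pm=T\FF\oplus\bfH$. Since $\codim\FF=1$ and $\omega_{\text{\rm b},\pm}$ is the $g_{\text{\rm b},\pm}$-dual of the unit horizontal vector $Z_\pm$, one gets
\[
g_{\text{\rm b},\pm}(v,v)=\omega_{\text{\rm b},\pm}(v)^2+|v_\bfV|_{g_\FF}^2\;.
\]
The tensor $g_L$ in the definition of $g_{\text{\rm c},\pm}$ is to be read as $\varpi^*g_L$, which equals $g_\FF$ on $T\FF$ and vanishes on $\bfH$. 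Combined with the rewriting of~\eqref{rho_pm^-1 d rho_pm} as $\varkappa\omega_{\text{\rm b},\pm}=\rho^{-1}d\rho-\eta$, and the fact that $\eta$ is leafwise so $\eta(v)=\eta(v_\bfV)$, this yields
\[
g_{\text{\rm c},\pm}(v,v)=\bigl(\omega_{\text{\rm b},\pm}(v)+\varkappa^{-1}\eta(v_\bfV)\bigr)^2+|v_\bfV|_{g_\FF}^2\;.
\]

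The two bounds then reduce to elementary estimates on $\beta^2+t^2$ versus $(\beta+\gamma)^2+t^2$, where $\beta=\omega_{\text{\rm b},\pm}(v)$, $\gamma=\varkappa^{-1}\eta(v_\bfV)$ and $t=|v_\bfV|_{g_\FF}$. By Cauchy-Schwarz, $|\gamma|\le\varkappa^{-1}\|\eta\|_{L^\infty}\,t$. The upper bound for $g_{\text{\rm c},\pm}$ follows by expanding $(\beta+\gamma)^2\le 2\beta^2+2\gamma^2$ and using $t^2\le g_{\text{\rm b},\pm}(v,v)$, giving
\[
g_{\text{\rm c},\pm}(v,v)\le 2\beta^2+2\varkappa^{-2}\|\eta\|_{L^\infty}^2\,t^2+t^2\le 2(1+\varkappa^{-2}\|\eta\|_{L^\infty}^2)\,g_{\text{\rm b},\pm}(v,v)\;.
\]
The reverse bound follows symmetrically, writing $\beta=(\beta+\gamma)-\gamma$, estimating $\beta^2\le 2(\beta+\gamma)^2+2\gamma^2$, and using $t^2\le g_{\text{\rm c},\pm}(v,v)$.

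I do not anticipate a real obstacle here: once the decomposition and the reformulation of $g_{\text{\rm c},\pm}$ are in place, this is a two-variable convexity exercise. The one point requiring care is the interpretation of $g_L$ in $g_{\text{\rm c},\pm}$ as a degenerate tensor on $M_\pm$, so that the horizontal part of $v$ contributes to $g_{\text{\rm c},\pm}$ only through $d\rho$. The constants arise naturally in terms of $\|\eta\|_{L^\infty}^2$; the statement's bare $\sqrt{2}$ in the reverse inequality corresponds to the regime $2\varkappa^{-2}\|\eta\|_{L^\infty}^2\le 1$, and otherwise holds with the same factor $\sqrt{2(1+\varkappa^{-2}\|\eta\|_{L^\infty}^2)}$ as the first bound, yielding quasi-isometry in either case.
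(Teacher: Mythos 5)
Your proof is correct and follows essentially the same route as the paper's: both decompose $u=v_\bfV+v_\bfH$ with respect to $T\FF\oplus\bfH$, use~\eqref{rho_pm^-1 d rho_pm} to rewrite $(\varkappa\rho)^{-2}(d\rho)^2=(\omega_{\text{\rm b},\pm}+\varkappa^{-1}\eta)^2$, and reduce everything to the pointwise formulas $g_{\text{\rm b},\pm}(u,u)=\beta^2+t^2$, $g_{\text{\rm c},\pm}(u,u)=(\beta+\gamma)^2+t^2$ with $|\gamma|\le\varkappa^{-1}\|\eta\|_{L^\infty}t$, followed by the quadratic inequality $(\beta+\gamma)^2\le2\beta^2+2\gamma^2$. (The paper phrases the same computation slightly differently, decomposing $u$ into a $\ker(d\rho)$ part and a $\bfH$ part before computing $|u|_{g_{\text{\rm c},\pm}}^2$ and then isolating the cross term.)

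Your remark about the constants is in fact a real improvement: the paper's first constant should be $\sqrt{2(1+\varkappa^{-2}\|\eta\|_{L^\infty}^2)}$, with $\|\eta\|_{L^\infty}$ squared, as you write. And the bare $\sqrt2$ in the paper's second inequality requires the implicit bound $\rho^{-2}\omega(v_\bfH)^2\le g_{\text{\rm c},\pm}(u,u)$, i.e.\ $\beta^2\le(\beta+\gamma)^2+t^2$, which is false when $\gamma$ and $\beta$ have opposite signs and $|\beta|$ is large relative to $t$; this needs precisely the smallness condition $2\varkappa^{-2}\|\eta\|_{L^\infty}^2\le1$ you identify. Your fallback constant $\sqrt{2(1+\varkappa^{-2}\|\eta\|_{L^\infty}^2)}$ gives quasi-isometry unconditionally, which is all the paper actually uses downstream.
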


\begin{proof}
Take any $p\in\mathring M_\pm$ and $u\in T_p\mathring M_\pm\equiv T_pM=\bfH_p\oplus\bfV_p$, and let $v=\bfV u$ and $w=\bfH u$. Then
\[
|u|_{g_{\text{\rm b},\pm}}^2=\rho^{-2}\omega(w)^2+|v|_{g_L}^2\;.
\]
By~\eqref{d rho_pm} and since $\omega(X)=1$, it follows that $u$ is the sum of the vectors
\[
v\mp|\varkappa|^{-1}\rho\eta(v)X_p\in\ker(d\rho)_p\;,\quad w\pm|\varkappa|^{-1}\rho\eta(v)X_p\in\bfH_p=\ker\varpi_{*p}\;,
\]
obtaining
\begin{align*}
|u|_{g_{\text{\rm c},\pm}}^2&=(\varkappa\rho)^{-2}\big(\pm|\varkappa|\omega(w)+\rho\eta(v)\big)^2+|v|_{g_L}^2\\
&=|u|_{g_{\text{\rm b},\pm}}^2+\varkappa^{-2}\eta(v)^2\pm2(|\varkappa|\rho)^{-1}\omega(w)\eta(v)\;.
\end{align*}
Then
\begin{align*}
|u|_{g_{\text{\rm c},\pm}}^2&\le|u|_{g_{\text{\rm b},\pm}}^2+2\varkappa^{-2}\eta(v)^2+\rho^{-2}\omega(w)^2
\le2(1+\varkappa^{-2}\|\eta\|_{L^\infty})|u|_{g_{\text{\rm b},\pm}}^2\;,\\[4pt]
|u|_{g_{\text{\rm b},\pm}}^2&\le|u|_{g_{\text{\rm c},\pm}}^2+\rho^{-2}\omega(w)^2\le2|u|_{g_{\text{\rm c},\pm}}^2\;.\qedhere
\end{align*}
\end{proof}

\subsection{Vector fields}\label{ss: vector fields}

Assume again that $\mathring{\widetilde M}_\pm$ and $\mathring M_\pm$ are endowed with $\tilde g_{\text{\rm b},\pm}$ and $g_{\text{\rm b},\pm}$. Recall that any $A\in\fX(M,\FF)$ induces a vector field  $A_\pm\in\fX(M,\FF)$, whose restriction to $\mathring M_\pm$ is also denoted by $A_\pm$ (\Cref{ss: simple fol flows,ss: suspension - homotheties}).

\begin{prop}\label{p: Y in fX_ub(mathring M_pm mathring FF_pm)}
$Y_\pm\in\fXub(\mathring M_\pm,\mathring\FF_\pm)$.
\end{prop}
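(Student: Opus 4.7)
The plan is to pull everything back to the universal cover and use the cylindrical product structure of $(\mathring{\widetilde M}_\pm, \tilde g_{\text{\rm b},\pm})$. As already noted in the proof of \Cref{p: FF_pm is of bd geom}, the change of variable $t = \varkappa^{-1}\ln|x|$ defines an isometry between $(\R^\pm, (\varkappa x)^{-2}(dx)^2)$ and $(\R, (dt)^2)$, and therefore $(\mathring{\widetilde M}_\pm, \tilde g_{\text{\rm b},\pm})$ is isometric to the Riemannian product $(\R \times \widetilde L, (dt)^2 + g_{\widetilde L})$ in the $(t,\tilde y)$ coordinates. By~\eqref{tilde xi^t(tilde y x) widetilde Y}, one has $\widetilde Y = \varkappa x \partial_x$, and a direct computation with $x = \pm e^{\varkappa t}$ gives $\partial_t = \varkappa x \partial_x$, so $\widetilde Y = \partial_t$ in these coordinates.

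Since $\widetilde Y$ is a unit vector field tangent to the $\R$-factor of a Riemannian product (with the other factor compact, hence of bounded geometry), it is parallel for the Levi-Civita connection of $\tilde g_{\text{\rm b},\pm}$, and a fortiori all its covariant derivatives (with respect to either $\widetilde\nabla$ or $\widetilde\rnabla$, which give equivalent $C^m_{\text{\rm ub}}$-norms by \Cref{ss: transverse structures of bd geometry}) vanish. Hence $\widetilde Y \in \fXub(\mathring{\widetilde M}_\pm)$. Moreover $\widetilde Y \in \fX(\mathring{\widetilde M}_\pm, \mathring{\widetilde\FF}_\pm)$, since $\widetilde Y$ is projectable by $\widetilde\varpi$ (with projection $0$), so its flow $\tilde\xi^t$ preserves $\widetilde\FF$.

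Now $\pi_{M_\pm}: (\mathring{\widetilde M}_\pm, \tilde g_{\text{\rm b},\pm}) \to (\mathring M_\pm, g_{\text{\rm b},\pm})$ is by construction a normal Riemannian covering, hence a local isometry, and $Y_\pm$ is the $\pi_{M_\pm}$-projection of the $\Gamma$-invariant vector field $\widetilde Y$ (the $\Gamma$-invariance follows from the second part of~\eqref{T_gamma tilde phi_x^t = tilde phi_a_gamma x^t T_gamma} applied to $\tilde\xi$, which is equivalent to $a_\gamma x\,\partial_x = T_{\gamma*}(x\,\partial_x)$). Since being of bounded geometry is a local condition preserved under local isometries, $Y_\pm$ inherits from $\widetilde Y$ the uniform bounds on all its covariant derivatives, so $Y_\pm \in \fXub(\mathring M_\pm)$. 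Together with $Y_\pm \in \fX(\mathring M_\pm, \mathring\FF_\pm)$ (which is the restriction of $Y \in \fX(M,\FF)$ to the open $\FF$-saturated subset $M^1_\pm$), this gives $Y_\pm \in \fXub(\mathring M_\pm, \mathring\FF_\pm)$.

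There is no real obstacle here: the whole argument reduces to recognising the cylindrical product structure in the right coordinates, after which $\widetilde Y$ is literally a unit translation vector field. The only point requiring a little care is the passage from $\widetilde Y$ to $Y_\pm$ via the covering $\pi_{M_\pm}$, but this is routine once one notes that $\pi_{M_\pm}$ is a local isometry and $\widetilde Y$ is $\Gamma$-invariant.
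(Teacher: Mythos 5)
Your argument is correct and coincides with the paper's: both use the change of variable $t=\varkappa^{-1}\ln|x|$ to identify $(\mathring{\widetilde M}_\pm,\tilde g_{\text{\rm b},\pm})$ with the cylinder $(\R\times\widetilde L,(dt)^2+g_{\widetilde L})$, in which $\widetilde Y$ becomes the parallel coordinate field $\partial_t$, and then descend through the isometric $\Gamma$-covering $\pi_{M_\pm}$. One small slip: you describe $\widetilde L$ as compact, but only $L$ is compact; $\widetilde L$ is a (possibly non-compact) covering of $L$, and what the argument actually needs — and you implicitly use — is that $\widetilde L$ is of bounded geometry, which holds for covers of closed Riemannian manifolds.
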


\begin{proof}
This follows from the proof of \Cref{p: FF_pm is of bd geom} because $\partial_t$ corresponds to $\varkappa x\partial_x$ by the change of coordinate $t=\varkappa^{-1}\ln|x|$.
\end{proof}

\begin{lem}\label{l: V}
If $V\in\fX_\co(\FF)$, then $V_\pm\in\fXub(\mathring\FF_\pm)$.
\end{lem}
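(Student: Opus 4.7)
\smallskip

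\textbf{Plan of proof.} The key point is that $V\in\fX_\co(\FF)$ is smooth on the whole of $M$ including the compact leaf $L = M^0$, with compact support. Via the foliated local embedding $\bfpi:(\bfM,\bfFF)\to(M,\FF)$, the lift $V_\pm$ is a smooth compactly supported section of $T\FF_\pm$ over $M_\pm$, tangent to $\partial M_\pm = L$. Using the global collar trivialization~\eqref{M_pm equiv [0 infty)_rho times L_varpi}, we may write $M_\pm\equiv[0,\infty)_\rho\times L_\varpi$; lifting to $\widetilde M_\pm\equiv[0,\infty)_x\times\widetilde L$ via $\pi_M$, the leafwise tangent frame equals $\partial_{\tilde y^j}|_x$, so $\widetilde V=\widetilde V^j(x,\tilde y)\,\partial_{\tilde y^j}|_x$ where the components are smooth up to the boundary $x=0$ and the support is contained, modulo $\Gamma$-translates, in a set $[0,X_0]\times\tilde K$ with $\tilde K\subset\widetilde L$ compact.

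Second, I would verify bounded geometry of $V_\pm$ by working in the normal foliated charts produced in the proof of \Cref{p: FF_pm is of bd geom}. Those charts on $\mathring{\widetilde M}_\pm$ are $\tilde\chi_{\tilde p,\pm}=(t_{x_0},y_q)$, with $t_{x_0}=\varkappa^{-1}(\ln|x|-\ln|x_0|)$ and $y_q$ a normal chart on $\widetilde L$ at $\tilde q$, and they descend via $\pi_M$ to charts on $\mathring M_\pm$. Since $t_{x_0}$ depends only on $x$, the frame $\partial_{y_q^j}|_{t_{x_0}}$ coincides with $\partial_{\tilde y^j}|_x$ (after expressing $\partial_{\tilde y^j}|_x$ in the $y_q$ coordinates), so the chart components of $V_\pm$ are functions of the form $(t_{x_0},y_q)\mapsto \widetilde V^j_q(x_0 e^{\varkappa t_{x_0}},y_q)$ for $(t_{x_0},y_q)\in(-r',r')\times B$.

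Third, I would bound the derivatives. Tangential derivatives $\partial_{y_q^i}$ commute with evaluation in $x$ and give $\partial_{y_q^i}\widetilde V^j_q$, uniformly bounded since $V$ is smooth and compactly supported. The transverse derivative is $\partial_{t_{x_0}}=\varkappa x\,\partial_x = \varkappa\rho\,\partial_\rho$ (via $\rho=e^Fx$), and iterating gives
\[
(\partial_{t_{x_0}})^a = \sum_{k=1}^{a}\varkappa^{a}S(a,k)\,x^k\partial_x^k,
\]
where $S(a,k)$ are Stirling numbers. Because $\widetilde V^j_q$ extends smoothly to $x=0$ with compact support in $[0,X_0]\times B$, each $x^k\partial_x^k\widetilde V^j_q$ is bounded on $[0,\infty)\times B$. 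Hence all mixed derivatives $\partial_{t_{x_0}}^a\partial_{y_q}^b\widetilde V^j_q$ are uniformly bounded, independently of $(t_{x_0},y_q)$ in the chart.

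Finally, uniformity over the chart center $\tilde p=(x_0,\tilde q)$ follows from $\Gamma$-invariance: a direct computation using $T_\gamma^*=\id$ on $\widetilde V$ and the fact that $y_{\gamma\tilde q}=y_q\circ\gamma^{-1}$ shows that the chart components at a $\Gamma$-translated center $(a_\gamma x_0,\gamma\tilde q)$ agree with those at $(x_0,\tilde q)$, so the bounds obtained on a fundamental domain suffice. The main technical point is the third step, where the smoothness of $V$ up to $\rho=0$ combined with the algebraic identity for $(x\partial_x)^a$ compensates for the degenerate behaviour of the b-metric near the boundary; this is precisely the mechanism that makes smooth compactly supported leafwise vector fields on $M$ automatically of bounded geometry on $\mathring M_\pm$.
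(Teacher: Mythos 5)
Your proposal is correct and follows essentially the same route as the paper's proof. Both arguments work in the normal foliated charts from the proof of \Cref{p: FF_pm is of bd geom}, exploit the identity $\partial_t = \varkappa x\partial_x = \varkappa\rho\partial_\rho$ together with the Stirling-type expansion $(x\partial_x)^a=\sum_k S(a,k)\,x^k\partial_x^k$ (this is exactly the paper's \Cref{cl: partial_t^k} with $c_{k,l}=\varkappa^kS(k,l)$), and then use that $V$ is smooth across $\rho=0$ with compact support to make each $x^k\partial_x^k$ of the chart components bounded. The only organizational difference is that the paper reduces to $V$ supported in a single distinguished domain $U_q$ by a $\varpi$-lifted partition of unity of $L$, whereas you work directly with the $\Gamma$-invariant lift $\widetilde V$ on $\widetilde M_\pm$ and handle uniformity over chart centers via $\Gamma$-invariance; the partition-of-unity reduction is a bit cleaner for pinning down the compactness of the support in the $(x,y)$-chart, but the mechanism is the same.
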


\begin{proof}
Consider the normal foliated charts of $\mathring\FF_\pm$, $\chi_{p,\pm}=(t_{x_0},y_q)=(t,y)$ on $U_{p,\pm}$, like in the proof of \Cref{p: FF_pm is of bd geom}, and the foliated charts of $\widetilde\FF$, $\chi_q=(x,y_q)=(x,y)$ on $U_q$, like in the proof of \Cref{l: d_FF(phi^t(p) p) ge inj_L}. Then $U_q=\varpi^{-1}(B_L(q,r))$, $U_{p,\pm}=U_q\cap\mathring M_\pm$ and $x=e^{\varkappa t}x_0$. Let $\partial_i=\partial_{y^i}$ and $\partial_I=\partial_{i_1}\cdots\partial_{i_m}$ for any multi-index $I=(i_1,\dots,i_m)$. Take a partition of unity subordinated to a finite open cover of the compact manifold $L$ by balls $B_L(q,r)$ ($q\in L$). By using the $\varpi$-lift of this partition of unity to $M$, it easily follows that we can assume $V$ is supported in some $U_q$. Thus we can write $V=f^i(x,y)\partial_i$ on $U_q$ for functions $f^i\in\Cinftyc(\R\times B)\equiv\Cinftyc(U_q)$, and write $V_\pm=h^i(t,y)\partial_i$ on $U_{p,\pm}$ for functions $h^i\in C^\infty(\R_\pm\times B)$. We have
\begin{equation}\label{partial_Ih^i(t y)}
\partial_Ih^i(t,y)=\partial_If^i(e^{\varkappa t}x_0,y)\;.
\end{equation}

\begin{claim}\label{cl: partial_t^k}
For $l\le k$ in $\N$, there are $c_{k,l}\in\N$ such that, on $U_{p,\pm}$, 
\[
\partial_t^k=\sum_lc_{k,l}x^l\partial_x^l=\sum_l(\pm1)^lc_{k,l}\rho^lX^l\;.
\]
\end{claim}

To simplify the notation, we define $c_{k,l}$ for all $k,l\in\Z$ by setting $c_{k,l}=0$ if $\min\{k,l\}<0$, $c_{0,0}=1$, and $c_{k,l}=\varkappa(lc_{k-1,l}+c_{k-1,l-1})$ if $\max\{k,l\}>0$. Note that $c_{k,l}=0$ if $l\le0<k$ or $l>k$. 

The first equality of \Cref{cl: partial_t^k} follows by induction on $k$. The case $k=1$ is true because $\partial_t=\varkappa x\partial_x$. If $k>1$ and the first equality holds for $k-1$, then
\begin{align*}
\partial_t^k&=\varkappa x\partial_x\sum_lc_{k-1,l}x^l\partial_x^l
=\sum_l\varkappa c_{k-1,l}\big(x^{l+1}\partial_x^{l+1}+x\big[\partial_x,x^l\big]\partial_x^l\big)\\
&=\sum_l\varkappa c_{k-1,l}\big(x^{l+1}\partial_x^{l+1}+lx^l\partial_x^l\big)
=\sum_l\varkappa(lc_{k-1,l}+c_{k-1,l-1})x^l\partial_x^l\;.
\end{align*}

The second equality of \Cref{cl: partial_t^k} holds because
\[
\tilde\rho^l\widetilde X^l=e^{lF(y)}|x|^l(e^{-F(y)}\partial_x)^l=|x|^l\partial_x^l=(\pm1)^lx^l\partial_x^l\;.
\]

By~\eqref{partial_Ih^i(t y)} and Claim~\ref{cl: partial_t^k},
\[
\partial_t^{k+1}\partial_Ih^i(t,y)=\sum_{l=1}^k(\pm1)^lc_{k,l}\rho^lX^l\partial_If^i(e^{\varkappa t}x_0,y)\;.
\]
Thus every function $|\partial_t^{k+1}\partial_Ih^i|$ is uniformly bounded on $\R_\pm\times B$ because $f^i\in\Cinftyc(\R\times B)\equiv\Cinftyc(U_q)$ and $X\in\fX(M)$.
\end{proof}

\begin{prop}\label{p: Z -> A}
For any $\epsilon>0$, there is some $A\in\fXcom(M,\FF)$ such that $A_\pm\in\fXub(\mathring M_\pm,\mathring\FF_\pm)$ and $A=Z$ on $T_\epsilon$.
\end{prop}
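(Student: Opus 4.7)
The plan is to interpolate between $Z$ (near $M^0 \equiv L$) and the much simpler foliated vector field $Y$ (away from $L$) by a cutoff supported in a compact tubular neighborhood. The key observation enabling this is that $\overline{Y} = \overline{Z} = \varkappa x\partial_x$, so the difference $V := Z - Y$ lies in $\fX(\FF)$, while $Y$ already satisfies the required uniform bound on $\mathring M_\pm$ by Proposition~\ref{p: Y in fX_ub(mathring M_pm mathring FF_pm)} and Lemma~\ref{l: V} applies to compactly supported leafwise vector fields.

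Concretely, I would choose some $\epsilon' > \epsilon$ and $f \in \Cinftyc(\R)$ with $f \equiv 1$ on $[-\epsilon,\epsilon]$ and $\supp f \subset (-\epsilon',\epsilon')$, and set $\mu = f\circ\rho \in C^\infty(M)$. Since $L$ is compact and $M \equiv \R_\rho \times L_\varpi$, the set $\overline{T_{\epsilon'}} \equiv [-\epsilon',\epsilon'] \times L$ is compact, so $\supp(\mu V) \subset \overline{T_{\epsilon'}}$ gives $\mu V \in \fX_\co(\FF)$. Define
\[
A = Y + \mu V \in \fX(M,\FF),
\]
which is smooth on $M$, belongs to $\fX(M,\FF)$ because $Y \in \fX(M,\FF)$ and $\mu V \in \fX(\FF) \subset \fX(M,\FF)$, and satisfies $A = Z$ throughout $T_\epsilon$ because $\mu \equiv 1$ there.

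The uniform-bounded-geometry assertion on $\mathring M_\pm$ then decomposes cleanly: $Y_\pm \in \fXub(\mathring M_\pm, \mathring\FF_\pm)$ by Proposition~\ref{p: Y in fX_ub(mathring M_pm mathring FF_pm)}, and $(\mu V)_\pm \in \fXub(\mathring\FF_\pm) \subset \fXub(\mathring M_\pm, \mathring\FF_\pm)$ by Lemma~\ref{l: V} applied to $\mu V \in \fX_\co(\FF)$, so $A_\pm = Y_\pm + (\mu V)_\pm$ belongs to $\fXub(\mathring M_\pm, \mathring\FF_\pm)$.

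The only genuine subtlety is completeness of $A$, since $M$ is non-compact and $A$ itself need not have compact support. The argument I have in mind is a gluing one: $A = Z$ on $T_\epsilon \supset L$ and $A = Y$ on $M \setminus \overline{T_{\epsilon'}}$, where both $Z$ and $Y$ are complete, while on the compact transition region $\overline{T_{\epsilon'}} \setminus T_\epsilon$ any integral curve of $A$ exits in finite time and then continues for all subsequent time as a $Z$- or $Y$-trajectory. This standard gluing, together with the bounded-geometry inclusion $\fXub(\mathring M_\pm) \subset \fXcom(\mathring M_\pm)$ from Section~3.4.3 (which handles the piece away from $L$) and the trivial matching at $L$ where $A = Z$, yields $A \in \fXcom(M,\FF)$. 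I expect completeness to be the one step requiring care, but it should be routine once the geometric picture above is made precise.
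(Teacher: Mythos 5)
Your proof is correct and follows essentially the same route as the paper's: the paper likewise sets $V = Z - Y$ (realized via the decomposition $\widetilde Z = (\varkappa x\partial_x, \widetilde Z_x)$), multiplies by a cutoff $\lambda$ that is $1$ on $T_\epsilon$ to get $V' \in \fX_\co(\FF)$, and takes $A = Y + V'$, invoking exactly \Cref{l: V} and \Cref{p: Y in fX_ub(mathring M_pm mathring FF_pm)}. The only difference is that you spell out the completeness of $A$ (which the paper asserts without comment); your gluing/invariance argument for it is fine, and the inclusion $\fXub(\mathring M_\pm)\subset\fXcom(\mathring M_\pm)$ you cite does the work off $L$.
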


\begin{proof}
Let $\widetilde V=(0,\widetilde Z_x)\in\fX(\widetilde\FF)$, which projects to a vector field $V\in\fX(\FF)$ by~\eqref{T_gamma tilde phi_x^t = tilde phi_a_gamma x^t T_gamma}. For any $\lambda\in\Cinftyc(M)$ such that $0\le\lambda\le1$ and $\lambda=1$ on $T_\epsilon$, we have $V':=\lambda V\in\fX_\co(\FF)$. Then $V'_\pm\in\fXub(\mathring\FF_\pm)$ by \Cref{l: V}, $A:=Y+V'\in\fXcom(M,\FF)$, $A=Z$ on $T_\epsilon$, and $A_\pm=Y_\pm+V'_\pm\in\fXub(\mathring M_\pm,\mathring\FF_\pm)$ by \Cref{p: Y in fX_ub(mathring M_pm mathring FF_pm)}. 
\end{proof}

\section{Global objects on foliations with simple foliated flows}\label{s: global objects}

Consider the notation of \Cref{ss: folns almost w/o hol,ss: simple fol flows}, where $M$ is compact, $\FF$ is transversely oriented, and $\phi$ is transversely simple.

\subsection{Tubular neighborhoods of $M^0$}\label{ss: tubular neighborhoods of M^0}

In the following, for $L\in\pi_0M^0$ (the set of leaves in $M^0$), we have corresponding objects $\hat h_L$, $h_L$, $\Gamma_L$, $\pi_L:\widetilde L\to L$, $\varkappa_L$ and $a_{L,\gamma}$ (\Cref{ss: simple fol flows}). Consider also the corresponding suspension foliated manifold, $(M'_L,\FF'_L)$, and all other associated objects (\Cref{ss: suspension - homotheties,ss: transv simple flows - suspension,ss: omega - suspension,ss: def func of M^0 - suspension,ss: boundary-def func of M_pm - suspension,ss: g_M,ss: g_b pm,ss: g_c pm}). A prime and the subscript ``$L$'' is added to their notation; for instance, we have $\xi'_L=\{\xi^{\prime\,t}_L\}$, $Y'_L$, $M^{\prime\,0}_L$, $M^{\prime\,1}_L$, $\FF^{\prime\,1}_L$, $\varpi'_L$, $\rho'_L$, $T'_{L,\epsilon}$, $T^{\prime\,1}_{L,\epsilon}$, $X'_L$, $\omega'_L$, $\eta'_L$, $g_{M'_L}$ and $g_{\FF'_L}$. The corresponding disjoint unions or combinations, with $L$ running in $\pi_0M^0$, are denoted by $M'$, $\FF'$, $\xi'=\{\xi^{\prime\,t}\}$, $Y'$, $M^{\prime\,0}$, $M^{\prime\,1}$, $\FF^{\prime\,1}$, $\varpi'$, $\rho'$, $T'_\epsilon$, $T^{\prime\,1}_\epsilon$, $X'$, $\omega'$, $\eta'$, $g_{M'}$ and $g_{\FF'}$, removing the subscript ``$L$''.

By the Reeb's local stability, if $\epsilon>0$ is small enough, there is a tubular neighborhood of every $L$ in $M$, $\varpi_L:T_{L,\epsilon}\to L$, such that $T_{L,\epsilon}$ is diffeomorphic to $T'_{L,\epsilon}$, with $\varpi_L$ and $\FF|_{T_{L,\epsilon}}$ corresponding to $\varpi'_L$ and $\FF'_L|_{T'_{L,\epsilon}}$; we simply write $\varpi_L\equiv\varpi'_L$ and $\FF\equiv\FF'_L$ on $T_{L,\epsilon}\equiv T'_{L,\epsilon}$. We can assume the closures $\overline{T_{L,\epsilon}}$ are disjoint one another. Then the combination of the maps $\varpi_{L,\epsilon}$ is a tubular neighborhood of $M^0$ in $M$,
\[
\varpi\equiv\varpi':T_\epsilon:=\bigcup_LT_{L,\epsilon}\equiv T'_\epsilon\to M^0\equiv M^{\prime\,0}\;.
\]

\subsection{Collar neighborhoods of every $\partial M_l$}\label{ss: collar neighborhoods of partial M_l}

Given any connected component $M^1_l$ of $M^1$, consider only leaves $L\in\pi_0(M^0\cap\overline{M^1_l})\equiv\pi_0(\partial M_l)$. The notation $(M'_{L,l},\FF'_{L,l})$ is used for $(M'_{L,+},\FF'_{L,+})$ (resp., $(M'_{L,-},\FF'_{L,-})$) if the transverse orientation of $\FF_l$ along $L$ points inwards (resp., outwards), like the transverse orientation along $L$ of $\FF'_{L,+}$ (resp., $\FF'_{L,-}$). This kind of change is applied to the rest of notation concerning these foliated manifolds with boundary (\Cref{ss: transv simple flows - suspension,ss: boundary-def func of M_pm - suspension,ss: g_M,ss: g_b pm,ss: g_c pm,ss: vector fields}). For instance, we obtain $\xi'_{L,l}=\{\xi^{\prime\,t}_{L,l}\}$, $Y'_{L,l}$, $\varpi'_{L,l}$, $\rho'_{L,l}$, $\nu'_{L,l}$, $T'_{L,l,\epsilon}$, $\omega'_{\text{\rm b},L,l}$, $\eta'_{L,l}$, $g'_{\text{\rm b},L,l}$ and $g'_{\text{\rm c},L,l}$. Similarly, we have $(M^{\prime\,1}_{L,l},\FF^{\prime\,1}_{L,l})\equiv(\mathring M'_{L,l},\mathring\FF'_{L,l})$, whose Molino's description involves $\widetilde M^{\prime\,1}_{L,l}$, $\widetilde\FF^{\prime\,1}_{L,l}$, $h'_{L,l}:\Gamma_L\to\R$ and $D'_{L,l}:\widetilde M^{\prime\,1}_{L,l}\to\R$. We have $\mathring T'_{L,l,\epsilon}\equiv T'_{L,\epsilon}\cap M^{\prime\,1}_l=:T^{\prime\,1}_{L,l,\epsilon}$. The corresponding disjoint unions or combinations, with $L$ running in $\pi_0(\partial M_l)$, are denoted by $M'_l$, $\FF'_l$, $\xi'_l=\{\xi^{\prime\,t}_l\}$, $Y'_l$, $\varpi'_l$, $\rho'_l$, $\nu'_{L,l}$, $T'_{l,\epsilon}$, $\omega'_{\text{\rm b},l}$, $\eta'_l$, $g'_{\text{\rm b},l}$ and $g'_{\text{\rm c},l}$, deleting the subscript ``$L$''. In the same way, we have $M^{\prime\,1}_l$, $\FF^{\prime\,1}_l$ and $T^{\prime\,1}_{l,\epsilon}=T'_\epsilon\cap M^{\prime\,1}_l\equiv\mathring T'_{l,\epsilon}$.

Next, we delete ``$l$'' from this notation and use boldface for the corresponding disjoint unions or combinations for all $l$, obtaining $\bfM'$, $\bfFF'$, $\bfvarpi'$, $\bfrho'$, $\bfnu'$, $\bfT'_\epsilon$, $\bfomega'_{\text{\rm b}}$, $\bfeta'$, $\bfg'_{\text{\rm b}}$ and $\bfg'_\co$.
 
On the other hand, $\varpi_L:T_{L,\epsilon}\to L$ induces a collar neighborhood $\varpi_{L,l}:T_{L,l,\epsilon}\to L$ of the boundary component $L$ of $M_l$, and the identity $T_{L,\epsilon}\equiv T'_{L,\epsilon}$ induces an identity $T_{L,l,\epsilon}\equiv T'_{L,l,\epsilon}$, and we have $\varpi_{L,l}\equiv\varpi'_{L,l}$ and $\FF_l\equiv\FF'_{L,l}$ on $T_{L,l,\epsilon}\equiv T'_{L,l,\epsilon}$. Moreover $T^{\prime\,1}_{L,l,\epsilon}\equiv T^1_{L,l,\epsilon}:=T_{L,\epsilon}\cap M^1_l\equiv\mathring T_{L,l,\epsilon}$ and $T^{\prime\,1}_{L,l,\epsilon}\equiv T^1_{l,\epsilon}:=T_\epsilon\cap M^1_l\equiv\mathring T_{L,l,\epsilon}$.

The combination of the maps $\varpi_{L,l}$, with $L$ running in $\pi_0M_l$, is a collar neighborhood $\varpi_l\equiv\varpi'_l:T_{l,\epsilon}\equiv T'_{l,\epsilon}\to\partial M_l\equiv\partial M'_l$ of the boundary in $M_l$, where $\FF_l\equiv\FF'_l$. In turn, the combination of the maps $\varpi_l$ is a collar neighborhood
\[
\bfvarpi\equiv\bfvarpi':\bfT_\epsilon:=\bigsqcup_lT_{l,\epsilon}\equiv\bfT'_\epsilon\to\partial\bfM\equiv M^0\sqcup M^0
\]
of the boundary in $\bfM$, and we have $\bfFF\equiv\bfFF'$ on $\bfT_\epsilon\equiv\bfT'_\epsilon$.

\subsection{Globalization}\label{ss: globalization}

For fixed $0<\epsilon<\epsilon_0$ small enough, we can construct the following objects with standard arguments, using a partition of unity subordinated to the open cover $\{T_{\epsilon_0},M\setminus\overline{T_{\epsilon}}\}$ of $M$:
\begin{enumerate}[(A)]
\setcounter{enumi}{4}
\item\label{i-(A): Y'} For any $A'\in\fXcom(M',\FF')$ with $\overline{A'}=\overline{Y'}$, there is some $A\in\fXcom(M,\FF)$ with $\overline{A}=\overline{Z}$, $A\equiv A'$ on $T_\epsilon\equiv T'_\epsilon$ and $A=Z$ on $M\setminus T_{\epsilon_0}$.  Moreover $A$ induces a vector field $A_l\in\fX(M_l,\FF_l)$ (\Cref{ss: simple fol flows}), whose restriction to $\mathring M_l\equiv M^1_l$ is denoted in the same way. In particular, this applies to $Y'\in\fXcom(M',\FF')$, obtaining $Y\in\fX(M,\FF)$ with flow $\xi=\{\xi^t\}$ and $Y_l\in\fX(M_l,\FF_l)$ with flow $\xi_l=\{\xi^t_l\}$. We have $\Fix(\xi)=M^0$, and the orbits of $\xi$ agree with the fibers of $\varpi$ on $T_\epsilon\cap M^1$. Thus $\xi$ has no closed orbit in $T_\epsilon\cap M^1$.

\item\label{i-(A): Z'} Some $Z'\in\fXcom(M',\FF')$, with flow $\phi'=\{\phi^{\prime\,t}\}$, such that $\overline{Z'}=\overline{Y'}$, $Z'\equiv Z$ on $T_\epsilon\equiv T'_\epsilon$, and $Z'=Y'$ on $M'\setminus T'_{\epsilon_0}$.  This $Z'$ induces vector fields $Z'_{L,l}\in\fX(M'_{L,l},\FF'_{L,l})$ with flow $\phi'_{L,l}=\{\phi^{\prime\,t}_{L,l}\}$, and $Z'_l\in\fX(M'_l,\FF'_l)$ with flow $\phi'_l=\{\phi^{\prime\,t}_l\}$.

\item\label{i-(A): g_b l} A bundle-like metric $g_{\text{\rm b},l}$ of every $\FF^1_l\equiv\mathring\FF_l$ on $M^1_l\equiv\mathring M_l$ such that $g_{\text{\rm b},l}\equiv g'_{\text{\rm b},l}$ on $T^1_{l,\epsilon}\equiv T^{\prime\,1}_{l,\epsilon}$. Thus $g_{\text{\rm b},l}$ is the restriction to $\mathring M_l$ of a b-metric on $M_l$, also denoted by $g_{\text{\rm b},l}$. Let $\omega_{\text{\rm b},l}$ be the $g_{\text{\rm b},l}$-transverse volume form, defining the transverse orientation given by $\overline{Z_l}$; thus $\omega_{\text{\rm b},l}\equiv\omega'_{\text{\rm b},l}$ on $T^1_{l,\epsilon}\equiv T^{\prime\,1}_{l,\epsilon}$. Since $\omega'_{\text{\rm b},l}(Y'_l)=\omega'_{\text{\rm b},l}(Z'_l)=1$ (\Cref{ss: boundary-def func of M_pm - suspension}), we can assume $\omega_{\text{\rm b},l}(Y_l)=\omega_{\text{\rm b},l}(Z_l)=1$.

\item\label{i-(A): g_c l} A Riemannian metric $g_{\text{\rm c},l}$ on every $M^1_l\equiv\mathring M_l$ such that $g_{\text{\rm c},l}\equiv g'_{\text{\rm c},l}$ on $T^1_{l,\epsilon}\equiv T^{\prime\,1}_{l,\epsilon}$. Thus $g_{\text{\rm c},l}$ is the restriction to $\mathring M_l$ of a b-metric on $M_l$, also denoted by $g_{\text{\rm c},l}$, and the b-metric $\varkappa_L^2g_{\text{\rm c},l}$ is exact and cylindrical around every boundary component $L$ of $M_l$.

\item\label{i-(A): g_M} A Riemannian metric $g_M$ on $M$ such that $g_M\equiv g_{M'}$ on $T_\epsilon\equiv T'_\epsilon$, $g_M=g_{\text{\rm b},l}$ on every $M^1_l\setminus T_{\epsilon_0}$, and $g_M$ defines the same orthogonal complement of $T\FF$ as $g_{\text{\rm b},l}$ on every $M^1_l$.  We consider the bigrading of $\Lambda M$ defined by the $g_M$-orthogonal complement of the leaves (\Cref{s: dif forms on fold mfds}).

\item\label{i-(A): g_FF} A leafwise Riemannian metric $g_{\FF}$ of $\FF$ such that $g_{\FF}\equiv g_{\FF'}$ on $T_\epsilon\equiv T'_\epsilon$. We can assume it is induced by $g_M$ on $M$, and by $g_{\text{\rm b},l}$ and $g_{\text{\rm c},l}$ on every $M^1_l$. It induces a leafwise metric $g_{\FF_l}$ for every $\FF_l$.

\item\label{i-(A): d omega = eta wedge omega} Differential forms, $\omega\in C^\infty(M;\Lambda^{1,0})$ and $\eta\in C^\infty(M;\Lambda^{0,1})$, such that $\omega$ is the transverse volume form of $\FF$ with respect to $g_M$, and $d\omega=\eta\wedge\omega$. Thus $\ker\omega=T\FF$, $\eta=0$ on $M\setminus T_{\epsilon_0}$, and they extend the forms $\omega$ and $\eta$ we had on $T_\epsilon$. For every $L\in\pi_0M^0$, we may use the notation $\eta_L=\eta|_L$ and $\tilde\eta_L=\pi_L^*\eta_L=d_{\widetilde L}F_L$ for some $F_L\in C^\infty(\widetilde L)$. Moreover, $\eta=\eta_0$ on $T_\epsilon$ with the notation of \Cref{ss: perturbation vs bigrading} because this is true for every $\FF'_L$. 

\item\label{i-(A): rho} A defining function $\rho\equiv\rho'$ of $M^0$ in $T_{\epsilon_0}\equiv T'_{\epsilon_0}$.

\item\label{i-(A): rho_l} A boundary-defining function $\rho=\rho_l$ on every $M_l$ such that $\rho_l\equiv\rho'_l$ on $T_{l,\epsilon}\equiv T'_{l,\epsilon}$, and $\rho_l=1$ on $M^1_l\setminus T^1_{l,\epsilon_0}$. The level hypersurfaces of $\rho_l$ in $T^1_{l,\epsilon}$ are totally geodesic with respect to $g_{\text{\rm c},l}$. Let $\nu=\nu_l$ be the unique smooth trivialization of ${}_+N\partial M_l$ with $d\rho_l(\nu_l)=1$ (\Cref{ss: b-geometry}). Thus $\nu_l\equiv\nu'_l$ via $T_{l,\epsilon}\equiv T'_{l,\epsilon}$.

\end{enumerate}

From \Cref{p: FF_pm is of bd geom,p: (M_pm g_pm) satisfies (B),p: d(ln rho) in C^infty_ub(mathring M_pm Lambda^1),p: Y in fX_ub(mathring M_pm mathring FF_pm),p: Z -> A}, it easily follows that $\FF^1_l$ is of bounded geometry, $(M_l,g_{\text{\rm b},l})$ satisfies the properties~\ref{i: g is of bounded geometry} and~\ref{i: A'} of \Cref{ss: weighted b-Sobolev}, $d(\ln\rho_l)\in C^\infty_{\text{\rm ub}}(\mathring M_l;T^*\mathring M_l)$, and $Y_l\in\fXub(M^1_l,\FF^1_l)$ with respect to $g_{\text{\rm b},l}$, and we can assume $Z'_{L,l}\in\fXub(M^{\prime\,1}_{L,l},\FF^{\prime\,1}_{L,l})$ with respect to $g'_{\text{\rm b},L,l}$. So $Z_l\in\fXub(M^1_l,\FF^1_l)$ with respect to $g_{\text{\rm b},l}$. By \Cref{p: g_c pm & g_pm are quasi-isometric} and since $M^1_l\setminus T^1_{l,\epsilon}$ is compact, we also get that the metrics $g_{\text{\rm b},l}$ and $g_{\text{\rm c},l}$ are quasi-isometric on $M^1_l$; this also follows because both of these metrics are restrictions to $\mathring M_l$ of b-metrics on the compact manifold with boundary $M_l$.

By~\eqref{omega_pm = sign(omega(Z_pm)) rho_pm^-1omega}, we have $\omega=\sign(\omega(Z_l))\,\rho_l\omega_{\text{\rm b},l}$ on $\mathring M_l\cap\bfT_\epsilon\equiv M^1_l\cap T_\epsilon$. This equality is also true on $M^1_l\setminus T_{\epsilon_0}$, where $g_M=g_{\text{\rm b},l}$ and $\rho_l=1$. Indeed, we can choose $\rho_l$ so that this equality holds on the whole of $\mathring M_l\equiv M^1_l$. So
\[
d\omega=\sign(\omega(Z_l))\,d\rho_l\wedge\omega_{\text{\rm b},l}=d\rho_l\wedge\rho_l^{-1}\omega=d(\ln\rho_l)\wedge\omega
\]
on $\mathring M_l\equiv M^1_l$, yielding
\begin{equation}\label{eta_0 = d_0,1(ln rho_l)}
\eta_0=d_{0,1}(\ln\rho_l)\equiv d_{\FF_l}(\ln\rho_l)\;.
\end{equation}

Taking combinations of the above objects on the manifolds $M_l$, we get a boundary-defining function $\bfrho$ on $\bfM$, a trivialization $\bfnu$ of ${}_+N\partial\bfM$, real $1$-forms $\bfomega_{\text{\rm b}}$ and $\bfeta$, and b-metrics $\bfg_{\text{\rm b}}$ and $\bfg_\co$. They agree with $\bfrho'$, $\bfnu'$, $\bfomega'_{\text{\rm b}}$, $\bfeta'$, $\bfg'_{\text{\rm b}}$ and $\bfg'_\co$ on $\bfT_\epsilon\equiv\bfT'_\epsilon$.

\subsection{The components of $M^1$}\label{ss: components of M^1}

Recall that every $\FF^1_l\equiv\mathring\FF_l$ on $M^1_l\equiv\mathring M_l$ is a transversely complete $\R$-Lie foliation, where this transverse structure is defined by $Z_l\in\fXcom(M^1_l,\FF^1_l)$. Of course, the transverse orientation of $\FF^1_l$ defined by $Z_l$ may not agree with the original transverse orientation of $\FF$.

The Fedida's description of $\FF^1_l$ is given by a regular covering $\pi_l:\widetilde M^1_l\to M^1_l$ with group of deck transformations $\Gamma_l$, a holonomy monomorphism $h_l:\Gamma_l\to\R$ and a developing map $D_l:\widetilde M^1_l\to\R$ (\Cref{ss: transverse structures,ss: complete R-Lie folns,}). Note that $\Gamma_l$ has finite rank because $M^1_l\equiv\mathring M_l$ and $M_l$ is compact. Recall that the action of any $\gamma\in\Gamma_l$ on $\widetilde M^1_l$ is denoted by $\tilde p\mapsto\gamma\cdot\tilde p$ or by $T_\gamma$.

Let $Y_l$ and $\xi_l=\{\xi_l^t\}$ denote the restrictions of $Y$ and $\xi$ to every $M^1_l$. Let $\widetilde\FF^1_l$, $\widetilde Y_l$, $\widetilde Z_l$, $\tilde\xi_l=\{\tilde\xi_l^t\}$ and $\tilde\phi_l=\{\tilde\phi_l^t\}$ be the lifts to $\widetilde M^1_l$ of $\FF^1_l$, $Z_l$ and $\phi_l$, respectively. Recall from \Cref{ss: Schwartz kernels} that $\widetilde Z_l$ is $\Gamma_l$-invariant and $D_l$-projectable, and $\tilde\phi_l$ is $\Gamma_l$-equivariant. Moreover we can assume $D_{l*}\widetilde Z_l=\partial_x$, where $x$ denotes the canonical global coordinate of $\R$, and therefore $\tilde\phi_l$ corresponds via $D_l$ to the flow $\bar\phi_l$ on $\R$ defined by $\bar\phi_l^t(x)=t+x$. So $D_l$ restricts to diffeomorphisms between the orbits of $\tilde\phi_l$ and $\R$.

\begin{prop}\label{p: widetilde M^1_l equiv L_l times R}
Given any leaf $L_l$ of $\FF^1_l$, there is a left action of $\Gamma_l$ on $L_l$ and there is an identity $\widetilde M^1_l\equiv \R\times L_l$ such that:
\begin{enumerate}[{\rm(i)}]

\item\label{D_l = the left factor proj} $D_l$ is the left-factor projection;

\item\label{i: widetilde Y_l equiv (partial_x 0)} $\widetilde Y_l\equiv(\partial_x,0)$ and $\tilde\xi_l^t(x,y)=(t+x,y)$;

\item\label{i: gamma . (x,y) = (h_l(gamma) + x gamma . y)} the action of $\Gamma_l$ on $\widetilde M^1_l$ is given by $\gamma\cdot(x,y)=(h_l(\gamma)+x,\gamma\cdot y)$; and

\item\label{i: gamma . y = y => gamma = e} there is some compact $K_l\subset M^1_l$ so that, if $\gamma\cdot y=y$ for some $\gamma\in\Gamma_l$ and $y\in L_l\setminus K_l$, then $\gamma=e$.

\end{enumerate}
\end{prop}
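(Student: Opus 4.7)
The plan is to coordinatize $\widetilde M^1_l$ as $\R\times L_l$ by flowing a chosen leaf through the lift $\widetilde Y_l$ of $Y_l$, and then transfer the $\Gamma_l$-action through this product.

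First, fix a leaf $L_l$ of $\FF^1_l$. By Fedida's theorem (\Cref{ss: transverse structures}), $\pi_l$ restricts to a diffeomorphism between some leaf $\widetilde L_l$ of $\widetilde\FF^1_l$ and $L_l$; translate $D_l$ by a constant so that $\widetilde L_l=D_l^{-1}(0)$, and identify $\widetilde L_l\equiv L_l$ via $\pi_l$. Completeness of $Y$ together with $\Fix(\xi)=M^0$ implies $Y_l$ and its lift $\widetilde Y_l$ are complete. Since $\overline{Y_l}=\overline{Z_l}$, the field $\widetilde Y_l$ is $D_l$-projectable to $\partial_x$, so its flow $\tilde\xi_l$ satisfies $D_l\circ\tilde\xi_l^t=t+D_l$ and is transverse to $\widetilde\FF^1_l$. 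Define $\Psi\colon\R\times\widetilde L_l\to\widetilde M^1_l$ by $\Psi(t,y)=\tilde\xi_l^t(y)$. Applying $D_l$ yields injectivity in $t$, and then injectivity of each $\tilde\xi_l^t$ gives overall injectivity; surjectivity follows since any $\tilde p\in\widetilde M^1_l$ equals $\Psi(D_l(\tilde p),\tilde\xi_l^{-D_l(\tilde p)}(\tilde p))$. Smoothness in both directions is clear. This establishes (i) and (ii).

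For (iii), the $\Gamma_l$-action on $\widetilde M^1_l$ does not preserve $\widetilde L_l$: by $h_l$-equivariance of $D_l$, one has $\gamma\cdot\widetilde L_l=\tilde\xi_l^{h_l(\gamma)}(\widetilde L_l)$. Correct this by \emph{defining} $\gamma\cdot_{\widetilde L_l}y:=\tilde\xi_l^{-h_l(\gamma)}(\gamma\cdot y)$. The $\Gamma_l$-invariance of $\widetilde Y_l$ (being the lift of a $\pi_l$-invariant field) makes each $\tilde\xi_l^t$ commute with the $\Gamma_l$-action, and combining this with the homomorphism property of $h_l$ shows the new formula is indeed a left group action. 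Transport it to $L_l$ via $\pi_l$. Then (iii) follows from the one-line computation $\gamma\cdot\Psi(t,y)=\tilde\xi_l^t(\gamma\cdot y)=\tilde\xi_l^{t+h_l(\gamma)}(\gamma\cdot_{\widetilde L_l}y)=\Psi(t+h_l(\gamma),\gamma\cdot_{\widetilde L_l}y)$.

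For (iv), take $K_l=M_l\setminus T_{l,\epsilon}$, which is compact and lies in $\mathring M_l=M^1_l$ since $T_{l,\epsilon}$ is an open collar of $\partial M_l$ in the compact manifold $M_l$. Suppose $\gamma\cdot_{\widetilde L_l}y=y$ with $\pi_l(y)\in L_l\setminus K_l\subset T^1_{l,\epsilon}$. Unwinding the definition gives $\gamma\cdot y=\tilde\xi_l^{h_l(\gamma)}(y)$ in $\widetilde M^1_l$, which projects via $\pi_l$ to $\xi_l^{h_l(\gamma)}(\pi_l(y))=\pi_l(y)$. If $h_l(\gamma)\ne 0$, this says $\pi_l(y)$ lies on a closed $\xi$-orbit inside $T^1_\epsilon\cap M^1$, contradicting the conclusion of \ref{i-(A): Y'}. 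Hence $h_l(\gamma)=0$ and injectivity of $h_l$ forces $\gamma=e$. The only conceptually delicate step is the twisted $\Gamma_l$-action in the second paragraph, which encodes the precise failure of the original action to preserve $\widetilde L_l$; once this is set up, (i)--(iii) become formal, and (iv) reduces to the carefully engineered absence of closed $\xi$-orbits near $M^0$ built into the construction of $Y$.
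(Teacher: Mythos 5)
Your proof is correct and follows essentially the same route as the paper's: identifying $\widetilde M^1_l$ with $\R\times\widetilde L_l$ via the flow of the $D_l$-projectable lift $\widetilde Y_l$, reading off the $\Gamma_l$-action on the $L_l$-factor from $\Gamma_l$-equivariance of $\tilde\xi_l$, and reducing (iv) to the absence of closed $\xi$-orbits in $T_\epsilon\cap M^1$ with $K_l=M^1_l\setminus T^1_{l,\epsilon}$. The only stylistic difference is that you write the induced action on $L_l$ explicitly as $\gamma\cdot_{\widetilde L_l}y=\tilde\xi_l^{-h_l(\gamma)}(\gamma\cdot y)$, whereas the paper defines it implicitly as the second coordinate of $\gamma\cdot(x,y)$ and then uses $\Gamma_l$-equivariance of $\tilde\xi_l$ to show that coordinate is independent of $x$; these are equivalent formulations of the same argument.
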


\begin{proof}
Since $\widetilde Y_l$ is projectable by $D_l$ to $\partial_x$ because $\overline{\widetilde Y_l}=\overline{\widetilde Z_l}$, it follows that $D_l$ also restricts to diffeomorphisms of the $\tilde\xi_l$-orbits to $\R$. So, given any leaf $\widetilde L_l$ of $\widetilde\FF^1_l$ over $L_l$, we get $\widetilde M^1_l\equiv\R\times\widetilde L_l\equiv \R\times L_l$ such that~\ref{D_l = the left factor proj} and~\ref{i: widetilde Y_l equiv (partial_x 0)} hold. 

The action of every $\gamma\in\Gamma_l$ on $(x,y)\in \R\times L_l\equiv\widetilde M^1_l$ can be written as $\gamma\cdot(x,y)=(h_l(\gamma)+x,T_\gamma(x,y))$ for some smooth map $T_\gamma:\R\times L_l\to L_l$. Then, since the flow $\tilde\xi_l^t$ is $\Gamma_l$-equivariant, it easily follows that $T_\gamma(x,y)=T_\gamma(t+x,y)$. So $T_\gamma(x,y)$ is independent of $x$, and therefore it can be written as $\gamma\cdot y$. It is easy to check that this defines a left $\Gamma_l$-action on $L_l$, and~\ref{i: gamma . (x,y) = (h_l(gamma) + x gamma . y)} follows.

Let us prove~\ref{i: gamma . y = y => gamma = e}. If $\gamma\cdot y=y$ for some $\gamma\in\Gamma_l\setminus\{e\}$ and $y\in L_l$, then we easily compute $\gamma\cdot\tilde\xi^t(x,y)=\tilde\xi^{h_l(\gamma)+t}(x,y)$ for all $x,t\in\R$. Thus the $\tilde\xi^t$-orbit of $(x,y)$ is invariant by the action of $\gamma$, and therefore the $\xi^t$-orbit of $[x,y]$ is closed because $\gamma\ne e$. Since $Y\equiv Y'$ on $T_\epsilon\equiv T'_\epsilon$, it follows that $y\in L_l\setminus T_\epsilon$, and $M^1_l\setminus T_\epsilon$ is compact in $M^1_l$.
\end{proof}

\begin{rem}  
In \Cref{p: widetilde M^1_l equiv L_l times R}, the projection $L_l\to\Gamma_l\backslash L_l$ may not be a covering map, and therefore $(M^1_l,\FF^1_l)$ may not be given by a suspension. According to its proof, a point $y\in L_l$ is fixed by some $\gamma\in\Gamma_l\setminus\{e\}$ just when $\R\times\{y\}$ projects to a closed orbit of $\xi^t$ in $M^1_l$ whose group of periods contains $h_l(\gamma)$.
\end{rem}

According to \Cref{p: widetilde M^1_l equiv L_l times R}, we may use the notation $[x,y]=\pi_l(x,y)\in M^1_l$ for $(x,y)\in\R\times L_l\equiv\widetilde M^1_l$, and the action of every $\gamma\in\Gamma_l$ on $L_l$ may be also denoted by $T_\gamma$. Like in~\eqref{tilde phi^t(tilde y x) widetilde Z} and~\eqref{T_gamma tilde phi_x^t = tilde phi_a_gamma x^t T_gamma}, we get
\begin{equation}\label{tilde phi_l^t(x tilde y)}
\tilde\phi_l^t(x,\tilde y)=(t+x,\tilde\phi_{l,x}^t(\tilde y))\;,\quad\widetilde Z_l=(\partial_x,\widetilde Z_{l,x})\;,
\end{equation}
for some smooth families, $\{\,\tilde\phi_{l,x}^t\mid x,t\in\R\,\}\subset\Diffeo(L_l)$ and $\{\,\widetilde Z_{l,x}\mid x\in\R\,\}\subset\fX(L_l)$, such that
\[
T_\gamma\tilde\phi_{l,x}^t=\tilde\phi_{l,h_l(\gamma)+x}^tT_\gamma\;,\quad T_{\gamma*}\widetilde Z_x=\widetilde Z_{h_l(\gamma)+x}\;.
\]

Let $c$ be a closed orbit of $\phi_l$ with period $t_0$, and let $p=[x,y]\in c$ and $\tilde p=(x,y)\in\widetilde M^1_l\equiv \R\times L_l$. Then $k=t_0/\ell(c)\in\Z$ and there is a unique $\gamma_0\in\Gamma_l$ such that $\tilde\phi_l^{t_0}(\tilde p)=\gamma_0\cdot\tilde p$. Using~\eqref{tilde phi_l^t(x tilde y)} and \Cref{p: widetilde M^1_l equiv L_l times R}~\ref{i: gamma . (x,y) = (h_l(gamma) + x gamma . y)}, it easily follows that $t_0=h_l(\gamma_0)$ and $\tilde\phi_{l,x}^{t_0}(y)=\gamma_0\cdot y$; i.e., $y$ is a fixed point of the diffeomorphism $T_{\gamma_0}^{-1}\tilde\phi^{t_0}_{l,x}$ of $L_l$. Moreover $y$ is simple if and only if $c$ is simple, and, in this case, $\epsilon_y(T_{\gamma_0}\tilde\phi_{l,x}^{t_0})=\epsilon_c(k,\phi)=\epsilon_c(k)$.

We have $\tilde\omega_{\text{\rm b},l}:=\pi_l^*\omega_{\text{\rm b},l}=D_l^*dx\equiv dx$ because $D_{l*}\widetilde Z_l=\partial_x$ and $\omega_{\text{\rm b},l}(Z_l)=1$ (\Cref{ss: globalization}).

\subsection{Metric properties of the components of $M^1$}\label{ss: metric properties of M^1}

With the notation of \Cref{ss: collar neighborhoods of partial M_l,ss: globalization,ss: components of M^1}, for leaves $L\subset M^0\cap\overline{M^1_l}$ and $0<\epsilon'\le\epsilon$, the open subsets
\[
\widetilde T^{\prime\,1}_{L,l,\epsilon'}=\pi_{M'_{L,l}}^{-1}(T^{\prime\,1}_{L,l,\epsilon'})\subset\widetilde M^{\prime\,1}_{L,l}\;,\quad
\widetilde T^1_{L,l,\epsilon'}=\pi_l^{-1}(T^1_{L,l,\epsilon'})\subset\widetilde M^1_l\;,
\]
are invariant by $\Gamma_L$ and $\Gamma_l$, respectively. Let $\tilde\rho_l=\pi_l^*\rho_l$ and $M^1_{l,\epsilon'}=M^1_l\setminus T_{l,\epsilon'}$, which is a connected compact smooth submanifold with boundary of $M^1_l$. Then $\widetilde T^1_{l,\epsilon'}:=\pi_l^{-1}(T^1_{l,\epsilon'})=\{\tilde\rho_l<\epsilon'\}$ is a $\Gamma_l$-invariant open subspace of $\widetilde M^1_l$, and $\pi_l:\widetilde M^1_{l,\epsilon'}:=\widetilde M^1_l\setminus\widetilde T^1_{l,\epsilon'}\to M^1_{l,\epsilon'}$ is a regular $\Gamma_l$-covering.

Let $d_l$ denote the length-metric on $M^1_l$ defined by $g_{\text{\rm b},l}$. Let $\tilde g_{\text{\rm b},l}$ and $\tilde g_{\text{\rm c},l}$ be the lifts to $\widetilde M^1_l$ of $g_{\text{\rm b},l}$ and $g_{\text{\rm c},l}$. Both of them induce the same leafwise metric $g_{\widetilde\FF_l}$ of $\widetilde\FF_l$, which is the lift of $g_{\FF_l}$. Let $\tilde d_l$ and $\tilde d_{l,\epsilon'}$ denote the length-metrics on $\widetilde M^1_l$ and $\widetilde M^1_{l,\epsilon'}$ defined by $\tilde g_{\text{\rm b},l}$. Similarly, let $\tilde d_{\text{\rm c},l}$ and $\tilde d_{\text{\rm c},l,\epsilon'}$ be the length-metrics on $\widetilde M^1_l$ and $\widetilde M^1_{l,\epsilon'}$ defined by $\tilde g_{\text{\rm c},l}$. Since $g_{\text{\rm b},l}$ and $g_{\text{\rm c},l}$ are quasi-isometric (\Cref{ss: globalization}), the metrics $\tilde g_{\text{\rm b},l}$ and $\tilde g_{\text{\rm c},l}$ are also quasi-isometric. Therefore there is some $C_1\ge1$ such that, for all $\tilde p,\tilde q\in\widetilde M^1_l$,
\begin{equation}\label{C_1^-1 tilde d_l(tilde p tilde q) le tilde d_l epsilon'(tilde p tilde q) le C_1 tilde d_l(tilde p tilde q)}
C_1^{-1}\tilde d_l(\tilde p,\tilde q)\le \tilde d_{c,l}(\tilde p,\tilde q)\le C_1\tilde d_l(\tilde p,\tilde q)\;.
\end{equation}
On the other hand, $\tilde d_l\le \tilde d_{l,\epsilon'}$ on $\widetilde M^1_{l,\epsilon'}$.

\begin{lem}\label{l: tilde d_c l = tilde d_c l epsilon'}
We have $\tilde d_{\text{\rm c},l}=\tilde d_{\text{\rm c},l,\epsilon'}$ on $\widetilde M^1_{l,\epsilon'}$.
\end{lem}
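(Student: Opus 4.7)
The inequality $\tilde d_{\text{\rm c},l}\le\tilde d_{\text{\rm c},l,\epsilon'}$ on $\widetilde M^1_{l,\epsilon'}$ is immediate, since every piecewise smooth path in $\widetilde M^1_{l,\epsilon'}$ is also a piecewise smooth path in $\widetilde M^1_l$ with the same $\tilde g_{\text{\rm c},l}$-length. So the content is the reverse inequality, and the natural strategy is to construct a $1$-Lipschitz retraction $r:(\widetilde M^1_l,\tilde g_{\text{\rm c},l})\to(\widetilde M^1_{l,\epsilon'},\tilde g_{\text{\rm c},l,\epsilon'})$ and push any competing path in $\widetilde M^1_l$ into $\widetilde M^1_{l,\epsilon'}$ without increasing its length.

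The first step is to make the cylindrical structure near $\partial M_l$ fully explicit. By construction in \Cref{ss: g_c pm} together with \Cref{ss: globalization}\,\ref{i-(A): g_c l} and the identifications $T^1_{l,\epsilon}\equiv T^{\prime\,1}_{l,\epsilon}$ and $\FF_l\equiv\FF'_l$ on that collar, on each boundary component $L\subset\partial M_l$ the b-metric $g_{\text{\rm c},l}$ has the exact cylindrical form
\[
\varkappa_L^2\,g_{\text{\rm c},l}=\rho_l^{-2}\,(d\rho_l)^2+\varkappa_L^2\,\varpi_l^*g_L
\]
on $T^1_{L,l,\epsilon}$. Changing coordinate to $t_L=\varkappa_L^{-1}\ln\rho_l$ realises each connected component of $\pi_l^{-1}(T^1_{L,l,\epsilon})$ as a Riemannian product $(-\infty,t_{L,\epsilon})\times\widetilde L$ with metric $dt_L^2+\widetilde\varpi_l^*g_{\widetilde L}$, the pullback being carried out along the chosen lift of the suspension trivialization of \Cref{ss: components of M^1}. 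Because the closures $\overline{T_{L,\epsilon}}$ are pairwise disjoint (\Cref{ss: tubular neighborhoods of M^0}), the various $\pi_l^{-1}(T^1_{L,l,\epsilon})$ are pairwise disjoint open subsets of $\widetilde M^1_l$.

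Second, define $r$ component-wise: on each connected component of $\pi_l^{-1}(T^1_{L,l,\epsilon})$, identified as above with $(-\infty,t_{L,\epsilon})\times\widetilde L$, set
\[
r(t_L,y)=\bigl(\max\{t_L,\,t_{L,\epsilon'}\},\,y\bigr),\qquad t_{L,\epsilon'}:=\varkappa_L^{-1}\ln\epsilon',
\]
and extend by the identity on the complement. Since the cylindrical coordinate $t_L$ agrees with the identity near $t_L=t_{L,\epsilon}$, the map $r$ is continuous and piecewise smooth, fixes every point of $\widetilde M^1_{l,\epsilon'}$, and maps $\widetilde M^1_l$ into $\widetilde M^1_{l,\epsilon'}$.

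Third, one checks that $r$ is $1$-Lipschitz for $\tilde g_{\text{\rm c},l}$. This is the local fact that on a Riemannian product $\R_t\times N$, the retraction $(t,y)\mapsto(\max\{t,t_0\},y)$ sends a tangent vector $(a,w)$ at $(t,y)$ with $t<t_0$ to $(0,w)$, whose squared norm $|w|^2_N$ is at most $a^2+|w|^2_N$; outside the collar $r$ is the identity. Given $\tilde p,\tilde q\in\widetilde M^1_{l,\epsilon'}$ and any piecewise smooth path $\gamma$ from $\tilde p$ to $\tilde q$ in $\widetilde M^1_l$, the composition $r\circ\gamma$ is a piecewise Lipschitz (hence rectifiable) path from $\tilde p$ to $\tilde q$ lying in $\widetilde M^1_{l,\epsilon'}$ whose length is bounded by that of $\gamma$; approximating by piecewise smooth paths in $\widetilde M^1_{l,\epsilon'}$ (which is possible since the target is an open submanifold) and taking the infimum yields $\tilde d_{\text{\rm c},l,\epsilon'}(\tilde p,\tilde q)\le\tilde d_{\text{\rm c},l}(\tilde p,\tilde q)$, completing the proof.

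The only non-routine step is the explicit identification of each component of $\pi_l^{-1}(T^1_{L,l,\epsilon})$ with a cylinder $(-\infty,t_{L,\epsilon})\times\widetilde L$; once this is in place the $1$-Lipschitz retraction and the length comparison are formal, so I expect the verification that this suspension-type cylinder structure is inherited by the global holonomy cover $\pi_l$ (and not just by the local cover $\pi_{M'_{L,l}}$) to be the main point to pin down.
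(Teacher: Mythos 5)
Your argument is correct and takes a genuinely different route from the paper's. The paper's proof reduces the claim to the $\tilde g_{\text{\rm c},l}$-convexity of $\widetilde M^1_{l,\epsilon'}$: any minimizing $\tilde g_{\text{\rm c},l}$-geodesic with endpoints in $\widetilde M^1_{l,\epsilon'}$ stays inside, because the level hypersurfaces of $\tilde\rho_l$ in the collar are $\tilde g_{\text{\rm c},l}$-totally geodesic, which in turn follows from the cylindrical form of $g_{\text{\rm c},l}$ near $\partial M_l$ recorded in \Cref{ss: globalization}~\ref{i-(A): rho_l}. You instead use the same cylindrical product structure to build an explicit $1$-Lipschitz retraction $r$ of $\widetilde M^1_l$ onto $\widetilde M^1_{l,\epsilon'}$ and push an arbitrary competitor path into the subspace without increasing its $\tilde g_{\text{\rm c},l}$-length. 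This is a bit more verbose to set up, but it buys something: it argues directly with rectifiable paths rather than minimizing geodesics, so it sidesteps the (implicit, but real) appeal to completeness of $(\widetilde M^1_l,\tilde g_{\text{\rm c},l})$ that the paper's convexity argument needs in order to know that the infimum of lengths is realized by a geodesic. Your self-flagged worry about whether the cylinder structure passes to the holonomy cover $\pi_l$ is in fact a non-issue: for the retraction all you need is that each connected component of $\pi_l^{-1}(T^1_{L,l,\epsilon})$ is a Riemannian product of an interval with \emph{some} cover of $L$ equipped with $g_{\widetilde L}$, and this is automatic because $(0,\epsilon)$ is simply connected, so any connected cover of $(0,\epsilon)\times L$ has that form. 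You do not need to identify that cover with the specific $\widetilde L$ from the suspension description (which is the much deeper content of \Cref{r: relation between the descriptions}, whose proof the paper also omits). One tiny correction: $\widetilde M^1_{l,\epsilon'}$ is a codimension-zero submanifold with boundary, not an open submanifold, but the standard approximation of rectifiable paths by piecewise smooth ones inside a manifold with boundary still applies.
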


\begin{proof}
It is enough to show that any $\tilde g_{\text{\rm c},l}$-geodesic segment with end-points in $\widetilde M^1_{l,\epsilon'}$ is contained in $\widetilde M^1_{l,\epsilon'}$ ($\widetilde M^1_{l,\epsilon'}$ is $\tilde g_{\text{\rm c},l}$-convex). This follows easily using that the level hypersurfaces of $\tilde\rho_l$ are $\tilde g_{\text{\rm c},l}$-totally geodesic because the level hypersurfaces of $\rho_l$ in $T^1_{l,\epsilon}$ are $g_{\text{\rm c},l}$-totally geodesic (\ref{i-(A): rho_l} of \Cref{ss: globalization}).
\end{proof}

Let $|{\cdot}|=|{\cdot}|_l:\Gamma_l\to\N_0$ and $|{\cdot}|=|{\cdot}|_L:\Gamma_L\to\N_0$ be the word length functions induced by any choice of finite sets of generators of $\Gamma_l$ and $\Gamma_L$. By the compactness of $M^1_{l,\epsilon'}$, there is some $C_2=C_2(\epsilon')\ge1$ such that, for all $\gamma\in\Gamma_l$ and $\tilde p\in\widetilde M^1_{l,\epsilon'}$,
\begin{equation}\label{C_2^-1 |gamma| le tilde d_l epsilon'(tilde p gamma . tilde p) le C_2 |gamma|}
C_2^{-1}|\gamma|\le \tilde d_{l,\epsilon'}(\tilde p,\gamma\cdot\tilde p)\le C_2|\gamma|\;.
\end{equation}
Since $g_{\text{\rm b},l}$ and $g_{\text{\rm c},l}$ are quasi-isometric on $M^1_l$, it follows from~\eqref{C_1^-1 tilde d_l(tilde p tilde q) le tilde d_l epsilon'(tilde p tilde q) le C_1 tilde d_l(tilde p tilde q)},~\eqref{C_2^-1 |gamma| le tilde d_l epsilon'(tilde p gamma . tilde p) le C_2 |gamma|} and \Cref{l: tilde d_c l = tilde d_c l epsilon'} that there is some $C_3=C_3(\epsilon')\ge1$ such that, for all $\gamma\in\Gamma_l$ and $\tilde p\in\widetilde M^1_{l,\epsilon'}$,
\begin{equation}\label{C_3^-1 |gamma| le tilde d_l(tilde p gamma . tilde p) le C_3 |gamma|}
C_3^{-1}|\gamma|\le \tilde d_l(\tilde p,\gamma\cdot\tilde p)\le C_3|\gamma|\;.
\end{equation}

\begin{rem}\label{r: relation between the descriptions}
For any leaf $L\subset M^0\cap\overline{M^1_l}$, the given descriptions of $\FF$ on $T^1_{L,\epsilon}$ and $M^1_l$ have the following relation, whose proof is omitted because it will not be used. There is a monomorphism $H_{L,l}:\Gamma_L\to\Gamma_l$ such that, for every connected component $\widetilde T^1_{L,l,\epsilon',0}$ of $\widetilde T^1_{L,l,\epsilon'}$, the identity $T^{\prime\,1}_{L,l,\epsilon'}\equiv T^1_{L,l,\epsilon'}$ can be lifted to an $H_{L,l}$-equivariant identity $\widetilde T^{\prime\,1}_{L,l,\epsilon'}\equiv\widetilde T^1_{L,l,\epsilon',0}$, which is locally equivariant with respect to the local flows defined by $\tilde\xi'_L$ on $\widetilde T^{\prime\,1}_{L,\epsilon',\pm}$ and $\tilde\xi_l$ on $\widetilde T^1_{L,l,\epsilon',0}$, and so that $D_l$ corresponds to $D'_{L,l}$.
\end{rem}

\chapter{Conormal leafwise reduced cohomology}\label{ch: conormal}

\section{Conormal sequence of leafwise currents}\label{s: conormal seq of leafwise currents}

Let $\FF$ be a transversely orientable smooth foliation of codimension one on a closed manifold $M$ satisfying the conditions~\ref{i: almost w/o hol} and~\ref{i: homotheties} of \Cref{ss: simple fol flows}. Then $M^0$ is determined by $\FF$ in the cases~\ref{i: minimal Lie foln}--\ref{i: all folns FF_l are models (2)} of \Cref{ss: simple fol flows}, whereas $M^0$ must be also given in the case~\ref{i: fiber bundle}. The compactness condition on $M$ is assumed for the sake of simplicity, but all concepts, results and arguments of this section have straightforward extensions to the case where $M$ is not compact and $M^0$ is compact, using compactly supported versions or versions without support restrictions of the spaces of leafwise currents that will be considered. The compactly supported versions, in the non-compact case, will be used in the arguments.

Since $\Diff^1(\FF;\Lambda\FF)\subset\Diff^1(M,M^0;\Lambda\FF)$, the graded LCHS\index{$I(\FF)$} 
\[
I(\FF)=I\Lambda^\bullet(\FF):=I(M,M^0;\Lambda\FF)
\]
becomes a topological complex with $d_\FF$ (\Cref{ss: diff opers on conormal distribs,ss: leafwise complex}). If we take coefficients in some leafwise flat vector bundle $E$, then the notation $I(\FF;E)$ will be used, and all other notations will be modified in the same way. We may even consider $I(\FF;E)$ for an arbitrary vector bundle $E$, missing the leafwise differential map $d_\FF$.

The topological complex $(I(\FF),d_\FF)$ produces the \emph{conormal leafwise cohomology} and \emph{conormal leafwise reduced cohomology} of $\FF$ (or of $(\FF,M^0)$ when $M^0$ is not determined by $\FF$), denoted by $H^\bullet I(\FF)$\index{$H^\bullet I(\FF)$} and $\bar H^\bullet I(\FF)$,\index{$\bar H^\bullet I(\FF)$} which are LCSs (\Cref{ss: top complexes}). The image and kernel of $d_\FF$ in $I(\FF)$ are denoted by $BI(\FF)$ and $ZI(\FF)$, and we write $\bar BI(\FF)=\overline{BI(\FF)}$.

The LCHSs\index{$I^{(s)}(\FF)$}
\[
I^{(s)}(\FF)=I^{(s)}\Lambda^\bullet(\FF):=I^{(s)}(M,M^0;\Lambda\FF)\quad(s\in\R)
\]
also become topological complexes with $d_\FF$ (\Cref{ss: diff opers on conormal distribs}). The notation $H^\bullet I^{(s)}(\FF)$,\index{$H^\bullet I^{(s)}(\FF)$} $\bar H^\bullet I^{(s)}(\FF)$,\index{$\bar H^\bullet I^{(s)}(\FF)$} $BI^{(s)}(\FF)$,\index{$BI^{(s)}(\FF)$} $ZI^{(s)}(\FF)$\index{$ZI^{(s)}(\FF)$} and $\bar BI^{(s)}(\FF)$\index{$\bar BI^{(s)}(\FF)$} is used as before. We have continuous inclusion maps (\Cref{ss: conormal - Sobolev order - compact})
\begin{equation}\label{j_s}
j_s:I^{(s)}(\FF)\hookrightarrow I(\FF)\;,\quad 
j_{s,s'}:I^{(s)}(\FF)\hookrightarrow I^{(s')}(\FF)\quad(s'\le s)\;.
\end{equation}
The induced homomorphism in cohomology and reduced cohomology are denoted by $j_{s*}$, $j_{s,s'*}$, $\bar\jmath_{s*}$ and $\bar\jmath_{s,s'*}$. The homomorphism $j_{s,s'*}$ and $\bar\jmath_{s*}$ form inductive spectra, giving rise to inductive limits as $s\downarrow-\infty$. The maps $j_{s*}$ and $\bar\jmath_{s*}$ induce canonical continuous linear isomorphisms (\Cref{s: injj lims in cohom and reduced cohom}),
\begin{equation}\label{varinjlim H^bullet I^(s)(FF) cong H^bullet I(FF)}
\left\{
\begin{gathered}
\tilde\jmath_*:=\varinjlim j_{s*}:
\widetilde H^\bullet I(\FF):=\varinjlim H^\bullet I^{(s)}(\FF)\xrightarrow{\cong} H^\bullet I(\FF)\;,\\
\hat\jmath_*:=\varinjlim\bar\jmath_{s*}:
\widehat H^\bullet I(\FF):=\varinjlim\bar H^\bullet I^{(s)}(\FF)\xrightarrow{\cong}\bar H^\bullet I(\FF)\;.
\end{gathered}
\right.
\end{equation}
The canonical maps of the steps to the inductive limits are denoted by \index{$\tilde\jmath_{s*}$} \index{$\hat\jmath_{s*}$}
\[
\tilde\jmath_{s*}:H^\bullet I^{(s)}(\FF)\to\widetilde H^\bullet I(\FF)\;,\quad
\hat\jmath_{s*}:\bar H^\bullet I^{(s)}(\FF)\to\widehat H^\bullet I(\FF)\;.
\]

The graded LCHSs,\index{$J(\FF)$} \index{$K(\FF)$}
\[
J(\FF)=J\Lambda^\bullet(\FF):=J(M,M^0;\Lambda\FF)\;,\quad K(\FF)=K\Lambda^\bullet(\FF):=K(M,M^0;\Lambda\FF)\;,
\]
also become topological complexes with $d_\FF$ (\Cref{ss: Diff(M) on the conormal seq}). The above kind of notation is also used for the induced spaces: $BJ(\FF)$, $ZJ(\FF)$, $\bar BJ(\FF)$ and $H^\bullet J(\FF)$,\index{$H^\bullet J(\FF)$} $\bar H^\bullet J(\FF)$,\index{$\bar H^\bullet J(\FF)$} and the same for $K(\FF)$.\index{$H^\bullet K(\FF)$}

Similarly, we have topological complexes $J^{(s)}(\FF)$, $J^m(\FF)$ and $K^{(s)}(\FF)$ ($s,m\in\R$) with $d_\FF$ (\Cref{ss: Diff(M) on the conormal seq}). The analogs of the inclusion maps~\eqref{j_s} for the spaces $J^{(s)}(\FF)$ and $K^{(s)}(\FF)$ are denoted in the same way. The induced homomorphisms in cohomology and reduced cohomology form inductive spectra. Their inductive limits, denoted by $\widetilde H^\bullet K(\FF)$, $\widehat H^\bullet K(\FF)$, $\widetilde H^\bullet J(\FF)$ and $\widehat H^\bullet J(\FF)$, satisfy analogs of~\eqref{varinjlim H^bullet I^(s)(FF) cong H^bullet I(FF)} (proved with the same arguments). In fact, in the case of $K(\FF)$, we have  canonical TVS-identities (\Cref{c: HK(FF) equiv bigoplus_k H_-k-1(M^0)}),
\begin{equation}\label{HK(FF) equiv bar HK(FF)}
\left\{
\begin{gathered}
H^\bullet K(\FF)\equiv\bar H^\bullet K(\FF)\;,\quad H^\bullet K^{(s)}(\FF)\equiv\bar H^\bullet K^{(s)}(\FF)\;,\\
\widetilde H^\bullet K^{(s)}(\FF)\equiv H^\bullet K(\FF)\;,\quad
\widehat H^\bullet K^{(s)}(\FF)\equiv\bar H^\bullet K(\FF)\;.
\end{gathered}
\right.
\end{equation}

There are also continuous inclusion maps (\Cref{ss: J(M L)})
\begin{equation}\label{j_m}
\left\{
\begin{gathered}
j_m:J^m(\FF)\hookrightarrow J(\FF)\;,\quad j_{m,m'}:J^m(\FF)\hookrightarrow J^{m'}(\FF)\quad(m'\le m)\;,\\
j_{s,m}:J^{(s)}(\FF)\hookrightarrow J^m(\FF)\quad(m<s-n/2-1)\;,\\
j_{m,s}:J^m(\FF)\hookrightarrow J^{(s)}(\FF)\quad(s\le m,0)\;, 
\end{gathered}
\right.
\end{equation}
denoted like in~\eqref{j_s} with some abuse of notation. The homomorphisms induced by the maps $j_{m,m'}$ in cohomology and reduced cohomology form inductive spectra whose inductive limits as $m\downarrow-\infty$ agree with the previous ones for $J(\FF)$, and the maps $j_m$ induce a continuous linear isomorphism analogous to~\eqref{varinjlim H^bullet I^(s)(FF) cong H^bullet I(FF)}.

There are similar constructions for the spaces of the symbol-order filtration of $I(\FF)$ and $K(\FF)$, with similar properties, but they will not be used here.

The \emph{leafwise conormal exact sequence} of $\FF$ is the bottom row of~\eqref{CD: conormal seqs} with $\Lambda\FF$, 
\begin{equation}\label{leafwise conormal exact seq}
0\to K(\FF) \xrightarrow{\iota} I(\FF) \xrightarrow{R} J(\FF)\to0\;.
\end{equation}
Besides being exact in the category of continuous linear maps between LCSs, it is compatible with $d_\FF$. The exactness of the induced sequences,
\begin{gather}
0\to H^\bullet K(\FF) \xrightarrow{\iota_*} H^\bullet I(\FF) \xrightarrow{R_*} H^\bullet J(\FF)\to0\;,
\label{exact seq in cohom - conormal}\\
0\to H^\bullet K(\FF) \xrightarrow{\bar\iota_*} \bar H^\bullet I(\FF) \xrightarrow{\bar R_*} \bar H^\bullet J(\FF)\to0\;,
\label{exact seq in reduced cohom - conormal}
\end{gather}
will be proved in \Cref{s: short exact seq - conormal}; in particular, this shows \Cref{t: intro - reduced conormal cohomology exact sequence}.

Concerning notation, the subscript ``$s$'' may be added to the notation of cochain maps between the topological complexes $K^{(s)}(\FF)$, $I^{(s)}(\FF)$ or $J^{(s)}(\FF)$, like \index{$\iota_s$} \index{$R_s$}
\begin{equation}\label{iota_s}
\iota_s=\iota:K^{(s)}(\FF)\to I^{(s)}(\FF)\;,\quad R_s=R:I^{(s)}(\FF)\to J^{(s)}(\FF)\;.
\end{equation}
The subscript ``$s$'' may be also added to the elements of their cohomologies or reduced cohomologies: $[\alpha]_s\in H^\bullet I^{(s)}(\FF)$ \index{$[\alpha]_s$} and $\overline{[\alpha]}_s\in\bar H^\bullet I^{(s)}(\FF)$ \index{$\overline{[\alpha]}_s$} for $\alpha\in ZI^{(s)}(\FF)$.

\section{Injective limits in cohomology and reduced cohomology}\label{s: injj lims in cohom and reduced cohom}

The purpose of this section is to prove that the maps~\eqref{varinjlim H^bullet I^(s)(FF) cong H^bullet I(FF)} are isomorphisms. The details are given for the case of $\bar H^\bullet I(\FF)$. Some remarks indicate how to modify the arguments to show the simpler case of $H^\bullet I(\FF)$.

\subsection{Injectivity of $\hat\jmath_*$}\label{ss: injectivity of hat jmath_*}

Take any element in $\ker\hat\jmath_*$, which is of the form $\hat\jmath_{s*}(\overline{[\alpha]}_s)$ for some $\overline{[\alpha]}_s\in\bar H^\bullet I^{(s)}(\FF)$. Then there is some net $\varphi_l\in I(\FF)$ such that $\alpha=\lim_ld_\FF\varphi_l$ in $I(\FF)$. We can assume $\varphi_l\in C^\infty(M;\Lambda\FF)$ by the density of $C^\infty(M;\Lambda\FF)$ in $I(\FF)$ (\Cref{ss: conormal - Sobolev order - compact}). The set $\{\alpha,d_\FF\varphi_l\}_l$ is compact in $I(\FF)$. Then $\{\alpha,d_\FF\beta_l\}_l$ is contained and compact in some step $I^{(s')}(\FF)$ ($s'\le s$) because $I(\FF)$ is compactly retractive (\Cref{ss: conormal - Sobolev order - compact}). Thus $\alpha=\lim_ld_\FF\varphi_l$ in $I^{(s')}(\FF)$; otherwise, using that $\{\alpha,d_\FF\varphi_l\}_l$ is compact in $I^{(s')}(\FF)$, it is easy to find a subnet $d_\FF\varphi_{l_k}$ convergent to some $\beta\ne\alpha$ in $I^{(s')}(\FF)$, which contradicts the continuity of $j_{s'}:I^{(s')}(\FF)\to I(\FF)$ and the convergence $d_\FF\varphi_l\to\alpha$ in $I(\FF)$. (Indeed, we can assume $d_\FF\phi_l$ is a sequence because $I^{(s')}(\FF)$ is a Fr\'echet space.) So $\overline{[\alpha]}_{s'}=0$ in $\bar H^\bullet I^{(s')}(\FF)$, and therefore $\hat\jmath_{s*}(\overline{[\alpha]}_s)=\hat\jmath_{s'*}(\overline{[\alpha]}_{s'})=0$.

\begin{rem}\label{r: injectivity of tilde jmath_*}
To prove injectivity of $\tilde\jmath_*$, take some $\tilde\jmath_{s*}(\overline{[\alpha]}_s)$ in $\ker\tilde\jmath_*$. Now modify the above argument by using cohomology classes, and taking an element $\varphi\in I(\FF)$ with $d_\FF\varphi=\alpha$ instead of a net $\varphi_l$. Then $\varphi$ and $\alpha$ are in some step $I^{(s')}(\FF)$ ($s'\le s$), yielding $[\alpha]_{s'}=0$ in $H^\bullet I^{(s')}(\FF)$, and therefore $\hat\jmath_{s*}([\alpha]_s)=\hat\jmath_{s'*}([\alpha]_{s'})=0$.
\end{rem}

\subsection{Surjectivity of $\hat\jmath_*$}\label{ss: surjectivity of hat jmath_*}

For any $\overline{[\alpha]}\in\bar H^\bullet I(\FF)$, there is some $s$ such that $\alpha\in I^{(s)}(\FF)$, and therefore $\alpha\in ZI^{(s)}(\FF)$. Hence the element $\overline{[\alpha]}_s\in\bar HI^{(s)}(\FF)$ is defined, and the element $\hat\jmath_{s*}(\overline{[\alpha]}_s)\in\widehat H^\bullet I^{(s)}(\FF)$ is mapped to $\overline{[\alpha]}$ by $\hat\jmath_*$.

\begin{rem}\label{r: surjectivity of tilde jmath_*}
To prove the surjectivity of $\tilde\jmath_*$, simply modify the argument by using cohomology classes instead of reduced cohomology classes.
\end{rem}

\section{Description of $H^\bullet K(\FF)$}\label{s: HK(FF)}

Consider also the notation of \Cref{ss: globalization}. For every $z\in\C$, we have the Witten's complex $d_z=d+z\,{\eta\wedge}$ on $C^\infty(M^0;\Lambda)$, whose cohomology is denoted by $H^\bullet_z(M^0)$ (\Cref{ss: Witten's complex}). Consider also the trivialization of the flat line bundle $\Omega^zNM^0=\Omega^zN\FF|_{M^0}$ defined by $|\omega|^z$. Then, by~\eqref{C^-infty(M Lambda otimes LL^z) equiv C^-infty(M Lambda)} and since $d\omega=\eta\wedge\omega$ (\ref{i-(A): d omega = eta wedge omega} of \Cref{ss: globalization}),
\begin{gather*}
C^{\pm\infty}(M^0;\Lambda\otimes\Omega^zNM^0)\equiv C^{\pm\infty}(M^0;\Lambda)\otimes\R|\omega|^z
\equiv C^{\pm\infty}(M^0;\Lambda)\;,\\
d\equiv d_z\otimes1\equiv d_z\;,\quad H^\bullet(M^0;\Omega^zNM^0)\equiv H_z(M^0)\;.
\end{gather*}
These identities will be applied without further comment. By Reeb's local stability, the following result follows from the case of a suspension foliation, which will be proved in \Cref{ss: (K(Lambda FF) d_FF)} (Corollary~\ref{c: K(M M^0 Lambda FF) equiv bigoplus_L bigoplus_k C^infty(L Lambda) suspension}).

\begin{prop}\label{p: K(Lambda FF) equiv bigoplus_k C^infty(M^0 Lambda)}
We have identities of topological complexes,
\begin{gather*}
K(\FF)\equiv\bigoplus_kC^\infty(M^0;\Lambda)
\equiv\bigoplus_kC^\infty(M^0;\Lambda\otimes\Omega^{-k-1}NM^0)\;,\\
d_\FF\equiv\bigoplus_kd_{-k-1}\equiv\bigoplus_kd\;,
\end{gather*}
where $k$ runs in $\N_0$. Moreover the subcomplex $K^{(s)}(\FF)\subset K(\FF)$ corresponds to the finite direct sum with $k<-s-1/2$.
\end{prop}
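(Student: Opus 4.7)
\medskip

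The plan is to reduce everything to the suspension model around a single preserved leaf, as the statement itself announces. Since every element of $K(\FF)=K(M,M^0;\Lambda\FF)$ is supported on $M^0$, which is a finite disjoint union of compact leaves $L$ with pairwise disjoint tubular neighborhoods $T_{L,\epsilon}$ (see \Cref{ss: tubular neighborhoods of M^0}), locality gives a decomposition of topological complexes
\[
K(\FF)\equiv\bigoplus_{L\subset M^0}K(T_{L,\epsilon},L;\Lambda\FF|_{T_{L,\epsilon}})\;,
\]
and the same decomposition holds for each Sobolev-order step $K^{(s)}(\FF)$ in view of~\eqref{bigoplus_m<-s-1/2 C^1_m cong K^(s)(M L)}. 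So it suffices to treat a single leaf~$L$.

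By Reeb's local stability, combined with property~\ref{i: homotheties} of \Cref{ss: simple fol flows}, $(T_{L,\epsilon},\FF|_{T_{L,\epsilon}})$ is identified with a neighborhood of the central leaf in the suspension foliated manifold $(M'_L,\FF'_L)$, so that $d_\FF$ corresponds to $d_{\FF'_L}$ and the defining function $\rho$ of $L$ corresponds to $\rho'_L$. This reduces the proposition to the same statement for $(M'_L,\FF'_L)$ around its preserved central leaf, which is precisely the forthcoming Corollary~\ref{c: K(M M^0 Lambda FF) equiv bigoplus_L bigoplus_k C^infty(L Lambda) suspension} in \Cref{ss: (K(Lambda FF) d_FF)}.

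What is left is to indicate how the suspension case itself will be obtained. One takes the vector-bundle version of the TVS-isomorphism~\eqref{bigoplus_m C^1_m -> K(M L)} with $E=\Lambda\FF'_L$,
\[
\bigoplus_{k=0}^\infty C^\infty\bigl(L;\Lambda\FF'_L|_L\otimes\Omega^{-1}NL\bigr)\xrightarrow{\cong}K(M'_L,L;\Lambda\FF'_L)\;,\quad u\mapsto\partial_{\rho'_L}^k\delta_L^u\;,
\]
and uses $\Lambda\FF'_L|_L\equiv\Lambda L$ (as $L$ is a leaf) together with the trivialization of $\Omega NL$ coming from $\rho'_L$ to rewrite the $k$-th summand as $C^\infty(L;\Lambda\otimes\Omega^{-k-1}NL)\equiv C^\infty(L;\Lambda)$. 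To read off how $d_\FF$ acts, one invokes the commutation~\eqref{[Theta_X d_0 1]}, i.e.\ $[d_{0,1},\partial_\rho]=-{\eta\wedge}\,\partial_\rho$ together with $\partial_\rho\eta=0$, to obtain $d_\FF\,\partial_\rho^k=\partial_\rho^k(d_\FF-k\,{\eta\wedge})$; then one combines this with the formula $d_\FF\delta_L^u\equiv\delta_L^{d_L u}$ from~\eqref{A delta_L^u} and with $\eta\wedge\delta_L^u=\delta_L^{\eta|_L\wedge u}$ from~\eqref{alpha wedge delta_L^beta}. This produces a Witten-type perturbation of the de~Rham differential on each summand, and the identification of the two descriptions $\bigoplus_k d_{-k-1}\equiv\bigoplus_k d$ is provided by~\eqref{C^-infty(M Lambda otimes LL^z) equiv C^-infty(M Lambda)}.

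The main obstacle is the bookkeeping in the previous paragraph: carefully tracking the normal-bundle twist so that the Witten parameter comes out as $-k-1$ rather than $-k$ (the discrepancy is exactly the additional $\Omega^{-k}NL$ picked up by $k$ iterated transverse differentiations of the Dirac section, beyond the single $\Omega^{-1}NL$ already built into $\delta_L^u$), and verifying that all signs and the trivialization chosen convert $d_{-k-1}$ to the plain de~Rham $d$. None of this is serious, but none of it is automatic either, and it is the content of the technical work deferred to \Cref{ss: (K(Lambda FF) d_FF)}.
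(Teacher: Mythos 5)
Your proposal is correct and follows essentially the same route as the paper: reduce via Reeb's local stability to the suspension model around each preserved leaf, invoke the isomorphism~\eqref{bigoplus_m C^1_m -> K(M L)} for $\Lambda\FF$, and use the commutation~\eqref{[Theta_X d_0 1]} (iterated to $d_\FF\,\partial_\rho^k=\partial_\rho^k(d_\FF-k\,{\eta\wedge})$) together with~\eqref{A delta_L^u} and~\eqref{alpha wedge delta_L^beta} to identify $d_\FF$ on the $k$-th summand with $d_L-k\,{\eta\wedge}$, which becomes $d_{-k-1}$ after trivializing $\Omega^{-1}NL$. This is exactly the content the paper records as Proposition~\ref{p: d_FF corresponds to d_L - m theta wedge} and Corollary~\ref{c: K(M M^0 Lambda FF) equiv bigoplus_L bigoplus_k C^infty(L Lambda) suspension}; the only step you gloss over (identifying $((d_\FF^\trans)|_L)^\trans$ with $d_L$, the $m=0$ case) is precisely the pairing computation carried out there, so the deferral you announce is accurate.
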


\begin{cor}\label{c: HK(FF) equiv bigoplus_k H_-k-1(M^0)}
We have TVS-identities,
\[
H^\bullet K(\FF)\equiv\bigoplus_kH_{-k-1}^\bullet(M^0)
\equiv\bigoplus_kH^\bullet(M^0,\Omega^{-k-1}NM^0)\;.
\]
Moreover $H^\bullet K^{(s)}(\FF)$ is the topological vector subspace of $H^\bullet K(\FF)$ given by the finite direct sum with $k<-s-1/2$. In particular,~\eqref{HK(FF) equiv bar HK(FF)} is satisfied.
\end{cor}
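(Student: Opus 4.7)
My plan is to deduce the corollary directly from Proposition~\ref{p: K(Lambda FF) equiv bigoplus_k C^infty(M^0 Lambda)} by taking cohomology of the displayed identity of topological complexes. Since the differential $d_\FF$ respects the direct-sum decomposition, the spaces of cocycles and coboundaries split as locally convex direct sums:
\[
ZK(\FF)\equiv\bigoplus_k Z(C^\infty(M^0;\Lambda),d_{-k-1})\;,\qquad BK(\FF)\equiv\bigoplus_k B(C^\infty(M^0;\Lambda),d_{-k-1})\;,
\]
and since quotients commute with locally convex direct sums, we obtain $H^\bullet K(\FF)\equiv\bigoplus_k H_{-k-1}^\bullet(M^0)$, which is the first claimed identity. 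The second identity, with coefficients in $\Omega^{-k-1}NM^0$, is merely a relabelling via the trivialisation $|\omega|^{-k-1}$ discussed just before Proposition~\ref{p: K(Lambda FF) equiv bigoplus_k C^infty(M^0 Lambda)}, using $d\omega=\eta\wedge\omega$ and the identifications recalled in \Cref{ss: flat line bundle}.

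For the equality $H^\bullet K(\FF)\equiv\bar H^\bullet K(\FF)$ (the first identity of~\eqref{HK(FF) equiv bar HK(FF)}), I would argue that the right-hand side is already Hausdorff. Each factor $H_{-k-1}^\bullet(M^0)$ is the cohomology of an elliptic complex on the closed manifold $M^0$, hence finite-dimensional and automatically Hausdorff. A locally convex direct sum of Hausdorff LCSs is Hausdorff, so $\overline 0=0$ in $H^\bullet K(\FF)$, giving the identification with the reduced cohomology. The same argument works for each $H^\bullet K^{(s)}(\FF)$, which by the proposition is a finite direct sum.

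Finally, for the Sobolev-order statement, Proposition~\ref{p: K(Lambda FF) equiv bigoplus_k C^infty(M^0 Lambda)} realises $K^{(s)}(\FF)$ as the finite sub-sum over $k<-s-1/2$ of the above decomposition. Taking cohomology yields the corresponding finite sub-sum of $H_{-k-1}^\bullet(M^0)$. The fact that the canonical map $H^\bullet K^{(s)}(\FF)\to H^\bullet K(\FF)$ is injective (so the former sits as a topological vector subspace of the latter) follows from the direct-sum structure: if $\alpha=(\alpha_k)_{k<-s-1/2}\in ZK^{(s)}(\FF)$ is the boundary of some $\beta=(\beta_k)_{k\in\N_0}\in K(\FF)$, then the components $\beta_k$ for $k\ge -s-1/2$ are themselves cocycles in their respective Witten complexes and can be discarded without changing $\alpha$, so $\alpha$ bounds within $K^{(s)}(\FF)$.

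There is essentially no hard step here; the only point deserving care is the commutation of kernels, images, and quotients with locally convex direct sums, which is standard because a linear map out of such a direct sum is continuous iff its restriction to each summand is. The role of the proposition is to convert a genuinely infinite-dimensional analytic problem about conormal leafwise currents into a countable direct sum of finite-dimensional twisted de~Rham cohomologies on $M^0$, after which everything is formal.
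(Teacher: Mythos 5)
Your proposal is correct and takes essentially the same route the paper intends; the paper presents this result as a direct corollary of Proposition~\ref{p: K(Lambda FF) equiv bigoplus_k C^infty(M^0 Lambda)} without spelling out the passage to cohomology, and your argument fills that in faithfully. The key observations you isolate -- that $d_\FF$ respects the direct-sum decomposition, that locally convex direct sums commute with kernels, images, and quotients (by the universal property of the direct sum and of the quotient), that each $H^\bullet_{-k-1}(M^0)$ is finite-dimensional Hausdorff because the Witten complex on the closed manifold $M^0$ is elliptic, and that a locally convex direct sum of Hausdorff spaces is Hausdorff -- are exactly the ones needed, and your componentwise argument for the injectivity of $H^\bullet K^{(s)}(\FF)\to H^\bullet K(\FF)$ (discarding the components $\beta_k$ with $k\ge -s-1/2$, which must be $d_{-k-1}$-cocycles since the corresponding components of $\alpha$ vanish) is clean and correct.
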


\begin{rem}\label{r: bigoplus_L}
The differential complexes on $M^0$ used in \Cref{p: K(Lambda FF) equiv bigoplus_k C^infty(M^0 Lambda)} obviously split into direct sums of the same complexes given by leaves $L\subset M^0$. The same applies to their cohomologies in Corollary~\ref{c: HK(FF) equiv bigoplus_k H_-k-1(M^0)}.
\end{rem}

\begin{rem}\label{r: C^-infty_M^0(M Lambda FF) equiv bigoplus_k ...}
Like in \Cref{p: K(Lambda FF) equiv bigoplus_k C^infty(M^0 Lambda)}, the isomorphism~\eqref{bigoplus_m C^0_m -> C^-infty_L(M)} gives
\[
C^{-\infty}_{M^0}(M;\Lambda\FF)\equiv\bigoplus_kC^{-\infty}(M^0;\Lambda)
\equiv\bigoplus_kC^{-\infty}(M^0;\Lambda\otimes\Omega^{-k-1}NM^0)\;.
\]
\end{rem}

\section{Description of $\bar H^\bullet J(\FF)$}\label{s: description of bar H^bullet J(FF)}

With the notation of \Cref{ss: folns almost w/o hol,ss: simple fol flows}, by~\eqref{J^m(M L) cong ...} and~\eqref{J(M L) cong ...}, for $m\in\R$,
\begin{gather}
J^m(\FF)\cong\bfrho^m\Hb^\infty(\bfM;\Lambda\bfFF)
\equiv\bfrho^{m+\frac12}H^\infty(\mathring\bfM;\Lambda\mathring\bfFF)\;,
\label{J^m(FF) cong ...}\\
J(\FF)\cong\bigcup_m\bfrho^m\Hb^\infty(\bfM;\Lambda\bfFF)
=\bigcup_m\bfrho^mH^\infty(\mathring\bfM;\Lambda\mathring\bfFF)\;,
\label{J(FF) cong ...}
\end{gather}
as topological complexes with $d_\FF$, $d_{\bfFF}$ or $d_{\mathring\bfFF}$, using the b-metric $\bfg$ to define $\Hb^\infty(\bfM;\Lambda\bfFF)$, and using $\bfg|_{\mathring\bfM}$ to define $H^\infty(\mathring\bfM;\Lambda\mathring\bfFF)$. 

On the other hand, since $\bfeta=d_{\bfFF}(\ln\bfrho)$ on $\mathring\bfM$ by~\eqref{eta_0 = d_0,1(ln rho_l)}, we get isomorphisms of topological complexes,
\begin{equation}\label{bfrho^-m-frac12 - conormal}
\bfrho^{-m-\frac12}:\big(\bfrho^{m+\frac12}H^\infty(\mathring\bfM;\Lambda\mathring\bfFF),d_{\mathring\bfFF}\big)\xrightarrow{\cong}\big(H^\infty(\mathring\bfM;\Lambda\mathring\bfFF),d_{\mathring\bfFF,m+\frac12}\big)\;,
\end{equation}
by the leafwise version of~\eqref{Witten's opers} (\Cref{s: Witten's opers on Riem folns of bd geom}). 

By~\eqref{J^m(FF) cong ...} and~\eqref{bfrho^-m-frac12 - conormal}, and  the analog of~\eqref{leafwise Hodge iso} for $\Delta_{\mathring\bfFF,m+\frac12}$ in $H^\infty(\mathring\bfM;\Lambda\mathring\bfFF)$ (\Cref{s: Witten's opers on Riem folns of bd geom}), we get induced TVS-isomorphisms
\begin{align}
\bar H^\bullet J^m(\FF)&\cong
\bar H^\bullet\big(\bfrho^{m+\frac12}H^\infty(\mathring\bfM;\Lambda\mathring\bfFF),d_{\mathring\bfFF}\big)
\label{bar H^bullet J^m(FF) cong bar H^bullet(...)}\\
&\cong\bar H^\bullet\big(H^\infty(\mathring\bfM;\Lambda\mathring\bfFF),d_{\mathring\bfFF,m+\frac12}\big)
\label{bar H^bullet(...) cong bar H^bullet(...) - conormal}\\
&\cong\ker\Delta_{\mathring\bfFF,m+\frac12}\;.\label{bar H^bullet J^m(FF) cong ker ...}
\end{align}
By the analog of~\eqref{varinjlim H^bullet I^(s)(FF) cong H^bullet I(FF)} for $J(\FF)$ and~\eqref{bar H^bullet J^m(FF) cong ker ...}, the LCHS $\bar H^\bullet J(\FF)$ is an inductive limit of Hilbertian spaces. The isomorphisms~\eqref{bar H^bullet J^m(FF) cong bar H^bullet(...)} and~\eqref{bar H^bullet(...) cong bar H^bullet(...) - conormal} are also true in cohomology.

\Cref{t: intro - bar H^bullet J(FF)} follows from the analog of~\eqref{varinjlim H^bullet I^(s)(FF) cong H^bullet I(FF)} for $J(\FF)$ and~\eqref{J^m(FF) cong ...}--\eqref{bfrho^-m-frac12 - conormal}.

\section{Short exact sequence of conormal reduced cohomology}\label{s: short exact seq - conormal}

The goal of this section is to prove \Cref{t: intro - reduced conormal cohomology exact sequence}; i.e., the exactenss~\eqref{exact seq in reduced cohom - conormal}. Some remarks will indicate how to modify the argument to get also the exactness of~\eqref{exact seq in cohom - conormal}. To begin with, we choose appropriate partial extension maps.

\subsection{Compatibility of the maps $E_m$ with $d_\FF$}\label{ss: compatibility of E_m with d_FF}

For $m\in\R$, take $s\in\R$ such that $s=0$ if $m\ge0$, and $m>s\in\Z^-$ if $m<0$. For fixed $0<\epsilon<1$, using the tubular neighborhood $T:=T_\epsilon$ of $M^0$ in $M$ (\Cref{ss: tubular neighborhoods of M^0}), consider the continuous inclusions of $I^{(s)}_\co(\FF|_T)\subset I^{(s)}(\FF)$ and $J^m_\co(\FF|_T)\subset J^m(\FF)$, using the extension by zero. Let $E_{m,T}:J^m_\co(\FF|_T) \to I^{(s)}_\co(\FF|_T)$ \index{$E_{m,T}$} be the continuous linear partial extension map constructed in the proofs of the compactly supported versions of \Cref{p: E_m,c: E_m: J^m(M L) -> I^(s)(M L)} with $\Lambda\FF|_T$ (see \Cref{r: E_m vector bundle,r: E_m compactly supported}). By the Reeb's local stability, the following result follows from its case for suspension foliations, which will be proved in \Cref{ss: E_m} (\Cref{c: E_m T}).

\begin{prop}\label{p: E_m T d_FF = d_FF E_m T}
$E_{m,T}d_\FF=d_\FF E_{m,T}$.
\end{prop}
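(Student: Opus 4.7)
The statement reduces, via Reeb's local stability (as remarked before the proposition), to the case of a suspension foliation $(M'_L,\FF'_L)$ around each preserved leaf $L\subset M^0$; and, via the cutting $\bfT'_L = T'_{L,+} \sqcup T'_{L,-}$, to the compactly supported, vector-bundle version (Remarks \ref{r: E_m m < 0}, \ref{r: E_m vector bundle}, \ref{r: E_m compactly supported}) of the partial extension map of \Cref{p: E_m} applied to $\Lambda\bfFF'_L|_{\bfT'_L}$. The plan is to lift everything to the holonomy cover $\widetilde{M'_L} = \R \times \widetilde L$, where $\widetilde{\FF'_L}$ has horizontal leaves $\{x\}\times\widetilde L$ parameterized by the suspension coordinate $x$, and to exploit the resulting product decomposition.

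On the cover, there is a canonical trivialization $\Lambda\widetilde{\FF'_L}\equiv\pr_2^*\Lambda\widetilde L$ under which $d_{\widetilde{\FF'_L}}$ becomes $1\otimes d_{\widetilde L}$. Using $x$ as boundary-defining function on each half of the lifted cut $\widetilde\bfT'_L$, the operators $\partial_x$ and $J_x u(x,\tilde y)=\int_\epsilon^x u(\xi,\tilde y)\,d\xi$ that build the lifted extension $\widetilde E_{m,\widetilde\bfT'_L}=\partial_x^N\circ\widetilde E_0\circ J_x^N$ act purely on the $x$-variable and hence commute trivially with $d_{\widetilde\bfFF'_L}$; so does $\widetilde E_{m,\widetilde\bfT'_L}$. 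Since the covering $\tilde\bfpi:\widetilde\bfT'_L\to\bfT'_L$ is a $\Gamma_L$-equivariant foliated local diffeomorphism, its pushforward yields $E_{m,\bfT'_L}\,d_{\bfFF'_L}=d_{\bfFF'_L}\,E_{m,\bfT'_L}$; the identification $\bfT'_L\cong\bfT_L$ from \Cref{ss: tubular neighborhoods of M^0} together with the pushforward $\bfpi_*$ in diagram \eqref{CD: conormal seqs} then transfers this to the desired $E_{m,T}\,d_\FF=d_\FF\,E_{m,T}$.

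The main obstacle is that the globally defined operators $\partial_\rho$ and $J_\rho$ on $T$ itself do \emph{not} commute with $d_\FF$: by \eqref{[partial_rho d_0 1]} one has $[\partial_\rho,d_\FF]=\eta\wedge\partial_\rho$, and a parallel boundary-term issue plagues $J_\rho$, so a factor-by-factor verification in the $(\rho,y)$-model on $T$ is impossible. This is precisely what forces the lift to the cover, where the product structure eliminates all twisting. An alternative formulation that remains on $T$ is to use $\rho\partial_\rho=x\partial_x=\Theta_X/|\varkappa|$, which is $\Gamma_L$-invariant and satisfies $[\rho\partial_\rho,d_\FF]=0$: from \eqref{d rho}, \eqref{| varkappa | partial_rho = Theta_X}, and \eqref{[Theta_X d_0 1]} one computes $[\rho\Theta_X,d_{0,1}]=\rho\,\eta\wedge\Theta_X-(d_\FF\rho)\wedge\Theta_X=0$ since $d_\FF\rho=\rho\eta$. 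Combined with the corresponding logarithmic integration, this yields another route to the commutation; by the uniqueness statements \Cref{p: E_m s} and \Cref{c: E_m s} (in their compactly supported, vector-bundle form), any such construction agrees on $J^m_\co(\FF|_T)$ and produces the same $E_{m,T}$.
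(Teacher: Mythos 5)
Your diagnosis of the difficulty is correct: $[\partial_\rho,d_\FF]=\eta\wedge\partial_\rho$ (by \eqref{[partial_rho d_0 1]}), so a naive factor-by-factor check in the $(\rho,y)$-model fails, and passing to the suspension cover is the right idea. However, the way you try to remove the twist on the cover does not work, and the proof has two genuine gaps.

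First, you propose to build the lifted extension from $\partial_x$ and $J_xu(x,\tilde y)=\int_\epsilon^x u(\xi,\tilde y)\,d\xi$, using $x$ as boundary-defining function; these do commute with $d_{\widetilde{\FF'_L}}$. But this operator is \emph{not} the lift of the paper's $E_{m,T}$, and more importantly it is not $\Gamma_L$-equivariant, so it has no pushforward to $\bfT'_L$. The deck action is $\gamma\cdot(x,\tilde y)=(a_\gamma x,\gamma\cdot\tilde y)$, so $T_\gamma^*\partial_x=a_\gamma\,\partial_x T_\gamma^*$; $x$, unlike $\rho=e^Fx$, is not $\Gamma_L$-invariant and cannot serve as a boundary-defining function on $M_\pm$. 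The $\Gamma_L$-equivariant lift of the paper's $J$ is the operator $\widetilde J$ built from $\tilde\rho$, and in $\tau=|x|$-coordinates it takes the form in \eqref{widetilde J}: $\widetilde J\tilde\alpha(\tau,\tilde y)=e^{F(\tilde y)}\int_{e^{-F(\tilde y)}}^\tau\tilde\alpha(\tau_1,\tilde y)\,d\tau_1$, not $J_x$. The factor $e^{F(\tilde y)}$ does not commute with $d_{\widetilde L}$ (since $\tilde\eta=dF$), and that is exactly where the nonvanishing commutator $\widetilde J d_{\widetilde\FF_\pm}=(d_{\widetilde\FF_\pm}-\tilde\eta\wedge)\widetilde J$ comes from. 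The twist is real; it cannot be conjured away by changing the coordinate on the cover, because the change of coordinate is not $\Gamma_L$-equivariant.

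Second, your alternative ``remaining on $T$'' does not produce a partial extension map at all, and the appeal to \Cref{p: E_m s} to identify it with $E_{m,T}$ is not available. The vector field $\rho\partial_\rho$ is a b-vector field (tangent to $\rho=0$), so $(\rho\partial_\rho)^N$ preserves $\dot\AA^{(0)}$ rather than dropping Sobolev order to $\dot\AA^{(-N)}$, and the ``corresponding logarithmic integration'' $\int\cdot\,\frac{d\rho_1}{\rho_1}$ does not increase the weight, so it cannot map $\AA^m$ (for $m<0$) into $\AA^0$. Moreover, \Cref{p: E_m s,c: E_m s} only compare the maps $E_{m,s}$ of the \emph{same} construction across different $(m,s)$; they do not assert that a partial extension map compatible with some property is unique, and indeed it is not (one can always add a map into $K^{(s')}(\FF)$). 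Your observation that $[\rho\partial_\rho,d_\FF]=0$ is correct in itself---it is essentially $\Theta_Y$ for the infinitesimal transformation $Y$, cf.\ \eqref{Theta_X d_0 1 = d_0 1 Theta_X}---but it does not plug into the construction of $E_{m,T}$.

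What actually makes the proposition work is a cancellation, not an absence, of twists. The paper proves (\Cref{p: J d_FF_pm = (d_FF_pm - theta wedge) J,p: partial_rho d_FF}) that $Jd_{\FF_\pm}=(d_{\FF_\pm}-\eta\wedge)J$ and $\partial_\rho d_{\FF_\pm}=(d_{\FF_\pm}+\eta\wedge)\partial_\rho$ on $\Cinftyc(T_\pm;\Lambda\FF_\pm)$, with \emph{opposite} twists. Since $\eta$ is horizontal and commutes with both $J$ and $\partial_\rho$, one gets $J^Nd_\FF=(d_\FF-N\eta\wedge)J^N$ and $\partial_\rho^Nd_\FF=(d_\FF+N\eta\wedge)\partial_\rho^N$, and in the composition $E_{m,\pm}=\partial_\rho^N E_{0,\pm}J^N$ the two $N\eta\wedge$ terms cancel exactly, giving $E_{m,\pm}d_{\FF_\pm}=d_{\FF_\pm}E_{m,\pm}$ (\Cref{c: E_m pm d_FF_pm = d_FF_pm E_m pm}), whence $E_{m,T}d_\FF=d_\FF E_{m,T}$ by \Cref{c: E_m T}. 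The lift to the cover is used only to compute the $J$-commutator cleanly, not to eliminate the twist.
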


Let $\{\lambda,\mu\}$ be a smooth partition of unity of $\R$ subordinated to the open cover $\{(-\epsilon,\epsilon),\R^\times\}$, which induces the smooth partition of unity $\{\lambda(\rho),\mu(\rho)\}$ of $M$ subordinated to the open cover $\{T, M^1\}$, where $\lambda(\rho)$ (resp., $\mu(\rho)$) is extended by $0$ (resp., $1$) to the whole of $M$. According to \Cref{r: E_m m < 0,r: E_m vector bundle}, take the continuous linear partial extension map $E_m:J^m(\FF)\to I^{(s)}(\FF)$ defined by
\begin{equation}\label{E_m alpha = E_m T(lambda(rho) alpha) + mu(rho) alpha}
E_m\alpha=E_{m,T}(\lambda(\rho)\,\alpha)+\mu(\rho)\,\alpha\;.
\end{equation}

\begin{cor}\label{c: E_m d_FF = d_FF E_m}
$E_md_\FF=d_\FF E_m$. 
\end{cor}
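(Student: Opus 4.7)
\textbf{Proof proposal for Corollary \ref{c: E_m d_FF = d_FF E_m}.} The plan is to expand both sides using the definition~\eqref{E_m alpha = E_m T(lambda(rho) alpha) + mu(rho) alpha}, apply the Leibniz rule for $d_\FF$, and invoke Proposition~\ref{p: E_m T d_FF = d_FF E_m T} to cancel the ``main'' terms; what is left will be a cross-term involving $d_\FF(\lambda(\rho))$ that must be checked separately.

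More concretely, for $\alpha \in J^m(\FF)$, I would compute
\[
d_\FF E_m\alpha = d_\FF E_{m,T}(\lambda(\rho)\,\alpha) + d_\FF(\mu(\rho))\wedge\alpha + \mu(\rho)\,d_\FF\alpha,
\]
and, using $E_{m,T}d_\FF = d_\FF E_{m,T}$ followed by Leibniz on $d_\FF(\lambda(\rho)\,\alpha)$,
\[
d_\FF E_m\alpha = E_{m,T}\big(d_\FF(\lambda(\rho))\wedge\alpha\big) + E_{m,T}(\lambda(\rho)\,d_\FF\alpha) + d_\FF(\mu(\rho))\wedge\alpha + \mu(\rho)\,d_\FF\alpha.
\]
On the other hand, $E_m(d_\FF\alpha) = E_{m,T}(\lambda(\rho)\,d_\FF\alpha) + \mu(\rho)\,d_\FF\alpha$. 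Since $\lambda(\rho) + \mu(\rho) = 1$ gives $d_\FF(\mu(\rho)) = -d_\FF(\lambda(\rho))$, the identity $d_\FF E_m = E_m d_\FF$ reduces to the single claim
\[
E_{m,T}\big(d_\FF(\lambda(\rho))\wedge\alpha\big) = d_\FF(\lambda(\rho))\wedge\alpha.
\]

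The key observation is that $d_\FF(\lambda(\rho))$ is supported in the compact annular region of $T$ where $\lambda(\rho)$ is non-constant, which lies in $T^1 = T\cap M^1$ and is disjoint from $M^0$. Since $\alpha \in J^m(\FF) \subset C^\infty(M^1;\Lambda\FF)$, the product $d_\FF(\lambda(\rho))\wedge\alpha$ is a smooth leafwise form with compact support in $T^1 \equiv \mathring{\bfT}$. Then the compactly supported analog of Corollary~\ref{c: E_m s} (applied through the identification of $\bfT$ with the collar of $\bfM$ and using \Cref{r: E_m vector bundle,r: E_m compactly supported}) gives that $E_{m,T}$ acts as the identity on such forms, which closes the argument.

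The only step that requires care is verifying that $E_{m,T}$ genuinely restricts to the identity on $\Cinftyc(T^1;\Lambda\FF)$ in the setting of vector bundles and with the particular construction recalled in \Cref{r: E_m m < 0,r: E_m vector bundle,r: E_m compactly supported}; this is routine given \Cref{p: E_m s,c: E_m s,r: E_m s - variants of the defn}, but must be explicitly cited. The substantive content of the corollary is really carried by Proposition~\ref{p: E_m T d_FF = d_FF E_m T}; once that is in hand, the globalization via the partition of unity $\{\lambda(\rho),\mu(\rho)\}$ is a short algebraic manipulation, and the main (minor) obstacle is simply making sure the cutoff contributions land in the smooth compactly supported regime where $E_{m,T}$ is known to be the identity.
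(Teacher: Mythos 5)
Your proof is correct and takes essentially the same route as the paper: the paper starts from $E_m d_\FF\alpha$, applies the definition and Leibniz to write it as $d_\FF E_{m,T}(\lambda(\rho)\alpha) - E_{m,T}(d_\FF\lambda(\rho)\wedge\alpha) + d_\FF(\mu(\rho)\alpha) - d_\FF\mu(\rho)\wedge\alpha$, then invokes \Cref{p: E_m T d_FF = d_FF E_m T} and the version of \Cref{c: E_m s} for $E_{m,T}$ exactly as you do, so your version (starting from $d_\FF E_m\alpha$) is the same computation read backwards. You also correctly identify that the residual term $E_{m,T}(d_\FF\lambda(\rho)\wedge\alpha) = d_\FF\lambda(\rho)\wedge\alpha$ is the non-trivial cancellation, supported in $T^1$ and handled by \Cref{c: E_m s} with its compactly supported, vector-bundle variants.
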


\begin{proof}
By the version of \Cref{c: E_m s} for $E_{m,T}$ (\Cref{conormal sequence}), and since $d_\FF\lambda(\rho)=-d_\FF\mu(\rho)$ is supported in $M^1$, we get, for $\alpha\in J^m(\FF)$,
\begin{align*}
E_md_\FF\alpha&=E_{m,T}(\lambda(\rho)\,d_\FF\alpha)+\mu(\rho)\,d_\FF\alpha\\
&=E_{m,T}d_\FF(\lambda(\rho)\,\alpha)-E_{m,T}(d_\FF\lambda(\rho)\wedge\alpha)\\
&\phantom{={}}{}+d_\FF(\mu(\rho)\,\alpha)-d_\FF\mu(\rho)\wedge \alpha\wedge\omega\\
&=d_\FF E_{m,T}(\lambda(\rho)\,\alpha)-d_\FF\lambda(\rho)\wedge\alpha\\
&\phantom{={}}{}+d_\FF(\mu(\rho)\,\alpha)-d_\FF\mu(\rho)\wedge \alpha\\
&=d_\FF E_{m,T}(\lambda(\rho)\,\alpha)+d_\FF(\mu(\rho)\,\alpha)
=d_\FF E_m\alpha\;.\qedhere
\end{align*}
\end{proof}

\subsection{The maps $F_m$}\label{ss: F_m}

For $s\in\R$ and $m<s-n/2-1$, we can consider $R:I^{(s)}(\FF)\to J^m(\FF)$ by the analog of~\eqref{sandwich for AA} for $J(\FF)$ (\Cref{ss: J(M L)}). Taking $s'=0$ if $m\ge0$, and $m>s'\in\Z^-$ if $m<0$, let $E_m:J^m(\FF)\to I^{(s')}(\FF)$ be defined like in \Cref{ss: compatibility of E_m with d_FF}.We can also consider
\begin{equation}\label{E_m: J^(s)(FF) to I^(s')(FF)}
E_m=E_mj_{s,m}:J^{(s)}(\FF)\to I^{(s')}(\FF)\;.
\end{equation}
Then define the continuous linear map \index{$F_m$}
\[
F_m:=1-E_mR:I^{(s)}(\FF)\to K^{(s')}(\FF)\;.
\]
Note that
\begin{gather}
E_mR_s+\iota_{s'}F_m=j_{s,s'}:I^{(s)}(\FF)\to I^{(s')}(\FF)\;,\label{E_m R_s + iota_s' F_m = j_s s'}\\
F_m\iota_s=j_{s,s'}:K^{(s)}(\FF)\to K^{(s')}(\FF)\;.\label{F_m iota_s = j_s s'}
\end{gather}
Moreover, by \Cref{c: E_m d_FF = d_FF E_m},
\begin{equation}\label{F_m d_FF = d_FF F_m}
F_m d_\FF=d_\FF F_m\;.
\end{equation}

Take smaller numbers, $s_1<s$, $m_1<m$ and $s'_1<s'$, satisfying the same inequalities as $s$, $m$ and $s'$. Then, with~\eqref{E_m: J^(s)(FF) to I^(s')(FF)}, the version of \Cref{p: E_m s} with $\Lambda\FF$ (see \Cref{r: E_m s - variants of the defn}) gives
\begin{equation}\label{j_s' s'_1 E_m = E_m_1 j_s s_1}
j_{s',s'_1}E_m=E_{m_1}j_{s,s_1}\;.
\end{equation}
Then, using the definition of $F_m$, we also get
\begin{equation}\label{j_s' s'_1 F_m = F_m_1 j_s s_1}
j_{s',s'_1}F_m=F_{m_1}j_{s,s_1}\;.
\end{equation}

\begin{rem}\label{r: F_m with Omega M}
According to \Cref{r: coefficients in Omega M}, we can also define
\[
F_m:I^{(s)}(\FF;\Omega M)\to K^{(s')}(\FF;\Omega M)
\]
satisfying similar properties, using $d_\FF^\trans$.
\end{rem}

\subsection{The equality $\ker\bar R_*=\im\bar\iota_*$}
\label{ss: ker bar R_* = im bar iota_*}

We already know that $\ker\bar R_*\supset\im\bar\iota_*$. To prove that $\ker\bar R_*\subset\im\bar\iota_*$, take any class $\overline{[\alpha]}\in\ker\bar R_*$ in $\bar H^\bullet I(\FF)$. Hence there is some net $\varphi_l\in J(\FF)$ such that $R\alpha=\lim_ld_\FF\varphi_l$ in $J(\FF)$. We can assume $\varphi_l\in\Cinftyc(M^1;\Lambda\FF)$ by the density of $\Cinftyc(M^1;\Lambda\FF)$ in $J(\FF)$ (\Cref{ss: J(M L)}).

Using that $J(\FF)$ is compactly retractive (\Cref{ss: J(M L)}) and arguing like in \Cref{ss: injectivity of hat jmath_*}, we get that $\{R\alpha,d_\FF\beta_l\}_l$ is contained in some step $J^{(s)}(\FF)$, and $R\alpha=\lim_ld_\FF\varphi_l$ in $J^{(s)}(\FF)$. Moreover, we can assume $d_\FF\phi_l$ is a sequence because $J^{(s)}(\FF)$ is a Fr\'echet space.

Consider the notation of \Cref{ss: F_m}. We have $R\alpha=\lim_l d_\FF\varphi_l$ in $J^m(\FF)$ by the version of~\eqref{AA^m(M) subset AA^m'(M)} for $J(\FF)$ (\Cref{ss: J(M L)}). We have $\beta:=F_m\alpha\in ZJ^{(s')}(\FF)\subset ZJ(\FF)$ by~\eqref{F_m d_FF = d_FF F_m}, obtaining a class $\overline{[\beta]}\in\bar H^\bullet J(\FF)$.

Since the sequence $\varphi_l$ is in $\Cinftyc(M^1;\Lambda\FF)$, it is also in $J^m(\FF)$ and in $I^{(s')}I(\FF)$, and we have $E_m\varphi_l=\varphi_l$ by the version of \Cref{c: E_m s} with $J^m(\FF)$. Hence, by \Cref{c: E_m d_FF = d_FF E_m},
\[
\beta=\alpha-E_mR\alpha=\alpha-\lim_lE_md_\FF\varphi_l=\alpha-\lim_ld_\FF E_m\varphi_l=\alpha-\lim_ld_\FF\varphi_l
\]
 in $I^{(s')}(\FF)$, and therefore also in $I(\FF)$. This shows that $\bar\iota_*(\overline{[\beta]})=\overline{[\alpha]}$ in $\bar H^\bullet I(\FF)$.
 
 \begin{rem}\label{r: ker R_* subset im iota_*}
Using cohomology instead of reduced cohomology and a single element $\varphi$ instead of a net $\varphi_l$, the analogous argument gives $\ker R_*\subset\im\iota_*$, obtaining $\ker R_*=\im\iota_*$.
 \end{rem}

\subsection{Injectivity of $\bar\iota_*$}\label{ss: injectivity of bar iota_*}

Take any $\overline{[\alpha]}\in H^\bullet K(\FF)$ with $\bar\iota_*(\overline{[\alpha]})=0$ in $\bar H^\bullet I(\FF)$. Since $I(\FF)$ is compactly retractive, $C^\infty(M;\Lambda\FF)$ is dense in $I(\FF)$ and every $I^{(s)}(\FF)$ is a Fr\'echet space (\Cref{ss: conormal - Sobolev order - compact}), we get as above that there is some $s$ and a sequence $\varphi_l$ in $C^\infty(M;\Lambda\FF)$ such that $\alpha\in K^{(s)}(\FF)$ and $\alpha=\lim_ld_\FF\varphi_l$ in $I^{(s)}(\FF)$. 

Consider again the notation of \Cref{ss: F_m}. By~\eqref{F_m iota_s = j_s s'},
\[
\alpha=F_m\alpha=\lim_lF_md_\FF\varphi_l=\lim_ld_\FF F_m\varphi_l
\]
in $K^{(s)}(\FF)$, and therefore in $K(\FF)$. So $\alpha\in\bar BK(\FF)=BK(\FF)$ by~\eqref{HK(FF) equiv bar HK(FF)}, and therefore $[\alpha]=0$ in $H^\bullet K(\FF)$.

\begin{rem}\label{r: injectivity of iota_*}
Like in \Cref{r: ker R_* subset im iota_*}, we also get the injectivity of $\iota_*$.
 \end{rem}

\subsection{Surjectivity of $\bar R_*$}\label{ss: surjectivity of bar R_*}

For any class $\overline{[\alpha]}\in\bar H^\bullet J(\FF)$, the representative $\alpha$ is in some step $J^{(s)}(\FF)$, and therefore it is also in $ZJ^{(s)}(\FF)$. With the notation of \Cref{ss: F_m}, $\beta:=E_m\alpha\in ZI^{(s')}(\FF)\subset  ZI(\FF)$ by \Cref{c: E_m d_FF = d_FF E_m}, and we have $R\beta=\alpha$. This shows that $\overline{[\alpha]}=\bar R(\overline{[\beta]})$.

 \begin{rem}\label{r: ker R_* subset im iota_*}
Using cohomology instead of reduced cohomology, the analogous argument gives the surjectivity of $R_*$.
 \end{rem}

\section{Computations in the case of a suspension foliation}\label{s: computations suspension foln}

Consider the notation of \Crefrange{ss: suspension - homotheties}{ss: boundary-def func of M_pm - suspension}, where the case of a suspension foliation with a simple foliated flow was considered.

\subsection{Description of $(K(\FF),d_{\FF})$}\label{ss: (K(Lambda FF) d_FF)}

For every $m\in\N_0$ and $s<-1/2$, consider the injection defined by~\eqref{C^infty(L Omega^-1 NL) cong partial_x^m C^infty(L Omega^-1 NL)} for the vector bundle $\Lambda\FF$, 
\begin{equation}\label{alpha mapsto partial_rho^m delta_M^0^alpha}
C^\infty(L;\Lambda\otimes\Omega^{-1}NL) \to K^{(s-m)}(M,L;\Lambda\FF)\;,\quad \alpha\mapsto\partial_\rho^m\delta_L^\alpha\;.
\end{equation}

\begin{prop}\label{p: d_FF corresponds to d_L - m theta wedge}
Via~\eqref{alpha mapsto partial_rho^m delta_M^0^alpha}, the operator $d_\FF$ on $K^{(s-m)}(M,L;\Lambda\FF)$ corresponds to the operator $d_L-m{\eta\wedge}$ on $C^\infty(L;\Lambda\otimes\Omega^{-1}NL)$.
\end{prop}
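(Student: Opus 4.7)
The plan is to reduce the claim to a commutator identity between $\partial_\rho$ and $d_\FF$ on $M$, combined with the behavior of $d_\FF$ and $\eta\wedge$ on Dirac sections supported on the leaf $L$. Combining~\eqref{| varkappa | partial_rho = Theta_X} (which gives $|\varkappa|\partial_\rho = \Theta_X$ as elements of $\Diff^1(M;\Lambda)$) with~\eqref{[Theta_X d_0 1]}, and invoking $d_{0,1}\equiv d_\FF$ on $C^\infty(M;\Lambda\FF)$ via~\eqref{d_0 1 equiv d_FF}, I get $[\partial_\rho, d_\FF] = \eta\wedge\partial_\rho$ as operators on leafwise forms. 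Since $\Theta_X\eta = 0$ (recorded in \Cref{ss: def func of M^0 - suspension}), the Leibniz rule for the first-order operator $\partial_\rho$ forces $[\partial_\rho, \eta\wedge] = 0$. A routine induction on $m$ then yields
$$
d_\FF\circ\partial_\rho^m = \partial_\rho^m\circ d_\FF - m\,\eta\wedge\partial_\rho^m
$$
as operators on $C^\infty(M;\Lambda\FF)$, and this identity extends by continuity to $C^{-\infty}(M;\Lambda\FF)$.

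Next I would compute $d_\FF\delta_L^\alpha$ and $\eta\wedge\delta_L^\alpha$ for $\alpha\in C^\infty(L;\Lambda\otimes\Omega^{-1}NL)$. The second is immediate from~\eqref{alpha wedge delta_L^beta}: $\eta\wedge\delta_L^\alpha = \delta_L^{\eta|_L\wedge\alpha}$. For the first, formula~\eqref{A delta_L^u} applies since $d_\FF\in\Diff^1(M,L;\Lambda\FF)$ ($L$ being a leaf, every leafwise vector field is tangent to $L$); under the canonical identification $\Lambda\FF|_L=\Lambda L$, the induced operator $(d_\FF)'$ is just $d_L$, so $d_\FF\delta_L^\alpha = \delta_L^{d_L\alpha}$. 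Assembling everything:
\begin{align*}
d_\FF(\partial_\rho^m\delta_L^\alpha)
&= \partial_\rho^m d_\FF\delta_L^\alpha - m\,\eta\wedge\partial_\rho^m\delta_L^\alpha \\
&= \partial_\rho^m\delta_L^{d_L\alpha} - m\,\partial_\rho^m(\eta\wedge\delta_L^\alpha) \\
&= \partial_\rho^m\delta_L^{(d_L-m\,\eta\wedge)\alpha},
\end{align*}
which is the identity claimed by the proposition.

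The main point that needs care is the unpacking of~\eqref{A delta_L^u} to justify $d_\FF\delta_L^\alpha = \delta_L^{d_L\alpha}$, taking proper account of the twist by $\Omega^{-1}NL$. On the suspension this is transparent: $NL$ is globally trivialized by $\partial_\rho$, so $\Omega^{-1}NL$ is a trivial line bundle with trivializing section $|d\rho|^{-1}$, and $d_L$ acts on twisted forms via the Leibniz rule with this trivializing section playing the role of a constant. Once this trivialization is in hand the commutator identity and the induction above are routine bookkeeping.
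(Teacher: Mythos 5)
Your proof has the same overall structure as the paper's: establish the commutator identity $[\partial_\rho,d_\FF]=\eta\wedge\partial_\rho$ (via \eqref{| varkappa | partial_rho = Theta_X} and \eqref{[Theta_X d_0 1]}), then reduce to the base case $m=0$ by induction, using $\eta\wedge\delta_L^\alpha=\delta_L^{\eta|_L\wedge\alpha}$. The induction step is fine. The gap is in your justification of the base case.

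You assert that $(d_\FF)'=d_L$ by \eqref{A delta_L^u} and try to make this "transparent" via the trivialization of $\Omega^{-1}NL$ by $|d\rho|^{-1}$, "with this trivializing section playing the role of a constant." But $|d\rho|^{-1}$ is \emph{not} a parallel section for the Bott flat structure. Indeed, for $V\in\fX(\FF)$ one has $\LL_V\omega=\iota_V(\eta\wedge\omega)=\eta(V)\,\omega$, so the Bott connection form on $\Omega^{-1}N\FF$ in the $|\omega|^{-1}$-trivialization is $-\eta$; since $d\rho|_L=|\varkappa|\,\omega|_L$ by \eqref{d rho}, the $|d\rho|^{-1}$-trivialization differs only by a constant factor and carries the same nonzero connection form. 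The linear holonomy of $L$ (generated by the homotheties $a_\gamma$) is nontrivial precisely because $[\eta|_L]\neq0$, so no trivializing section of $\Omega^{-1}NL$ is flat. This matters quantitatively: if you treat $|d\rho|^{-1}$ as parallel, the trivialized operator becomes the untwisted $d$, and the final formula would read $d-m\,\eta\wedge\equiv d_{-m}$ rather than the correct $d_{-m-1}$ demanded by \Cref{p: K(Lambda FF) equiv bigoplus_k C^infty(M^0 Lambda)}. The paper's $m=0$ computation is exactly the point where this extra $\eta\wedge$ is produced: it pairs $d_\FF\delta_L^\alpha$ against test forms $\beta=\beta_0\wedge\omega$, applies \eqref{d_FF equiv (-1)^v+1 d_FF^t} and Stokes' theorem, and the term $d\omega=\eta\wedge\omega$ supplies the correction that identifies $(d_\FF)'$ with the Bott-twisted $d_L$ (i.e.\ $d-\eta\wedge$ in either trivialization). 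That computation cannot be replaced by the flatness claim; you need to either reproduce it or explicitly work out $((d_\FF^\trans)|_L)^\trans$ taking the nontrivial connection form into account.
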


\begin{proof}
Consider first the case $m=0$. According to~\eqref{C^-infty(M Lambda^v FF) equiv C^infty_c(M Lambda^n''-v FF otimes Omega N FF)'}, for some degree $v$, take $\alpha\in C^\infty(L;\Lambda^v\otimes\Omega^{-1}NL)$ and $\beta\in\Cinftyc(M;\Lambda^{1,n-1-v})$. We can write $\alpha=\alpha_0\otimes|\omega|^{-1}$ and $\beta=\beta_0\wedge\omega$ for some $\alpha_0\in C^\infty(L;\Lambda^v)$ and $\beta_0\in\Cinftyc(M;\Lambda^{0,n-1-v})$. By~\eqref{C^-infty(M Lambda otimes LL^z) equiv C^-infty(M Lambda)},~\eqref{C^-infty(M Lambda^r) equiv C^infty_c(M Lambda^n-r)'},~\eqref{d_z equiv (-1)^k+1 d_-z^t} (or~\eqref{d antiderivative with d_z and d_-z} and the Stokes' theorem),~\eqref{C^-infty(M Lambda^v FF) equiv C^infty_c(M Lambda^n''-v FF otimes Omega N FF)'} and~\eqref{d_FF equiv (-1)^v+1 d_FF^t}, and since $d\omega=\eta\wedge\omega$,
\begin{align*}
\langle d_\FF\delta_L^\alpha,\beta\rangle
&=-(-1)^v\langle \delta_L^\alpha,d\beta\rangle
=-(-1)^v\langle \delta_L^\alpha,(d\beta_0+(-1)^{n-1-v}\beta_0\wedge\eta)\wedge\omega\rangle\\
&=-(-1)^v\int_L\alpha_0\wedge((d+\eta\wedge)\beta_0)|_L=-(-1)^v\int_L\alpha_0\wedge(d+\eta\wedge)(\beta_0|_L)\\
&=\int_L(d_L-\eta\wedge)\alpha_0\wedge\beta_0|_L
=\int_Ld_L\alpha\wedge\beta|_L=\langle\delta_L^{d_L\alpha},\beta\rangle\;.
\end{align*}

The general case follows from the previous case because $d_\FF\partial_\rho=\partial_\rho(d_\FF-{\eta\wedge})$ on $C^{-\infty}(M;\Lambda\FF)$ by~\eqref{| varkappa | partial_rho = Theta_X} and~\eqref{[Theta_X d_0 1]}, and $\eta\wedge\delta_L^\alpha=\delta_L^{\eta\wedge\alpha}$ by~\eqref{alpha wedge delta_L^beta}.
\end{proof}

\begin{cor}\label{c: K(M M^0 Lambda FF) equiv bigoplus_L bigoplus_k C^infty(L Lambda) suspension}
\Cref{p: K(Lambda FF) equiv bigoplus_k C^infty(M^0 Lambda)} is true in this case.
\end{cor}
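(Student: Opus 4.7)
The proof of the corollary combines the general structure theorem for $K(M,L;E)$ with the explicit computation of $d_\FF$ on its summands given in Proposition~\ref{p: d_FF corresponds to d_L - m theta wedge}. First I would apply the isomorphism \eqref{bigoplus_m C^1_m -> K(M L)} (and its Sobolev-filtered version \eqref{bigoplus_m<-s-1/2 C^1_m cong K^(s)(M L)}) with coefficients in the $\FF$-flat bundle $\Lambda\FF$, using the global defining function $\rho$ of $M^0\equiv L$ from \Cref{ss: def func of M^0 - suspension} to define the normal derivatives. Since $L$ is a leaf of $\FF$, the canonical identity $T\FF|_L=TL$ gives $\Lambda\FF|_L\equiv\Lambda L$, so this produces a TVS-isomorphism
$$
\bigoplus_{k=0}^\infty C^\infty(L;\Lambda\otimes\Omega^{-1}NL)\xrightarrow{\cong}K(\FF)\,,\qquad
\alpha\mapsto\partial_\rho^k\delta_L^\alpha\,,
$$
which restricts to $K^{(s)}(\FF)\cong\bigoplus_{k<-s-1/2}C^\infty(L;\Lambda\otimes\Omega^{-1}NL)$.

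Next, by Proposition~\ref{p: d_FF corresponds to d_L - m theta wedge}, under this isomorphism $d_\FF$ acts on the $k$-th summand as the Witten-perturbed operator $d_L-k\,\eta|_L\wedge$ on $C^\infty(L;\Lambda\otimes\Omega^{-1}NL)$. The remaining step is to absorb the normal-derivative factor $\partial_\rho^k$ into the density-bundle twist, so as to match the form asserted in Proposition~\ref{p: K(Lambda FF) equiv bigoplus_k C^infty(M^0 Lambda)}. Here one uses that $\omega|_L$ trivializes $\Omega NL$ and that the relation $d\omega=\eta\wedge\omega$ (from \ref{i-(A): d omega = eta wedge omega} of \Cref{ss: globalization}) forces the flat connection on $\Omega^zNL$ to satisfy $d|\omega|^z=z\,\eta\otimes|\omega|^z$. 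By \eqref{C^-infty(M Lambda otimes LL^z) equiv C^-infty(M Lambda)}, the trivialization $|\omega|^{-k-1}$ then yields a TVS-identity $C^\infty(L;\Lambda\otimes\Omega^{-k-1}NL)\equiv C^\infty(L;\Lambda)$ under which the flat de~Rham differential on the twisted bundle corresponds to the Witten perturbation $d_{-k-1}=d-(k+1)\eta\wedge$. I would then fix, on each summand, the rescaling
$$
C^\infty(L;\Lambda\otimes\Omega^{-k-1}NL)\xrightarrow{\cong}C^\infty(L;\Lambda\otimes\Omega^{-1}NL)
$$
that intertwines $d_{-k-1}$ with the operator $d_L-k\,\eta\wedge$ supplied by Proposition~\ref{p: d_FF corresponds to d_L - m theta wedge}, obtaining the identities of topological complexes claimed in Proposition~\ref{p: K(Lambda FF) equiv bigoplus_k C^infty(M^0 Lambda)} in the suspension setting.

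The main technical point is bookkeeping this final rescaling: the power of $\partial_\rho$ must be absorbed into the $\Omega NL$-twist with the right normalization so that the Witten parameter comes out as $-k-1$ (as required by the Proposition and by the $\phi^{t*}$-scaling $e^{-(k+1)\varkappa_L t}$ of \Cref{t: intro - H^bullet K(FF)}) rather than the $-k$ that appears directly from Proposition~\ref{p: d_FF corresponds to d_L - m theta wedge}. The rest is essentially formal: the Sobolev-order filtration matches immediately via \eqref{bigoplus_m<-s-1/2 C^1_m cong K^(s)(M L)}, the ``$L$ running over leaves of $M^0$'' decomposition of the general Proposition reduces in the suspension case to the single preserved leaf $L$, and the equality $H^\bullet K(\FF)\equiv\bar H^\bullet K(\FF)$ of~\eqref{HK(FF) equiv bar HK(FF)} follows because $K(\FF)$ becomes a direct sum of Fr\'echet differential complexes on the closed manifold $L$, each of whose images is automatically closed.
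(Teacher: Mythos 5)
Your proof is correct and follows essentially the same path as the paper's one-line argument, which is just ``Apply~\eqref{bigoplus_m C^1_m -> K(M L)},~\eqref{bigoplus_m<-s-1/2 C^1_m cong K^(s)(M L)} and \Cref{p: d_FF corresponds to d_L - m theta wedge}''; you merely make explicit the bookkeeping, left implicit in the text, that converts ``$d_L-k\,\eta\wedge$ on $C^\infty(L;\Lambda\otimes\Omega^{-1}NL)$'' (where $d_L$ is the flat de~Rham differential of the twist, carrying an extra $-1$ once $|\omega|^{-1}$ trivializes it) into ``$d_{-k-1}$ on $C^\infty(L;\Lambda)$'', i.e.\ into the $\Omega^{-k-1}N\FF$-twisted complex of \Cref{p: K(Lambda FF) equiv bigoplus_k C^infty(M^0 Lambda)}. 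The one phrase I'd fix is ``the power of $\partial_\rho$ must be absorbed into the $\Omega NL$-twist'': nothing about $\partial_\rho^k$ gets absorbed; the extra unit in $-k-1$ comes from trivializing the fixed $\Omega^{-1}N\FF$ factor via $|\omega|^{-1}$, while the $-k$ is supplied directly by \Cref{p: d_FF corresponds to d_L - m theta wedge}.
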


\begin{proof}
Apply~\eqref{bigoplus_m C^1_m -> K(M L)},~\eqref{bigoplus_m<-s-1/2 C^1_m cong K^(s)(M L)} and \Cref{p: d_FF corresponds to d_L - m theta wedge}.
\end{proof}

\subsection{A partial extension map on $M_\pm$}\label{ss: E_m pm}

The notation of \Cref{ss: conormality at the boundary - Sobolev order,ss: x^m L^infty(M),ss: description of AA(M) by bounds,ss: dot AA(M) and AA(M) vs I(breve M partial M),ss: conormality at the boundary - symbol order,ss: KK(M)}, concerning conormal distributions at the boundary, is also used here. By \Cref{p: E_m}, there is a continuous linear partial extension map, \index{$E_{m,\pm}$}
\[
E_{m,\pm}:\AA^m(M_\pm;\Lambda\FF_\pm)\to\dot\AA^{(s)}(M_\pm;\Lambda\FF_\pm)\;,
\]
where $s=0$ if $m\ge0$, and $m>s\in\Z^-$ if $m<0$.
According to the proof of \Cref{p: E_m} and \Cref{r: E_m m < 0,r: E_m vector bundle}, in the case $0>m>s\in\Z^-$, the homomorphism $E_{m,\pm}$ can be given by the composition
\[
\AA^m(M_\pm;\Lambda\FF_\pm) \xrightarrow{J^N} \AA^0(M_\pm;\Lambda\FF_\pm) 
\xrightarrow{E_{0,\pm}} \dot\AA^{(0)}(M_\pm;\Lambda\FF_\pm) \xrightarrow{\partial_\rho^N} 
\dot\AA^{(s)}(M_\pm;\Lambda\FF_\pm)\;,
\]
where $N=-s\in\Z^+$, $E_{0,\pm}$ is a continuous inclusion map, and $J$ is the endomorphism of $C^\infty(M_\pm;\Lambda\FF_\pm)$ given by
\[
J\alpha(\rho,y)=\int_1^\rho\alpha(\rho_1,y)\,d\rho_1\;,
\]
using~\eqref{dot AA(M)|_mathring M AA(M) subset C^infty(mathring M)},~\eqref{M_pm equiv [0 infty)_rho times L_varpi} and the identity $\Lambda_{(\rho,y)}\FF\equiv\Lambda_yL$.

Consider also the endomorphism $\widetilde J$ of $C^\infty(\widetilde M_\pm;\Lambda\widetilde\FF_\pm)$ defined like $J$, 
\[
\widetilde J\tilde\alpha(\tilde\rho,\tilde y)=\int_1^{\tilde \rho}\tilde\alpha(\tilde\rho_1,y)\,d\tilde\rho_1\;,
\]
using~\eqref{widetilde M_pm equiv [0 infty)_tilde rho times widetilde L_varpi} and the identity $\Lambda_{(\rho,\tilde y)}\FF_\pm\equiv\Lambda_{\tilde y}\widetilde L$. Clearly, $\widetilde J$ corresponds to $J$ via
\[
\pi_{M_\pm}^*:C^\infty(M_\pm;\Lambda\FF_\pm)\to C^\infty(\widetilde M_\pm;\Lambda\widetilde\FF_\pm)\;.
\]
Using~\eqref{widetilde M_pm equiv [0 infty)_tau times widetilde L_varpi}, we can also write
\begin{equation}\label{widetilde J}
\widetilde J\tilde\alpha(\tau,\tilde y)=e^{F(\tilde y)}\int_{e^{-F(\tilde y)}}^\tau\tilde\alpha(\tau_1,\tilde y)\,d\tau_1\;,
\end{equation}
with the change of variable $\tilde\rho_1=e^{F(\tilde y)}\tau_1$, because $\tilde\rho=\pm e^{F(\tilde y)}\tau$.

For fixed $0<\epsilon<1$, consider the collar neighborhoods $T_\pm:=T_{\pm,\epsilon}$ and $\widetilde T_\pm=\widetilde T_{\pm,\epsilon}=\pi_{M_\pm}^{-1}(T_\pm)$ of the boundaries in $M_\pm$ and $\widetilde M_\pm$. Using~\eqref{Lambda M}, consider $\eta\in C^\infty(M_\pm;\Lambda^1\FF_\pm)$ and $\tilde\eta\in C^\infty(\widetilde M_\pm;\Lambda^1\widetilde\FF_\pm)$.

\begin{prop}\label{p: J d_FF_pm = (d_FF_pm - theta wedge) J}
$Jd_{\FF_\pm}=(d_{\FF_\pm}-\eta\wedge)J$ on $\Cinftyc(T_\pm;\Lambda\FF_\pm)$.
\end{prop}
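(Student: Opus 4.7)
The plan is to lift everything to $\widetilde M_\pm$ via $\pi_{M_\pm}$ and work in the coordinates $(\tau,\tilde y)\in[0,\infty)\times\widetilde L$ provided by~\eqref{widetilde M_pm equiv [0 infty)_tau times widetilde L_varpi}. The decisive feature of this choice is that the leaves of $\widetilde\FF_\pm$ are the level sets $\{\tau\}\times\widetilde L$, so $d_{\widetilde\FF_\pm}$ reduces to $d_{\widetilde L}$ acting only in the $\tilde y$-variable, and $\tilde\eta=dF$ depends only on $\tilde y$. Since both $J$ and $d_{\FF_\pm}$ lift $\Gamma_L$-equivariantly to $\widetilde J$ and $d_{\widetilde\FF_\pm}$, and $\pi_{M_\pm}^*$ is injective on smooth sections, it suffices to establish the analogous identity $\widetilde Jd_{\widetilde\FF_\pm}=(d_{\widetilde\FF_\pm}-\tilde\eta\wedge)\widetilde J$ for the lifted operators using formula~\eqref{widetilde J}.

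Writing $\tilde\alpha=\sum_I a_I(\tau,\tilde y)\,dy^I$ in local coordinates on $\widetilde L$ and applying Leibniz to differentiate under the integral, there are three $\tilde y$-dependences in~\eqref{widetilde J} to track: the prefactor $e^{F(\tilde y)}$, the coefficients $a_I(\tau_1,\tilde y)$, and the moving lower limit $e^{-F(\tilde y)}$. Using $d_{\widetilde L}(e^{\pm F})=\pm e^{\pm F}\tilde\eta$ together with the standard identity $\partial_c\int_c^\tau f(s)\,ds=-f(c)$, the three contributions combine to
\[
d_{\widetilde\FF_\pm}(\widetilde J\tilde\alpha)
=\tilde\eta\wedge\widetilde J\tilde\alpha
+\widetilde J(d_{\widetilde\FF_\pm}\tilde\alpha)
+\tilde\eta\wedge\tilde\alpha\big(e^{-F(\tilde y)},\tilde y\big)\;,
\]
and rearranging gives the discrepancy
\[
(d_{\widetilde\FF_\pm}-\tilde\eta\wedge)\widetilde J\tilde\alpha
-\widetilde Jd_{\widetilde\FF_\pm}\tilde\alpha
=\tilde\eta\wedge\tilde\alpha\big(e^{-F(\tilde y)},\tilde y\big)\;.
\]

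To finish, I will observe that this leftover boundary term vanishes on $\pi_{M_\pm}^*\Cinftyc(T_\pm;\Lambda\FF_\pm)$. The hypersurface $\{\tau=e^{-F(\tilde y)}\}\subset\widetilde M_\pm$ is exactly the preimage of $\{\rho_\pm=1\}\subset M_\pm$ under the coordinate change $\tilde\rho_\pm=e^{F(\tilde y)}\tau$. Since the collar $T_\pm$ has been chosen with $\epsilon<1$ in \Cref{ss: E_m pm}, any $\alpha\in\Cinftyc(T_\pm;\Lambda\FF_\pm)$ satisfies $\supp\alpha\subset\{\rho_\pm<\epsilon\}\subset\{\rho_\pm<1\}$, whence its lift $\tilde\alpha$ vanishes identically on $\{\tilde\rho_\pm=1\}$, giving $\tilde\alpha(e^{-F(\tilde y)},\tilde y)\equiv0$. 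The computation is essentially a direct application of Leibniz, so there is no serious obstacle; the main conceptual point---and the reason for imposing $\epsilon<1$---is that the moving lower limit in~\eqref{widetilde J} produces an endpoint contribution that must be killed by the compact-support condition.
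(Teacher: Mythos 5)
Your proof is correct and takes essentially the same route as the paper: both reduce to the lifted identity on $\widetilde M_\pm$ in the $(\tau,\tilde y)$ coordinates via~\eqref{widetilde J}, and both hinge on the same vanishing $\tilde\alpha(e^{-F(\tilde y)},\tilde y)\equiv 0$, which holds because the hypersurface $\{\tau=e^{-F(\tilde y)}\}=\{\tilde\rho_\pm=1\}$ lies outside $\widetilde T_{\pm,\epsilon}$ since $\epsilon<1$. The only difference is presentational: the paper computes $\widetilde J\,d_{\widetilde\FF_\pm}\tilde\alpha$ and uses that vanishing to pass $d_{\widetilde\FF_\pm}$ silently through the integral, whereas you compute $d_{\widetilde\FF_\pm}\widetilde J\tilde\alpha$, explicitly surface the endpoint term $\tilde\eta\wedge\tilde\alpha(e^{-F(\tilde y)},\tilde y)$ coming from the moving lower limit, and then kill it.
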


\begin{proof}
Take open subsets $\widetilde B\subset\widetilde L$ such that $\pi_L:\widetilde B\to B:=\pi_L(\widetilde B)$ is a diffeomorphism. Since the open sets of the form $\varpi_\pm^{-1}(B)\equiv[0,\infty)_\rho\times\widetilde B_{\widetilde\varpi}$ cover $M_\pm$ and $J$ preserves the spaces $\Cinftyc(T_\pm\cap\varpi_\pm^{-1}(B);\Lambda\FF_\pm)$, it is enough to prove the stated equality on $\Cinftyc(T_\pm\cap\varpi_\pm^{-1}(B);\Lambda\FF_\pm)$. In turn, this follows by checking that $\widetilde Jd_{\widetilde \FF_\pm}=(d_{\widetilde \FF_\pm}-\tilde\eta\wedge)\widetilde J$ on $\Cinftyc(\widetilde T_\pm;\Lambda\widetilde\FF_\pm)$ because
\[
\pi_{M_\pm}\equiv\id\times\pi_L:\widetilde\varpi_\pm^{-1}(\widetilde B)\equiv[0,\infty)_{\tilde\rho}\times\widetilde B_{\widetilde\varpi}
\to\varpi_\pm^{-1}(B)\equiv[0,\infty)_\rho\times B_\varpi
\]
is a diffeomorphism. Let $\tilde\alpha\in\Cinftyc(\widetilde T_\pm;\Lambda\widetilde\FF_\pm)$ and $(\tau,\tilde y)\in[0,\infty)\times\widetilde L$ with $\tau<e^{-F(\tilde y)}\epsilon$, which means that $(\tau,\tilde y)$ corresponds to an element of $T_\pm$ via~\eqref{widetilde M_pm equiv [0 infty)_tau times widetilde L_varpi}; in particular, $\tilde\alpha(e^{-F(\tilde y)},\tilde y)=0$. So, by~\eqref{widetilde J} and since $d_{\widetilde\FF_\pm}\tau=0$,
\begin{align*}
\widetilde Jd_{\widetilde\FF_\pm}\tilde\alpha(\tau,\tilde y)
&=e^{F(\tilde y)}\int_{e^{-F(\tilde y)}}^\tau d_{\widetilde\FF_\pm}\tilde\alpha(\tau_1,\tilde y)\,d\tau_1
=e^{F(\tilde y)}d_{\widetilde\FF_\pm}\int_{e^{-F(\tilde y)}}^\tau\tilde\alpha(\tau_1,\tilde y)\,d\tau_1\\
&=d_{\widetilde\FF_\pm}\,e^{F(\tilde y)}\int_{e^{-F(\tilde y)}}^\tau\tilde\alpha(\tau_1,\tilde y)\,d\tau_1
-e^{F(\tilde y)}\tilde\eta(\tilde y)\wedge\int_{e^{-F(\tilde y)}}^\tau\tilde\alpha(\tau_1,\tilde y)\,d\tau_1\\
&=(d_{\widetilde \FF_\pm}-\tilde\eta\wedge)\widetilde J\tilde\alpha(x,\tilde y)\;.\qedhere
\end{align*}
\end{proof}

\begin{prop}\label{p: partial_rho d_FF}
We have $\partial_\rho d_{\FF_\pm}=(d_{\FF_\pm}+\eta\wedge)\partial_\rho$ on $C^{-\infty}(M_\pm;\Lambda\FF_\pm)$.
\end{prop}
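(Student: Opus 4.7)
The identity has essentially been established at the $C^\infty$ level already in~\eqref{[partial_rho d_0 1]}, which says $[\partial_\rho,d_{0,1}]={\eta\wedge}\,\partial_\rho$ on $C^\infty(M_\pm;\Lambda)$. My plan is to first restrict this identity to smooth leafwise forms, and then extend it to leafwise currents by density and continuity.

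For the first step, I observe that $X=|\varkappa|\partial_\rho$ is horizontal (in $C^\infty(M_\pm;\bfH)$), so by~\eqref{| varkappa | partial_rho = Theta_X} the operator $\partial_\rho$ equals $|\varkappa|^{-1}\Theta_X$, which is bihomogeneous of bidegree $(0,0)$ and therefore preserves the bigrading of $\Lambda M_\pm$. In particular, $\partial_\rho$ preserves the subspace $C^\infty(M_\pm;\Lambda^{0,\bullet})\equiv C^\infty(M_\pm;\Lambda\FF_\pm)$ via~\eqref{Lambda FF subset Lambda FF otimes Lambda N FF} and~\eqref{Lambda^u v M}. Since $d_{0,1}\equiv d_\FF$ on this subspace by~\eqref{d_0 1 equiv d_FF}, and since $\eta=\eta_0\in C^\infty(M_\pm;\Lambda^{0,1})$ here (i.e.\ $\eta$ acts as a leafwise form), the identity~\eqref{[partial_rho d_0 1]} restricts to
\[
[\partial_\rho,d_{\FF_\pm}]={\eta\wedge}\,\partial_\rho\quad\text{on}\quad C^\infty(M_\pm;\Lambda\FF_\pm)\;,
\]
which is equivalent to the claimed formula.

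For the second step, I extend the identity from smooth leafwise forms to leafwise currents. By construction (see \Cref{r: E_m m < 0,r: E_m vector bundle}), $\partial_\rho$ lies in $\Diff^1(M_\pm;\Lambda\FF_\pm)$; clearly $d_{\FF_\pm}\in\Diff^1(M_\pm;\Lambda\FF_\pm)$, and ${\eta\wedge}$ is a $C^\infty(M_\pm)$-linear endomorphism of $\Lambda\FF_\pm$, hence an element of $\Diff^0(M_\pm;\Lambda\FF_\pm)$. According to \Cref{ss: diff ops on supp/ext distribs}, every such operator extends uniquely and continuously to an endomorphism of $C^{-\infty}(M_\pm;\Lambda\FF_\pm)$. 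Consequently both sides of the desired identity are continuous endomorphisms of $C^{-\infty}(M_\pm;\Lambda\FF_\pm)$. Since they coincide on the dense subspace $C^\infty(M_\pm;\Lambda\FF_\pm)$ by the previous step (density given by the vector-bundle version of~\eqref{Cinftyc(mathring M) subset dot C^infty(M) subset C^infty(M) subset dot C^-infty(M)}), they coincide on all of $C^{-\infty}(M_\pm;\Lambda\FF_\pm)$.

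The one point deserving care — and the only possible obstacle — is checking that the extension of $\partial_\rho$ to $C^{-\infty}(M_\pm;\Lambda\FF_\pm)$ is the ``correct'' one, since $\partial_\rho$ is transverse to $\partial M_\pm$ rather than tangent (it is not a b-vector field). This is handled by the extendible-distribution framework of \Cref{ss: diff ops on supp/ext distribs}: $\partial_\rho$ admits a global extension $\breve\partial_\rho\in\Diff^1(\breve M_\pm;\Lambda\breve\FF_\pm)$ to a closed double, and its action on $C^{-\infty}(M_\pm;\Lambda\FF_\pm)$ is defined via the commutative diagram~\eqref{AR = RA with breve M}. No additional boundary correction arises because we are working with extendible (not supported) currents, so once this framework is invoked the density argument goes through without further complication.
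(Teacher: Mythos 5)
Your argument is correct and follows essentially the same route as the paper's one-line proof, which simply cites~\eqref{Lambda M},~\eqref{d_0 1 equiv d_FF} and~\eqref{[partial_rho d_0 1]}: restrict the operator identity $[\partial_\rho,d_{0,1}]={\eta\wedge}\,\partial_\rho$ to the bidegree-$(0,\bullet)$ part via the bigrading and the identification $d_{0,1}\equiv d_{\FF_\pm}$. The density/continuity step you add for the passage to $C^{-\infty}$ is not wrong (though the density of $C^\infty(M_\pm)$ in the extendible space $C^{-\infty}(M_\pm)$ is given by~\eqref{C^infty(M) subset C^-infty(M)} rather than the equation you cite), but it is also unnecessary: since~\eqref{[partial_rho d_0 1]} is an identity of elements of $\Diff^1(M_\pm;\Lambda)$ and the canonical extension of a differential operator to extendible distributions (\Cref{ss: diff ops on supp/ext distribs}) is by transposition, any such operator identity persists automatically on $C^{-\infty}(M_\pm;\Lambda\FF_\pm)$.
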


\begin{proof}
Apply~\eqref{Lambda M},~\eqref{d_0 1 equiv d_FF} and~\eqref{[partial_rho d_0 1]}. 
\end{proof}


\begin{cor}\label{c: E_m pm d_FF_pm = d_FF_pm E_m pm}
For all $m\in\R$, $E_{m,\pm}d_{\FF_\pm}=d_{\FF_\pm}E_{m,\pm}$ on $\AA^m_\co(T_\pm;\Lambda\FF_\pm)$.
\end{cor}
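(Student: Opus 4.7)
The plan is to split according to the sign of $m$ and to exploit the fact that $\eta=\varpi^*\eta_L$ is independent of $\rho$ (recall $\partial_\rho\eta=0$ from \Cref{ss: boundary-def func of M_pm - suspension}, and that $\eta(y)$ factors out of the integral defining $J$), so that $\eta\wedge$ commutes with both $J$ and $\partial_\rho$.

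For $m\ge 0$, the map $E_{m,\pm}$ is simply the continuous inclusion $\AA^m(M_\pm;\Lambda\FF_\pm)\subset\dot\AA^{(0)}(M_\pm;\Lambda\FF_\pm)$, as observed in the proof of \Cref{p: E_m}. Since $\partial M_\pm$ is a union of leaves of $\FF_\pm$, the leafwise differential $d_{\FF_\pm}$ is tangent to the boundary, and by~\eqref{Diff(breve M,partial M) = Diffb(M)} we have $d_{\FF_\pm}\in\Diffb(M_\pm;\Lambda\FF_\pm)$. The characterisation of $\Diffb$ recalled in \Cref{ss: Diffb(M)} then asserts that diagram~\eqref{A inclusion ne inclusion A} commutes for $d_{\FF_\pm}$, which gives the desired commutativity with $E_{m,\pm}$.

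For $m<0$, I would use the factorisation $E_{m,\pm}=\partial_\rho^N\circ E_{0,\pm}\circ J^N$ with $N=-s\in\Z^+$, as in the proof of \Cref{p: E_m}. Iterating \Cref{p: J d_FF_pm = (d_FF_pm - theta wedge) J}, together with $[J,\eta\wedge]=0$, yields $J^N d_{\FF_\pm}=(d_{\FF_\pm}-N\eta\wedge)J^N$ on $\Cinftyc(T_\pm;\Lambda\FF_\pm)$; by continuity of all operators involved and density of $\Cinftyc(T_\pm;\Lambda\FF_\pm)$ in $\AA^m_\co(T_\pm;\Lambda\FF_\pm)$, the identity extends to $\AA^m_\co$. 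Symmetrically, \Cref{p: partial_rho d_FF} together with $[\partial_\rho,\eta\wedge]=0$ gives $\partial_\rho^N d_{\FF_\pm}=(d_{\FF_\pm}+N\eta\wedge)\partial_\rho^N$. Using the first case to push $d_{\FF_\pm}$ past $E_{0,\pm}$, one then chains:
\[
E_{m,\pm}d_{\FF_\pm}=\partial_\rho^N E_{0,\pm}(d_{\FF_\pm}-N\eta\wedge)J^N=\partial_\rho^N(d_{\FF_\pm}-N\eta\wedge)E_{0,\pm}J^N=d_{\FF_\pm}\partial_\rho^N E_{0,\pm}J^N=d_{\FF_\pm}E_{m,\pm}.
\]

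The main point requiring care is ensuring that the iteration stays within the compactly supported conormal spaces on $T_\pm$ and that the commutator identities, stated in \Cref{p: J d_FF_pm = (d_FF_pm - theta wedge) J,p: partial_rho d_FF} only on $\Cinftyc(T_\pm;\Lambda\FF_\pm)$ and $C^{-\infty}(M_\pm;\Lambda\FF_\pm)$ respectively, genuinely transfer to $\AA^m_\co(T_\pm;\Lambda\FF_\pm)$. Since $\epsilon<1$, a direct inspection of $J\alpha(\rho,y)=\int_1^\rho\alpha(\rho_1,y)\,d\rho_1$ shows that $J$ preserves support in $T_\pm$, so $J^N\alpha\in\AA^0_\co(T_\pm;\Lambda\FF_\pm)$ and the remaining composition is well-defined; the density-continuity extension of the commutator identities is then routine given the continuity properties of $J$, $\partial_\rho$, $d_{\FF_\pm}$, $\eta\wedge$ and $E_{0,\pm}$ on the relevant conormal spaces.
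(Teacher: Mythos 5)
Your proof is correct and takes essentially the same route as the paper's: for $m<0$ you unwind $E_{m,\pm}=\partial_\rho^N E_{0,\pm}J^N$, iterate \Cref{p: J d_FF_pm = (d_FF_pm - theta wedge) J,p: partial_rho d_FF} (correctly using that $\eta$ is $\rho$-independent so $[J,\eta\wedge]=[\partial_\rho,\eta\wedge]=0$, and that $d_{\FF_\pm}-N\eta\wedge$ is still a b-operator), and conclude by density of $\Cinftyc(T_\pm;\Lambda\FF_\pm)$. The only small imprecision is in the $m\ge0$ case, where diagram~\eqref{A inclusion ne inclusion A} concerns the inclusion $C^\infty(M)\to\dot C^{-\infty}(M)$ rather than $\AA^m\hookrightarrow\dot\AA^{(0)}$; the desired commutativity actually follows from the fact that $d_{\FF_\pm}\in\Diffb$ preserves both $\AA^m$ and $\dot\AA^{(0)}$ together with the compatibility $Rd_{\FF_\pm}=d_{\FF_\pm}R$ on $\dot\AA^{(0)}\xrightarrow{\cong}\AA^{(0)}$, but the conclusion and the underlying reason are right.
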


\begin{proof}
It is enough to consider the case $m<0$. Then apply \Cref{p: J d_FF_pm = (d_FF_pm - theta wedge) J,p: partial_rho d_FF}, using the given definition of $E_{m,\pm}$ and the density of $\Cinftyc(T_\pm;\Lambda\FF_\pm)$ in $\AA^m_\co(T_\pm;\Lambda\FF_\pm)$ (see \Cref{ss: description of AA(M) by bounds}).
\end{proof}

\subsection{A partial extension map on $M$}\label{ss: E_m}

Let us apply the notation of \Cref{s: conormal seq} to the suspension foliation (that notation is compatible with the notation of \Cref{ss: folns almost w/o hol,ss: simple fol flows,s: suspension}). Recall that $\bfM=M_-\sqcup M_+$, $\bfFF$ is the combination of $\FF_\pm$, and $\bfpi:\bfM\to M$ is the combination of $\pi_\pm:M_\pm\to M$. The version of the commutative diagram~\eqref{CD: conormal seqs} for $\Lambda\bfFF\equiv\bfpi^*\Lambda\FF$ is
\begin{equation}\label{commut diag of conormal seqs with folns}
\begin{CD}
\KK(\bfM;\Lambda\bfFF) @>{\iota}>> \dot\AA(\bfM;\Lambda\bfFF) @>R>> \AA(\bfM;\Lambda\bfFF)\phantom{\;.} \\
@V{\bfpi_*}VV @V{\bfpi_*}VV @V{\bfpi_*}V{\cong}V \\
K(\FF) @>{\iota}>> I(\FF) @>R>> J(\FF)\;.
\end{CD}
\end{equation}
Moreover $d_{\bfFF}\in\Diffb(\bfM;\Lambda\bfFF)$ is the lift of $d_\FF$. Hence the operators defined by $d_{\bfFF}$ on the spaces of the top row of~\eqref{commut diag of conormal seqs with folns} correspond to the operators defined by $d_\FF$ on the spaces of its bottom row via the homomorphisms $\bfpi_*$ (\Cref{s: conormal seq}). According to~\Cref{ss: diff ops on dot AA(M) and AA(M)}, $d_{\bfFF}$ preserves the subspaces $\dot\AA^{(s)}(\bfM;\Lambda\bfFF)$ and $\AA^m(\bfM;\Lambda\bfFF)$.

The partial extension maps of \Cref{ss: E_m pm},
\[
E_{m,\pm}:\AA^m(M_\pm;\Lambda\FF_\pm)\to\dot\AA^{(s)}(M_\pm;\Lambda\FF_\pm)\;,
\]
can be combined to define a continuous linear partial extension map
\[
\bfE_m:\AA^m(\bfM;\Lambda\bfFF)\to\dot\AA^{(s)}(\bfM;\Lambda\bfFF)\;.
\]
Then, according to \Cref{c: E_m: J^m(M L) -> I^(s)(M L)} and its proof, a continuous linear partial extension map $E_m:J^m(\FF)\to I^{(s)}(\FF)$ is given by the composition
\[
J^m(\FF) \xrightarrow{\bfpi_*^{-1}} \AA^m(\bfM;\Lambda\bfFF) \xrightarrow{\bfE_m} \dot\AA^{(s)}(\bfM;\Lambda\bfFF) \xrightarrow{\bfpi_*} I^{(s)}(\FF)\;,
\]
which is a continuous inclusion map if $m\ge0$. Recall that $T\equiv(-\epsilon,\epsilon)_\rho\times L_\varpi$ and
\[
\bfT=T_-\sqcup T_+=\bfpi^{-1}(T)\equiv[0,\epsilon)_{\bfrho}\times\partial\bfM_{\bfvarpi}\;.
\]
Like in \Cref{ss: injectivity of bar iota_*}, consider the restriction $E_{m,T}:J^m_\co(T;\Lambda\FF)\to I^{(s)}_\co(T;\Lambda\FF)$ of $E_m$. Suppose $\epsilon<1$, like in \Cref{ss: E_m pm}.

\begin{cor}\label{c: E_m T}
For all $m\in\R$, $E_{m,T}$ satisfies $E_{m,T}d_\FF=d_\FF E_{m,T}$.
\end{cor}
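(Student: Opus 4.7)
The plan is to deduce this corollary from the already-established commutation relation on the two halves $M_\pm$ (Corollary~\ref{c: E_m pm d_FF_pm = d_FF_pm E_m pm}), transported to $M$ through the cutting map $\bfpi$. Concretely, I would first unpack the definition of $E_{m,T}$ as the restriction to compactly supported sections over $T$ of the three-step composition $\bfpi_*\circ\bfE_m\circ\bfpi_*^{-1}$. Since $\bfT=\bfpi^{-1}(T)=T_-\sqcup T_+$ and $\bfFF|_{\bfT}$ is the combination of $\FF_\pm|_{T_\pm}$, the identifications in~\eqref{bfpi_*: AA(bfM) cong J(M L)} and~\eqref{bfpi_*: dot AA^(0)(bfM) cong I^(0)(M L)} restrict to TVS-isomorphisms
\[
\bfpi_*:\AA^m_\co(\bfT;\Lambda\bfFF)\xrightarrow{\cong} J^m_\co(T;\Lambda\FF)\;,\quad
\bfpi_*:\dot\AA^{(s)}_\co(\bfT;\Lambda\bfFF)\xrightarrow{\cong} I^{(s)}_\co(T;\Lambda\FF)\;,
\]
and under these identifications $E_{m,T}$ is precisely the restriction of $\bfE_m$ to $\AA^m_\co(\bfT;\Lambda\bfFF)$.

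Next, I would use the remark following diagram~\eqref{commut diag of conormal seqs with folns} that $d_{\bfFF}\in\Diffb(\bfM;\Lambda\bfFF)$ is the lift of $d_\FF$ via $\bfpi$, so $\bfpi_*$ intertwines $d_{\bfFF}$ with $d_\FF$ on every space appearing in the top row of~\eqref{commut diag of conormal seqs with folns}. Therefore the claimed identity $E_{m,T}d_\FF=d_\FF E_{m,T}$ on $J^m_\co(T;\Lambda\FF)$ is equivalent, via $\bfpi_*$, to the identity
\[
\bfE_m\,d_{\bfFF}=d_{\bfFF}\,\bfE_m\quad\text{on}\quad \AA^m_\co(\bfT;\Lambda\bfFF)\;.
\]

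Finally, I would observe that, since $\bfT$, $\bfFF|_{\bfT}$ and $\bfE_m$ all split as disjoint sums over the two components $T_\pm$, this last identity decomposes into the two identities
\[
E_{m,\pm}\,d_{\FF_\pm}=d_{\FF_\pm}\,E_{m,\pm}\quad\text{on}\quad\AA^m_\co(T_\pm;\Lambda\FF_\pm)\;,
\]
each of which is exactly the content of Corollary~\ref{c: E_m pm d_FF_pm = d_FF_pm E_m pm}. Combining the three steps yields the corollary. The real work has already been done in Corollary~\ref{c: E_m pm d_FF_pm = d_FF_pm E_m pm} (and the Propositions~\ref{p: J d_FF_pm = (d_FF_pm - theta wedge) J} and~\ref{p: partial_rho d_FF} that feed it), where the specific choice of $E_{m,\pm}$ using the primitive operator $\widetilde J$ had to be made compatible with the twisting of $d_\FF$ by $\eta\wedge$; here there is no remaining obstacle, only the bookkeeping of transporting that identity across $\bfpi$.
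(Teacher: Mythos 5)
Your proof is correct and follows the same route as the paper: the paper's proof is literally the one line ``Apply Corollary~\ref{c: E_m pm d_FF_pm = d_FF_pm E_m pm},'' leaving implicit the bookkeeping you spell out --- that $E_{m,T}$ is, via the identifications $\bfpi_*$ which intertwine $d_\FF$ with $d_{\bfFF}$, the combination of the two half-space maps $E_{m,\pm}$, so its commutation with $d_\FF$ reduces component-by-component to Corollary~\ref{c: E_m pm d_FF_pm = d_FF_pm E_m pm}. You have simply made the transport across $\bfpi$ explicit; the content is identical.
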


\begin{proof}
Apply \Cref{c: E_m pm d_FF_pm = d_FF_pm E_m pm}.
\end{proof}

\section{Functoriality and leafwise homotopy invariance}\label{s: functor and leafwise homotopy inv}

\subsection{Pull-back of conormal leafwise currents}\label{ss: pull-back of leafwise conormal currents}

Let $M'$ be another closed manifold, and let $\phi:M'\to M$ be a smooth map transverse to $\FF$. Then $\FF':=\phi^*\FF$ is another transversely oriented foliation of codimension one satisfying the conditions~\ref{i: almost w/o hol} and~\ref{i: homotheties} in \Cref{ss: simple fol flows} with $M^{\prime\,0}:=\phi^{-1}(M^0)$.

\begin{rem}\label{r: phi - extension - non-compact}
The results of \Cref{s: functor and leafwise homotopy inv} have direct extensions to the case where $M$ or $M'$ may not be compact, with the condition that $M^0$ and $M^{\prime\,0}$ are compact.
\end{rem}

According to \Cref{ss: pull-back of conormal distribs}, the map~\eqref{phi^* on leafwise forms} has a continuous extension
\begin{equation}\label{phi^*: I(FF) to I(FF')}
\phi^*:I(\FF)\to I(\FF')\;.
\end{equation}
defined as the composition
\begin{equation}\label{pull-back - decomposition - conormal - leafwise}
I(\FF) \xrightarrow{\phi^*} I(M',M^{\prime\,0};\phi^*\Lambda\FF) \xrightarrow{\phi^*} I(\FF')\;,
\end{equation}
like~\eqref{composition - pull-back - conormal currents}, using~\eqref{phi^*: I(M L E) -> I(M' L' phi^*E)} with $E=\Lambda\FF$. We can also describe~\eqref{phi^*: I(FF) to I(FF')} as the restriction of~\eqref{phi^*: I(M L Lambda) to I(M' L' Lambda)} to conormal currents of bidegree $(0,\bullet)$, like in~\eqref{phi^*: C^-infty(M Lambda FF) to (C^-infty(M' Lambda FF')}. The map~\eqref{phi^*: I(FF) to I(FF')} is also a restriction of~\eqref{phi^*: C^-infty(M Lambda FF) to (C^-infty(M' Lambda FF')}. 

Similarly, the analogs of~\eqref{phi^*: I(M L E) -> I(M' L' phi^*E)} with $E=\Lambda\FF$ for~\eqref{phi^*: K(M L) to K(M' L')} and~\eqref{phi^*: J(M L) to J(M' L')} induce continuous homomorphisms
\begin{gather}
\phi^*:K(\FF)\to K(\FF')\;,\label{phi^*: K(FF) to K(FF')}\\
\phi^*:J(\FF)\to J(\FF')\;.\label{phi^*: J(FF) to J(FF')}
\end{gather}
By passing to cohomology and reduced cohomology, we get continuous homomorphisms,
\begin{equation}\label{phi^*: H^bullet K(FF) to H^bullet K(FF')}
\left\{
\begin{gathered}
\phi^*:H^\bullet K(\FF)\to H^\bullet K(\FF')\;,\\
\begin{alignedat}{2}
\phi^*:&H^\bullet I(\FF)\to H^\bullet I(\FF')\;,&\quad\phi^*:&\bar H^\bullet I(\FF)\to\bar H^\bullet I(\FF')\;,\\
\phi^*:&H^\bullet J(\FF)\to H^\bullet J(\FF')\;,&\quad\phi^*:&\bar H^\bullet J(\FF)\to\bar H^\bullet J(\FF')\;.
\end{alignedat}
\end{gathered}
\right.
\end{equation}
The assignment of the homomorphisms~\eqref{phi^*: I(FF) to I(FF')}--\eqref{phi^*: H^bullet K(FF) to H^bullet K(FF')} is functorial.

\subsection{Description of $\phi^*:K(\FF)\to K(\FF')$}\label{ss: description of phi^*:K(FF) to K(FF')}

For $\omega'=\phi^*\omega$ and $\eta'=\phi^*\eta$, we have $T\FF'=\ker\omega'$ and $d\omega'=\eta'\wedge\omega'$ (the Frobenius integrability condition for $\FF'$). Thus
\begin{equation}\label{phi^*: C^infty(M^0 Lambda) to C^infty(M^prime 0 Lambda)}
\phi^*:C^\infty(M^0;\Lambda)\to C^\infty(M^{\prime\,0};\Lambda)
\end{equation}
is a cochain map for $d_{s\eta}$ and $d_{s\eta'}$ ($s\in\R$) (\Cref{ss: Witten vs pull-back and push-forward}). In other words, $\phi$ induces
\begin{equation}
\label{phi^*: C^infty(M^0 Lambda otimes Omega^sNM^0) to C^infty(M^prime 0 Lambda otimes Omega^sNM^prime 0)}
\phi^*:C^\infty(M^0;\Lambda\otimes\Omega^sNM^0)\to C^\infty(M^{\prime\,0};\Lambda\otimes\Omega^sNM^{\prime\,0})\;,
\end{equation} 
given by
\begin{equation}\label{phi^*(alpha otimes |omega|^s) = phi^*alpha otimes |omega'|^s}
\phi^*(\alpha\otimes|\omega|^s)=\phi^*\alpha\otimes|\omega'|^s\;,
\end{equation}
which is another cochain map for the de~Rham differentials defined with the flat bundle structures of $\Omega^sNM^0$ and $\Omega^sNM^{\prime\,0}$.

If $\rho$ is a defining function of $M^0$ in some open neighborhood $T$, then $\rho':=\phi^*\rho$ is a defining function of $M^{\prime\,0}$ in $T'=\phi^{-1}(T)$, and~\eqref{phi^*: C^infty(M^0 Lambda otimes Omega^sNM^0) to C^infty(M^prime 0 Lambda otimes Omega^sNM^prime 0)} satisfies
\begin{equation}\label{phi^*(alpha otimes |d rho|^s) = phi^*alpha otimes |d rho'|^s}
\phi^*(\alpha\otimes|d\rho|^s)=\phi^*\alpha\otimes|d\rho'|^s\;.
\end{equation}
Note the compatibility of~\eqref{phi^*(alpha otimes |omega|^s) = phi^*alpha otimes |omega'|^s} and~\eqref{phi^*(alpha otimes |d rho|^s) = phi^*alpha otimes |d rho'|^s} with~\eqref{d rho}. Furthermore, the inverse image of $T:=T_\epsilon\equiv(-\epsilon,\epsilon)_\rho\times M^0_\varpi$, for $\epsilon>0$ small enough, is a tubular neighborhood $T'\equiv(-\epsilon,\epsilon)_{\rho'}\times M^{\prime\,0}_{\varpi'}$ of $M^{\prime\,0}$ in $M'$, where $\varpi':T'\to M^{\prime\,0}$ satisfies $\phi\varpi'=\varpi\phi$ as maps $T'\to M^0$. Thus
\[
\phi\equiv\id\times\phi:T'\equiv(-\epsilon,\epsilon)\times M^{\prime\,0}\to T\equiv(-\epsilon,\epsilon)\times M^0\;,
\]
which is proper because $M^{\prime\,0}$ is compact. We can use these tubular neighborhoods to define the operators $\partial_\rho$ and $\partial_{\rho'}$ on $\Cinftyc(T;\Lambda\FF)$ and $\Cinftyc(T';\Lambda\FF')$ (\Cref{s: conormal seq}), which are used in the identities of \Cref{p: K(Lambda FF) equiv bigoplus_k C^infty(M^0 Lambda)} for $K(\FF)$ and $K(\FF')$ (\Cref{ss: (K(Lambda FF) d_FF)}). Clearly,
\begin{equation}\label{partial_rho' phi^* = phi^* partial_rho}
\partial_{\rho'}\phi^*=\phi^*\partial_\rho\;,
\end{equation}
as maps $\Cinftyc(T;\Lambda\FF)\to\Cinftyc(T';\Lambda\FF')$,

\begin{prop}\label{p: phi^* equiv bigoplus_k phi^*}
According to \Cref{p: K(Lambda FF) equiv bigoplus_k C^infty(M^0 Lambda)}, the map~\eqref{phi^*: K(FF) to K(FF')} is given by
\[
\phi^*\equiv\bigoplus_k\phi^*\equiv\bigoplus_k\phi^*\;,
\]
where the terms of the first direct sum are given by~\eqref{phi^*: C^infty(M^0 Lambda) to C^infty(M^prime 0 Lambda)}, and the terms of the second direct sum are given by~\eqref{phi^*: C^infty(M^0 Lambda otimes Omega^sNM^0) to C^infty(M^prime 0 Lambda otimes Omega^sNM^prime 0)}, taking $s=-k-1$.
\end{prop}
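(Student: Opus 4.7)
The plan is to reduce the identification to the bottom degree $k=0$ using the compatibility relation \eqref{partial_rho' phi^* = phi^* partial_rho}, and then handle the Dirac case by a direct local computation in the product tubular neighborhoods. Since both sides of the claimed identity are continuous linear maps and the identification of \Cref{p: K(Lambda FF) equiv bigoplus_k C^infty(M^0 Lambda)} is realized by the injections \eqref{alpha mapsto partial_rho^m delta_M^0^alpha}, it is enough to show, for every $k\in\N_0$ and every $\alpha\in C^\infty(M^0;\Lambda\otimes\Omega^{-1}NM^0)$, that
\[
\phi^*\big(\partial_\rho^k\,\delta_{M^0}^\alpha\big)=\partial_{\rho'}^k\,\delta_{M^{\prime\,0}}^{\phi^*\alpha}\;,
\]
where on the right $\phi^*\alpha$ is given by \eqref{phi^*: C^infty(M^0 Lambda otimes Omega^sNM^0) to C^infty(M^prime 0 Lambda otimes Omega^sNM^prime 0)} with the normalization \eqref{phi^*(alpha otimes |d rho|^s) = phi^*alpha otimes |d rho'|^s} (so that the twist by $\Omega^{-k-1}NM^0$ is transported to the twist by $\Omega^{-k-1}NM^{\prime\,0}$ via the substitution $|d\rho|\mapsto|d\rho'|$, consistent with $\rho'=\phi^*\rho$).

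First I would reduce to $k=0$. Assuming the identity $\phi^*\delta_{M^0}^\alpha=\delta_{M^{\prime\,0}}^{\phi^*\alpha}$ has been established, the general case follows by applying $\partial_{\rho'}^k$ to both sides and invoking \eqref{partial_rho' phi^* = phi^* partial_rho}, which gives
\[
\partial_{\rho'}^k\,\phi^*\delta_{M^0}^\alpha=\phi^*\,\partial_\rho^k\delta_{M^0}^\alpha\;,
\]
as continuous maps on $K(\FF)$, together with the fact that $\partial_{\rho'}^k$ on $C^\infty(M^{\prime\,0};\Lambda\otimes\Omega^{-1}NM^{\prime\,0})$ realizes the injection into $K^{(s-k)}(M',M^{\prime\,0};\Lambda\FF')$ used in the identification. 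Because the map $\phi^*:K(\FF)\to K(\FF')$ is local and the Dirac sections are supported in $M^0$, one may then restrict the whole computation to the tubular neighborhoods $T\equiv(-\epsilon,\epsilon)_\rho\times M^0_\varpi$ and $T'\equiv(-\epsilon,\epsilon)_{\rho'}\times M^{\prime\,0}_{\varpi'}$, on which $\phi\equiv\id\times\phi|_{M^{\prime\,0}}$, and in particular $\phi\,\varpi'=\varpi\,\phi$ and $\rho'=\phi^*\rho$.

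Next I would verify the case $k=0$ directly from the definition of $\phi^*$ on conormal currents given by the composition \eqref{pull-back - decomposition - conormal - leafwise}. The first arrow, which is the $C^\infty(M)$-tensor product of the scalar pull-back $\phi^*:I(M,M^0)\to I(M',M^{\prime\,0};\phi^*\Lambda\FF)$ with the identity on $C^\infty(M;\Lambda\FF)$, sends $\delta_{M^0}^\alpha$ (for $\alpha$ identified as a section of $\Lambda\FF|_{M^0}\otimes\Omega^{-1}NM^0$ via the trivialization $|d\rho|$) to the corresponding Dirac section along $M^{\prime\,0}$ with value the pull-back of $\alpha$ to $M^{\prime\,0}$, now seen as a section of $\phi^*\Lambda\FF|_{M^{\prime\,0}}\otimes\Omega^{-1}NM^{\prime\,0}$. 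This step uses the product structure on $T$ and $T'$: the scalar part is $\phi|_{M^{\prime\,0}}^*\delta_0(\rho)=\delta_0(\rho')$ together with the Jacobian identity $\rho'=\phi^*\rho$, and the vector-bundle part is just the identity in the second factor $\Lambda\FF|_{M^0}\equiv\varpi^*\Lambda\FF|_{M^0}$. The second arrow in \eqref{pull-back - decomposition - conormal - leafwise}, induced by the bundle homomorphism $\phi_*:T\FF'\to T\FF$, then converts this into a Dirac section with values in $\Lambda\FF'|_{M^{\prime\,0}}\otimes\Omega^{-1}NM^{\prime\,0}$, which is exactly $\delta_{M^{\prime\,0}}^{\phi^*\alpha}$ with $\phi^*\alpha$ given by~\eqref{phi^*: C^infty(M^0 Lambda otimes Omega^sNM^0) to C^infty(M^prime 0 Lambda otimes Omega^sNM^prime 0)} and~\eqref{phi^*(alpha otimes |omega|^s) = phi^*alpha otimes |omega'|^s}; the compatibility of \eqref{phi^*(alpha otimes |omega|^s) = phi^*alpha otimes |omega'|^s} with \eqref{phi^*(alpha otimes |d rho|^s) = phi^*alpha otimes |d rho'|^s} follows from $\rho'=\phi^*\rho$ and~\eqref{d rho}.

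The main obstacle I expect is a purely bookkeeping one: tracking the twist by $\Omega^{-k-1}N$ at each $k$ so that the isomorphism \eqref{alpha mapsto partial_rho^m delta_M^0^alpha} intertwines the two descriptions of $\phi^*$ given in the two direct-sum formulas of the proposition. The combinatorial shift $s=-k-1$ arises from the transposition argument used in the proof of \Cref{p: d_FF corresponds to d_L - m theta wedge} (compare the factor of $-m\,{\eta\wedge}$ there, which comes from $d\omega=\eta\wedge\omega$), and one must check that the same transposition is compatible with the pull-back, i.e.\ that \eqref{phi^*(alpha otimes |d rho|^s) = phi^*alpha otimes |d rho'|^s} intertwines the $C^\infty(M)$-module structure used to absorb the $\Omega^{-k-1}N$ factor. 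Granted this, the identity in the second direct sum is a restatement of the identity in the first one, and the proposition follows.
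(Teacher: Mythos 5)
Your proposal is correct and follows essentially the same route as the paper: reduce to $k=0$ via $\partial_{\rho'}\phi^*=\phi^*\partial_\rho$, then compute the pull-back of the degree-zero Dirac section in the tubular neighborhoods using the product structure and $\rho'=\phi^*\rho$. The only difference is that the paper carries out the $k=0$ step by an explicit approximation ($\delta_{M^0}^u=\varpi^*\alpha\cdot\rho^*\delta_0$ with a sequence $f_i\to\delta_0$), whereas you argue it by tracing through the two-arrow decomposition of $\phi^*$ on conormal currents; both amount to the same local calculation.
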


\begin{proof}
The second identity follows from the first one and~\eqref{phi^*(alpha otimes |omega|^s) = phi^*alpha otimes |omega'|^s}. To prove the first identity, by~\eqref{partial_rho' phi^* = phi^* partial_rho}, it is enough to consider the term with $k=0$. 

For $\alpha\in C^\infty(M^0;\Lambda)$, let $u=\alpha\otimes|d\rho|^{-1}\in C^\infty(M^0;\Lambda\otimes\Omega^{-1}NM^0)$. Using the first identity of \Cref{p: K(Lambda FF) equiv bigoplus_k C^infty(M^0 Lambda)} for $k=0$, we have $u\equiv\delta_{M^0}^u=\varpi^*\alpha\cdot\rho^*\delta_0$ in $K(\FF)$, using Dirac sections (\Cref{ss: Dirac sections}). Here, $\rho^*\delta_0\in K(T,M^0)$ is defined because $\rho:T\to(-\epsilon,\epsilon)$ is transverse to $0$. Moreover $u':=\phi^*u=\phi^*\alpha\cdot|d\rho'|^{-1}$ by~\eqref{phi^*(alpha otimes |d rho|^s) = phi^*alpha otimes |d rho'|^s}. As before, $u'\equiv\delta_{M^{\prime\,0}}^{u'}=\varpi^{\prime*}\phi^*\alpha\cdot\rho^{\prime*}\delta_0$ in $K(\FF')$. Take a sequence $f_i\in\Cinftyc(-\epsilon,\epsilon)$ converging to $\delta_0$ in $C^{-\infty}_\co(-\epsilon,\epsilon)$. Then
\begin{align*}
\phi^*\delta_{M^0}^u&=\phi^*(\varpi^*\alpha\cdot\rho^*\delta_0)
=\lim_i\phi^*(\varpi^*\alpha\cdot\rho^*f_i)
=\lim_i\phi^*\varpi^*\alpha\cdot\phi^*\rho^*f_i\\
&=\lim_i\varpi^{\prime*}\phi^*\alpha\cdot\rho^{\prime*}f_i
=\varpi^{\prime*}\phi^*\alpha\cdot\rho^{\prime*}\delta_0
=\delta_{M^{\prime\,0}}^{u'}\;.\qedhere
\end{align*}
\end{proof}

\begin{rem}\label{r: phi^* equiv prod_k phi^*}
The equality $\partial_{\rho'}\phi^*=\phi^*\partial_\rho$ has a continuous extension as maps $C^{-\infty}_\co(T;\Lambda\FF)\to C^{-\infty}_\co(T';\Lambda\FF')$, and the computations of the above proof also work also with $\alpha\in C^{-\infty}(M^0;\Lambda)$. So we get similar expressions of $\phi^*:C^{-\infty}_{M^0}(M;\Lambda\FF)\to C^{-\infty}_{M^{\prime\,0}}(M';\Lambda\FF')$ according to \Cref{r: C^-infty_M^0(M Lambda FF) equiv bigoplus_k ...}.
\end{rem}

\subsection{Push-forward of conormal leafwise currents}\label{ss: push-forward of conormal leafwise currents}

With the notation and conditions of \Cref{ss: pull-back of leafwise conormal currents}, suppose that moreover $\phi$ is a submersion such that the vertical bundle $\VV$ is oriented (\Cref{ss: push-forward of leafwise currents}). Thus $\phi:M^{\prime\,0}\to M^0$ is also a submersion whose vertical bundle is $\VV|_{M^{\prime\,0}}\subset TM^{\prime\,0}$, also oriented. Then the case of~\eqref{phi_*: C^pm infty_c/cv(M' Lambda FF') to C^pm infty_c/.(M Lambda FF)} on smooth leafwise forms has a continuous extension
\begin{equation}\label{phi_*: I_c/cv(FF') to I_c/cv(FF)}
\phi_*:I_{\co/\cv}(\FF')\to I_{\co/{\cdot}}(\FF)\;.
\end{equation}
This map can be described as the restriction of the map~\eqref{phi_*: I_c/cv(M' L' Lambda) to I_c/.(M L Lambda)} to conormal currents of bidegree $(0,\bullet)$, like~\eqref{phi_*: C^pm infty_c/cv(M' Lambda FF') to C^pm infty_c/.(M Lambda FF)} in \Cref{ss: push-forward of leafwise currents}. We can also describe~\eqref{phi_*: I_c/cv(FF') to I_c/cv(FF)} as the composition
\[
I_{\co/\cv}(M',L';\Lambda\FF)\xrightarrow{\pi_\topd}I_{\co/\cv}(M',L';\phi^*\Lambda\FF\otimes\Omega_\fiber)
\xrightarrow{\phi_*}I_{\co/{\cdot}}(M,L;\Lambda\FF)\;,
\]
like in~\eqref{push-forward - composition - leafwise currents}, where $\phi_*$ is given by~\eqref{phi_*: I_c/cv(M' L' phi^*E otimes Omega_fiber) -> I_c/.(M L E)} for $E=\Lambda\FF$. The map~\eqref{phi_*: I_c/cv(FF') to I_c/cv(FF)} is also a restriction of the case of~\eqref{phi_*: C^pm infty_c/cv(M' Lambda FF') to C^pm infty_c/.(M Lambda FF)} for leafwise currents.

According to \Cref{ss: push-forward of the conormal seq}, the map~\eqref{phi_*: I_c/cv(FF') to I_c/cv(FF)} induces homomorphisms
\begin{gather}
\phi_*:K(\FF')\to K(\FF)\;,\label{phi_*:K(FF') to K(FF)}\\
\phi_*:J_{\co/\cv}(\FF')\to J_{\co/{\cdot}}(\FF)\;.\label{phi_*: J_c/cv(FF') to J_c/.(FF)}
\end{gather}
Like in \Cref{ss: pull-back of leafwise conormal currents}, we get induced continuous homomorphisms,
\begin{equation}\label{HK(FF') to HK(FF) ...}
\left\{
\begin{gathered}
\phi_*:H^\bullet K(\FF')\to H^\bullet K(\FF)\;,\\
\begin{alignedat}{2}
\phi_*:&H^\bullet I_\co(\FF')\to H^\bullet I_\co(\FF)\;,&\quad
\phi_*:&\bar H^\bullet I_\co(\FF')\to\bar H^\bullet I_\co(\FF)\;,\\
\phi_*:&H^\bullet J_\co(\FF')\to H^\bullet J_\co(\FF)\;,&\quad
\phi_*:&\bar H^\bullet J_\co(\FF')\to\bar H^\bullet J_\co(\FF)\;.
\end{alignedat}
\end{gathered}
\right.
\end{equation} 
The assignments of homomorphisms~\eqref{phi_*: I_c/cv(FF') to I_c/cv(FF)}--\eqref{HK(FF') to HK(FF) ...} are clearly functorial.

\subsection{Description of $\phi_*:K(\FF')\to K(\FF)$}\label{ss: description of phi_*: K(FF') to K(FF)}

For $\phi$ as above, consider the notation of \Cref{ss: description of phi^*:K(FF) to K(FF')}. Then
\begin{equation}\label{phi_*: C^infty(M^prime 0 Lambda) to C^infty(M^0 Lambda)}
\phi_*:C^\infty(M^{\prime\,0};\Lambda)\to C^\infty(M^0;\Lambda)
\end{equation}
is a cochain map for $d_{s\eta}$ and $d_{s\eta'}$ ($s\in\R$) (\Cref{ss: Witten vs pull-back and push-forward}). That is, $\phi$ induces
\begin{equation}
\label{phi_*: C^infty(M^prime 0 Lambda otimes Omega^sNM^prime 0) to C^infty(M^0 Lambda otimes Omega^sNM^0)}
\phi_*:C^\infty(M^{\prime\,0};\Lambda\otimes\Omega^sNM^{\prime\,0})\to C^\infty(M^0;\Lambda\otimes\Omega^sNM^0)\;,
\end{equation} 
given by
\begin{equation}\label{phi_*(alpha otimes |omega'|^s) = phi_*alpha otimes |omega|^s}
\phi_*(\alpha\otimes|\omega'|^s)=\phi_*\alpha\otimes|\omega|^s\;,
\end{equation}
which is another cochain map for the de~Rham differentials defined with the flat bundle structures of $\Omega^sNM^0$ and $\Omega^sNM^{\prime\,0}$ induced by the Bott flat $T\FF$-partial connection (\Cref{ss: infinitesmal transfs}). Like in~\eqref{phi^*(alpha otimes |d rho|^s) = phi^*alpha otimes |d rho'|^s} and~\eqref{partial_rho' phi^* = phi^* partial_rho}, we have
\begin{gather}
\phi_*(\alpha\otimes|d\rho'|^s)=\phi_*\alpha\otimes|d\rho|^s\;,
\label{phi_*(alpha otimes |d rho'|^s) = phi_*alpha otimes |d rho|^s}\\
\partial_\rho\phi_*=\phi_*\partial_{\rho'}\;,
\label{partial_rho phi_* = phi_* partial_rho'}
\end{gather}
where~\eqref{partial_rho phi_* = phi_* partial_rho'} holds as maps $\Cinftyc(T';\Lambda\FF')\to\Cinftyc(T;\Lambda\FF)$.

\begin{prop}\label{p: phi_* equiv bigoplus_k phi_*}
According to \Cref{p: K(Lambda FF) equiv bigoplus_k C^infty(M^0 Lambda)}, the map~\eqref{phi_*:K(FF') to K(FF)} is given by
\[
\phi_*\equiv\bigoplus_k\phi_*\equiv\bigoplus_k\phi_*\;,
\]
where the terms of the first direct sum are given by~\eqref{phi_*: C^infty(M^prime 0 Lambda) to C^infty(M^0 Lambda)}, and the terms of the second direct sum are given by~\eqref{phi_*: C^infty(M^prime 0 Lambda otimes Omega^sNM^prime 0) to C^infty(M^0 Lambda otimes Omega^sNM^0)}, taking $s=-k-1$.
\end{prop}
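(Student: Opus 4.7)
The plan is to mirror the proof of Proposition~\ref{p: phi^* equiv bigoplus_k phi^*}, replacing pull-back by push-forward throughout. First, the second identity will follow at once from the first together with the compatibility formula \eqref{phi_*(alpha otimes |omega'|^s) = phi_*alpha otimes |omega|^s}, which expresses how $\phi_*$ respects the trivializations of $\Omega^sNM^{\prime 0}$ and $\Omega^sNM^0$ afforded by $|\omega'|^s$ and $|\omega|^s$. So the real content is the first identity.

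Second, by \eqref{partial_rho phi_* = phi_* partial_rho'}, the operator $\partial_\rho$ intertwines $\phi_*$ with $\partial_{\rho'}$ on $\Cinftyc(T';\Lambda\FF')$, and the isomorphisms of \Cref{p: K(Lambda FF) equiv bigoplus_k C^infty(M^0 Lambda)} for $\FF$ and $\FF'$ send the $k$th direct summand to the image of $\partial_\rho^k$ (resp.\ $\partial_{\rho'}^k$) applied to the $k=0$ Dirac sections. Iterating \eqref{partial_rho phi_* = phi_* partial_rho'} reduces the proof to the summand $k=0$.

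Third, for $k=0$: given $\alpha\in C^\infty(M^{\prime 0};\Lambda)$, set $u'=\alpha\otimes|d\rho'|^{-1}$, so that, as in the proof of Proposition~\ref{p: phi^* equiv bigoplus_k phi^*}, one has $\delta_{M^{\prime 0}}^{u'}\equiv\varpi'^*\alpha\cdot\rho'^*\delta_0$ inside $K(\FF')$. Choose $f_i\in\Cinftyc(-\epsilon,\epsilon)$ with $f_i\to\delta_0$ in $C^{-\infty}_\co(-\epsilon,\epsilon)$. Since $\rho'^*f_i=\phi^*\rho^*f_i$ is the pull-back of a scalar function under $\phi$, the projection formula applied to the push-forward~\eqref{phi_*: C^pm infty_c/cv(M' Lambda FF') to C^pm infty_c/.(M Lambda FF)} yields
\[
\phi_*(\varpi'^*\alpha\cdot\rho'^*f_i)=\rho^*f_i\cdot\phi_*(\varpi'^*\alpha)\;.
\]
Using the product decomposition $\phi\equiv\id\times\phi_0:(-\epsilon,\epsilon)\times M^{\prime 0}\to(-\epsilon,\epsilon)\times M^0$ where $\phi_0=\phi|_{M^{\prime 0}}$, together with $\phi\varpi'=\varpi\phi$, a direct integration-along-fibers computation gives $\phi_*(\varpi'^*\alpha)=\varpi^*(\phi_{0*}\alpha)$. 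Passing to the limit in $C^{-\infty}_\co$ and using continuity of~\eqref{phi_*: I_c/cv(FF') to I_c/cv(FF)}, we obtain
\[
\phi_*\delta_{M^{\prime 0}}^{u'}=\varpi^*(\phi_{0*}\alpha)\cdot\rho^*\delta_0=\delta_{M^0}^{\phi_*\alpha\otimes|d\rho|^{-1}}\;,
\]
which is the asserted identity for $k=0$. Applying $\partial_\rho^k$ and invoking \eqref{partial_rho phi_* = phi_* partial_rho'} completes the first identity in all degrees, and then \eqref{phi_*(alpha otimes |omega'|^s) = phi_*alpha otimes |omega|^s} delivers the second.

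The only subtle point is checking the identity $\phi_*\varpi'^*\alpha=\varpi^*\phi_{0*}\alpha$ with the correct sign and orientation conventions encoded in the projection $\pi_\topd$ and the factor $\Omega_\fiber$ used in the compositions \eqref{composition -push-forward - diff forms} and \eqref{push-forward - composition - leafwise currents}; once one notes that the vertical bundle of $\phi=\id\times\phi_0$ coincides with the vertical bundle of $\phi_0$ pulled back under the second-factor projection, and that $\varpi'^*\alpha$ already has no $d\rho'$ component, the two conventions match. The compactness hypotheses needed for the push-forward to land in the right space are automatic on $T'$ because $\phi_0:M^{\prime 0}\to M^0$ is a submersion between compact manifolds (and the non-compact extension is covered by Remark~\ref{r: phi - extension - non-compact}).
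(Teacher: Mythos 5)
Your proof is correct and follows essentially the same route as the paper's: the reduction to $k=0$ via $\partial_\rho\phi_*=\phi_*\partial_{\rho'}$, then the Dirac-section computation $\phi_*\delta_{M'^0}^{u'}=\delta_{M^0}^{\phi_{0*}\alpha\otimes|d\rho|^{-1}}$ by approximating $\delta_0$ with test functions and using the projection formula together with $\phi_*\varpi'^*=\varpi^*\phi_{0*}$, and finally the passage to density-twisted coefficients via \eqref{phi_*(alpha otimes |omega'|^s) = phi_*alpha otimes |omega|^s}. You make explicit a couple of steps (naming the projection formula and checking the orientation/fiber conventions for $\phi=\id\times\phi_0$) that the paper leaves implicit, but these are exactly the right things to verify and they do hold.
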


\begin{proof}
The second identity follows from the first one and~\eqref{phi_*(alpha otimes |omega'|^s) = phi_*alpha otimes |omega|^s}. To prove the first identity, by~\eqref{partial_rho phi_* = phi_* partial_rho'}, it is enough to consider the term with $k=0$. 

For $\beta\in C^\infty(M^{\prime\,0};\Lambda)$, let $v'=\beta\otimes|d\rho'|^{-1}\in C^\infty(M^{\prime\,0};\Lambda\otimes\Omega^{-1}NM^{\prime\,0})$. Like in the proof of \Cref{p: phi^* equiv bigoplus_k phi^*}, we have $v'\equiv\delta_{M^{\prime\,0}}^{v'}=\varpi^{\prime*}\beta\cdot\rho^{\prime*}\delta_0$ in $K(\FF')$. Moreover $v:=\phi_*v'=\phi_*\beta\cdot|d\rho|^{-1}$ by~\eqref{phi_*(alpha otimes |d rho'|^s) = phi_*alpha otimes |d rho|^s}, with $v\equiv\delta_{M^0}^v=\varpi^{\prime*}\phi_*\beta'\cdot\rho^*\delta_0$ in $K(\FF)$. Take a sequence $f_i\in\Cinftyc(-\epsilon,\epsilon)$ converging to $\delta_0$ in $C^{-\infty}_\co(-\epsilon,\epsilon)$. We get
\begin{align*}
\phi_*\delta_{M^{\prime\,0}}^{v'}&=\phi_*(\varpi^{\prime*}\beta\cdot\rho^{\prime*}\delta_0)
=\lim_i\phi_*(\varpi^{\prime*}\beta\cdot\rho^{\prime*}f_i)
=\lim_i\phi_*\varpi^*\beta\cdot\rho^*f_i\\
&=\lim_i\varpi^*\phi_*\beta\cdot\rho^*f_i
=\varpi^*\phi^*\beta\cdot\rho^*\delta_0
=\delta_{M^0}^v\;.\qedhere
\end{align*}
\end{proof}

The analog of \Cref{r: phi^* equiv prod_k phi^*} for $\phi_*:K(\FF')\to K(\FF)$ is true.

\subsection{Leafwise homotopy invariance}\label{ss: leafwise homotopy invariance}

With the notation of \Cref{ss: leafwise homotopy opers}, let $H:(M'\times I,\FF'\times I)\to(M,\FF)$ ($I=[0,1]$) be a smooth leafwise homotopy such that $H_0$ is transverse to $M^0$ and $H_0^{-1}(M^0)=M^{\prime\,0}$. Then, for every $p'\in M'$, the map $H_{t*}:N_{p'}\FF'\to N_{H_t(p')}\FF$ is the composition of $H_{0*}:N_{p'}\FF'\to N_{H_0(p')}\FF$ with the parallel transport along the leafwise path $s\in[0,t]\mapsto H_s(p')$. It follows that every $H_t$ is transverse to $M^0$ and $H_t^{-1}(M^0)=M^{\prime\,0}$. Hence $H$ is transverse to $M^0$ and $H^{-1}(M^0)=M^{\prime\,0}\times I$. Then, by~\eqref{sh = fint_I H^* leafwise} and according to \Cref{ss: pull-back of leafwise conormal currents,ss: push-forward of conormal leafwise currents}, the corresponding leafwise homotopy operator $\sh:C^\infty(M;\Lambda\FF)\to C^\infty(M';\Lambda\FF')$ has continuous linear extensions,
\[
\sh:K(\FF)\to K(\FF')\;,\quad\sh:I(\FF)\to I(\FF')\;,\quad\sh:J(\FF)\to J(\FF')\;.
\]
By continuity and according to \Cref{ss: leafwise homotopy opers}, we have $H_1^*-H_0^*=\sh d_\FF+d_{\FF'}\sh$ with $H_0^*$ and $H_1^*$ given by~\eqref{phi^*: I(FF) to I(FF')},~\eqref{phi^*: K(FF) to K(FF')} and~\eqref{phi^*: J(FF) to J(FF')}. Hence we get the following.

\begin{prop}\label{p: leafwise homotopy invariance - H^bullet K(FF)}
 Let $\phi,\psi:(M',\FF')\to(M,\FF)$ be smooth foliated maps transverse to $M^0$ with $\phi^{-1}(M^0)=\psi^{-1}(M^0)=M^{\prime\,0}$. If $\phi$ is leafwise homotopic to $\psi$, then $\phi$ and $\psi$ induce the same homomorphisms~\eqref{phi^*: H^bullet K(FF) to H^bullet K(FF')}.
\end{prop}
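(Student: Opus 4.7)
The plan is to extend the classical homotopy-operator argument to the conormal setting by exploiting the machinery already developed in the preceding subsections. First I would verify that any smooth leafwise homotopy $H:(M'\times I,\FF'\times I)\to(M,\FF)$ between $\phi$ and $\psi$ is transverse to $M^0$ with $H^{-1}(M^0)=M^{\prime 0}\times I$: this is the observation made in the paragraph preceding the proposition, and it follows immediately from the fact that parallel transport by the Bott connection along the leafwise path $s\mapsto H_s(p')$ intertwines $H_{0*}$ with $H_{t*}$ on normal bundles, so the transversality and pre-image conditions propagate from $t=0$ to every $t\in I$.

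With this transversality in hand, the constructions of \Cref{ss: pull-back of leafwise conormal currents,ss: push-forward of conormal leafwise currents} apply: pull-back by $H$ together with integration along the compact oriented fiber $I$ give the leafwise homotopy operator $\sh=\fint_I\circ H^*$ as a continuous linear map on each of the three topological complexes, namely $\sh:I(\FF)\to I(\FF')$, $\sh:K(\FF)\to K(\FF')$ (by restriction, since $\sh$ preserves the support-in-$M^0$ condition thanks to the transversality) and $\sh:J(\FF)\to J(\FF')$ (by descent through the conormal short exact sequence~\eqref{leafwise conormal exact seq}).

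The core step is the identity $H_1^*-H_0^*=\sh\,d_\FF+d_{\FF'}\,\sh$, which holds on smooth leafwise forms by the classical Cartan-type computation recalled in \Cref{ss: leafwise homotopy opers}, with the pull-backs $H_0^*=\phi^*$ and $H_1^*=\psi^*$ arising as the two boundary contributions of $I$ in the Stokes-type formula for integration along the fiber. Since both sides are continuous on $I(\FF)$ and smooth leafwise forms are dense there (\Cref{ss: conormal - Sobolev order - compact}), the identity extends by continuity to all of $I(\FF)$, and then restricts to $K(\FF)$ and descends to $J(\FF)$.

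For any cocycle $\alpha$ in any of the three complexes, the identity yields $\psi^*\alpha-\phi^*\alpha=d_{\FF'}(\sh\alpha)\in\im d_{\FF'}\subseteq\overline{\im d_{\FF'}}$, so $\phi^*$ and $\psi^*$ induce the same map in cohomology and in reduced cohomology, which gives all five equalities asserted in~\eqref{phi^*: H^bullet K(FF) to H^bullet K(FF')}. The main obstacle I anticipate is not conceptual but bookkeeping: one must verify that the push-forward $\fint_I$ is indeed continuous between the relevant conormal spaces in the presence of the fiber boundary $\partial I=\{0,1\}$ and produces the expected Stokes-type identity; this is essentially a check using the descriptions of the conormal spaces via cutting along $M^{\prime 0}$ from \Cref{s: conormal seq}, combined with the compactness of $I$.
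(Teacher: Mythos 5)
Your proposal is correct and follows essentially the same route as the paper: you establish that the leafwise homotopy $H$ is transverse to $M^0$ with $H^{-1}(M^0)=M^{\prime\,0}\times I$ via the Bott parallel-transport argument, use the pull-back and push-forward machinery from \Cref{ss: pull-back of leafwise conormal currents,ss: push-forward of conormal leafwise currents} to realize $\sh=\fint_I\circ H^*$ continuously on $K(\FF)$, $I(\FF)$ and $J(\FF)$, and then extend the Cartan chain-homotopy identity from $C^\infty(M;\Lambda\FF)$ by density and continuity (then restricting to $K$ and descending to $J$). The obstacle you flag — that $\fint_I$ with fiber boundary $\partial I=\{0,1\}$ must produce a Stokes term — is exactly what the paper handles via~\eqref{phi_* d_FF' - d_FF phi_* = (phi|_partial M')_* iota^*} and~\eqref{sh = fint_I H^* leafwise}, so this is indeed just a bookkeeping check already done in \Cref{ss: push-forward of leafwise currents,ss: leafwise homotopy opers}.
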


\section{Action of foliated flows on the conormal sequence}\label{s: action of foliated flows on the conormal seq}

Let $\phi=\{\phi^t\}$ be a foliated flow with transversely simple preserved leaves on a compact foliated manifold $(M,\FF)$. The homomorphisms~\eqref{phi^*: H^bullet K(FF) to H^bullet K(FF')} induced by the maps $\phi^t$ define actions of $\R$ on $H^\bullet K(\FF)$, $H^\bullet I(\FF)$ and $H^\bullet J(\FF)$, denoted by $\phi^*=\{\phi^{t*}\}$, and actions on $\bar H^\bullet I(\FF)$ and $\bar H^\bullet J(\FF)$, denoted by $\bar\phi^*=\{\bar\phi^{t*}\}$.  By \Cref{p: leafwise homotopy invariance - H^bullet K(FF)}, they only depend on the flow-leafwise-homotopy class of $\phi$ (\Cref{ss: fol maps}).

With the notation of \Cref{ss: globalization}, the foliated flow $\xi=\{\xi^t\}$ has transversely simple preserved leaves and satisfies $\bar\xi=\bar\phi$ and $\xi^t=\id$ on $M^0$. By \Cref{p: there exists a leafwise homotopy}, there is a flow-leafwise homotopy between $\phi$ and $\xi$, and therefore $\phi^*=\xi^*$ on $H^\bullet K(\FF)$. Consider the tubular neighborhood with defining function, $T_\epsilon\equiv(-\epsilon,\epsilon)_\rho\times M^0_\varpi$, like in \Cref{ss: description of phi^*:K(FF) to K(FF')}.

\begin{prop}\label{p: phi^t* equiv bigoplus_L k e^-(k+1) varkappa_L t}
According to \Cref{c: HK(FF) equiv bigoplus_k H_-k-1(M^0),r: bigoplus_L},
\[
\phi^{t*}\equiv\bigoplus_{k,L}e^{-(k+1)\varkappa_Lt}\equiv\bigoplus_{k,L}e^{-(k+1)\varkappa_Lt}
\]
on $H^\bullet K(\FF)$, where $k$ runs in $\N_0$ and $L$ in $\pi_0M^0$.
\end{prop}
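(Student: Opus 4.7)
The plan is to replace $\phi$ by the simpler flow $\xi$ from~\ref{i-(A): Y'} of \Cref{ss: globalization} and then read off the scalar action one summand at a time from the explicit description of $K(\FF)$.

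First I would reduce to $\xi^{t*}$. By~\ref{i-(A): Y'} we have $\overline{Y}=\overline{Z}$, so $Z-Y\in\fX(\FF)$, and this difference has compact support because $M$ is compact. \Cref{p: there exists a leafwise homotopy}, applied with the roles $X=Y$ and $Y=Z$, then furnishes a flow leafwise homotopy between $\xi$ and $\phi$; its time-$t$ slice is a leafwise homotopy between the foliated diffeomorphisms $\xi^t$ and $\phi^t$. Both $\xi^t$ and $\phi^t$ are automatically transverse to $M^0$ with $(\xi^t)^{-1}(M^0)=(\phi^t)^{-1}(M^0)=M^0$ (they are diffeomorphisms of $M$ carrying $M^0$ to itself), so \Cref{p: leafwise homotopy invariance - H^bullet K(FF)} gives $\phi^{t*}=\xi^{t*}$ on $H^\bullet K(\FF)$. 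It therefore suffices to compute $\xi^{t*}$.

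Next I would apply \Cref{p: phi^* equiv bigoplus_k phi^*} to $\xi^t$: under the identifications of \Cref{p: K(Lambda FF) equiv bigoplus_k C^infty(M^0 Lambda)} and Remark~\ref{r: bigoplus_L}, the endomorphism $\xi^{t*}$ of $K(\FF)$ decomposes as $\bigoplus_{k,L}\xi^{t*}_{k,L}$, where $\xi^{t*}_{k,L}$ is the pullback on $C^\infty(L;\Lambda)$ in the first realization and on $C^\infty(L;\Lambda\otimes\Omega^{-k-1}NL)$ in the second. Because $Y$ vanishes on $M^0$ by~\ref{i-(A): Y'}, we have $\xi^t|_L=\id$ for every leaf $L\subset M^0$, so the pullback on $\Lambda L$ is the identity. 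The defining property~\eqref{overline Z = varkappa_Lx partial_x} of $\varkappa_L$, which is valid for both $Z$ and $Y$ since $\overline{Y}=\overline{Z}$, shows that the normal tangent map $\xi^t_*\colon NL\to NL$ is fiberwise multiplication by $e^{\varkappa_L t}$. The induced fiber action on the rank-one density bundle $\Omega^{-k-1}NL$ is therefore multiplication by $e^{-(k+1)\varkappa_L t}$, and pullback of sections covering $\id_L$ is the same scalar. Passing to cohomology yields the formula on $\bigoplus_{k,L}H^\bullet(L;\Omega^{-k-1}NL)$; the first realization $\bigoplus_{k,L}H^\bullet_{-k-1}(L)$ follows because its identification with the density-coefficient version is a scalar trivialization that intertwines the two actions.

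The main subtlety is keeping the density-pullback conventions straight. As a cross-check, the same factor can be recovered directly from the representative $\partial_\rho^k\delta_L^u$ of~\eqref{alpha mapsto partial_rho^m delta_M^0^alpha}: from $\xi^{t*}\rho=e^{\varkappa_L t}\rho$ on the tubular neighborhood (see~\eqref{xi^t* rho = e^varkappa t rho}), together with the fact that the orbits of $\xi$ agree with the fibers of $\varpi$ on $T_\epsilon\cap M^1$ by~\ref{i-(A): Y'}, one gets $\xi^{t*}\delta_L^u=e^{-\varkappa_L t}\delta_L^u$ via the homogeneity of $\delta_0$, and $\xi^{t*}\partial_\rho=e^{-\varkappa_L t}\partial_\rho$ as differential operators, so that $\xi^{t*}(\partial_\rho^k\delta_L^u)=e^{-(k+1)\varkappa_L t}\partial_\rho^k\delta_L^u$, in agreement with the density-bundle computation.
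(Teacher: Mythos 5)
Your proof is correct and takes essentially the same route as the paper: reduce from $\phi$ to $\xi$ via the flow leafwise homotopy and \Cref{p: leafwise homotopy invariance - H^bullet K(FF)}, then compute the scalar on each summand from the density-bundle scaling coming from $\xi^{t*}\rho = e^{\varkappa_L t}\rho$ together with \Cref{p: phi^* equiv bigoplus_k phi^*}. The paper phrases the scaling via~\eqref{phi^*(alpha otimes |d rho|^s) = phi^*alpha otimes |d rho'|^s} and $|d\rho|$, which is exactly your cross-check, while your main computation reads the same factor off the normal tangent map action on $\Omega^{-k-1}NL$; these are the same mechanism.
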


\begin{proof}
Since $\xi^{t*}\rho=e^{\varkappa_Lt}\rho$ on every $T_{L,\epsilon}\cap\xi^{-t}(T_{L,\epsilon})$ by~\eqref{xi^t* rho = e^varkappa t rho}, it follows from~\eqref{phi^*(alpha otimes |d rho|^s) = phi^*alpha otimes |d rho'|^s} and \Cref{p: phi^* equiv bigoplus_k phi^*} that
\[
\xi^{t*}\equiv\bigoplus_{k,L}e^{-(k+1)\varkappa_Lt}\equiv\bigoplus_{k,L}e^{-(k+1)\varkappa_Lt}
\]
on $K(\FF)$, according to \Cref{p: K(Lambda FF) equiv bigoplus_k C^infty(M^0 Lambda)} and Remark~\ref{r: bigoplus_L}. Hence $\phi^{t*}=\xi^{t*}$ has the stated expression on $H^\bullet K(\FF)$.
\end{proof}

\Cref{p: K(Lambda FF) equiv bigoplus_k C^infty(M^0 Lambda),c: HK(FF) equiv bigoplus_k H_-k-1(M^0),r: bigoplus_L,p: phi^t* equiv bigoplus_L k e^-(k+1) varkappa_L t} show \Cref{t: intro - H^bullet K(FF)}.

\chapter{Dual-conormal leafwise reduced cohomology}\label{ch: dual-conormal}

\section{Dual-conormal sequence of leafwise differential forms}
\label{s: dual-conormal seq of leafwise diff forms}

Assume the conditions of \Cref{s: conormal seq of leafwise currents} on $(M,\FF)$. According to~\Cref{ss: diff opers on dual-conormal distribs}, the LCHS \index{$I'(\FF)$}
\[
I'(\FF)=I'\Lambda^\bullet (\FF):=I'(M,M^0;\Lambda\FF)
\]
is a topological complex with $d_\FF$. It induces the \emph{dual-conormal leafwise cohomology} and \emph{dual-conormal leafwise reduced cohomology} of $\FF$ (or of $(\FF,M^0)$). The notation $BI'(\FF)$, $ZI'(\FF)$, $\bar BI'(\FF)$, $H^\bullet I'(\FF)$ \index{$H^\bullet I'(\FF)$} and $\bar H^\bullet I'(\FF)$ \index{$\bar H^\bullet I'(\FF)$} is used like in \Cref{s: conormal seq of leafwise currents}.

For a leafwise flat vector bundle $E$, we can also consider the topological complex
\[
I'(\FF;E)=I'\Lambda^\bullet(\FF;E)=I'(M,M^0;\Lambda\FF\otimes E)
\]
with $d_\FF$. The LCHS $I'(\FF;E)$ is also defined for an arbitrary vector bundle $E$, missing the leafwise differential map $d_\FF$.

Moreover, the LCHSs
\[
I^{\prime\,(s)}(\FF)=I^{\prime\,(s)}\Lambda^\bullet(\FF)=I^{\prime\,(s)}(M,M^0;\Lambda\FF)\quad(s\in\R)\;.
\]
also become topological complexes with $d_\FF$. The notation $BI^{\prime\,(s)}(\FF)$, $ZI^{\prime\,(s)}(\FF)$, $\bar BI^{\prime\,(s)}(\FF)$, $H^\bullet I^{\prime\,(s)}(\FF)$\index{$H^\bullet I^{\prime\,(s)}(\FF)$} and $\bar H^\bullet I^{\prime\,(s)}(\FF)$\index{$\bar H^\bullet I^{\prime\,(s)}(\FF)$} is used like in \Cref{s: conormal seq of leafwise currents}. 

\begin{rem}\label{r: coefficients in Omega M}
Although $\Omega M$ has no leafwise flat structure in general, we can assume $\FF$ is oriented by working locally or passing to the double cover of orientations of $\FF$. Then we can apply~\eqref{(Lambda^v FF)^* otimes Omega M equiv Lambda^n' n''-v M}--\eqref{d_FF equiv (-1)^v+1 d_FF^t} and the leafwise flat structure of $\Omega N\FF$ to define $d_\FF$ and $d_\FF^\trans$ on every $I^{(s)}(\FF;\Omega M)\equiv I^{(s)}(\FF;\Omega N\FF)$. Since the condition of being in $I^{(s)}(\FF;\Omega M)$ is local for elements of $C^{-\infty}(M;\Lambda\FF\otimes\Omega)$, this procedure gives the definition of $d_\FF=d_\FF^{\text{\rm tt}}$.
\end{rem}

For $s'\le s$ in $\R$, we have the continuous linear restriction maps (\Cref{ss: dual-conormal distribs - compact})
\begin{equation}\label{j'_s}
j'_s:I'(\FF)\to I^{\prime\,(s)}(\FF)\;,\quad j'_{s,s'}:I^{\prime\,(s)}(\FF)\to I^{\prime\,(s')}(\FF)\;,
\end{equation}
where $j'_s=j_{-s}^\trans$ and $j'_{s,s'}=j_{-s',-s}^\trans$ for the version of~\eqref{j_s} with $\Omega M$. The induced homomorphisms in cohomology and reduced cohomology are denoted by  $j'_{s*}$, $j'_{s,s'*}$, $\bar\jmath'_{s*}$ and $\bar\jmath'_{s,s'*}$. The homomorphisms $j'_{s,s'*}$ and $\bar\jmath'_{s,s'*}$ form projective spectra, giving rise to projective limits as $s\uparrow+\infty$. Like in~\eqref{varinjlim H^bullet I^(s)(FF) cong H^bullet I(FF)}, the maps $j'_{s*}$ and $\bar\jmath'_{s*}$ induce canonical continuous linear maps,
\begin{equation}\label{bar H^bullet I'(FF) cong projlim bar H^bullet I^(prime s)(FF)}
\left\{
\begin{gathered}
\tilde\jmath'_*:=\varprojlim j'_{s*}:H^\bullet I'(\FF)\to\widetilde H^\bullet I'(\FF)
:=\varprojlim H^\bullet I^{\prime\,(s)}(\FF)\;,\\
\hat\jmath'_*:=\varprojlim\bar\jmath'_{s*}:\bar H^\bullet I'(\FF)\xrightarrow{\cong}
\widehat H^\bullet I'(\FF):=\varprojlim\bar H^\bullet I^{\prime\,(s)}(\FF)\;,
\end{gathered}
\right.
\end{equation}
where the second one is a linear isomorphism (\Cref{s: proj lims in reduced cohom}). The canonical maps of the inductive limits to the steps are denoted by \index{$\tilde\jmath_{s*}$} \index{$\hat\jmath_{s*}$}
\[
\tilde\jmath_{s*}:\widetilde H^\bullet I'(\FF)\to H^\bullet I^{\prime\,(s)}(\FF)\;,\quad
\hat\jmath_{s*}:\widehat H^\bullet I'(\FF)\to\bar H^\bullet I^{\prime\,(s)}(\FF)\;.
\]

Using the above type of notation, the LCHSs $J'(\FF)$ and $K'(\FF)$ are also topological complexes with $d_\FF$ (\Cref{ss: action of Diff(M) on the dual-conormal seq}), with corresponding spaces $BJ'(\FF)$, $ZJ'(\FF)$, $\bar BJ'(\FF)$, $H^\bullet J'(\FF)$\index{$H^\bullet J'(\FF)$} and $\bar H^\bullet J'(\FF)$,\index{$\bar H^\bullet J'(\FF)$} and the same for $K'(\FF)$.\index{$H^\bullet K'(\FF)$}

Similarly, we have topological complexes $J^{\prime\,(s)}(\FF)$, $J^{\prime\,m}(\FF)$ and $K^{\prime\,(s)}(\FF)$ ($s,m\in\R$) (\Cref{ss: dual-conormal seq,ss: Diff(M) on the conormal seq}), with corresponding spaces $BJ^{\prime\,(s)}(\FF)$, $ZJ^{\prime\,(s)}(\FF)$, $\bar BJ^{\prime\,(s)}(\FF)$, $H^\bullet J^{\prime\,(s)}(\FF)$ and $\bar H^\bullet J^{\prime\,(s)}(\FF)$, and the same for $J^{\prime\,m}(\FF)$ and $K^{\prime\,(s)}(\FF)$. There are obvious versions for $J^{\prime\,(s)}(\FF)$ and $K^{\prime\,(s)}(\FF)$ of the maps~\eqref{j'_s} (\Cref{ss: K'(M L) and J'(M L)}), also denoted by $j'_s$ and $j'_{s,s'}$, giving rise to projective spectra in cohomology and reduced cohomology, and the corresponding projective limits. In the case of $J'(\FF)$, the maps $j'_{s,s'}$ and $j'_s$ are continuous inclusions (\Cref{ss: K'(M L) and J'(M L)}).

There are also continuous inclusion maps (\Cref{ss: K'(M L) and J'(M L)})
\begin{equation}\label{j_m}
\left\{
\begin{gathered}
j'_m:J'(\FF)\hookrightarrow J^{\prime\,m}(\FF)\;,\quad 
j'_{m,m'}:J^{\prime\,m}(\FF)\hookrightarrow J^{\prime\,m'}(\FF)\quad(m'\le m)\;,\\
j'_{m,s}:J^m(\FF)\hookrightarrow J^{(s)}(\FF)\quad(m>s+n/2+1)\;,\\
j'_{s,m}:J^{(s)}(\FF)\hookrightarrow J^m(\FF)\quad(s\ge m,0)\;, 
\end{gathered}
\right.
\end{equation}
denoted like in~\eqref{j_s} with some abuse of notation. The homomorphisms induced by the maps $j'_{m,m'}$ in cohomology and reduced cohomology form projective spectra whose inductive limits as $m\uparrow+\infty$ agree with the previous ones for $J(\FF)$, and the maps $j'_m$ induce a continuous linear isomorphism analogous to~\eqref{bar H^bullet I'(FF) cong projlim bar H^bullet I^(prime s)(FF)}.

It will be shown (Corollary~\ref{c: H^bullet K'(FF) equiv prod_k H^bullet_k(M^0)}) that the canonical projections are TVS-identities, 
\begin{equation}\label{H^bullet K'(FF) equiv bar H^bullet K'(FF)}
\left\{
\begin{gathered}
H^\bullet K'(\FF)\equiv\bar H^\bullet K'(\FF)\;,\quad 
H^\bullet K^{\prime\,(s)}(\FF)\equiv\bar H^\bullet K^{\prime\,(s)}(\FF)\;,\\
H^\bullet K'(\FF)\equiv\varprojlim H^\bullet K^{\prime\,(s)}(\FF)\;,\quad\bar H^\bullet K'(\FF)\equiv\varprojlim\bar H^\bullet K^{\prime\,(s)}(\FF)\;.
\end{gathered}
\right.
\end{equation}

The version of the bottom row of~\eqref{CD: dual-conormal seqs} with $\Lambda\FF$  is a short exact sequence of continuous homomorphisms of topological complexes,
\begin{equation}\label{leafwise dual-conormal exact seq}
0\leftarrow K'(\FF) \xleftarrow{R'} I'(\FF) \xleftarrow{\iota'} J'(\FF)\leftarrow0\;,
\end{equation}
using the notation $R'=\iota^\trans$ \index{$R'$} and $\iota'=R^\trans$. \index{$\iota'$} The exactness of the induced sequences,
\begin{gather}
0\leftarrow H^\bullet K'(\FF) \xleftarrow{R'_*} H^\bullet I'(\FF) \xleftarrow{\iota'_*} H^\bullet J'(\FF)\leftarrow0\;,
\label{exact seq in cohom - dual-conormal}\\
0\leftarrow H^\bullet K'(\FF) \xleftarrow{\bar R'_*} \bar H^\bullet I'(\FF) \xleftarrow{\bar\iota'_*} \bar H^\bullet J'(\FF)\leftarrow0\;,
\label{exact seq in reduced cohom - dual-conormal}
\end{gather}
will be proved in \Cref{s: short exact seq - dual-conormal}; in particular, this shows \Cref{t: intro - reduced dual-conormal cohomology exact sequence}.

Taking the transpose of the analog of~\eqref{iota_s} with $\Omega M$, we get continuous linear maps \index{$R'_s$} \index{$\iota'_s$}
\begin{equation}\label{R'_s}
R'_s:K^{\prime\,(s)}(\FF) \to I^{\prime\,(s)}(\FF)\;,\quad \iota'_s:I^{\prime\,(s)}(\FF)\to J^{\prime\,(s)}(\FF)\;.
\end{equation}
Like in \Cref{s: conormal seq of leafwise currents}, the subscript ``$s$'' may be also added to the elements of the cohomologies or reduced cohomologies of $K^{\prime\,(s)}(\FF)$, $I^{\prime\,(s)}(\FF)$ or $J^{\prime\,(s)}(\FF)$.

\section{Projective limits in reduced cohomology}\label{s: proj lims in reduced cohom}

The goal of this section is to prove the linear isomorphism~\eqref{bar H^bullet I'(FF) cong projlim bar H^bullet I^(prime s)(FF)}, and its version for $J'(\FF)$. The case of $K'(\FF)$ is given by~\eqref{H^bullet K'(FF) equiv bar H^bullet K'(FF)}.

To simplify the notation, we write \index{$\widetilde H^\bullet I'(\FF)$} \index{$\widehat H^\bullet I'(\FF)$}
\[
\widetilde H^\bullet I'(\FF)=\varprojlim H^\bullet I^{\prime\,(s)}(\FF)\;,\quad
\widehat H^\bullet I'(\FF)=\varprojlim\bar H^\bullet I^{\prime\,(s)}(\FF)\;,
\]
and the canonical maps of the projective limits to the steps are denoted by \index{$\tilde\jmath'_s$} \index{$\hat\jmath'_s$}
\[
\tilde\jmath'_s:\widetilde H^\bullet I'(\FF)\to H^\bullet I^{\prime\,(s)}(\FF)\;,\quad
\hat\jmath'_s:\widehat H^\bullet I'(\FF)\to\bar H^\bullet I^{\prime\,(s)}(\FF)\;.
\]
The same type of notation is used in the cases of $J'(\FF)$ and $K'(\FF)$.

\begin{lem}\label{l: j'_s: BJ'(FF) to BJ^prime (s)(FF) is dense}
$BI'(\FF)$ is dense in every $BI^{\prime\,(s)}(\FF)$ is dense.
\end{lem}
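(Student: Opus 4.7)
The plan is to reduce the claim to density at the level of primitives and then push it through $d_\FF$. Concretely, I will show first that $j'_s \colon I'(\FF) \to I^{\prime\,(s)}(\FF)$ itself has dense image. Granting this, given any $\beta \in BI^{\prime\,(s)}(\FF)$, write $\beta = d_\FF\alpha$ for some $\alpha \in I^{\prime\,(s)}(\FF)$ and pick a net $\alpha_\lambda \in I'(\FF)$ with $j'_s(\alpha_\lambda) \to \alpha$ in $I^{\prime\,(s)}(\FF)$. Since $j'_s$ is a cochain map (it is a restriction map for the action of $\Diff(\FF)$, cf.\ \Cref{ss: action of Diff(M) on the dual-conormal seq}) and $d_\FF$ is continuous on $I^{\prime\,(s)}(\FF)$, we have
\[
j'_s(d_\FF\alpha_\lambda) \;=\; d_\FF\,j'_s(\alpha_\lambda) \;\longrightarrow\; d_\FF\alpha \;=\; \beta
\]
in $I^{\prime\,(s)}(\FF)$, with $d_\FF\alpha_\lambda \in BI'(\FF)$. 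This would give the statement.

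The main content is the density of $j'_s(I'(\FF))$ in $I^{\prime\,(s)}(\FF)$. For this I will mimic the Hahn--Banach/reflexivity argument already recorded in the text just after~\eqref{I'(M L) equiv projlim I^prime (s)(M L) equiv projlim I^prime m(M L)} for the transition maps $j'_{s,s'}$. By construction (the analog of~\eqref{j'_s} for $\Lambda\FF$), $j'_s$ is the transpose of the continuous inclusion
\[
j_{-s}\colon I^{(-s)}(\FF;\Omega M)\hookrightarrow I(\FF;\Omega M)\;,
\]
which is injective. The Fr\'echet space $I^{(-s)}(\FF;\Omega M)$ is (totally) reflexive (\Cref{ss: conormal - Sobolev order - compact}) and the LF-space $I(\FF;\Omega M)$ is Montel, hence reflexive. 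Under these reflexivity identifications, the transpose of $j'_s$ agrees with $j_{-s}$ itself, so the transpose of $j'_s$ is injective. By the Hahn--Banach theorem in the Hausdorff LCS $I^{\prime\,(s)}(\FF)$, this is equivalent to $j'_s$ having dense image, as desired. The same argument also shows that $j'_s$ is injective (its transpose $j_{-s}$ has dense image, since it contains the dense subspace $C^\infty(M;\Lambda\FF\otimes\Omega M)\subset I(\FF;\Omega M)$), so one may legitimately regard $I'(\FF)$ as a subspace of $I^{\prime\,(s)}(\FF)$, which is the interpretation implicit in the statement.

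The hard part is really this density step; the remainder is just continuity of $d_\FF$ and the cochain-map property of $j'_s$. The one point that needs a bit of care is the reflexivity book-keeping in identifying the transpose of $j'_s$ with $j_{-s}$, but this is precisely the pattern already used in the text and in \cite{AlvKordyLeichtnam-conormal}, so no new ideas are required.
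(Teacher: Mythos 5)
Your proposal is correct, and the second half (pushing density of primitives through $d_\FF$ using continuity of $d_\FF$ and the cochain-map property of $j'_s$) is exactly the paper's one-line proof. One thing to flag: the statement as printed contains an evident typo --- it reads ``$BI'(\FF)$ is dense in every $BI^{\prime\,(s)}(\FF)$ is dense'' --- whereas the label and the downstream uses in \Cref{c: bar BJ'(FF) = bigcap_s bar BJ^prime (s)(FF),p: hat jmath'_* is a linear iso} show the intended claim is that $BJ'(\FF)$ is dense in every $BJ^{\prime\,(s)}(\FF)$. You proved the $I'$ version verbatim as written, but the identical argument transfers to $J'$ because $J^{(s)}(M,L)$ is a totally reflexive Fr\'echet space and $J(M,L)$ is a Montel space (\Cref{ss: J(M L)}), which is all your reflexivity book-keeping requires.

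Where you diverge from the paper is only in the density step, which the paper treats as already known. The paper's proof cites the density of the image of $J'(\FF)$ in $J^{\prime\,(s)}(\FF)$, which in the text ultimately comes from the concrete fact that $\Cinftyc(M^1)$ is dense in every $J^{\prime\,m}(M,M^0)$ (\Cref{ss: description of J'(M L)}), combined with the interleaving of the $(s)$- and $m$-filtrations. You instead re-derive the density abstractly: $j'_s=j_{-s}^\trans$, the source and target of $j_{-s}$ are reflexive, so $(j'_s)^\trans$ is identified with the injective inclusion $j_{-s}$, and Hahn--Banach in the Hausdorff codomain gives dense image. This is the same Hahn--Banach/reflexivity machinery the paper itself invokes in \Cref{ss: dual-conormal distribs - compact} for analogous claims, so the route is legitimate; it buys independence from the specific dense subspace $\Cinftyc(M^1)$ at the cost of retracing a step the text had already made available.
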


\begin{proof}
Use that the image of $J'(\FF)$ is dense in $J^{\prime\,(s)}(\FF)$ (\Cref{ss: dual-conormal distribs - compact}) and $d_\FF$ is continuous on $J'(\FF)$ and $J^{\prime\,(s)}(\FF)$ (\Cref{ss: action of Diff(M) on the dual-conormal seq}).
\end{proof}

Recall that $\bar BJ'(\FF)$ (resp., $\bar BJ^{\prime\,(s)}(\FF)$) denotes the closure of $BJ'(\FF)$ (resp., $BJ^{\prime\,(s)}(\FF)$) in $J'(\FF)$ (resp., $J^{\prime\,(s)}(\FF)$).

\begin{cor}\label{c: bar BJ'(FF) = bigcap_s bar BJ^prime (s)(FF)}
As vector spaces,
\[
\bar BJ'(\FF)=\bigcap_s\bar BJ^{\prime\,(s)}(\FF)\;.
\]
\end{cor}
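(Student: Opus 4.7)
The plan is to prove the two set-theoretic inclusions separately, using the projective limit description of $J'(\FF)$ together with the density lemma that just preceded.

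For the easy inclusion $\bar BJ'(\FF) \subset \bigcap_s \bar BJ^{\prime\,(s)}(\FF)$, I would argue as follows. For every $s$, the inclusion $j'_s:J'(\FF)\hookrightarrow J^{\prime\,(s)}(\FF)$ is continuous, and trivially $BJ'(\FF)\subset BJ^{\prime\,(s)}(\FF)$ under this inclusion (since $d_\FF$ commutes with the restriction of currents). Hence the image of $\bar BJ'(\FF)$ under $j'_s$ is contained in the closure of $BJ^{\prime\,(s)}(\FF)$ in $J^{\prime\,(s)}(\FF)$, i.e.\ in $\bar BJ^{\prime\,(s)}(\FF)$. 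Intersecting over all $s$ gives the inclusion.

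For the reverse inclusion, suppose $\alpha\in\bigcap_s\bar BJ^{\prime\,(s)}(\FF)$. The set-theoretic identity $J'(\FF)=\bigcap_s J^{\prime\,(s)}(\FF)$, which is the analog of~\eqref{dot AA'(M) = bigcap_s dot AA^prime (s)(M) = bigcap_m dot AA^prime m(M)} recorded at the start of \Cref{s: dual-conormal seq of leafwise diff forms}, shows $\alpha\in J'(\FF)$. It remains to produce, for each neighborhood $W$ of $\alpha$ in the topology of $J'(\FF)$, an element of $BJ'(\FF)$ lying in $W$. Since $J'(\FF)$ carries the projective limit topology defined by the continuous inclusions $j'_s:J'(\FF)\hookrightarrow J^{\prime\,(s)}(\FF)$, and the spectrum $\{J^{\prime\,(s)}(\FF)\}$ is directed (with $J^{\prime\,(s)}(\FF)\hookrightarrow J^{\prime\,(s')}(\FF)$ continuously for $s'\le s$), every basic $0$-neighborhood in $J'(\FF)$ is of the form $(j'_s)^{-1}(V)$ for some $s$ and some convex $0$-neighborhood $V$ in $J^{\prime\,(s)}(\FF)$.

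The key step is then to realize such a basic neighborhood $\alpha+(j'_s)^{-1}(V)$ by an element of $BJ'(\FF)$. Using convexity, write $V=V_1+V_1$ for some $0$-neighborhood $V_1$. Because $\alpha\in\bar BJ^{\prime\,(s)}(\FF)$, there is $\gamma\in BJ^{\prime\,(s)}(\FF)$ with $\alpha-\gamma\in V_1$. Lemma~\ref{l: j'_s: BJ'(FF) to BJ^prime (s)(FF) is dense} asserts that $BJ'(\FF)$ is dense in $BJ^{\prime\,(s)}(\FF)$ for the $J^{\prime\,(s)}(\FF)$-topology, so there is $\beta\in BJ'(\FF)$ with $\gamma-\beta\in V_1$. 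Adding, $\alpha-\beta\in V$, equivalently $\beta\in\alpha+(j'_s)^{-1}(V)$. This gives $\alpha\in\bar BJ'(\FF)$ and completes the proof.

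I do not expect a real obstacle here: the only subtlety is that $J^{\prime\,(s)}(\FF)$ need not be metrizable (it is a strong dual of a Fréchet space, only bornological and barreled as noted), but the argument above is phrased with filters/neighborhoods rather than sequences, so metrizability is irrelevant. If a net/sequence-style proof were preferred, one could pick any cofinal sequence $s_n\uparrow+\infty$ and carry out a diagonal construction, combining $\alpha\in\bar BJ^{\prime\,(s_n)}(\FF)$ with Lemma~\ref{l: j'_s: BJ'(FF) to BJ^prime (s)(FF) is dense} at each step, exploiting the fact that convergence in $J'(\FF)$ is just simultaneous convergence in every $J^{\prime\,(s_n)}(\FF)$.
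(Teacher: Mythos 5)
Your proof is correct and takes essentially the same approach as the paper: the paper's proof simply states that $BJ'(\FF)$ is dense in $\bigcap_s\bar BJ^{\prime\,(s)}(\FF)$ (citing the description of the projective-limit topology and \Cref{l: j'_s: BJ'(FF) to BJ^prime (s)(FF) is dense}) and that the intersection is closed, which are precisely the two inclusions you established; your verification of density via basic neighborhoods $(j'_s)^{-1}(V)$ and the $V=V_1+V_1$ splitting just unpacks the cited reference.
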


\begin{proof}
By the definition of the projective topology of $\bigcap_sJ^{\prime\,(s)}(\FF)$ \cite[Section~II.5]{Schaefer1971} and using \Cref{l: j'_s: BJ'(FF) to BJ^prime (s)(FF) is dense}, we get that $BJ'(\FF)$ is dense in $\bigcap_s\bar BJ^{\prime\,(s)}(\FF)$. Moreover, this intersection is closed in $J'(\FF)$. Then the stated equality is true.
\end{proof}

\begin{lem}\label{l: ZJ'(FF) = bigcap_s ZJ^prime (s)(FF)}
As vector spaces,
\[
ZJ'(\FF)=\bigcap_s ZJ^{\prime\,(s)}(\FF)\;.
\]
\end{lem}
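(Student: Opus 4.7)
The plan is to reduce the statement to the already-quoted fact that $J'(\FF)=\bigcap_s J^{\prime\,(s)}(\FF)$ as vector spaces (the $J'(\FF)$-analog of~\eqref{dot AA'(M) = bigcap_s dot AA^prime (s)(M) = bigcap_m dot AA^prime m(M)}, noted in \Cref{s: dual-conormal seq of leafwise diff forms}) together with the compatibility of $d_\FF$ with the restriction maps $j'_{s,s'}$, which in the case of $J'$ are continuous inclusions.

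For the inclusion $ZJ'(\FF)\subset\bigcap_s ZJ^{\prime\,(s)}(\FF)$: given $\alpha\in ZJ'(\FF)$, the identity $J'(\FF)=\bigcap_s J^{\prime\,(s)}(\FF)$ places $\alpha$ in every step $J^{\prime\,(s)}(\FF)$. Since $d_\FF$ on $J^{\prime\,(s)}(\FF)$ restricts to $d_\FF$ on $J'(\FF)$ (the maps $j'_s$ being cochain inclusions), the element $d_\FF\alpha\in J^{\prime\,(s-1)}(\FF)$ coincides with the image of $d_\FF\alpha=0$ from $J'(\FF)$, hence vanishes. Therefore $\alpha\in ZJ^{\prime\,(s)}(\FF)$ for every $s$.

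For the reverse inclusion: if $\alpha\in\bigcap_s ZJ^{\prime\,(s)}(\FF)$, then in particular $\alpha\in\bigcap_s J^{\prime\,(s)}(\FF)=J'(\FF)$. The element $d_\FF\alpha\in J'(\FF)\subset J^{\prime\,(s)}(\FF)$ computed in $J'(\FF)$ agrees with $d_\FF\alpha$ computed in any $J^{\prime\,(s)}(\FF)$ (again by compatibility of $d_\FF$ with the inclusions $j'_s$), which is zero by hypothesis. Hence $d_\FF\alpha=0$ in $J'(\FF)$, so $\alpha\in ZJ'(\FF)$.

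There is no essential obstacle here: both inclusions reduce to the bookkeeping that the differential $d_\FF$ is the same operator on all the spaces under consideration, once they are realized as nested subspaces via the set-theoretic identification $J'(\FF)=\bigcap_s J^{\prime\,(s)}(\FF)$. The only point that requires care is to note that although $d_\FF$ is first-order (so a priori maps $J^{\prime\,(s)}(\FF)$ into $J^{\prime\,(s-1)}(\FF)$ by the analog of~\eqref{A: I^prime [s](M L E) -> I^prime (s-m)(M L E)}), this causes no difficulty: the definition of $ZJ^{\prime\,(s)}(\FF)$ is simply $\{\alpha\in J^{\prime\,(s)}(\FF)\mid d_\FF\alpha=0\}$, and vanishing in $J^{\prime\,(s-1)}(\FF)$ is the same as vanishing in $J'(\FF)$ once $\alpha$ is known to lie in $J'(\FF)$.
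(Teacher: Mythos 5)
Your proof is correct and, in substance, matches the paper's. The paper organizes the same facts into a commutative diagram
\[
\begin{CD}
0 @>>> \bigcap_sZJ^{\prime\,(s)}(\FF) @>>> \bigcap_sJ^{\prime\,(s)}(\FF) @>{d_\FF}>> \bigcap_sBJ^{\prime\,(s)}(\FF) \\
&& @AAA @| @AAA \\
0 @>>> ZJ'(\FF) @>>> J'(\FF) @>{d_\FF}>> BJ'(\FF) @>>> 0
\end{CD}
\]
and invokes left-exactness of projective limits (\cite[Proposition~3.1.8]{Wengenroth2003}) to get exactness of the top row, then reads off the equality of the left-hand terms via the central identity $J'(\FF)=\bigcap_sJ^{\prime\,(s)}(\FF)$. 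You instead chase elements directly, which works precisely because the linking maps $j'_{s,s'}:J^{\prime\,(s)}(\FF)\hookrightarrow J^{\prime\,(s')}(\FF)$ are honest inclusions (as the paper records in \Cref{ss: K'(M L) and J'(M L)}), so the projective limit is a literal intersection of nested subspaces and the kernels can be compared pointwise. The paper's formulation is slightly more robust, since left-exactness of $\varprojlim$ needs no injectivity of the linking maps, but for these particular spaces the two routes collapse to the same computation. One small inaccuracy in your write-up: $d_\FF\in\Diff^1(\FF)$ is tangent to $M^0$, hence (via the analog of \Cref{ss: Diff(M) on AA'(M)}) is an endomorphism of each $J^{\prime\,(s)}(\FF)$ itself, not merely a map into $J^{\prime\,(s-1)}(\FF)$; the loss of one order occurs only for general first-order operators, not for b-differential ones. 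Your closing hedge on this point is therefore unnecessary, though harmless, since your argument never actually used the weaker target space.
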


\begin{proof}
Consider the commutative diagram
\[
\begin{CD}
0 @>>> \bigcap_sZJ^{\prime\,(s)}(\FF) @>>> \bigcap_sJ^{\prime\,(s)}(\FF) @>{d_\FF}>> \bigcap_sBJ^{\prime\,(s)}(\FF) \\
&& @AAA @| @AAA & \\
0 @>>> ZJ'(\FF) @>>> J'(\FF) @>{d_\FF}>> BJ'(\FF) @>>> 0\;.
\end{CD}
\]
Here, the central vertical equality is the analog~\eqref{dot AA'(M) = bigcap_s dot AA^prime (s)(M) = bigcap_m dot AA^prime m(M)}, the arrows that are not given by $d_\FF$ and do not go to $0$ denote inclusion maps, and the bottom row is exact. Since the surjective maps $d_\FF:J^{\prime\,(s)}(\FF)\to BJ^{\prime\,(s)}(\FF)$ form a homomorphism between projective spectra whose kernel is the projective spectrum consisting of the spaces $ZJ^{\prime\,(s)}(\FF)$, the top row is also exact \cite[Proposition~3.1.8]{Wengenroth2003}. Thus the left-hand-side vertical arrow is an equality of vector spaces.  
\end{proof}

\begin{prop}\label{p: hat jmath'_* is a linear iso}
The canonical map $\bar H^\bullet J'(\FF)\to\widehat H^\bullet J'(\FF)$ is a linear isomorphism. 
\end{prop}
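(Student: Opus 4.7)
The strategy is to realize the canonical map as the one induced on projective limits by the short exact sequences
\[
0 \to \bar BJ^{\prime\,(s)}(\FF) \to ZJ^{\prime\,(s)}(\FF) \to \bar H^\bullet J^{\prime\,(s)}(\FF) \to 0\;,
\]
and to argue that this exactness survives taking $\varprojlim$ over $s \uparrow +\infty$. By \Cref{l: ZJ'(FF) = bigcap_s ZJ^prime (s)(FF)} and \Cref{c: bar BJ'(FF) = bigcap_s bar BJ^prime (s)(FF)} the first two projective limits already coincide as vector spaces with $ZJ'(\FF)$ and $\bar BJ'(\FF)$, whose quotient is $\bar H^\bullet J'(\FF)$; exactness at the right will then give precisely the claim.

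Injectivity follows directly from \Cref{c: bar BJ'(FF) = bigcap_s bar BJ^prime (s)(FF)}: if $\overline{[\alpha]} \in \bar H^\bullet J'(\FF)$ maps to zero in each $\bar H^\bullet J^{\prime\,(s)}(\FF)$, then the representative $\alpha$ lies in $\bar BJ^{\prime\,(s)}(\FF)$ for every $s$, hence in $\bigcap_s \bar BJ^{\prime\,(s)}(\FF) = \bar BJ'(\FF)$, forcing $\overline{[\alpha]} = 0$. For surjectivity I would exploit \Cref{l: j'_s: BJ'(FF) to BJ^prime (s)(FF) is dense}: the transition $j'_{s,s'}: \bar BJ^{\prime\,(s)}(\FF) \to \bar BJ^{\prime\,(s')}(\FF)$ ($s' \le s$) has image containing $BJ'(\FF)$, which is dense in $BJ^{\prime\,(s')}(\FF)$ and hence in its closure. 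So the projective spectrum $\{\bar BJ^{\prime\,(s)}(\FF)\}$ has dense transitions, which, via the general Mittag-Leffler/${\varprojlim}^1$-vanishing results of \cite[Chapters~3.1--3.2]{Wengenroth2003} (already used in the proof of \Cref{l: ZJ'(FF) = bigcap_s ZJ^prime (s)(FF)}), guarantees that the projective limit of the above short exact sequences is still short exact. The resulting exactness
\[
0 \to \bar BJ'(\FF) \to ZJ'(\FF) \to \widehat H^\bullet J'(\FF) \to 0
\]
identifies the cokernel $\bar H^\bullet J'(\FF)$ with $\widehat H^\bullet J'(\FF)$ via the canonical map.

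The main obstacle is that the steps $J^{\prime\,(s)}(\FF)$ are bornological and barreled but not Fr\'echet, so the cleanest ${\varprojlim}^1$-vanishing statements do not apply verbatim. The likely workaround is to restrict to a countable cofinal subspectrum $s_k \uparrow +\infty$ and carry out the standard inductive construction by hand: given a compatible system $(\overline{[\alpha_{s_k}]})$ with cocycle representatives $\alpha_{s_k} \in ZJ^{\prime\,(s_k)}(\FF)$, use the density of $BJ'(\FF)$ in each $BJ^{\prime\,(s_k)}(\FF)$ to successively correct $\alpha_{s_{k+1}}$ by an element of $BJ'(\FF)$, forcing the corrected sequence to be Cauchy in the projective topology of $J'(\FF) = \bigcap_s J^{\prime\,(s)}(\FF)$. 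The limit lies in $ZJ'(\FF)$ by \Cref{l: ZJ'(FF) = bigcap_s ZJ^prime (s)(FF)} and, by construction, represents a class in $\bar H^\bullet J'(\FF)$ mapping onto the given system.
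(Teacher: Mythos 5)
Your proposal matches the paper's proof essentially step for step: both proofs pass the short exact sequences
\[
0 \to \bar BJ^{\prime\,(s)}(\FF) \to ZJ^{\prime\,(s)}(\FF) \to \bar H^\bullet J^{\prime\,(s)}(\FF) \to 0
\]
through $\varprojlim$, identify the two left-hand projective limits with $\bar BJ'(\FF)$ and $ZJ'(\FF)$ via \Cref{c: bar BJ'(FF) = bigcap_s bar BJ^prime (s)(FF)} and \Cref{l: ZJ'(FF) = bigcap_s ZJ^prime (s)(FF)}, use the density from \Cref{l: j'_s: BJ'(FF) to BJ^prime (s)(FF) is dense} to kill $\varprojlim^1\bar BJ^{\prime\,(s)}(\FF)$, and conclude by comparing with the tautological exact sequence for $\bar H^\bullet J'(\FF)$. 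Your injectivity argument is exactly what the bottom-to-top comparison in the paper's commutative diagram encodes. The one place you diverge is in flagging that the steps $J^{\prime\,(s)}(\FF)$ are not Fr\'echet; the paper handles this by invoking \cite[Theorem~3.2.1 and Corollary~3.1.5]{Wengenroth2003} directly, which is the route you should take rather than the manual Mittag--Leffler workaround you sketch. Your workaround has its own latent gap: you propose building a Cauchy sequence in the projective topology of $J'(\FF)=\bigcap_sJ^{\prime\,(s)}(\FF)$ and passing to a limit, but although $J'(\FF)$ is a complete Montel space, it is not claimed to be metrizable (the steps $J^{\prime\,(s)}(\FF)$ are DF-type duals of Fr\'echet spaces), so one would need to run the successive-correction estimates in a genuinely net-theoretic fashion or fall back on exactly the general $\varprojlim^1$-vanishing machinery you were trying to avoid. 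In short: your strategy is correct and is the paper's strategy; trust the cited Wengenroth results rather than re-deriving them by hand.
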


\begin{proof}
Consider the commutative diagram
\[
\begin{CD}
0 @>>> \bigcap_s\bar BJ^{\prime\,(s)}(\FF) @>>> \bigcap_sZJ^{\prime\,(s)}(\FF) 
@>>> \widehat H^\bullet J'(\FF) @>>>0\phantom{\;.} \\
&& @| @| @AAA & \\
0 @>>> \bar BJ'(\FF) @>>> ZJ'(\FF) @>>> \bar H^\bullet J'(\FF) @>>> 0\;.
\end{CD}
\]
Here,  \Cref{c: bar BJ'(FF) = bigcap_s bar BJ^prime (s)(FF),l: ZJ'(FF) = bigcap_s ZJ^prime (s)(FF)} give the vertical equalities of vector spaces, the vertical arrow is canonical, and the other maps are canonical; in particular, the bottom row is exact. \Cref{l: j'_s: BJ'(FF) to BJ^prime (s)(FF) is dense} also shows that every $BJ^{(s)}(\FF)$ is dense in $BJ^{(s')}(\FF)$ for $s'<s$. Hence the right derived functor $\varprojlim^1$ satisfies $\varprojlim^1\bar BJ^{(s)}(\FF)=0$ as $s\uparrow+\infty$ \cite[Theorem~3.2.1]{Wengenroth2003}, obtaining that the top row is also exact by \cite[Corollary~3.1.5]{Wengenroth2003}. Then the result follows.
\end{proof}

On the other hand, the kind of arguments that will be given in \Cref{s: short exact seq - dual-conormal} can be adapted to show the exactness of the sequence \index{$\widehat R'_*$} \index{$\hat\iota'_*$}
\begin{equation}\label{exact seq in the proj limit of reduced cohoms - dual-conormal}
0\leftarrow H^\bullet K'(\FF) \xleftarrow{\widehat R'_*} \widehat H^\bullet I'(\FF) 
\xleftarrow{\hat\iota'_*} \widehat H^\bullet J'(\FF) \leftarrow0\;,
\end{equation}
where $\widehat R'_*=\varprojlim\bar R'_{s*}$ and $\hat\iota'_*=\varprojlim\bar\iota'_{s*}$, using the homomorphisms induced by~\eqref{R'_s}. This fits into a commutative diagram
 \[
 \begin{CD}
 0 @<<< H^\bullet K'(\FF) @<<< \bar H^\bullet I'(\FF) @<<< \bar H^\bullet J'(\FF) @<<< 0\phantom{\;,} \\
&& @| @VVV @VV{\cong}V  \\
 0 @<<< H^\bullet K'(\FF) @<<< \widehat H^\bullet I'(\FF) @<<< \widehat H^\bullet J'(\FF) @<<<0\;,
 \end{CD}
 \]
where the top row is the exact sequence~\eqref{exact seq in reduced cohom - dual-conormal}, and the vertical arrows are canonical. The last vertical arrow is a linear isomorphism by \Cref{p: hat jmath'_* is a linear iso}. Then the central vertical arrow is also a linear isomorphism by the five lemma.

\section{Description of $H^\bullet K'(\FF)$}\label{s: bar H K'(FF)}

As explained in \Cref{s: dual-conormal seq of leafwise diff forms}, there is no loss of generality in assuming $\FF$ is oriented, and then we can apply~\eqref{(Lambda^v FF)^* otimes Omega M equiv Lambda^n' n''-v M}--\eqref{d_FF equiv (-1)^v+1 d_FF^t} to get $K^{(s)}(\FF;\Omega M)\equiv K^{(s)}(\FF;\Omega N\FF)$, where we can consider $d_\FF$ or $d_\FF^\trans$ using the leafwise flat structure of $\Omega N\FF$. 

Consider the notation of \Cref{s: HK(FF)}. Since $d\omega=\eta\wedge\omega$ and $d_\FF$ satisfies the derivation rule on products of smooth leafwise currents and smooth leafwise forms (\Cref{ss: leafwise currents}), it follows that the version of \Cref{p: K(Lambda FF) equiv bigoplus_k C^infty(M^0 Lambda)} with coefficients in $\Omega N\FF$ states that
\begin{equation}\label{K(FF Omega N FF) equiv bigoplus_k C^infty(M^0 Lambda)}
\left\{
\begin{gathered}
K(\FF;\Omega N\FF)\equiv\bigoplus_kC^\infty(M^0;\Lambda)
\equiv\bigoplus_kC^\infty(M^0;\Lambda\otimes\Omega^{-k}NM^0)\;,\\
d_\FF\equiv\bigoplus_kd_{-k}\equiv\bigoplus_kd\;,
\end{gathered}
\right.
\end{equation}
where $k$ runs in $\N_0$. Moreover the subcomplex $K^{(s)}(\FF;\Omega N\FF)\subset K(\FF;\Omega N\FF)$ corresponds to the finite direct sum with $k<-s-1/2$. Taking dual spaces and transposing maps, using~\eqref{C^-infty(M Lambda^r) equiv C^infty_c(M Lambda^n-r)'} and~\eqref{d_z equiv (-1)^k+1 d_-z^t}, we get the following consequence.

\begin{cor}\label{c: K'(Lambda FF) equiv prod_k C^-infty(M^0 Lambda)}
We have identities of topological complexes,
\begin{gather*}
K'(\FF)\equiv\prod_kC^{-\infty}(M^0;\Lambda)
\equiv\prod_kC^{-\infty}(M^0;\Lambda\otimes\Omega^kNM^0)\;,\\
d_\FF\equiv\prod_kd_k\equiv\prod_kd\;,
\end{gather*}
where $k$ runs in $\N_0$. Moreover the quotient complex $K^{\prime\,(s)}(\FF)$ corresponds to the finite direct sum with $k<s-1/2$.
\end{cor}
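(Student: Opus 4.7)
\bigskip

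\noindent\textbf{Proof plan.} The strategy is exactly the one signalled after the statement: apply~\eqref{K(FF Omega N FF) equiv bigoplus_k C^infty(M^0 Lambda)} to the ``bundle-twisted'' version of $K(\FF)$ and then take strong duals on both sides. Concretely, by the very definition recalled in \Cref{ss: K'(M L) and J'(M L)}, $K'(\FF)=K(M,M^0;(\Lambda\FF)^*\otimes\Omega M)'$. I will first use the canonical identification~\eqref{(Lambda^v FF)^* otimes Omega M equiv Lambda^n' n''-v M}, which gives $(\Lambda^v\FF)^*\otimes\Omega M\equiv\Lambda^{n''-v}\FF\otimes\Omega N\FF$, so that $K^{\prime\,v}(\FF)$ is realized as the strong dual of $K(\FF^{n''-v};\Omega N\FF)$.

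\smallskip

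The second step is to dualize the identity~\eqref{K(FF Omega N FF) equiv bigoplus_k C^infty(M^0 Lambda)}. The strong dual of a locally convex direct sum is the topological product of the duals (this is where the coproduct becomes a product), so I obtain
\[
K^{\prime\,v}(\FF)\equiv\prod_k C^\infty\bigl(M^0;\Lambda^{n''-v}\otimes\Omega^{-k}NM^0\bigr)'\;.
\]
Applying now the bundle version of~\eqref{C^-infty(M Lambda^r) equiv C^infty_c(M Lambda^n-r)'} (i.e.\ $C^\infty(M^0;E)'\equiv C^{-\infty}(M^0;E^*\otimes\Omega M^0)$) together with the identity $(\Lambda^{n''-v}M^0)^*\otimes\Omega M^0\equiv\Lambda^v M^0$ and $(\Omega^{-k}NM^0)^*\equiv\Omega^k NM^0$, the $k$-th factor becomes $C^{-\infty}(M^0;\Lambda^v\otimes\Omega^k NM^0)$, which is what the statement claims. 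Restoring the trivialization of $\Omega^k NM^0$ by $|\omega|^k$, as is done in \Cref{s: HK(FF)}, gives the second identity $\prod_k C^{-\infty}(M^0;\Lambda)$.

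\smallskip

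For the differential, I transpose the second identity of~\eqref{K(FF Omega N FF) equiv bigoplus_k C^infty(M^0 Lambda)}, which reads $d_\FF\equiv\bigoplus_k d_{-k}$ once the trivialization by $|\omega|^{-k}$ is in place. The identity~\eqref{d_z equiv (-1)^k+1 d_-z^t} on $M^0$ then yields $d_{-k}^\trans\equiv\pm d_k$ on the dual space $C^{-\infty}(M^0;\Lambda)$, and the signs are absorbed into the canonical identification $C^\infty(M^0;\Lambda)'\equiv C^{-\infty}(M^0;\Lambda)$ coming from~\eqref{C^-infty(M Lambda^r) equiv C^infty_c(M Lambda^n-r)'}, so the product differential becomes exactly $\prod_k d_k$. (Equivalently, without the trivialization, the differentials are the de~Rham operators on $\Lambda M^0\otimes\Omega^k NM^0$ induced by the Bott flat structure, giving the ``$\prod_k d$'' form.)

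\smallskip

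For the Sobolev-order filtration, \Cref{p: K(Lambda FF) equiv bigoplus_k C^infty(M^0 Lambda)} tells us that $K^{(-s)}(\FF^{n''-v};\Omega N\FF)$ is the finite direct sum over $k<s-1/2$; since~\eqref{K^prime[s](M L)} defines $K^{\prime\,(s)}(\FF)=K^{(-s)}(\FF;\Omega M)'$, and since the strong dual of a finite direct sum is the same finite direct sum of duals, the final assertion about $K^{\prime\,(s)}(\FF)$ being the finite sum over $k<s-1/2$ follows, and the quotient map $K'(\FF)\to K^{\prime\,(s)}(\FF)$ is the obvious projection in the product. The only step that requires any care is the bookkeeping in the middle paragraph—keeping track of the duality between $\Omega^{-k}NM^0$ and $\Omega^k NM^0$ and the sign shift $k\mapsto -k$ in the Witten perturbation—but once those identifications are written down, everything is a formal computation.
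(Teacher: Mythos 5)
Your proof is correct and follows the same route the paper takes: establish the $\Omega N\FF$-twisted version of \Cref{p: K(Lambda FF) equiv bigoplus_k C^infty(M^0 Lambda)} (i.e.\ \eqref{K(FF Omega N FF) equiv bigoplus_k C^infty(M^0 Lambda)}), then pass to strong duals, using \eqref{C^-infty(M Lambda^r) equiv C^infty_c(M Lambda^n-r)'} to identify $C^\infty(M^0;\cdot)'$ with $C^{-\infty}(M^0;\cdot)$ and \eqref{d_z equiv (-1)^k+1 d_-z^t} to recognize $d_{-k}^\trans$ as $d_k$ up to a degree-dependent sign absorbed in the identification. You simply spell out the graded bundle bookkeeping more explicitly than the paper does; the two arguments are the same.
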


\begin{cor}\label{c: H^bullet K'(FF) equiv prod_k H^bullet_k(M^0)}
We have TVS-identities,
\[
H^\bullet K'(\FF)\equiv\prod_kH_k^\bullet(M^0)
\equiv\prod_kH^\bullet(M^0,\Omega^kNM^0)\;,
\]
where $k$ runs in $\N_0$. Moreover $H^\bullet K^{\prime\,(s)}(\FF)$ is the quotient space of $H^\bullet K'(\FF)$ given by the finite product with $k<s-1/2$. In particular,~\eqref{H^bullet K'(FF) equiv bar H^bullet K'(FF)} is satisfied.
\end{cor}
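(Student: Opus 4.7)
The strategy is to apply the cohomology functor directly to the topological complex identification of \Cref{c: K'(Lambda FF) equiv prod_k C^-infty(M^0 Lambda)}, namely $(K'(\FF),d_\FF)\equiv\prod_k(C^{-\infty}(M^0;\Lambda),d_k)$. The main point is that cohomology commutes with topological products in this setting, so that $H^\bullet K'(\FF)\equiv\prod_k H^\bullet(C^{-\infty}(M^0;\Lambda),d_k)$. Each factor complex is elliptic and defined over the closed manifold $M^0$, so by the general theory of elliptic complexes on closed manifolds (\Cref{ss: diff complexes}, in particular the canonical isomorphism $H^\bullet(C^\infty(M;E),d)\cong H^\bullet(C^{-\infty}(M;E),d)$), the cohomology $H^\bullet(C^{-\infty}(M^0;\Lambda),d_k)$ coincides with the twisted cohomology $H^\bullet_k(M^0)$ introduced in \Cref{ss: Witten's complex}, giving the first identity.

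For the second identity, I would use that under the identification recalled at the start of \Cref{s: HK(FF)}, the trivialization $|\omega|^k$ of the flat line bundle $\Omega^k NM^0$ (which is available since $d\omega=\eta\wedge\omega$ on a neighborhood of $M^0$, see item \ref{i-(A): d omega = eta wedge omega} of \Cref{ss: globalization}) converts the Witten differential $d_k$ into the de Rham differential $d^{\Omega^k NM^0}$ with values in $\Omega^k NM^0$, so that $H^\bullet_k(M^0)\equiv H^\bullet(M^0,\Omega^k NM^0)$ as TVSs, factor by factor.

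For the last two assertions, observe that since $M^0$ is a finite disjoint union of compact leaves, every $H^\bullet_k(M^0)$ is finite-dimensional (\Cref{ss: Witten's complex}), hence Hausdorff. Consequently $\prod_k H^\bullet_k(M^0)$ is Hausdorff (products of Hausdorff spaces are Hausdorff in the product topology), which forces $H^\bullet K'(\FF)\equiv\bar H^\bullet K'(\FF)$, i.e.\ $\overline{0}=0$ in $H^\bullet K'(\FF)$, yielding the first two identities of~\eqref{H^bullet K'(FF) equiv bar H^bullet K'(FF)}. For the statement on $H^\bullet K^{\prime\,(s)}(\FF)$, \Cref{c: K'(Lambda FF) equiv prod_k C^-infty(M^0 Lambda)} realizes the quotient $K^{\prime\,(s)}(\FF)$ as the finite subproduct with $k<s-1/2$; since the quotient map $K'(\FF)\to K^{\prime\,(s)}(\FF)$ is a cochain map of topological complexes splitting along the product decomposition, passing to cohomology yields $H^\bullet K^{\prime\,(s)}(\FF)\equiv\prod_{k<s-1/2}H^\bullet_k(M^0)$, finite-dimensional and Hausdorff, and the projective limit description in~\eqref{H^bullet K'(FF) equiv bar H^bullet K'(FF)} follows.

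The main obstacle I expect is verifying carefully that cohomology commutes with the topological product, i.e.\ that the kernel of $\prod_k d_k$ is the product of the kernels (immediate) \emph{and} that its image is the product of the images as TVSs. The latter uses that each $d_k:C^{-\infty}(M^0;\Lambda)\to C^{-\infty}(M^0;\Lambda)$ has closed image with finite-dimensional cokernel $H^\bullet_k(M^0)$ (by ellipticity on the closed manifold $M^0$); this closedness is precisely what lets one invoke the exactness of $\prod$ on short exact sequences of LCSs with closed image, so that $H^\bullet\prod_k=\prod_k H^\bullet$ holds as TVSs and not merely as vector spaces.
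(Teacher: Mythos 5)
Your proposal is correct and is essentially the argument the paper leaves implicit: the corollary is derived by passing to cohomology in the topological-complex identity of Corollary~\ref{c: K'(Lambda FF) equiv prod_k C^-infty(M^0 Lambda)}, using that cohomology commutes with countable topological products when each factor complex has closed differential image — which holds here because each $(C^{-\infty}(M^0;\Lambda),d_k)$ is an elliptic complex on the closed manifold $M^0$. You also correctly identify the finite-dimensionality/Hausdorffness point needed for $H^\bullet=\bar H^\bullet$ and the factor-wise trivialization by $|\omega|^k$ giving the second identity, so no discrepancies with the paper's approach.
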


\begin{rem}\label{r: prod_L}
The differential complexes on $M^0$ used in \Cref{c: K'(Lambda FF) equiv prod_k C^-infty(M^0 Lambda)} split into direct sums of the same complexes given by leaves $L\subset M^0$. The same applies to their cohomologies, used in Corollary~\ref{c: H^bullet K'(FF) equiv prod_k H^bullet_k(M^0)}.
\end{rem}

\begin{cor}\label{c: H^bullet K'(FF) equiv H^bullet(M^0) oplus H^n''-bullet K'(FF)'}
There is a canonical TVS-isomorphism,
\[
H^\bullet K'(\FF)\equiv H^\bullet(M^0)\oplus H^{n''-\bullet}K(\FF)'\;,
\]
\end{cor}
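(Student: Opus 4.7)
The identity will be obtained by combining Corollaries \ref{c: HK(FF) equiv bigoplus_k H_-k-1(M^0)} and \ref{c: H^bullet K'(FF) equiv prod_k H^bullet_k(M^0)} with Poincar\'e duality for the Witten complex on the closed $n''$-manifold $M^0$. First I would rewrite
\[
H^\bullet K'(\FF)\equiv\prod_{k\ge 0}H_k^\bullet(M^0)
\equiv H^\bullet(M^0)\oplus\prod_{k\ge 1}H_k^\bullet(M^0),
\]
using that $d_0=d$ so that $H_0^\bullet(M^0)\equiv H^\bullet(M^0)$. This already splits off the first summand of the target identity, so the remaining task is a canonical TVS-identity
\[
\prod_{k\ge 1}H_k^\bullet(M^0)\equiv H^{n''-\bullet}K(\FF)'.
\]

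For this second step I would use that, by Corollary \ref{c: HK(FF) equiv bigoplus_k H_-k-1(M^0)}, $H^{n''-\bullet}K(\FF)$ is a topological direct sum of the finite-dimensional Witten cohomologies $H_{-k-1}^{n''-\bullet}(M^0)$ for $k\ge 0$; hence its strong dual is canonically the topological product $\prod_{k\ge 0}H_{-k-1}^{n''-\bullet}(M^0)'$. After the reindexing $k+1\mapsto k$, it then suffices to construct a canonical TVS-identity
\[
H_k^\bullet(M^0)\equiv H_{-k}^{n''-\bullet}(M^0)'\qquad(k\ge 1),
\]
which is exactly the Poincar\'e-duality pairing for the Witten complex recalled in \eqref{H_z^k(M) times H_-z^n-kM) to C} (used with the real parameter $z=k$, on each connected component of $M^0$). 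Both sides are finite-dimensional, so nondegeneracy of the pairing automatically upgrades to a TVS-isomorphism.

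The main obstacle will be the careful bookkeeping of orientations. The pairing of \eqref{H_z^k(M) times H_-z^n-kM) to C} is stated for oriented closed manifolds, whereas the leaves in $M^0$ need not be orientable even though $\FF$ is transversely oriented. To handle this I would pass to the equivalent ``density-twisted'' presentations $H^\bullet(M^0;\Omega^jNM^0)$ of Corollaries \ref{c: HK(FF) equiv bigoplus_k H_-k-1(M^0)} and \ref{c: H^bullet K'(FF) equiv prod_k H^bullet_k(M^0)}: the transverse orientation of $\FF$ gives a canonical trivialization of every $\Omega^jNM^0$ as a line bundle (while the flat connection still depends on $j$ via $\eta$), and the leafwise identity $\Omega L\otimes\Omega NL\equiv\Omega M|_L$ rewrites the twisted Poincar\'e pairing $H^r(L;E)\times H^{n''-r}(L;E^*\otimes\Omega L)\to\C$ with $E=\Omega^kNL$ in the form $H^r(L;\Omega^kNL)\times H^{n''-r}(L;\Omega^{-k-1}NL\otimes\Omega M|_L)\to\C$. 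Checking that this reshuffling is canonical, compatible with the $d_{\FF}$-action already inherited from Corollaries \ref{c: HK(FF) equiv bigoplus_k H_-k-1(M^0)} and \ref{c: H^bullet K'(FF) equiv prod_k H^bullet_k(M^0)}, and matches the bilinear extension of $\eqref{extension of exterior product to currents}$ between $K(\FF)$ and $K'(\FF)$, is essentially the only nontrivial content of the proof once the two Corollaries are in place; finite dimensionality of each factor then trivially promotes the algebraic isomorphism to the claimed TVS-identity.
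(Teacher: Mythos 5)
Your argument is exactly the paper's: the proof in the text reads ``Apply Corollaries~\ref{c: HK(FF) equiv bigoplus_k H_-k-1(M^0)} and~\ref{c: H^bullet K'(FF) equiv prod_k H^bullet_k(M^0)}, and~\eqref{H_z^k(M) times H_-z^n-kM) to C},'' and your splitting off of $H_0^\bullet(M^0)\equiv H^\bullet(M^0)$, dualization of the finite-dimensional direct sum into a product, application of the pairing $H_z^r\times H_{-z}^{n''-r}\to\C$ on each leaf, and the reindexing $k\mapsto k+1$ is precisely what that citation amounts to. The orientability worry you raise is legitimate to notice but already dispatched at the start of \Cref{s: bar H K'(FF)}, where the paper fixes ``there is no loss of generality in assuming $\FF$ is oriented'' (via \Cref{r: coefficients in Omega M}); once $\FF$ is tangentially oriented, every leaf $L\subset M^0$ is oriented and the pairing of~\eqref{H_z^k(M) times H_-z^n-kM) to C} applies directly, so your proposed density-bundle detour is unnecessary.
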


\begin{proof}
Apply \Cref{c: HK(FF) equiv bigoplus_k H_-k-1(M^0),c: H^bullet K'(FF) equiv prod_k H^bullet_k(M^0)}, and~\eqref{H_z^k(M) times H_-z^n-kM) to C}.
\end{proof}

\section{Description of $\bar H^\bullet J'(\FF)$}\label{s: description of bar H^bullet J'(FF)}

Like in \Cref{s: description of bar H^bullet J(FF)}, by~\eqref{J^prime m(M L) cong ...} and~\eqref{J'(M L) cong ...}, for $m\in\R$,
\begin{gather}
J^{\prime\,m}(\FF)\cong\bfrho^m\Hb^{-\infty}(\bfM;\Lambda\bfFF)
\equiv\bfrho^{m-\frac12}H^{-\infty}(\mathring\bfM;\Lambda\mathring\bfFF)\;,
\label{J^prime m(FF) cong ...}\\
J'(\FF)\cong\bigcap_m\bfrho^m\Hb^{-\infty}(\bfM;\Lambda\bfFF)
=\bigcap_m\bfrho^mH^{-\infty}(\mathring\bfM;\Lambda\mathring\bfFF)\;,
\label{J'(FF) cong ...}
\end{gather}
as topological complexes with $d_\FF$, $d_{\bfFF}$ and $d_{\mathring\bfFF}$, using the b-metric $\bfg$ to define $\Hb^{-\infty}(\bfM;\Lambda\bfFF)$, and using $\bfg|_{\mathring\bfM}$ to define $H^{-\infty}(\mathring\bfM;\Lambda\mathring\bfFF)$. The leafwise version of~\eqref{Witten's opers} (\Cref{s: Witten's opers on Riem folns of bd geom}) also gives isomorphisms of topological complexes,
\begin{equation}\label{bfrho^-m+frac12 - dual-conormal}
\bfrho^{-m+\frac12}:\big(\bfrho^{m-\frac12}H^\infty(\mathring\bfM;\Lambda\mathring\bfFF),d_{\mathring\bfFF}\big)\xrightarrow{\cong}\big(H^\infty(\mathring\bfM;\Lambda\mathring\bfFF),d_{\mathring\bfFF,m-\frac12}\big)\;.
\end{equation}

By~\eqref{J^prime m(FF) cong ...} and~\eqref{bfrho^-m+frac12 - dual-conormal}, and the analog of~\eqref{leafwise Hodge iso} for $\Delta_{\mathring\bfFF,m-\frac12}$ in $H^{-\infty}(\mathring\bfM;\Lambda\mathring\bfFF)$ (\Cref{s: Witten's opers on Riem folns of bd geom}), we get induced TVS-isomorphisms
\begin{align}
\bar H^\bullet J^{\prime\,m}(\FF)
&\cong\bar H^\bullet\big(\bfrho^{m-\frac12}H^{-\infty}(\mathring\bfM;\Lambda\mathring\bfFF),d_{\mathring\bfFF}\big)
\label{bar H^bullet J^prime m(FF) cong bar H^bullet(...)}\\
&\cong\bar H^\bullet\big(H^{-\infty}(\mathring\bfM;\Lambda\mathring\bfFF),d_{\mathring\bfFF,m-\frac12}\big)
\label{bar H^bullet(...) cong bar H^bullet(...) - dual-conormal}\\
&\cong\ker\Delta_{\mathring\bfFF,m-\frac12}\;.\label{bar H^bullet J^prime m(FF) cong ker ...}
\end{align}
By the analog of~\eqref{bar H^bullet I'(FF) cong projlim bar H^bullet I^(prime s)(FF)} for $J'(\FF)$ and~\eqref{bar H^bullet J^prime m(FF) cong ker ...}, the LCHS $\bar H^\bullet J(\FF)$ is a projective limit of a sequence of Hilbertian spaces, and therefore a Fr\'echet space. The isomorphisms~\eqref{bar H^bullet J^prime m(FF) cong bar H^bullet(...)} and~\eqref{bar H^bullet(...) cong bar H^bullet(...) - dual-conormal} are also true in cohomology.

\Cref{t: intro - bar H^bullet J'(FF)} follows from the analog of~\eqref{bar H^bullet I'(FF) cong projlim bar H^bullet I^(prime s)(FF)} for $J'(\FF)$ and~\eqref{J^prime m(FF) cong ...}--\eqref{bfrho^-m+frac12 - dual-conormal}.

\section{Short exact sequence of dual-conormal reduced cohomology}
\label{s: short exact seq - dual-conormal}

The goal of this section is to prove the exactness of~\eqref{exact seq in reduced cohom - dual-conormal}. Some remarks will indicate how to adapt the proof to show also the exactness of~\eqref{exact seq in cohom - dual-conormal} and~\eqref{exact seq in the proj limit of reduced cohoms - dual-conormal}.

\subsection{The maps $F'_m$}\label{ss: F'_m}

For every $m\in\R$, let \index{$F'_m$}
\[
F'_m=E_{-m}^\trans:I^{\prime\,(s)}(\FF)\to J^{\prime\,m}(\FF)\;,
\]
where $s=0$ if $m\le0$, and $m<s\in\Z^+$ if $m>0$, where
\[
E_{-m}:J^{-m}(\FF;\Omega M)\to I^{(-s)}(\FF;\Omega M)
\]
is given by the version of \Cref{c: E_m d_FF = d_FF E_m} with $\Omega M$ (see \Cref{r: coefficients in Omega M}); thus
\begin{equation}\label{F'_m d_FF = d_FF F'_m}
F'_md_\FF=d_\FF F'_m\;.
\end{equation}
Since $s\ge m,0$, the map $j'_{s,m}$ is defined, and we have
\begin{equation}\label{F'_m iota'_s = j'_s m}
F'_m\iota'_s=E_{-m}^\trans R_{-s}^\trans=(R_{-s}E_{-m})^\trans=j_{-m,-s}^\trans=j'_{s,m}\;.
\end{equation}

\subsection{The maps $E'_m$}\label{ss: E'_m}

For $s\in\R$ and $m>s+n/2+1$, let \index{$E'_m$}
\[
E'_m=F_{-m}^\trans:K^{\prime\,(s')}(\FF)\to I^{\prime\,(s)}(\FF)\;,
\]
where $s'=0$ if $m\le0$, and $m<s'\in\Z^+$ if $m>0$. Here, we use the map
\[
F_{-m}:I^{(-s)}(\FF;\Omega M)\to K^{(-s')}(\FF;\Omega M)
\]
given be the version of \Cref{ss: F_m} with coefficients in $\Omega M$ (\Cref{r: F_m with Omega M}). Consider
\begin{equation}\label{F'_m: I^prime (s')(FF) to J^prime (s)(FF)}
F'_m=j'_{m,s}F'_m:I^{\prime\,(s')}(\FF)\to J^{\prime\,(s)}(\FF)\;,
\end{equation}
which is the transpose of the version of~\eqref{E_m: J^(s)(FF) to I^(s')(FF)} with coefficients in $\Omega M$,
\[
E_{-m}:J^{(-s)}(\FF;\Omega M)\to I^{(-s')}(\FF;\Omega M)\;.
\]
Then~\eqref{F'_m iota'_s = j'_s m} becomes
\begin{equation}\label{F'_m iota'_s = j'_s s'}
F'_m\iota'_s=j'_{s,s'}\;.
\end{equation}
Transposing the versions of~\eqref{E_m R_s + iota_s' F_m = j_s s'}--\eqref{F_m d_FF = d_FF F_m} with coefficients in $\Omega M$, we get
\begin{gather}
\iota'_sF'_m+E'_mR'_{s'}=j'_{s',s}:I^{\prime\,(s')}(\FF)\to I^{\prime\,(s)}(\FF)\;,\label{iota'_s' F'_m + E'_m R'_s = j'_s' s}\\
R'_sE'_m=j'_{s',s}:K^{\prime\,(s')}(\FF)\to K^{\prime\,(s)}(\FF)\;,\label{R'_s E'_m = j'_s' s}\\
E'_m d_\FF=d_\FF E'_m\;.\label{E'_m d_FF = d_FF E'_m}
\end{gather}

Take greater numbers, $s_1>s$, $m_1>m$ and $s'_1>s'$, satisfying the same inequalities as $s$, $m$ and $s'$. Using~\eqref{F'_m: I^prime (s')(FF) to J^prime (s)(FF)}, the transposition of the versions of~\eqref{j_s' s'_1 E_m = E_m_1 j_s s_1} and~\eqref{j_s' s'_1 F_m = F_m_1 j_s s_1} with coefficients in $\Omega M$ give
\begin{align}
F'_mj'_{s'_1,s'}&=j'_{s_1,s}F'_{m_1}\;,\label{F'_m j'_s'_1 s' = j'_s_1 s F'_m_1}\\
E'_mj'_{s'_1,s'}&=j'_{s_1,s}E'_{m_1}\;.\label{E'_m j'_s'_1 s' = j'_s_1 s E'_m_1}
\end{align}

\subsection{The equality $\ker\bar R'_*=\im\bar\iota'_*$}\label{ss: ker bar R'_* = im bar iota'_*}

We already know that $\ker\bar R'_*\supset\im\bar\iota'_*$. To prove $\ker\bar R'_*\subset\im\bar\iota'_*$, take any class $\overline{[u]}\in\ker\bar R'_*$ in $\bar H^\bullet I'(\FF)$. Thus there is some net $v_l$ in $K'(\FF)$ such that $R'u=\lim_ld_\FF v_l$ in $K'(\FF)$. Write $u_s=j'_su\in I^{\prime\,(s)}(\FF)$ and $v_{l,s}=j'_sv_l\in K^{\prime\,(s)}(\FF)$. Take $s$, $m$ and $s'$ satisfying the the conditions of \Cref{ss: E'_m}, obtaining $E'_m:K^{\prime\,(s')}(\FF)\to I^{\prime\,(s)}(\FF)$ and $F'_m:I^{\prime\,(s')}(\FF)\to J^{\prime\,(s)}(\FF)$. Let $a_s=F'_mu_{s'}\in J^{\prime\,(s)}(\FF)$ and $b_{l,s}=E'_mv_{l,s'}\in I^{\prime\,(s)}(\FF)$. By~\eqref{F'_m d_FF = d_FF F'_m},
\[
d_\FF a_s=F'_md_\FF u_{s'}=0\;.
\]
Moreover, by~\eqref{iota'_s' F'_m + E'_m R'_s = j'_s' s} and~\eqref{E'_m d_FF = d_FF E'_m},
\begin{align*}
u_s&=j'_{s',s}u_{s'}=\iota'_sF'_mu_{s'}+E'_mR'_{s'}u_{s'}\\
&=\iota'_sa_s+\lim_lE'_md_\FF v_{l,s'}
=\iota'_sa_s+\lim_ld_\FF b_{l,s}\;.
\end{align*}

Now consider the above notation for greater real numbers $s_1$, $m_1$ and $s'_1$, satisfying the same properties as $s$, $m$ and $s'$. By~\eqref{F'_m j'_s'_1 s' = j'_s_1 s F'_m_1} and~\eqref{E'_m j'_s'_1 s' = j'_s_1 s E'_m_1},
\begin{gather*}
j'_{s_1,s}a_{s_1}=j'_{s_1,s}F'_{m_1}u_{s'_1}=F'_{m}j'_{s'_1,s'}u_{s'_1}=F'_{m}u_{s'}=a_s\;.\\
j'_{s_1,s}b_{l,s_1}=j'_{s_1,s}E'_{m_1}v_{l,s'_1}=E'_{m}j'_{s'_1,s'}v_{l,s'_1}=E'_{m}v_{l,s'}=b_{l,s}\;.
\end{gather*}
Therefore, taking $s\uparrow+\infty$, $m\uparrow+\infty$ and $s'\uparrow+\infty$, satisfying the above relations, the elements $a_s\in J^{\prime\,(s)}(\FF)$ and $b_{l,s}\in I^{\prime\,(s)}(\FF)$ define elements $a:=(a_s)_s\in ZJ'(\FF)$ and $b_l:=(b_{l,s})_s\in I'(\FF)$, and we have $u=\iota'a+\lim_ld_\FF b_l$. Hence $\overline{[u]}=\bar\iota_*(\overline{[a]})$.

\begin{rem}\label{r: ker R'_* = im iota'_*}
A similar argument, taking an element $v\in K'(\FF)$ instead of a net $v_l$, shows the inclusion $\ker R'_*=\im\iota'_*$ in $H^\bullet I'(\FF)$.
\end{rem}

\begin{rem}\label{r: ker widehat R'_* = im hat iota'_*}
As before, to prove $\ker\widehat R'_*=\im\hat\iota'_*$ in~\eqref{exact seq in the proj limit of reduced cohoms - dual-conormal}, we only have to prove ``$\subset$''. For any $\hat u:=(\overline{[u_s]}_s)_s\in\ker\widehat R'_*$, there is some $v\in K'(\FF)$ such that $R'_su_s=d_\FF v_s$, where $v_s=j'_sv$. Moreover, $j'_{s',s}u_{s'}=u_s+\lim_ld_\FF g_{l,s,s'}$ for some net $g_{l,s,s'}$ in $I^{\prime\,(s)}(\FF)$. Take $a_s$ and $b_{s,l}$ as above. The given argument shows that
\begin{gather*}
d_\FF a_s=0\;,\quad u_s+\lim_ld_\FF g_{l,s',s}=\iota'_sa_s+\lim_ld_\FF b_{l,s}\;,\\
j'_{s_1,s}a_{s_1}=a_s+\lim_ld_\FF F'_mg_{l,s'_1,s'}\;,\quad j'_{s_1,s}b_{l,s_1}=b_{l,s}\;.
\end{gather*}
Hence $\hat a:=(\overline{[a_s]}_s)_s\in\widehat H^\bullet J'(\FF)$ is defined and $\hat\iota'_*(\hat a)=\hat u$.
\end{rem}

\subsection{Injectivity of $\bar\iota'_*$}\label{ss: ker bar iota'_* = 0}

Let $\overline{[u]}\in\bar H^rJ'(\FF)$ such that $\bar\iota'_*(\overline{[u]})=0$. This means that there is a net $v_l$ in $I'(\FF)$ such that $\iota'u=\lim_ld_\FF v_l$ in $I'(\FF)$. Write $u_s=j'_su\in ZK^{\prime\,(s)}(\FF)$ and $v_{l,s}=j'_sv_l\in I^{\prime\,(s)}(\FF)$. With the notation of \Cref{ss: ker bar R'_* = im bar iota'_*}, let $b_{l,s}=F'_mv_{l,s'}\in J^{\prime\,(s)}(\FF)$. By~\eqref{F'_m d_FF = d_FF F'_m} and~\eqref{F'_m iota'_s = j'_s s'},
\[
u_s=j'_{s',s}u_{s'}=F'_m\iota'_{s'}u_{s'}=\lim_lF'_md_\FF v_{l,s'}=\lim_ld_\FF b_{l,s}\;.
\]

Like in \Cref{ss: ker bar R'_* = im bar iota'_*}, it can be shown that, taking $s\uparrow+\infty$, $m\uparrow+\infty$ and $s'\uparrow+\infty$ as above, the elements $b_{l,s}\in J^{\prime\,(s)}(\FF)$ define elements $b_l:=(b_{l,s})_s\in J'(\FF)$, and we have $u=\lim_ld_\FF b_l$. Thus $\overline{[u]}=0$ in $\bar H^rJ'(\FF)$.

\begin{rem}\label{r: ker iota'_* = 0}
Like in \Cref{r: ker R'_* = im iota'_*}, we also get the injectivity of $\iota'_*$.
\end{rem}

\begin{rem}\label{r: ker hat iota'_* = 0}
To prove the injectivity of $\hat\iota'_*$, take any $\hat u:=(\overline{[u_s]}_s)_s\in\ker\hat\iota'_*$ in~\eqref{exact seq in the proj limit of reduced cohoms - dual-conormal}. Then there is some net $v_{l,s}$ in every $I^{\prime\,(s)}(\FF)$ such that $\iota'_su_s=\lim_ld_\FF v_{l,s}$ in $I^{\prime\,(s)}(\FF)$. Moreover, $j'_{s',s}u_{s'}=u_s+\lim_ld_\FF g_{l,s,s'}$ for some net $g_{l,s,s'}$ in $J^{\prime\,(s)}(\FF)$. Take $b_{l,s}$ as before. The above argument shows that 
\[
u_s+\lim_ld_\FF g_{l,s',s}=\lim_ld_\FF b_{l,s}\;.
\]
So $\hat u=0$ in $\widehat H^\bullet J'(\FF)$.
\end{rem}

\subsection{Surjectivity of $\bar R'_*$}\label{ss: surjectivity of bar R'_*}

Take any $[u]\in H^\bullet K(\FF)$, and write $u_s=j'_su\in ZK^{(s)}(\FF)$. With the notation of \Cref{ss: E'_m}, we have $v_s:=E'_mv_{s'}\in ZI^{(s')}(\FF)$ by~\eqref{E'_m d_FF = d_FF E'_m}, and $R'_sv_s=j'_{s',s}u_{s'}=u_s$ by~\eqref{R'_s E'_m = j'_s' s}.

Now consider the above notation for greater real numbers $s_1$, $m_1$ and $s'_1$, satisfying the same properties as $s$, $m$ and $s'$. By~\eqref{E'_m j'_s'_1 s' = j'_s_1 s E'_m_1},
\[
j'_{s_1,s}v_{s_1}=j'_{s_1,s}E'_{m_1}v_{s'_1}=E'_mj'_{s'_1,s'}v_{s'_1}=E'_mv_{s'}=v_s\;.
\]
So $v:=(v_s)_s\in ZI'(\FF)$ satisfies $R'v=u$, and therefore $\bar R'_*(\overline{[v]})=\overline{[u]}$.

\begin{rem}\label{r: surjectivity of R'_*}
Using cohomology instead of reduced cohomology, the analogous argument gives the surjectivity of $R'_*$.
\end{rem}

\begin{rem}\label{r: surjectivity of widehat R'_*}
To prove the surjectivity of $\widehat R'_*$ in~\eqref{exact seq in the proj limit of reduced cohoms - dual-conormal}, for any $[u]\in H^\bullet K(\FF)$, define $u_s$ and $v_s$ as above. We also have $R'_sv_s=u_s$ and $j'_{s_1,s}v_{s_1}=v_s$. Thus $\hat v:=(\overline{[v_s]}_s)_s\in\widehat H^\bullet I'(\FF)$ and $\widehat R'_*\hat v=[u]$.
\end{rem}

\section{Functoriality and leafwise homotopy invariance}\label{s: functor and leafwise homotopy inv - dual-conormal}

\subsection{Pull-back of dual-conormal leafwise currents}\label{ss: pull-back of dual-conormal leafwise currents}

Consider the notation and conditions of \Cref{ss: push-forward of conormal leafwise currents} (including the conditions of \Cref{ss: pull-back of leafwise conormal currents}). According to \Cref{ss: pull-back of dual-conormal distributions}, the map~\eqref{phi^* on leafwise forms} has a continuous extension
\begin{equation}\label{phi^*: I'(FF) to I'(FF')}
\phi^*:I'(\FF)\to I'(\FF')\;,
\end{equation}
defined as the composition
\[
I'(\FF) \xrightarrow{\phi^*} I'(M',M^{\prime\,0};\phi^*\Lambda\FF)
\xrightarrow{\phi^*} I'(\FF')\;,
\]
like~\eqref{pull-back - decomposition - conormal - leafwise}, using~\eqref{phi_*: I'_c/cv(M' L' phi^*E otimes Omega_fiber) -> I'_c/.(M L E)} with $E=\Lambda\FF$. We can also describe~\eqref{phi^*: I'(FF) to I'(FF')} as the restriction of~\eqref{phi_*: I_c/cv(M' L' Lambda) to I_c/.(M L Lambda)} to dual-conormal currents of bidegree $(0,\bullet)$, like in~\eqref{phi_*: C^pm infty_c/cv(M' Lambda FF') to C^pm infty_c/.(M Lambda FF)}. The map~\eqref{phi^*: I'(FF) to I'(FF')} is also a restriction of~\eqref{phi_*: C^pm infty_c/cv(M' Lambda FF') to C^pm infty_c/.(M Lambda FF)}. 

~\eqref{phi_*: I'_c/cv(M' L' phi^*E otimes Omega_fiber) -> I'_c/.(M L E)} with $E=\Lambda\FF$

Similarly, the analogs of~\eqref{phi^*: I(M L E) -> I(M' L' phi^*E)} with $E=\Lambda\FF$ for~\eqref{phi^*: K'(M L) to K'(M' L')} and~\eqref{phi^*: J'(M L) to J'(M' L')} induce continuous homomorphisms
\begin{gather}
\phi^*:K'(\FF)\to K'(\FF')\;,\label{phi^*: K'(FF) to K'(FF')}\\
\phi^*:J'(\FF)\to J'(\FF')\;.\label{phi^*: J'(FF) to J'(FF')}
\end{gather}
By passing to cohomology and reduced cohomology, we get continuous homomorphisms,
\begin{equation}\label{phi^*: HK'(FF) to HK'(FF')}
\left\{
\begin{gathered}
\phi^*:H^\bullet K'(\FF)\to H^\bullet K'(\FF')\;,\\
\begin{alignedat}{2}
\phi^*:&H^\bullet I'(\FF)\to H^\bullet I'(\FF')\;,&\quad\phi^*:&\bar H^\bullet I'(\FF)\to\bar H^\bullet I'(\FF')\;,\\
\phi^*:&H^\bullet J'(\FF)\to H^\bullet J'(\FF')\;,&\quad\phi^*:&\bar H^\bullet J'(\FF)\to\bar H^\bullet J'(\FF')\;.
\end{alignedat}
\end{gathered}
\right.
\end{equation}
The assignment of the homomorphisms~\eqref{phi^*: I'(FF) to I'(FF')}--\eqref{phi^*: HK'(FF) to HK'(FF')} is functorial.

\subsection{Description of $\phi^*:K'(\FF)\to K'(\FF')$}\label{ss: description of phi^*:K'(FF) to K'(FF')}

Consider the notation and conditions of \Cref{ss: description of phi^*:K(FF) to K(FF')}, and assume also that $\phi$ is a submersion. By the density of the space of smooth forms in the space of currents, we get from~\eqref{phi^*: C^infty(M^0 Lambda) to C^infty(M^prime 0 Lambda)} that
\begin{equation}\label{phi^*: C^-infty(M^0 Lambda) to C^-infty(M^prime 0 Lambda)}
\phi^*:C^{-\infty}(M^0;\Lambda)\to C^{-\infty}(M^{\prime\,0};\Lambda)
\end{equation}
is a cochain map for $d_{s\eta}$ and $d_{s\eta'}$ ($s\in\R$), and we get from~\eqref{phi^*: C^infty(M^0 Lambda otimes Omega^sNM^0) to C^infty(M^prime 0 Lambda otimes Omega^sNM^prime 0)} that
\begin{equation}
\label{phi^*: C^-infty(M^0 Lambda otimes Omega^sNM^0) to C^-infty(M^prime 0 Lambda otimes Omega^sNM^prime 0)}
\phi^*:C^{-\infty}(M^0;\Lambda\otimes\Omega^sNM^0)\to C^{-\infty}(M^{\prime\,0};\Lambda\otimes\Omega^sNM^{\prime\,0})
\end{equation} 
is another cochain map for the de~Rham differentials defined with the flat bundle structures of $\Omega^sNM^0$ and $\Omega^sNM^{\prime\,0}$.

\begin{prop}\label{p: phi^* equiv prod_k phi^*}
According to \Cref{c: K'(Lambda FF) equiv prod_k C^-infty(M^0 Lambda)}, the map~\eqref{phi^*: K'(FF) to K'(FF')} is given by
\[
\phi^*\equiv\prod_k\phi^*\equiv\prod_k\phi^*\;,
\]
where the terms of the first direct sum are given by~\eqref{phi^*: C^-infty(M^0 Lambda) to C^-infty(M^prime 0 Lambda)}, and the terms of the second direct sum are given by~\eqref{phi^*: C^-infty(M^0 Lambda otimes Omega^sNM^0) to C^-infty(M^prime 0 Lambda otimes Omega^sNM^prime 0)}, taking $s=k$.
\end{prop}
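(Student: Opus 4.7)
My plan is to imitate the proof of \Cref{p: phi^* equiv bigoplus_k phi^*}, taking advantage of the new hypothesis that $\phi$ is a submersion so that the pull-back of currents is defined. The first identity $\phi^*\equiv\prod_k\phi^*$ (via~\eqref{phi^*: C^-infty(M^0 Lambda) to C^-infty(M^prime 0 Lambda)}) will be the main content; the second identity follows from it together with the analog for currents of~\eqref{phi^*(alpha otimes |omega|^s) = phi^*alpha otimes |omega'|^s}, which is obtained by density from its smooth version.

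First I would observe that the commutation $\partial_{\rho'}\phi^*=\phi^*\partial_\rho$ of~\eqref{partial_rho' phi^* = phi^* partial_rho} extends by continuity and density from $\Cinftyc(T;\Lambda\FF)$ to $C^{-\infty}_\co(T;\Lambda\FF)$, using that $\phi:T'\to T$ is a proper submersion and hence~\eqref{phi_*: C^-infty_c(M' phi^*E otimes Omega_fiber) -> C^-infty_c(M E)} gives a continuous pull-back on currents (and $\partial_\rho$, $\partial_{\rho'}$ are continuous on the relevant spaces). Applying this to~\eqref{C^-infty(L Omega^-1 NL) cong partial_x^m C^-infty(L Omega^-1 NL)}, and arguing via~\eqref{bigoplus_m C^0_m -> C^-infty_L(M)} as in \Cref{c: K(M M^0 Lambda FF) equiv bigoplus_L bigoplus_k C^infty(L Lambda) suspension}, it reduces the proposition to the $k=0$ summand.

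For the $k=0$ summand I would repeat verbatim the Dirac-section computation of \Cref{p: phi^* equiv bigoplus_k phi^*}: for $\alpha\in C^{-\infty}(M^0;\Lambda)$ the element $u=\alpha\otimes|d\rho|^{-1}$ corresponds to $\delta^u_{M^0}=\varpi^*\alpha\cdot\rho^*\delta_0$, one picks a sequence $f_i\in\Cinftyc(-\epsilon,\epsilon)$ with $f_i\to\delta_0$ in $C^{-\infty}_\co(-\epsilon,\epsilon)$, pulls back $\varpi^*\alpha\cdot\rho^*f_i$ under $\phi$ (which is legitimate because each factor is smooth enough for the pull-back of currents to distribute over the product, the submersion hypothesis guaranteeing that $\rho^{\prime*}=\phi^*\rho^*$ on distributions), and passes to the limit to obtain $\phi^*\delta^u_{M^0}=\delta^{\phi^*u}_{M^{\prime\,0}}$. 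The only difference with \Cref{p: phi^* equiv bigoplus_k phi^*} is that $\alpha$ is now a current on $M^0$ rather than a smooth form, and that $\phi^*\alpha$ uses~\eqref{phi_*: C^-infty(M E) -> C^-infty(M' phi^*E)} on $M^0$.

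The main obstacle will be justifying the continuity of the product $\varpi^*\alpha\cdot\rho^*f_i$ and the distributivity $\phi^*(\varpi^*\alpha\cdot\rho^*f_i)=\phi^{\prime*}\varpi^*\alpha\cdot\rho^{\prime*}f_i$ at the level of currents: this is a wavefront-set condition and uses that $\varpi^*\alpha$ is smooth along the fibers of $\varpi$ while $\rho^*f_i$ is transverse to those fibers, together with transversality of $\phi$ to the conormal to $M^0$ (which follows from $\phi$ being a submersion). Once these continuity and naturality properties are in place, taking $i\to\infty$ gives the desired formula, and combining with the reduction to $k=0$ completes the proof. The second identity of the proposition then follows by noting that the identification $u\leftrightarrow\alpha\otimes|d\rho|^{-1}$ intertwines the two display expressions, using~\eqref{phi^*(alpha otimes |d rho|^s) = phi^*alpha otimes |d rho'|^s} which extends to currents by density.
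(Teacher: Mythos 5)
Your proposed proof takes a genuinely different route from the paper's, and it contains a significant gap. The paper's proof is a one-liner: it applies \Cref{p: phi_* is transpose of phi^* - forms - currents,p: phi_* is transpose of phi^* - I-currents - I'-currents,p: phi_* equiv bigoplus_k phi_*}. That is, $\phi^*:K'(\FF)\to K'(\FF')$ is by definition (or by the cited transpose propositions) the transpose of $\phi_*:K_\co(\FF';\Omega)\to K_\co(\FF;\Omega)$, the identification of \Cref{c: K'(Lambda FF) equiv prod_k C^-infty(M^0 Lambda)} is the transpose of the identification of $K(\FF;\Omega)$ used in the version of \Cref{p: phi_* equiv bigoplus_k phi_*} with $\Omega M$, and so the product decomposition of $\phi^*$ on $K'$ is just the transpose of the direct-sum decomposition of $\phi_*$ on $K$ — no further computation is required.

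The gap in your approach is in the step ``for $\alpha\in C^{-\infty}(M^0;\Lambda)$ the element $u=\alpha\otimes|d\rho|^{-1}$ corresponds to $\delta^u_{M^0}=\varpi^*\alpha\cdot\rho^*\delta_0$.'' The space $K'(\FF)$ is a dual space (equivalently, the quotient of $I'(\FF)$ by $R'$); it is \emph{not} a subspace of $C^{-\infty}(M;\Lambda\FF)$, and the identification of \Cref{c: K'(Lambda FF) equiv prod_k C^-infty(M^0 Lambda)} is a purely duality-theoretic one (the transpose of \eqref{bigoplus_m C^1_m -> K(M L)} with $\Omega M$), not a realization of $K'(\FF)$ as distributions supported on $M^0$. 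In fact, for $\alpha$ a genuine non-smooth current, $\delta^{\alpha\otimes|d\rho|^{-1}}_{M^0}$ lives in $C^{-\infty}_{M^0}(M;\Lambda\FF)$ but \emph{not} in $I'(\FF)$ or $K'(\FF)$: pairing $\delta^\alpha_{M^0}$ against a general conormal $v\in K(\FF;\Omega)$ (both conormal along $M^0$) precisely violates H\"ormander's transversality criterion, so the pairing that would make it an element of $I'(\FF)=I(\FF;\Omega)'$ is not defined. (Note the paper's~\eqref{ABC}-type statement: $I(M,L)\cap I'(M,L)=C^\infty(M)$ already shows that no non-smooth Dirac section can sit in $I'$.) Thus the ``Dirac-section computation'' you propose never takes place inside $K'(\FF)$ at all, and the statement ``repeat verbatim'' does not go through: \Cref{p: phi^* equiv bigoplus_k phi^*} works because there the Dirac sections, with smooth coefficients, literally \emph{are} elements of $K(\FF)$; here they are not elements of $K'(\FF)$. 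What your outline essentially re-proves is the decomposition of $\phi^*$ on $C^{-\infty}_{M^0}(M;\Lambda\FF)$ (which is \Cref{r: phi^* equiv prod_k phi^*}), a different statement, and passing from there to $K'(\FF)$ would still require the compatibility argument that is precisely supplied by the paper's one-line transposition. The cleanest fix is to drop the Dirac-section picture and simply transpose, as the paper does.
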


\begin{proof}
Apply \Cref{p: phi_* is transpose of phi^* - forms - currents,p: phi_* is transpose of phi^* - I-currents - I'-currents,p: phi_* equiv bigoplus_k phi_*}.  
\end{proof}

\subsection{Push-forward of dual-conormal leafwise currents}\label{ss: push-forward of dual-conormal leafwise currents}

Consider the notation and conditions of \Cref{ss: push-forward of conormal leafwise currents} (containing those of \Cref{ss: pull-back of leafwise conormal currents}). Then the case of~\eqref{phi_*: C^pm infty_c/cv(M' Lambda FF') to C^pm infty_c/.(M Lambda FF)} on smooth leafwise forms has a continuous extension
\begin{equation}\label{phi_*: I'_c/cv(FF') to I'_c/cv(FF)}
\phi_*:I'_{\co/\cv}(\FF')\to I'_{\co/{\cdot}}(\FF)\;.
\end{equation}
This map can be described as the restriction of the map~\eqref{phi_*: I'_c/cv(M' L' Lambda) to I'_c/.(M L Lambda)} to dual-conormal currents of bidegree $(0,\bullet)$, like~\eqref{phi_*: C^pm infty_c/cv(M' Lambda FF') to C^pm infty_c/.(M Lambda FF)} in \Cref{ss: push-forward of leafwise currents}. We can also describe~\eqref{phi_*: I'_c/cv(FF') to I'_c/cv(FF)} as the composition
\[
I'_{\co/\cv}(M',L';\Lambda\FF)\xrightarrow{\pi_\topd}I'_{\co/\cv}(M',L';\phi^*\Lambda\FF\otimes\Omega_\fiber)
\xrightarrow{\phi_*}I'_{\co/{\cdot}}(M,L;\Lambda\FF)\;,
\]
like in~\eqref{push-forward - composition - leafwise currents}, where $\phi_*$ is given by~\eqref{phi_*: I'_c/cv(M' L' Omega_fiber) -> I'_c/.(M L)} with $E=\Lambda\FF$. The map~\eqref{phi_*: I'_c/cv(FF') to I'_c/cv(FF)} is also a restriction of the case of~\eqref{phi_*: C^pm infty_c/cv(M' Lambda FF') to C^pm infty_c/.(M Lambda FF)} for leafwise currents.

According to \Cref{ss: push-forward of the dual-conormal seq}, the map~\eqref{phi_*: I'_c/cv(FF') to I'_c/cv(FF)} induces homomorphisms
\begin{gather}
\phi_*:K'(\FF')\to K'(\FF)\;,\label{phi_*:K'(FF') to K'(FF)}\\
\phi_*:J'_{\co/\cv}(\FF')\to J'_{\co/{\cdot}}(\FF)\;.\label{phi_*: J'_c/cv(FF') to J'_c/.(FF)}
\end{gather}
Like in \Cref{ss: pull-back of dual-conormal leafwise currents}, we get induced continuous homomorphisms,
\begin{equation}\label{HK'(FF') to HK'(FF) ...}
\left\{
\begin{gathered}
\phi_*:H^\bullet K'(\FF')\to H^\bullet K'(\FF)\;,\\
\begin{alignedat}{2}
\phi_*:&H^\bullet I'_\co(\FF')\to H^\bullet I'_\co(\FF)\;,&\quad
\phi_*:&\bar H^\bullet I'_\co(\FF')\to\bar H^\bullet I'_\co(\FF)\;,\\
\phi_*:&H^\bullet J'_\co(\FF')\to H^\bullet J'_\co(\FF)\;,&\quad
\phi_*:&\bar H^\bullet J'_\co(\FF')\to\bar H^\bullet J'_\co(\FF)\;.
\end{alignedat}
\end{gathered}
\right.
\end{equation} 
The assignments of homomorphisms~\eqref{phi_*: I'_c/cv(FF') to I'_c/cv(FF)}--\eqref{HK'(FF') to HK'(FF) ...} are clearly functorial.

\subsection{Leafwise homotopy invariance}\label{ss: leafwise homotopy invariance - dual-conormal}

Consider the notation and conditions of \Cref{ss: leafwise homotopy invariance}, and assume that every $H_t$ is a submersion. Like in \Cref{ss: leafwise homotopy invariance}, according to \Cref{ss: pull-back of dual-conormal leafwise currents,ss: push-forward of dual-conormal leafwise currents}, the corresponding leafwise homotopy operator $\sh:C^\infty(M;\Lambda\FF)\to C^\infty(M';\Lambda\FF')$ has continuous linear extensions,
\[
\sh:K'(\FF)\to K'(\FF')\;,\quad\sh:I'(\FF)\to I'(\FF')\;,\quad\sh:J'(\FF)\to J'(\FF')\;.
\]
By continuity and according to \Cref{ss: leafwise homotopy opers}, we have $H_1^*-H_0^*=\sh d_\FF+d_{\FF'}\sh$ with $H_0^*$ and $H_1^*$ given by~\eqref{phi^*: I'(FF) to I'(FF')},~\eqref{phi^*: K'(FF) to K'(FF')} and~\eqref{phi^*: J'(FF) to J'(FF')}. Hence we get the following.

\begin{prop}\label{p: leafwise homotopy invariance - H^bullet K'(FF)}
 Let $\phi,\psi:(M',\FF')\to(M,\FF)$ be smooth foliated maps transverse to $M^0$ with $\phi^{-1}(M^0)=\psi^{-1}(M^0)=M^{\prime\,0}$. If $\phi$ is leafwise homotopic to $\psi$, then $\phi$ and $\psi$ induce the same homomorphisms~\eqref{phi^*: HK'(FF) to HK'(FF')}.
\end{prop}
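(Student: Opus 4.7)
The plan is to mirror the conormal proof in \Cref{ss: leafwise homotopy invariance} with dual-conormal replacements throughout. First I will fix a leafwise homotopy $H:(M'\times I,\FF'\times I)\to(M,\FF)$ between $\phi$ and $\psi$ with every $H_t$ a submersion (as assumed in the preceding paragraph); since $H_t$ is a submersion for every $t$, the map $H$ itself is a submersion, which is the hypothesis needed for the dual-conormal pull-back to be defined. The transversality propagation argument recalled in \Cref{ss: leafwise homotopy invariance} — that $H_{t\,*}$ factors through $H_{0\,*}$ followed by $\nabla^\FF$-parallel transport along the leafwise path $s\mapsto H_s(p')$ — works verbatim to show that $H$ is transverse to $M^0$ and $H^{-1}(M^0)=M^{\prime\,0}\times I$, so $H$ and $\pr_1:M'\times I\to M'$ are admissible for the constructions of \Cref{ss: pull-back of dual-conormal leafwise currents,ss: push-forward of dual-conormal leafwise currents}.

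Next I will define the homotopy operator $\sh=\fint_I\circ H^*$ as in~\eqref{sh = fint_I H^* leafwise}. Composing the dual-conormal pull-backs~\eqref{phi^*: K'(FF) to K'(FF')},~\eqref{phi^*: I'(FF) to I'(FF')},~\eqref{phi^*: J'(FF) to J'(FF')} of $H$ with the dual-conormal push-forwards~\eqref{phi_*:K'(FF') to K'(FF)},~\eqref{phi_*: I'_c/cv(FF') to I'_c/cv(FF)},~\eqref{phi_*: J'_c/cv(FF') to J'_c/.(FF)} of $\pr_1$ (no support condition arises since the fibre $I$ is compact), one obtains continuous linear operators
\[
\sh:K'(\FF)\to K'(\FF')\;,\quad\sh:I'(\FF)\to I'(\FF')\;,\quad\sh:J'(\FF)\to J'(\FF')\;.
\]
On the dense subspace $C^\infty(M;\Lambda\FF)$ the identity $H_1^*-H_0^*=\sh\,d_\FF+d_{\FF'}\sh$ is the classical leafwise homotopy formula of \Cref{ss: leafwise homotopy opers}, the boundary contribution $H_1^*-H_0^*$ arising precisely via the Stokes-type identity~\eqref{phi_* d_FF' - d_FF phi_* = (phi|_partial M')_* iota^*} applied to $\pr_1$ together with the oriented boundary decomposition $\partial(M'\times I)=M'\times\{1\}\sqcup(-M'\times\{0\})$. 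Extending this identity by continuity and density to $K'(\FF)$, $I'(\FF)$ and $J'(\FF)$, and then passing to cohomology and reduced cohomology, yields $\phi^*=H_0^*=H_1^*=\psi^*$ on each of the five cohomologies appearing in~\eqref{phi^*: HK'(FF) to HK'(FF')}.

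The main obstacle I expect is the density step used to propagate the homotopy identity from smooth forms to the dual-conormal complexes. The spaces $I'(\FF)$ and $J'(\FF)$ carry projective-limit topologies over the Sobolev-order spectra of \Cref{s: dual-conormal seq of leafwise diff forms}, so one must verify both that smooth forms are dense in each step $I^{\prime\,(s)}(\FF)$ and $J^{\prime\,(s)}(\FF)$ and that the homotopy formula is compatible with the restriction maps of the inverse system. Density in the steps is available from the Hahn-Banach/reflexivity arguments recalled in \Cref{ss: dual-conormal distribs - compact} together with the explicit Sobolev description~\eqref{J^prime m(FF) cong ...} of $J^{\prime\,m}(\FF)$, and for $K'(\FF)$ it follows from \Cref{c: K'(Lambda FF) equiv prod_k C^-infty(M^0 Lambda)} and the density of smooth sections in distributional sections over $M^0$. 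Compatibility with the spectrum is routine because $H^*$, $\pr_{1\,*}$, $d_\FF$ and $d_{\FF'}$ are defined at each Sobolev step. A minor secondary point is to note that, if the given $\phi$ and $\psi$ are not themselves submersions, one can still conclude by perturbing the homotopy within its leafwise-homotopy class using the transitivity of $\FF^1$ on $M^1$ from \Cref{ss: simple fol flows}; this reduces the problem to the submersive case handled above.
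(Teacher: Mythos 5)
Your proof is correct and takes the same route as the paper: reduce to the conormal homotopy operator $\sh=\fint_I\circ H^*$ of~\eqref{sh = fint_I H^* leafwise}, reuse the transversality propagation from \Cref{ss: leafwise homotopy invariance}, and extend $\sh$ continuously to $K'(\FF)$, $I'(\FF)$, $J'(\FF)$ via the dual-conormal pull-back and push-forward constructions of \Cref{ss: pull-back of dual-conormal leafwise currents,ss: push-forward of dual-conormal leafwise currents} under the standing assumption that every $H_t$ is a submersion, then conclude by continuity. Only your closing aside should be dropped: the maps~\eqref{phi^*: HK'(FF) to HK'(FF')} already presuppose that $\phi,\psi$ are submersions (that is the condition under which the dual-conormal pull-backs are defined in \Cref{ss: pull-back of dual-conormal leafwise currents}), and the transitivity of $\FF^1$ from \Cref{ss: simple fol flows} concerns the foliation on $M$ rather than maps from an arbitrary $M'$, so it does not produce the claimed perturbation; this does not affect the main argument.
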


\section{Action of foliated flows on the dual-conormal sequence}\label{s: action of foliated flows on the dual-conormal seq}

Consider the notation and conditions of \Cref{s: action of foliated flows on the conormal seq}.

\begin{prop}\label{p: phi^t* equiv prod_L k e^k varkappa_L t}
According to \Cref{c: H^bullet K'(FF) equiv prod_k H^bullet_k(M^0),r: prod_L},
\[
\phi^{t*}\equiv\prod_{k,L}e^{k\varkappa_Lt}\equiv\prod_{k,L}e^{k\varkappa_Lt}
\]
on $H^\bullet K(\FF)$, where $k$ runs in $\N_0$ and $L$ in $\pi_0M^0$.
\end{prop}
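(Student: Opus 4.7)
The plan is to mirror the argument already used for the conormal case in Proposition~\ref{p: phi^t* equiv bigoplus_L k e^-(k+1) varkappa_L t}, substituting the dual-conormal tools developed in Chapter~6 for their conormal counterparts from Chapter~5. Concretely, I will first pass from $\phi$ to the model flow $\xi$ of \ref{i-(A): Y'} in \Cref{ss: globalization}, then invoke \Cref{p: phi^* equiv prod_k phi^*} to decompose $\xi^{t*}$ factor by factor through the splitting in \Cref{c: K'(Lambda FF) equiv prod_k C^-infty(M^0 Lambda)}, and finally compute each factor using $\xi^{t*}\rho = e^{\varkappa_L t}\rho$.

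For the reduction step, note that $V := Z - Y$ belongs to $\fX(\FF) = \fX_\co(\FF)$ (as $M$ is compact and $\overline{Z} = \overline{Y}$ by \ref{i-(A): Y'} of \Cref{ss: globalization}), so \Cref{p: there exists a leafwise homotopy} produces a flow leafwise homotopy $H : M \times \R \times I \to M$ between $\phi$ and $\xi$ built from the flows of $X + sV$. Each intermediate vector field is complete and foliated, hence each slice $H(\cdot, t, s)$ is a diffeomorphism of $M$, in particular a submersion, and $H^{-1}(M^0) = M^0 \times I$ because $\overline{X+sV} = \overline{Z}$ vanishes exactly on $M^0$. The submersion hypothesis of \Cref{p: leafwise homotopy invariance - H^bullet K'(FF)} is therefore satisfied, and we conclude $\phi^{t*} = \xi^{t*}$ as endomorphisms of $H^\bullet K'(\FF)$.

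For the decomposition step, apply \Cref{p: phi^* equiv prod_k phi^*} to the diffeomorphism $\xi^t : (M,\FF) \to (M,\FF)$ (which is certainly a submersion transverse to $\FF$, with $(\xi^t)^{-1}(M^0) = M^0$). Under the second identification of \Cref{c: K'(Lambda FF) equiv prod_k C^-infty(M^0 Lambda)}, this reduces the computation to describing $\xi^{t*}$ on each factor $C^{-\infty}(L;\Lambda \otimes \Omega^k NL)$, $L \in \pi_0 M^0$, $k \in \N_0$. Since $\xi^t$ restricts to the identity on every preserved leaf $L$, pull-back is the identity on the $\Lambda L$ factor. For the normal density factor, use~\eqref{xi^t* rho = e^varkappa t rho}, which holds on $T_{L,\epsilon} \cap \xi^{-t}(T_{L,\epsilon})$ after globalization (\Cref{ss: globalization}), to obtain $\xi^{t*} d\rho = e^{\varkappa_L t}\, d\rho$ along $L$, and hence $\xi^{t*} |d\rho|^k = e^{k\varkappa_L t} |d\rho|^k$. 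The dual-conormal analogue of~\eqref{phi^*(alpha otimes |d rho|^s) = phi^*alpha otimes |d rho'|^s} — already built into \Cref{p: phi^* equiv prod_k phi^*} — then yields $\xi^{t*}(\alpha \otimes |d\rho|^k) = e^{k\varkappa_L t}\, \alpha \otimes |d\rho|^k$ for every $\alpha \in C^{-\infty}(L;\Lambda)$, so under the $|d\rho|^k$-trivialization the action collapses to multiplication by $e^{k\varkappa_L t}$. Combining across $(k,L)$ gives $\xi^{t*} \equiv \prod_{k,L} e^{k\varkappa_L t}$ on $K'(\FF)$, and passing to cohomology completes the proof.

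The only place where the argument departs from the conormal template is the need to verify the submersion hypothesis in \Cref{p: leafwise homotopy invariance - H^bullet K'(FF)}; this is where I expect the mild wrinkle to sit, and it dissolves precisely because the homotopy is built from flows of complete foliated vector fields rather than from a general smooth family of foliated maps. Beyond that, every remaining ingredient — the factor-wise description of pull-back on $K'(\FF)$ and the scaling identity for $|d\rho|^k$ — is directly dual to the conormal version and requires no further work.
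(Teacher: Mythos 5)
Your proposal is correct and follows essentially the same route the paper's proof points to, namely running the conormal argument from Proposition~\ref{p: phi^t* equiv bigoplus_L k e^-(k+1) varkappa_L t} with the dual-conormal tools substituted in. The extra step you flag — verifying that the slices $H(\cdot,t,s)$ of the flow leafwise homotopy are submersions so that the hypothesis of \Cref{p: leafwise homotopy invariance - H^bullet K'(FF)} is met — is exactly the right thing to check in the dual-conormal setting, and your observation that it holds automatically because each slice is the time-$t$ map of a complete foliated vector field (hence a diffeomorphism) is the argument the paper leaves implicit. Two trivial slips worth flagging: the intermediate vector fields in \Cref{p: there exists a leafwise homotopy} applied here are $Y+sV$ with $V=Z-Y$ rather than $X+sV$ (you carry over the letter $X$ from the general statement of that proposition), and the statement as printed in the paper says $H^\bullet K(\FF)$ where it clearly means $H^\bullet K'(\FF)$ — your proof correctly works with $K'(\FF)$ throughout.
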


\begin{proof}
Argue like in the proof of \Cref{p: phi^t* equiv bigoplus_L k e^-(k+1) varkappa_L t} and its previous observations, using \Cref{c: K'(Lambda FF) equiv prod_k C^-infty(M^0 Lambda)}, Remark~\ref{r: prod_L} and \Cref{p: leafwise homotopy invariance - H^bullet K'(FF)}.
\end{proof}

\Cref{c: K'(Lambda FF) equiv prod_k C^-infty(M^0 Lambda),c: H^bullet K'(FF) equiv prod_k H^bullet_k(M^0),r: prod_L,p: phi^t* equiv prod_L k e^k varkappa_L t} show \Cref{t: intro - H^bullet K'(FF)}.

\chapter{Contribution from $M^1$}\label{ch: contribution from M^1}

\section{Operators on a suspension foliation}\label{s: opers suspension}

Consider again the notation of \Cref{s: suspension}, where the case of a weakly simple foliated flow $\phi=\{\phi^t\}$ on a suspension foliated manifold $(M,\FF)$ was described. Equip $\mathring M_\pm$ with $g_\pm$, obtaining that $\mathring\FF_\pm$ is of bounded geometry (\Cref{p: FF_pm is of bd geom}). We can assume $\phi$ is of $\R$-local bounded geometry on $\mathring M_\pm$ by \Cref{p: Z -> A} and according to \Cref{ss: families of bd geom}. Thus, on $\mathring{\widetilde M}_\pm\equiv(\mathring{\widetilde M}_\pm,\tilde g_\pm$), $\mathring{\widetilde\FF}_\pm$ is of bounded geometry and $\tilde\phi$ is of $\R$-local bounded geometry. Consider the leafwise perturbed operators for $(\mathring M_\pm,\mathring\FF_\pm)$ and $(\mathring{\widetilde M}_\pm,\mathring{\widetilde\FF}_\pm)$ defined by the leafwise-closed form $\eta_0$ and the leafwise-exact form $\tilde\eta_0$ (\Cref{ss: perturbation vs bigrading} ). For any $\psi\in\AA$, $f\in\Cinftyc(\R)$ and $z\in\C$, the operator \index{$\mathring P_\pm$}
\begin{equation}\label{mathring P_pm suspension}
\mathring P_\pm=\int_{-\infty}^{+\infty}\phi^{t*}_z\,\psi(D_{\mathring\FF_\pm,z})\,f(t)\,dt
\end{equation}
on $H^{-\infty}(\mathring M_\pm;\Lambda\mathring\FF_\pm)$ is a version of~\eqref{P with f} for $\phi^{t*}_z$ and $D_{\mathring\FF_\pm,z}$, and therefore it is smoothing by the corresponding analog of~\eqref{P_psi}. Let $\mathring K_\pm=K_{\mathring P_\pm}$. \index{$\mathring K_\pm$}

By~\eqref{tilde phi^t(tilde y x) widetilde Z} and~\eqref{T_gamma tilde phi_x^t = tilde phi_a_gamma x^t T_gamma}, for $\gamma\in\Gamma$ and $t\in\R$, the equality $\tilde\phi^{t*}_zT_\gamma^*=T_\gamma^*\tilde\phi^{t*}_z$ means that, for all $x\in\R$,
\begin{equation}\label{tilde phi^t*_x z T_gamma^* = T_gamma^* tilde phi^t*_a_gamma x z}
\tilde\phi^{t*}_{x,z}T_\gamma^*=T_\gamma^*\tilde\phi^{t*}_{a_\gamma x,z}
\end{equation}
on $C^\infty(\widetilde L,\Lambda)$ (\Cref{ss: perturbation of pull-back homs}). Consider also the notation of \Cref{ss: perturbed Schwartz kernels on regular coverings of compact mfds} for the regular covering $\pi=\pi_L:\widetilde L\to L$ used in the suspension construction; in particular, recall the notation $\tilde k_z$. Recall that $h_\pm(\gamma)=\varkappa^{-1}\ln a_\gamma$ for $\gamma\in\Gamma$, and $D_\pm(x,\tilde y)=\varkappa^{-1}\ln|x|$ for $(x,\tilde y)\in\mathring{\widetilde M}_\pm$ (\Cref{ss: suspension - homotheties}). Thus, by the version of \eqref{Schwartz kernel} for the leafwise perturbed differential complex (\Cref{s: Witten's opers on Riem folns of bd geom}), and by~\eqref{tilde phi^t(tilde y x) widetilde Z},~\eqref{T_gamma tilde phi_x^t = tilde phi_a_gamma x^t T_gamma} and~\eqref{tilde phi^t*_x z T_gamma^* = T_gamma^* tilde phi^t*_a_gamma x z}, if $\hat\psi\in\Cinftyc(\R)$, then
\begin{equation}\label{mathring K_pm([tilde y x] [tilde y' x'])}
\mathring K_\pm([x,\tilde y],[x',\tilde y'])\equiv\sum_{\gamma\in \Gamma}\mathring K_{\pm,\gamma}([x,\tilde y],[x',\tilde y'])
\end{equation}
for all $(x,\tilde y),(x',\tilde y')\in\mathring{\widetilde M}_\pm$, where
\begin{multline}\label{mathring K_pm gamma([x tilde y] [x' tilde y'])}
\mathring K_{\pm,\gamma}([x,\tilde y],[x',\tilde y'])\\
=\frac{1}{|\varkappa|}
\tilde\phi_{x,z}^{\frac{1}{\varkappa}\ln\frac{x'}{a_\gamma x}*}T_\gamma^*
\tilde k_z\Big(\gamma\cdot\tilde\phi^{\frac{1}{\varkappa}\ln\frac{x'}{a_\gamma x}}_x(\tilde y),\tilde y'\Big)
f\Big(\frac{1}{\varkappa}\ln\frac{x'}{a_\gamma x}\Big)\Big|\frac{dx'}{x'}\Big|\;.
\end{multline}

According to \Cref{ss: b-stretched product}, for the boundary-defining function $\rho$ on $M_\pm$ (\Cref{ss: boundary-def func of M_pm - suspension}), let $\rho$ and $\rho'$ denote its lifts to $(M_\pm)^2$ from the left and right factors, and let $s=\rho/\rho':(M_\pm)^2\to[0,\infty]$. We have corresponding smooth functions $\rho$, $\rho'$ and $s$ on $(M_\pm)^2_{\text{\rm b}}$. Similarly, let $\eta$ and $\eta'$ denote the lifts of $\eta$ from the left and right factors. Using~\eqref{M_pm equiv [0 infty)_rho times L_varpi}, we get
\[
(M_\pm)^2\equiv[0,\infty)_\rho\times[0,\infty)_{\rho'}\times L^2\;,\quad
(\mathring M_\pm)^2\equiv(0,\infty)^2\times L^2\;.
\]
Then
\[
(M_\pm)^2_{\text{\rm b}}\equiv[0,\infty)_\rho\times[0,\infty]_s\times L^2\;,
\]
with boundary components $\lb=\{s=0\}$, $\rb=\{s=\infty\}$ and $\ff=\{\rho=0\}$. Moreover
\[
\Deltab\equiv\{\,(\rho,1,y,y)\mid\rho\ge0,\ y\in L\,\}\;.
\]
With the above identities, the restriction of $\beta_{\text{\rm b}}: (M_\pm)^2_{\text{\rm b}}\to (M_\pm)^2$ to the interior corresponds to the diffeomorphism
\[
(0,\infty)^2\times L^2\to(0,\infty)^2\times L^2\;,\quad(\rho, s,y,y')\mapsto(\rho,\rho s^{-1}, y, y')\;.
\]
Similar observations apply to $\widetilde M_\pm$, using $\widetilde L$ instead of $L$, and using the lifts $\tilde\rho$, $\tilde\rho'$ and $\tilde s$ instead of $\rho$, $\rho'$ and $s$. The subscript ``$\pm$'' will be added to the notation $\Deltab$ and $\Delta_{\text{\rm b},0}=\Deltab\cap\ff$ if needed.

Let $\mathring\kappa_\pm$ \index{$\mathring\kappa_\pm$} be the $C^\infty$ section of $\beta_{\text{\rm b}}^*(\Lambda\FF_\pm \boxtimes (\Lambda\FF^*_\pm\otimes \Omega M_\pm))$ on the interior of $(M_\pm)^2_{\text{\rm b}}$ that corresponds to $\mathring K_\pm$ via $\beta_{\text{\rm b}}$. If $\hat\psi\in\Cinftyc(\R)$, then, using the changes of variables
\[
x=\pm e^{-F(\tilde y)}\tilde\rho\;,\quad x'=\pm e^{-F(\tilde y')}\tilde\rho'\;,
\] 
with
\begin{gather*}
\ln\frac{x'}{x}=F(\tilde y)-F(\tilde y')-\ln\tilde s\;,\quad
\frac{dx'}{x'}=-\tilde\eta'+\frac{d\tilde\rho'}{\tilde\rho'}\;,\quad
\frac{d\tilde s}{\tilde s}=-\frac{d\tilde\rho'}{\tilde\rho'}\;,\\
x'=0\Leftrightarrow\tilde\rho'=0\Leftrightarrow\tilde s=\infty\;,\quad 
x'=\pm\infty\Leftrightarrow\tilde\rho'=\infty\Leftrightarrow\tilde s=0\;,
\end{gather*}
it follows from~\eqref{mathring K_pm([tilde y x] [tilde y' x'])} and~\eqref{mathring K_pm gamma([x tilde y] [x' tilde y'])} that
\begin{align}
\mathring K_\pm(\rho,\rho',[\tilde y],[\tilde y'])&=\sum_{\gamma\in\Gamma}\mathring K_{\pm,\gamma}(\rho,\rho',[\tilde y],[\tilde y'])\;,
\label{mathring K_pm(rho rho' [tilde y] [tilde y'])}\\
\mathring\kappa_\pm(\rho,s,[\tilde y],[\tilde y'])
&=\sum_{\gamma\in\Gamma}\mathring\kappa_{\pm,\gamma}(\rho,s,[\tilde y],[\tilde y'])\;,
\label{mathring kappa_pm(rho s [tilde y] [tilde y'])}
\end{align}
where \index{$\mathring K_{\pm,\gamma}$}
\begin{multline}\label{mathring K_pm gamma(rho rho' [tilde y] [tilde y'])}
\mathring K_{\pm,\gamma}(\rho,\rho',[\tilde y],[\tilde y'])\\
\begin{aligned}[b]
&=\frac{1}{|\varkappa|}
\tilde\phi_{\pm e^{-F(\tilde y)}\rho,z}^{\frac{1}{\varkappa}(F(\tilde y)-F(\tilde y')
+\ln \frac{\rho'}{a_\gamma\rho})*}T_\gamma^*\,\tilde k_z\Big(\gamma\cdot 
\tilde\phi^{\frac{1}{\varkappa}(F(\tilde y)-F(\tilde y')+\ln \frac{\rho'}{a_\gamma\rho})}_{\pm e^{-F(\tilde y)}\rho}(\tilde y),\tilde y'\Big)\\
&\phantom{=\text{}}\text{}\times f\Big(\frac{1}{\varkappa}\Big(F(\tilde y)-F(\tilde y')
+\ln\frac{\rho'}{a_\gamma\rho}\Big)\Big)\Big|\frac{d\rho'}{\rho'}\Big|\;,
\end{aligned}
\end{multline}
and \index{$\mathring\kappa_{\pm,\gamma}$}
\begin{multline}\label{mathring kappa_pm gamma(rho s [tilde y] [tilde y'])}
\mathring\kappa_{\pm,\gamma}(\rho,s,[\tilde y],[\tilde y'])\\
\begin{aligned}[b]
&=\frac{1}{|\varkappa|}\tilde\phi_{\pm e^{-F(\tilde y)}\rho,z}^{\frac{1}{\varkappa}
(F(\tilde y)-F(\tilde y')-\ln a_\gamma s)*}\,T_\gamma^*\,
\tilde k_z\Big(\gamma\cdot \tilde\phi^{\frac{1}{\varkappa}(F(\tilde y)-F(\tilde y')
-\ln a_\gamma s)}_{\pm e^{-F(\tilde y)}\rho}(\tilde y),\tilde y'\Big)\\
&\phantom{:=\text{}}\text{}\times f\Big(\frac{1}{\varkappa}\big(F(\tilde y)-F(\tilde y')-\ln a_\gamma s\big)\Big)
\Big|\frac{ds}{s}\Big|\;.
\end{aligned}
\end{multline}
Let us look for more general conditions on $\psi$ to get~\eqref{mathring kappa_pm(rho s [tilde y] [tilde y'])} by using the Fr\'echet algebra and $\C[z]$-module $\AA$ (\Cref{ss: Witten - mfds of bd geom}). Notice that every $\mathring\kappa_{\pm,\gamma}(\rho,s,[\tilde y],[\tilde y'])$ has a $C^\infty$ extension to $\rho=0$.

\begin{lem}\label{l: kappa_mathring P_pm}
If $\psi\in \AA$, then, given any fundamental domain $\bfF\subset\widetilde L$, the series in~\eqref{mathring kappa_pm(rho s [tilde y] [tilde y'])} converges with all covariant derivatives, uniformly on $\rho\ge0$, $0<s<\infty$ and $\tilde y,\tilde y'\in \bfF$. Moreover its sum is $\mathring\kappa_\pm(\rho,s,[\tilde y],[\tilde y'])$ for $\rho>0$.
\end{lem}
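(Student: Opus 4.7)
The plan is to run a variant of the argument for Proposition~\ref{p: k(y y') equiv ...}, upgrading from $\hat\psi\in\Cinftyc(\R)$ to $\psi\in\AA$ by using the exponential decay estimate~\eqref{|... tilde k_z(gamma cdot tilde y tilde y')|} on the leafwise Schwartz kernel $\tilde k_z$ associated with the covering $\pi_L:\widetilde L\to L$, combined with two geometric ingredients specific to the suspension setting: the equivariance~\eqref{T_gamma tilde phi_x^t = tilde phi_a_gamma x^t T_gamma},~\eqref{tilde phi^t*_x z T_gamma^* = T_gamma^* tilde phi^t*_a_gamma x z}, and the $\R$-local bounded geometry of $\tilde\phi_\pm$ on $\mathring{\widetilde M}_\pm$.

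The first step is to rewrite the typical term. Using~\eqref{T_gamma tilde phi_x^t = tilde phi_a_gamma x^t T_gamma}, in the factor $\tilde k_z(\gamma\cdot\tilde\phi^t_x(\tilde y),\tilde y')$ we move $\gamma$ past $\tilde\phi^t_x$ at the cost of replacing $x$ by $a_\gamma x$, so that the decay estimate~\eqref{|... tilde k_z(gamma cdot tilde y tilde y')|} becomes directly applicable to the pair $(\gamma\cdot\tilde y,\tilde y')$ after a bounded perturbation by $\tilde\phi^t_{a_\gamma x}$; fix an arbitrary fundamental domain $\bfF\subset\widetilde L$, and let $I=\supp f$. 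For every $\gamma$ the support of $f$ forces the ``time'' $t=\frac{1}{\varkappa}(F(\tilde y)-F(\tilde y')-\ln a_\gamma s)$ to lie in $I$; since $F|_\bfF$ is bounded, this restricts $\ln a_\gamma s$ to a bounded set depending only on $I$ and $\bfF$ (not on the particular $\gamma$).

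Next I would establish the required uniform $C^\infty$ bounds on the pulled-back factor $\tilde\phi_{\pm e^{-F(\tilde y)}\rho,z}^{t\,*}\,T_\gamma^*$ acting on $\tilde k_z$. Here the key trick is equivariance: $T_\gamma$ is an isometry of $\widetilde L$, and conjugation by $T_\gamma$ turns $\tilde\phi^t_{a_\gamma x}$ into $\tilde\phi^t_x$, which reduces the task to controlling $\tilde\phi^t_x$ for $x$ in a $\Gamma$-fundamental set of $\R^\pm$ and $t\in I$. On that bounded parameter set the $\R$-local bounded geometry of $\phi_\pm$ (\Cref{p: Z -> A} together with \Cref{ss: families of bd geom}), combined with the fact that $\eta\in\Cinftyub(\mathring M_\pm;\Lambda^1)$ (\Cref{p: d(ln rho) in C^infty_ub(mathring M_pm Lambda^1)}), provides uniform $C^m$-bounds on $\tilde\phi^{t*}_{x,z}$ acting on forms, valid up to $\rho=0$ and uniformly over $\tilde y,\tilde y'\in\bfF$. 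Combining these bounds with~\eqref{|... tilde k_z(gamma cdot tilde y tilde y')|} applied to $(\tilde\phi^t_{a_\gamma x}(\gamma\cdot\tilde y),\tilde y')$ and the lower bound $d_{\widetilde L}(\gamma\cdot\tilde y,\tilde y')\ge c_1^{-1}|\gamma|-c_2$ coming from~\eqref{word-metric with bfK}, each covariant derivative of $\mathring\kappa_{\pm,\gamma}$ is majorized by $C\,e^{-W|\gamma|/c_1}\,\|\psi\|_{\AA,W,N+m}$ for an arbitrarily large weight $W$, uniformly on $\rho\ge0$, $s>0$ and $\tilde y,\tilde y'\in\bfF$.

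With such exponential control in $|\gamma|$ and the sub-exponential growth~\eqref{sum_gamma in Gamma e^-W_0 |gamma| < infty} of $\Gamma$, choosing $W>c_1W_0$ yields uniform convergence of the series~\eqref{mathring kappa_pm(rho s [tilde y] [tilde y'])} together with all its covariant derivatives. For the identification with $\mathring\kappa_\pm$ on $\rho>0$ I would invoke density: pick a sequence $\psi_k\in\AA$ with $\hat\psi_k\in\Cinftyc(\R)$ converging to $\psi$ in $\AA$ (e.g.\ by truncating $\hat\psi$ smoothly). For each $\psi_k$ the identity is already known from~\eqref{mathring kappa_pm(rho s [tilde y] [tilde y'])}. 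The left-hand side converges by the continuity of the map $\AA\to\Cinftyub(M^2;\dots)$, $\psi\mapsto K_{P_\psi}$, which is the leafwise analogue of~\eqref{P_psi} recalled in \Cref{ss: smoothing operators}; the right-hand side converges by the uniform estimate just proved, since $\|\psi_k-\psi\|_{\AA,W,N+m}\to0$. The main technical obstacle will be organizing the $x$-dependence of $\tilde\phi^t_x$ cleanly near $\rho=0$: one must verify that the $C^m$ constants from $\R$-local bounded geometry, once transported via $T_\gamma$, do not pick up $\gamma$-dependent factors that could spoil the exponential gain in~\eqref{|... tilde k_z(gamma cdot tilde y tilde y')|}; this is exactly the role played by the isometric character of the deck action.
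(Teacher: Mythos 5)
Your overall plan matches the paper's proof almost exactly: both rest on the exponential decay estimate~\eqref{|... tilde k_z(gamma cdot tilde y tilde y')|} (from \Cref{l: exponential decay}), the $\R$-local bounded geometry of $\tilde\phi$, the lower bound $d_{\widetilde L}(\gamma\cdot\tilde y,\tilde y')\ge c_1^{-1}|\gamma|-c_2$ via~\eqref{word-metric with bfK}, summability~\eqref{sum_gamma in Gamma e^-W_0 |gamma| < infty}, and a density argument with $\widehat{\psi_k}\in\Cinftyc(\R)$ for the identification at $\rho>0$. Two points deserve scrutiny, though.

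First, the step you flag as ``the main technical obstacle'' --- keeping the $x$-dependence of $\tilde\phi^t_x$ under control near $\rho=0$, with the isometric deck action as the resolution --- is actually not where the difficulty lies. The $\R$-local bounded geometry already provides $C^\infty$ bounds on $\tilde\phi^t_x$ uniform over \emph{all} $x\in\R^\pm$ and $t$ in a compact set; no reduction to a $\Gamma$-fundamental set of $\R^\pm$ and no conjugation by $T_\gamma$ is needed. The paper simply applies~\eqref{|... tilde k_z(gamma cdot tilde y tilde y')|} with $\tilde p=\tilde\phi^t_x(\tilde y)$, $\tilde q=\tilde y'$ and the compact set $\bfK=\overline{\Pen}_{\widetilde L}(\bfF,R)\times\bfF$ from the proof of \Cref{p: d_FF(phi^t(p) p) ge c_1^{-1} |gamma| - c_2}.

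Second --- and this is the genuine gap --- your claimed per-term bound $C\,e^{-W|\gamma|/c_1}\,\|\psi\|_{\AA,W,N+m}$ for an order-$m$ covariant derivative, followed by ``choosing $W>c_1W_0$ yields uniform convergence\ldots with all covariant derivatives,'' is too optimistic. Differentiating $\mathring\kappa_{\pm,\gamma}$ in $s$ produces, by the chain rule applied to the argument $\tfrac1\varkappa(F(\tilde y)-F(\tilde y')-\ln a_\gamma s)$ of $f$ and of $\tilde\phi^\cdot_x$, factors of $s^{-1}$. You correctly observe that the support of $f$ confines $\ln a_\gamma s$ to a bounded set, but the consequence you need from this is the quantitative bound $s^{-1}<e^{c_0|\gamma|-c_3}$ (via~\eqref{|ln a_gamma| le c_0 |gamma|}), so an order-$m$ derivative picks up an extra factor as large as $e^{mc_0|\gamma|}$. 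The true bound is therefore of the form
\[
C_3\,e^{(mc_0-\frac{W}{c_1})|\gamma|}\,\|\psi\|_{\AA,W,N+m}\,\|f\|_{I,C^m}\;,
\]
and the convergence threshold must be raised to $W>c_1(mc_0+W_0)$, depending on $m$. Since $W$ can be taken arbitrarily large this is entirely fixable, but as written your proof would only establish $C^0$ convergence, not convergence with all covariant derivatives uniformly in $s>0$.
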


\begin{proof}
Since $\tilde\phi$ is of $\R$-local bounded geometry on $\mathring{\widetilde M}_\pm$ with $\tilde g_\pm$ and $\supp f$ is compact, we can take $R>0$ and $\bfK\subset{\widetilde L}^2$ like in the proof of \Cref{p: d_FF(phi^t(p) p) ge c_1^{-1} |gamma| - c_2} with $\supp f\subset I$ for any compact $I\subset\R$. Using~\eqref{|... tilde k_z(gamma cdot tilde y tilde y')|} with this $\bfK$, for any $W>0$, we get
\[
\big|\tilde k_z(\gamma\cdot\tilde\phi^t_x(\tilde y),\tilde y')\big| \le C'_1 e^{-\frac{W}{c_1}\,|\gamma|}\,\|\psi\|_{\AA, W,N}
\]
for $\gamma\in\Gamma$, $x\in\R^\pm$, $t \in \supp f$ and $\tilde y,\tilde y' \in \bfF$. Using again the $\R$-local bounded geometry of $\tilde\phi$ on $\mathring{\widetilde M}_\pm$ with $\tilde g_\pm$ and compactness of $I$, it follows that there is some $C_2=C_2(z,W)>0$ such that
\begin{equation}\label{|K_gamma(rho s [tilde y] [tilde y'])|}
\big|\mathring\kappa_{\pm,\gamma}(\rho,s,[\tilde y],[\tilde y'])\big|
\le C_2 e^{-\frac{W}{c_1}\,|\gamma|}\,\|\psi\|_{\AA,W,N}\,\|f\|_{I,C^0}
\end{equation}
for $\gamma\in\Gamma$, $\rho\ge0$, $s>0$ and $\tilde y,\tilde y'\in \bfF$. By~\eqref{sum_gamma in Gamma e^-W_0 |gamma| < infty} and~\eqref{|K_gamma(rho s [tilde y] [tilde y'])|}, if $W>c_1W_0$, then the series in~\eqref{mathring kappa_pm(rho s [tilde y] [tilde y'])} converges uniformly on $\rho\ge0$, $s>0$ and $\tilde y,\tilde y'\in \bfF$, and the norm of its sum is${}\le C \|\psi\|_{\AA, W,N}$ for some $C=C(z,W,N)>0$. 

With more generality, by the $\R$-local bounded geometry of $\tilde\phi$ on $\mathring{\widetilde M}_\pm$ and the compactness of $I$, the higher order derivatives of $\tilde\phi^t_x(\tilde y)$ with respect to $x$, $t$ and $\tilde y$ (in normal coordinates) are also uniformly bounded for $x\in\R^\pm$, $t\in I$ and $\tilde y\in\widetilde L$. Hence, for every $m\in\N_0$, it follows from~\eqref{|... tilde k_z(gamma cdot tilde y tilde y')|} that
\[
\Big|\nabla_{\tilde y}^{m_1}\nabla_{\tilde y'}^{m_2}
\tilde k_z(\gamma\cdot\tilde\phi^t_x(\tilde y),\tilde y')\Big|
\le C'_1 e^{-\frac{W}{c_1}\,|\gamma|}\,\|\psi\|_{\AA, W, N+m}
\]
for $\gamma\in\Gamma$, $x\in\R^\pm$, $t \in I$, $\tilde y,\tilde y' \in \bfF$ and $m_1+m_2\le m$. Moreover, since $I$ and $\bfF$ are compact, there is some $c_3\in\R$ such that, for all $\tilde y,\tilde y'\in\bfF$,
\[
\ln a_\gamma s>c_3\Rightarrow\varkappa^{-1}\big(F(\tilde y)-F(\tilde y')-\ln a_\gamma s\big)\notin I\;.
\]
Thus we can assume $s^{-1}<e^{-c_3}a_\gamma$, yielding $s^{-1}<e^{c_0|\gamma|-c_3}$ by~\eqref{|ln a_gamma| le c_0 |gamma|}. Hence there is some $C_3=C_3(z,W,m)>0$ such that
\begin{multline}\label{|nabla^m K_gamma(rho s [tilde y] [tilde y'])|}
\big|\partial^{m_1}_\rho\partial^{m_2}_s\nabla^{m_3}_{[\tilde y]}\nabla^{m_4}_{[\tilde y']}
\mathring\kappa_{\pm,\gamma}(\rho,s,[\tilde y],[\tilde y'])\big|\\
\le C_3 e^{(mc_0-\frac{W}{c_1})|\gamma|}\,\|\psi\|_{\AA,W,N+m}\,\|f\|_{I,C^m}\;,
\end{multline}
for $\gamma\in\Gamma$, $\rho\ge0$, $s>0$, $\tilde y,\tilde y'\in \bfF$ and $m_1+\dots+m_4\le m$. By~\eqref{sum_gamma in Gamma e^-W_0 |gamma| < infty} and~\eqref{|nabla^m K_gamma(rho s [tilde y] [tilde y'])|}, if $W>c_1(mc_0+W_0)$, then the series defined by the covariant derivatives of order${}\le m$ of the terms in~\eqref{mathring kappa_pm(rho s [tilde y] [tilde y'])} is also convergent, uniformly on $\rho\ge0$, $s>0$ and $\tilde y,\tilde y'\in \bfF$, and the norm of its sum is${}\le C' \|\psi\|_{\AA, W,N+m}\,\|f\|_{I,C^m}$ for some $C'=C'(z,W,N,m)>0$.

We already know that the sum of the series in~\eqref{mathring kappa_pm(rho s [tilde y] [tilde y'])} is $\mathring\kappa_\pm(\rho,s,[\tilde y],[\tilde y'])$ for $\rho>0$ if $\hat\psi\in\Cinftyc(\R)$. Then this also holds when $\psi\in\AA$, as follows by taking a convergent sequence $\psi_k\to\psi$ in $\AA$ with $\widehat{\psi_k}\in\Cinftyc(\R)$, and using the above estimates of the sum.
\end{proof}

\begin{rem}
Like in Remark~\ref{r: k(y y') equiv ...}, \Cref{l: kappa_mathring P_pm} is true for any $\psi\in\SS$ since $\Gamma$ is abelian. But $\psi\in\AA$ is needed for the estimates~\eqref{|K_gamma(rho s [tilde y] [tilde y'])|} and~\eqref{|nabla^m K_gamma(rho s [tilde y] [tilde y'])|}, which will be used later. 
\end{rem}

\begin{prop}\label{p: kappa_P_pm}
If $\psi\in\AA$, then $\mathring\kappa_\pm$ has a $C^\infty$ extension $\kappa_\pm$ to $(M_\pm)^2_{\text{\rm b}}$, also given by~\eqref{mathring kappa_pm(rho s [tilde y] [tilde y'])} and~\eqref{mathring kappa_pm gamma(rho s [tilde y] [tilde y'])} using $C^\infty$ extensions $\kappa_{\pm,\gamma}$ of the sections $\mathring\kappa_{\pm,\gamma}$ to $(M_\pm)^2_{\text{\rm b}}$, which vanishes to all orders at $\lb\cup\rb$. Therefore $\kappa_\pm=\kappa_{P_\pm}$ for some $P_\pm\in\Psib^{-\infty}(M_\pm;\Lambda\FF_\pm)$ induced by $\mathring P_\pm$.
\end{prop}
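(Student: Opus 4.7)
My plan is to construct the smooth extension $\kappa_\pm$ of $\mathring\kappa_\pm$ by working termwise in the series~\eqref{mathring kappa_pm(rho s [tilde y] [tilde y'])}, upgrading the uniform control from Lemma~\ref{l: kappa_mathring P_pm} to smoothness up to every boundary face of $(M_\pm)^2_{\text{\rm b}}$ together with vanishing to infinite order at $\lb\cup\rb$. First, across the front face $\ff=\{\rho=0\}$, smoothness should be essentially automatic: each term $\mathring\kappa_{\pm,\gamma}$ in~\eqref{mathring kappa_pm gamma(rho s [tilde y] [tilde y'])} is a smooth function of $x=\pm e^{-F(\tilde y)}\rho$ (through $\tilde\phi^t_{x,z}$ and $\tilde k_z$), so extends smoothly to $\rho=0$; Lemma~\ref{l: kappa_mathring P_pm} then gives $C^\infty$-convergence of the sum uniformly on $\bfF^2$-slices with $\rho\ge0$ and $s$ in compact subsets of $(0,\infty)$, yielding a smooth $\kappa_\pm$ on $(M_\pm)^2_{\text{\rm b}}\setminus(\lb\cup\rb)$ extending smoothly across $\ff\setminus(\lb\cup\rb)$.

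The main step will be the extension across $\lb$ and $\rb$ and the vanishing to infinite order there. The key observation I would exploit is the explicit support-localization of each term: the factor $f(\varkappa^{-1}(F(\tilde y)-F(\tilde y')-\ln a_\gamma s))$ in~\eqref{mathring kappa_pm gamma(rho s [tilde y] [tilde y'])} confines the support of $\mathring\kappa_{\pm,\gamma}$ to values of $s$ with $\ln a_\gamma s$ in a bounded set (depending only on $\bfF$ and $\supp f$). Consequently, for $s$ near $0$ or $\infty$ only those $\gamma\in\Gamma$ with $|\ln a_\gamma|\asymp|\ln s|$ contribute; by~\eqref{|ln a_gamma| le c_0 |gamma|}, these all satisfy $|\gamma|\ge c\,|\ln s|$ for some $c>0$ independent of $s$ and $\gamma$. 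Combining this with the exponential decay $e^{(mc_0-W/c_1)|\gamma|}$ of~\eqref{|nabla^m K_gamma(rho s [tilde y] [tilde y'])|} and the polynomial growth of the finitely generated abelian group $\Gamma$, I expect to get, for any b-differential operator $Q$ of order $m$ on $(M_\pm)^2_{\text{\rm b}}$ (built from $\partial_\rho$, $s\partial_s$ and tangential derivatives) and any $N\in\N_0$, on choosing $W$ large enough,
\[
|Q\kappa_\pm(\rho,s,[\tilde y],[\tilde y'])|
\le\sum_{|\gamma|\ge c|\ln s|}|Q\mathring\kappa_{\pm,\gamma}|
\le C_{m,N}\cdot\min\{s^N,s^{-N}\}\;,
\]
uniformly on $\rho\ge0$ and $(\tilde y,\tilde y')\in\bfF^2$. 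Since $N$ is arbitrary, the coefficient of $\kappa_\pm$ with respect to the natural b-density frame (containing $|ds/s|$ near $\lb\cup\rb$) will then vanish to all orders at $\lb\cup\rb$.

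The upshot: $\kappa_\pm$ extends to a $C^\infty$ section on $(M_\pm)^2_{\text{\rm b}}$, smooth up to every boundary face and flat at $\lb\cup\rb$, so by \Cref{ss: b-pseudodiff opers} it must be the Schwartz kernel of a unique $P_\pm\in\Psib^{-\infty}(M_\pm;\Lambda\FF_\pm)$; the identity $\kappa_\pm|_{\mathring M_\pm^2}=\mathring\kappa_\pm$ will tell me that $P_\pm$ is induced by $\mathring P_\pm$. The delicate part is the counting step: I need to check that the number of $\gamma\in\Gamma$ lying in the supporting strip $\{|\ln a_\gamma+\ln s|\le C\}$ with $|\gamma|\le R$ grows only polynomially in $R$, so that this polynomial count is dominated by the exponential factor $e^{-W|\gamma|/c_1}$ once $W$ is chosen large enough in comparison with both the order $m$ of $Q$ and the target order of vanishing $N$; this is where the estimate~\eqref{|nabla^m K_gamma(rho s [tilde y] [tilde y'])|} from the proof of Lemma~\ref{l: kappa_mathring P_pm} has to really be squeezed, and it is the only place where the higher-rank structure of $\Gamma$ enters non-trivially.
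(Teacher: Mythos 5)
Your proposal follows the same route as the paper's proof: smoothness across $\ff$ from Lemma~\ref{l: kappa_mathring P_pm}, then the support localization from the factor $f\bigl(\varkappa^{-1}(F(\tilde y)-F(\tilde y')-\ln a_\gamma s)\bigr)$ combined with~\eqref{|ln a_gamma| le c_0 |gamma|} to get the lower bound~\eqref{... < |gamma|} on $|\gamma|$, and then the exponential decay~\eqref{|nabla^m K_gamma(rho s [tilde y] [tilde y'])|} with $W$ arbitrarily large to force $s^{\mp N}$ decay of all derivatives, hence infinite-order vanishing at $\lb\cup\rb$. One small simplification versus your ``delicate counting step'': you do not actually need to bound the number of $\gamma$ lying in the supporting strip; the paper simply writes $e^{-c_1^{-1}W|\gamma|}=e^{-W_0|\gamma|}\,e^{-(c_1^{-1}W-W_0)|\gamma|}$, uses~\eqref{... < |gamma|} to pull the factor $e^{-(c_1^{-1}W-W_0)c_0^{-1}(\pm\ln s-R)}$ outside, and bounds what remains by the full convergent series $\sum_{\gamma}e^{-W_0|\gamma|}$ of~\eqref{sum_gamma in Gamma e^-W_0 |gamma| < infty}, which only requires $\Gamma$ to have at most exponential growth rather than the polynomial growth you invoke.
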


\begin{proof}
By \Cref{l: kappa_mathring P_pm}, $\mathring\kappa_\pm$ extends smoothly to $\mathring\ff$ ($\rho=0$ and $0<s<\infty$).

Take any compact $I\subset\R$ containing $\supp f$. According to~\eqref{mathring kappa_pm gamma(rho s [tilde y] [tilde y'])}, the sum in~\eqref{mathring kappa_pm(rho s [tilde y] [tilde y'])} can be taken for $\gamma\in \Gamma$ with
\[
\varkappa^{-1}(F(\tilde y)-F(\tilde y')-\ln a_\gamma s)\in I\;.
\]
Then, since $\tilde y,\tilde y'\in\bfF$, there exists $R>0$ such that    
\[
\ln s-R<\ln a_\gamma<\ln s+R\;.
\]
Combining this with~\eqref{|ln a_gamma| le c_0 |gamma|}, we get
\begin{equation}\label{... < |gamma|}
c_0^{-1}(\pm\ln s-R)<|\gamma|\;.  
\end{equation}
By~\eqref{sum_gamma in Gamma e^-W_0 |gamma| < infty},~\eqref{|K_gamma(rho s [tilde y] [tilde y'])|} and~\eqref{... < |gamma|}, for any $W>c_1W_0$, there is some $C'_2=C'_2(z,W)>0$ such that, for $\rho\ge0$, $s>0$ and $y,y' \in L$,
\begin{align}
\big|\mathring\kappa_\pm(\rho,s,y,y')\big|  
& < C_2\sum_{|\gamma|>c_0^{-1}(\pm\ln s-R)} e^{-c_1^{-1}W\,|\gamma|}\,\|\psi\|_{\AA,W,N}\,\|f\|_{I,C^0}\notag\\ 
& < C_2  e^{-(c_1^{-1}W-W_0) c_0^{-1}(\pm\ln s-R)}
\sum_{\gamma\in \Gamma} e^{-W_0 |\gamma|}\,\|\psi\|_{\AA,W,N}\,\|f\|_{I,C^0}\notag\\
& < C'_2s^{\mp(c_1^{-1}W-W_0)c_0^{-1}}\,\|\psi\|_{\AA,W,N}\,\|f\|_{I,C^0}\;.
\label{|mathring kappa_pm(rho s y y')| < ... |psi|_AA W N |f|_I C^0}
\end{align}
Using~\eqref{|nabla^m K_gamma(rho s [tilde y] [tilde y'])|} and~\eqref{... < |gamma|}, we similarly get that, for $m\in\N_0$, if $W>c_1(mc_0+W_0)$, then there is some $C'_3=C'_3(z,W,m)>0$ such that
\begin{multline}\label{|partial ... kappa_pm(rho s y y')| <  ... |psi|_AA W N+m |f|_I C^m}
\big|\partial^{m_1}_\rho\partial^{m_2}_s\nabla^{m_3}_y\nabla^{m_4}_{y'}\mathring\kappa_\pm(\rho,s,y,y')\big|\\
<C'_3s^{\mp(c_1^{-1}W-mc_0-W_0)c_0^{-1}}\,\|\psi\|_{\AA,W,N+m}\,\|f\|_{I,C^m}
\end{multline}
for $\rho\ge0$, $s>0$, $y,y' \in L$ and $m_1+m_2+m_3+m_4\le m$. Since $W$ is arbitrarily large, it follows that $\mathring\kappa_\pm$ also extends smoothly to $\lb\cup\rb$ ($s=0,\infty$), where it vanishes to all orders.
\end{proof}

\begin{notation}\label{n: subscripts psi f z}
The subscripts ``$\psi$'', ``$f$'' or ``$z$'' may be added to the notation $\mathring P_\pm$, $\mathring K_\pm$, $\mathring K_{\pm,\gamma}$, $\mathring\kappa_\pm$, $\kappa_\pm$ and $P_\pm$ if needed.
\end{notation}

\begin{prop}\label{p: (psi f) mapsto kappa_pm psi f is cont}
The bilinear map
\[
\AA\times\Cinftyc(\R)\to C^\infty\big((M_\pm)^2_{\text{\rm b}};\beta_{\text{\rm b}}^*(\Lambda\FF_\pm \boxtimes (\Lambda\FF^*_\pm\otimes \Omega M_\pm))\big)\;,\quad(\psi,f)\mapsto\kappa_{\pm,\psi,f}\;,
\]
is continuous.
\end{prop}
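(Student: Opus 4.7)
The target space is a Fréchet space whose topology is generated by the seminorms $\|\cdot\|_{K,C^m}$ for compact $K \subset (M_\pm)^2_{\text{\rm b}}$ and $m \in \N_0$. Since $\AA$ is Fréchet and $\Cinftyc(\R) = \bigcup_I \Cinftyc(I)$ is a (strict) LF-space with Fréchet steps $\Cinftyc(I)$, it suffices to establish continuity of the bilinear restriction $\AA \times \Cinftyc(I) \to C^\infty((M_\pm)^2_{\text{\rm b}}; \beta_{\text{\rm b}}^*(\cdot))$ for each compact $I \subset \R$, which (both factors being Fréchet) amounts to verifying a bound of the form
\[
\|\kappa_{\pm,\psi,f}\|_{K,C^m} \le C_{K,m}\,\|\psi\|_{\AA,W,N+m}\,\|f\|_{I,C^m}
\]
for every compact $K \subset (M_\pm)^2_{\text{\rm b}}$, every $m \in \N_0$, and some choice of $W$ and $N$ depending on $K$ and $m$.

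The plan is to leverage the estimates already produced in the proof of \Cref{p: kappa_P_pm}. On the open set $\{s \in (0,\infty)\}$ of $(M_\pm)^2_{\text{\rm b}}$, which exhausts everything except $\rb$, the tuple $(\rho,s,y,y')$ furnishes smooth coordinates, and the inequality~\eqref{|partial ... kappa_pm(rho s y y')| <  ... |psi|_AA W N+m |f|_I C^m} (valid for $W > c_1(mc_0 + W_0)$) gives
\[
\big|\partial^{m_1}_\rho\partial^{m_2}_s\nabla^{m_3}_y\nabla^{m_4}_{y'}\kappa_{\pm,\psi,f}(\rho,s,y,y')\big|
\le C'_3\,s^{\mp(c_1^{-1}W - mc_0 - W_0)c_0^{-1}}\,\|\psi\|_{\AA,W,N+m}\,\|f\|_{I,C^m}
\]
for $m_1+m_2+m_3+m_4 \le m$. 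For $K \subset \{s \ge \delta\}$ with $\delta > 0$ the $s^{\mp\cdots}$ factor is bounded, giving the desired seminorm estimate directly. Near $\lb$ (where $s \downarrow 0$) one takes $W$ large enough so that the exponent of $s$ is positive and arbitrarily large, which forces $\kappa_{\pm,\psi,f}$ together with all its derivatives in $(\rho,s,y,y')$ to vanish to all orders at $\lb$, yielding the $C^m$ bound on any compact $K$ meeting $\lb$. Near $\rb$ one performs the same analysis in the symmetric chart $(\rho',s^{-1},y,y')$ (obtained by swapping the two factors), where the estimate becomes a positive power of $s^{-1}$; the change of variables is smooth up to the boundary face and the resulting bounds are again of the claimed form.

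The main obstacle is essentially bookkeeping: one must verify that the rapid-decay bounds obtained in $(\rho,s,y,y')$-coordinates translate into uniform $C^m$ control in the b-coordinates of $(M_\pm)^2_{\text{\rm b}}$ across the three boundary hypersurfaces $\ff$, $\lb$, $\rb$ and their intersections, in particular at the corners $\ff \cap \lb$ and $\ff \cap \rb$. This is handled by exploiting the flexibility in $W$: for fixed $m$ and fixed compact $K$, any sufficiently large $W$ makes the $s^{\pm}$ factor absorb all growth introduced by changes of coordinate and differentiation across the corners. Once this uniform $C^m$ estimate on an arbitrary compact $K$ is established, continuity of the bilinear map follows immediately from the definition of the Fréchet topology on the target and of the seminorms $\|\cdot\|_{\AA,W,N}$ on $\AA$ and $\|\cdot\|_{I,C^m}$ on $\Cinftyc(I)$.
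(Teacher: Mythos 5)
Your proof is correct and takes essentially the same approach as the paper: the paper's own proof of \Cref{p: (psi f) mapsto kappa_pm psi f is cont} simply observes that the continuity follows from the two estimates~\eqref{|mathring kappa_pm(rho s y y')| < ... |psi|_AA W N |f|_I C^0} and~\eqref{|partial ... kappa_pm(rho s y y')| <  ... |psi|_AA W N+m |f|_I C^m} established in the proof of \Cref{p: kappa_P_pm}, which is exactly what you unfold. Your extra discussion of the reduction to $\Cinftyc(I)$, the role of the chart change $(\rho,s)\leadsto(\rho',1/s)$ near $\rb$, and the absorption of the Jacobian growth by taking $W$ large, is the bookkeeping that the paper leaves implicit, and it is carried out correctly.
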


\begin{proof}
This is an additional consequence of~\eqref{|mathring kappa_pm(rho s y y')| < ... |psi|_AA W N |f|_I C^0} and~\eqref{|partial ... kappa_pm(rho s y y')| <  ... |psi|_AA W N+m |f|_I C^m}.
\end{proof}

Recall the notation $\phi_L=\{\phi_L^t\}=\{\phi_0^t\}$ on $M^0\equiv L$ and $\tilde \phi_{\widetilde L}=\{\tilde \phi_{\widetilde L}^t\}=\{\tilde \phi_0^t\}$ on $\widetilde M^0\equiv\widetilde L$ (\Cref{ss: transv simple flows - suspension}), and the trivialization $\nu$ of ${}_+N\partial M_\pm$ (\Cref{ss: boundary-def func of M_pm - suspension}). Recall also that the indicial family is defined in \Cref{s: indicial}.

\begin{prop}\label{p: I_nu(P_pm z lambda))}
We have
\[
I_{\nu_\pm}(P_{\pm,z},\lambda)
\equiv\int_{-\infty}^{+\infty}\phi_{L,z+i\lambda}^{t*}\,\psi(D_{L,z+i\lambda})\,e^{i\lambda\varkappa t}f(t)\,dt\;.
\]
\end{prop}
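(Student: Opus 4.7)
The strategy is to compute the indicial family directly from the formula~\eqref{K_I_nu(A lambda)(y y')}, using the explicit expressions~\eqref{mathring kappa_pm(rho s [tilde y] [tilde y'])}--\eqref{mathring kappa_pm gamma(rho s [tilde y] [tilde y'])} for the restriction of $\kappa_{\pm,z}$ to the front face, and then recognize the result as a kernel on $L$ by invoking the covering-kernel formula~\eqref{k([tilde p] [tilde q]) equiv ...} together with the Witten conjugation identity~\eqref{Witten's opers}.

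First, by \Cref{p: kappa_P_pm}, $\kappa_{\pm,z}$ extends smoothly to $\mathring\ff$, where it is represented by the convergent series~\eqref{mathring kappa_pm(rho s [tilde y] [tilde y'])} with $\rho=0$. Setting $\rho=0$ in~\eqref{mathring kappa_pm gamma(rho s [tilde y] [tilde y'])} replaces $\tilde\phi^t_{\pm e^{-F(\tilde y)}\rho,z}$ by its restriction to the boundary leaf $\widetilde L\equiv\{0\}\times\widetilde L$, which is $\tilde\phi_{\widetilde L,z}^{t*}$; likewise $\tilde\phi^t_{\pm e^{-F(\tilde y)}\rho}$ becomes $\tilde\phi_{\widetilde L}^t$. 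Thus, in view of~\eqref{K_I_nu(A lambda)(y y')}, I obtain
\begin{equation*}
K_{I_{\nu_\pm}(P_{\pm,z},\lambda)}([\tilde y],[\tilde y'])
= \frac{1}{|\varkappa|}\sum_{\gamma\in\Gamma}\int_0^\infty s^{-i\lambda}\tilde\phi_{\widetilde L,z}^{t(\gamma,s)\,*}T_\gamma^*\,
\tilde k_z\bigl(\gamma\cdot\tilde\phi_{\widetilde L}^{t(\gamma,s)}(\tilde y),\tilde y'\bigr)\,f(t(\gamma,s))\,\frac{ds}{s}\,,
\end{equation*}
where $t(\gamma,s):=\frac{1}{\varkappa}(F(\tilde y)-F(\tilde y')-\ln a_\gamma s)$.

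Next, in the $\gamma$-th integral I change variables from $s$ to $t=t(\gamma,s)$, so that $s=a_\gamma^{-1}e^{F(\tilde y)-F(\tilde y')-\varkappa t}$ and $\frac{ds}{s}=-\varkappa\,dt$ (the resulting sign flip restores integration over all of $\R$ in the correct orientation, and cancels $|\varkappa|^{-1}$). This yields the factor
\begin{equation*}
s^{-i\lambda}=a_\gamma^{i\lambda}\,e^{-i\lambda(F(\tilde y)-F(\tilde y'))}\,e^{i\lambda\varkappa t}\,.
\end{equation*}
The central observation is that, by the Witten conjugation identity~\eqref{Witten's opers} applied on the leaf $\widetilde L$ with $\tilde\eta_0|_{\widetilde L}=d_{\widetilde L}F$, the perturbed kernels are related by
\begin{equation*}
\tilde k_{z+i\lambda}(\tilde y_1,\tilde y_2)=e^{-i\lambda F(\tilde y_1)}e^{i\lambda F(\tilde y_2)}\tilde k_z(\tilde y_1,\tilde y_2)\,,
\end{equation*}
and using $T_\gamma^*F=F-\ln a_\gamma$ from~\eqref{T_gamma^* F = F + ln a_gamma} one computes
\begin{equation*}
T_\gamma^*\tilde k_{z+i\lambda}(\tilde y,\tilde y')=a_\gamma^{i\lambda}\,e^{-i\lambda(F(\tilde y)-F(\tilde y'))}\,T_\gamma^*\tilde k_z(\tilde y,\tilde y')\,.
\end{equation*}
Thus the prefactor $a_\gamma^{i\lambda}e^{-i\lambda(F(\tilde y)-F(\tilde y'))}$ produced by the change of variables is precisely what converts $T_\gamma^*\tilde k_z$ into $T_\gamma^*\tilde k_{z+i\lambda}$. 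Similarly, the perturbed pullback satisfies $\tilde\phi_{\widetilde L,z+i\lambda}^{t*}=e^{i\lambda(\tilde\phi_{\widetilde L}^{t*}F-F)}\tilde\phi_{\widetilde L,z}^{t*}$, which absorbs the remaining $F$-dependent factors that arise when one moves the exponential through the pullback.

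After these rearrangements, the $\gamma$-th term takes the form
$\int_\R e^{i\lambda\varkappa t}\,\tilde\phi_{\widetilde L,z+i\lambda}^{t*}\,T_\gamma^*\tilde k_{z+i\lambda}(\gamma\cdot\tilde\phi_{\widetilde L}^t(\tilde y),\tilde y')\,f(t)\,dt$, and summing over $\gamma\in\Gamma$ and using~\eqref{k([tilde p] [tilde q]) equiv ...} (applicable here since $\psi\in\AA$ and $\psi(\cdot)\cdot e^{i\lambda\varkappa t}\in\AA$ uniformly) identifies the result with
\begin{equation*}
\int_{-\infty}^{+\infty}\phi_{L,z+i\lambda}^{t*}\psi(D_{L,z+i\lambda})\,e^{i\lambda\varkappa t}f(t)\,dt
\end{equation*}
as kernels on $L$. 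The exchange of sum and integral in both directions is justified by the exponential decay estimates~\eqref{|K_gamma(rho s [tilde y] [tilde y'])|} and~\eqref{|nabla^m K_gamma(rho s [tilde y] [tilde y'])|}, together with \Cref{p: k(y y') equiv ...}.

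The main obstacle I anticipate is the careful bookkeeping in the exponential factors: the change of variables produces the factor $a_\gamma^{i\lambda}e^{-i\lambda(F(\tilde y)-F(\tilde y'))}$ which must be split correctly between (i) the passage $\tilde k_z\leadsto \tilde k_{z+i\lambda}$ via Witten conjugation on the leaf, (ii) the passage $\tilde\phi_{\widetilde L,z}^{t*}\leadsto\tilde\phi_{\widetilde L,z+i\lambda}^{t*}$, and (iii) the $\Gamma$-invariance required to descend the covering series to $L$. Getting the signs and the role of $F(\gamma\cdot\tilde y)=F(\tilde y)-\ln a_\gamma$ consistent across these three places is the only delicate point; the analytic content (absolute convergence, smoothness) is already provided by the exponential bounds of \Cref{l: kappa_mathring P_pm}.
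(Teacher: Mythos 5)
Your proposal is correct and takes essentially the same route as the paper's proof: restrict the explicit kernel expression \eqref{mathring kappa_pm gamma(rho s [tilde y] [tilde y'])} to the front face $\rho=0$, substitute $t=\frac{1}{\varkappa}(F(\tilde y)-F(\tilde y')-\ln(a_\gamma s))$, and identify the resulting series via the Witten conjugation $\tilde k_{z+i\lambda}=e^{i\lambda(F(\tilde y')-F(\tilde y))}\tilde k_z$, the relation $\tilde\phi_{0,z+i\lambda}^{t*}=e^{i\lambda(\tilde\phi_0^{t*}F-F)}\tilde\phi_{0,z}^{t*}$, and the covering formula of \Cref{p: k(y y') equiv ...}. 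The ``delicate bookkeeping'' you flag is exactly what the paper carries out and is the entire content of the argument; note for your own records that the correct product of conversion factors from the leafwise kernel and the perturbed pullback is $a_\gamma^{i\lambda}e^{i\lambda(F(\tilde y')-F(\tilde y))}$, which indeed matches the $s^{-i\lambda}$ factor you computed from the change of variables.
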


\begin{proof}
By~\eqref{K_I_nu(A lambda)(y y')}, it is enough to show that the Schwartz kernel of the smoothing operator
\[
\int_{-\infty}^{+\infty}\phi_{L,z+i\lambda}^{t*}\,\psi(D_{L,z+i\lambda})\,e^{i\lambda\varkappa t}f(t)\,dt
\]
on $C^{-\infty}(L;\Lambda)$ is given by
\[
\int_0^\infty s^{-i\lambda}\kappa_{\pm,z}(0,s,y,y')\,\frac{ds}{s}\;,
\]
 at every $(y,y')\in L^2$. By \Cref{l: kappa_mathring P_pm,p: kappa_P_pm}, for all $\tilde y,\tilde y'\in\widetilde L$,
\begin{multline*}
\int_0^\infty s^{-i\lambda}\kappa_{\pm,z}(0,s,[\tilde y],[\tilde y'])\,\frac{ds}{s}\\
\begin{aligned}
&= \frac{1}{|\varkappa|}\sum_{\gamma\in \Gamma} \int_0^\infty
s^{-i\lambda} \tilde\phi_{0,z}^{\frac{1}{\varkappa}(F(\tilde y)-F(\tilde y')-\ln a_\gamma s)*}T_\gamma^*\\
&\phantom{=\text{}}\text{}\circ\tilde k_z\Big(\gamma\cdot \tilde\phi^{\frac{1}{\varkappa}(F(\tilde y)
-F(\tilde y')-\ln a_\gamma s)}_{0}(\tilde y),\tilde y'\Big) \\ 
&\phantom{=\text{}}\text{}\times f\Big(\frac{1}{\varkappa}\Big(F(\tilde y)
-F(\tilde y')-\ln a_\gamma s\Big)\Big)\frac{ds}{s}\\
&=\sum_{\gamma\in \Gamma} e^{i\lambda(F(\tilde y')-F(\tilde y)-\ln a_\gamma)} 
\int_{-\infty}^{+\infty}\tilde\phi_{0,z}^{t*}T_\gamma^*\,
\tilde k_z(\gamma\cdot \tilde\phi_0^t(\tilde y),\tilde y')e^{i\lambda\varkappa t}f(t)\,dt\;,
\end{aligned}
\end{multline*}
where we have used the change of variable
\[
t=\varkappa^{-1}(F(\tilde y)-F(\tilde y')-\ln s+\ln a_\gamma)\;,
\]
with
\begin{gather*}
s=e^{F(\tilde y)-F(\tilde y')+\ln a_\gamma-\varkappa t}\;,\quad dt=-\frac{ds}{\varkappa s}\;,\\
s=0\Leftrightarrow t=\sign(\varkappa)\infty\;,
\quad s=\infty\Leftrightarrow t=-\sign(\varkappa)\infty\;.
\end{gather*}
By \Cref{p: k(y y') equiv ...},
\[
k_{z+i\lambda}([\tilde y],[\tilde y']) =\sum_{\gamma\in \Gamma} 
T_\gamma^*\tilde k_{z+i\lambda}(\gamma\cdot\tilde y,\tilde y')\;.
\]
Moreover, by~\eqref{Witten's opers},
\[
\tilde k_{z+i\lambda}(\tilde y,\tilde y')= e^{i\lambda (F(\tilde y')-F(\tilde y))}
\tilde k_z(\tilde y,\tilde y')\;.
\]
So, by~\eqref{T_gamma tilde phi_x^t = tilde phi_a_gamma x^t T_gamma} and~\eqref{T_gamma^* F = F + ln a_gamma},
\begin{multline*}
\sum_{\gamma\in \Gamma} e^{i\lambda(F(\tilde y')-F(\tilde y)-\ln a_\gamma)} 
\tilde\phi_{0,z}^{t*}T_\gamma^*\,\tilde k_z(\gamma\cdot \tilde\phi_0^t(\tilde y),\tilde y')\\
\begin{aligned}
&= \sum_{\gamma\in \Gamma} e^{i\lambda (F(\tilde y')-F(\gamma\cdot \tilde y))}
\tilde\phi_{0,z}^{t*}T_\gamma^*\,\tilde k_z(\gamma\cdot\tilde\phi_0^t(\tilde y),\tilde y')\\
&= \sum_{\gamma\in \Gamma}e^{i\lambda  (\tilde\phi_0^{t*}F -F)(\gamma\cdot \tilde y)}\tilde\phi_{0,z}^{t*}    
e^{i\lambda (F(\tilde y')-F(\gamma\cdot\tilde\phi_0^t(\tilde y)))}T_\gamma^* \,\tilde k_z(\gamma\cdot
\tilde\phi_0^t(\tilde y),\tilde y') \\
&= \tilde\phi_{0,z+i\lambda}^{t*} \sum_{\gamma\in \Gamma}
T_\gamma^*\tilde k_{z+i\lambda}(\gamma\cdot \tilde\phi_0^t(\tilde y),\tilde y')
\equiv\phi_{0,z+i\lambda}^{t*}\,k_{z+i\lambda}(\phi_0^t([\tilde y]),[\tilde y'])\;.\qedhere
\end{aligned}
\end{multline*}
\end{proof}

\begin{notation}\label{n: subscript u}
In \Cref{n: subscripts psi f z}, we may also add the subscript ``$u$'' if we use a family of functions $\psi_u\in\AA$ depending on a parameter $u$. This also applies to $k_z$ and $\tilde k_z$.
\end{notation}

The identity element of $\Gamma$ is denoted by $e$.

\begin{prop}\label{p: (kappa_pm u - kappa_pm e u)|_Deltab to 0}
If $\psi_u(x)=e^{-ux^2}$, then $(\kappa_{\pm,u}-\kappa_{\pm,e,u})|_{\Delta_{\text{\rm b},\pm}}\to0$ as $u\downarrow0$ in the $C^\infty$ topology.
\end{prop}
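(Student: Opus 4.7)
The plan is to use the absolutely convergent series expansion $\kappa_{\pm,u} - \kappa_{\pm,e,u} = \sum_{\gamma \ne e} \kappa_{\pm,\gamma,u}$ provided by \Cref{l: kappa_mathring P_pm} and \Cref{p: kappa_P_pm}, and to control each restriction $\kappa_{\pm,\gamma,u}|_{\Delta_{\text{\rm b},\pm}}$ via the Gaussian decay of the leafwise heat kernel. Restriction to $\Delta_{\text{\rm b},\pm}$ amounts to setting $s=1$ and $\tilde y' = \tilde y$ in~\eqref{mathring kappa_pm gamma(rho s [tilde y] [tilde y'])}, which forces $t_\gamma = -h_\pm(\gamma) = -\varkappa^{-1}\ln a_\gamma$, and the cutoff $f(t_\gamma)$ keeps only $\gamma$ with $h_\pm(\gamma) \in -\supp f$, a compact condition.

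The core ingredient is the Gaussian heat kernel bound $|\tilde k_{z,u}(\tilde p,\tilde q)| \le C u^{-n''/2} e^{-c\,d_{\widetilde L}(\tilde p,\tilde q)^2/u}$ from~\eqref{heat kernel estimates}, together with analogous bounds on its higher covariant derivatives. Since $t_\gamma$ lies in the compact set $-\supp f$ and $\tilde\phi$ is of $\R$-local bounded geometry on $\mathring{\widetilde M}_\pm$, the displacement satisfies $d_{\widetilde L}(\tilde\phi^{t_\gamma}_x(\tilde y), \tilde y) \le R$ uniformly for $\gamma$ with $h_\pm(\gamma) \in -\supp f$, $x \in \R^\pm$, and $\tilde y$ in a fixed fundamental domain $\bfF\subset\widetilde L$. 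Applying the word-metric estimate~\eqref{word-metric with bfK} to $\bfK = \bfF^2$ and the triangle inequality then gives
\[
d_{\widetilde L}(\gamma\cdot\tilde\phi^{t_\gamma}_x(\tilde y), \tilde y) \ge c_1^{-1}|\gamma| - c_2 - R.
\]
For $|\gamma|$ large, the right-hand side is comparable to $|\gamma|/(2c_1)$, so each $\kappa_{\pm,\gamma,u}|_{\Delta_{\text{\rm b},\pm}}$ is bounded in $C^\infty$ by $C'\,u^{-N'} e^{-c|\gamma|^2/u}$. Combined with the polynomial growth of the finitely generated abelian group $\Gamma$, the tail sum over $|\gamma|>N$ tends to zero in $C^\infty$ uniformly as $u\downarrow 0$; for the finitely many $\gamma \ne e$ with $|\gamma|\le N$, each individual term is bounded by $C u^{-N'} e^{-c_\gamma/u}$ and hence tends to zero faster than any polynomial. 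Derivatives with respect to local coordinates on $\Delta_{\text{\rm b},\pm}$ are absorbed by differentiating the Gaussian bound and using the bounded geometry of $\tilde\phi^t_x$ and $\widetilde L$.

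I expect the main obstacle to be securing a strictly positive uniform lower bound on $d_{\widetilde L}(\gamma\cdot\tilde\phi^{t_\gamma}_x(\tilde y), \tilde y)$ for the finitely many small-$|\gamma|$ terms: the free action of $\Gamma$ on $\widetilde L$ ensures $\gamma\cdot\tilde y \ne \tilde y$ and thus $d_{\widetilde L}(\gamma\cdot\tilde y, \tilde y) > 0$, but the displacement bound $R$ from $\tilde\phi^{t_\gamma}_x$ could in principle approach or exceed this positive distance at points corresponding to closed orbits of $\phi$ in $\mathring M_\pm$. Resolving this requires invoking the weakly simple hypothesis and the explicit structure~\eqref{tilde phi^t(tilde y x) widetilde Z} of $\tilde\phi^t_x$ in the suspension, which forces any such coincidence points to be isolated; one then argues that on $\Delta_{\text{\rm b},\pm}$ the corresponding contributions either sit away from the compact set where the Gaussian bound is tight, or vanish in the limit via a compactness/cancellation argument combined with the smoothness already established in \Cref{p: kappa_P_pm}.
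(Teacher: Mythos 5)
Your overall strategy matches the paper's: decompose $\kappa_{\pm,u}-\kappa_{\pm,e,u}$ as the series $\sum_{\gamma\ne e}\kappa_{\pm,\gamma,u}$ provided by \Cref{l: kappa_mathring P_pm} and \Cref{p: kappa_P_pm}, restrict to $\Delta_{\text{\rm b},\pm}$, feed a lower bound on the leafwise displacement into the Gaussian heat kernel estimate~\eqref{heat kernel estimates}, and sum over $\Gamma$ using polynomial growth. The gap you flag at the end is exactly the one real difficulty, and your diagnosis of it is correct; but your speculated resolution does not reflect what the paper actually does.

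The paper does not bound $d_{\widetilde L}(\gamma\cdot\tilde\phi^{t_\gamma}_x(\tilde y),\tilde y)$ directly by~\eqref{word-metric with bfK} plus a triangle inequality, which, as you note, gives only $c_1^{-1}|\gamma|-c_2-R$ and is useless for small $|\gamma|$. Instead it first converts the displacement into a leafwise distance on $M_\pm$: using the $\Gamma$-equivariance~\eqref{T_gamma tilde phi_x^t = tilde phi_a_gamma x^t T_gamma}, $\phi^{-h_\pm(\gamma)}(p)=\bigl[x,\gamma\cdot\tilde\phi^{-h_\pm(\gamma)}_x(\tilde y)\bigr]$ for $p=[x,\tilde y]$, and since $\pi_{M_\pm}$ restricts to a leafwise isometry $\{x\}\times\widetilde L\to L_p$, one gets the exact identity
\[
d_{\widetilde L}\bigl(\gamma\cdot\tilde\phi^{-h_\pm(\gamma)}_x(\tilde y),\tilde y\bigr)
=d_{\FF}\bigl(\phi^{-h_\pm(\gamma)}(p),p\bigr)\;.
\]
Then Corollary~\ref{c: d_FF(phi^t(p) p) ge c_3 |gamma|} supplies a \emph{uniform} linear bound $\ge c_3|\gamma|$ for all $\gamma\ne e$, absorbing both the small- and large-$|\gamma|$ regimes at once. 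The input for small $|\gamma|$ there is Lemma~\ref{l: d_FF(phi^t(p) p) ge inj_L}: in a foliated chart of the suspension centred at $p$, the transverse coordinate $x$ changes under $\phi^t$ by the factor $e^{\varkappa t}\ne1$ whenever $t\ne0$, so $\phi^t(p)$ and $p$ lie in different plaques whenever $\phi^t$ preserves $L_p$, whence $d_{\FF}(\phi^t(p),p)\ge\inj_L$. This is a structural, plaque-transversality fact about suspensions, not a genericity or isolation argument.

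Your proposed fix (``weakly simple, isolated coincidence points, cancellation'') points in the wrong direction and would not close the gap. There is no cancellation: if the collar flow had a closed orbit $c$ with period $-h_\pm(\gamma_0)\in\supp f$, the term $\kappa_{\pm,\gamma_0,u}|_{\Delta_{\text{\rm b},\pm}}$ would converge on $c$ to the local Lefschetz contribution $\epsilon_c(\cdot)$ rather than to zero, so the statement would simply be false there. What is actually needed is a strictly positive pointwise lower bound on the displacement, valid for \emph{every} $p\in\mathring M_\pm$, and that is precisely what the injectivity-radius argument of Lemma~\ref{l: d_FF(phi^t(p) p) ge inj_L} provides. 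To complete your proof along these lines you should cite that lemma (through Corollary~\ref{c: d_FF(phi^t(p) p) ge c_3 |gamma|}) instead of the word-metric estimate alone.
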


\begin{proof}
For $\gamma\in\Gamma$ and $p=[x,\tilde y]=[a_\gamma x,\gamma\cdot\tilde y]\in\mathring M_\pm$, by~\eqref{tilde phi^t(tilde y x) widetilde Z} and~\eqref{T_gamma tilde phi_x^t = tilde phi_a_gamma x^t T_gamma},
\[
\phi^{-h_\pm(\gamma)}(p)=\big[x,\phi^{-h_\pm(\gamma)}_{a_\gamma x}(\gamma\cdot\tilde y)\big]
=\big[x,\gamma\cdot\phi^{-h_\pm(\gamma)}_x(\tilde y)\big]\;.
\]
Thus, using that $\pi_M$ defines an isometric diffeomorphism of $\{x\}\times\widetilde L\equiv\widetilde L$ to $L_p$, it follows from \Cref{c: d_FF(phi^t(p) p) ge c_3 |gamma|} that there is some $c_3>0$, independent of $p$ and $\gamma$, such that, if $h_\pm(\gamma)\in\supp f$, then
\[
d_{\widetilde L}\big(\gamma\cdot\phi^{-h_\pm(\gamma)}_x(\tilde y),\tilde y\big)=d_{\FF}\big(\phi^{-h_\pm(\gamma)}(p),p\big)\ge c_3|\gamma|\;.
\]
Therefore, by~\eqref{heat kernel estimates} and since $\phi$ is of $\R$-local bounded geometry, for $m_1,m_2\in\N_0$, $0<u<u_0$, $\gamma\in\Gamma$, $\rho>0$ and $\tilde y\in\widetilde L$, we get
\[
\big|\partial_\rho^{m_1}\nabla_{\tilde y}^{m_2}\tilde k_{u,z}\big(\gamma\cdot\phi^{-h_\pm(\gamma)}_{\pm e^{-F(\tilde y)}\rho}(\tilde y),\tilde y\big)\big|
\le C_1u^{-(n-1+m_1+m_2)/2}e^{-C_2c_3^2|\gamma|^2/u}\;,
\]
where $\tilde k_{u,z}$ is the Schwartz kernel of $\psi_u(D_{\widetilde L,z})=e^{-u\Delta_{\widetilde L,z}}$. Using again the $\R$-local bounded geometry of $\phi$ and the compactness of $\supp f$, it follows that there is some $C_3>0$ such that
\[
|\partial_\rho^{m_1}\nabla_y^{m_2}\kappa_{\gamma,u}(\rho,1,y,y)|\le C_3u^{-(n-1+m_1+m_2)/2}e^{-C_2c_3^2|\gamma|^2/u}
\]
for $m_1,m_2\in\N_0$, $0<u<u_0$, $\gamma\in\Gamma$, $\rho>0$ and $y\in L$. So
\[
|\partial_\rho^{m_1}\nabla_y^{m_2}(\kappa_{\pm,u}-\kappa_{\pm,e,u})(\rho,1,y,y)|
\le C_3u^{-(n-1+m_1+m_2)/2}\sum_{\gamma\in\Gamma\setminus\{e\}} e^{-C_2c_3^2|\gamma|^2/u}\;,
\]
which converges to zero as $u\downarrow0$.
\end{proof}

\begin{cor}\label{c: kappa_pm u|_Deltab to 0}
If $\psi_u(x)=e^{-ux^2}$ and $f(0)=0$, then $\kappa_{\pm,u}|_{\Delta_{\text{\rm b},\pm}}\to0$ as $u\downarrow0$ in the $C^\infty$ topology. 
\end{cor}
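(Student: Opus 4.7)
The plan is to decompose
\[
\kappa_{\pm,u}|_{\Delta_{\text{\rm b},\pm}} = \bigl(\kappa_{\pm,u}-\kappa_{\pm,e,u}\bigr)\bigr|_{\Delta_{\text{\rm b},\pm}} + \kappa_{\pm,e,u}|_{\Delta_{\text{\rm b},\pm}}
\]
and treat the two pieces separately. The first piece tends to $0$ in the $C^\infty$ topology directly by Proposition~\ref{p: (kappa_pm u - kappa_pm e u)|_Deltab to 0} (this is precisely its content, and crucially it does not require $f(0)=0$). So the whole content of the corollary is to show that under the assumption $f(0)=0$ the identity contribution $\kappa_{\pm,e,u}|_{\Delta_{\text{\rm b},\pm}}$ vanishes.

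For this, I would invoke the explicit formula~\eqref{mathring kappa_pm gamma(rho s [tilde y] [tilde y'])} with $\gamma=e$, for which $a_e=1$, $\ln a_e=0$, and $T_e=\mathrm{id}$. Recall from \Cref{ss: opers suspension} that in the local coordinates $(\rho,s,y,y')$ on $(M_\pm)^2_{\text{\rm b}}$ the b-diagonal is described by $s=1$ and $y=y'$. Lifting to $\widetilde L$, one can represent points of $\Delta_{\text{\rm b},\pm}$ by $(\rho,1,[\tilde y],[\tilde y])$. Substituting $s=1$ and $\tilde y'=\tilde y$ into~\eqref{mathring kappa_pm gamma(rho s [tilde y] [tilde y'])} with $\gamma=e$, the argument of $f$ collapses to $\varkappa^{-1}(F(\tilde y)-F(\tilde y)-\ln 1)=0$. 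Therefore the entire expression at points of $\Delta_{\text{\rm b},\pm}$ becomes a smooth section multiplied by the scalar $f(0)$, which vanishes identically by hypothesis. Hence $\kappa_{\pm,e,u}|_{\Delta_{\text{\rm b},\pm}}\equiv 0$ for every $u>0$, and combined with Proposition~\ref{p: (kappa_pm u - kappa_pm e u)|_Deltab to 0} this yields the $C^\infty$ convergence claimed. There is no real obstacle here: once the trivial observation that the $\gamma=e$ term sees $f$ only at $0$ is in place, the corollary is an immediate bookkeeping consequence of the previous proposition.
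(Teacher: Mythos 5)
Your proof is correct and is exactly the argument the paper intends: the corollary is stated without a separate proof because it reduces immediately to \Cref{p: (kappa_pm u - kappa_pm e u)|_Deltab to 0} once one notices that, on $\Delta_{\text{\rm b},\pm}$ (i.e., $s=1$, $\tilde y'=\tilde y$), the $\gamma=e$ term of~\eqref{mathring kappa_pm gamma(rho s [tilde y] [tilde y'])} reduces to $\tfrac{1}{|\varkappa|}\tilde k_{z,u}(\tilde y,\tilde y)\,f(0)\,|ds/s|$, which vanishes identically under the hypothesis $f(0)=0$; this identification of $\mathring\kappa_{\pm,e,u}|_{\Delta_{\text{\rm b},\pm}}$ with an $f(0)$-multiple is precisely what is used (for general $f$) in the displayed computation in the proof of the very next corollary, \Cref{c: lim_u to 0 trs(kappa_pm u|_Delta_b pm) equiv f(0) Pf(R_FF_pm) |omega_pm|}.
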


Recall the notation $e(\FF_\pm,g_{\FF_\pm})$ if $n-1$ is even (\Cref{ss: e(FF g_FF),ss: g_M}), and also the notation $C^{0,\infty}_{\varpi_\pm}(M_\pm;\bOmega)$ (\Cref{ss: diff ops on fol mfds}).

\begin{cor}\label{c: lim_u to 0 trs(kappa_pm u|_Delta_b pm) equiv f(0) Pf(R_FF_pm) |omega_pm|}
If $\psi_u(x)=e^{-ux^2}$, then
\[
\lim_{u\downarrow0}\str(\kappa_{\pm,u}|_{\Delta_{\text{\rm b},\pm}})\equiv
\begin{cases}
f(0)\,e(\FF_\pm,g_{\FF_\pm})\,|\omega_\pm| & \text{if $n-1$ is even}\\
0 & \text{if $n-1$ is odd}
\end{cases}
\]
in $C^{0,\infty}_{\varpi_\pm}(M_\pm;\bOmega)$, using the identity $\Delta_{\text{\rm b},\pm}\equiv M_\pm$.
\end{cor}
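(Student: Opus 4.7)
The plan is to reduce the limit to the single term $\gamma=e$ in the series~\eqref{mathring kappa_pm(rho s [tilde y] [tilde y'])}, evaluate that term explicitly on the b-diagonal, and then invoke the leafwise local index formula.

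First, I would use Proposition~\ref{p: (kappa_pm u - kappa_pm e u)|_Deltab to 0} to discard all off-diagonal terms: $(\kappa_{\pm,u}-\kappa_{\pm,e,u})|_{\Delta_{\text{\rm b},\pm}}\to0$ in the $C^\infty$ topology as $u\downarrow0$, so it suffices to analyze $\str\kappa_{\pm,e,u}|_{\Delta_{\text{\rm b},\pm}}$. The convergence upgrades to $C^{0,\infty}_{\varpi_\pm}(M_\pm;\bOmega)$ using the uniform estimates~\eqref{|nabla^m K_gamma(rho s [tilde y] [tilde y'])|} from the proof of Proposition~\ref{p: kappa_P_pm}, combined with the continuity statement of Proposition~\ref{p: (psi f) mapsto kappa_pm psi f is cont}.

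Next, I would read off $\kappa_{\pm,e,u}|_{\Delta_{\text{\rm b},\pm}}$ from formula~\eqref{mathring kappa_pm gamma(rho s [tilde y] [tilde y'])} with $\gamma=e$. On $\Delta_{\text{\rm b},\pm}=\{s=1,\ \tilde y=\tilde y'\}$ the time parameter becomes $t=\varkappa^{-1}(F(\tilde y)-F(\tilde y)-\ln 1)=0$, so $\tilde\phi^{0}_{x,z}=\id$ and the formula collapses to
\[
\kappa_{\pm,e,u}\big|_{\Delta_{\text{\rm b},\pm}}([x,\tilde y])
=\frac{f(0)}{|\varkappa|}\,\tilde k_{u,z}(\tilde y,\tilde y)\,\Big|\frac{ds}{s}\Big|,
\]
where $\tilde k_{u,z}$ is the Schwartz kernel of $e^{-u\Delta_{\widetilde L,z}}$ on the leaf $\widetilde L$.

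Then I would apply Theorem~\ref{t: e_z l} to the $(n-1)$-dimensional leafwise heat kernel $\tilde k_{u,z}$. If $n-1$ is odd, part~\ref{i: str Theta_z j(p p) = 0} gives $\str\tilde k_{u,z}(\tilde y,\tilde y)\to 0$ as $u\downarrow 0$, yielding the second case of the corollary. If $n-1$ is even, part~\ref{i: str Theta_z n/2(p p) |dvol(p)| = e(M g)(p)} gives $\str\tilde k_{u,z}(\tilde y,\tilde y)\,|{\dvol}_{\widetilde L}|\to e(\widetilde L,g_{\widetilde L})(\tilde y)$. Since $\pi_L\colon\widetilde L\to L$ is a Riemannian covering, $e(\widetilde L,g_{\widetilde L})=\pi_L^*\,e(L,g_L)$; and since every leaf of $\FF_\pm$ is a Riemannian cover of $L$ via $\varpi_\pm$ (by the suspension construction and the choice of $g_{\FF_\pm}=\varpi^*g_L$), this descends to $e(\FF_\pm,g_{\FF_\pm})$ on $M_\pm$ under the identification $\Delta_{\text{\rm b},\pm}\equiv M_\pm$.

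The main obstacle is the density bookkeeping at the very end: one must verify that, under the canonical isomorphism $\bOmega(M_\pm)^2_{\text{\rm b}}|_{\Delta_{\text{\rm b},\pm}}\cong\bOmega M_\pm$ recalled in \Cref{ss: b-stretched product}, the transverse factor $|\varkappa|^{-1}|ds/s|$ combined with the leafwise volume $|{\dvol}_{\widetilde L}|$ furnished by Theorem~\ref{t: e_z l} is precisely the b-density denoted $|\omega_\pm|$ in the statement. Using $\tilde\omega=|\varkappa|^{-1}e^F\,dx$ and the coordinate $s=\rho/\rho'$ on $(M_\pm)^2_{\text{\rm b}}$, one checks that $|\varkappa|^{-1}|ds/s|$ restricted to $\Delta_{\text{\rm b},\pm}$ coincides with the transverse b-volume $|\omega_{\text{\rm b},\pm}|$, so that the full limit b-density on $M_\pm$ is indeed $f(0)\,e(\FF_\pm,g_{\FF_\pm})\,|\omega_\pm|$.
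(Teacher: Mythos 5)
Your proposal follows the same route as the paper's proof: reduce to the single term $\gamma=e$ via \Cref{p: (kappa_pm u - kappa_pm e u)|_Deltab to 0}, evaluate that term on $\Delta_{\text{\rm b},\pm}$ (where $s=1$ forces $t=0$, so the formula collapses to $|\varkappa|^{-1}f(0)\,\tilde k_{z,u}(\tilde y,\tilde y)$ times the transverse b-density), and then apply \Cref{t: e_z l} to the $(n-1)$-dimensional leafwise heat kernel. The paper handles the density identification more directly by working with $\mathring K_{\pm,e,u}(\rho,\rho,y,y)$ and invoking~\eqref{rho_pm^-1 d rho_pm}, but your bookkeeping via $s=\rho/\rho'$ is the same computation in the blown-up coordinates; and your extra step upgrading the $C^\infty$ convergence of \Cref{p: (kappa_pm u - kappa_pm e u)|_Deltab to 0} to the $C^{0,\infty}_{\varpi_\pm}$ topology via the uniform estimates~\eqref{|nabla^m K_gamma(rho s [tilde y] [tilde y'])|} is a point the paper leaves implicit. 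The argument is correct and essentially matches the paper's.
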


\begin{proof}

By \Cref{p: kappa_P_pm,p: (kappa_pm u - kappa_pm e u)|_Deltab to 0}, and~\eqref{mathring K_pm(rho rho' [tilde y] [tilde y'])},~\eqref{mathring K_pm gamma(rho rho' [tilde y] [tilde y'])} and~\eqref{rho_pm^-1 d rho_pm}, for all $\rho>0$ and $y=[\tilde y]\in L$ with $\tilde y\in\widetilde L$,
\begin{multline*}
\lim_{u\downarrow0}\str\mathring\kappa_{\pm,u}(\rho,1,y,y)\\
\begin{aligned}
&=\lim_{u\downarrow0}\str\mathring\kappa_{\pm,e,u}(\rho,1,y,y)
=\lim_{u\downarrow0}\str\mathring K_{\pm,e,u}(\rho,\rho,y,y)\\
&=\frac{f(0)}{|\varkappa|}\,\Big|\frac{d\rho}{\rho}\Big|\,\lim_{u\downarrow0}\str\tilde k_{z,u}(\tilde y,\tilde y)
=f(0)\,|\omega_\pm|(y)\,\lim_{u\downarrow0}\str\tilde k_{z,u}(\tilde y,\tilde y)\;.
\end{aligned}
\end{multline*} 
But, by \Cref{t: e_z l},
\[
\lim_{u\downarrow0}\str\tilde k_{z,u}(\tilde y,\tilde y)=e(\widetilde L,g_{\widetilde L})(\tilde y)=e(L,g_L)(y)\equiv e(\FF_\pm,g_{\FF_\pm})(\rho,y)
\]
if $n-1$ is even, and
\[
\lim_{u\downarrow0}\str\tilde k_{z,u}(\tilde y,\tilde y)=0
\]
if $n-1$ is odd.
\end{proof}

\section{Operators on the components $M^1_l$}\label{s: opers on M^1_l}

Consider the notation of \Cref{ss: globalization,ss: components of M^1}; in particular, consider the boundary-defining function $\rho=\rho_l$ on every $M_l$ and the trivialization $\nu=\nu_l$ of ${}_+N\partial M_l$. According to \Cref{ss: b-stretched product}, consider also the lifts of $\rho$ to $M_l^2$ from the left and right factors, $\rho$ and $\rho'$, and the function $s=s_l=\rho/\rho':M_l^2\to[0,\infty]$, as well as the corresponding functions $\rho$, $\rho'$ and $s$ on $(M_l)^2_{\text{\rm b}}$. Equip $\mathring M_l$ with the Riemannian metric $g_{\text{\rm b},l}$, so that $\mathring\FF_l$ becomes a Riemannian foliation of bounded geometry (\Cref{ss: transverse structures of bd geometry}). Consider the leafwise perturbed operators for $(\mathring M_l,\mathring\FF_l)$ defined by the leafwise-closed form $\eta_0$, which agrees with $\eta$ on the collar neighborhood of the boundaty we have fixed. For any $\psi\in\AA$, $f\in\Cinftyc(\R)$, $z\in\C$ and every index $l$, the operator \index{$\mathring P_l$}
\[
\mathring P_l=\int_{-\infty}^{+\infty}\phi_{l,z}^{t*}\,\psi(D_{\mathring\FF_{l,z}})f(t)\,dt
\]
on $H^{-\infty}(\mathring M_l;\Lambda\mathring\FF_l)$ is a twisted version of~\eqref{P with f}, which is smoothing by the appropriate analog of~\eqref{P_psi} (\Cref{s: Witten's opers on Riem folns of bd geom}). Let \index{$\mathring K_l$}
\[
\mathring K_l=K_{\mathring P_l}\in C^\infty(\mathring M_l^2;\Lambda\mathring\FF_l\boxtimes (\Lambda\mathring\FF_l^*\otimes \Omega \mathring M_l))\;.
\]

\begin{lem}\label{l: |mathring P_l|_rho^aH^k rho^aH^m}
For any compact $I\subset \R$ containing $\supp f$, and for all $k,m\in\N_0$ and $a\in\R$, there are some $C',C''>0$ and $N\in\N_0$, depending only on $I$, $k$, $m$ and $a$, such that
\[
\big\|\mathring P_l\big\|_{\rho^aH^k,\rho^aH^m} 
\le C''\,\|\psi\|_{\AA,C',N}\,\|f\|_{I,C^N}\;.
\]
\end{lem}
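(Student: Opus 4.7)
The proof will reduce to a uniform norm estimate on the \emph{conjugated} operator $\rho^{-a}\mathring P_l\,\rho^a$ acting between unweighted Sobolev spaces $H^k(\mathring M_l;\Lambda\mathring\FF_l)\to H^m(\mathring M_l;\Lambda\mathring\FF_l)$; once we have
\[
\bigl\|\rho^{-a}\mathring P_l\,\rho^a\bigr\|_{k,m}\le C''\,\|\psi\|_{\AA,C',N}\,\|f\|_{I,C^N},
\]
the stated bound follows from the definition of $\rho^aH^k$ in \Cref{ss: weighted sps,ss: weighted b-Sobolev}. So the plan is to identify the conjugated operator with one of the smoothing operators already controlled by the bounded-geometry theory of \Cref{s: Witten's opers on Riem folns of bd geom}.

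The key input is the identity $\eta_0=d_{\mathring\FF_l}(\ln\rho)$ on $\mathring M_l$, which is~\eqref{eta_0 = d_0,1(ln rho_l)}. With $\ln\rho$ playing the role of the function $F$ in \Cref{ss: flat line bundle,ss: perturbation of pull-back homs}, the Witten-type relations of~\eqref{Witten's opers} become, for $a\in\R$,
\begin{gather*}
\rho^{-a}d_{\mathring\FF_l,z}\,\rho^a=d_{\mathring\FF_l,z+a},\quad
\rho^{-a}\delta_{\mathring\FF_l,z}\,\rho^a=\delta_{\mathring\FF_l,z-a},\\
\rho^{-a}D_{\mathring\FF_l,z}\,\rho^a=D_{\mathring\FF_l,z+a,z-a},\quad
\rho^{-a}\Delta_{\mathring\FF_l,z}\,\rho^a=\Delta_{\mathring\FF_l,z+a,z-a},
\end{gather*}
using the two-parameter operators of \Cref{ss: leafwise 2 parameters}. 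The analogous computation for the perturbed flow pull-back, using $\phi_{l,z}^{t*}=e^{z(\phi_l^{t*}F-F)}\phi_l^{t*}$, gives $\rho^{-a}\phi_{l,z}^{t*}\rho^a=\phi_{l,z+a}^{t*}$. Combining these inside the defining integral of $\mathring P_l$ yields
\[
\rho^{-a}\mathring P_l\,\rho^a=\int_{-\infty}^{+\infty}\phi_{l,z+a}^{t*}\,\psi\bigl(D_{\mathring\FF_l,z+a,z-a}\bigr)\,f(t)\,dt,
\]
which is an operator of exactly the form covered by the two-parameter extension described at the end of \Cref{s: Witten's opers on Riem folns of bd geom}.

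The proof then invokes the analog of the Sobolev estimate~\eqref{|P_psi|_m m'} for this two-parameter setup. This extension is available because $\mathring\FF_l$ is of bounded geometry (\Cref{p: FF_pm is of bd geom} together with \Cref{ss: globalization}), $\phi_l$ is of $\R$-local bounded geometry on $\mathring M_l$ (by \Cref{p: Z -> A}), $\eta_0\in\Cinftyub(\mathring M_l;\Lambda^1)$ (by \Cref{p: d(ln rho) in C^infty_ub(mathring M_pm Lambda^1)}), and $D_{\mathring\FF_l,z+a,z-a}$ is uniformly leafwise elliptic with symmetric leading symbol. Thus the arguments of \Cref{ss: Leafwise functional calculus,ss: smoothing operators} apply verbatim (omitting only the $\Pi_\infty$ statements, which we do not need), producing a bilinear estimate of the type~\eqref{|P_psi|_m m'} with constants $C',C'',N$ depending only on $I$, $k$, $m$ and the spectral shift $a$.

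The main obstacle is purely bookkeeping: verifying that the constants in the bounded-geometry functional calculus depend on $a$ only through the shift in the closed form driving the Witten perturbation, and that all the hypotheses of the two-parameter extension of \Cref{ss: smoothing operators} hold uniformly. Once this is in place, no new analytic input beyond~\eqref{|e^itD_0 alpha|_m},~\eqref{psi(D_0)}, and the finite-propagation/exponential-decay machinery of \Cref{ss: Witten - mfds of bd geom} is required.
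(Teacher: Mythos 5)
Your proof is correct and takes essentially the same approach as the paper: the paper's own proof is a one-liner that cites the conjugation identity $D_{\mathring\FF_l,z}=\rho^aD_{\mathring\FF_l,z+a,z-a}\rho^{-a}$ (via~\eqref{Witten's opers} and~\eqref{eta_0 = d_0,1(ln rho_l)}) and then invokes the two-parameter analog of~\eqref{|P_psi|_m m'}. You have merely written out the conjugation identities for $d_{\mathring\FF_l,z}$, $\delta_{\mathring\FF_l,z}$ and $\phi_{l,z}^{t*}$ explicitly and listed the hypotheses needed for the bounded-geometry functional calculus, all of which are correct and in line with what the paper implicitly relies upon.
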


\begin{proof}
By~\eqref{Witten's opers} and~\eqref{eta_0 = d_0,1(ln rho_l)}, $D_{\mathring\FF_l,z}=\rho^aD_{\mathring\FF_l,z+a,z-a}\rho^{-a}$ (see \Cref{ss: leafwise 2 parameters}). So the result follows from the analog of~\eqref{|P_psi|_m m'} for $D_{\mathring\FF_l,z+a,z-a}$ (\Cref{s: Witten's opers on Riem folns of bd geom}).
\end{proof}

\begin{prop}\label{p: mathring K_l}
The kernel $\mathring K_l$ has a $C^\infty$ extension to $M_l^2\setminus (\partial M_l)^2$ that vanishes to all orders on $(\partial M_l \times \mathring M_l)\cup (\mathring M_l \times \partial M_l)$.
\end{prop}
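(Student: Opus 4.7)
The plan is to lift to the Fedida cover $\pi_l\colon\widetilde M^1_l\equiv\R\times L_l\to M^1_l$ and adapt the analysis used in the suspension case (Proposition~\ref{p: kappa_P_pm}). Using the $\Gamma_l$-equivariance of $\tilde\phi_l$ and $\widetilde Z_l$ together with the twisted analog of \eqref{Schwartz kernel}, write the lift $\widetilde{\mathring K}_l$ as a convergent series
\[
\widetilde{\mathring K}_l(\tilde p,\tilde p')=\sum_{\gamma\in\Gamma_l}\widetilde{\mathring K}_{l,\gamma}(\tilde p,\tilde p'),
\]
where each term is built, in the manner of \eqref{mathring K_pm gamma(rho rho' [tilde y] [tilde y'])}, from $\tilde\phi^{\bullet*}_{l,z}$, $T_\gamma^*$ and $\tilde k_{\psi,z}$. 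Convergence in $\Cinftyub$ over compact subsets of $\mathring M_l^2$ follows from Lemma~\ref{l: exponential decay}, the distance bound \eqref{C_3^-1 |gamma| le tilde d_l(tilde p gamma . tilde p) le C_3 |gamma|}, and the polynomial growth of $\Gamma_l$; this recovers smoothness of $\mathring K_l$ on $\mathring M_l^2$ (also immediate from $\mathring P_l$ being smoothing).

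For the boundary behavior, fix a component $L\subset\partial M_l$ and a point $(p_0,p'_0)$ with $p_0\in L$ and $p'_0\in\mathring M_l$ bounded away from $\partial M_l$; the symmetric case is analogous. Work in the collar $T^1_{L,l,\epsilon}$, where by the constructions in \Cref{ss: globalization} we have $\FF^1_l\equiv\FF^{\prime\,1}_{L,l}$ and $Z_l\equiv Z'_{L,l}$, so the flow matches the suspension model. By Remark~\ref{r: relation between the descriptions}, a connected component $\widetilde T^1_{L,l,\epsilon,0}$ of $\pi_l^{-1}(T^1_{L,l,\epsilon})$ is $H_{L,l}$-equivariantly identified with $\widetilde T^{\prime\,1}_{L,l,\epsilon}$. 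Split the series according to $\gamma\in H_{L,l}(\Gamma_L)$ versus $\gamma\in\Gamma_l\setminus H_{L,l}(\Gamma_L)$. On the collar-cover the first part coincides with the suspension kernel $\mathring K'_{L,l}$ treated by Proposition~\ref{p: kappa_P_pm}, so it extends smoothly to $(M_l)^2_{\text{\rm b}}$ near the corresponding face of $\ff$ and vanishes to all orders at $\lb\cup\rb$.

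For the remaining terms, since $\phi_l$ is of $\R$-local bounded geometry on $\mathring M^1_l$, the argument of Proposition~\ref{p: phi^-t(T_epsilon) subset T_kappa epsilon} transplants to $M_l$ and gives $\phi^t_l(T^1_{l,\epsilon'})\subset T^1_{l,c'\epsilon'}$ for $t\in\supp f$. Combined with the word-length estimate \eqref{C_3^-1 |gamma| le tilde d_l(tilde p gamma . tilde p) le C_3 |gamma|} and the analog of Corollary~\ref{c: d_FF(phi^t(p) p) ge c_3 |gamma|} for $M_l$, this shows that for $\gamma\in\Gamma_l\setminus H_{L,l}(\Gamma_L)$ and $\tilde p\in\widetilde T^1_{L,l,\epsilon,0}$ near the boundary, the leafwise distance from $T_\gamma\tilde\phi^{t-h_l(\gamma)}_l(\tilde p)$ to $\tilde p'$ grows at least linearly in $|\gamma|$ (and also linearly in $|\ln\rho(p)|$ via the analog of Lemma~\ref{l: d_widetilde L(tilde y tilde y') > C ln(rho([x tilde y]) / c rho([x tilde y'])} in the collar). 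Then Lemma~\ref{l: exponential decay} yields, for every $W,m>0$,
\[
\bigl|\partial^{m_1}_\rho\nabla^{m_2}_y\nabla^{m_3}_{y'}\widetilde{\mathring K}_{l,\gamma}(\tilde p,\tilde p')\bigr|\le C_{W,m}\,e^{-W|\gamma|}\,\rho(p)^W\,\|\psi\|_{\AA,W,N+m}\,\|f\|_{I,C^m},
\]
and summation over $\gamma$ via polynomial growth gives rapid decay of this part of the kernel to all orders as $\rho(p)\downarrow 0$. Together with the first part, this establishes smoothness on $M_l^2\setminus(\partial M_l)^2$ and vanishing to all orders on $(\partial M_l\times\mathring M_l)\cup(\mathring M_l\times\partial M_l)$; a smooth partition of unity subordinated to the collar neighborhoods of the components of $\partial M_l$ and to $M^1_l\setminus T^1_{l,\epsilon/2}$ globalizes the local statements.

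The main obstacle is the careful bookkeeping that separates the $H_{L,l}(\Gamma_L)$-contributions (which reproduce the suspension kernel near $L$) from the complementary contributions (which must decay to all orders at $\rho=0$). Since the identification in Remark~\ref{r: relation between the descriptions} is only local on one connected component $\widetilde T^1_{L,l,\epsilon,0}$, one must show that any $\gamma\in\Gamma_l$ taking this component to a different component displaces the lifted trajectories out to $\tilde d_l$-distance comparable to $|\gamma|$, uniformly in $\rho(p)$; this is where the convexity-type estimate of Lemma~\ref{l: tilde d_c l = tilde d_c l epsilon'} together with the quasi-isometry between $g_{\text{\rm b},l}$ and $g_{\text{\rm c},l}$ (\Cref{ss: metric properties of M^1}) plays the decisive role.
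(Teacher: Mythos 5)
Your proposal takes a substantially different route from the paper's proof, and it has genuine gaps. The paper proves Proposition~\ref{p: mathring K_l} with a ``soft'' functional-analytic argument, using nothing but Lemma~\ref{l: |mathring P_l|_rho^aH^k rho^aH^m} (boundedness of $\mathring P_l$ on the weighted Sobolev spaces $\rho^a H^k\to\rho^a H^m$, for \emph{every} $a\in\R$, obtained by conjugating $D_{\mathring\FF_l,z}$ by $\rho^a$) and a Schwartz-kernel-theorem style argument: the Dirac section $\delta^\alpha_q$ lies in $\rho^a H^k$ with norm $\lesssim\rho(q)^{-a}$, hence $\mathring P_l\delta^\alpha_q\in\rho^a H^m$ with norm $\lesssim\rho(q)^{-a}$, and Sobolev embedding gives $|\mathring K_l(p,q)|\lesssim(\rho(p)/\rho(q))^a=s(p,q)^a$ for every $a$. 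Since $a$ is arbitrary, the kernel vanishes to all orders at $\lb\cup\rb$ (away from the corner), and iterating with $D_1\mathring P_lD_2$ for $D_1,D_2\in\Diffb$ gives the same for all covariant derivatives. No series expansion and no comparison with the suspension model is used at this stage.

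Your approach attempts a hard, explicit series expansion over $\Gamma_l$, compared term-by-term against the suspension model. This is both more laborious and leaves gaps. First, you invoke Remark~\ref{r: relation between the descriptions}, but the paper explicitly states that the proof of that remark ``is omitted because it will not be used''; building your argument on it is not permitted unless you supply the proof. Second, the word-length estimate~\eqref{C_3^-1 |gamma| le tilde d_l(tilde p gamma . tilde p) le C_3 |gamma|} is stated only for $\tilde p$ in the compact core $\widetilde M^1_{l,\epsilon'}$, with constants $C_3=C_3(\epsilon')$; it is not established uniformly as $\rho(p)\downarrow0$, which is exactly the regime you need for the boundary behavior. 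Third, the ``analog of Corollary~\ref{c: d_FF(phi^t(p) p) ge c_3 |gamma|} for $M_l$'' and the ``analog of Lemma~\ref{l: d_widetilde L(tilde y tilde y') > C ln(rho([x tilde y]) / c rho([x tilde y'])}'' are asserted but not proved; the suspension proofs use the explicit product structure $\widetilde M=\R\times\widetilde L$ and the formula $\rho=e^F x$, which are not available for general $M^1_l$. You acknowledge this obstacle in your last paragraph, but citing Lemma~\ref{l: tilde d_c l = tilde d_c l epsilon'} and the $g_{\text{\rm b},l}$-vs-$g_{\text{\rm c},l}$ quasi-isometry does not close it: those results again concern $\widetilde M^1_{l,\epsilon'}$ and a fixed $\epsilon'$. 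Finally, your strategy is effectively trying to establish, as a prerequisite, the much stronger comparison result of Proposition~\ref{p: P equiv P'} (smooth extension of $\kappa_l$ \emph{including} the front face $\ff$), which the paper proves \emph{after} Proposition~\ref{p: mathring K_l} via finite-propagation-speed and commutator-cutoff arguments; the present statement only concerns $M_l^2\setminus(\partial M_l)^2$, and that weaker statement admits the simpler weighted-Sobolev proof the paper gives.
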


\begin{proof}
We will use the arguments from the proof of \cite[Theorem~5.2.6]{Hormander1983-I}.

For any $q\in \mathring M_l$ and $\alpha\in \Lambda_q\mathring\FF_l\otimes \Omega^{-1}_q\mathring M_l$, we have $\delta_q^\alpha\in H^k(\mathring M_l;\Lambda\mathring\FF_l)$ for any $k<-n/2$, and $\|\delta_q^\alpha\|_k\le C_k\,|\alpha|$, where $C_k>0$ is independent of $q$ and $\alpha$ (\Cref{ss: Dirac sections}). Therefore, by the definition of weighted Sobolev spaces and the properties of Dirac sections at submanifolds (\Cref{ss: weighted sps,ss: Dirac sections}), for all $a\in \R$, we have $\delta^\alpha_q\in\rho^a H^k(\mathring M_l;\Lambda\mathring\FF_l)$ and
\[
\|\delta^\alpha_q\|_{\rho^aH^k}\le C_k\rho(q)^{-a}|\alpha|\;.
\]
Moreover, for any $\alpha\in C^\infty(\mathring M_l;\Lambda\mathring\FF_l\otimes\Omega^{-1}\mathring M_l)$, the map
\[
\mathring M_l\to\rho^a H^k(\mathring M_l;\Lambda\mathring\FF_l)\;,\quad q\mapsto\delta^{\alpha(q)}_q\;,
\]
is continuous by the continuity of~\eqref{(p u) mapsto delta_p^u(p)}.

Fix any compact $I\subset \R$ containing $\supp f$. By \Cref{l: |mathring P_l|_rho^aH^k rho^aH^m}, we have $\mathring P_l\delta^\alpha_q\in\rho^aH^m(\mathring M_l;\Lambda\mathring\FF_l)$ for any $m\in\N_0$, and
\[
\big\|\mathring P_l\delta^\alpha_q\big\|_{\rho^aH^m}\le C'_m\rho(q)^{-a}\,\|\psi\|_{\AA,C',N}\,\|f\|_{I,C^N}\,|\alpha|
\]
for $q\in \mathring M_l$ and $\alpha\in\Lambda_q\mathring\FF_l\otimes\Omega^{-1}_q\mathring M_l$, where $C'_m>0$ is independent of $a$, $q$ and $\alpha$. Moreover, for any $\alpha\in C^\infty(\mathring M_l;\Lambda\mathring\FF_l\otimes\Omega^{-1}\mathring M_l)$, the map
\[
\mathring M_l\to\rho^aH^m(\mathring M_l;\Lambda\mathring\FF_l)\;,\quad 
q\mapsto \mathring P_l\delta^{\alpha(q)}_q\;,
\]
is continuous. On the other hand, by~\eqref{K_A(cdot q)(u) = A delta_q^u}, for all $q\in\mathring M_l$ and $\alpha\in \Lambda_q\mathring\FF_l\otimes \Omega^{-1}_q\mathring M_l$,
\[
\mathring K_l(\cdot, q)(\alpha)=\mathring P_l\delta_q^\alpha\in C^\infty(\mathring M_l;\Lambda\mathring\FF_l)\;.
\]
It follows that the map
\[
\mathring M_l\to \rho^a H^m(\mathring M_l;\Lambda\mathring\FF_l\otimes\Omega_q\mathring M_l)\;,\quad
q\mapsto \mathring K_l(\cdot, q)\;,
\]
is continuous for any $a\in\R$ and $m\in\N_0$, with
\[
\big\|\mathring K_l(\cdot, q)\big\|_{\rho^aH^m}\le C'_m\rho(q)^{-a}\,\|\psi\|_{\AA,C',N}\,\|f\|_{I,C^N}
\]
for all $q\in \mathring M_l$. Using the Sobolev embedding theorem, we conclude that $\mathring K_l$ is continuous on $\mathring M_l^2$, and
\begin{align*}
\big|\mathring K_l(p,q)\big|&\le C\left(\frac{\rho(p)}{\rho(q)}\right)^{a}\,\|\psi\|_{\AA,C',N},\|f\|_{I,C^N}\\
&=C\,s(p,q)^a\,\|\psi\|_{\AA,C',N}\,\|f\|_{I,C^N}\;,
\end{align*}
for all $a\in \R$ and $p,q\in \mathring M_l$, where $C, C'>0$ and $N\in\N_0$ are independent of $a$, $p$ and $q$. So $\mathring K_l$ extends to a continuous section on $M_l^2\setminus (\partial M_l)^2$, which vanishes on $(\partial M_l \times \mathring M_l)\cup (\mathring M_l \times \partial M_l)$.

For any $D_1, D_2\in \Diffb^k(M_l;\Lambda\FF_l)$, applying the above arguments to the operator $D_1\mathring P_lD_2$ and using~\eqref{K_PAQ(x y)}, it follows that, for all $a\in\R$ and $p,q\in \mathring M_l$, 
\begin{equation}\label{| D_1 p D_2 q^t mathring K_l(p q) |}
\big|D_{1,p}\,D_{2,q}^\trans\mathring K_l(p,q)\big| \le C\,s(p,q)^a\,\|\psi\|_{\AA,C',N}\,\|f\|_{I,C^N}\;,
\end{equation}
where $C, C'>0$ and $N\in\N_0$ are independent of $a$, $p$ and $q$.
\end{proof}

Let $\mathring\kappa_l$ be the $C^\infty$ section of $\beta_{\text{\rm b}}^*(\Lambda\FF_l \boxtimes (\Lambda\FF_l^*\otimes \Omega M_l))$ on the interior of $(M_l)^2_{\text{\rm b}}$ that corresponds to $\mathring K_l$ via $\beta_{\text{\rm b}}^*$, using the notation of \Cref{ss: b-stretched product}.  Fix $0<\epsilon<\epsilon_0$ like in \Cref{ss: globalization}, and consider the notation of \Cref{ss: collar neighborhoods of partial M_l,ss: globalization}. Let $L$ be a boundary component of $M_l$, which can be identified with a leaf of $\FF$ in $M^0$. For $0<\sigma\le\epsilon_0$, via the identity $\mathring T_{L,l,\sigma}\equiv\mathring T'_{L,l,\sigma}$, the sections in $C_\co^\infty(\mathring M_l;\Lambda\mathring\FF_l)$ and $C_\co^\infty(\mathring M_l;\Lambda\mathring\FF_l\otimes\Omega)$ supported in $\mathring T_{L,l,\sigma}$ can be identified with sections in $C_\co^\infty(\mathring M'_l;\Lambda\mathring\FF'_l)$ and $C_\co^\infty(\mathring M'_l;\Lambda\mathring\FF'_l\otimes\Omega)$ supported in $\mathring T'_{L,l,\sigma}$. Similarly, according to \Cref{s: opers suspension},
\[
\beta_{\text{\rm b}}^{-1}(\mathring T_{L,l,\sigma}^2)\equiv\beta_{\text{\rm b}}^{-1}(\mathring T^{\prime\,2}_{L,l,\sigma})
\equiv\{\,(\rho,s, y, y')\in(0,\infty)^2\times L^2\mid \rho,\rho s^{-1}<\sigma\,\}\;.
\]
The operator~\eqref{mathring P_pm suspension}, studied in \Cref{s: opers suspension}, is now expressed as \index{$\mathring P'_{L,l}$}
\[
\mathring P'_{L,l}=\int_{-\infty}^{+\infty}\phi_{L,l,z}^{\prime\,t*}\,\psi(D_{\mathring\FF'_{L,l},z})f(t)\,dt\;.
\]
Let $\mathring K'_{L,l}=K_{\mathring P'_{L,l}}$,\index{$\mathring K'_{L,l}$} with lift $\mathring\kappa'_{L,l}$ \index{$\mathring\kappa'_{L,l}$} to the interior of $(M'_{L,l})^2_{\text{\rm b}}$, and let $\kappa'_{L,l}=\kappa_{P'_{L,l}}$ \index{$\kappa'_{L,l}$} denote the extension of $\mathring\kappa'_{L,l}$ to $(M'_{L,l})^2_{\text{\rm b}}$ given by \Cref{p: kappa_P_pm}. 

The subscripts of \Cref{n: subscripts psi f z,n: subscript u} may be also used with $\mathring P_l$, $\mathring K_l$, $\mathring\kappa_l$, $\mathring P'_{L,l}$, $\mathring K'_{L,l}$, $\mathring\kappa'_{L,l}$ and $\kappa'_{L,l}$.

\begin{prop}\label{p: P equiv P'}
Given $\psi\in\AA$ and $u>0$, take $\psi_u\in\AA$ defined by $\psi_u(x)=\psi(ux)$, and consider the restrictions of $\mathring\kappa_{l,u}$ and $\mathring\kappa'_{L,l,u}$ to $\beta_{\text{\rm b}}^{-1}(\mathring T_{L,l,\epsilon}^2)\equiv\beta_{\text{\rm b}}^{-1}(\mathring T^{\prime\,2}_{L,l,\epsilon})$. There is some $0<\epsilon'<\epsilon$ such that, for any $R>0$, $m,N\in\N_0$ and $a\in\R$, there exist $\widehat C,W>0$ and $N'\in\N_0$ so that, for $m_1+m_2+m_3+m_4\le m$, $0<u\le1$ and $(\rho,s, y, y')\in\beta_{\text{\rm b}}^{-1}(\mathring T_{L,l,\epsilon'}^2)$,
\[
\big|\partial^{m_1}_\rho\partial^{m_2}_s\nabla^{m_3}_y\nabla^{m_4}_{y'}(\mathring\kappa_{l,u}-\mathring\kappa'_{L,l,u})(\rho,s, y, y')\big| 
\le\widehat Ce^{-\frac{R}{u}}\rho^Ns^{a}\,\|\psi\|_{\AA,W,N'}\,\|f\|_{I,C^{N'}}\;.
\]
\end{prop}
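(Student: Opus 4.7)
The strategy is to exploit the fact that the two foliated manifolds $(M_l,\FF_l)$ and $(M'_{L,l},\FF'_{L,l})$, together with their bundle-like metrics, transverse forms $\eta_0$, boundary-defining functions, and foliated flows, coincide on the collar neighborhood $T_{L,l,\epsilon}$ via the identifications from \Cref{ss: collar neighborhoods of partial M_l,ss: globalization}. So any discrepancy between $\mathring P_{l,u}$ and $\mathring P'_{L,l,u}$ at points of $T_{L,l,\epsilon'}$ must be transmitted via the leafwise wave propagation hidden in $\psi_u(D_\FF)$, and combining finite propagation speed with the Paley--Wiener bound for $\psi\in\AA$ rescaled by $u$ will convert the smallness of $u$ into the exponential factor $e^{-R/u}$.

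First I would fix $\epsilon'$ small enough that \Cref{p: phi^-t(T_epsilon) subset T_kappa epsilon} gives $\phi^t(T_{L,l,\epsilon'})\subset T_{L,l,\epsilon/2}$ for all $t\in\supp f$, so the flow contributions to $\mathring P_{l,u}$ and $\mathring P'_{L,l,u}$ at any point of $T_{L,l,\epsilon'}$ see only the collar region where the two structures agree. The problem then reduces to estimating the kernels of $\psi_u(D_{\mathring\FF_l,z})-\psi_u(D_{\mathring\FF'_{L,l},z})$ between pairs of points inside the collar. I would use the Fourier representation
\[
\psi_u(D)=\frac{1}{2\pi}\int_{-\infty}^{+\infty}\widehat{\psi_u}(\xi)\,e^{i\xi D}\,d\xi\;,
\]
with $\widehat{\psi_u}(\xi)=u^{-1}\hat\psi(\xi/u)$, and split at a cutoff $|\xi|\le T$, where $T>0$ is chosen, via \Cref{c: Pen_FF_l(T_epsilon R) subset T_ce^R/C epsilon}, as a lower bound on the leafwise distance between $T_{L,l,\epsilon'}$ and the complement of $T_{L,l,\epsilon/2}$. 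For $|\xi|\le T$, the leafwise finite propagation speed \eqref{leafwise unit propagation speed} keeps the wave supported in the region where $D_{\mathring\FF_l,z}$ and $D_{\mathring\FF'_{L,l},z}$ are identified, so the contribution to the difference vanishes. For $|\xi|>T$, the Paley--Wiener bound \eqref{|hat psi(xi)| le A_c e^-c |xi|} gives $|\widehat{\psi_u}(\xi)|\le A_W u^{-1}e^{-W|\xi|/u}$ for every $W>0$, which combined with the Sobolev-type operator bounds \eqref{|e^itD_0 alpha|_m} yields an $e^{-WT/u}$ bound after integration in $\xi$.

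To absorb the weights $\rho^N s^a$ and the derivatives of order $m$, I would conjugate by $\rho^b$ and compose with b-differential operators. Since $\eta_0=d_{0,1}(\ln\rho_l)$ by \eqref{eta_0 = d_0,1(ln rho_l)}, conjugation by $\rho^b$ turns $D_{\mathring\FF_l,z}$ into an operator of the same type with a shifted Witten parameter via \eqref{Witten's opers}, producing operators still bounded on weighted Sobolev spaces (as in \Cref{l: |mathring P_l|_rho^aH^k rho^aH^m}), and \eqref{K_PAQ(x y)} lets one read the derivative estimates off the kernel of the conjugated operator. Iterating this procedure a finite number of times, all $\rho$-powers and differential operators become shifts of $z$ and additional bounded factors inside a fixed $\|\psi\|_{\AA,W,N'}$ norm, preserving the exponential factor $e^{-R/u}$ up to adjusting $W$ and $N'$.

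The main obstacle will be that $D_{\mathring\FF_l,z}$ and $D_{\mathring\FF'_{L,l},z}$ act on different manifolds, so their wave operators cannot be directly subtracted. To handle this I would work at the universal-cover level and expand both kernels as $\Gamma_l$-- and $\Gamma_L$--sums through the formulas analogous to \eqref{Schwartz kernel} and \eqref{mathring K_pm gamma(rho rho' [tilde y] [tilde y'])}. The monomorphism $H_{L,l}:\Gamma_L\to\Gamma_l$ from \Cref{r: relation between the descriptions} identifies the terms in the suspension sum with a subset of the terms in the $\Gamma_l$--sum; on these identified terms the argument above gives the $e^{-R/u}$ bound, while the terms in $\Gamma_l\setminus H_{L,l}(\Gamma_L)$ are estimated using the lower bound \Cref{c: d_FF(phi^t(p) p) ge c_3 |gamma|}, which forces $|\gamma|\to\infty$ and supplies enough exponential decay in $|\gamma|$ (as in the proof of \Cref{l: kappa_mathring P_pm}) to make the sum converge uniformly in $u\in(0,1]$ and in points of $\beta_{\text{\rm b}}^{-1}(\mathring T_{L,l,\epsilon'}^2)$.
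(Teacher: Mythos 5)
Your overall strategy — agree on the collar, exploit finite leafwise propagation speed to kill the small-$|\xi|$ part of the Fourier representation, and use the Paley--Wiener bound for $\psi\in\AA$ on the large-$|\xi|$ part to produce the $e^{-R/u}$ factor — is exactly the engine of the paper's proof. The paper implements it through cutoff functions $\chi_\sigma,\chi_\tau$ supported in collars of radius $\sigma,\tau$, considers $\chi_\tau(\mathring P_{l,u}-\mathring P'_{L,l,u})\chi_\sigma$, uses the propagation claims (Claims~\ref{cl: leafwise unit propagation speed} and~\ref{cl: phi^t* alpha(u) equiv phi^t* alpha'(u)}) to truncate the $\xi$-integral, and then obtains pointwise kernel bounds by Sobolev embedding as you describe.

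However, there is a real gap in your treatment of the weight $\rho^N$. You propose to split the $\xi$-integral at a \emph{fixed} cutoff $T$ determined by the leafwise distance between $T_{L,l,\epsilon'}$ and the complement of $T_{L,l,\epsilon/2}$. A fixed $T$ gives an $e^{-WT/u}$ bound, and together with the conjugation by $\rho^b$ this yields $e^{-R/u}\,s^a$, but it produces no power of $\rho$ at all. The $\rho^N$ factor is essential (it is what makes the difference vanish to all orders at $\ff$), and it does not come from conjugation alone, since conjugating by powers of $\rho$ only produces weights of the form $s^b$. The paper obtains it by keeping the cutoff radii $\sigma,\tau$ as free parameters: the $\xi$-truncation then occurs at $|\xi| \gtrsim \ln(\epsilon/\sigma)+\ln(\epsilon/\tau)$, and the resulting exponential decays like $(\sigma\tau)^{CW/2}\,e^{-CW/u}$; only at the end are $\sigma,\tau$ chosen to be constant multiples of $\rho(q),\rho(p)$, which converts this into $\rho^N\,e^{-R/u}$. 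Without that position-dependent cutoff your argument cannot deliver $\rho^N$.

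Your final paragraph is also a detour that introduces difficulties the paper avoids. The ``main obstacle'' you describe — that $D_{\mathring\FF_l,z}$ and $D_{\mathring\FF'_{L,l},z}$ act on different manifolds — is not an obstacle at the operator level: once you cut the initial data down to the collar via $\chi_\sigma$, the two wave equations literally coincide there, and uniqueness of solutions to the symmetric hyperbolic system together with finite propagation speed gives the claimed equality of $e^{i\xi D_{\mathring\FF_l,z}}\alpha$ and $e^{i\xi D_{\mathring\FF'_{L,l},z}}\alpha'$ for small $|\xi|$; no $\Gamma$-decomposition is needed. Passing instead to the $\Gamma_l$- and $\Gamma_L$-sums and trying to match terms via $H_{L,l}$ relies on Remark~\ref{r: relation between the descriptions}, whose proof the paper explicitly omits precisely because it is never used, and it commits you to controlling the unmatched terms $\Gamma_l\setminus H_{L,l}(\Gamma_L)$ with a variant of Corollary~\ref{c: d_FF(phi^t(p) p) ge c_3 |gamma|} that is stated only for the suspension model — a considerable extra burden with no payoff over the direct operator comparison you had already outlined.
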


\begin{proof}
Take $C>0$ and $c\ge1$ like in \Cref{l: d_widetilde L(tilde y tilde y') > C ln(rho([x tilde y]) / c rho([x tilde y'])} and Corollary~\ref{c: Pen_FF_l(T_epsilon R) subset T_ce^R/C epsilon}, and take $c'>0$ like in \Cref{p: phi^-t(T_epsilon) subset T_kappa epsilon}, for the suspension foliation $\FF'_{L,l}$ on $M'_{L,l}$ and any compact $I\subset\R$ containing $\supp f$.

\begin{claim}\label{cl: leafwise unit propagation speed}
For $\alpha,\beta\in\Cinftyc(\mathring M_{L,l};\Lambda\mathring\FF_{L,l})$, $\alpha',\beta'\in\Cinftyc(\mathring M'_{L,l};\Lambda\mathring\FF'_{L,l})$ and $\xi\in\R$, let
\begin{alignat*}{2}
\alpha(\xi)&=e^{i\xi D_{\mathring\FF_{L,l},z}}\alpha\;,&\quad\beta(\xi)&=e^{i\xi D_{\mathring\FF_{L,l},z}}\beta\;,\\
\alpha'(\xi)&=e^{i\xi D_{\mathring\FF'_{L,l},z}}\alpha'\;,&\quad\beta'(\xi)&=e^{i\xi D_{\mathring\FF'_{L,l},z}}\beta'\;.
\end{alignat*}
The following properties hold for $0<\sigma,\tau<\epsilon$:
\begin{enumerate}[{\rm(i)}]

\item\label{i: alpha(u) equiv alpha'(u)} If $\alpha$ and $\alpha'$ are supported in $\mathring T_{L,l,\sigma}\equiv\mathring T'_{L,l,\sigma}$ and agree there, then $\alpha(\xi)$ and $\alpha'(\xi)$ are supported in $\mathring T_{L,l,\epsilon}\equiv\mathring T'_{L,l,\epsilon}$ and agree there for $|\xi|< C\ln\frac{\epsilon}{c\sigma}$.

\item\label{i: beta(u) equiv beta'(u)} If $\beta$ and $\beta'$ agree on $\mathring T_{L,l,\epsilon}\equiv\mathring T'_{L,l,\epsilon}$, then $\beta(\xi)\equiv\beta'(\xi)$ on $\mathring T_{L,l,\tau}\equiv\mathring T'_{L,l,\tau}$ for $|\xi|<C\ln\frac{\epsilon}{c\tau}$.

\end{enumerate}
\end{claim} 

This is a consequence of Corollary~\ref{c: Pen_FF_l(T_epsilon R) subset T_ce^R/C epsilon} and the leafwise twisted version of~\eqref{leafwise unit propagation speed} applied to the equation $\partial_\xi\mu(\xi)=iD_{\mathring\FF_l,z}\mu(\xi)$ on $\mathring T_{L,l,\epsilon}\equiv\mathring T'_{L,l,\epsilon}$, where $\mu(\xi)=\alpha(\xi)\equiv\alpha'(\xi)$ in~\ref{i: alpha(u) equiv alpha'(u)}, and $\mu(\xi)=\beta(\xi)\equiv\beta'(\xi)$ in~\ref{i: beta(u) equiv beta'(u)}.

\begin{claim}\label{cl: phi^t* alpha(u) equiv phi^t* alpha'(u)}
Let $\alpha$, $\alpha'$, $\alpha(\xi)$ and $\alpha'(\xi)$ be defined like in Claim~\ref{cl: leafwise unit propagation speed}, and let $0<\sigma<\epsilon$ and $0<\tau<\epsilon,\epsilon/c'$. If $\alpha$ and $\alpha'$ are supported in $\mathring T_{L,l,\sigma}\equiv\mathring T'_{L,l,\sigma}$ and agree there, then $\phi^{t*}\alpha(\xi)\equiv\phi^{\prime\,t*}\alpha'(\xi)$ on $\mathring T_{L,l,\tau}\equiv\mathring T'_{L,l,\tau}$ for any $t\in I$ and $|\xi|<C(\ln \frac{\epsilon}{c \sigma}+\ln\frac{\epsilon}{cc'\tau})$.
\end{claim}

By Claim~\ref{cl: leafwise unit propagation speed}~\ref{i: alpha(u) equiv alpha'(u)}, if $\xi<C\ln \frac{\epsilon}{c\sigma}$, then $\alpha(\xi)$ and $\alpha'(\xi)$ are supported in $\mathring T_{L,l,\epsilon}\equiv\mathring T'_{L,l,\epsilon}$ and agree there. Thus, by Claim~\ref{cl: leafwise unit propagation speed}~\ref{i: beta(u) equiv beta'(u)}, if $|\zeta|<C\ln \frac{\epsilon}{cc'\tau}$, then $\alpha(\xi+\zeta)\equiv\alpha'(\xi+\zeta)$ on $\mathring T_{L,l,c'\tau}\equiv\mathring T'_{L,l,c'\tau}$. Hence $\phi^{t*}\alpha(\xi+\zeta)\equiv\phi^{\prime\,t*}\alpha'(\xi+\zeta)$ on $\mathring T_{L,l,\tau}\equiv\mathring T'_{L,l,\tau}$ for all $t\in I$ since $\phi^t(\mathring T_{L,l,\tau})\subset\mathring T_{L,l,c'\tau}$ by \Cref{p: phi^-t(T_epsilon) subset T_kappa epsilon}. This shows Claim~\ref{cl: phi^t* alpha(u) equiv phi^t* alpha'(u)}.

Take any $\mu\in C^\infty(\R)$ such that $0\le\mu\le1$, $\supp\mu\subset(-\infty,0]$, and $\mu=1$ on $(-\infty,-\ln2]$. For $0<\sigma<\epsilon$, let $\chi_\sigma=\mu(\ln\rho-\ln\sigma)\in C^\infty_{\text{\rm ub}}(\mathring M_l)$. We have $\chi_\sigma\ge0$, $\supp\chi_\sigma\subset\mathring T_{L,l,\sigma}$, and $\chi_\sigma=1$ on $\mathring T_{L,l,\sigma/2}$. Moreover $\chi_\sigma\in C^\infty_{\text{\rm ub}}(\mathring M_l)$ and $\|\chi_\sigma\|_{C^m_{\text{\rm ub}}}$ is independent of $\sigma$ for $m\in\N_0$ because $d(\ln\rho)\in C^\infty_{\text{\rm ub}}(\mathring M_l;T^*\mathring M_l)$ (\Cref{ss: globalization}). Let also $0<\tau<\epsilon,\epsilon/c'$ and define $\chi_\tau$ as above. Then the operator $\chi_\tau(\mathring P_{l,u}-\mathring P'_{L,l,u})\chi_\sigma$ is well defined on $H^{-\infty}(\mathring M_l;\Lambda\mathring\FF_l)$ via the identity $\mathring T_{L,l,\epsilon}\equiv\mathring T'_{L,l,\epsilon}$.

Let $\alpha\in C_\co^\infty(\mathring M_l;\Lambda\mathring\FF_l)$ and $\beta\in C_\co^\infty(\mathring M_l;\Lambda\mathring\FF_l^*\otimes\Omega)$. By Claim~\ref{cl: phi^t* alpha(u) equiv phi^t* alpha'(u)} and the version of~\eqref{psi(D_0)} for $\xi uD_{\FF_l,z}$ and $\xi uD_{\FF'_{L,l},z}$ instead of $tD_0$ (\Cref{s: Witten's opers on Riem folns of bd geom}),
\[
\big\langle\chi_\tau(\mathring P_{l,u}-\mathring P'_{L,l,u})\chi_\sigma\alpha,\beta\big\rangle
=\frac{1}{2\pi}\int_{|\xi|>\frac Cu\ln\frac{\epsilon^2}{c^2c'\sigma\tau}} \int_{-\infty}^{+\infty}\hat\psi(\xi)A_{l,z,u}(t,\xi)f(t)\, d\xi\,dt\;,
\]
where
\[
A_{l,z,u}(t,\xi)
=\Big\langle\Big(\phi_{l,z}^{t*}\, e^{i\xi uD_{\mathring\FF_l,z}}-\phi_{L,l,z}^{\prime\,t*}\, e^{i\xi uD_{\mathring\FF'_{L,l},z}}\Big)
\chi_\sigma\alpha,\chi_\tau\beta\Big\rangle\;.
\]
Then, by the version of~\eqref{|e^itD_0 alpha|_m} for $\xi uD_{\FF_l,z}$ and $\xi uD_{\FF'_{L,l},z}$ instead of $tD_0$ (\Cref{s: Witten's opers on Riem folns of bd geom}), since $\phi_l$ and $\phi'_{L,l}$ are of $\R$-local bounded geometry, and using that $\|\chi_\sigma\|_{C^k_{\text{\rm ub}}}$ and $\|\chi_\tau\|_{C^k_{\text{\rm ub}}}$ are finite and independent of $\sigma$ and $\tau$ for all $k\in\N_0$, we get that, for all $m\in\R$,
\begin{align*}
|A_{l,z,u}(t,\xi)|
&\le\Big\|\Big(\phi_{l,z}^{t*}\, e^{i\xi uD_{\mathring\FF_l,z}}
-\phi_{L,l,z}^{\prime\,t*}\, e^{i\xi uD_{\mathring\FF'_{L,l},z}}\Big)\chi_\sigma\alpha\Big\|_m\|\chi_\tau\beta\|_{-m}\\
&\le C'e^{C_m|\xi|}\|\chi_\sigma\alpha \|_m\|\chi_\tau\beta \|_{-m}
\le C''e^{C_m|\xi|}\|\alpha \|_m\|\beta \|_{-m}\;,
\end{align*}
for some $C_m,C',C''>0$ independent of $\alpha$, $\beta$, $\sigma$, $\tau$, and $u\in(0,1]$. So, for all $W>0$,
\begin{multline*}
\big|\big\langle\chi_\tau(\mathring P_{l,u}-\mathring P'_{L,l,u})\chi_\sigma\alpha,\beta\big\rangle\big|\\
\begin{aligned}
&\le\frac{1}{2\pi}\int_{|\xi|>\frac Cu(\ln \frac{\epsilon}{c \sigma}+\ln\frac{\epsilon}{cc'\tau})} 
\int_{-\infty}^{+\infty}\big|\hat\psi(\xi)\big|\, |A_{l,z,u}(t,\xi)|\, |f(t)|\, d\xi\,dt \\
&\le C''\|\alpha \|_m\|\beta \|_{-m}\|f\|_{L^1}
\int_{|\xi|>\frac{C}{u}(\ln \frac{\epsilon}{c \sigma}+\ln\frac{\epsilon}{cc'\tau})}e^{C_m|\xi|}\,\big|\hat\psi(\xi)\big|\,d\xi\\
& \le C''\|\alpha \|_m\|\beta \|_{-m}\|f\|_{L^1}e^{-\frac{CW}{u}(\ln \frac{\epsilon}{c \sigma}+\ln\frac{\epsilon}{cc'\tau})} 
\int_{-\infty}^{\infty}e^{(W+C_m)|\xi|}\,\big|\hat\psi(\xi)\big|\,d\xi\;,
\end{aligned}
\end{multline*}
for some $C_m,C',C''>0$ independent of $\alpha$, $\beta$, $\sigma$, $\tau$, and $u\in(0,1]$. Now, assume 
\begin{equation}\label{sigma < frac epsilon ce}
\sigma<\frac{\epsilon}{ce}\;,\quad\tau<\frac{\epsilon}{cc'e}\;.
\end{equation}
Thus $\ln\frac{\epsilon}{c \sigma},\ln\frac{\epsilon}{cc'\tau}>1$, obtaining 
\begin{align*}
e^{-\frac{CW}{u}(\ln \frac{\epsilon}{c \sigma}+\ln\frac{\epsilon}{cc'\tau})}
&\le e^{-\frac{CW}{u}(1+\frac{1}{2}(\ln \frac{\epsilon}{c \sigma}+\ln\frac{\epsilon}{cc'\tau}))}
\le e^{-\frac{CW}{u}}e^{-\frac{CW}{2}(\ln \frac{\epsilon}{c \sigma}+\ln\frac{\epsilon}{cc'\tau})}\\
&=e^{-\frac{CW}{u}}e^{-\frac{CW}{2}(\ln \frac{\epsilon}{c}+\ln\frac{\epsilon}{cc'})}(\sigma\tau)^{\frac{CW}{2}}
\le e^{-\frac{CW}{u}}(\sigma\tau)^{\frac{CW}{2}}\;.
\end{align*}
Hence
\begin{multline*}
\big|\big\langle\chi_\tau(\mathring P_{l,u}-\mathring P'_{L,l,u})\chi_\sigma\alpha,\beta\big\rangle\big|\\
\le C'''e^{-\frac{CW}{u}}(\sigma\tau)^{\frac{CW}{2}}\,\|\psi\|_{\AA,W+C_m,0}\,\|f\|_{I,C^0}\, \|\alpha \|_m\,\|\beta \|_{-m}\;,
\end{multline*}
for some $C'''>0$ independent of $\alpha$, $\beta$, $\sigma$, $\tau$, and $u\in(0,1]$, but involving the length of $I$. Thus, for any $R>0$, $N\in\N_0$ and $m\in \R$, there are some $\widehat C,W>0$, such that, for all $\sigma$ and $\tau$ as in~\eqref{sigma < frac epsilon ce}, and every $u\in(0,1]$,
\[
\big\|\chi_\tau(\mathring P_{l,u}-\mathring P'_{L,l,u})\chi_\sigma\big\|_m
\le\widehat Ce^{-\frac{R}{u}}\sigma^N \tau^N \|\psi\|_{\AA,W,0}\,\|f\|_{I,C^0}\;.
\]
Using the arguments of the proof of~\eqref{| D_1 p D_2 q^t mathring K_l(p q) |}, we similarly get that, for any $R>0$, $N\in\N_0$ and $m,m'\in \R$, there are $\widehat C,W>0$ and $N'\in\N_0$ such that, for all $\sigma$ and $\tau$ as in~\eqref{sigma < frac epsilon ce}, and every $u\in(0,1]$,
\[
\big\|\chi_\tau(\mathring P_{l,u}-\mathring P'_{L,l,u})\chi_\sigma\big\|_{m,m'}
\le\widehat Ce^{-\frac{R}{u}}\sigma^N \tau^N\|\psi\|_{\AA,W,N'}\,\|f\|_{I,C^{N'}}\;.
\]
Moreover, for any $a\in\R$, replacing $\alpha$ with $\rho^{-a}\alpha$ and $\beta$ with $\rho^{a}\beta$ in the above argument, we also get
\begin{multline*}
\big\langle \chi_\tau(\mathring P_{l,u}-\mathring P'_{L,l,u})\chi_\sigma\rho^{-a}\alpha, \rho^{a}\beta\big\rangle\\
=\frac{1}{2\pi}\int_{|\xi|u>C(\ln \frac{\epsilon}{c \sigma}+\ln\frac{\epsilon}{cc'\tau})} \int_{-\infty}^{+\infty}\hat\psi(\xi) B_{l,z}(t,\xi,a) f(t)\, d\xi\,dt\;, 
\end{multline*}
where
\begin{align*}
B_{l,z}(t,\xi,a)
&=\Big\langle\Big(\phi_{l,z}^{t*}\, e^{i\xi uD_{\mathring\FF_l,z}}
-\phi_{L,l,z}^{\prime\,t*}\, e^{i\xi uD_{\mathring\FF'_{L,l},z}}\Big)\chi_\sigma\rho^{-a}\alpha,\chi_\tau\rho^{a}\beta\Big\rangle \\
&=\Big\langle\Big(\rho^{a}\phi_{l,z}^{t*}\, e^{i\xi uD_{\mathring\FF_l,z}}\rho^{-a}
-\rho^{a}\phi_{L,l,z}^{\prime\,t*}\, e^{i\xi uD_{\mathring\FF'_{L,l},z}}\rho^{-a}\Big)\chi_\sigma\alpha,\chi_\tau\beta\Big\rangle \\
&=\Big\langle\Big(\phi_{l,z-a}^{t*} e^{i\xi uD_{\mathring\FF_l,z-a,z+a}}
-\phi_{L,l,z-a}^{\prime\,t*}e^{i\xi uD_{\mathring\FF'_{L,l},z-a,z+a}}\Big)\chi_\sigma\alpha,\chi_\tau\beta\Big\rangle\;.
\end{align*}  
Then, proceeding as above, we obtain
\[
\big\|\chi_\tau(\mathring P_{l,u}-\mathring P'_{L,l,u})\chi_\sigma\big\|_{\rho^aH^m,\rho^a H^{m'}}
\le\widehat Ce^{-\frac{R}{u}}\sigma^N \tau^N\|\psi\|_{\AA,W,N'}\,\|f\|_{I,C^{N'}}\;,
\]
for some $\widehat C,W>0$ and $N'\in\N_0$, depending only on $R$, $N$, $m$, $m'$ and $a$. Using the Sobolev embedding theorem as in \Cref{p: mathring K_l}, it follows that, for any $a\in\R$, $R>0$ and $N\in\N_0$, there are some $\widehat C,W>0$ and $N'\in\N_0$ such that
\begin{multline*}
\big|\chi_\tau(p)(\mathring K_{l,u}(p,q)-\mathring K'_{L,l,u}(p,q))\chi_\sigma(q)\big|\\
\le\widehat Ce^{-\frac{R}{u}} \Big(\frac{\rho(p)}{\rho(q)}\Big)^a \sigma^N \tau^N\,\|\psi\|_{\AA,W,N'}\,\|f\|_{I,C^{N'}}\;,
\end{multline*}
for all $p,q\in\mathring T_{L,l,\epsilon}$ and $u\in(0,1]$, and every $\sigma$ and $\tau$ as in~\eqref{sigma < frac epsilon ce}. Put 
\[
\epsilon'=\min\left(\frac{\epsilon}{4ce}, \frac{\epsilon}{4cc'e}\right)\;.
\]
For $p,q\in\mathring T_{L,l,\epsilon'}$, we set $\tau=3\rho(p)$ and $\sigma=3\rho(q)$. It is clear that $\sigma$ and $\tau$ satisfy~\eqref{sigma < frac epsilon ce} and $\chi_\tau(p)=\chi_\sigma(q)=1$ (since $\rho(p)<\tau/2, \rho(q)<\sigma/2$). Therefore, by the above estimate, we get
\begin{align*}
\big|(\mathring K_{l,u}-\mathring K'_{L,l,u})(p,q)\big|
&\le 9^N\widehat Ce^{-\frac{R}{u}}s(p,q)^a\rho(p)^N\rho(q)^N\,\|\psi\|_{\AA,W,N'}\,\|f\|_{I,C^{N'}}\\
&=9^N\widehat Ce^{-\frac{R}{u}}s(p,q)^{a-N}\rho(p)^{a+N}\|\psi\|_{\AA,W,N'}\,\|f\|_{I,C^{N'}}\;,
\end{align*}
for all $p,q\in\mathring T_{L,l,\epsilon'}$ and $u\in(0,1]$.

For any $k\in\N_0$, taking arbitrary operators $D_1, D_2\in \Diffb^k(M_l;\Lambda\FF_l)$ and $D'_1, D'_2\in \Diffb^k(M'_{L,l};\Lambda\FF'_{L,l})$ with $D_i\equiv D'_i$ on $\mathring T_{L,l,\epsilon}\equiv\mathring T'_{L,l,\epsilon}$ ($i=1,2$), and applying the above arguments to the operators $D_1\mathring P_{l,u}D_2$ and $D'_1\mathring P'_{l,u}D'_2$, we obtain that, for all $a\in\R$ and $N\in\N_0$, there are some $\widehat C,W>0$ and $N'\in\N_0$ such that, for all $p,q\in\mathring T_{L,l,\epsilon'}$ and $u\in(0,1]$,
\begin{equation}\label{|D_1 p D_2 q^t(mathring K_l u - mathring K'_l u)(p q)|}
\big|D_{1,p}\,D_{2,q}^\trans(\mathring K_{l,u}-\mathring K'_{L,l,u})(p,q)\big|
\le\widehat Ce^{-\frac{R}{u}}s(p,q)^a\rho(p)^N\|\psi\|_{\AA,W,N'}\,\|f\|_{I,C^{N'}}\;.
\end{equation}
Consider the vector bundle $S=\Lambda\FF_l\boxtimes(\Lambda\FF_l^*\otimes\Omega M_l)$ over $M_l^2$. Recall that $\Diffb^k((M_l)^2_{\text{\rm b}};\beta_{\text{\rm b}}^*S)$ is $C^\infty((M_l)^2_{\text{\rm b}})$-spanned by the lift of $\Diffb^k(M_l^2;S)$, and $\Diffb^k(M_l^2;S)$ is $C^\infty(M_l^2)$-spanned by the lift of $\Diffb^k(M_l;\Lambda\FF_l)$ from the left-factor projection and the lift of $\Diffb^k(M_l;\Lambda\FF_l^*\otimes\Omega)$ from right-factor projection (\Cref{ss: b-stretched product}). Then it follows from~\eqref{|D_1 p D_2 q^t(mathring K_l u - mathring K'_l u)(p q)|} that, for all $A\in\Diffb^k((M_l)^2_{\text{\rm b}};\beta_{\text{\rm b}}^*S)$, $a\in\R$ and $N\in\N_0$, there are some $\widehat C,W>0$ and $N'\in\N_0$ such that, on $\beta_{\text{\rm b}}^{-1}(\mathring T_{L,l,\epsilon'}^2)$,
\[
\big|A(\mathring\kappa_{l,u}-\mathring\kappa'_{L,l,u})\big|
\le\widehat Ce^{-\frac{R}{u}}s^a\rho^N\|\psi\|_{\AA,W,N'}\,\|f\|_{I,C^{N'}}\;.
\]
Since $a$ and $N$ are arbitrary, this indeed holds with $A\in\Diff^k((M_l)^2_{\text{\rm b}};\beta_{\text{\rm b}}^*S)$, after possibly increasing $\widehat C$, obtaining the stated inequality.
\end{proof}

\Cref{p: P equiv P'} means that $\mathring\kappa_{l,u}-\mathring\kappa'_{L,l,u}$ has a $\dot C^\infty$ extension on the open subset $(T_{L,l,\epsilon})^2_{\text{\rm b}}\subset(M_l)^2_{\text{\rm b}}$ over $T_{\epsilon'}\equiv T'_{\epsilon'}$.

Recall that $\phi_L=\{\phi_L^t\}$ denotes the restriction of $\phi_l$, or of $\phi$, to any boundary leaf $L$ of $\FF_l$.

\begin{cor}\label{c: P_l u in Psib^-infty(M_l bigwedge T^*FF_l)}
The section $\mathring\kappa_l$ has a $C^\infty$ extension $\kappa_l$ to $(M_l)^2_{\text{\rm b}}$, which vanishes to all orders at $\lb\cup\rb$, and therefore $\mathring P_l$ defines an operator $P_l\in\Psib^{-\infty}(M_l;\Lambda\FF_l)$. Moreover
\begin{align*}
I_{\nu_l}(P_l,\lambda)
&\equiv\bigoplus_L\int_{-\infty}^{+\infty}\phi_{L,z+i\lambda}^{t*}\,\psi(D_{L,z+i\lambda})\,e^{i\lambda\varkappa_Lt}f(t)\,dt\\
&\in\Psi^{-\infty}(\partial M_l;\Lambda)\equiv\bigoplus_L\Psi^{-\infty}(L;\Lambda)\;,
\end{align*}
where $L$ runs in $\pi_0(\partial M_l)$.
\end{cor}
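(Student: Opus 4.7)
\medskip

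\noindent\textbf{Proof plan.} The strategy is to patch together what is already known. Away from the front face, \Cref{p: mathring K_l} already tells us that $\mathring K_l$ extends smoothly to $M_l^2\setminus(\partial M_l)^2$ and vanishes to all orders on $(\partial M_l\times\mathring M_l)\cup(\mathring M_l\times\partial M_l)$. Translating through $\beta_{\text{\rm b}}$, this means $\mathring\kappa_l$ extends smoothly to the complement of $\ff$ in $(M_l)^2_{\text{\rm b}}$, with vanishing to all orders on the portions of $\lb\cup\rb$ lying away from $\ff$. So the only point is to produce a smooth extension across $\ff$ that vanishes to all orders on the remaining portions of $\lb$ and $\rb$ near $\ff$.

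For every boundary leaf $L\in\pi_0(\partial M_l)$, the collar identification $T_{L,l,\epsilon}\equiv T'_{L,l,\epsilon}$ lifts to an identification of the corresponding open neighborhoods of the relevant component of $\ff$ inside $(M_l)^2_{\text{\rm b}}$ and $(M'_{L,l})^2_{\text{\rm b}}$. On the smaller neighborhood determined by $\epsilon'<\epsilon$ from \Cref{p: P equiv P'}, I will write
\[
\mathring\kappa_l=\mathring\kappa'_{L,l}+(\mathring\kappa_l-\mathring\kappa'_{L,l})\;.
\]
The first summand extends smoothly across the corresponding component of $\ff$ by \Cref{p: kappa_P_pm} applied to the suspension case, and that extension vanishes to all orders on $\lb\cup\rb$. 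The second summand extends smoothly across $\ff$ and vanishes to all orders on all boundary hypersurfaces (in particular on $\ff$, $\lb$ and $\rb$) by \Cref{p: P equiv P'}: taking $u$ fixed (or equivalently taking $N$ as large as desired in the estimate, which controls derivatives up to arbitrary order together with powers of $\rho$ and $s^a$ for any $a\in\R$) yields a $\dot C^\infty$ extension on the neighborhood. Adding a partition of unity subordinate to the cover consisting of these collar neighborhoods together with the complement of a smaller neighborhood of $\partial\bfM\times\partial\bfM$ (on which \Cref{p: mathring K_l} already applies), I obtain the desired $C^\infty$ extension $\kappa_l$ on all of $(M_l)^2_{\text{\rm b}}$, vanishing to all orders on $\lb\cup\rb$. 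Thus $P_l\in\Psib^{-\infty}(M_l;\Lambda\FF_l)$.

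For the indicial family, recall from \Cref{s: indicial} that $I_{\nu_l}(P_l,\lambda)$ depends only on the restriction $\kappa_l|_{\ff}$. Since the components of $\ff$ are indexed by the boundary leaves $L\in\pi_0(\partial M_l)$ and they are mutually disjoint, $I_{\nu_l}(P_l,\lambda)$ splits canonically as a direct sum indexed by $L$. On the component of $\ff$ over $L^2$, the decomposition above together with the vanishing of $(\mathring\kappa_l-\mathring\kappa'_{L,l})$ on $\ff$ (take $N>0$ in \Cref{p: P equiv P'}) gives $\kappa_l|_{\ff}=\kappa'_{L,l}|_{\ff}$, so the contribution equals $I_{\nu'_{L,l}}(P'_{L,l},\lambda)$, which is computed by \Cref{p: I_nu(P_pm z lambda))} with $\varkappa=\varkappa_L$. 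Combining these contributions yields the stated formula.

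The main obstacle is the patching across $\ff$: one must verify that the bound in \Cref{p: P equiv P'} (which involves the joint derivatives in the b-coordinates $(\rho,s,y,y')$, together with arbitrary powers of $\rho$ and arbitrary $a\in\R$ for $s^a$) really implies a $\dot C^\infty$ extension of the difference across $\ff$, i.e.\ with vanishing to all orders on $\ff$, $\lb$ and $\rb$ simultaneously; once this is granted, the rest is bookkeeping with partitions of unity and with the indicial-family formula from \Cref{s: indicial}.
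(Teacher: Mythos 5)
Your argument is correct and is precisely the route the paper takes: the paper's proof is a bare citation of \Cref{p: kappa_P_pm,p: I_nu(P_pm z lambda)),p: mathring K_l,p: P equiv P'}, and you have simply spelled out how those four ingredients combine (the only minor inefficiency being that the partition of unity is superfluous, since any two smooth extensions of $\mathring\kappa_l$ agree on overlaps by continuity from the interior). The "obstacle" you flag at the end is indeed routine: uniform vanishing of all $(\rho,s,y,y')$-derivatives with a factor $\rho^N s^a$ for all $N\in\N_0$ and $a\in\R$ gives a $\dot C^\infty$ extension by zero across $\ff\cup\lb\cup\rb$ by the standard argument.
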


\begin{proof}
This follows from \Cref{p: kappa_P_pm,p: I_nu(P_pm z lambda)),p: mathring K_l,p: P equiv P'}.
\end{proof}

The subscripts of \Cref{n: subscripts psi f z,n: subscript u} may be also used with $P_l$ and $\kappa_l$. If needed, the subscript ``$l$'' is also added to the notation of the b-diagonal $\Deltab$ of $(M_l)^2_{\text{\rm b}}$, and to $\Delta_{\text{\rm b},0}=\Deltab\cap\ff$.

\begin{cor}\label{c: (psi f) mapsto kappa_l psi f is cont}
The bilinear map
\[
\AA\times\Cinftyc(\R)\to C^\infty\big((M_l)^2_{\text{\rm b}};\beta_{\text{\rm b}}^*(\Lambda\FF_l \boxtimes (\Lambda\FF^*_l\otimes \Omega M_l))\big)\;,\quad(\psi,f)\mapsto\kappa_{l,\psi,f}\;,
\]
is continuous.
\end{cor}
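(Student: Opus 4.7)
The plan is to reduce the statement to two continuity results already established: Proposition~\ref{p: mathring K_l} (with the estimate~\eqref{| D_1 p D_2 q^t mathring K_l(p q) |} derived in its proof) away from the boundary, and Proposition~\ref{p: (psi f) mapsto kappa_pm psi f is cont} together with Proposition~\ref{p: P equiv P'} near the boundary. Since $(M_l)^2_{\text{\rm b}}$ is compact, the Fr\'echet topology of $C^\infty\big((M_l)^2_{\text{\rm b}};\beta_{\text{\rm b}}^*(\Lambda\FF_l\boxtimes(\Lambda\FF^*_l\otimes\Omega M_l))\big)$ is described by a countable family of $C^k$ seminorms on finite coverings by compact sets, and continuity will be checked on each of them separately.

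First, fix the smaller collar $\bfT_{\epsilon'}$ provided by Proposition~\ref{p: P equiv P'}, and pick a smooth partition of unity $\{\mu_0,(\mu_L)_L\}$ on $(M_l)^2_{\text{\rm b}}$ subordinate to the open cover consisting of $(M_l)^2_{\text{\rm b}}\setminus\overline{\beta_{\text{\rm b}}^{-1}(\bfT_{\epsilon'/2}^2)}$ and, for each $L\in\pi_0(\partial M_l)$, an open set contained in $\beta_{\text{\rm b}}^{-1}(T_{L,l,\epsilon}^2)\equiv\beta_{\text{\rm b}}^{-1}(T'_{L,l,\epsilon}{}^2)$. Then write
\[
\kappa_{l,\psi,f}=\mu_0\,\kappa_{l,\psi,f}+\sum_L\mu_L\big(\kappa_{l,\psi,f}-\kappa'_{L,l,\psi,f}\big)+\sum_L\mu_L\,\kappa'_{L,l,\psi,f}\;.
\]
The support of $\mu_0$ is a compact subset of $(M_l)^2_{\text{\rm b}}\setminus(\lb\cup\rb\cup\ff)$, so under $\beta_{\text{\rm b}}$ it is a compact subset of $\mathring M_l^2$ on which $s$ is bounded above and below. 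The estimate~\eqref{| D_1 p D_2 q^t mathring K_l(p q) |} extracted from the proof of Proposition~\ref{p: mathring K_l} then shows that, for each $k\in\N_0$, there exist $C,C'>0$ and $N\in\N_0$, depending only on $k$ and the cover, such that the $C^k$ seminorm of $\mu_0\kappa_{l,\psi,f}$ is bounded by $C\|\psi\|_{\AA,C',N}\,\|f\|_{I,C^N}$ for any compact $I\subset\R$ containing $\supp f$; this is the desired continuity for the first piece.

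For the second sum, Proposition~\ref{p: P equiv P'} provides, for every prescribed $m,N\in\N_0$ and $a\in\R$, constants $\widehat C,W>0$ and $N'\in\N_0$ bounding the $C^m$-seminorm (with weight $s^a\rho^N$) of $\mathring\kappa_{l,\psi,f}-\mathring\kappa'_{L,l,\psi,f}$ on $\beta_{\text{\rm b}}^{-1}(\mathring T_{L,l,\epsilon'}^2)$ by $\widehat C\,\|\psi\|_{\AA,W,N'}\,\|f\|_{I,C^{N'}}$ (the exponential factor $e^{-R/u}$ is absent here because $u$ plays no role in the present statement; one simply drops the rescaling $\psi_u$ and applies the same argument verbatim). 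Since $\mu_L$ is compactly supported in $\beta_{\text{\rm b}}^{-1}(T_{L,l,\epsilon}^2)$ and the difference extends to a $\dot C^\infty$ section there, this yields continuity of $(\psi,f)\mapsto\mu_L(\kappa_{l,\psi,f}-\kappa'_{L,l,\psi,f})$ into the $C^k$ topology for every $k$. Finally, continuity of the third sum is exactly Proposition~\ref{p: (psi f) mapsto kappa_pm psi f is cont} applied to the suspension $(M'_{L,l},\FF'_{L,l})$, composed with multiplication by the smooth compactly supported function $\mu_L$. Summing the three contributions gives the claimed continuity.

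The main obstacle is bookkeeping rather than a new analytic input: one must verify that the proof of Proposition~\ref{p: P equiv P'} goes through when the rescaling $\psi_u$ is replaced by $\psi$ itself and one seeks only continuous dependence on $(\psi,f)$ rather than the exponential $u$-decay. This is immediate from the argument there, since the $u$-rescaling was used only to produce the $e^{-R/u}$ factor and to keep the $\AA$-seminorms of $\psi_u$ under control; replacing $\psi_u$ by $\psi$ simply removes this factor while leaving the $\AA$-seminorm estimates intact.
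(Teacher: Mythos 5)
Your proposal takes the same route as the paper, whose proof is a one-line citation of exactly the three ingredients you invoke: the estimate~\eqref{| D_1 p D_2 q^t mathring K_l(p q) |}, \Cref{p: (psi f) mapsto kappa_pm psi f is cont}, and \Cref{p: P equiv P'}. The elaboration is sound except for one slip: $\supp\mu_0$ is \emph{not} compact in $\mathring M_l^2$ (it still meets $\lb\cup\rb$, since the first set in your cover only excises a neighborhood of $\ff$), so $s$ is not bounded above and below there; nevertheless the conclusion holds because~\eqref{| D_1 p D_2 q^t mathring K_l(p q) |} is valid for every exponent $a$, which gives rapid decay at $\lb$ and $\rb$ and converts b-derivative bounds into full $C^k$ bounds there. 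Also, for \Cref{p: P equiv P'} there is no need to ``drop the rescaling'': specializing to $u=1$ gives $\psi_1=\psi$ and the stated seminorm bound in $\|\psi\|_{\AA,W,N'}\,\|f\|_{I,C^{N'}}$ already.
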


\begin{proof}
Apply~\eqref{| D_1 p D_2 q^t mathring K_l(p q) |} and \Cref{p: (psi f) mapsto kappa_pm psi f is cont,p: P equiv P'}.
\end{proof}

\begin{cor}\label{c: kappa_l u to 0  on Deltab cap beta_b^-1(T_l epsilon'^2)}
If $\psi_u(x)=e^{-ux^2}$ {\rm(}$u>0${\rm)} and $f(0)=0$, then there is some $0<\epsilon'<\epsilon$ such that $\kappa_{l,u}\to0$ on $\Delta_{\text{\rm b},l}\cap\beta_{\text{\rm b}}^{-1}(T_{l,\epsilon'}^2)\equiv T_{l,\epsilon'}$, in the $C^\infty$ topology, as $u\downarrow0$.
\end{cor}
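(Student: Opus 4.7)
The argument reduces the statement, via the collar-neighborhood identification $T_{L,l,\sigma}\equiv T'_{L,l,\sigma}$, to the corresponding assertion for the suspension model together with a small-$u$ comparison between $\kappa_{l,u}$ and its suspension counterpart $\kappa'_{L,l,u}$. First, observe that the Gaussian $\psi(x)=e^{-x^2}$ lies in $\AA$, and that the family $\psi_u(x)=e^{-ux^2}$ can be rewritten as $\psi(vx)$ with $v=\sqrt{u}$, so our heat-kernel family fits the form used in \Cref{p: P equiv P'} after this reparametrization.

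Next, applying \Cref{p: P equiv P'} with parameter $v=\sqrt{u}$, and shrinking $\epsilon'$ to the minimum of the resulting $\epsilon'$'s over the finitely many boundary leaves $L\in\pi_0(\partial M_l)$, the difference $\mathring\kappa_{l,u}-\mathring\kappa'_{L,l,u}$ on $\beta_{\text{\rm b}}^{-1}(\mathring T_{L,l,\epsilon'}^2)$ admits estimates of the form $\widehat C\,e^{-R/\sqrt{u}}\,\rho^N s^a$ on derivatives of arbitrary order, for arbitrary $R$, $N$ and $a$. Rapid decay in $u$, together with the factors $\rho^N$ (for arbitrary $N$) and $s^a$ (for arbitrary $a$), implies that this difference extends smoothly to $\beta_{\text{\rm b}}^{-1}(T_{L,l,\epsilon'}^2)$ and converges to zero as $u\downarrow0$ in the Fr\'echet space $C^\infty\bigl((T_{L,l,\epsilon'})^2_{\text{\rm b}};\beta_{\text{\rm b}}^*(\Lambda\FF_l\boxtimes(\Lambda\FF_l^*\otimes\Omega M_l))\bigr)$. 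A fortiori, its restriction to $\Deltab\cap\beta_{\text{\rm b}}^{-1}(T_{L,l,\epsilon'}^2)$ tends to zero in the $C^\infty$ topology.

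For the suspension piece $\kappa'_{L,l,u}$, \Cref{c: kappa_pm u|_Deltab to 0} applies directly (using the hypothesis $f(0)=0$) and yields convergence of $\kappa'_{L,l,u}|_{\Delta_{\text{\rm b},L,l}}$ to $0$ in the $C^\infty$ topology as $u\downarrow0$; restricting further to $\Delta_{\text{\rm b},L,l}\cap\beta_{\text{\rm b}}^{-1}(T_{L,l,\epsilon'}^2)\equiv T_{L,l,\epsilon'}$ preserves this convergence. Adding the two contributions gives $\kappa_{l,u}|_{\Deltab\cap\beta_{\text{\rm b}}^{-1}(T_{L,l,\epsilon'}^2)}\to0$ in $C^\infty$, and patching over the finitely many boundary leaves $L$ of $\FF_l$ yields the stated convergence on $\Deltab\cap\beta_{\text{\rm b}}^{-1}(T_{l,\epsilon'}^2)\equiv T_{l,\epsilon'}$. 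The only delicate point is matching the heat-kernel scaling to the scaling assumed in \Cref{p: P equiv P'}; the substitution $v=\sqrt{u}$ achieves this, and the resulting factor $e^{-R/\sqrt{u}}$ still decays faster than any polynomial in $u$ as $u\downarrow0$, which is all that is required to absorb the finitely many derivatives and powers of $\rho^{-1}$, $s^{\pm1}$ appearing when restricting to $\Deltab$.
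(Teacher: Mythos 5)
Your proof is correct and follows exactly the paper's (terse) argument: the paper's own proof is the single sentence ``This is a consequence of Corollary~\ref{c: kappa_pm u|_Deltab to 0} and \Cref{p: P equiv P'}.'' You correctly identify and handle the only subtlety, namely that \Cref{p: P equiv P'} uses the scaling convention $\psi_u(x)=\psi(ux)$ while the corollary's heat kernel $e^{-ux^2}$ requires the reparametrization $v=\sqrt u$, which still yields super-polynomial decay $e^{-R/\sqrt u}$; your patching over the finitely many boundary leaves $L\in\pi_0(\partial M_l)$ and the remark that the $\rho^N$ factor absorbs the extension to $\ff$ are both sound.
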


\begin{proof}
This is a consequence of Corollary~\ref{c: kappa_pm u|_Deltab to 0} and \Cref{p: P equiv P'}.
\end{proof}

\begin{cor}\label{c: lim_u to 0 trs(kappa_l u|_Deltab) equiv f(0) Pf(R_FF_l) |omega_l|}
If $\psi_u(x)=e^{-ux^2}$, then there is some $0<\epsilon'<\epsilon$ such that
\[
\lim_{u\downarrow0}\str(\kappa_{l,u}|_{\Delta_{\text{\rm b},l}})\equiv
\begin{cases}
f(0)\,e(\FF_l,g_{\FF_l})\,|\omega_{\text{\rm b},l}| & \text{if $n-1$ is even}\\
0 & \text{if $n-1$ is odd}
\end{cases}
\]
in $C^{0,\infty}_{\varpi_l}(T_{l,\epsilon'};\bOmega)$, using the identity $\Delta_{\text{\rm b},l}\cap\beta_{\text{\rm b}}^{-1}(T_{l,\epsilon'}^2)\equiv T_{l,\epsilon'}$.
\end{cor}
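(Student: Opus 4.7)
The plan is to reduce the statement to the already-proven suspension case, Corollary \ref{c: lim_u to 0 trs(kappa_pm u|_Delta_b pm) equiv f(0) Pf(R_FF_pm) |omega_pm|}, by using the fact that near each boundary leaf $L$ of $M_l$ the b-pseudodifferential operator $P_l$ looks, up to rapidly decaying corrections, like the suspension operator $P'_{L,l}$. The crucial input is Proposition \ref{p: P equiv P'}, which provides exponentially small differences between $\mathring\kappa_{l,u}$ and $\mathring\kappa'_{L,l,u}$ on $\beta_{\text{\rm b}}^{-1}(\mathring T_{L,l,\epsilon'}^2)$.

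Concretely, first I would choose $\epsilon'$ as the smallest of the constants supplied by Proposition \ref{p: P equiv P'} over the finitely many boundary components $L\in\pi_0(\partial M_l)$; this makes the collar $\bfT_{\epsilon'}$ small enough that on each piece $T_{L,l,\epsilon'}\equiv T'_{L,l,\epsilon'}$ the identifications $\FF_l\equiv\FF'_{L,l}$, $g_{\FF_l}\equiv g_{\FF'_{L,l}}$, $|\omega_{\text{\rm b},l}|\equiv|\omega'_{\text{\rm b},L,l}|$, and the matching $e(\FF_l,g_{\FF_l})\equiv e(\FF'_{L,l},g_{\FF'_{L,l}})$ of leafwise Euler forms, all hold (see \Cref{ss: collar neighborhoods of partial M_l} and \Cref{ss: globalization}). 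On the b-diagonal restricted to this collar I would then split
\[
\kappa_{l,u}\big|_{\Delta_{\text{\rm b},l}}
= \kappa'_{L,l,u}\big|_{\Delta_{\text{\rm b},L,l}} + (\kappa_{l,u}-\kappa'_{L,l,u})\big|_{\Delta_{\text{\rm b}}},
\]
where the identification of the two b-diagonals over the common collar is the one coming from $\bfpi:\bfM\to M$. For $\psi_u(x)=e^{-ux^2}$, Proposition \ref{p: P equiv P'} (adapted to this one-parameter family of Gaussians, which is handled exactly as in its proof because the only ingredient used is exponential decay of $|\widehat{\psi_u}(\xi)|$ at a rate that blows up as $u\downarrow 0$) yields that the second term, together with all its covariant derivatives, tends to zero as $u\downarrow 0$, with exponential speed in $1/u$; in particular it converges to zero in $C^{0,\infty}_{\varpi_l}(T_{L,l,\epsilon'};\bOmega)$. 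For the first term, Corollary \ref{c: lim_u to 0 trs(kappa_pm u|_Delta_b pm) equiv f(0) Pf(R_FF_pm) |omega_pm|}, applied to the suspension $(M'_{L,l},\FF'_{L,l})$, gives exactly $f(0)\,e(\FF'_{L,l},g_{\FF'_{L,l}})\,|\omega'_{\text{\rm b},L,l}|$ when $n-1$ is even (and $0$ otherwise) in $C^{0,\infty}_{\varpi'_{L,l}}$. Adding the two contributions and transporting the limit across the identifications listed above yields the claimed formula on each $T_{L,l,\epsilon'}$, and hence, by combining over the components of $\partial M_l$, on all of $T_{l,\epsilon'}$.

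The main obstacle I anticipate is bookkeeping rather than analysis: making sure that the identifications between the b-calculus objects on $M_l$ and on the local suspension models $M'_{L,l}$ — in particular the b-diagonals, the trivializations $\nu_l\equiv\nu'_l$ of ${}_+N\partial M_l$, and the b-densities — are used consistently, and that the supertrace and the identification $\bOmega^{1/2}(M^2_{\text{\rm b}})|_{\Deltab}\cong\bOmega M$ of \Cref{ss: b-stretched product} match on both sides. Once these identifications are pinned down, the convergence in the $C^{0,\infty}_{\varpi_l}$ topology follows at once, because Proposition \ref{p: P equiv P'} and Corollary \ref{c: lim_u to 0 trs(kappa_pm u|_Delta_b pm) equiv f(0) Pf(R_FF_pm) |omega_pm|} both provide uniform convergence with all derivatives along the fibers of $\varpi_l\equiv\varpi'_l$.
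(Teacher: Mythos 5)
Your proposal is correct and follows essentially the same route as the paper, which gives a one-line proof citing exactly the two ingredients you identify: Proposition~\ref{p: P equiv P'} to reduce the difference $\kappa_{l,u}-\kappa'_{L,l,u}$ near $\partial M_l$ to an exponentially small remainder, and Corollary~\ref{c: lim_u to 0 trs(kappa_pm u|_Delta_b pm) equiv f(0) Pf(R_FF_pm) |omega_pm|} for the suspension term. Your extra remark about adapting Proposition~\ref{p: P equiv P'} to the Gaussian family $\psi_u(x)=e^{-ux^2}$ (scaling by $\sqrt{u}$ instead of $u$) is sound but not strictly needed — the substitution $u'=\sqrt{u}$ already places $e^{-ux^2}=\psi(u'x)$ in the setting of that proposition, with $e^{-R/u'}=e^{-R/\sqrt{u}}$ still forcing the remainder to vanish.
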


\begin{proof}
This is a consequence of Corollary~\ref{c: lim_u to 0 trs(kappa_pm u|_Delta_b pm) equiv f(0) Pf(R_FF_pm) |omega_pm|} and \Cref{p: P equiv P'}.
\end{proof}

\begin{prop}\label{p: d_FF_l z in Psi_b^c} 
We have
\[
d_{\FF_l,z}\in\Diffb^1(M_l;\Lambda\FF_l)\;,\quad I_{\nu_l}(d_{\FF_l,z},\lambda)=d_{\partial M_l,z+i\lambda}\;.
\]
\end{prop}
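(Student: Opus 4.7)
The plan has two parts, corresponding to the two assertions.

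For the membership $d_{\FF_l,z}\in\Diffb^1(M_l;\Lambda\FF_l)$, I would first note that $\eta_0\in C^\infty(M_l;\Lambda^1\FF_l)$ (this is part of \ref{i-(A): d omega = eta wedge omega} in \Cref{ss: globalization}: $\eta$ is a global smooth $1$-form of bidegree $(0,1)$ on $M$, which restricts to a smooth leafwise $1$-form on each $M_l$). Therefore $\eta_0\wedge\in\Diffb^0(M_l;\Lambda\FF_l)$, since multiplication by any smooth section of $\End(\Lambda\FF_l)$ is a b-differential operator of order zero. It then suffices to show $d_{\FF_l}\in\Diffb^1(M_l;\Lambda\FF_l)$. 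This follows because $\FF_l$ is tangent to $\partial M_l$ (each boundary component of $M_l$ is a leaf of $\FF_l$), so $\fX(\FF_l)\subset\fXb(M_l)$, and the formula for $d_{\FF_l}$ in local foliated coordinates adapted to the boundary involves only derivations with respect to vector fields tangent to the boundary; hence $d_{\FF_l}$ lies in $\Diffb^1$. Adding the zero-order term gives the desired conclusion.

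For the indicial family, the key point is the identity
\begin{equation*}
\rho_l^{-i\lambda}\,d_{\FF_l,z}\,\rho_l^{i\lambda}=d_{\FF_l,z+i\lambda}\quad\text{on $\mathring M_l$,}
\end{equation*}
which is the leafwise version of \eqref{Witten's opers}. To verify it, I would apply both sides to an $\alpha\in C^\infty(\mathring M_l;\Lambda\FF_l)$ and use the Leibniz rule together with $\eta_0=d_{\FF_l}(\ln\rho_l)$ from \eqref{eta_0 = d_0,1(ln rho_l)}:
\begin{align*}
\rho_l^{-i\lambda}\,d_{\FF_l,z}(\rho_l^{i\lambda}\alpha)
&=\rho_l^{-i\lambda}\bigl(i\lambda\,\rho_l^{i\lambda-1}\,d_{\FF_l}(\rho_l)\wedge\alpha+\rho_l^{i\lambda}d_{\FF_l}\alpha+z\,\rho_l^{i\lambda}\eta_0\wedge\alpha\bigr)\\
&=d_{\FF_l}\alpha+(z+i\lambda)\,\eta_0\wedge\alpha=d_{\FF_l,z+i\lambda}\alpha.
\end{align*}
Now I would invoke the characterization \eqref{I_nu(A lambda) = (x^-i lambda A x^i lambda)_partial} of the indicial family to get $I_{\nu_l}(d_{\FF_l,z},\lambda)=(d_{\FF_l,z+i\lambda})_\partial$.

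Finally, I would identify $(d_{\FF_l,z+i\lambda})_\partial$ with $d_{\partial M_l,z+i\lambda}$. Since each connected component of $\partial M_l$ is a leaf $L$ of $\FF_l$, there is a canonical identity $\Lambda\FF_l|_{\partial M_l}\equiv\Lambda\partial M_l$, the restriction of $d_{\FF_l}$ equals $d_{\partial M_l}$, and $\eta_0|_{\partial M_l}=\eta|_{\partial M_l}$. Applying the operator $(\cdot)_\partial$ of \Cref{s: indicial} to the b-operator $d_{\FF_l,z+i\lambda}$ (lift a smooth section of $\Lambda\partial M_l$ to a smooth section of $\Lambda\FF_l$, apply $d_{\FF_l,z+i\lambda}$, and restrict to $\partial M_l$) yields exactly $d_{\partial M_l}+(z+i\lambda)\eta|_{\partial M_l}\wedge=d_{\partial M_l,z+i\lambda}$, completing the proof.

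The computations here are essentially routine; the only point requiring some care is confirming that the conjugation identity above, which is a priori only defined on $\mathring M_l$, is compatible with the intrinsic definition of the indicial family through $(\cdot)_\partial$, and that the orthogonal decomposition of $\Lambda\FF_l|_{\partial M_l}$ identifies with $\Lambda\partial M_l$ without any twisting by normal factors. This is where I would be most careful, but both follow directly from the structure set up in \Cref{ss: globalization} and \Cref{s: indicial}.
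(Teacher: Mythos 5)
Your proposal is correct and follows essentially the same route as the paper: membership in $\Diffb^1$ comes from $\Diff^1(\FF_l;\Lambda\FF_l)\subset\Diffb^1(M_l;\Lambda\FF_l)$ since the leaves are tangent to the boundary, and the indicial family is computed from $I_{\nu_l}(A,\lambda)=(\rho^{-i\lambda}A\rho^{i\lambda})_\partial$ combined with $\eta_0=d_{\FF_l}(\ln\rho_l)$, exactly as in~\eqref{I_nu(A lambda) = (x^-i lambda A x^i lambda)_partial} and~\eqref{eta_0 = d_0,1(ln rho_l)}. Your expanded conjugation computation is just an unpacking of the paper's one-line version.
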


\begin{proof}
By~\eqref{d_FF(f_I dx^prime prime I)}, $d_{\FF_l,z}\in\Diff^1(\FF_l;\Lambda\FF_l)\subset\Diffb^1(M_l;\Lambda\FF_l)$. By~\eqref{I_nu(A lambda) = (x^-i lambda A x^i lambda)_partial} and~\eqref{eta_0 = d_0,1(ln rho_l)},
\[
I_{\nu_l}(d_{\FF_l,z},\lambda)=(\rho^{-i\lambda}d_{\FF_l,z}\rho^{i\lambda})_\partial
=(d_{\FF_l,z}+i\lambda\rho^{-1}d_{\FF_l}\rho\wedge)_\partial=d_{\partial M_l,z+i\lambda}\;.
\]
Alternatively, we can use~\eqref{I_nu(A lambda) for A in Diffb^m(M)} and~\eqref{d_FF(f_I dx^prime prime I)} to describe $I_{\nu_l}(d_{\FF_l,z},\lambda)$.
\end{proof}

Recall that $\bfM\equiv\bigsqcup_l M_l$ and $\bfnu$ is the combination of the sections $\nu_l$ (\Cref{ss: globalization}). This boldface notation of \Cref{ss: transv simple flows - suspension,ss: collar neighborhoods of partial M_l,ss: globalization} allows to simplify the notation of direct sums of section spaces, cohomologies and operators defined on the manifolds $M_l$. For instance, we get the operators
\begin{gather*}
\bfP\equiv\bigoplus_lP_l\in{\textstyle\Psib^{-\infty}(\bfM;\Lambda\bfFF)}
\equiv\bigoplus_l{\textstyle\Psib^{-\infty}(M_l;\Lambda\FF_l)}\;,\\
d_{\bfFF,z}\equiv\bigoplus_ld_{\FF_l,z}\in{\textstyle\Psib^1(\bfM;\Lambda\bfFF)}
\equiv\bigoplus_l{\textstyle\Psib^1(M_l;\Lambda\FF_l)}\;,
\end{gather*}
whose indicial operators are
\begin{gather*}
I_{\bfnu}(\bfP,\lambda)\equiv\bigoplus_lI_{\nu_l}(P_l,\lambda)
\in\Psi^{-\infty}(\partial\bfM;\Lambda)\equiv\bigoplus_L\Psi^{-\infty}(L;\Lambda)\;,\\
I_{\bfnu}(d_{\bfFF,z},\lambda)\equiv\bigoplus_lI_{\nu_l}(d_{\FF_l,z},\lambda)\in\Psi^1(\partial\bfM;\Lambda)\equiv\bigoplus_L\Psi^1(L;\Lambda)\;,
\end{gather*}
where $L$ runs in $\pi_0(\partial \bfM)\equiv\pi_0M^0\sqcup\pi_0M^0$. On the other hand, according to \Cref{p: d_FF_l z in Psi_b^c}, $I_{\bfnu}(d_{\bfFF,z},\lambda)=d_{\partial \bfM,z+i\lambda}$. Let also $\kappa=\kappa_{\bfP}$ on $\bfM^2_{\text{\rm b}}\equiv\bigsqcup_l(M_l)^2_{\text{\rm b}}$, which is the combination of the sections $\kappa_l$. In $\bfM^2_{\text{\rm b}}$, we have $\Deltab\equiv\bigsqcup_l\Delta_{\text{\rm b},l}$ and $\Delta_{\text{\rm b},0}\equiv\bigsqcup_l\Delta_{\text{\rm b},0,l}$. The subscripts of \Cref{n: subscripts psi f z,n: subscript u} may be also used with $\bfP$ and $\kappa$.

When $z=\mu\in\R$ and $\psi(x)=\psi_u(x)=e^{-ux^2}$ ($u>0$), the above $\bfP=\bfP_u$ is the operator $\bfP_{\mu,u,f}$ of \Cref{ss: intro - leafwise Witten}. Thus \Cref{t: intro - bfP_u f} is a consequence of \Cref{c: smallnuint is cont,c: P_l u in Psib^-infty(M_l bigwedge T^*FF_l),c: (psi f) mapsto kappa_l psi f is cont}.

\section{The limit of $\bStr(\bfP_u)$ as $u\downarrow0$}\label{s: lim_u to 0 bTrs(P_u z)}

With the notation of \Cref{ss: simple flows}, let $\CC=\CC(\phi)$, $\PP=\PP(\phi)$, $\CC_l=\CC(\phi_l)$ and $\PP_l=\PP(\phi_l)$. For any leafwise density $\alpha\in C^\infty(M_l;\Omega\FF_l)$, we can consider $\alpha\,|\omega_{\text{\rm b},l}|\in C^\infty(M_l;\bOmega)$. In particular, if $n-1$ is even, the leafwise Euler density $e(\FF_l,g_{\FF_l})\in C^\infty(M_l;\Omega\FF_l)$ (\Cref{ss: e(FF g_FF)}) gives rise to the b-density $e(\FF_l,g_{\FF_l})\,|\omega_{\text{\rm b},l}|\in C^\infty(M_l;\bOmega)$, whose b-integral,
\[
\bchi_{|\omega_{\text{\rm b},l}|}(\FF_l)=\nulint_{M_l}e(\FF_l,g_{\FF_l})\,|\omega_{\text{\rm b},l}|\;,
\]
can be called the \emph{b-Connes $|\omega_{\text{\rm b},l}|$-Euler characteristic} of $\FF_l$. This is a b-normalized version of the Connes $|\omega_{\text{\rm b},l}|$-Euler characteristic, where $|\omega_{\text{\rm b},l}|$ is considered as an invariant transverse measure of $\FF^1_l$. The usual Connes $|\omega_{\text{\rm b},l}|$-Euler characteristic is not defined because $M^1_l$ is not compact. If $n-1$ is odd, let $\bchi_{|\omega_{\text{\rm b},l}|}(\FF_l)=0$.

Recall the operator $\bfP$ defined in \Cref{s: opers on M^1_l}.

\begin{thm}\label{t: lim_u -> 0 bTrs(P_u z)}
If $\psi_u(x)=e^{-ux^2}$ {\rm(}$u>0${\rm)}, then
\[
\lim_{u\downarrow0}\bStr(\bfP_u)=\sum_l\bchi_{|\omega_{\text{\rm b},l}|}(\FF_l)\cdot f(0)+\sum_{c\in\CC}\ell(c)\sum_{k\in\Z^\times}\epsilon_c(k)\cdot f(k\ell(c))\;.
\]
\end{thm}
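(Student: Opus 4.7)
\medskip
\noindent\textbf{Proof proposal.} Since $\bStr(\bfP_u)=\sum_l\bTr(P_{l,u}\sw)=\sum_l\smallnuint_{M_l}\str(\kappa_{l,u}|_{\Delta_{\text{\rm b},l}})$, it suffices to work on each $M_l$ and sum at the end. Choose $0<\epsilon'<\epsilon$ small enough for \Cref{c: kappa_l u to 0  on Deltab cap beta_b^-1(T_l epsilon'^2),c: lim_u to 0 trs(kappa_l u|_Deltab) equiv f(0) Pf(R_FF_l) |omega_l|} to apply simultaneously for every boundary leaf of every $M_l$, and split
\[
\bTr(P_{l,u}\sw)=\nulint_{T_{l,\epsilon'}}\str(\kappa_{l,u}|_{\Delta_{\text{\rm b},l}})
+\int_{M_l\setminus T_{l,\epsilon'}}\str(\kappa_{l,u}|_{\Delta_{\text{\rm b},l}})\;.
\]
The plan is to analyse the two pieces separately: the first encodes only the b-normalised Euler contribution at $f(0)$, and the second is a genuine (ordinary) integral on a compact subset of $M^1_l$ which, by the standard heat-kernel method for fixed-point formulas, produces both the remaining Euler density and the full sum over closed orbits. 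All closed orbits of $\phi$ lie in $M^1=\bigsqcup_l M^1_l$, so $\bigsqcup_l\CC(\phi_l)=\CC(\phi)$.

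For the collar piece, \Cref{c: lim_u to 0 trs(kappa_l u|_Deltab) equiv f(0) Pf(R_FF_l) |omega_l|} shows that $\str(\kappa_{l,u}|_{\Delta_{\text{\rm b},l}})$ converges in $C^{0,\infty}_{\varpi_l}(T_{l,\epsilon'};\bOmega)$ to $f(0)\,e(\FF_l,g_{\FF_l})\,|\omega_{\text{\rm b},l}|$ when $\dim\FF$ is even (to zero otherwise). By \Cref{r: smallnuint is cont}, the b-integral is continuous on this weaker topology, so
\[
\lim_{u\downarrow0}\nulint_{T_{l,\epsilon'}}\str(\kappa_{l,u}|_{\Delta_{\text{\rm b},l}})
=f(0)\nulint_{T_{l,\epsilon'}}e(\FF_l,g_{\FF_l})\,|\omega_{\text{\rm b},l}|\;.
\]

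For the interior piece, $M_l\setminus T_{l,\epsilon'}$ is a compact subset of $M^1_l$ where $\kappa_{l,u}|_{\Delta_{\text{\rm b},l}}$ coincides with the ordinary restriction $\str\mathring K_{l,u}(p,p)$, and
\[
\mathring K_{l,u}(p,p)=\int_{\R}\phi_{l,z}^{t*}\,K_{u,z}\bigl(\phi_l^t(p),p\bigr)\,f(t)\,dt\;,
\]
where $K_{u,z}$ is the leafwise heat kernel of $e^{-u\Delta_{\mathring\FF_l,z}}$. As $u\downarrow0$, $K_{u,z}$ concentrates on the leafwise diagonal, so only pairs $(t,p)$ with $\phi_l^t(p)=p$ contribute. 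Since $\phi_l$ is transverse to $\mathring\FF_l$ on $M^1_l$, these pairs consist of $\{0\}\times(M_l\setminus T_{l,\epsilon'})$ together with, for each simple closed orbit $c\in\CC(\phi_l)$ and each $k\in\Z^\times$ with $k\ell(c)\in\supp f$, the circle $\{k\ell(c)\}\times c$. The $t=0$ contribution, via \Cref{t: e_z l}, is exactly $f(0)\int_{M_l\setminus T_{l,\epsilon'}}e(\FF_l,g_{\FF_l})\,|\omega_{\text{\rm b},l}|$. Adding this to the collar limit reconstructs the full b-integral $f(0)\,\bchi_{|\omega_{\text{\rm b},l}|}(\FF_l)$. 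Near each closed orbit $c$, one chooses Fermi coordinates (arclength $s\in\R/\ell(c)\Z$ along $c$ together with leafwise coordinates transverse to $c$ in the leaf) and localises the $(t,p)$-integral near $t=k\ell(c)$, $p\in c$. The transverse piece is treated by exactly the Atiyah--Bott heat-kernel computation of \Cref{p: local Lefschetz formula} applied to the transverse return map, giving the sign $\epsilon_c(k)$; the longitudinal integration along $c$ yields the factor $\ell(c)$ and sends $f(t)\,dt$ to $f(k\ell(c))$. Summing over $c$, $k$, and $l$, and recalling that $\bigsqcup_l\CC(\phi_l)=\CC$, produces the closed-orbit sum in the statement.

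The main technical obstacle is this last step: the rigorous identification of the limit of $\int_{\R}\int_{W}\str(\phi_{l,z}^{t*}K_{u,z}(\phi_l^t(p),p))\,dp\,f(t)\,dt$ for $W$ a tube around $c$ as $\ell(c)\,\epsilon_c(k)\,f(k\ell(c))$. Because the fixed-point set of $\phi_l^{k\ell(c)}$ is the one-parameter family $c$ rather than an isolated point, \Cref{p: local Lefschetz formula} has to be applied only in the $(n-2)$ transverse directions, and one must control the remainder as both $u\downarrow0$ and $t\to k\ell(c)$ together; this is the same computation as carried out in \cite{AlvKordy2002} for the case $M^0=\emptyset$, and it applies verbatim here because all closed orbits live in the compact subset $M_l\setminus T_{l,\epsilon'}$ of $M^1_l$, where $\phi_l$ is transverse to $\FF_l$ and the b-structure plays no role.
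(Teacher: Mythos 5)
Your collar-versus-interior split and your treatment of the collar term (\Cref{c: lim_u to 0 trs(kappa_l u|_Deltab) equiv f(0) Pf(R_FF_l) |omega_l|} plus continuity of the b-integral, \Cref{r: smallnuint is cont}) match the paper's strategy, but the closed-orbit part has a genuine gap rooted in a misreading of the geometry. You describe the fixed-point locus of $\phi_l^{k\ell(c)}$ near $c$ as the one-parameter family $c$, propose applying \Cref{p: local Lefschetz formula} ``in the $(n-2)$ transverse directions,'' and acknowledge needing to control a remainder as $u\downarrow0$ and $t\to k\ell(c)$ simultaneously. But $c$ is \emph{transverse} to the leaves of $\FF^1_l$, not contained in one; so for each $p\in c$ the leafwise return map $\phi_l^{k\ell(c)}|_{L_p}$ has $p$ as an \emph{isolated} simple fixed point of an $(n-1)$-dimensional diffeomorphism, and no non-isolated/fixed-submanifold version of the heat-kernel Lefschetz argument is needed (and the relevant dimension is $n-1$, not $n-2$). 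Your expression $\mathring K_{l,u}(p,p)=\int_\R\phi_{l,z}^{t*}K_{u,z}(\phi_l^t(p),p)f(t)\,dt$ also glosses over the fact that $K_{u,z}$ is a \emph{leafwise} kernel, defined only on $\RR_{\FF^1_l}$; the rigorous version is the discrete sum over $\gamma\in\Gamma_l$ the paper works with, with the $t$-integral already frozen at $t=-h_l(\gamma)$.

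The paper's argument (\Cref{p: lim_u -> 0 bTrs(P_l,u,z)}) exploits exactly this: via \Cref{p: widetilde M^1_l equiv L_l times R}, $\widetilde M^1_l\equiv\R\times L_l$ with $\gamma\cdot(x,y)=(h_l(\gamma)+x,\gamma\cdot y)$, so the orbit-$c_j$ term of the $\Gamma_l$-sum (the unique $\gamma_0$ with $-h_l(\gamma_0)=t_0$) factors as an $x$-integral over a fundamental domain $[0,\ell(c_j)]$ — producing the length $\ell(c_j)$ — times the $(n-1)$-dimensional isolated-fixed-point limit over $W_j\subset L_l$ — producing $\epsilon_{c_j}(k_j)$ by \Cref{p: local Lefschetz formula}. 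The remaining $\gamma$'s are suppressed by the polynomial growth of $\Gamma_l$ and Gaussian decay of the heat kernel. The paper also first localises $\supp f$ into three cases (no period, one period, $0\in\supp f$), so the Euler and orbit contributions never have to be disentangled within a single estimate; your all-at-once framing would need to handle both contributions and the vanishing error terms in one pass, which is doable but is where the real work is.
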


To prove this theorem, we consider every $P_{l,u}$, separately. Recall that $\mathring\kappa_{l,u,z}$ corresponds to $\mathring k_{l,z,u}$ via the restriction of $\beta_{\text{\rm b}}:(M_l)^2_{\text{\rm b}}\to M_l^2$ to the interiors. Thus we are going to study the asymptotic behaviour of $\mathring k_{l,z,u}$ as $u\downarrow0$. The identities $\mathring M_l\equiv M^1_l$, $\mathring\FF_l\equiv\FF^1_l$ and $\mathring T_{l,\epsilon'}\equiv T^1_{l,\epsilon'}$ ($0<\epsilon'\le\epsilon$) will be used without further comment. With the notation of \Cref{ss: components of M^1,ss: metric properties of M^1}, and adapting the notation of \Cref{ss: Schwartz kernels}, let $\fG_l=\Hol\FF^1_l$ and $\widetilde\fG_l=\Hol\widetilde\FF^1_l$, with source and target projections, $\bfs,\bfr:\fG_l\to M^1_l$ and $\bfs,\bfr:\widetilde\fG_l\to\widetilde M^1_l$. The pairs $(\bfr,\bfs)$ define identities $\fG_l\equiv\RR_l:=\RR_{\FF^1_l}$ and $\widetilde\fG_l\equiv\widetilde\RR_l:=\RR_{\widetilde\FF^1_l}$. Let $\Delta_l\subset\RR_l$ denote the diagonal. Consider also the vector bundles
\[
S_l=\bfs^*\Lambda\FF^1_l\otimes\bfr^*(\Lambda\FF_l^{1*}\otimes\Omega\FF^1_l)\;,\quad
\widetilde S_l=\bfs^*\Lambda\widetilde\FF^1_l\otimes\bfr^*\big(\Lambda\widetilde\FF_l^{1*}\otimes\Omega\widetilde\FF^1_l\big)\;,
\]
over $\fG_l$ and $\widetilde\fG_l$, and the leafwise Schwartz kernel $\tilde k_{l,z,u}$ defined by the Schwartz kernels of the operators $e^{-u\Delta_{\widetilde L',z}}$ on the leaves $\widetilde L'$ of $\widetilde\FF^1_l$, for $z\in\C$ (\Cref{ss: Schwartz kernels}). By~\eqref{Schwartz kernel} and since $\tilde\omega_{\text{\rm b},l}=D_l^*dx$ (\Cref{ss: components of M^1}), for $\tilde p\in\widetilde M^1_l$ and $p=[\tilde p]\in M^1_l$,
\begin{equation}\label{mathring K_l u z(p p)}
\mathring k_{l,z,u}(p,p)
\equiv\sum_{\gamma\in\Gamma_l}
\tilde\phi_{l,z}^{-h_l(\gamma)*}\,T_\gamma^*\,\tilde k_{l,z,u}\big(T_\gamma\tilde\phi_l^{-h_l(\gamma)}(\tilde p),\tilde p\big)\,
f(-h_l(\gamma))\,|\omega_{\text{\rm b},l}|(p)\;,
\end{equation}
using that $\widetilde S_{(\gamma\cdot\tilde p,\tilde p)}\equiv S_{(p,p)}$. This defines a convergent series in $\Cinftyub(\Delta_l;S_l)$.

Any leaf of $\widetilde\FF^1_l$ is of the form $\widetilde L'=\{x\}\times L_l\equiv L_l$ for some $x\in\R$. Then the restriction of $\tilde g_{\text{\rm b},l}$ to $\widetilde L'$ is identified with a metric $\tilde g_{l,x}$ on $L_l$, $\Delta_{\widetilde L',z}$ is identified with the twisted Laplacian $\Delta_{l,x,z}$ on $(L_l,\tilde g_{l,x})$ defined by the restriction of $\tilde\eta_0$, and $\tilde k_{l,z,u}$ on $\widetilde L^{\prime\,2}$ is identified with the Schwartz kernel $\tilde k_{l,x,u,z}$ of $e^{-u\Delta_{l,x,z}}$, defined on $L_l^2$.

\Cref{t: lim_u -> 0 bTrs(P_u z)} follows from the following result.

\begin{prop}\label{p: lim_u -> 0 bTrs(P_l,u,z)}
Let $I\subset\R$ be a compact interval with $\supp f\subset I$. Then the following properties hold:
\begin{enumerate}[{\rm(i)}]
\item\label{i: no periods in I} If $I\subset\R^\times$ and $I\cap\PP_l=\emptyset$, then
\[
\lim_{u\downarrow0}\bStr(P_{l,u})=0\;.
\]
\item\label{i: one period in I} If $I\subset\R^\times$ and $I\cap\PP_l=\{t_0\}$, then
\[
\lim_{u\downarrow0}\bStr(P_{l,u})=f(t_0)\sum_{c\in\CC_{l,t_0}}\ell(c)\,\epsilon_c(t_0/\ell(c))\;,
\]
where $\CC_{l,t_0}$ consists of the orbits $c\in\CC_l$ with period $t_0$.
\item\label{i: 0 in I} If $0\in I$ and $I\cap\PP_l=\emptyset$, then
\[
\lim_{u\downarrow0}\bStr(P_{l,u})=f(0)\,\bchi_{|\omega_{\text{\rm b},l}|}(\FF_l)\;.
\]
\end{enumerate}
\end{prop}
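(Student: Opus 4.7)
My plan is to compute $\bStr(P_{l,u})=\smallnulint_{M_l}\str(\kappa_{l,u}|_{\Deltab,l})$ by splitting the b-integral into a boundary contribution on $T_{l,\epsilon'}$ and an interior contribution on $M^1_{l,\epsilon'}=M_l\setminus T_{l,\epsilon'}$, for $\epsilon'$ small as in \Cref{c: kappa_l u to 0  on Deltab cap beta_b^-1(T_l epsilon'^2)}, \Cref{c: lim_u to 0 trs(kappa_l u|_Deltab) equiv f(0) Pf(R_FF_l) |omega_l|} and \Cref{p: P equiv P'}. Via $\Delta_{\text{\rm b},l}\equiv M_l$ and the interior identification $\Delta_l\cong M^1_l$, I have $\kappa_{l,u}|_{\Deltab,l}\equiv\mathring k_{l,u}(\cdot,\cdot)$ on $M^1_l$, which is given by \eqref{mathring K_l u z(p p)}. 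Using \Cref{r: smallnuint is cont} together with \Cref{c: (psi f) mapsto kappa_l psi f is cont}, it suffices to compute the $C^\infty$-limit of $\str(\kappa_{l,u}|_{\Deltab,l})$ on the boundary collar and the $L^1$-limit of $\str\mathring k_{l,u}(p,p)$ on $M^1_{l,\epsilon'}$.

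\textbf{Treatment of the boundary contribution.}
On $T_{l,\epsilon'}$, I exploit \Cref{p: P equiv P'}, which says $\mathring\kappa_{l,u}-\mathring\kappa'_{L,l,u}\to0$ in $C^\infty$ faster than any polynomial as $u\downarrow0$. Thus the boundary $C^\infty$-limit of $\str(\kappa_{l,u}|_{\Deltab,l})$ coincides with that of $\str(\kappa'_{L,l,u}|_{\Delta_{\text{\rm b},\pm}})$, computed in \Cref{c: lim_u to 0 trs(kappa_l u|_Deltab) equiv f(0) Pf(R_FF_l) |omega_l|}. In cases \ref{i: no periods in I} and \ref{i: one period in I}, where $f(0)=0$, this limit is zero by \Cref{c: kappa_l u to 0  on Deltab cap beta_b^-1(T_l epsilon'^2)}, so the boundary part does not contribute to the b-integral by \Cref{c: smallnuint is cont}. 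In case \ref{i: 0 in I}, the limit is $f(0)\,e(\FF_l,g_{\FF_l})\,|\omega_{\text{\rm b},l}|$, whose b-integral over $T_{l,\epsilon'}$ combines with the interior ordinary integral to yield $f(0)\,\bchi_{|\omega_{\text{\rm b},l}|}(\FF_l)$ (if $n-1$ is even, and $0$ otherwise, matching the definition).

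\textbf{Treatment of the interior contribution.}
On $M^1_{l,\epsilon'}$, I analyze the series \eqref{mathring K_l u z(p p)} term by term. For $\gamma\in\Gamma_l$ with $-h_l(\gamma)\notin\supp f$, the term vanishes. For the remaining $\gamma$, \Cref{c: d_FF(phi^t(p) p) ge c_3 |gamma|} (or its analog from \Cref{ss: metric properties of M^1} applied through the Molino cover) together with the heat kernel estimate \eqref{heat kernel estimates} shows that $T_\gamma^*\tilde k_{l,z,u}(T_\gamma\tilde\phi_l^{-h_l(\gamma)}(\tilde p),\tilde p)$ is pointwise controlled by $u^{-(n-1)/2}e^{-C_2 c_3^2|\gamma|^2/u}$ off the locus where $T_\gamma^{-1}\tilde\phi_{l,x}^{-h_l(\gamma)}$ has fixed points on $L_l$. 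By the discussion in \Cref{ss: components of M^1} this fixed-point locus corresponds precisely to lifts of closed orbits $c\in\CC_l$ with $\ell(c)$ dividing $-h_l(\gamma)$. In case \ref{i: no periods in I} this locus is empty and the interior contribution vanishes. In case \ref{i: one period in I}, near each $c\in\CC_{l,t_0}$ with $k=t_0/\ell(c)$, I apply \Cref{p: local Lefschetz formula} leafwisely to get a contribution $\epsilon_c(k)$ per transversal slice; integration along $c$ with the invariant transverse density $|\omega_{\text{\rm b},l}|$ (normalized so that $\omega_{\text{\rm b},l}(Z_l)=1$) produces the factor $\ell(c)$, yielding $f(t_0)\sum_{c\in\CC_{l,t_0}}\ell(c)\,\epsilon_c(k)$. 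In case \ref{i: 0 in I}, only the $\gamma=e$ term survives and, by \Cref{t: e_z l}, converges in $L^1$ to $f(0)\,e(\FF_l,g_{\FF_l})\,|\omega_{\text{\rm b},l}|$; combined with the boundary part above this gives the b-Connes--Euler characteristic.

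\textbf{Main obstacle.}
The delicate step is case \ref{i: one period in I}: I must verify that, after discarding the $\gamma$'s whose phase function $h_l(\gamma)$ lies outside $-\supp f$, the finite remaining sum of off-diagonal heat kernels concentrates on simple closed orbits in a way that reproduces the Lefschetz local index with the correct multiplicity $\ell(c)$. This requires a careful change of variables from the Molino model $\widetilde M^1_l\equiv\R\times L_l$ to a tubular neighborhood of $c$, using \eqref{tilde phi_l^t(x tilde y)} and \Cref{p: widetilde M^1_l equiv L_l times R}\ref{i: gamma . y = y => gamma = e} to decouple the transverse integration along $\R$ (which yields $\ell(c)$ through the normalization $\omega_{\text{\rm b},l}(Z_l)=1$) from the leafwise Lefschetz calculation of \Cref{p: local Lefschetz formula} (which yields $\epsilon_c(k)$). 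Once this localization is established, assembling the three cases is routine.
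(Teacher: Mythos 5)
Your proposal is correct and follows essentially the same approach as the paper: split the b-integral into a boundary collar piece (handled via the comparison with the suspension model, \Cref{p: P equiv P'}, together with \Cref{c: kappa_l u to 0  on Deltab cap beta_b^-1(T_l epsilon'^2)}, \Cref{c: lim_u to 0 trs(kappa_l u|_Deltab) equiv f(0) Pf(R_FF_l) |omega_l|}, and \Cref{c: smallnuint is cont}/\Cref{r: smallnuint is cont}) and a compact interior piece (handled by the Gaussian heat kernel decay \eqref{heat kernel estimates}, the word-length estimates from \Cref{ss: metric properties of M^1}, polynomial growth of $\Gamma_l$, and the local Lefschetz formula \Cref{p: local Lefschetz formula}). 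The step you flag as the ``main obstacle''---the change of variables $\widetilde M^1_l\equiv\R\times L_l$ that decouples $\int_0^{\ell(c_j)}|dx|$ from the leafwise fixed-point index and the triangle-inequality estimate isolating the $\gamma_0$-term near each closed orbit---is precisely the technical core of the paper's own proof of case~\ref{i: one period in I}.
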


\begin{proof}
Choose some $0<\epsilon'<\epsilon$ satisfying the statements of Corollaries~\ref{c: kappa_l u to 0  on Deltab cap beta_b^-1(T_l epsilon'^2)} and~\ref{c: lim_u to 0 trs(kappa_l u|_Deltab) equiv f(0) Pf(R_FF_l) |omega_l|}, and take some $0<\epsilon''<\epsilon'$. Take some $C_3\ge1$ satisfying~\eqref{C_3^-1 |gamma| le tilde d_l(tilde p gamma . tilde p) le C_3 |gamma|}. Since $\tilde\phi_l$ is of $\R$-local bounded geometry (\Cref{ss: families of bd geom}), there is some $R\ge0$ such that $\tilde d_l(\tilde\phi_l^t(\tilde p),\tilde p)\le R$ for all $\tilde p\in\widetilde M^1_l$ and $t\in I$. So, by~\eqref{C_3^-1 |gamma| le tilde d_l(tilde p gamma . tilde p) le C_3 |gamma|} and the triangle inequality, for all $\tilde p\in\widetilde M^1_{l,\epsilon''}$ and $\gamma\in\Gamma_l$ with $-h_l(\gamma)\in I$, we get
\begin{equation}\label{C_3^-1 |gamma| - R le d_widetilde FF^1_l(gamma^-1 . tilde phi^h_l(gamma)(tilde p) tilde p) le C_3 |gamma| + R}
C_3^{-1}|\gamma|-R\le \tilde d_l\big(\gamma\cdot\tilde\phi_l^{-h_l(\gamma)}(\tilde p),\tilde p\big)\le C_3|\gamma|+R\;,
\end{equation}
using also that $\gamma\cdot\tilde\phi_l^{-h_l(\gamma)}(\tilde p)=\tilde\phi_l^{-h_l(\gamma)}(\gamma\cdot\tilde p)$ with $\gamma\cdot\tilde p\in\widetilde M^1_{l,\epsilon''}$.

By the $\R$-local bounded geometry of $\tilde\phi_l$ and the compactness of $I$, there are $C_4,C_5>0$ such that, for all $t\in I$,
\begin{equation}\label{|tilde phi_l z^t*| le C_4 |f(t)| le C_5}
\big|\tilde\phi_{l,z}^{t*}\big|\le C_4\;,\quad|f(t)|\le C_5\;.
\end{equation}

Assume $I\subset\R^\times$ and $I\cap\PP_l=\emptyset$ to prove~\ref{i: no periods in I}. Thus
\[
\{\,(p,\phi_l^t(p))\mid p\in M^1_{l,\epsilon''},\ t\in I\,\}
\]
is a compact subset of $(M^1_l)^2\setminus\Delta_l$. By \Cref{l: d_FF reaches maximum/minimum}~\ref{i: d_FF reaches minimum}, there is some $C_6>0$ such that $d_{\FF^1_l}(\phi_l^t(p),p)\ge C_6$ for all $p\in M^1_{l,\epsilon''}$ and $t\in I$. So, for all $\tilde p\in\widetilde M^1_{l,\epsilon''}$ and $\gamma\in\Gamma_l$ with $-h_l(\gamma)\in I$,
\begin{equation}\label{d_widetilde FF^1_l(gamma^-1 . tilde phi_l^t(tilde p) tilde p) ge C_6}
d_{\widetilde\FF^1_l}\big(\gamma\cdot\tilde\phi_l^{-h_l(\gamma)}(\tilde p),\tilde p\big)\ge C_6\;.
\end{equation}

Take some $C_7>0$ such that, for all $\gamma\in\Gamma_l$ with $-h_l(\gamma)\in I$,
\[
C_7|\gamma|\le 
\begin{cases}
C_3^{-1}|\gamma|-R & \text{if $|\gamma|>C_3R$}\\
C_6 & \text{if $|\gamma|\le C_3R$}\;.
\end{cases}
\]
Since $\tilde d_l\le d_{\widetilde\FF^1_l}$ (\Cref{ss: leafwise metric}), it follows from~\eqref{C_3^-1 |gamma| - R le d_widetilde FF^1_l(gamma^-1 . tilde phi^h_l(gamma)(tilde p) tilde p) le C_3 |gamma| + R} and~\eqref{d_widetilde FF^1_l(gamma^-1 . tilde phi_l^t(tilde p) tilde p) ge C_6} that, for all $\tilde p\in\widetilde M^1_{l,\epsilon''}$ and $\gamma\in\Gamma_l$ with $-h_l(\gamma)\in I$,
\begin{equation}\label{d_widetilde FF^1_l(gamma^-1 . tilde phi_l^t(tilde p) tilde p) ge C_7 |gamma|}
d_{\widetilde\FF^1_l}\big(\gamma\cdot\tilde\phi_l^{-h_l(\gamma)}(\tilde p),\tilde p\big)\ge C_7|\gamma|\;.
\end{equation}
By~\eqref{heat kernel estimates} and~\eqref{d_widetilde FF^1_l(gamma^-1 . tilde phi_l^t(tilde p) tilde p) ge C_7 |gamma|}, and since the leaves of $\widetilde\FF^1_l$ are of equi-bounded geometry, there are $C_1,C_2,u_0>0$ such that, for all $0<u\le u_0$, $\tilde p\in\widetilde M^1_{l,\epsilon''}$ and $\gamma\in\Gamma_l$ with $-h_l(\gamma)\in I$,
\begin{equation}\label{| tilde phi_l^h_l(gamma)* T_gamma^-1* tilde k_l u z(gamma^-1 tilde phi^h_l(gamma)(tilde p) tilde p) | le ...}
\big|\tilde k_{l,z,u}\big(\gamma\cdot\tilde\phi_l^{-h_l(\gamma)}(\tilde p),\tilde p\big)\big|
\le C_1u^{(n-1)/2}e^{-C_2C_7^2|\gamma|^2/u}\;.
\end{equation}
Hence, by~\eqref{mathring K_l u z(p p)} and~\eqref{|tilde phi_l z^t*| le C_4 |f(t)| le C_5}, for all $0<u\le u_0$ and $p\in M^1_{l,\epsilon''}$,
\begin{equation}\label{| mathring K_l u z(p p) | le ...}
\big|\mathring k_{l,z,u}(p,p)\big|
\le C_4C_5C_1u^{(n-1)/2}\sum_{\gamma\in\Gamma_l}e^{-C_2C_7^2|\gamma|^2/u}\;,
\end{equation}
which converges to zero as $u\downarrow0$ because $\Gamma_l$ is of polynomial growth. Since $M^1_{l,\epsilon''}$ is compact, we get
\[
\lim_{u\downarrow0}\int_{p\in M^1_{l,\epsilon''}}\str\mathring k_{l,z,u}(p,p)=0\;,
\]
and therefore~\ref{i: no periods in I} follows by Corollaries~\ref{c: kappa_l u to 0  on Deltab cap beta_b^-1(T_l epsilon'^2)} and~\ref{c: smallnuint is cont}.

Now assume $I\subset\R^\times$ and $I\cap\PP=\{t_0\}$ to prove~\ref{i: one period in I}, and let $\CC_{l,t_0}=\{c_1,\dots,c_m\}$. \index{$\CC_{l,t_0}$} Then the following properties hold (\Cref{ss: components of M^1}):
\begin{enumerate}[(A)]
\setcounter{enumi}{13}

\item\label{i-(A): t_0 = h_l(gamma_0)} There is a unique $\gamma_0\in\Gamma_l$ such that $t_0=-h_l(\gamma_0)$. 

\item\label{i-(A): k_j = t_0/ell(c_j) in Z} We have $k_j:=t_0/\ell(c_j)\in\Z$ ($j=1,\dots,m$).

\item\label{i-(A): pi_l: R times y_j to c_j} There is some $y_j\in L_l$ such that $\pi_l:\R\times\{y_j\}\to c_j$ is a $C^\infty$ covering map with fundamental domain $[0,\ell(c_j)]\times\{y_j\}$.

\item\label{i-(A): gamma_0 cdot tilde phi_l^t_0(tilde p) = tilde p} For all $\tilde p\in\R\times\{y_j\}$, we have $\gamma_0\cdot\tilde\phi_l^{t_0}(\tilde p)=\tilde p$.

\item\label{i-(A): epsilon_{y_j}(T_gamma_0 tilde phi_l x^t_0) = epsilon_c_j(k_j)} For all $x\in\R$, every $y_j$ is a simple fixed point of the diffeomorphism $T_{\gamma_0}\tilde\phi_{l,x}^{t_0}$ of $L_l$ with $\epsilon_{y_j}(T_{\gamma_0}\tilde\phi_{l,x}^{t_0})=\epsilon_{c_j}(k_j,\phi)=\epsilon_{c_j}(k_j)$.

\end{enumerate}
In particular, there are no other fixed points of $T_{\gamma_0}\tilde\phi_{l,x_j}^{t_0}$ in some open neighborhood $W_j$ of $y_j$ in $L_l$. Then $\pi_l([0,\ell(c_j)]\times W_j)$ is a neighborhood of $c_j$, whose interior is denoted by $V_j$, which does not intersect other closed orbits with period in $I$. Note that $\pi_l:(0,\ell(c_j))\times W_j\to V_j$ is a $C^\infty$ embedding and $V_j\setminus\pi_l((0,\ell(c_j))\times W_j)=\pi_l(\{0\}\times W_j)$ is of measure zero. For every $p\in V_j$, let $\tilde p$ be the unique point in $[0,\ell(c_j))\times W_j$ with $\pi_l(\tilde p)=p$. We have
\begin{multline*}
\int_{V_j}\str\big(\tilde\phi_{l,z}^{t_0*}\,T_{\gamma_0}^*\,
\tilde k_{l,z,u}\big(T_{\gamma_0}\tilde\phi_l^{t_0}(\tilde p),\tilde p\big)\big)
\,f(t_0)\,|\omega_{\text{\rm b},l}|(p)\\
\begin{aligned}
&=\int_{[0,\ell(c_j)]\times W_j}\str\big(\tilde\phi_{l,z}^{t_0*}\,T_{\gamma_0}^*\,
\tilde k_{l,z,u}\big(T_{\gamma_0}\tilde\phi_l^{t_0}(\tilde p),\tilde p\big)\big)
\,f(t_0)\,\big|\tilde\omega_{\text{\rm b},l}\big|(\tilde p)\\
&=f(t_0)\int_0^{\ell(c_j)}\int_{W_j}\str
\big(\tilde\phi_{l,z}^{t_0*}\,T_{\gamma_0}^*\,\tilde k_{l,z,u}\big(\big(x,T_{\gamma_0}\tilde\phi_{l,x}^{t_0}(y)\big),(x,y)\big)\,|dx|\\
&=f(t_0)\int_0^{\ell(c_j)}\int_{W_j}\str
\big(\tilde\phi_{l,x,z}^{t_0*}\,T_{\gamma_0}^*\,\tilde k_{l,x,u,z}\big(T_{\gamma_0}\tilde\phi_{l,x}^{t_0}(y),y\big)\,|dx|\;.
\end{aligned}
\end{multline*}
But, by \Cref{p: local Lefschetz formula},
\[
\lim_{u\downarrow0}\int_{W_j}\str
\big(\tilde\phi_{l,x,z}^{t_0*}\,T_{\gamma_0}^*\,\tilde k_{l,x,u,z}\big(T_{\gamma_0}\tilde\phi_{l,x}^{t_0}(y),y\big)\big)
=\epsilon_{y_j}\big(T_{\gamma_0}\tilde\phi_{l,x}^{t_0}\big)=\epsilon_{c_j}(k_j)\;.
\]
So
\begin{multline}\label{lim_u to 0 ... = f(t_0) ell(c_j) varepsilon_c_j(k_j)}
\lim_{u\downarrow0}\int_{V_j}\str\big(\tilde\phi_{l,z}^{t_0*}\,T_{\gamma_0}^*\,
\tilde k_{l,z,u}\big(T_{\gamma_0}\tilde\phi_l^{t_0}(\tilde p),\tilde p\big)\big)\,f(t_0)\,|\omega_{\text{\rm b},l}|(p) \\
=f(t_0)\ell(c_j)\epsilon_{c_j}(k_j)\;.
\end{multline}

By~\ref{i: no periods in I}, we can assume the length of $I$ is as small as desired. By the $\R$-local bounded geometry of $\tilde\phi_l$, if the length of $I$ is small enough, there is some $0<r<C_3^{-1}/2$ such that $d_l(\phi_l^t(\tilde p),\phi_l^s(\tilde p))\le r$ for all $\tilde p\in\widetilde M^1_l$ and $t,s\in I$. So, by~\eqref{C_3^-1 |gamma| le tilde d_l(tilde p gamma . tilde p) le C_3 |gamma|},~\ref{i-(A): t_0 = h_l(gamma_0)} and~\ref{i-(A): gamma_0 cdot tilde phi_l^t_0(tilde p) = tilde p}, for all $p\in c_j$ and $\gamma\in\Gamma_l\setminus\{\gamma_0\}$ with $-h_l(\gamma)\in I$,
\begin{multline*}
d_l\big(\gamma\cdot\tilde\phi_l^{-h_l(\gamma)}(\tilde p),\tilde p\big)\\
\begin{aligned}
&\ge d_l\big(\gamma\cdot\tilde\phi_l^{-h_l(\gamma)}(\tilde p),\gamma_0\cdot\tilde\phi_l^{-h_l(\gamma)}(\tilde p)\big)\\
&\phantom{=\text{}}\text{}-d_l\big(\gamma_0\cdot\tilde\phi_l^{-h_l(\gamma)}(\tilde p),
\gamma_0\cdot\tilde\phi_l^{-h_l(\gamma_0)}(\tilde p)\big)
-d_l\big(\gamma_0\cdot\tilde\phi_l^{-h_l(\gamma_0)}(\tilde p),\tilde p\big)\\
&=d_l\big(\tilde\phi_l^{-h_l(\gamma)}(\tilde p),\gamma^{-1}\gamma_0\cdot\tilde\phi_l^{-h_l(\gamma)}(\tilde p)\big)
-d_l\big(\tilde\phi_l^{-h_l(\gamma)}(\tilde p),\tilde\phi_l^{-h_l(\gamma_0)}(\tilde p)\big)\\
&\ge C_3^{-1}|\gamma^{-1}\gamma_0|-r\;.
\end{aligned}
\end{multline*}
Thus, by continuity, the neighborhood $W_j$ of every $y_j$ can be chosen so small that, for all $p\in V_j$ and $\gamma\in\Gamma_l\setminus\{\gamma_0\}$ with $-h_l(\gamma)\in I$,
\[
d_l\big(\gamma\cdot\tilde\phi_l^{-h_l(\gamma)}(\tilde p),\tilde p\big)\ge C_3^{-1}|\gamma^{-1}\gamma_0|-2r\ge C_3^{-1}-2r>0\;.
\]
Hence, by~\eqref{heat kernel estimates} and since the leaves of $\widetilde\FF^1_l$ are of equi-bounded geometry, there are $C_1,C_2,u_0>0$ such that, for all $0<u\le u_0$, $p\in V_j$ and $\gamma\in\Gamma_l\setminus\{\gamma_0\}$ with $-h_l(\gamma)\in I$,
\[
\big|\tilde k_{l,z,u}\big(\gamma\cdot\tilde\phi_l^{-h_l(\gamma)}(\tilde p),\tilde p\big)\big|
\le C_1u^{(n-1)/2}e^{-C_2(C_3^{-1}|\gamma^{-1}\gamma_0|-2r)^2/u}\;.
\]
Then, by~\eqref{|tilde phi_l z^t*| le C_4 |f(t)| le C_5}, for all $0<u\le u_0$ and $p\in V_j$,
\begin{multline*}
\bigg|\sum_{\gamma\in\Gamma_l\setminus\{\gamma_0\}}
\tilde\phi_{l,z}^{-h_l(\gamma)*}\,T_\gamma^*\,\tilde k_{l,z,u}\big(T_\gamma\tilde\phi_l^{-h_l(\gamma)}(\tilde p),\tilde p\big)\,f(-h_l(\gamma))\,
|\omega_{\text{\rm b},l}|(p)\bigg|\\
\le C_4C_5C_1u^{(n-1)/2}\sum_{\gamma\in\Gamma_l\setminus\{\gamma_0\}}e^{-C_2(C_3^{-1}|\gamma^{-1}\gamma_0|-2r)^2/u}\;,
\end{multline*}
which converges to zero as $u\downarrow0$ because $\Gamma_l$ is of polynomial growth. So, by~\eqref{mathring K_l u z(p p)} and~\eqref{lim_u to 0 ... = f(t_0) ell(c_j) varepsilon_c_j(k_j)},
\begin{equation}\label{lim_u to 0 int_p in V_j trs mathring K_l u z(p p) = f(t_0) ell(c_j) varepsilon_c_j(k_j)}
\lim_{u\downarrow0}\int_{p\in V_j}\str\mathring k_{l,z,u}(p,p)=f(t_0)\ell(c_j)\epsilon_{c_j}(k_j)\;.
\end{equation}

On the other hand, since $\phi$ has no closed orbits in $T^1_{l,\epsilon}$ (\Cref{ss: globalization}), we can assume $V_j\subset M^1_{l,\epsilon''}$. Let $\widetilde V_j=\pi_l^{-1}(V_j)\subset\widetilde M^1_{l,\epsilon''}$. If $p\in M^1_{l,\epsilon''}\setminus(V_1\cup\dots\cup V_m)$ and $t\in I$, then $\phi^t(p)\ne p$. Hence, like in the proof of~\ref{i: no periods in I}, there are $C_7,C_1,C_2,u_0>0$ such that~\eqref{d_widetilde FF^1_l(gamma^-1 . tilde phi_l^t(tilde p) tilde p) ge C_7 |gamma|} and~\eqref{| tilde phi_l^h_l(gamma)* T_gamma^-1* tilde k_l u z(gamma^-1 tilde phi^h_l(gamma)(tilde p) tilde p) | le ...} hold for all $0<u\le u_0$, $\tilde p\in\widetilde M^1_{l,\epsilon''}\setminus(\widetilde V_1\cup\dots\cup\widetilde V_m)$ and $\gamma\in\Gamma_l$ with $-h_l(\gamma)\in I$. Thus~\eqref{| mathring K_l u z(p p) | le ...} holds for all $p$ in the compact space $M^1_{l,\epsilon''}\setminus(V_1\cup\dots\cup V_m)$, yielding
\[
\lim_{u\downarrow0}\int_{p\in M^1_{l,\epsilon''}\setminus(V_1\cup\dots\cup V_m)}\str\mathring k_{l,z,u}(p,p)=0\;.
\]
So~\ref{i: one period in I} is true by~\eqref{lim_u to 0 int_p in V_j trs mathring K_l u z(p p) = f(t_0) ell(c_j) varepsilon_c_j(k_j)} and \Cref{c: smallnuint is cont,c: kappa_l u to 0  on Deltab cap beta_b^-1(T_l epsilon'^2)}.

Finally, assume $0\in I$ and $I\cap\PP_l=\emptyset$ to prove~\ref{i: 0 in I}. By~\ref{i: no periods in I}, we can suppose again that the length of $I$ is as small as desired. By~\eqref{C_3^-1 |gamma| - R le d_widetilde FF^1_l(gamma^-1 . tilde phi^h_l(gamma)(tilde p) tilde p) le C_3 |gamma| + R}, there are finitely many elements $\gamma\in\Gamma_l$ such that $-h_l(\gamma)\in I$ and, for all $\tilde p\in\widetilde M^1_l$,
\begin{equation}\label{... > 1}
d_{\widetilde\FF^1_l}\big(\gamma\cdot\tilde\phi_l^{-h_l(\gamma)}(\tilde p),\tilde p\big)>1\;.
\end{equation}
Thus, if $I$ is small enough, we can assume~\eqref{... > 1} is true for all $\tilde p\in\widetilde M^1_l$ and $\gamma\in\Gamma_l\setminus\{e\}$ with $-h_l(\gamma)\in I$. Then, like in the proof of~\ref{i: no periods in I}, there are $C_7,C_1,C_2,u_0>0$ such that~\eqref{d_widetilde FF^1_l(gamma^-1 . tilde phi_l^t(tilde p) tilde p) ge C_7 |gamma|} and~\eqref{| tilde phi_l^h_l(gamma)* T_gamma^-1* tilde k_l u z(gamma^-1 tilde phi^h_l(gamma)(tilde p) tilde p) | le ...} hold for all $0<u\le u_0$, $\tilde p\in\widetilde M^1_{l,\epsilon''}$ and $\gamma\in\Gamma_l\setminus\{e\}$ with $-h_l(\gamma)\in I$. Hence, by~\eqref{|tilde phi_l z^t*| le C_4 |f(t)| le C_5}, for all $0<u\le u_0$ and $p\in M^1_{l,\epsilon''}$,
\begin{multline*}
\bigg|\sum_{\gamma\in\Gamma_l\setminus\{e\}}
\tilde\phi_{l,z}^{-h_l(\gamma)*}\,T_\gamma^*\,\tilde k_{l,z,u}\big(T_\gamma\tilde\phi_l^{-h_l(\gamma)}(\tilde p),\tilde p\big)\,f(-h_l(\gamma))\,
|\omega_{\text{\rm b},l}|(p)\bigg|\\
\le C_4C_5C_1u^{(n-1)/2}\sum_{\gamma\in\Gamma_l}e^{-C_2C_7^2|\gamma|^2/u}\;,
\end{multline*}
which converges to zero as $u\downarrow0$ because $\Gamma_l$ is of polynomial growth. On the other hand, by~\eqref{asymptotic expansion} and \Cref{t: e_z l},
\[
\lim_{u\downarrow0}\str\tilde k_{l,z,u}(\tilde p,\tilde p)=
\begin{cases}
e(\widetilde\FF_l, g_{\widetilde\FF_l})(\tilde p)\equiv e(\FF_l,g_{\FF_l})([\tilde p]) & \text{if $n-1$ is even}\\
0 & \text{if $n-1$ is odd}\;,
\end{cases}
\]
uniformly on $\tilde p\in\widetilde M^1_{l,\epsilon''}$. So, by~\eqref{mathring K_l u z(p p)},
\[
\lim_{u\downarrow0}\str\kappa_{l,u}=f(0)\,e(\FF_l,g_{\FF_l})\,|\omega_{\text{\rm b},l}|
\]
uniformly on $\Delta_{\text{\rm b},l}\cap\beta_{\text{\rm b}}^{-1}(M_{l,\epsilon''}^2)\equiv M_{l,\epsilon''}$. Therefore~\ref{i: 0 in I} follows using \Cref{c: smallnuint is cont,c: lim_u to 0 trs(kappa_l u|_Deltab) equiv f(0) Pf(R_FF_l) |omega_l|,r: smallnuint is cont}.
\end{proof}

\begin{rem}\label{r: lim_u -> 0 bTrs(P_u z) if there are no preserved leaves} 
The simpler argument given in \cite{AlvKordy2002,AlvKordy2008a} for the case of \Cref{t: lim_u -> 0 bTrs(P_u z)} with no preserved leaves cannot be applied here because now $\bStr(\bfP_u)$ depends on $u$.
\end{rem}

\Cref{t: intro - lim_u->0 bTrs(bfP_mu u f} is a restatement of \Cref{t: lim_u -> 0 bTrs(P_u z)}.

\section{The limit of $\bStr(\bfP_{\mu,u})$ as $u\uparrow+\infty$ and $\mu\to\pm\infty$}
\label{s: lim_mu to infty lim_u to infty bTrs(P_u mu)}

\subsection{An expression of $\bTr([d_{\bfFF,\mu},\bfP_\mu\sw])$}\label{ss: bTrs([d_bfFF mu P_mu sw])}

From now on, we will only consider $\bfP_z$ for $z=\mu\in\R$; written $\bfP_\mu$. We keep the notation $z=\mu+i\lambda$ for any other $\lambda\in\R$ ($i=\sqrt{-1}$). In the following, $L$ runs in $\pi_0M^0$. Recall that $\eta_0=\eta$ around $M^0$. For $\psi\in\AA$, $\mu\in\R$ and $f\in\Cinftyc(\R)$, let
\begin{align}
S_{L,\mu}&=-\frac1{2\pi}\int_{-\infty}^{+\infty} {\eta_L\wedge}\,\psi(D_{L,z})\hat f(-\varkappa_L\lambda)\,d\lambda\notag\\
&=-\frac1{2\pi|\varkappa_L|}\int_{-\infty}^{+\infty} {\eta_L\wedge}\,\psi(D_{L,z})\widehat{f_L}(\lambda)\,d\lambda\;,\label{S_L mu}
\end{align}
where $f_L(\lambda)=f(-\lambda/\varkappa_L)$. Again, we may also add the subscript ``$\psi$'' or ``$f$'' to the notation $S_{L,\mu}$ if needed. Recall also that $\sw$ denotes the degree involution. Observe that $\sw d_{\bfFF,z}=-d_{\bfFF,z}\sw$ and $I_{\bfnu}(\bfP_\mu\sw,\lambda)=I_{\bfnu}(\bfP_\mu,\lambda)\,\sw$ by~\eqref{K_I_nu(A lambda)(y y')}.

\begin{lem}\label{l: bTr([d_FF mu P_mu bfw]) = 2 sum_L Trs(S_L mu)}
We have
\[
\bTr([d_{\bfFF,\mu},\bfP_\mu\sw])=2\sum_L\Str(S_{L,\mu})\;.
\]
\end{lem}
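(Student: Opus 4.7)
\medskip

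\noindent\textbf{Plan of proof.} The strategy is to invoke Melrose's commutator formula~\eqref{bTr[A,B]} applied to $A = d_{\bfFF,\mu} \in \Diffb^1(\bfM;\Lambda\bfFF)$ (valid by \Cref{p: d_FF_l z in Psi_b^c}) and $B = \bfP_\mu\sw \in \Psib^{-\infty}(\bfM;\Lambda\bfFF)$ (valid by \Cref{c: P_l u in Psib^-infty(M_l bigwedge T^*FF_l)}). The formula yields
\[
\bTr([d_{\bfFF,\mu},\bfP_\mu\sw])
=-\frac{1}{2\pi i}\int_{-\infty}^{+\infty}\Tr\bigl(\partial_\lambda I_\bfnu(d_{\bfFF,\mu},\lambda)\cdot I_\bfnu(\bfP_\mu\sw,\lambda)\bigr)\,d\lambda.
\]

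\noindent First I would assemble the two indicial families. By \Cref{p: d_FF_l z in Psi_b^c},
\[
I_\bfnu(d_{\bfFF,\mu},\lambda)=d_{\partial\bfM,\mu+i\lambda}
\equiv\bigoplus_{L\in\pi_0(\partial\bfM)}\bigl(d_L+(\mu+i\lambda)\,\eta_L\wedge\bigr),
\]
so $\partial_\lambda I_\bfnu(d_{\bfFF,\mu},\lambda)=\bigoplus_L i\,\eta_L\wedge$ is purely of order zero. On the other hand, \Cref{c: P_l u in Psib^-infty(M_l bigwedge T^*FF_l)} gives
\[
I_\bfnu(\bfP_\mu,\lambda)\equiv\bigoplus_{L\in\pi_0(\partial\bfM)}\int_{-\infty}^{+\infty}\phi_{L,\mu+i\lambda}^{t*}\,\psi(D_{L,\mu+i\lambda})\,e^{i\lambda\varkappa_Lt}\,f(t)\,dt,
\]
and $I_\bfnu(\bfP_\mu\sw,\lambda)=I_\bfnu(\bfP_\mu,\lambda)\,\sw$ by~\eqref{K_I_nu(A lambda)(y y')}, using that $\sw$ is independent of $\lambda$.

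\noindent Next I would use the key normalization that $\phi^t=\id$ on $M^0$, which has been arranged in \Cref{ss: globalization} (the action on the relevant invariants only depends on the flow-leafwise-homotopy class). Under this condition, for each leaf $L\subset M^0$, the lift $\tilde\phi^t_L$ is a deck transformation of the holonomy covering, so $\tilde\phi^{t*}_LF-F$ is constant along $L$ and $\phi^{t*}_{L,z}=\id$ on $C^\infty(L;\Lambda)$ (\Cref{ss: perturbation of pull-back homs}). Consequently
\[
I_\bfnu(\bfP_\mu,\lambda)|_L=\psi(D_{L,\mu+i\lambda})\int_{-\infty}^{+\infty}e^{i\lambda\varkappa_Lt}\,f(t)\,dt
=\psi(D_{L,\mu+i\lambda})\,\hat f(-\varkappa_L\lambda).
\]

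\noindent Substituting into the commutator formula, and using that $\Tr(C\sw)=\Str(C)$ for any even operator $C$:
\[
\bTr([d_{\bfFF,\mu},\bfP_\mu\sw])
=-\frac{1}{2\pi}\sum_{L\in\pi_0(\partial\bfM)}\int_{-\infty}^{+\infty}\Str\bigl(\eta_L\wedge\psi(D_{L,z})\,\hat f(-\varkappa_L\lambda)\bigr)\,d\lambda,
\]
where $z=\mu+i\lambda$. Since $\partial\bfM\equiv M^0\sqcup M^0$, every leaf $L\subset M^0$ appears twice in the sum over $\pi_0(\partial\bfM)$, producing a factor of $2$. Interchanging $\Str$ with the (absolutely convergent, by \Cref{c: (psi f) mapsto kappa_l psi f is cont} and the rapid decay of $\widehat f$) $\lambda$-integral and comparing with the definition~\eqref{S_L mu} of $S_{L,\mu}$ finishes the identification.

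\noindent The routine task is the interchange of $\Str$ with the $\lambda$-integral and the verification that the commutator formula~\eqref{bTr[A,B]} applies in the form we need (the relevant version for $A\in\Diffb$, $B\in\Psib^{-\infty}$ is explicitly cited after~\eqref{bTr[A,B]}). The main conceptual point—and the place where the factor of $2$ enters—is the doubling $\partial\bfM\equiv M^0\sqcup M^0$ intrinsic to cutting $M$ along $M^0$; all other steps are bookkeeping once the indicial families have been computed and the normalization $\phi^t|_{M^0}=\id$ has been invoked.
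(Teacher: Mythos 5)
Your proof is correct and follows essentially the same route as the paper's: apply the commutator formula~\eqref{bTr[A,B]} (valid for $A\in\Diffb$, $B\in\Psib^{-\infty}$), compute the two indicial families from \Cref{p: d_FF_l z in Psi_b^c} and \Cref{c: P_l u in Psib^-infty(M_l bigwedge T^*FF_l)}, use the normalization $\phi^t|_{M^0}=\id$ to drop $\phi^{t*}_{L,z}$, and track the factor of two coming from $\partial\bfM\equiv M^0\sqcup M^0$. One small imprecision worth tightening: from ``$\tilde\phi^t_L$ is a deck transformation, so $\tilde\phi^{t*}_LF-F$ is constant'' it does not yet follow that $\phi^{t*}_{L,z}=\id$ --- a non-trivial deck transformation $T_\gamma$ gives $T_\gamma^*F-F=-c_\gamma\ne0$, hence $\phi^{t*}_{L,z}=e^{-zc_\gamma}\ne\id$; the right argument is that $t\mapsto\tilde\phi^t_L$ is a continuous path of deck transformations with $\tilde\phi^0_L=\id$, and since the deck group is discrete one gets $\tilde\phi^t_L=\id$ for all $t$, whence $\phi^{t*}_{L,z}=\id$.
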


\begin{proof}
By the version of~\eqref{bTr[A,B]} with a b-differential operator and a b-pseudodifferential operator of order $-\infty$, \Cref{c: P_l u in Psib^-infty(M_l bigwedge T^*FF_l),p: d_FF_l z in Psi_b^c},
\begin{align*}
\bTr([d_{\bfFF,\mu},\bfP_\mu\sw])
&=-\frac{1}{2\pi i}\int_{-\infty}^{+\infty}\Tr(\partial_\lambda I_{\bfnu}(d_{\bfFF,\mu},\lambda)\, I_{\bfnu}(\bfP_\mu,\lambda)\sw)\,d\lambda\\
&=-\frac{1}{\pi}\sum_L\int_{-\infty}^{+\infty}\int_{-\infty}^{+\infty}\Tr({\eta_L\wedge}\,\psi(D_{L,z})\sw)\,e^{i\lambda\varkappa_Lt}f(t)\,dt\,d\lambda\\
&=2\sum_L\Str(S_{L,\mu})\;.\qedhere
\end{align*}
\end{proof}

\subsection{Variation of $\bStr(\bfP_{\mu,u})$ with respect to $u$}\label{ss: variation of bTrs(P_mu u)}

For any $\psi\in\AA$ and $u>0$, let $\psi_u\in\AA$ be defined by $\psi_u(x)=\psi(\sqrt ux)$, and consider the corresponding operator $\bfP_{\mu,u}$. Recall that $\bfP_{\mu,u}$ is the operator $\bfP_{\mu,u,f}$ of \Cref{ss: intro - leafwise Witten} if $\psi(x)=e^{-x^2}$.

\begin{prop}\label{p: d/du bTrs(P_mu u)}
If $\psi\in\AA$ is even, then
\[
\frac{d}{du}\bStr(\bfP_{\mu,u})=-\frac1{\sqrt u}\sum_L\Str(S_{L,(\psi')_u,\mu})\;.
\]
\end{prop}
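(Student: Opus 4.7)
The plan is to differentiate $\bfP_{\mu,u}$ under the integral, reinterpret the derivative as an anticommutator with $d_{\bfFF,\mu}$, and then reduce to \Cref{l: bTr([d_FF mu P_mu bfw]) = 2 sum_L Trs(S_L mu)} applied with $\psi$ replaced by $(\psi')_u\in\AA$. Since $\psi$ is even, I write $\psi(x)=\tilde\psi(x^2)$, whence $\psi_u(x)=\tilde\psi(ux^2)$ and $\tfrac{d}{du}\psi_u(x)=x^2\tilde\psi'(ux^2)$; by leafwise functional calculus $\tfrac{d}{du}\psi_u(D_{\bfFF,\mu})=\Delta_{\bfFF,\mu}\tilde\psi'(u\Delta_{\bfFF,\mu})$. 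Expanding $\Delta_{\bfFF,\mu}=d_{\bfFF,\mu}\delta_{\bfFF,\mu}+\delta_{\bfFF,\mu}d_{\bfFF,\mu}$ and using both $[d_{\bfFF,\mu},\bfphi^{t*}_\mu]=0$ (because $\bfphi^{t*}_\mu$ is a cochain map for the perturbed leafwise de~Rham complex) and $[d_{\bfFF,\mu},\tilde\psi'(u\Delta_{\bfFF,\mu})]=0$ (leafwise functional calculus), I obtain
\[
\bfphi^{t*}_\mu\Delta_{\bfFF,\mu}\tilde\psi'(u\Delta_{\bfFF,\mu})=\{d_{\bfFF,\mu},\bfphi^{t*}_\mu\delta_{\bfFF,\mu}\tilde\psi'(u\Delta_{\bfFF,\mu})\}\,,
\]
and integrating against $f(t)\,dt$ yields $\tfrac{d}{du}\bfP_{\mu,u}=\{d_{\bfFF,\mu},\bfQ_{\mu,u}\}$ with $\bfQ_{\mu,u}:=\int_\R\bfphi^{t*}_\mu\delta_{\bfFF,\mu}\tilde\psi'(u\Delta_{\bfFF,\mu})f(t)\,dt$.

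The second step is to identify $\bfQ_{\mu,u}$ with $\tfrac{1}{2\sqrt u}\bfP_{\mu,(\psi')_u,f}$ modulo a $d_{\bfFF,\mu}$-exact term. Since $\psi'$ is odd, $(\psi')_u(x)=2\sqrt u\,x\,\tilde\psi'(ux^2)$, and hence $(\psi')_u(D_{\bfFF,\mu})=2\sqrt u\,D_{\bfFF,\mu}\tilde\psi'(u\Delta_{\bfFF,\mu})$. Splitting $D_{\bfFF,\mu}=d_{\bfFF,\mu}+\delta_{\bfFF,\mu}$ and pulling $d_{\bfFF,\mu}$ past $\bfphi^{t*}_\mu$ and $\tilde\psi'(u\Delta_{\bfFF,\mu})$ gives
\[
\bfP_{\mu,(\psi')_u,f}=2\sqrt u\,(d_{\bfFF,\mu}\bfT_{\mu,u}+\bfQ_{\mu,u})\,,\qquad \bfT_{\mu,u}:=\int_\R\bfphi^{t*}_\mu\tilde\psi'(u\Delta_{\bfFF,\mu})f(t)\,dt\,,
\]
with $\bfT_{\mu,u}$ also commuting with $d_{\bfFF,\mu}$. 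Equivalently, $\bfQ_{\mu,u}=\tfrac{1}{2\sqrt u}\bfP_{\mu,(\psi')_u,f}-d_{\bfFF,\mu}\bfT_{\mu,u}$.

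Taking b-supertraces and using $\sw d_{\bfFF,\mu}=-d_{\bfFF,\mu}\sw$ to get $\bStr(\{d_{\bfFF,\mu},X\})=\bTr([d_{\bfFF,\mu},X\sw])$ for any smoothing b-pseudodifferential $X$, I apply this with $X=\bfQ_{\mu,u}$ and expand via the previous splitting to obtain
\[
\tfrac{d}{du}\bStr(\bfP_{\mu,u})=\tfrac{1}{2\sqrt u}\bTr([d_{\bfFF,\mu},\bfP_{\mu,(\psi')_u,f}\sw])-\bTr([d_{\bfFF,\mu},d_{\bfFF,\mu}\bfT_{\mu,u}\sw])\,.
\]
The second b-trace vanishes because $[d_{\bfFF,\mu},d_{\bfFF,\mu}\bfT_{\mu,u}\sw]=0$ identically as an operator: direct expansion using $d_{\bfFF,\mu}^2=0$, $[d_{\bfFF,\mu},\bfT_{\mu,u}]=0$, and $\sw d_{\bfFF,\mu}=-d_{\bfFF,\mu}\sw$ annihilates both summands. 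Invoking \Cref{l: bTr([d_FF mu P_mu bfw]) = 2 sum_L Trs(S_L mu)} with $(\psi')_u$ in place of $\psi$ (the lemma imposes no parity assumption) then identifies the remaining b-trace with $2\sum_L\Str(S_{L,(\psi')_u,\mu})$, delivering the stated formula after sign tracking.

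The hard part will be analytical rather than algebraic: I must verify that $\bfQ_{\mu,u}$ and $\bfT_{\mu,u}$ are themselves smoothing b-pseudodifferential operators on $\bfM$ with well-defined indicial families, so that both~\eqref{bTr[A,B]} and \Cref{l: bTr([d_FF mu P_mu bfw]) = 2 sum_L Trs(S_L mu)} legitimately apply. This follows by repeating the bounded-geometry arguments of \Cref{s: opers suspension,s: opers on M^1_l} with the smoothing leafwise operators $\delta_{\bfFF,\mu}\tilde\psi'(u\Delta_{\bfFF,\mu})$ and $\tilde\psi'(u\Delta_{\bfFF,\mu})$ in place of $\psi(D_{\bfFF,\mu})$; the exponential decay estimates of \Cref{l: kappa_mathring P_pm} transfer verbatim because $\tilde\psi$ inherits membership in $\AA$ from $\psi$. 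A secondary but delicate point is tracking the signs from the anticommutation of $\sw$ with $d_{\bfFF,\mu}$ together with the factor of $2$ produced by the doubling $\pi_0(\partial\bfM)\to\pi_0(M^0)$ already present in the proof of \Cref{l: bTr([d_FF mu P_mu bfw]) = 2 sum_L Trs(S_L mu)}, which must combine with the $\tfrac{1}{2\sqrt u}$ prefactor to yield exactly $-\tfrac{1}{\sqrt u}$.
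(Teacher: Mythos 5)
Your proposal is essentially the same proof the paper gives, presented with a bit more algebraic scaffolding. The paper changes variables to $v=\sqrt u$, notes $\frac{d}{dv}\psi^v(D)=D\psi'(vD)$, splits $D=d+\delta$, and applies \Cref{l: bTr([d_FF mu P_mu bfw]) = 2 sum_L Trs(S_L mu)} to $A=\int\phi^{t*}_\mu\psi'(vD_{\bfFF,\mu})f\,dt$; you work directly in $u$, use the McKean--Singer identity $\Delta\tilde\psi'(u\Delta)=\{d,\delta\tilde\psi'(u\Delta)\}$, and then expand $\bfQ_{\mu,u}$ in terms of $\bfP_{\mu,(\psi')_u,f}$ and a $d$-exact remainder. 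Since $(\psi')_u(D)=\psi'(\sqrt u\,D)=\psi'(vD)$, the operator to which you both apply the lemma is the same, so the arguments are interchangeable.

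Two remarks on the ``hard parts'' you flagged. First, the regularity of $\bfQ_{\mu,u}$ and $\bfT_{\mu,u}$ is not actually an extra burden: writing $\chi(x):=\psi'(x)/(2x)$, which is entire (since $\psi'(0)=0$) and again lies in $\AA$, one has $\tilde\psi'(u\Delta)=\chi_u(D_{\bfFF,\mu})$, so $\bfT_{\mu,u}=\bfP_{\mu,\chi_u,f}$ is covered directly by \Cref{c: P_l u in Psib^-infty(M_l bigwedge T^*FF_l)}, $d_{\bfFF,\mu}\bfT_{\mu,u}\in\Psib^{-\infty}$ because $\Diffb\cdot\Psib^{-\infty}\subset\Psib^{-\infty}$, and then $\bfQ_{\mu,u}\in\Psib^{-\infty}$ by your decomposition. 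Second, on the sign you deferred: tracking the chain $\bStr\{d,\bfQ\}=\bTr[d,\bfQ\sw]=\tfrac1{2\sqrt u}\bTr[d,\bfP_{(\psi')_u}\sw]=\tfrac1{2\sqrt u}\cdot2\sum_L\Str(S_{L,(\psi')_u,\mu})$ yields $+\tfrac1{\sqrt u}\sum_L\Str(S_{L,(\psi')_u,\mu})$, not $-\tfrac1{\sqrt u}$; the paper's own proof by change of variables produces the same $+$ sign. So the assertion that the bookkeeping ``must combine'' to give $-\tfrac1{\sqrt u}$ is not something your argument actually delivers, and the discrepancy points to a sign slip somewhere in the paper's normalization of $S_{L,\mu}$ or in the stated proposition rather than to a flaw in your derivation --- but it does mean you should not present that sign as having been verified.
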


\begin{proof}
This result follows like in the heat equation proof of the usual Lefschetz trace formula
\cite{AtiyahBott1967,Gilkey1995,Roe1998}, but the stated derivative does not vanish because the b-trace of commutators may not be zero. To simplify the arguments, consider the change of variables $v=\sqrt u$, and let $\psi^v(x)=\psi_u(x)=\psi(vx)$ and $\bfP^v_\mu=\bfP_{\psi^v,\mu,f}$. By \Cref{l: bTr([d_FF mu P_mu bfw]) = 2 sum_L Trs(S_L mu)} and since $\psi'$ is odd,
\begin{multline*}
\bStr\left(\int_{-\infty}^{+\infty}\phi^{t*}_\mu d_{\bfFF,\mu}\,\psi'(vD_{\bfFF,\mu})\,f(t)\,dt\right)\\
\begin{aligned}
&=\bTr\left(\int_{-\infty}^{+\infty}\phi^{t*}_\mu d_{\bfFF,\mu}\,\psi'(vD_{\bfFF,\mu})\,\sw\,f(t)\,dt\right)\\
&=\bTr\left(\int_{-\infty}^{+\infty}\phi^{t*}_\mu \psi'(vD_{\bfFF,\mu})\,\sw\,d_{\bfFF,\mu}f(t)\,dt\right)+2\sum_L\Str(S_{L,(\psi')^v,\mu})\\
&=-\bTr\left(\int_{-\infty}^{+\infty}\phi^{t*}_\mu \delta_{\bfFF,\mu}\,\psi'(vD_{\bfFF,\mu})\,\sw\,f(t)\,dt\right)+2\sum_L\Str(S_{L,(\psi')^v,\mu})\\
&=-\bStr\left(\int_{-\infty}^{+\infty}\phi^{t*}_\mu \delta_{\bfFF,\mu}\,\psi'(vD_{\bfFF,\mu})\,f(t)\,dt\right)+2\sum_L\Str(S_{L,(\psi')^v,\mu})\;.
\end{aligned}
\end{multline*}
So
\begin{align*}
\frac{d}{dv}\bTr^{\text{\rm s}} \bfP^v_\mu
&=\bStr\left(\int_{-\infty}^{+\infty}\phi^{t*}_\mu D_{\bfFF,\mu}\psi'(vD_{\bfFF,\mu})f(t)\,dt\right)\\
&=\bStr\left(\int_{-\infty}^{+\infty}\phi^{t*}_\mu d_{\bfFF,\mu}\psi'(vD_{\bfFF,\mu})f(t)\,dt\right)\\
&\phantom{=\text{}}\text{}+\bStr\left(\int_{-\infty}^{+\infty}\phi^{t*}_\mu\delta_{\bfFF,\mu}\psi'(vD_{\bfFF,\mu})f(t)\,dt\right)\\
&=2\sum_L\Str(S_{L,(\psi')^v,\mu})\;.
\end{align*}
Now apply the chain rule.
\end{proof}

\subsection{The limit of $\bStr(\bfP_{\mu,u})$ as $u\uparrow+\infty$ and $\mu\to\pm\infty$}
\label{ss: lim_mu to infty lim_u to infty bTrs(P_u mu)}

Now take $\psi(x)=e^{-x^2}$. Hence $\psi_u(x)=e^{-ux^2}$ and $(\psi')_u(x)=-2\sqrt uxe^{-ux^2}$. Thus, by~\eqref{S_L mu},
\begin{equation}\label{Trs(S_L (psi')_u mu)}
\Str(S_{L,(\psi')_u,\mu})
=-\frac{1}{\pi|\varkappa_L|}\int_{-\infty}^{+\infty}
\Str\big({\eta_L\wedge}\,\delta_ze^{-u\Delta_{L,z}}\big)\widehat{f_L}(\lambda)\,d\lambda\;.
\end{equation}

\begin{thm}\label{c: bTrs(P_u z)}
For all $\tau\gg0$, we can choose every $\eta_L$ and $g_L$ {\rm(}$L\in\pi_0M^0${\rm)} so that
\[
\lim_{\mu\uparrow+\infty}\Big(\lim_{u_1\uparrow+\infty}\bfP_{\mu,u_1}-\lim_{u_0\downarrow0}\bfP_{\mu,u_0}\Big)=\tau f(0)\;.
\]
If $n-1$ is even, this is true for all $\tau\in\R$ and as $\mu\to\pm\infty$.
\end{thm}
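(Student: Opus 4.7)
The plan is to reduce the double limit to a sum of leafwise contributions that can each be identified with a rescaled version of the distribution $Z_\mu$ of \Cref{ss: Z}, and then invoke \Cref{t: Z} on every preserved leaf.

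\textbf{Step 1 (fundamental theorem of calculus).} By \Cref{p: d/du bTrs(P_mu u)} applied to $\psi(x)=e^{-x^2}$, for any $0<u_0<u_1$,
\[
\bStr(\bfP_{\mu,u_1})-\bStr(\bfP_{\mu,u_0})
=-\sum_L\int_{u_0}^{u_1}\frac{1}{\sqrt u}\,\Str\big(S_{L,(\psi')_u,\mu}\big)\,du\;.
\]
Substituting~\eqref{Trs(S_L (psi')_u mu)} for $\Str(S_{L,(\psi')_u,\mu})$ and changing variables $u=v^{2}$, one obtains
\[
\bStr(\bfP_{\mu,u_1})-\bStr(\bfP_{\mu,u_0})
=\frac{2}{\pi}\sum_L\frac{1}{|\varkappa_L|}\int_{\sqrt{u_0}}^{\sqrt{u_1}}\!\!\int_{-\infty}^{+\infty}
\Str\big({\eta_L\wedge}\delta_ze^{-v^2\Delta_{L,z}}\big)\,\widehat{f_L}(\lambda)\,d\lambda\,dv\;,
\]
where $f_L(\lambda)=f(-\lambda/\varkappa_L)$ and $z=\mu+i\lambda$.

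\textbf{Step 2 (leafwise identification).} Next I would show that the $v$-integral above converges at both endpoints and that the resulting double integral depends continuously on $f\in\SS(\R)$, hence defines a tempered distribution in $f$. The small-$v$ behaviour is controlled by the leafwise local index formula (\Cref{t: e_z l}) together with the supersymmetry identity $\Str([d_z,{\eta_L\wedge}e^{-v^2\Delta_{L,z}}])=0$, which kills the leading diagonal contribution; the large-$v$ behaviour follows from leafwise Hodge theory on the closed manifold $L$, whereby $\delta_z e^{-v^2\Delta_{L,z}}$ decays exponentially on the orthogonal complement of $\ker\Delta_{L,z}$ and vanishes identically on the harmonic part. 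Combined with the Schwartz decay of $\widehat{f_L}$ in $\lambda$, this justifies passing to $u_0\downarrow0$ and $u_1\uparrow+\infty$ and gives
\[
\lim_{u_1\uparrow+\infty}\bStr(\bfP_{\mu,u_1})-\lim_{u_0\downarrow0}\bStr(\bfP_{\mu,u_0})
=\sum_Lc_L\,\big\langle Z_\mu(L,g_L,\eta_L),\;f_L\big\rangle\;,
\]
for explicit constants $c_L=c_L(\varkappa_L)$, where $Z_\mu(L,g_L,\eta_L)$ is the tempered distribution on $\R$ of \Cref{ss: Z} (up to the variable substitution relating $u$ and $v^2$, which is absorbed into the constants $c_L$ and the test function $f_L$).

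\textbf{Step 3 (prescription via \Cref{t: Z}).} For the class $\xi_L=[\eta_L]\in H^1(L)$ determined by $\FF$ and any prescribed $\tau_L\in\R^+$, \Cref{t: Z} provides representatives $\eta_L\in\xi_L$ and metrics $g_L$ such that $\lim_{\mu\uparrow+\infty}Z_\mu(L,g_L,\eta_L)=\tau_L\,\delta_0$; if $n-1$ is even then $\tau_L$ may be any real number and the limit can be prescribed as $\mu\to\pm\infty$ (\Cref{r: Z}). Since $\langle\delta_0,f_L\rangle=f_L(0)=f(0)$, the $\mu$-limit of the expression in Step~2 equals $(\sum_L c_L\tau_L)f(0)$. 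By distributing the target value $\tau$ among the finitely many leaves $L\subset M^0$ according to the positive weights $c_L$, we can realize any $\tau\gg0$ in general, and any $\tau\in\R$ with $\mu\to\pm\infty$ when $n-1$ is even.

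\textbf{Main obstacle.} The hardest part is Step~2: controlling the joint $(v,\lambda,\mu)$ behaviour tightly enough to interchange the $v$-integration with the $\lambda$-integration, the $u$-limits, and finally the $\mu\to\pm\infty$ limit, while tracking the precise constants $c_L$ that convert the integrand into the paper's normalized $Z_\mu(L,g_L,\eta_L)$. The small-$v$ asymptotics require the leafwise Bismut--Zhang cancellations of \Cref{t: e_z l} to secure an integrable bound near $v=0$, and the large-$v$ decay and $\mu\to\infty$ temperedness require uniform leafwise Hodge estimates for the family $\Delta_{L,z}$; reconciling these with the Fourier duality built into $\widehat{f_L}$ is the technical heart of the argument, and is exactly where the referenced analysis of~\cite{AlvKordyLeichtnam-ziomf} is used.
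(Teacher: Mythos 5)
Your proposal follows essentially the same route as the paper's own proof: start from \Cref{p: d/du bTrs(P_mu u)} and the fundamental theorem of calculus, identify the resulting $u$–$\lambda$ double integral with $\sum_L c_L\langle Z_{L,\mu},f_L\rangle$ via~\eqref{Trs(S_L (psi')_u mu)} and~\eqref{Z_mu}, then invoke \Cref{t: Z} to prescribe $\lim_\mu Z_{L,\mu}=\tau_L\delta_0$. The one place where your write-up expends considerably more effort than the paper is Step~2 and the ``Main obstacle'' paragraph: you sketch, from scratch, why the $v$-integral converges at both endpoints (small-time index cancellation via \Cref{t: e_z l}, large-time Hodge decay, joint $(v,\lambda,\mu)$ control). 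But \Cref{t: Z} already \emph{asserts} the existence and temperedness of $Z_\mu(L,g_L,\eta_L)$ and the convergence $Z_\mu\to\tau\delta_0$; those facts are proved in the cited reference and can be used as a black box, which is exactly what the paper does. So what you identify as the technical heart is in fact delegated to an earlier result, and the substantive content of the present proof is only the identity
\[
\lim_{u_1\uparrow+\infty}\bStr(\bfP_{\mu,u_1})-\lim_{u_0\downarrow0}\bStr(\bfP_{\mu,u_0})
=\sum_L c_L\,\langle Z_{L,\mu},f_L\rangle,
\]
which both you and the paper obtain the same way. Your final remark about ``distributing $\tau$ among the leaves according to the positive weights $c_L$'' should be stated more carefully: the constant $c_L=-\tfrac{1}{\pi|\varkappa_L|}$ that the paper produces is \emph{negative}, so whatever sign convention is in force, one must check that the sign of $\tau$ prescribable through \Cref{t: Z} matches the sign of $\sum_L c_L\tau_L$; it does not suffice to assert the weights are positive.
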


\begin{proof}
By \Cref{t: Z}, if $\tau\gg0$, we can choose every $\eta_L$ and $g_L$ so that~\eqref{Z_mu} defines a tempered distribution $Z_{L,\mu}:=Z(L,g_L,\eta_L)\in\SS'$ for $|\mu|\gg0$, and $Z_{L,\mu}\to\tau\delta_0$ in $\SS'$ as $\mu\to\infty$. If $n-1=\dim L$ is even, then this is true for all $\tau\in\R$ and as $\mu\to\pm\infty$. Then the result follows because, by~\eqref{Z_mu},~\eqref{Trs(S_L (psi')_u mu)} and \Cref{p: d/du bTrs(P_mu u)},
\begin{align*}
\lim_{u_1\uparrow+\infty}\bfP_{\mu,u_1}-\lim_{u_0\downarrow0}\bfP_{\mu,u_0}
&=-\frac1{\sqrt u}\sum_L\int_0^\infty\Str(S_{L,(\psi')_u,\mu})\,du\\
&=-\frac{1}{\pi}\sum_L\frac1{|\varkappa_L|}\langle Z_{L,\mu},f_L\rangle\;.\qedhere
\end{align*}
\end{proof}

\Cref{c: bTrs(P_u z)} gives \Cref{t: intro - lim_mu->pm infty ...} by taking $\tau=0$ when $n-1$ is even. 

\begin{cor}\label{c: bTrs(P_u z)}
For all $\tau\gg0$, we can choose every $\eta_L$ and $g_L$ {\rm(}$L\in\pi_0M^0${\rm)} so that
\[
\lim_{\mu\uparrow+\infty}\lim_{u\uparrow+\infty}\bStr(\bfP_{\mu,u})
=\big(\bchi_{|\bfomega_b|}(\mathring\bfFF)+\tau\big)\,f(0)
+\sum_{c\in\CC}\ell(c)\sum_{k\in\Z^\times}\epsilon_c(k)\,f(k\ell(c))\;.
\]
If $n$ is even, this is true for all $\tau\in\R$ and as $\mu\to\pm\infty$.
\end{cor}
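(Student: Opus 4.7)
The plan is simply to combine the two main results of this chapter: \Cref{t: lim_u -> 0 bTrs(P_u z)}, which identifies $\lim_{u\downarrow 0}\bStr(\bfP_{\mu,u})$ with the dynamical contribution of the closed orbits plus the b\nobreakdash-Connes-Euler correction $\bchi_{|\bfomega_{\text{\rm b}}|}(\mathring\bfFF)\,f(0)$, and the preceding \Cref{c: bTrs(P_u z)}, which computes the limit as $\mu\to+\infty$ (or $\mu\to\pm\infty$ when $n-1$ is even) of the difference $\lim_{u_1\uparrow+\infty}\bStr(\bfP_{\mu,u_1})-\lim_{u_0\downarrow0}\bStr(\bfP_{\mu,u_0})$ for appropriate choices of $\eta_L$ and $g_L$ on the preserved leaves.

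The first step is the trivial algebraic identity
\[
\lim_{u_1\uparrow+\infty}\bStr(\bfP_{\mu,u_1})
=\lim_{u_0\downarrow0}\bStr(\bfP_{\mu,u_0})
+\Big(\lim_{u_1\uparrow+\infty}\bStr(\bfP_{\mu,u_1})-\lim_{u_0\downarrow0}\bStr(\bfP_{\mu,u_0})\Big),
\]
valid whenever both one-sided limits exist (which they do, by the very hypotheses of \Cref{c: bTrs(P_u z)}). The crucial observation is that the right-hand side of \Cref{t: lim_u -> 0 bTrs(P_u z)},
\[
\bchi_{|\bfomega_{\text{\rm b}}|}(\mathring\bfFF)\,f(0)
+\sum_c\ell(c)\sum_{k\in\Z^\times}\epsilon_c(k)\,f(k\ell(c)),
\]
depends on $f$, on the leafwise Euler density, and on the infinitesimal data of the closed orbits, but \emph{not} on the parameter $\mu$; this is because the pull-back $\phi^{t*}_\mu=\bfrho^{-\mu}\bfphi^{t*}\bfrho^\mu$ acts trivially on $\str\kappa_{l,u}|_{\Delta_{\text{\rm b},l}}$ as $u\downarrow0$ (the Witten conjugation $\bfrho^{\pm\mu}$ cancels pointwise on the diagonal). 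Consequently the $u\downarrow0$ limit is constant in $\mu$, so taking $\mu\uparrow+\infty$ (or $\mu\to\pm\infty$) leaves it untouched.

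The second step is then just to pass to the limit $\mu\uparrow+\infty$ on both sides of the identity above. The first summand on the right is the $\mu$-independent expression just identified, and the second summand, by \Cref{c: bTrs(P_u z)}, equals $\tau\,f(0)$ for the prescribed choices of $\eta_L$ and $g_L$ (with $\tau\gg0$ in general, and any $\tau\in\R$ together with $\mu\to\pm\infty$ when $n-1$ is even). Adding the two contributions and collecting the coefficient of $f(0)$ yields the asserted formula, using the identity $\bchi_{|\omega^1|}(\FF^1)=\bchi_{|\bfomega_{\text{\rm b}}|}(\mathring\bfFF)$ recorded in the introduction.

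There is essentially no obstacle here: the entire analytic content has already been absorbed into \Cref{t: lim_u -> 0 bTrs(P_u z),c: bTrs(P_u z)}. The only minor point worth flagging is the case dichotomy regarding the parity of $\dim\FF=n-1$ and the admissible range of $\tau$: for $\tau\gg0$ both directions work only as $\mu\uparrow+\infty$, whereas evenness of $\dim\FF$ (the relevant case in Deninger's program, where moreover one can arrange $\tau=0$ by \Cref{r: Z}) upgrades this to arbitrary $\tau\in\R$ and to the two-sided limit $\mu\to\pm\infty$. This dichotomy is inherited verbatim from \Cref{c: bTrs(P_u z)}, so no additional argument is needed.
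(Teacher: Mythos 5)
Your proof is correct and follows exactly the paper's own route: the paper's proof of this corollary is literally the one-line instruction to combine \Cref{t: lim_u -> 0 bTrs(P_u z)} with the preceding theorem (also labeled \texttt{c: bTrs(P\_u z)}), and your elaboration — writing the $u\uparrow+\infty$ limit as the $u\downarrow0$ limit plus the difference, then passing to $\mu\to\pm\infty$ — is just that same argument made explicit. One small but worthwhile observation you make: you correctly keep the parity condition as $\dim\FF=n-1$ even (inherited from the preceding theorem and ultimately from \Cref{t: Z} applied to each $(n-1)$-dimensional leaf $L$), whereas the corollary statement as printed says ``$n$ even''; that appears to be a typo in the paper, and your phrasing is the consistent one.
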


\begin{proof}
Apply \Cref{t: lim_u -> 0 bTrs(P_u z),c: bTrs(P_u z)}.
\end{proof}

\Cref{t: intro - lim_mu->pm infty ...} follows taking $\tau=0$ in \Cref{c: bTrs(P_u z)}.

Like in\cite{AlvKordy2002,AlvKordyLeichtnam2020}, by \eqref{bar H^bullet J^m(FF) cong ker ...} and~\eqref{bar H^bullet J^prime m(FF) cong ker ...}, the distributions
\[
f\mapsto\lim_{u\uparrow+\infty}\bStr\big(\bfP_{m+\frac12,u,f}\big)\;,\quad 
f\mapsto\lim_{u\uparrow+\infty}\bStr\big(\bfP_{m-\frac12,u,f}\big)
\]
can be considered as a distributional supertraces of the action $\bfphi^*$ of $\R$ on $\bar H^\bullet J^m(\FF)$ and $\bar H^\bullet J^{\prime\,m}(\FF)$. So, by the analogs of~\eqref{varinjlim H^bullet I^(s)(FF) cong H^bullet I(FF)} and~\eqref{bar H^bullet I'(FF) cong projlim bar H^bullet I^(prime s)(FF)} for $J(\FF)$ and $J'(\FF)$, and using~\eqref{bar H^bullet J^m(FF) cong ker ...} and~\eqref{bar H^bullet J^prime m(FF) cong ker ...}, the distributions
\[
f\mapsto\lim_{m\downarrow-\infty}\lim_{u\uparrow+\infty}\bStr\big(\bfP_{m+\frac12,u,f}\big)\;,\quad 
f\mapsto\lim_{m\downarrow\infty}\lim_{u\uparrow+\infty}\bStr\big(\bfP_{m-\frac12,u,f}\big)
\]
can be considered as a distributional supertraces of the action $\bfphi^*$ of $\R$ on $\bar H^\bullet J(\FF)$ and $\bar H^\bullet J'(\FF)$, as indicated in \Cref{ss: L_dis(phi)}.


\backmatter

\bibliographystyle{amsalpha}

\bibliography{../../../../Suso}

\printindex

\end{document}